\setlist[enumerate]{label=(\roman*)}
\newcommand{\todozb}[1]{\todo[inline,color=green!40]{from ZB: #1}}
\newcommand{\todoan}[1]{\todo[inline,color=blue!20]{from AN: #1}}
\newcommand\adda[1]{{\color{blue}{#1}}}
\newcommand\linspan{\mathrm{linspan}}
\newcommand{\embed}{\hookrightarrow}
\newcommand{\bis}{{\prime\prime}}
\newcommand{\duality}[4]{\fourIdx{}{#4}{}{#3}{\big\langle #1, #2 \big\rangle}}
\renewcommand{\P}{\boldsymbol{\Pi}}
\theoremstyle{remark}
\newcommand{\red}[1]{{\color{red} #1}}
\newcommand\coma[1]{{\color{red} #1}}
\newcommand{\eps}{\varepsilon}
\newcommand{\bn}{\mbox{\boldmath{$n$}}}
\newcommand{\bv}{\mbox{\boldmath{$v$}}}
\newcommand{\bu}{\mbox{\boldmath{$u$}}}
\newcommand{\bb}{\mbox{\boldmath{$b$}}}
\newcommand{\bX}{\mbox{\boldmath{$X$}}}
\newcommand{\by}{\mbox{\boldmath{$y$}}}
\newcommand{\bA}{\mbox{\boldmath{$A$}}}
\newcommand{\bG}{\mbox{\boldmath{$G$}}}
\newcommand{\bw}{\mbox{\boldmath{$\mathrm{w}$}}}
\def\d{\mathrm{d}}
\DeclareMathOperator{\diver}{div}
\newcommand\dela[1]{}
\theoremstyle{plain}
\newtheorem{theorem}{\textbf{Theorem}}[section]
\newtheorem{lemma}[theorem]{\textbf{Lemma}}
\newtheorem{proposition}[theorem]{\textbf{Proposition}}
\theoremstyle{definition}
\newtheorem{remark}[theorem]{\textbf{Remark}}
\newtheorem{definition}[theorem]{\textbf{Definition}}
\newtheorem{assumption}[theorem]{\textbf{Assumption}}
\newtheorem{example}[theorem]{\textbf{Example}}
\numberwithin{equation}{section}
\numberwithin{figure}{section}
\begin{document}

\title[]{On a stochastic Cahn-Hilliard-Brinkman model}

\dela{
\author{
Z. Brze\'zniak$^{1,\ast}$%
\thanks{Corresponding author: Z. Brze\'zniak, e-mail: zdzislaw.brzezniak@york.ac.uk}
and
A. Ndongmo Ngana$^{1}$
and
T. Tachim Medjo$^{2}$
}
}

\author{Z. Brze\'zniak$^{1,\ast}$, A. Ndongmo Ngana$^{1}$ and T. Tachim Medjo$^{2}$}

\dedicatory{\vspace{-10pt}\normalsize{$^{1}$ Department of Mathematics, University of York
Heslington, York, YO105DD, UK \\
$^{2}$ Department of Mathematics and Statistics, Florida International University, MMC, Miami, FL $33199$, USA
}}

\setcounter{footnote}{0}
\renewcommand{\thefootnote}{\fnsymbol{footnote}}
\footnotetext[1]{Corresponding author. E-mail address: zdzislaw.brzezniak@york.ac.uk (Z. Brze\'zniak).
% \\
%E-mail address: aristide.ndongmongana@york.ac.uk (A. Ndongmo Ngana).
% \\
%E-mail address: tachimt@fiu.edu (T. Tachim Medjo).
}
\renewcommand{\thefootnote}{\arabic{footnote}
}

\keywords{Brinkman equation, Cahn-Hilliard equations, Yosida approximation}

%\footnotetext{Corresponding author:  Z. Brze\'zniak, e-mail: zdzislaw.brzezniak@york.ac.uk}

%\footnote{Corresponding author:  Z. Brze\'zniak, e-mail: tachimt@fiu.edu}
	
\date{} 

\maketitle

\begin{abstract}
In this paper, we consider a stochastic version of the Cahn-Hilliard-Brinkman model in a smooth two- or three-dimensional domain with dynamical boundary conditions. The system describes creeping two-phase flows and is basically a coupling of the Brinkman equation for the velocity field that governs the flow through the porous media coupled with  convective Cahn-Hilliard equations for the phase field, both with two independent sources of randomness given by general multiplicative-type Wiener noises in the Cahn-Hilliard equations. The existence of a weak solution, both in the probabilistic and PDEs sense,  is proved. Our construction of a solution is based on the classical Faedo-Galerkin approximation, the Yosida approximation and uses a  compactness method.
Our paper is the first attempt to generalize the paper \cite{Colli+Knopf+Schimperna+Signor_2024} to a stochastic setting.
\end{abstract}	

%%%%%%%%%%%%%%%%%%%%%%%%%%%%%%%%%%%%%%%%%%%%%%%%%%%%%%%%%%%%%%%%%%%%
\section{Introduction}
%%%%%%%%%%%%%%%%%%%%%%%%%%%%%%%%%%%%%%%%%%%%%%%%%%%%%%%%%%%%%%%%%%%%
In this paper, we are interested in the Cahn-Hilliard-Brinkman equations with random perturbation filling a bounded open domain 
$\mathcal{O}\subset \mathbb{R}^d$, $d=2$ or $d=3$, with a boundary $\Gamma$ which is a $C^2$-class riemannian submanifold of $\mathbb{R}^d$. 
More precisely, given a time horizon $T$, we consider the following stochastic system, 
\begin{subequations}\label{Eq1.1}
\begin{align}
-\diver(2 \nu(\phi) D\bu) + \lambda (\phi) \bu + \nabla p = \mu \nabla \phi \quad & \text{in} \quad (0,T) \times \mathcal{O},  \label{eq1.1a} 
 \\
\diver \bu=0 \quad &\text{in} \quad (0,T) \times \mathcal{O}, \label{eq1.2a}
    \\
\partial_t \phi + \diver(\phi \bu) - \diver(M_\mathcal{O}(\phi) \nabla \mu)= F_1(\phi)\d W \quad &\text{in} \quad (0,T) \times \mathcal{O},\label{eq1.3a}
	    \\
\mu= -\eps \Delta \phi + \frac1\eps F'(\phi) \quad &\text{in} \quad (0,T) \times \mathcal{O}, \label{eq1.4a}                                          
\end{align}
\end{subequations}
%%%%%%%%%%%%%%%%%%%%%%%%%%%%%%%%%%%%%%%%%%%%%%%%%%%%%%%%%%%%%%%%%%%%%%%%%%%%%%%%%%%%%%%%%%%%%%%%%%%%%%%%%%%%%%%%%%%%%%%%%%%%%%%%%%%%%%%%%%%%%%%%%%%%%%%%%%%%%%%%%%%%%%%%%%%%%%%%%%%%%%%
\begin{subequations}\label{Eq1.2}
\begin{align}
\partial_t \varphi + \diver_\Gamma(\varphi \bu) - \diver_\Gamma (M_\Gamma(\varphi) \nabla_\Gamma \theta)= F_2(\varphi)\d W_\Gamma  \quad &\text{on} \quad (0,T) \times \Gamma, \label{eq1.5a}
\\
\theta= -\eps_\Gamma \Delta_\Gamma \varphi + \frac{1}{\eps_\Gamma} G'(\varphi) + \eps \partial_{\bn} \phi \quad &\text{on} \quad (0,T) \times \Gamma, \label{eq1.6a}
  \\
 K \partial_{\bn} \phi= \varphi - \phi \quad &\text{on} \quad (0,T) \times \Gamma, \label{eq1.7a} 
\end{align}
\end{subequations}
%%%%%%%%%%%%%%%%%%%%%%%%%%%%%%%%%%%%%%%%%%%%%%%%%%%%%%%%%%%%%%%%%%%%%%%%%%%%%%%%%%%%%%%%%%%%%%%%%%%%%%%%%%%%%%%%%%%%%%%%%%%%%%%%%%%%%%%%%%%%%%%%%%%%%%%%%%%%%%%%%%%%%%%%%%%%%%%%%%%%%%%
together with the following boundary and initial conditions
\begin{subequations}\label{Eq1.3}
\begin{align}
 M_\mathcal{O} (\phi) \partial_{\bn} \mu= \bu \cdot \bn= 0 \quad &\text{on} \quad (0,T) \times \Gamma, \label{eq1.8a} 
 \\
[2 \nu(\phi) (D \bu) \bn + \gamma(\varphi) \bu]_\tau= [- \varphi \nabla_\Gamma \theta]_\tau \quad &\text{on} \quad (0,T) \times \Gamma, \label{eq1.9a}                                           
\end{align}
\end{subequations}
%%%%%%%%%%%%%%%%%%%%%%%%%%%%%%%%%%%%%%%%%%%%%%%%%%%%%%%%%%%%%%%%%%%%%%%%%%%%%%%%%%%%%%%%%%%%%%%%%%%%%%%%%%%%%%%%%%%%%%%%%%%%%%%%%%%%%%%%%%%%%%%%%%%%%%%%%%%%%%%%%%%%%%%%%%%%%%%%%%%%%%%
\begin{subequations}
\begin{align}
\phi(0)= \phi_0 \quad \text{in} ~ \mathcal{O} \quad \text{and} \quad \varphi(0)= \varphi_0 \quad &\text{on} \quad \Gamma\coloneq\partial \mathcal{O}. \label{eq1.10a}
\end{align}
\end{subequations}
Here we assume that $W$ is a cylindrical Wiener process on a separable Hilbert space $U$ and $F_1(\phi)$, for every suitable $\phi$,  is a Hilbert-Schmidt operator from $U$ to $H^1(\mathcal{O})$. Moreover,  $W_\Gamma$ is a cylindrical Wiener process on another separable Hilbert space $U_\Gamma$, independent of $W$, and $F_2(\varphi)$, for every suitable $\varphi$, is a Hilbert-Schmidt operator from $U_\Gamma$ to $H^1(\Gamma)$. In \eqref{Eq1.1}-\eqref{Eq1.2}, the unknown $\bu= \bu(t,x)$, $p= p(t,x)$, $\phi= \phi(t,x)$, and $\varphi= \varphi(t,x)$ represent, respectively, the velocity vector field, the pressure of the fluid, the phase-field in the bulk and the phase-field on the boundary $\Gamma$.
$\bn$ is the unit outward normal vector to the boundary $\Gamma$ and $D\bu$ is the rate of strain tensor defined by 
$$
D_{ij}\bu=\frac{1}{2} \left(\frac{\partial \bu_i}{\partial x_j} + \frac{\partial \bu_j}{\partial x_i}\right).
$$
$\nu(\phi)$ and $\lambda(\phi)$ are the kinematic viscosity and the permeability coefficients, respectively, whereas $\mu$ and $\theta$ are the chemical potentials in the bulk and on the boundary, respectively. The parameters $\eps$ and $\eps_\Gamma$ are positive real numbers which are related to the thickness of the diffuse interface in the bulk and on the surface, respectively. These parameters are fixed through the paper. For any vector $\bv: \Gamma \to \mathbb{R}^d$, $\bv_{\bn}\coloneq (\bv \cdot \bn) \bn$ is its normal component, while $\bv_\tau\coloneq \bv - \bv_{\bn}$ is its tangential component.

\noindent
The evolution problem \eqref{eq1.1a}–\eqref{eq1.10a} does not represent a gradient flow of the local free energy functional 
\begin{equation}\label{energy-functional}
E(\phi,\varphi)= \int_{\mathcal{O}}\left( \frac{\eps}{2} \lvert \nabla \phi \rvert^2 + \frac{1}{\eps} F(\phi) \right) \d x + \int_{\Gamma} \left(\frac{\eps_\Gamma}{2} \lvert \nabla_\Gamma \varphi \rvert^2 + \frac{1}{\eps_\Gamma} G(\varphi) \right) \d S + \frac{\eps [K]}{2} \int_\Gamma (\varphi - \phi)^2 \, \d S,
\end{equation}
in the $\mathcal{H}= H^{-1}(\mathcal{O}) \times H^{-1}(\Gamma)$ metric. Indeed, as one can observe, the bulk-surface Cahn-Hilliard system \eqref{eq1.3a}–\eqref{eq1.10a} is coupled to the velocity equation \eqref{eq1.1a}, and consequently, the overall system no longer constitutes a pure gradient flow. Nevertheless, the system \eqref{eq1.1a}–\eqref{eq1.10a} remains thermodynamically consistent with respect to the free energy functional 
$E$ in the sense that the chemical potentials $\mu$ and $\theta$  are directly linked, through equations \eqref{eq1.4a} and \eqref{eq1.6a}, to the variational derivatives (or subdifferentials) of $E$ with respect to the metric space $\mathcal{H}$. Here $[K]= 0$ if $K=0$ and $[K]= 1/K$ if $K>0$. 
However, we only consider the case $K>0$ in this paper.
\newline
%begin delete
\dela{
In the subsequel analysis of the problem \eqref{eq1.1a}-\eqref{eq1.10a}, the potentials $F$ and $G$ in the free energy functional $E$ are allowed to be of regular type, i.e. they are of the form 
   \begin{equation}\label{regular-potential}
     F_{\text{pol}}(s)= \frac{\alpha}{4} (s^2 - \beta^2)^2, \; \; s \in \mathbb{R}.
   \end{equation}
Potential $F_{\text{pol}}$ can be seen as an approximation of the Flory-Huggins logarithmic potential introduced in \cite{Cahn+Hilliard_1958}:
   \begin{equation}\label{Eq-singular potential}
     F_{\log}(s)= \frac{\Theta}{2}[(1+s)\log(1+s) - (1-s)\log(1-s)] - \frac{\Theta_c}{2} s^2, \; \; s \in (-1,1),
   \end{equation}
where $\Theta$ and $\Theta_c$ are positive constants denoting, respectively, the absolute temperature and the critical temperature of the mixture. The above two constants satisfy the physical relation $0< \Theta < \Theta_c$, where the phase separation occurs. $F_{\log}$ is referred to as a singular potential since its derivative $F_{\log}^\prime$ blows up at the pure phases $\pm 1$. The parameter $\alpha>0$ in \eqref{regular-potential} is related to $\Theta$ and $\Theta_c$, whereas $\pm \beta$ are the two minima of $F_{\log}$. However, the problem \eqref{eq1.1a}-\eqref{eq1.10a} can also be analyzed in the case of singular potential, but due to the size of the paper, we have not consider this issue here and we postpone that question to a subsequent paper. Let us point out that the potential $F_{\log}$ ensures the existence of physical solutions, i.e. solutions such that $\phi,\,\varphi \in[-1,1]$, and this is most coherent with the physical derivation of the Cahn-Hilliard model itself in relation to its thermodynamical consistency.
}
%end delete
When studying a two-phase fluid mixture, the equation governing the phase field variable $\phi$ (which is generally the Cahn-Hilliard equation) is coupled with an equation describing the averaged velocity $\bu$ of the fluid mixture. A classic choice for modelling this dynamic is the Navier-Stokes system, to which a capillary force, commonly known as the Korteweg force, is added. The latter can be represented as $\mu \nabla \phi$, where $\mu$ is the chemical potential in the bulk phases. In the context of diffuse-interface models, one of the most commonly used models for describing the motion of two incompressible viscous fluids with matched (constant) densities is the H-model.  Initially introduced in \cite{Hohenberg+Halperin_1997} and later rigorously derived in \cite{Gurtin+Polignone+Vinal_1996}, this model involves a system of partial differential equations (PDEs) consisting of the incompressible Navier-Stokes equation coupled to a convective Cahn-Hilliard equation.  In recent years, several papers have investigated the Navier-Stokes-Cahn-Hilliard equations (both in deterministic and stochastic contexts) from the perspectives of mathematical theory and numerical analysis, see for example \cite{Abels+ Garcke_2018,Gal+ Grasselli+Wu_2019,Gurtin+Polignone+Vinal_1996,Deugoue+Boris+Tachim_2021,Abels+Garcke+Grun_2012,Giorgini+Ngana+Medjo+Temam_2023,Giorgini+Temam_2020,Aristide+Deugoue+Tachim_2021,Scarpa_2021} and the reference therein.

\noindent
In general, the H model is used to describe the dynamics of phase transition phenomena for fluids with the same constant density (matched densities). However, another improvement scheme during a seminar work was proposed by the researchers in \cite{Abels+Garcke+Grun_2012}, and their model takes into account the variation of density (unmatched densities). This model is usually called the AGG model. Relevant studies on AGG model's include those by \cite{Abels+Depner+Garcke_2013,Giorgini_2021,Abels+Garcke+Giorgini_2024} and the reference therein. Note that in the literature, both the AGG model and the model H have mostly been supplemented with the classical no-slip boundary condition for the velocity field and homogeneous Neumann boundary conditions for the phase field $\phi$ and the chemical potential $\mu$ in the bulk. 
Note that the Euler equation \cite{Bardos_1972} with a class of dynamic boundary conditions have become extremely popular, and remain so, because they sit at the crossroads of mathematics, physics, and engineering. Furthermore, recently, the H and AGG models with a class of dynamic boundary conditions have become extremely popular, since they more accurately describe the dynamics of phase transition phenomena between bulk and surface. Among these models, we have the Allen-Cahn equation \cite{Cherfils+Gatti+Miranville_2013}, the Cahn-Hilliard equation \cite{Knopf+Lam+Liu+Metzger_2021,Knopf+Lam_2020}, the Cahn-Hilliard-Brinkman equation \cite{Bosia+Conti+Grasselli_2015,Colli+Knopf+Schimperna+Signor_2024}, and the Navier-Stokes-Cahn-Hilliard equations \cite{Giorgini+Knopf_2022}, all subjected with dynamic boundary conditions. In particular, in \cite{Giorgini+Knopf_2022}, the authors proved the existence of global weak solutions both in two and three dimensions as well as the uniqueness of weak solutions for the deterministic Navier-Stokes-Cahn-Hilliard equations with dynamic boundary conditions and in the case of matched densities. We should point out that for the deterministic system \eqref{eq1.1a}-\eqref{eq1.10a}, that is in the absence of the noises terms, the authors in \cite{Colli+Knopf+Schimperna+Signor_2024} established the existence of a weak solution in the case $K\geq 0$ with regular potentials, and the uniqueness of solution in dimension two. Furthermore, they proved the existence of global weak solutions in the case $K \geq 0$ even when the potentials are now singular. Let us point out that in comparison to the model derived and studied in \cite{Giorgini+Knopf_2022} in the case $K=0$, the model investigated in \cite{Colli+Knopf+Schimperna+Signor_2024} take into account the following aspect: The phase-fields $\phi$ and $\varphi$ are not merely coupled through the trace relation $\phi\lvert_\Gamma= \varphi$, but rather through a more general Robin-type coupling condition  $K \partial_{\bn} \phi= \varphi - \phi$ on $\Gamma$, where $K\geq 0$ (cf. \ref{eq1.7a}), and this general condition also reduces to the trace relation when $K=0$. 

\noindent
The present paper is a generalization of the results of  \cite{Colli+Knopf+Schimperna+Signor_2024} under the presence of random forces. More precisely, with the view to capture the randomness component which may affect the phase-field evolutions, we introduce a Wiener type noise in the Cahn-Hilliard equations themselves. This type of approach was first considered in \cite{Cook_1970} with the use of Wiener noise in the well-known stochastic version of the Cahn-Hilliard-Cook model, which was then confirmed as the most accurate description of phase separation, as showed in the contribution work \cite{Milchev+Heermann+Binder_1988} and the references cited therein. In continuation of these fundamental works, many other stochastic models describing the dynamics of incompressible fluids have also been the subject of intensive investigation in the mathematical literature. For such stochastic models, we refer the reader, for instance, to \cite{Feireisl+Petcu_2019,Deugoue+Boris+Tachim_2021,Aristide+Deugoue+Tachim_2021,Scarpa_2021} in the case of regular potentials like $F_{\text{pol}}$ (see \eqref{regular-potential}); \cite{Aristide+Deugoue+Tachim_2024,Andrea+Grasselli+Scarpa_2024,Scarpa_2021} in the case of singular potentials like $F_{\log}$; and also to \cite{Tachim_2019,Tachim_2021} for modified models. For analysis of stochastic Navier-Stokes equations in unbounded domains, see \cite{Brzezniak+Moty_2013,Brzezniak+Motyl+Ondrejat_2017}.

\noindent
In this paper, we are interested in studying the stochastic Cahn-Hilliard-Brinkman system \eqref{eq1.1a}-\eqref{eq1.10a} with smooth potential (i.e. the potentials $F$ and $G$ in the free energy functional $E$ are allowed to be of class $C^2$) using a variational approach.
We prove the existence of martingale solutions to the problem \eqref{eq1.1a}-\eqref{eq1.10a} in both two and three dimension in the case $K>0$. 
The proofs of the main results rely on a double approximation through a Faedo-Galerkin discretization in spacial space an Yosida regularization on the nonlinearity. 
We derive then uniform estimates on the solutions by applying the It\^o formula to the energy functional $E(\phi,\varphi)$ (see \eqref{energy-functional}) and by using the properties of $\nu,\,\lambda,\,M_{\mathcal{O}},\,M_{\Gamma},\,\gamma,\,F_1,$ and $F_2$. 
Finally, we pass to the limit by a stochastic compactness argument. The main challenges on the mathematical side in the present paper are the following: for the deterministic system, the integration in space of equations \eqref{eq1.3a} and \eqref{eq1.5a} together with the boundary conditions \eqref{eq1.7a} and \eqref{eq1.8a} yields the conservation of the phase field in the bulk $\phi$ and the phase field on the boundary $\varphi$ during the evolution, and this conservation of mass is then crucial to control the spatial mean of the chemical potentials and also to simplify the mathematical analysis of the system. However, in the stochastic case, the presence of noise in the equations \eqref{eq1.3a} and \eqref{eq1.5a} deteriorates the system's mass conservation, making it more difficult to control the chemical potentials $\mu$ and $\theta$. 
%%%begin delete
\dela{
The second main issue is to obtain satisfactory estimates on the phase field $\varphi$ on the surface $\Gamma$, for instance in space $L^\infty(0,T;L^2(\Gamma))$, due to the fact that we have no mass conservation, the presence of the stochastic term $F_2(\varphi)\d W_\Gamma$ in equation \eqref{eq1.5a}, and the impossibility to control the term $\int_\Gamma \varphi \bu \cdot \nabla_\Gamma \varphi \,\d S$ if we apply the It\^o formula to the functional $\Phi(x)= \Vert x\Vert_{L^2(\Gamma)}^2$.}
%end delete
%%%%%%%%%%%%%

\noindent
\noindent
In a subsequent paper, by taking advantage of the present analysis, we aim to study \eqref{eq1.1a}-\eqref{eq1.10a} in the case of singular potential \eqref{Eq-singular potential}. From the mathematical perspective, the approach to proving the existence of a solution in the case \(K=0\) differs from the deterministic work \cite{Colli+Knopf+Schimperna+Signor_2024} due to the presence of a proliferation terms in equations \eqref{eq1.3a} and \eqref{eq1.5a}. However, we believe that the approach we propose in this first part of the work for solving system \eqref{eq1.1a}-\eqref{eq1.10a} in the case \(K>0\) can also be applied, but because of the length of the paper, we postpone its investigation in a subsequent paper. 

\noindent
Outline of this paper. Section \ref{sect2} is devoted to recalling some basic notation and abstract framework. 
In Section \ref{sect3}, we introduce some auxiliary operators, the general assumptions, and we formulate our first main result. 
Sections \ref{sect4} and \ref{sect_Uniform_estimates_delta} are devoted to the proofs of several auxiliary results, such as the compactness and tightness criteria, which play a crucial role in establishing the main existence theorem for martingale solutions 
in the case $K>0$, see Section \ref{eqn-thm-first_main_theorem}.
Additional supporting results related to the proofs of our main theorems are provided in the Appendix.
%%%%%%%%%%%%%%%%%%%%%%%%%%%%%%%%%%%%%%%%%%%%%%%%%%%%%%%%%%%%%%
\section{Deterministic setting of the Cahn-Hilliard-Brinkman equations}\label{sect2}
%%%%%%%%%%%%%%%%%%%%%%%%%%%%%%%%%%%%%%%%%%%%%%%%%%%%%%%%%%%%%%
\subsection{Notation and setting}	
%%%%%%%%%%%%%%%%%%%%%%%%%%%%%%%%%%%%%%%%%%%%%%%%%%%%%%%%%%%%%%
Throughout the paper, we use the symbol $C$ to denote any arbitrary positive constant depending only on the data of the problem, whose value may be updated throughout the proofs. When we want to specify the dependence of $C$ on specific quantities, we will indicate them through a subscript. We denote by $\bu_i$ the $i$-th component of a vector field $\bu$. 
\newline
Next, we introduce the necessary definitions of functional spaces that are frequently used in this work. For any (real) Banach space $X$, its (topological) dual is denoted by $X'$ and the duality pairing between $X'$ and $X$ by \duality{\cdot}{\cdot}{X}{X^\prime} or simply \duality{\cdot}{\cdot}{}{}  if there is no confusion. Furthermore, we write $(\cdot,\cdot)_X$ to denote the corresponding inner product.
Given $p \geq 1$, we denote by $L_w^2(0,T;X)$ the Banach space $L^2(0,T;X)$ endowed with weak topology.  In particular,  $v_n \to v$ in $L_w^p(0,T;X)$ if and only if for all $\varphi \in L^q(0,T;X^\prime)$, $p^\prime$ being the conjugate of $p$,
      \begin{equation*}
        \lim_{n \to \infty} \int_0^T \duality{\varphi(s)}{v_n(s) - v(s)}{X}{X^\prime} \, \d s=0.
      \end{equation*}
For two separable Hilbert spaces $\mathbb{K}$ and $\mathbb{X}$, $\mathscr{T}_2(\mathbb{K},\mathbb{X})$ will denote the space of all Hilbert-Schmidt operators $\mathbb{A}: \mathbb{K} \to \mathbb{X}$ with norm $\Vert \mathbb{A} \Vert_{\mathscr{T}_2(\mathbb{K},\mathbb{X})}$, i.e.
   \begin{equation}\label{eqn-Hibert-Schmidt-norm}
     \Vert \mathbb{A} \Vert_{\mathscr{T}_2(\mathbb{K},\mathbb{X})}^2\coloneq \sum_{i=1}^{\text{dim}§\,\mathbb{K}} \Vert A e_i \Vert_{\mathbb{X}}^2,
   \end{equation}
where $(e_i)_{i\geq 1}$ is an ONB of $\mathbb{K}$. Note that any operator in $\mathscr{T}_2(\mathbb{K},\mathbb{X})$ is compact and $\mathscr{T}_2(\mathbb{K},\mathbb{X})$ is a separable Hilbert space endowed with a scalar product
\[
(\mathbb{A},\mathbb{B})_{\mathscr{T}_2(\mathbb{K},\mathbb{X})}
= \sum_{k=1}^{\text{dim}§\,\mathbb{K}} (\mathbb{A} e_k, \mathbb{B} e_k)_{\mathbb{X}}, \; \mathbb{A},\,\mathbb{B} \in \mathscr{T}_2(\mathbb{K},\mathbb{X}).
\]
The Borel $\sigma$-field $\mathcal{B}(\mathscr{T}_2(\mathbb{K},\mathbb{X}))$ is the $\sigma$-field generated by $\mbox{top}(\mathscr{T}_2(\mathbb{K},\mathbb{X}))$, i.e. the topology on $\mathscr{T}_2(\mathbb{K},\mathbb{X})$ which is induced by the norm \eqref{eqn-Hibert-Schmidt-norm} on $\mathscr{T}_2(\mathbb{K},\mathbb{X})$.
\newline
Let $1\leq p \leq\infty$ and $I$ be a subset of $[0,\infty)$. The functions spaces $L^p(I;X)$ consist of all Bochner measurable $p$-measurable functions defined on $I$ with values in $X$. We introduce the space $W^{1,p}(I;X)$ to consist of all functions $u \in L^p(I;X)$ with the vector-valued distributional derivative $\partial_t u \in L^p(I;X)$. If $p=2$, we simply write $H^1(I;X)= W^{1,2}(I;X)$. \newline
The set of continuous functions $f: I\to X$ is denoted by $C(I;X)$. \newline 
Given the Banach space $X$, $C_w([0,T];X)$ is the space of weakly continuous $f:  [0,T] \to X$. More precisely,
\begin{align*}
C_w([0,T];X)\coloneq \{f: & [0,T] \to X; \; \; \mbox{ for all} \; \; \varphi \in X^\prime,  \\
& [0,T] \ni t \mapsto \langle f(t), \varphi \rangle \in \mathbb{R} \mbox{ is continuous}\}.
\end{align*}
$C_w([0,T];X)$ is endowed with the locally convex topology induced by the family $\mathscr{P}$ of semi-norms given by
      \begin{align*}
         \mathscr{P}&\coloneq \{\mathscr{P}_{\varphi}: \; \varphi \in X^\prime\},
          \\
       \mathscr{P}_{\varphi}(f)&\coloneq \sup_{t \in [0,T]} \lvert \langle f(t), \varphi \rangle \rvert, \; \; f \in  C_w([0,T];X).
     \end{align*}
Assume that $d=2$ or $d=3$, and  $\mathcal{O}\subset \mathbb{R}^d$ is a bounded open domain. Then, define for every $t \in(0,T]$,
\begin{align*}
 Q_t&\coloneq(0,t) \times \mathcal{O},
 \\
 \Sigma_t&\coloneq (0,t) \times \partial \mathcal{O}.
\end{align*}
For any $p \in [1,\infty)$ and $s \in \mathbb{R}$, we denote by $L^p(\mathcal{O})$ and $W^{s,p}(\mathcal{O})$ the usual Lebesgue and Sobolev spaces of scalar functions, respectively. If $p=2$, we simply write $W^{s,2}(\mathcal{O})\coloneq H^s(\mathcal{O})$. 
\dela{
We denote by $H_0^1(\mathcal{O})$ the closure of $\mathcal{C}_c^\infty(\mathcal{O})$ in $H^1(\mathcal{O})$. 
}
We use the notation $\mathbb{L}^p(\mathcal{O})$, $\mathbb{W}^{s,p}(\mathcal{O})$, and $\mathbb{H}^s(\mathcal{O})$, to denote the spaces $L^p(\mathcal{O},\mathbb{R}^d)$, $W^{s,p}(\mathcal{O},\mathbb{R}^d)$, and $H^s(\mathcal{O},\mathbb{R}^d)$, respectively, with $d=2,3$. The Lebesgue spaces and Sobolev spaces on the boundary $\Gamma$ are defined analogously.
\newline
We denote by $\lvert \cdot \rvert$ and $(\cdot,\cdot)$ the norm and the scalar product, respectively, on $\mathbb{L}^2(\mathcal{O})$, $L^2(\mathcal{O})$, and $H$, see \eqref{eqn-spaces} for the definition of the space $H$.
We will also denote, for the sake of brevity, by $\lvert \cdot \rvert$ the norm on $L^2(\mathcal{O},\mathbb{R}^{d \times d})$. 
Next, we denote by $(\cdot,\cdot)_\Gamma$ and $\lvert \cdot \rvert_\Gamma$ the inner product and the norm in $L^2(\Gamma)$ and $\mathbb{L}^2(\Gamma)$, respectively.
\newline
The space $V$, as defined in point \eqref{eqn-spaces} below, is endowed with the norm and scalar product inherited from the space $\mathbb{H}^1(\mathcal{O})$, 
\begin{equation*}
(\bu,\bv)_V\coloneq (\bu,\bv) + (\nabla \bu,\nabla \bv), \quad \bu,\,\bv \in V,
\end{equation*}
and the norm induced by the scalar product $(\cdot,\cdot)_V$ in $V$ is then given by
\[
\Vert \bv \Vert_V^2= \lvert \bv \rvert^2 + \lvert \nabla \bv \rvert^2, \; \; \bv \in V.
\]

The averages of $u$ over $\mathcal{O}$ and $v$ over $\Gamma$ are defined respectively by:
\begin{align*}
       \langle u \rangle_{\mathcal{O}}=
          \begin{cases}
             \frac{1}{\lvert \mathcal{O} \rvert} \duality{u}{1}{H^1(\mathcal{O})}{(H^1(\mathcal{O}))^\prime} &\mbox{ if } u \in (H^1(\mathcal{O}))^\prime, 
               \\
             \fint_{\mathcal{O}} u \, \d x &\mbox{ if } u \in L^1(\mathcal{O}), 
          \end{cases}
\\
\langle v \rangle_{\Gamma}=
\begin{cases}
\frac{1}{\lvert \Gamma \rvert} \duality{v}{1}{H^1(\Gamma)}{(H^1(\Gamma))^\prime} &\mbox{ if } v \in (H^1(\Gamma))^\prime,
\\
\fint_{\Gamma} v \, \d S &\mbox{ if } v \in L^1(\Gamma).
\end{cases}
\end{align*}
Here $1$ represents the constant function assuming $1$ in $\mathcal{O}$ and on $\Gamma$, respectively, while $\lvert \mathcal{O} \rvert$ and $\lvert \Gamma \rvert$ stand for the $d$-dimensional and $d-1$-dimensional Lebesgue measures of $\mathcal{O}$ and $\Gamma$, respectively.
\newline
If $\bv: \mathcal{O} \to \mathbb{R}^d$, then we put 
\begin{align*}
\lvert \nabla \bv \rvert_{L^2(\mathcal{O},\mathbb{R}^{d \times d})}^2 
&\coloneq \sum_{i=1}^d \lvert D_i \bv \rvert_{L^2(\mathcal{O},\mathbb{R}^d)}^2
= \sum_{i,j=1}^d \lvert D_i \bv_j \rvert_{L^2(\mathcal{O})}^2,
\\
\lvert D \bv \rvert_{L^2(\mathcal{O},\mathbb{R}^{d \times d})}
&\coloneq \sum_{i,j=1} \left \lvert \frac{1}{2} (D_j \bv_i + D_i \bv_j )\right \rvert_{L^2(\mathcal{O})}^2.
\end{align*}
We set
       \begin{equation*}
         \mathcal{C}_{0,\sigma}^\infty(\mathcal{O})= \{\bv \in \mathcal{C}_0^\infty(\mathcal{O},\mathbb{R}^d): ~ \diver \bv= 0 \mbox{ in } \mathcal{O}\},
       \end{equation*}
and for the mathematical setting of \eqref{eq1.1a}–\eqref{eq1.10a} we introduce the Hilbert spaces
\begin{equation}\label{eqn-spaces}
\begin{aligned}
H(\diver,\mathcal{O})&= \{\bv \in \mathbb{L}^2(\mathcal{O}); ~ \diver \bv \in L^2(\mathcal{O})\},
\\
H &= \text{the closure of} ~ \mathcal{C}_{0,\sigma}^\infty(\mathcal{O}) ~ \mbox{ in } ~ \mathbb{L}^2(\mathcal{O}),
   \\
   V &= \mathbb{H}^1(\mathcal{O}) \cap H.
\end{aligned}
\end{equation}
\dela{
$%V&=  \text{the closure of} ~ \mathcal{C}_{0,\sigma}^\infty(\mathcal{O}) ~ \mbox{ in } ~ \mathbb{H}^1(\mathcal{O})$
It can be shown, see e.g. \cite{Temam_2001}, that $V = \mathbb{H}_0^1(\mathcal{O}) \cap H$.

On the other hand, why we cannot prove \eqref{eq4.34} directly without approximation?  }

%The space $H$ can also be characterized in a different way, see \cite[Theorem 1.1/1.4 and 1.1/1.6]{Temam_2001}:
The space $H$ can also be characterised as \cite[Theorem 1.1/1.4]{Temam_2001}:
   \begin{align*}
      H&= \{\bv \in H(\diver,\mathcal{O}):  \; \bu \cdot \bn=0 \mbox{ on } \Gamma \}.
      %\\
      %V&= \{\bv \in \mathbb{H}^1(\mathcal{O}): \diver \bv=0, \;
      %\bu =0 \mbox{ on } \Gamma\}.
    \end{align*}
For every $k \in[0,\infty)$, we introduce the following standard scale of Hilbert spaces
\[\mathcal{H}^k\coloneq H^k(\mathcal{O}) \times H^k(\Gamma),\]
which are endowed with the standard inner product
  \begin{equation*}
   ((u, f), (v, g))_{\mathcal{H}^k}\coloneq (u,v)_{H^k(\mathcal{O})} + (f, g)_{H^k(\Gamma)} \; \;  (u, f), \; (v, g) \in \mathbb{H}^k
  \end{equation*}
and the induced norm 
\[
\Vert (u,f) \Vert_{\mathcal{H}^k}^2= ((u,f),(u,f))_{\mathcal{H}^k}, \; \; (u,f) \in \mathcal{H}^k.
\]
Consequently, $(\mathcal{H}^k, (\cdot,\cdot)_{\mathcal{H}^k}, \Vert \cdot \Vert_{\mathcal{H}^k})$ is a Hilbert space. Note that for $k=0$, 
\[
\mathbb{H}\coloneq L^2(\mathcal{O}) \times L^2(\Gamma)= \mathcal{H}^0.
\]
\newline
Subsequently, we introduce the product:
       \begin{equation*}
         ((u,f),(v,g))_{\mathbb{H}}\coloneq (u,v) + (f,g)_\Gamma, \;\; (u,f),\, (v,g) \in \mathbb{H}.
       \end{equation*}
We set $\mathbb{V}= H^1(\mathcal{O}) \times H^1(\Gamma)$ and endow the space $\mathbb{V}$ with the inner product $(\cdot,\cdot)_{\mathbb{V}}\coloneq(\cdot,\cdot)_{\mathcal{H}^1}$ and its induced norm. This means that $(\mathbb{V}, (\cdot,\cdot)_{\mathbb{V}}, \|\cdot\|_{\mathbb{V}})$ is also a Hilbert space. Since the space $\mathbb{V}$ is densely and continuously embedded in $\mathbb{H}$, then by identifying $\mathbb{H}$ with its dual space $\mathbb{H}^\prime$, we have the following Gelfand chain, where each space is densely and compactly embedded into the next one, 
\[
\mathbb{V} \hookrightarrow \mathbb{H} \cong \mathbb{H}^\prime \hookrightarrow \mathbb{V}^\prime.
\]
%%%%%%%%%%%%%%%%
\dela{
We set $\mathbb{V}= H^1(\mathcal{O}) \times H^1(\Gamma)$ and we introduce the bulk-surface product space
  \begin{align*}
    \mathbb{V}_K &\coloneq \begin{cases}
                        \mathbb{V}  &\mbox{ if }  K>0, \\
                        \{(v, g ) \in \mathbb{V}: ~ g= v %\lvert
                        _\Gamma ~ \mbox{ a.e. on } \Gamma\}  &\mbox{ if } K= 0.
                      \end{cases}
  \end{align*}
For arbitrary $K\geq 0$, we endow the space $\mathbb{V}_K$ with the inner product $(\cdot,\cdot)_{\mathbb{V}_K}\coloneq(\cdot,\cdot)_{\mathcal{H}^1}$ and its induced norm. This means that $(\mathbb{V}_K, (\cdot,\cdot)_{\mathbb{V}_K}, \|\cdot\|_{\mathbb{V}_K})$ is also a Hilbert space. Since the space $\mathbb{V}_K$ is densely and continuously embedded in $\mathbb{H}$, then by identifying $\mathbb{H}$ with its dual space $\mathbb{H}^\prime$, we have the following Gelfand chain, where each space is densely and compactly embedded into the next one, 
\[
\mathbb{V}_K \hookrightarrow \mathbb{H} \cong \mathbb{H}^\prime \hookrightarrow \mathbb{V}_K^\prime.
\]
}
%%%%%%%%%%%%%%%
%end delete
For two Banach spaces $X_i, \,i=1,2$, we understand that $(\prod_{i=1}^2 X_i )^\prime \coloneq \prod_{i=1}^2 X_i^\prime$. 
So,
\begin{align*}
\duality{ Z^\ast}{Z}{\mathbb{V}}{\mathbb{V}^\prime}
= \duality{Z_1^\ast}{ Z_1}{V_1}{V_1^\prime} + \duality{ Z_2^\ast}{Z_2}{V_\Gamma}{V_\Gamma^\prime}, \; \;  Z\in \mathbb{V}, \; Z^\ast \in \mathbb{V}^\prime,
\end{align*}
having set, for the sake of brevity, 
   \begin{equation*}
     V_1 \coloneq H^1(\mathcal{O}) \mbox{ and } V_\Gamma \coloneq H^1(\Gamma).
   \end{equation*}
The dual norm on $\mathbb{V}^\prime$ is given by
\[
\Vert (u,f) \Vert_{\mathbb{V}^\prime}
\coloneq \sup \{\vert \duality{(u,f)}{(v,g)}{\mathbb{V}}{\mathbb{V}^\prime} \rvert \;: \; (v,g) \in \mathbb{V}  \mbox{ with }  \Vert (v,g) \Vert_{\mathbb{V}}= 1 \}, \; \; (u,f) \in \mathbb{V}^\prime.
\] 
We conclude this subsection by recalling the following Korn inequality, cf. \cite{Abels_2012,Colli+Knopf+Schimperna+Signor_2024}, and the bulk-surface Poincar\'e inequality, see \cite[Lemma A.1]{Knopf+Liu_2021}\dela{\cite[Chapter II, Section 1.4]{Temam2}}, respectively, and which will be used throughout this work.
%%%%%%%%%%%
%begin delete
\dela{
We conclude this subsection by reporting the following definition on uniform $C^m$-regularity condition borrowed from Adams's book \cite[Section 4.10]{Adams_1975}, the Korn inequality, cf. \cite{Abels_2012,Colli+Knopf+Schimperna+Signor_2024}, and the bulk-surface Poincar\'e inequality, see \cite[Lemma A.1]{Knopf+Liu_2021}\dela{\cite[Chapter II, Section 1.4]{Temam2}}, respectively, and which will be used throughout this work.
Note that we use Adam's book but restrict to the case of bounded set.
\begin{definition}\label{Uniform-regularity condition}
Fix $n,\;m\geq 1$ and let $\mathcal{B}_1=\{x\in \mathbb{R}^n: \vert x\rvert <1\}$. An open and bounded set $\mathcal{O}\subset \mathbb{R}^n$ satisfies the $C^m$-regularity condition if there exists a finite set $\{U_i\}$ and a $C^m$-class diffeomorphism
$\boldsymbol{\psi}_i: U_i \to \mathcal{B}_1$ such that
\begin{trivlist}
\item[(i)] each $U_i$ is an open subset of $\mathbb{R}^n$ and $\partial \mathcal{O} \subset \bigcup_i U_i$;
\item[(ii)] for some $\delta>0$, $\mathcal{O}_\delta \subset \bigcup_{i=1}^\infty \boldsymbol{\psi}_i^{-1} (\{x\in \mathbb{R}^n: \;\vert x\rvert <1/2\})$; 
\item[(iii)] for each $i$, $\boldsymbol{\psi}_i(U_i \cap \mathcal{O})=\{x\in \mathcal{B}_1:\; x_n>0\}$; 
\item[(iv)] there is a constant $N>0$ such that for every $\alpha$, with $0<\lvert\alpha\rvert\leq m$, and every $i$, we have
            \begin{align*}
                \lvert D^\alpha \psi_{i}(x) \rvert \leq N, \; x \in U_i, \\
                 \lvert D^\alpha \Psi_{i}(y) \rvert \leq N, \;y \in \mathcal{B}_1.
            \end{align*}
\end{trivlist}
\end{definition}
We report the following Korn inequality, see \cite[Appendix]{Abels_2012} for its proof.
}
%end delete
%%%%%%%%%%%
\begin{lemma}
Assume $\mathcal{O}$ is of class $C^2$. Then, there exists $C_{\text{KN}}>0$ such that 
    \begin{equation}\label{Korn-inequality}
      \lvert \nabla \bv \rvert \leq C_{\text{KN}}(\mathcal{O}) (\lvert D \bv \rvert + \lvert \bv \rvert_\Gamma), \; \; \bv \in \mathbb{H}^1(\mathcal{O}).
    \end{equation}
%Here, $\lvert D \bv \rvert\coloneq \sqrt{\int_{\mathcal{O}} \sum_{i,j=1}^d \lvert D_{ij} \bv(x) \rvert^2\, \d x}$, with $D_{ij}\bv= \frac{1}{2} \left(\frac{\partial \bv_i}{\partial x_j} + \frac{\partial \bv_j}{\partial x_i}\right)$.
\end{lemma}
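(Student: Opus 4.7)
I would argue by contradiction. Assume~\eqref{Korn-inequality} fails; then one finds a sequence $(\bv_n)_{n\geq 1} \subset \mathbb{H}^1(\mathcal{O})$ with $\lvert \nabla \bv_n \rvert = 1$ while $\lvert D \bv_n \rvert + \lvert \bv_n \rvert_\Gamma \to 0$. The first task is to secure an $\mathbb{H}^1$ bound on the sequence. For this I would invoke the boundary Poincar\'e inequality
\[
\lvert \bw \rvert \leq C\bigl( \lvert \nabla \bw \rvert + \lvert \bw \rvert_\Gamma\bigr), \qquad \bw \in \mathbb{H}^1(\mathcal{O}),
\]
which is itself provable by a short Rellich-plus-contradiction argument (any limit of a counterexample sequence is constant on each component of $\mathcal{O}$ and has zero trace, hence vanishes). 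With this, the $\mathbb{L}^2(\mathcal{O})$ norms of the $\bv_n$ are controlled by $\lvert \nabla \bv_n \rvert = 1$ and $\lvert \bv_n \rvert_\Gamma \to 0$, so $(\bv_n)$ is bounded in $\mathbb{H}^1(\mathcal{O})$.

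After extracting a subsequence, Rellich--Kondrachov and compactness of the trace on the $C^2$ domain $\mathcal{O}$ give
\[
\bv_n \rightharpoonup \bv \ \text{ in } \mathbb{H}^1(\mathcal{O}), \qquad \bv_n \to \bv \ \text{ in } \mathbb{L}^2(\mathcal{O}), \qquad \bv_n \to \bv \ \text{ in } \mathbb{L}^2(\Gamma).
\]
Weak lower semicontinuity of the symmetric gradient yields $\lvert D \bv \rvert \leq \liminf_n \lvert D \bv_n \rvert = 0$, hence $D \bv \equiv 0$ on $\mathcal{O}$; by the classical pointwise characterization, $\bv$ must be an infinitesimal rigid motion, $\bv(x) = a + Bx$ with $a \in \mathbb{R}^d$ and $B \in \mathbb{R}^{d \times d}$ skew-symmetric. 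Strong boundary convergence also gives $\bv \equiv 0$ on $\Gamma$; since the zero set of a non-trivial affine map is an affine subspace of dimension at most $d-1$, and the compact $(d-1)$-dimensional $C^2$ hypersurface $\Gamma$ contains $d+1$ affinely independent points, one must have $a = 0$ and $B = 0$. Thus $\bv \equiv 0$ in $\mathcal{O}$.

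To close the contradiction I would invoke the classical second Korn inequality on the $C^2$ domain $\mathcal{O}$,
\[
\lvert \nabla \bw \rvert \leq C\bigl(\lvert D \bw \rvert + \lvert \bw \rvert\bigr), \qquad \bw \in \mathbb{H}^1(\mathcal{O}),
\]
with $\bw = \bv_n$. The assumption delivers $\lvert D \bv_n \rvert \to 0$, while the strong $\mathbb{L}^2(\mathcal{O})$-convergence $\bv_n \to 0$ obtained above gives $\lvert \bv_n \rvert \to 0$. Therefore $\lvert \nabla \bv_n \rvert \to 0$, contradicting the normalisation $\lvert \nabla \bv_n \rvert = 1$. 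The step I expect to be the main obstacle---and the step that genuinely uses the global geometry of $\mathcal{O}$---is the rigid-motion rigidity in the previous paragraph, which rules out tangential rigid motions sitting in the kernel of the right-hand side of~\eqref{Korn-inequality}. The remaining ingredients (boundary Poincar\'e, trace compactness, weak lower semicontinuity of $\bw \mapsto \lvert D \bw\rvert$, and the classical second Korn inequality) are standard facts for $C^2$ (indeed Lipschitz) domains and can be quoted from, for instance, \cite{Abels_2012}.
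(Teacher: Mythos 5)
Your proof is correct. Note that the paper itself does not prove this lemma at all: it is stated as a quoted fact with references to \cite{Abels_2012,Colli+Knopf+Schimperna+Signor_2024}, so there is no in-paper argument to compare against. Your compactness-and-contradiction route is the standard way this trace-Korn inequality is established (it is essentially the argument in the appendix of \cite{Abels_2012}): normalise $\lvert \nabla \bv_n\rvert =1$, use the Poincar\'e inequality with boundary term to get $\mathbb{H}^1$-boundedness, pass to a weak limit, use weak lower semicontinuity of $\bw\mapsto\lvert D\bw\rvert$ to conclude the limit is an infinitesimal rigid motion vanishing on $\Gamma$, and then the classical second Korn inequality to contradict the normalisation. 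The one step that deserves the care you gave it is the rigidity step: for $B$ skew and $(a,B)\neq(0,0)$ the zero set of $x\mapsto a+Bx$ is empty or an affine subspace of dimension at most $d-1$, while the boundary of a nonempty bounded open set cannot lie in any hyperplane, so $a=B=0$. All the ingredients you quote (trace compactness, second Korn, generalized Poincar\'e) hold on Lipschitz domains, so the $C^2$ hypothesis is more than enough. No gaps.
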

%%%%%%%%%%%%%%%%%%%%%%%%%%%%%%%%%%%%%%%%%%%%%%%%%%%%%%%%%%%%%%%%%%%%%%%%%%%%%%%%%%%%%%%%%%%%%%%%%%%%%%%%%%%%%%%%%%%%%%%%%%%%%%%%%%%%%%%%%%%%%%%%%%%%%%%%%%%%%%%%%%%%%%%%%%%%%%%%%%%%%%%%%%%%%%%%%%%%%%%%
The following lemma is a direct consequence of the bulk surface Poincar\'e inequality derived in \cite[Lemma A.1]{Knopf+Liu_2021}.
\begin{lemma}\label{Poincare-inequality}
Suppose $\mathcal{O}$ is of class $C^2$. Then, there exists $C_{\text{PC}}>0$ depending only on $\mathcal{O}$ such that
     \begin{equation}\label{bulk-surface-Poincare-inequality}
       \left \lvert v - \fint_\Gamma v \, \d S \right \rvert \leq C_{\text{PC}} \lvert \nabla_\Gamma v \rvert_\Gamma, \; \; v \in H^1(\Gamma). 
     \end{equation}
\end{lemma}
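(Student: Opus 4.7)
The inequality is the standard Poincar\'e--Wirtinger inequality on the compact $C^2$ Riemannian submanifold $\Gamma = \partial \mathcal{O}$ (reading the left-hand norm as $|\cdot|_\Gamma$, as required by dimensional consistency since $v \in H^1(\Gamma)$), and the authors already advertise it as a direct consequence of the more general bulk--surface inequality of Knopf--Liu \cite[Lemma A.1]{Knopf+Liu_2021}. The shortest self-contained route is the classical Rellich compactness plus contradiction argument, which I outline below; invoking the Knopf--Liu result is an equally valid alternative but requires choosing a test extension of $v$ to the bulk, which is less direct.

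The plan is as follows. First I would argue by contradiction: if the inequality fails, a standard rescaling produces a sequence $(v_n)_{n\geq 1} \subset H^1(\Gamma)$ satisfying
\[
\fint_\Gamma v_n \,\d S = 0, \qquad |v_n|_\Gamma = 1, \qquad |\nabla_\Gamma v_n|_\Gamma \longrightarrow 0.
\]
Since $\Gamma$ is a compact $C^2$-manifold without boundary, the embedding $H^1(\Gamma) \hookrightarrow L^2(\Gamma)$ is compact by Rellich--Kondrachov, so along a subsequence $v_n \to \bar v$ strongly in $L^2(\Gamma)$ and weakly in $H^1(\Gamma)$. Weak lower semicontinuity of the Dirichlet energy $v \mapsto |\nabla_\Gamma v|_\Gamma$ together with the third normalisation forces $\nabla_\Gamma \bar v = 0$, while $L^2$-continuity of the surface mean yields $\fint_\Gamma \bar v \,\d S = 0$. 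On a connected $\Gamma$, the first condition says $\bar v$ is constant and the second then forces $\bar v \equiv 0$, contradicting $|\bar v|_\Gamma = \lim_n |v_n|_\Gamma = 1$.

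The only foreseeable subtlety is that this argument tacitly uses connectedness of $\Gamma$, which is standard in the present PDE setting; if $\partial \mathcal{O}$ should happen to split into finitely many components one runs the same contradiction componentwise and sums the resulting inequalities. No ingredients beyond Rellich compactness on $\Gamma$ and weak lower semicontinuity of $|\nabla_\Gamma \cdot|_\Gamma$ are required, so I do not expect any genuine obstacle; the main thing to be careful about is the convention on the LHS norm and the implicit connectedness assumption on $\mathcal{O}$.
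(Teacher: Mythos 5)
Your argument is correct and complete for the case actually needed here, but it is not the paper's route: the paper offers no proof at all and simply records the lemma as a direct consequence of the bulk--surface Poincar\'e inequality of \cite[Lemma A.1]{Knopf+Liu_2021}. What you supply instead is the classical self-contained proof of the Poincar\'e--Wirtinger inequality on the compact closed $C^2$-manifold $\Gamma$: normalise a hypothetical violating sequence, extract a subsequence via the compact embedding $H^1(\Gamma)\hookrightarrow L^2(\Gamma)$, and use weak lower semicontinuity of the Dirichlet energy together with connectedness to force the weak limit to be the zero constant, contradicting the $L^2$-normalisation. This buys independence from the external reference, and you are right that the left-hand side should be read as $\lvert\cdot\rvert_\Gamma$, consistent with the paper's convention for norms on $L^2(\Gamma)$; the price is that the constant $C_{\text{PC}}$ obtained by contradiction is non-constructive, whereas the citation route inherits whatever quantitative information the quoted lemma provides.

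One remark in your proposal is genuinely wrong, although it does not affect the case the paper uses: if $\Gamma$ splits into several components, running the compactness argument componentwise and summing only controls $v$ minus its \emph{componentwise} means, not $v$ minus the global mean $\fint_\Gamma v\,\d S$. In fact the stated inequality fails outright for disconnected $\Gamma$: take $v$ locally constant with different values on two components, so that $\nabla_\Gamma v=0$ while $v-\fint_\Gamma v\,\d S$ is not identically zero. Connectedness of $\Gamma$ is therefore a genuine hypothesis rather than a removable convenience, and since even a connected bounded domain (an annulus, say) can have disconnected boundary, it is being assumed implicitly by the paper as well.
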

\dela{
\begin{lemma}\label{Poincare-inequality}
There exists a positive constant $C_P>0$ depending only on $\mathcal{O}$ such that
     \begin{equation}
       \vert v \rvert \leq C_P (\lvert \nabla v \rvert + \lvert v \rvert_{\Gamma}), \; \; v \in V_1.
     \end{equation}
\end{lemma}
}
%%%%%%%%%%%%%%%%%%%%%%%%%%%%%%%%%%%%%%%%%%%%%%%%%%%%%%%%%%%%%%%%%%%%%%%%%%%%%%%%%%%%%%%%%%%%%%%%%%%%%%%%%%%%%%%%%%%%%%%%%%%%%%%%%%%%%%%%%%%%%%%%%%%%%%%%%%%%%%%%%%%%%%%%%%%%%%%%%%%%%%%%%%%%%%%%%%%%%%%%
\subsection{Setting of bilinear operators and their properties}
%%%%%%%%%%%%%%%%%%%%%%%%%%%%%%%%%%%%%%%%%%%%%%%%%%%%%%%%%%%%%%%%%%%%%%%%%%%%%%%%%%%%%%%%%%%%%%%%%%%%%%%%%%%%%%%%%%%%%%%%%%%%%%%%%%%%%%%%%%%%%%%%%%%%%%%%%%%%%%%%%%%%%%%%%%%%%%%%%%%%%%%%%%%%%%%%%%%%%%%%
Let $\P: \mathbb{L}^2(\mathcal{O}) \to H$ be the orthogonal projection usually called the Helmhotz-Leray projection. It is known, see \cite[Remark 1.6]{Temam_2001}, that $\P$ maps continuously the Hilbert space $H^1(\mathcal{O})$ into itself. \newline
From now on, we denote by $\bA$ the Stokes operator in the case of Navier boundary conditions, which is defined by, see, e.g., \cite[Appendix A]{Abels_2012},
   \begin{equation}\label{eqn-Stokes operator}
    \begin{aligned}
     D(\bA) &\coloneq \{\bv \in H^2(\mathcal{O},\mathbb{R}^d) \cap H:\, [2(D\bv \bn) + \bv]_\tau= 0 \mbox{ on }  \Gamma\},  
       \\
     \bA \bu &\coloneq -\P \diver (2 D \bv), \;\; \bu \in  D(\bA).
    \end{aligned}
   \end{equation}
Notice that $\bA$ is a symmetric operator since
     \begin{equation*}
        (\bA \bv,\bw)= 2 \int_{\mathcal{O}} D\bv:D\bw\, \d x + \int_{\Gamma} \bv \cdot \bw\, \d S= (\bv, \bA \bw), \; \; \bv,\,\bw \in D(\bA).
    \end{equation*}
Moreover, $\bA$ is a positive and self-adjoint operator on $H$ with compact inverse and $V= (H,D(\bA))_{\frac12,2}$.
\newline
Now and throughout this paper, we assume that $\Gamma$ is a $C^2$-class riemannian submanifold of $\mathbb{R}^d$ endowed with the metric induced from 
$\mathbb{R}^d$.  
Thus, we have the scale $H^s(\Gamma)$, $s \in \mathbb{R}$ of Sobolev spaces, see e.g. \cite{Taylor_1981}.
\newline
We denote by $\Delta_\Gamma: L^2(\Gamma) \to L^2(\Gamma)$ the corresponding Laplace-Beltrami operator on $L^2(\Gamma)$ defined by
   \begin{equation}\label{eqn-Laplace-Beltrami operator}
       \begin{aligned}
          D(\Delta_\Gamma) &\coloneq H^2(\Gamma),  
            \\
           \Delta_\Gamma u &\coloneq  \diver_\Gamma \nabla_\Gamma u, \;\; u \in D(\Delta_\Gamma),
       \end{aligned}
   \end{equation}
where $\diver_\Gamma$ denotes the surface divergence, $\nabla_\Gamma \coloneq (\partial_{\tau_1},\ldots,\partial_{\tau_{d-1}})$ the Riemannian gradient on $\Gamma$, and $\partial_{\tau_i}$ the derivative along the $i$-th tangential direction $\tau_i$, $i= 1,\ldots,d-1$. 
The operator $I-\Delta_\Gamma$ has a compact inverse.
\newline
Let us introduce the trilinear form $b$ defined by 
\begin{equation*}
  b: V \times V_1 \times V_1 \ni (\bu,\phi,\psi) \mapsto  
    \sum_{i,j = 1}^{d} \int_{\mathcal{O}} \bu_i(x) \frac{\partial \phi(x)}{\partial x_i} \psi(x) \,\d x \in \mathbb{R}.
\end{equation*}
Using the density of $\mathcal{C}_0^\infty(\mathcal{O})$ in $V_1$ together with the Gagliardo-Nirenberg inequality, we can prove as in 
\cite[Lemmas II.1.3 \& II.1.6]{Temam_2001} that there exists $C>0$ depending only on $\mathcal{O}$ s.t.
    \begin{equation*}
      \begin{aligned}
         \vert b(\bu,\phi,\psi) \rvert 
         \leq C \Vert \bu \Vert_{\mathbb{L}^4(\mathcal{O})} \Vert \phi \Vert_{L^4(\mathcal{O})} \lvert \nabla \psi \rvert 
         \leq C \lvert \bu \rvert^{\frac{1}{d}} \Vert \bu \Vert_{V}^{1 - \frac{1}{d}} \vert \phi \rvert^{\frac{1}{d}} \Vert \phi \Vert_{V_1}^{1 - \frac{1}{d}} \Vert \psi \Vert_{V_1}, \; \; \bu \in V, \; \psi,\; \phi \in V_1.
     \end{aligned}
    \end{equation*}
Moreover,
      \begin{equation}\label{B1-first-Property}
        b(\bu,\phi,\phi)= 0, \; \; \bu \in V, \; \phi \in V_1.
      \end{equation}
The previous inequality implies the following fact. There exists a continuous bilinear form 
$$B_1: V \times V_1 \to V_1^\prime$$ such that
                              \begin{equation}
                                 \duality{B_1(\bu,\phi)}{\psi}{V_1}{V_1^\prime}= b(\bu,\phi,\psi), \; \; \bu \in V, \; \psi, \; \phi \in V_1.
                              \end{equation}
In particular, $B_1$ satisfies 
      \begin{equation}%\label{B1-first-Property}
       \duality{B_1(\bu,\phi)}{\psi}{V_1}{V_1^\prime}=- b(\bu,\psi,\phi), \; \; \bu \in V, \; \psi, \; \phi \in V_1
     \end{equation}
and
    \begin{equation}\label{B1-Property}
       \Vert B_1(\bu,\phi) \Vert_{V_1^\prime} 
         \leq C \lvert \bu \rvert^{\frac{1}{d}} \Vert \bu \Vert_V^{1 - \frac{1}{d}} \vert \phi \rvert^{\frac{1}{d}} \Vert \phi \Vert_{V_1}^{1 - \frac{1}{d}}, \; \; \bu \in V, \; \phi \in V_1.
     \end{equation}
Subsequently, if $\bu \in V$ and $\phi \in V_1$, we have the following equality, which is to be understood in the weak sense: $$\diver(\phi \bu)= \bu \cdot \nabla \phi + \phi \underbrace{\diver \bu}_{=0}= \bu \cdot \nabla \phi.$$
Hence, for any test function $\psi \in V_1$, 
\begin{align*}
\int_{\mathcal{O}} \diver(\phi \bu) \psi\, \d x
= \int_{\mathcal{O}} [\bu \cdot \nabla \phi] \psi\, \d x 
= - \int_{\mathcal{O}} [\bu \cdot \nabla \psi] \phi\, \d x 
=- b(\bu,\psi,\phi).
\end{align*}
The following equality holds in the weak sense: 
\[\diver_\Gamma (\varphi \bu) \coloneq \nabla_\Gamma(\varphi \bu)= \varphi \nabla_\Gamma \cdot \bu + \nabla_\Gamma \varphi \cdot \bu.
\]
Thus, in particular, for all $\bu \in V, \; \varphi, \, \psi \in V_\Gamma$, we have
\begin{equation*}
    \begin{aligned}
      &\int_\Gamma \diver_\Gamma (\varphi \bu) \psi \, \d S
      = \int_\Gamma \varphi \psi \nabla_\Gamma \cdot \bu \, \d S + \int_\Gamma \nabla_\Gamma \varphi \cdot \bu \psi \, \d S \\
      &= \int_\Gamma \varphi \psi \nabla_\Gamma \cdot \bu \, \d S - \int_\Gamma \varphi \psi \nabla_\Gamma \cdot \bu \, \d S - \int_\Gamma \varphi \bu \cdot \nabla_\Gamma \psi\, \d S 
      = - \int_\Gamma \varphi \bu \cdot \nabla_\Gamma \psi\, \d S.
    \end{aligned}
\end{equation*}
From now on, we introduce a trilinear form $\tilde{b}$ defined by
\begin{equation*}
   \tilde{b}: V \times V_\Gamma \times V_\Gamma  \ni (\bu,\varphi,\psi) \mapsto - \int_\Gamma \varphi \bu \cdot \nabla_\Gamma \psi\, \d S \in \mathbb{R}.
\end{equation*}
$\tilde{b}$ is well defined, since $\mathcal{O} \subset \mathbb{R}^d$. 
Furthermore, the map $\tilde{b}$ enjoys the following properties.
\begin{lemma}\label{Properties of trilinear-boundary-map}
Assume $\bu \in V$, $\varphi, \, \psi \in V_\Gamma$. The map $\tilde{b}$ is trilinear continuous and its satisfies
     \begin{equation}\label{First-Properties of trilinear-boundary-map}
        \vert \tilde{b}(\bu,\varphi,\psi) \rvert \leq C \|\bu\|_{V} \|\varphi\|_{V_\Gamma} \|\psi\|_{V_\Gamma}.
     \end{equation}
Moreover, there exists a bilinear form $\Tilde{B}: V \times V_\Gamma \to V_\Gamma^\prime$ such that
  \begin{equation*}
     \duality{\Tilde{B}(\bu,\varphi)}{\psi}{V_\Gamma}{V_\Gamma^\prime}= \Tilde{b}(\bu,\varphi,\psi), \; \bu \in V, \;\; \varphi,\;\psi \in V_\Gamma,
  \end{equation*}
and there exists a positive constant $C=C(\mathcal{O},\Gamma)$ such that
   \begin{equation}\label{Property-tilde B}
      \Vert \tilde{B}(\bu,\varphi) \Vert_{V_\Gamma^\prime} \leq C \Vert \bu \Vert_{V} \Vert \varphi \Vert_{V_\Gamma}, \; \bu \in V, \; \varphi \in V_\Gamma.
    \end{equation}
\end{lemma}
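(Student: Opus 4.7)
\begin{prev}[\underline{Proof sketch}]
The plan is to establish the bound \eqref{First-Properties of trilinear-boundary-map} directly by H\"older's inequality plus two Sobolev/trace embeddings, and then to read off the existence and boundedness of $\tilde{B}$ as an immediate consequence.

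\textbf{Step 1: Pointwise H\"older estimate.} Starting from the definition
\[
\tilde b(\bu,\varphi,\psi)= -\int_\Gamma \varphi\,\bu\cdot \nabla_\Gamma\psi\,\mathrm{d}S,
\]
I would apply H\"older's inequality with exponents $(4,4,2)$ on $\Gamma$ to get
\[
\lvert \tilde b(\bu,\varphi,\psi)\rvert
\leq \|\varphi\|_{L^4(\Gamma)}\,\|\bu_{|\Gamma}\|_{\mathbb{L}^4(\Gamma)}\,\|\nabla_\Gamma\psi\|_{\mathbb{L}^2(\Gamma)}.
\]
Trilinearity of $\tilde b$ is clear by inspection of the integral.

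\textbf{Step 2: Sobolev embeddings on $\Gamma$ and trace.} Since $\Gamma$ is a compact $C^2$ submanifold of $\mathbb{R}^d$ with $d\in\{2,3\}$, so that $\dim\Gamma \leq 2$, the Sobolev embedding $H^1(\Gamma)\hookrightarrow L^4(\Gamma)$ holds, giving
\[
\|\varphi\|_{L^4(\Gamma)} \leq C(\Gamma)\,\|\varphi\|_{V_\Gamma}.
\]
For the velocity, I would combine the trace theorem $H^1(\mathcal{O})\hookrightarrow H^{1/2}(\Gamma)$ with the fractional Sobolev embedding $H^{1/2}(\Gamma)\hookrightarrow L^4(\Gamma)$ (the latter holds because the critical exponent $\frac1p=\frac12-\frac{1/2}{\dim\Gamma}$ yields $p=4$ when $\dim\Gamma=2$, while for $\dim\Gamma=1$ one has $H^{1/2}(\Gamma)\hookrightarrow L^q(\Gamma)$ for every $q<\infty$). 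Applied componentwise to $\bu\in V\subset \mathbb{H}^1(\mathcal{O})$, this yields
\[
\|\bu_{|\Gamma}\|_{\mathbb{L}^4(\Gamma)}\leq C(\mathcal{O},\Gamma)\,\|\bu\|_{\mathbb{H}^1(\mathcal{O})}\leq C(\mathcal{O},\Gamma)\,\|\bu\|_V.
\]
Combined with the trivial bound $\|\nabla_\Gamma\psi\|_{\mathbb{L}^2(\Gamma)}\leq \|\psi\|_{V_\Gamma}$, Steps~1--2 yield \eqref{First-Properties of trilinear-boundary-map} with $C=C(\mathcal{O},\Gamma)$, and incidentally prove joint continuity of $\tilde b$ on $V\times V_\Gamma\times V_\Gamma$.

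\textbf{Step 3: Definition of $\tilde B$ and its bound.} For fixed $(\bu,\varphi)\in V\times V_\Gamma$, the map $\psi\mapsto \tilde b(\bu,\varphi,\psi)$ is linear on $V_\Gamma$ and, by \eqref{First-Properties of trilinear-boundary-map}, bounded with operator norm at most $C\|\bu\|_V\|\varphi\|_{V_\Gamma}$. I would \emph{define} $\tilde B(\bu,\varphi)\in V_\Gamma^\prime$ to be precisely this linear functional, so that the duality identity
\[
\duality{\tilde B(\bu,\varphi)}{\psi}{V_\Gamma}{V_\Gamma^\prime}= \tilde b(\bu,\varphi,\psi)
\]
holds by construction. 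Bilinearity of $\tilde B$ in $(\bu,\varphi)$ follows from the trilinearity of $\tilde b$, and taking the supremum over $\psi\in V_\Gamma$ with $\|\psi\|_{V_\Gamma}=1$ in \eqref{First-Properties of trilinear-boundary-map} gives \eqref{Property-tilde B}.

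\textbf{Main obstacle.} The only non-bookkeeping point is the sharpness of the embeddings in the three-dimensional case: in $d=3$, $\dim\Gamma=2$ and the embedding $H^{1/2}(\Gamma)\hookrightarrow L^4(\Gamma)$ sits at the borderline Sobolev exponent, so one must invoke the continuous (not compact) embedding here, which is still sufficient for the desired bound. Alternatively, one could replace the $(4,4,2)$ splitting by $(p,q,2)$ with $p,q\in(2,\infty)$ close to $4$, use Gagliardo--Nirenberg on $\Gamma$ and the trace inequality, and absorb the constants into $C(\mathcal{O},\Gamma)$; this avoids the borderline embedding at the cost of only slight additional notation.
\end{prev}
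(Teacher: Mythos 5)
Your proposal is correct and follows essentially the same route as the paper: H\"older with exponents $(4,4,2)$ on $\Gamma$, the embedding $H^1(\Gamma)\hookrightarrow L^4(\Gamma)$ for $\varphi$, a trace-type bound $\|\bu\|_{\mathbb{L}^4(\Gamma)}\leq C\|\bu\|_V$ for the velocity, and then the definition of $\tilde B(\bu,\varphi)$ as the resulting bounded linear functional on $V_\Gamma$. The only cosmetic difference is that the paper invokes Adams' trace theorem $H^1(\mathcal{O})\to L^4(\Gamma)$ directly, whereas you factor it through $H^{1/2}(\Gamma)\hookrightarrow L^4(\Gamma)$ and correctly flag the borderline exponent in $d=3$; both give the same constant $C(\mathcal{O},\Gamma)$.
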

\begin{proof}
Let $\bu \in V$ and $\varphi,\; \psi \in V_\Gamma$ be arbitrary. Using the H\"older inequality together with the Trace embedding theorem, see, for instance, Adams \cite[Theorem 5.36, Section 5]{Adams_1975}, we infer 
\begin{align*}
\lvert \tilde{b}(\bu,\varphi,\psi) \rvert
\leq \Vert \bu \Vert_{\mathbb{L}^4(\Gamma)} \Vert \varphi \Vert_{L^4(\Gamma)} \Vert \nabla_\Gamma \psi \Vert_{\mathbb{L}^2(\Gamma)} 
\leq C(\Gamma,\mathcal{O}) \Vert \bu \Vert_{V} \Vert \varphi \Vert_{V_\Gamma} \Vert \psi \Vert_{V_\Gamma}.
\end{align*}
Hence, for an arbitrary but fixed $\psi \in V_\Gamma$, the mapping $\tilde{b}(\cdot,\cdot,\psi)$ defined on $V \times V_\Gamma$ with values in $\mathbb{R}$ is bilinear and continuous, which implies the existence of a continuous bilinear form $\tilde{B}: V \times V_\Gamma \to V_\Gamma^\prime$ satisfying the assertions in the lemma.
\end{proof}
%%%%%%%%%%%%%%%%%%%%%%%%%%%%%%%%%%%%%%%%%%%%%%%%%%%%%%%%%%%%%%%%%%%%
To conclude this section, we introduce two continuous mappings that are also necessary for the analysis of the stochastic Brinkman system \eqref{eq1.1a}-\eqref{eq1.10a}. 
\newline
For fixed $\phi \in V_1,\; \varphi \in V_\Gamma$, we introduce the following linear maps $A_{\phi}: V_1 \to V_1^\prime$ and $\mathcal{A}_{\varphi}: V_\Gamma \to V_\Gamma^\prime$ whose actions are given by
  \begin{equation}\label{Definition of A_phi and A_varphi}
    \begin{aligned}
        \duality{A_{\phi}(\mu)}{v}{V_1}{V_1^\prime}
           &= \int_{\mathcal{O}} M_{\mathcal{O}}(\phi) \nabla \mu \cdot \nabla v\, \d x, \; \; \mu,\, v \in V_1=H^1(\mathcal{O}),
              \\
         \duality{\mathcal{A}_{\varphi}(\theta)}{\rho}{V_\Gamma}{V_\Gamma^\prime}
            &= \int_{\Gamma} M_\Gamma(\varphi) \nabla_\Gamma \theta \cdot \nabla_\Gamma \rho\, \d S, \; \; \rho, \, \theta \in  V_\Gamma=H^1(\Gamma),
    \end{aligned}
  \end{equation}
and by assumption \ref{item:H4} below, we have
\begin{equation}
   \begin{aligned}
     \duality{A_{\phi}(\mu)}{\mu}{V_1}{V_1^\prime}&\geq M_0 \lvert \nabla \mu \rvert^2, \quad \mu \in V_1, 
          \\
    \duality{\mathcal{A}_{\varphi}(\theta)}{\theta}{V_\Gamma}{V_\Gamma^\prime}&\geq N_0 \lvert \nabla_\Gamma \theta \rvert^2, \; \; \theta \in V_\Gamma.
   \end{aligned}
\end{equation}
Once more, thanks to the assumption \ref{item:H4} and H\"older inequality, we can easily check that $A_{\phi}: V_1 \to V_1^\prime$ and $\mathcal{A}_{\varphi}: V_\Gamma \to V_\Gamma^\prime$ are globally Lipschitz continuous, i.e. if $\mu,\;\tilde{\mu} \in V_1$ and $\theta,\;\tilde{\theta}\in V_\Gamma$, 
  \begin{equation}
    \begin{aligned}
       \Vert A_{\phi}(\mu) - A_{\phi}(\Tilde{\mu}) \Vert_{V_1^\prime}
         &\leq \bar{M}_0 \Vert \mu - \tilde{\mu} \Vert_{V_1}, 
         \\
       \Vert \mathcal{A}_{\varphi}(\theta) - \mathcal{A}_{\varphi}(\tilde{\theta}) \Vert_{V_\Gamma^\prime}
        &\leq \bar{N}_0 \Vert \theta - \tilde{\theta} \Vert_{V_\Gamma}.
    \end{aligned}
  \end{equation}
%%%%%%%%%%%%%%%%%%%%%%%%%%%%%%%%%%%%%%%%%%%%%%%%%%%%%%%%%%%%%%%%%%%%%%%%%%%%%%%%%%%%%%%%%%%%%%%%%%%%%%%%%%%%%%%%%%%%%%%%%%%%%%%%%%%%%%%%%%%%%%%%%%%%%%%%%%%%%%%%%%%%%%%%%%%%
\section{The Stochastic Cahn-Hilliard-Brinkman Equations and main result}\label{sect3}
%%%%%%%%%%%%%%%%%%%%%%%%%%%%%%%%%%%%%%%%%%%%%%%%%%%%%%%%%%%%%%%%%%%%%%%%%%%%%%%%%%%%%%%%%%%%%%%%%%%%%%%%%%%%%%%%%%%%%%%%%%%%%%%%%%%%%%%%%%%%%%%%%%%%%%%%%%%%%%%%%%%%%%%%%%%%
\subsection{Stochastic preliminaries}
%%%%%%%%%%%%%%%%%%%%%%%%%%%%%%%%%%%%%%%%%%%%%%%%%%%%%%%%%%%%%%%%%%%%%%%%%%%%%%%%%%%%%%%%%%%%%%%%%%%%%%%%%%%%%%%%%%%%%%%%%%%%%%%%%%%%%%%%%%%%%%%%%%%%%%%%%%%%%%%%%%%%%%%%%%%%
We explicitly define the force injecting energy into the system we will analyze. Let $(\Omega, \mathcal{F}, \mathbb{F}= \{\mathcal{F}_t\}_{t\in[0,T]},\mathbb{P})$ be a filtered probability space,
with filtration $\mathbb{F}= \{\mathcal{F}_t\}_{t\in[0,T]}$ satisfying the so called usual assumptions, namely it is complete, right-continuous, and $\mathcal{F}_0$ contains all null sets, i.e. $\mathbb{P}$-negligible subset of $\mathcal{F}$ belongs to $\mathcal{F}_0$. \newline
Let $(U,(\cdot,\cdot)_U)$ and $(U_\Gamma, (\cdot,\cdot)_{U_\Gamma})$ be two separable Hilbert spaces.  For convenience, we fix once and for all two complete standard orthonormal systems $\{e_{1,k}\}_{k \in \mathbb{N}}$ on $U$ and $\{e_{2,k}\}_{k \in \mathbb{N}}$ on $U_\Gamma$, and we denote by $\mathbb{E}$ the mathematical expectation w.r.t. the probability measure $\mathbb{P}$. 
We assume that  $W \coloneq (W^k)_{k=1}^\infty$, resp. $W_\Gamma \coloneq (W_\Gamma^k)_{k=1}^\infty$, is a $U$-cylindrical Wiener process, resp. $U_\Gamma$-cylindrical Wiener process on $(\Omega, \mathcal{F}, \mathbb{F},\mathbb{P})$. 
This implies that there exists a sequence   $W^k$ (resp. $W_\Gamma^k$) of i.i.d. standard real Wiener processes such that 
\begin{equation}\label{wiener-process-representation}
         W(t,\cdot)= \sum_{k=1}^\infty W^k(t,\cdot) e_{1,k}(\cdot), \; \; W_\Gamma(t,\cdot)= \sum_{k=1}^\infty W_\Gamma^k(t,\cdot) e_{2,k}(\cdot),
     \end{equation}
where the series are convergent almost surely in appropriate Hilbert or Banach spaces. \newline
It is well known that the series in \eqref{wiener-process-representation} do not converge in $U$, resp. in  $U_\Gamma$,  unless $U$, resp. $U_\Gamma$,  is finite dimensional. Hence, $W$, resp. $W_\Gamma$, is not rigorously defined as a $U$-valued continuous process, resp. $U_\Gamma$-valued continuous process. Consequently, we will occasionally make use of a larger space $U_0 \supset U$, resp. $U_\Gamma^0 \supset U_\Gamma$, such that the embeddings of $U \embed U_0$, resp. $U_\Gamma \embed U_\Gamma^0$, are  Hilbert-Schmidt.
\begin{example}\label{ex-001}
An example of such space is the following 
\[
U_0 \coloneq \left\{\psi= \sum_{k=1}^\infty \alpha_k e_{1,k}: \; \Vert \psi \Vert_{U_0}^2= \sum_{k=1}^\infty \alpha_k^2/k^2< \infty \right\}, 
\]
where $\Vert u \Vert_{U_0} \coloneq \left(\sum_{k=1}^\infty \frac{1}{k^2} \vert (u,e_{1,k})_{U} \rvert^2 \right)^\frac{1}{2}$ for all $u \in U$. 
\end{example}
Moreover, by standard martingale arguments \dela{and the fact that each $\{W^k(t)\}_k$ and $\{W^k_\Gamma(t)\}_k$ are almost surely continuous, see, e.g., \cite{Prato,Liu+Rockner_2015}, we have that, for almost every $\omega \in \Omega$,}it follows that the processes $W(t)$, resp. $W_\Gamma(t)$ has a continuous $U_0$, resp. $U_0^\Gamma$-valued modification. 
\dela{It is important to point out that we can now identify $W$ and $W_\Gamma$ as a $Q$-Wiener process on $U_0$ and $Q_\Gamma$-Wiener process on $U_\Gamma$, respectively, with $Q=J \circ J^*$ and $Q_\Gamma= \Im \circ \Im^*$ being trace-class operators and such that $Q^\frac{1}{2} (U_0)= J(U)$ and $Q_\Gamma^\frac{1}{2} (U^0_\Gamma)= \Im(U_\Gamma)$, $J^*$ the adjoint of $J$, and $\Im^*$ the adjoint of $\Im$. In the following, we may implicitly assume these extensions by simply saying that $W$ and $W_\Gamma$ are cylindrical processes on $U$ and $U_\Gamma$, i.e., $W$ is a cylindrical Wiener process on $U$ (resp. $W_\Gamma$ is a cylindrical Wiener process on $U_\Gamma$) if and only if it is a $Q$-Wiener process on $U$ (resp. $Q_\Gamma$-Wiener process on $U_\Gamma$).} \newline
%begin delete
\dela{
For every $p,\, r \in [1,+\infty]$ and for any Banach space $X$, the symbol $L^p(\Omega;L^r(0,T;X))$ consists of all random functions $v: \Omega \times [0,T] \times \mathcal{O}\to L^r(0,T;X)$ such that $v$ is measurable w.r.t $(\omega,t)$ and for all $t$, $v$ is measurable w.r.t $\mathcal{F}_t$. We furthermore  endow this space with the norm
    \begin{equation*}
      \|v\|_{L^p(\Omega;L^r(0,T;X))} = \left(\mathbb{E} \|v\|_{L^r(0,T;X)}^p \right )^{1/p};
     \end{equation*}
when $r=\infty$, then the norm in the space $L^p(\Omega;L^\infty(0,T;X))$ is given by  
      \begin{equation*}
        \|v\|_{L^p(\Omega;L^\infty(0,T;X))} = \left(\mathbb{E} \|v\|_{L^\infty(0,T;X)}^p \right )^{1/p}.
      \end{equation*} 
}      
\begin{definition}\label{def-Ito integral}
    Let $\tilde{K}$ be a separable Hilbert space, $\tilde{W}$ be a $\tilde{K}$-cylindrical Wiener process on $(\Omega, \mathcal{F},\mathbb{F},\mathbb{P})$ and $\tilde{H}$ be a Hilbert space. We denote by $\mathcal{M}^2(\Omega \times [0,T]; \mathscr{T}_2(\tilde{K},\tilde{H}))$ the space of all equivalence classes of $\mathbb{F}$-progressively measurable processes $\phi:\Omega \times [0,T] \to \mathscr{T}_2(\tilde{K},\tilde{H})$ satisfying
  \begin{equation*}
   \mathbb{E} \int_0^T \Vert \phi(t) \Vert_{\mathscr{T}_2(\tilde{K},\tilde{H})}^2 \d t < \infty.
  \end{equation*}
For any $\phi \in \mathcal{M}^2(\Omega \times [0,T]; \mathscr{T}_2(\tilde{K},\tilde{H}))$, we have, by the theory of stochastic integration on infinite dimensional Hilbert space, cf. \cite[Chapter 5, Section 26]{Metivier_1982} or \cite[Chapter 4]{Prato},  that the process $M$ defined by 
\[
M(t)= \int_0^t \phi(s)\, \d \tilde{W}(s) %\coloneq \int_0^t \phi(s) \circ J^{-1} \, \d \bar{W}(s)
, \; \; t \in [0,T],
\]
is a $\tilde{H}$-valued martingale.
%begin delete
\dela{where 
      \begin{align*}
         \bar{W}(t) \coloneq \sum_{k=1}^\infty W^k(t) J(e_{1,k}), \; \; t \in[0,T],
         \end{align*}
and  $\phi(\cdot) \circ J^{-1} \in \mathcal{M}^2(\Omega \times [0,T]; \mathscr{T}_2(Q^\frac{1}{2}(U_0),\tilde{H}))$.}
%end delete
Moreover, the following It\^o isometry holds,
\begin{equation}\label{eq3.5}
\begin{aligned}
\mathbb{E} \left \Vert \int_0^t \phi(s) \dela{\circ J^{-1} }\d \tilde{W}(s)  \right \Vert_{\tilde{H}}^2
\dela{&= \mathbb{E} \int_0^t \Vert \phi(s) \circ J^{-1} \Vert_{\mathscr{T}_2(Q^\frac{1}{2}(U_0),\tilde{H})}^2 \d s }
= \mathbb{E} \int_0^t \Vert \phi(s) \Vert_{\mathscr{T}_2(\tilde{K},\tilde{H})}^2 \d s.
\end{aligned}
\end{equation}
\end{definition}
and by the Burkholder-Davis-Gundy (BDG) inequality,
       \begin{equation}\label{eqn-BDG Inequality}
            \mathbb{E} \sup_{s\in[0,t]} \left \Vert \int_0^s \phi(\tau) \dela{\circ J^{-1}} \d \tilde{W}(\tau)  \right \Vert_{\tilde{H}}^p \leq C(p) \mathbb{E} \left(\int_0^t \Vert \phi(s) \Vert_{\mathscr{T}_2(\tilde{K},\tilde{H})}^2 \d s \right)^{\frac p2}, \; p \in [1,\infty), \, t \in [0,T].
       \end{equation}
The above Definition \ref{def-Ito integral} can be applied to the case of $\Gamma$. 
Thus, the process defines by 
$$
M_\Gamma(t)
= \int_0^t \phi_\Gamma(s)\, \d W_\Gamma(s) \dela{\coloneq \int_0^t \phi_\Gamma(s) \circ \Im^{-1} \, \d \bar{W}_{\Gamma}(s)},\; \phi_\Gamma \in \mathcal{M}^2(\Omega \times [0,T]; \mathscr{T}_2(U_\Gamma,\tilde{H})),
$$ 
enjoys the same properties as for the stochastic process $M= (M(t): t \in [0,T])$. \newline
Below, we denote by $\mathcal{M}^{d\times d}(\mathbb{R})$ the vector space of all real $d\times d$-matrices and recall an important result on stochastic ODEs in finite dimension Hilbert spaces.
\begin{lemma}\label{stochastic-ordinary-theorem}
  Let the maps $\boldsymbol{\sigma}:[0,T] \times \mathbb{R}^d \times \Omega \to \mathcal{M}^{d\times d}(\mathbb{R})$ and $\bb:[0,T] \times \mathbb{R}^d \times \Omega \to \mathbb{R}^d$ be both continuous in $x \in \mathbb{R}^d$ for all $t\in[0,T]$ and $\omega\in\Omega$, and be progressively measurable on the probability space $(\Omega,\mathcal{F},\mathbb{P})$ with respect to the filtration $\{\mathcal{F}_t\}_{t\in[0,T]}$. Assume that the following conditions are satisfied for all $R>0$:
\begin{trivlist}
  \item[(i)] $\int\limits_0^T \sup\limits_{\lvert x\rvert\leq R} (\lvert \bb(t,x) \rvert + \lvert \boldsymbol{\sigma}(t,x) \rvert^2)\, \d t <\infty$; 
  
  \item[(ii)] $2 (x-y) \cdot (\bb(t,x) - \bb(t,y)) + \lvert \boldsymbol{\sigma}(t,x) - \boldsymbol{\sigma}(t,y) \rvert^2) \leq K_t(R),\;\; t \in[0,T],\,x,\,y\in \mathbb{R}^d,\; \lvert x \rvert,\,\lvert y \rvert\leq R$;
  
  \item[(iii)] $2x \cdot \bb(t,x) + \lvert \boldsymbol{\sigma}(t,x) \rvert^2) \leq K_1(t) (1 + \lvert x \rvert^2),\;\; t \in[0,T],\,x \in \mathbb{R}^d,\;\lvert x \rvert \leq R$;
\end{trivlist}
where $K_t(R)$ is an $\mathbb{R}^+ \text{-}$valued $(\mathcal{F}_t)\text{-}$adapted process satisfying
\[
\alpha_s(R)\coloneq \int_0^s K_t(R) \, \d t<\infty, \quad s\in(0,T]. 
\]
Then, for any $\mathcal{F}_0\text{-}$measurable map $\boldsymbol{X}_0: \Omega \to \mathbb{R}^d$, there exists a unique global  solution to the stochastic differential equation
\[
\d \boldsymbol{X}(t)= \bb(t,\boldsymbol{X}(t)) \d t + \boldsymbol{\sigma}(t,\boldsymbol{X}(t)) \, \d W(t).
\]
\end{lemma}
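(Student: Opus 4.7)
The plan is a standard truncation-plus-localization argument adapted to the local monotonicity condition \textrm{(ii)} and the coercivity bound \textrm{(iii)}. First I would regularize the coefficients: pick a smooth cut-off $\chi_R \in \mathcal{C}_c^\infty(\mathbb{R}^d)$ with $\chi_R \equiv 1$ on $\{\lvert x \rvert \leq R\}$ and $\chi_R \equiv 0$ on $\{\lvert x \rvert \geq R+1\}$, and set $\bb^R(t,x) = \chi_R(x)\bb(t,x)$, $\boldsymbol{\sigma}^R(t,x) = \chi_R(x)\boldsymbol{\sigma}(t,x)$. By \textrm{(i)} these are globally bounded in $x$ with the required temporal integrability, and since both sides of \textrm{(ii)} vanish when both $x$ and $y$ lie outside $B_{R+1}$, the truncated pair inherits a \emph{global} monotonicity estimate on $\mathbb{R}^d$ (with an enlarged adapted constant $\tilde K_t(R)$). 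For monotone SDE coefficients of this type, existence and pathwise uniqueness of a global strong solution $\boldsymbol{X}^R$ on $[0,T]$ with $\boldsymbol{X}^R(0)=\boldsymbol{X}_0$ are classical, via the Krylov--Rozovskii scheme of Galerkin approximation and monotonicity-based identification of limits.

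Next I localize via the stopping times $\tau_R \coloneq \inf\{t \in [0,T] : \lvert \boldsymbol{X}^R(t) \rvert \geq R\} \wedge T$. On $[0,\tau_R]$ the process $\boldsymbol{X}^R$ solves the untruncated equation driven by $(\bb,\boldsymbol{\sigma})$, so pathwise uniqueness of the truncated SDE applied on the stopped interval $[0, \tau_R \wedge \tau_{R'}]$ yields $\boldsymbol{X}^R \equiv \boldsymbol{X}^{R'}$ on $[0, \tau_R]$ for every $R' \geq R$. Consequently $\tau_R$ is nondecreasing in $R$, and I can define a solution $\boldsymbol{X}(t) \coloneq \boldsymbol{X}^R(t)$ consistently on $\{t < \tau_R\}$, which extends to the stochastic interval $[0, \tau_\infty)$ with $\tau_\infty \coloneq \lim_{R \to \infty} \tau_R$.

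The main obstacle, and the heart of the proof, is non-explosion, i.e.\ showing $\tau_\infty = T$ almost surely. To handle the fact that $\boldsymbol{X}_0$ is only $\mathcal{F}_0$-measurable (not assumed integrable), I first truncate the initial datum by $\boldsymbol{X}_0^N \coloneq \boldsymbol{X}_0 \mathbf{1}_{\{\lvert \boldsymbol{X}_0 \rvert \leq N\}} \in L^\infty(\Omega;\mathbb{R}^d)$, obtain the corresponding solution $\boldsymbol{X}^{R,N}$, and apply It\^o's formula to $t \mapsto 1 + \lvert \boldsymbol{X}^{R,N}(t \wedge \tau_R) \rvert^2$. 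The coercivity \textrm{(iii)} controls the drift term by $K_1(t)(1+\lvert \boldsymbol{X}^{R,N}\rvert^2)$, the quadratic covariation contributes the $\lvert \boldsymbol{\sigma}\rvert^2$ piece already folded into \textrm{(iii)}, and the stochastic integral is a true martingale since the integrand is bounded on $[0,\tau_R]$; taking expectations yields
\[
\mathbb{E}\bigl[1 + \lvert \boldsymbol{X}^{R,N}(t \wedge \tau_R) \rvert^2\bigr] \leq (1+N^2) + \int_0^t K_1(s)\, \mathbb{E}\bigl[1 + \lvert \boldsymbol{X}^{R,N}(s \wedge \tau_R) \rvert^2\bigr]\,\d s.
\]
Gronwall's lemma bounds the left-hand side by $(1+N^2)\exp(\int_0^T K_1(s)\,\d s)$ uniformly in $R$, and Chebyshev gives $\mathbb{P}(\tau_R < T,\, \lvert \boldsymbol{X}_0 \rvert \leq N) \leq C_N/R^2 \to 0$ as $R \to \infty$ for each fixed $N$; letting $N \to \infty$ concludes $\tau_\infty = T$ a.s. Global uniqueness is inherited from uniqueness of each $\boldsymbol{X}^R$ by applying It\^o to $\lvert \boldsymbol{X} - \tilde{\boldsymbol{X}} \rvert^2$ on $[0, \tau_R \wedge \tilde\tau_R]$, invoking \textrm{(ii)} and pathwise Gronwall, and sending $R \to \infty$.
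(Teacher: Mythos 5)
The paper does not prove this lemma at all: its entire ``proof'' is a citation of \cite[Theorem 3.1.1]{Prevot+Rockner_2007}, whose argument runs through an Euler-scheme approximation with stopping times built in from the start, not through truncation of the coefficients. Your overall architecture (localization by exit times, non-explosion via (iii) plus It\^o and Gronwall after truncating the initial datum, uniqueness via (ii) on stopped intervals) is the right skeleton, but your very first step contains a genuine gap. You assert that $\bb^R=\chi_R\bb$, $\boldsymbol{\sigma}^R=\chi_R\boldsymbol{\sigma}$ ``inherit a global monotonicity estimate'' because both sides of (ii) vanish when $x,y$ both lie outside $B_{R+1}$. This does not follow, and is false in general. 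Writing $\chi_R(x)\bb(x)-\chi_R(y)\bb(y)=\chi_R(x)(\bb(x)-\bb(y))+(\chi_R(x)-\chi_R(y))\bb(y)$ and similarly for $\boldsymbol{\sigma}$, the expansion of $\lvert \boldsymbol{\sigma}^R(x)-\boldsymbol{\sigma}^R(y)\rvert^2$ produces a term of the form $(1+\epsilon)\chi_R(x)^2\lvert\boldsymbol{\sigma}(x)-\boldsymbol{\sigma}(y)\rvert^2$, whereas condition (ii) only supplies the cancellation $-\chi_R(x)\lvert\boldsymbol{\sigma}(x)-\boldsymbol{\sigma}(y)\rvert^2$ through the drift term; the leftover $\epsilon\lvert\boldsymbol{\sigma}(x)-\boldsymbol{\sigma}(y)\rvert^2$ cannot be dominated by $C\lvert x-y\rvert^2$, because (ii) is one-sided and couples $\bb$ and $\boldsymbol{\sigma}$ (it yields only $\lvert\boldsymbol{\sigma}(x)-\boldsymbol{\sigma}(y)\rvert^2\leq K\lvert x-y\rvert^2+C\lvert x-y\rvert$, with a linear term that survives as $\lvert x-y\rvert\to 0$). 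This is a well-known obstruction: cut-offs and radial retractions preserve global Lipschitz bounds but do not preserve one-sided (monotonicity) bounds, which is precisely why Krylov's and Pr\'ev\^ot--R\"ockner's proofs avoid coefficient truncation.

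Two further points. First, without a globally monotone truncated system you cannot invoke pathwise uniqueness plus Yamada--Watanabe to produce the \emph{strong} solution $\boldsymbol{X}^R$ that your gluing argument needs; Skorokhod's theorem for bounded continuous coefficients only gives a weak solution, and the uniqueness you can actually extract from (ii) holds only up to the exit time of $B_R$, which is not enough for Yamada--Watanabe. Second, as a side remark, condition (ii) as printed in the statement lacks the factor $\lvert x-y\rvert^2$ on the right-hand side; read literally it is vacuous for continuous coefficients and yields no uniqueness at all, so your (correct) use of the standard form $K_t(R)\lvert x-y\rvert^2$ silently repairs a typo in the statement. To close the gap you should either follow the Euler-approximation route of the cited reference, or restructure the truncation so that all estimates are only ever performed on the event $\{t\leq\tau_R\}$ where the truncated and original coefficients coincide and (ii) applies directly, establishing existence on $[0,\tau_R]$ by a compactness (Skorokhod/martingale-problem) argument rather than by appeal to a global monotone theory for the truncated system.
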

\begin{proof}
We refer to \cite[Theorem 3.1.1]{Prevot+Rockner_2007}.
\end{proof}
%%%%%%%%%%%%%%%%%%%%%%%%%%%%%%%%%%%%%%%%%%%%%%%%%%%%%%%%%%%%%%%%%%%%%%%%%%%%%%%%%%%%%%%%%%%%%%%%%%%%%%%%%%%%%%%%%%%%%%%%%%%%%%%%%%%%%%%%%%%%%%%%%%%%%%%%%%%%%%%%%%%%%%%%%%%%%%%%%%%%%%%%%%%%%%%%%%%%%%%%%%%%
Before formulating our main assumptions, we recall the following definitions of strong and Borel measurability of an $\mathscr{T}_2(\mathbb{K},\mathbb{X})$-valued operator that we need in this work. Here, $\mathbb{K}$ and $\mathbb{X}$ are two given separable Hilbert spaces.
\newline
Let us fix a measurable space $(X,\mathscr{X})$ and $Y$ a Banach space. Let us denote the Borel $\sigma$-algebra of $Y$ by $\mathcal{B}(Y)$.
   \begin{definition}(\cite[Sec 1.1, p. 2]{Hytonen+Neerven+Veraar+Weis_2016})\label{Def-measurability}
       Assume that $\mathscr{Y}$ is a $\sigma$-field of subsets of $Y$. A function $f: X \to Y$ is $\mathscr{X}/\mathscr{Y}$ measurable iff the pre-image $f^{-1}(A) \in \mathscr{X}$ for any set $A \in \mathscr{Y}$.
  \end{definition}
In particular, a function $f: X \to Y$ is $\mathscr{X}$-Borel iff it is $\mathscr{X}/\mathcal{B}(Y)$ measurable.
\begin{definition}\label{Def-measurability-1}
A function $f: X \to \mathscr{T}_2(\mathbb{K},\mathbb{X})$ is called strongly $\mathscr{X}$-Borel measurable if for every $k \in \mathbb{K}$, the $\mathbb{X}$-valued functions 
$f(\cdot)k: X \ni \psi \mapsto f(\psi) k \in \mathbb{X}$ is $\mathscr{X}$-Borel according to Definition \ref{Def-measurability}.
\end{definition}
To prove the existence of weak solutions in the case of nondegenerate viscosity $\nu$, nondenegerate permeability $\lambda$, nondegenerate friction $\gamma$, and regular potentials $F$ and $G$, we make the following assumptions. 
\begin{assumption} \label{Hypo1}
\begin{enumerate}[label=(H\arabic*),ref=(H\arabic*)]
\item \label{item:H1} The set $\mathcal{O} \subset \mathbb{R}^d$, $d=2,3$, is bounded and of class $C^2$. 
\dela{\adda{We consider the "embedded" riemannian structure on the boundaryless $d-1$-dimensional manifold $\Gamma:=\partial \mathcal{O}$, i.e. 
 every tangent $d-1$-dimensional hyperplane $T_p\Gamma$, $p \in \Gamma$, in endowed with the inner product inherited from the ambient space $\mathbb{R}^d$ and assume that the resulting riemannian manifold is of $C^2$-class. This riemannian manifold has a unique Laplace-Beltrami operator $\Delta_\Gamma$. The domain of $\Delta_\Gamma$ is equal to the Sobolev space $H^2(\Gamma)$, $-\Delta_\Gamma$ is non-negative self-adjoint in the Hilbert space 
 $L^2(\Gamma)$, where is endowed with "volume measure" corresponding to the previously introduced riemannian structure. If fact, this voulme measure is equal to the surface measure from the isometric embedding $\Gamma \embed \mathbb{R}^d$. The integration with respect to this measure we denote by 
 $dS$. Finally, $I-\Delta_\Gamma$ has compact inverse and so there exists a sequence $(\lambda_i^\Gamma)_{i=1}^\infty$ on non-negative numbers and 
 an $H^2(\Gamma)$-valued  sequence $(\Lambda_i^\Gamma)_{i=1}^\infty$ which is an orthonormal basis of $L^2(\Gamma)$ and 
 \[
-\Delta_\Gamma \Lambda_i^\Gamma= \lambda_i^\Gamma \Lambda_i^\Gamma, \;\; i \in \mathbb{N}_\ast.
 \]
 }}
%\adda{
\item \label{item:H2} The maps 
\[
F_1: V_1 \to \mathscr{T}_2(U,V_1) \mbox{ and } F_2: V_\Gamma \to \mathscr{T}_2(U_\Gamma,V_\Gamma)
\]
are measurable in the sense of Definition \ref{Def-measurability-1}. 
\newline
Moreover, we assume that 
     \begin{align*}
        F_1: V_1 \to \mathscr{T}_2(U,L^2(\mathcal{O})) \mbox{ and } F_2: V_\Gamma \to \mathscr{T}_2(U_\Gamma,L^2(\Gamma))
     \end{align*}
are Lipschitz-continuous, while
\[
F_1: V_1 \to \mathscr{T}_2(U,V_1) \mbox{ and } F_2: V_\Gamma \to \mathscr{T}_2(U_\Gamma,V_\Gamma) 
\]
have linear growth. More precisely, there exist positive constants $C_1$ and $C_2$ such that
\begin{equation}\label{eq4.122-1}
\begin{aligned}
%\Vert F_1(\phi) \Vert_{\mathscr{T}_2(U,L^2(\mathcal{O})))}^2 &\leq C_1, \; \; &\phi \in L^2(\mathcal{O}), 
%\\
\Vert F_1(\phi_1) - F_1(\phi_2) \Vert_{\mathscr{T}_2(U,L^2(\mathcal{O})))}^2 &\leq C_2 \lvert \phi_1 - \phi_2 \rvert^2, \; \; &\phi_1,\, \phi_2 \in L^2(\mathcal{O}),  
\\
\Vert F_1(\phi) \Vert_{\mathscr{T}_2(U,V_1)}^2 &\leq C_1  (1 + \lvert \nabla \phi \rvert^2), \; \; &\phi \in V_1,
\end{aligned}
\end{equation}
and there exist positive constants $C_3$ and $C_4$ such that
\begin{equation}\label{eq4.122-2}
\begin{aligned}
%\Vert F_2(\varphi) \Vert_{\mathscr{T}_2(U_\Gamma,L^2(\Gamma)))}^2 &\leq C_3, \; \; &\varphi \in L^2(\Gamma), 
%\\
\Vert F_2(\varphi_1) - F_2(\varphi_2) \Vert_{\mathscr{T}_2(U_\Gamma,L^2(\Gamma)))}^2 &\leq C_4 \lvert \varphi_1 - \varphi_2 \rvert_\Gamma^2, \; \; &\varphi_1,\, \varphi_2 \in L^2(\Gamma),  
\\
\Vert F_2(\varphi) \Vert_{\mathscr{T}_2(U_\Gamma,V_\Gamma)}^2 &\leq C_3  (1 + \lvert \nabla_\Gamma \varphi \rvert_\Gamma^2), \; \; &\varphi \in V_\Gamma.
\end{aligned}
\end{equation}
Furthermore, there exist  positive constants $\tilde{C}_1$ and $\tilde{C}_2$  such that 
   \begin{equation}\label{eqn-boundedness-of-F_1}
      \sum_{k=1}^\infty \Vert F_1(\phi) e_{1,k} \Vert_{L^\infty(Q_T)}^2
      \leq \tilde{C}_1, \mbox{ for every $\phi \in L^2(Q_T)$ }
     \end{equation}
%and there exists a positive constant  such that ,
    \begin{equation}\label{eqn-boundedness-of-F_2}
      \sum_{k=1}^\infty \Vert F_2(\varphi) e_{2,k} \Vert_{L^\infty(\Sigma_T)}^2\leq \tilde{C}_2, \;\; \mbox{ for every $\varphi \in L^2(\Sigma_T)$} .
    \end{equation}
%\todozb{Add assumptions about $F_1$ so that the Example fits in it. For instance that $F_1$ is Lipschitz-continuous.}
\begin{example} 
Let $(\sigma_k)_{k\geq 1} \subset W^{1,\infty}(\mathbb{R})$ and $(\tilde{\sigma}_k)_{k\geq 1} \subset W^{1,\infty}(\mathbb{R})$ be a sequence of functions such that 
        \begin{equation*}
            \dela{C_1 \coloneq} \sum_{k=1}^\infty \Vert \sigma_k \Vert_{W^{1,\infty}(\mathbb{R})}^2<\infty \mbox{ and }  \dela{C_2\coloneq} \sum_k^\infty \|\tilde{\sigma}_k\|_{W^{1,\infty}(\mathbb{R})}^2< \infty.
        \end{equation*}
Define
      \begin{align*}
          F_1(\phi) e_{1,k}&= \sigma_k \circ \phi, \;\; k \in \{1,2,\ldots\}, \;\; \phi \in V_1
      \end{align*}
and
       \begin{equation*}
          F_2(\varphi) e_{2,k}= \tilde{\sigma}_k \circ\varphi, \;\; k \in \{1,2,\ldots\}, \;\; \varphi \in V_\Gamma.
       \end{equation*}
\end{example}
%}
\item \label{item:H4} The maps $M_{\mathcal{O}}: \mathbb{R} \to \mathbb{R}$ and $M_\Gamma: \mathbb{R} \to \mathbb{R}$ are continuous, bounded and uniformly positive, i.e. there exist  positive constants 
%$M_{\mathcal{O},\ast}>0, \, M_{\mathcal{O}}^\ast>0, \, M_{\Gamma,\ast}>0$ and $M_{\Gamma}^\ast>0$ such that 
$M_0, \, \bar{M}_0, \, N_0$ and $\bar{N}_0$ such that
\begin{equation*}
M_0 \leq M_{\mathcal{O}}(s) \leq \bar{M}_0, \quad N_0 \leq M_\Gamma(s) \leq \bar{N}_0, \quad  s \in \mathbb{R}.
\end{equation*}
\item \label{item:H5} Function $\nu \in W^{1,\infty}(\mathcal{O})$ is uniformly positive and bounded, i.e. there exist positive constants $\nu_0$ and $\bar{\nu}_0$ such that 
\[\nu_0\leq \nu(s) \leq \bar{\nu}_0, \;\; s \in \mathbb{R}.
\]

\item \label{item:H6} Function $\lambda \in W^{1,\infty}(\mathcal{O})$ is positive and bounded, i.e. there exists a positive constant $\lambda_0$ such that  
\[0<\lambda(s) \leq \lambda_0,  \;\; s \in \mathbb{R}. 
\]
\item \label{item:H7} Function $\gamma \in C(\mathcal{O})$ is uniformly positive and bounded, i.e. there exist positive constants $\gamma_0$ and $\bar{\gamma}_0$ such that 
\[ \gamma_0\leq \gamma(s) \leq \bar{\gamma}_0, \;\; s \in \mathbb{R}. 
\]
\item \label{item:H8} Functions $F \in C^2(\mathbb{R})$ and  $G \in C^2(\mathbb{R})$ are nonnegative,\dela{$F\geq 0$,} $F^\prime(0)=0$, \dela{$G\geq 0$,} $G^\prime(0)=0$, and there exist $C_F>0$, $\tilde{c}_F>0$, $C_G> 0$,  and $\tilde{c}_G>0$ such that
\begin{equation}\label{condition_ F'_and_F''}
\begin{aligned}
 \lvert F^\prime(r) \rvert &\leq C_F(1 + F(r)), \;\; & r \in \mathbb{R}, \\
 \lvert F''(r) \rvert &\leq C_F(1 + F(r)), \;\; & r \in \mathbb{R}, \\
 F''(r) &\geq - \tilde{c}_F, \;\; & r \in \mathbb{R}, 
\end{aligned}
\end{equation}
and
              \begin{equation}\label{condition_ G'_and_G''}
                \begin{aligned}
                 \lvert G^\prime(r) \rvert &\leq C_G(1 + G(r)), \;\; & r \in \mathbb{R}, \\
                 \lvert G''(r) \rvert &\leq C_G(1 + G(r)), \;\; & r \in \mathbb{R}, \\
                 G''(r) &\geq - \tilde{c}_G, \;\; & r \in \mathbb{R}.
                \end{aligned}
            \end{equation}
\end{enumerate}
\end{assumption} 
%%%%%%%%%%%%%%%%%%%%%%%%%%%%%%%%%%%%%%%%%%%%%%%%%%%%%%%%%%%%%%%%%%%%%%%%%%%%%%%%%%%%%%%%%%%%%%%%%%%%%%%%%%%%%%%%%%%%%%%%%%%%%%%%%%%%%%%%%%%%%%%%%%%%%%%%%%%%%%%%%%%%%%%%%
%begin delete
\dela{
\coma{
\todozb{Combine the previous two examples with the Remark below.}

\begin{remark}\label{Rk_1}
\begin{trivlist}
\item[(i)] 
Given functions $\sigma_k$ as in item \ref{item:H2}, the Hilbert-Schmidt norm of the operator $F_1(\phi)$ is equal to 
     \begin{align*}
         \Vert F_1(\phi) \Vert_{\mathscr{T}_2(U,L^2(\mathcal{O}))}^2 
          \coloneq \sum_{k=1}^\infty \lvert  [F_1(\phi)] (e_{1,k}) \rvert^2
           = \sum_{k=1}^\infty \lvert \sigma_k(\phi) \rvert^2 < \infty,
      \end{align*}
and obviously, it ensures that the operator $F_1: L^2(\mathcal{O}) \to \mathscr{T}_2(U, L^2(\mathcal{O}))$ is \adda{Lipschitz-continuous} and linearly bounded, and its restriction $F_1 \lvert_{V_1} \to \mathscr{T}_2(U, V_1)$ is linearly bounded, cf. \cite{Breit+Feireisl+Hofmanova_2018,Feireisl+Petcu_2021,Scarpa_2021}.
\item[(ii)] We use the symbol, with a slight abuse of notation, $\nabla F_1: V_1 \to \mathscr{T}_2(U,\mathbb{L}^2(\mathcal{O}))$ to denote the gradient process, which is well defined by
\[
\nabla [F_1(\phi) e_{1,k}] \coloneq \nabla \sigma_k(\phi)= \sigma_k^\prime(\phi) \nabla \phi, \; \; \phi \in V_1, \; \; k \in\{1,2,\ldots\}.
\]
\end{trivlist}
We notice that the same remark also hold for the operator $F_2$.
\end{remark}
}
}% end delete
%%%%%%%%%%%%%%%%%%%%%%%%%%%%%%%%%%%%%%%%%%%%%%%%%%%%%%%%%%%%%%%%%%%%%%%%%%%%%%%%%%%%%%%%%%%%%%%%%%%%%%%%%%%%%%%%%%%%%%%%%%%%%%%%%%%%%%%%%%%%%%%%%%%%%%%%%%%%%%%%%%%%%%%%%%%%%%%%%%%%%%%%%%%%%%%%%%%%%%%%
\begin{remark}
$(i)$ As an example of regular potential satisfying conditions \eqref{condition_ F'_and_F''} or \eqref{condition_ G'_and_G''}, we can consider the following polynomial
   \begin{equation}\label{regular-potential}
     F_{\text{pol}}(s)= \frac{\alpha}{4} (s^2 - \beta^2)^2, \; \; s \in \mathbb{R},
   \end{equation}
where $\alpha>0$ and $\beta \in \mathbb{R}$.
\newline
$(ii)$ $F_{\text{pol}}$ can be seen as an approximation of the Flory-Huggins logarithmic potential introduced in \cite{Cahn+Hilliard_1958}:
   \begin{equation}\label{Eq-singular potential}
     F_{\log}(s)= \frac{\Theta}{2}[(1+s)\log(1+s) - (1-s)\log(1-s)] - \frac{\Theta_c}{2} s^2, \; \; s \in (-1,1),
   \end{equation}
where $\Theta$ and $\Theta_c$ are positive constants denoting, respectively, the absolute temperature and the critical temperature of the mixture. In particular, $\Theta$ and $\Theta_c$ satisfy the physical relation $0< \Theta < \Theta_c$, where the phase separation occurs. 
In general, $F_{\log}$ is referred to as a singular potential since its derivative $F_{\log}^\prime$ blows up at the pure phases $\pm 1$, and the parameter $\alpha$ in \eqref{regular-potential} is related to $\Theta$ and $\Theta_c$, whereas $\pm \beta$ are the two minima of $F_{\log}$.
\newline
$(iii)$ Problem \eqref{eq1.1a}-\eqref{eq1.10a} can also be analyzed in the case of singular potential, but due to the size of the paper, we have not consider this issue here and we postpone that question to a subsequent paper. Let us point out that the potential $F_{\log}$ ensures the existence of physical solutions, i.e. solutions such that $\phi,\,\varphi \in[-1,1]$, and this is most coherent with the physical derivation of the Cahn-Hilliard model itself in relation to its thermodynamical consistency.
\end{remark}
%end delete
%%%%%%%%%%%%%%%%%%%%%%%%%%%%%%%%%%%%%%%%%%%%%%%%%%%%%%%%%%%%%%%%%%%%%%%%%%%%%%%%%%%%%%%%%%%%%%
\subsection{Abstract formulation}
%%%%%%%%%%%%%%%%%%%%%%%%%%%%%%%%%%%%%%%%%%%%%%%%%%%%%%%%%%%%%%%%%%%%%%%%%%%%%%%%%%%%%%%%%%%%%%
We set $\mathcal{U}= U \times U_\Gamma$.
%%%%%%%
\dela{
In order to define the concept of weak martingale solutions that we shall work with, we introduce the following function $[r]: [0,\infty) \to [0,\infty)$ defined by
\begin{equation}
[r]= \begin{cases}
       \frac1r \mbox{ if } r>0, \\
       0 \mbox{ if }  r=0,
      \end{cases}
\end{equation}
to deal with the cases $K>0$ and $K=0$ simultaneously.}
%end delete
%%%%%%%%%%%%%%%%%
Then, given $\bu \in V$, define the following maps,
\begin{align*}
\mathbf{b}&: \mathbb{V} \ni \bX=(\phi,\varphi) \mapsto
\Bigl(- B_1(\bu,\phi) - A_{\phi}(\mu), - \mathcal{A}_{\varphi}(\theta) - \tilde{B}(\bu,\varphi) \Bigr) \in  \mathbb{V}^\prime,
\\
\boldsymbol{\sigma}&: \mathbb{V} \ni \bX=(\phi,\varphi) \mapsto
\mathrm{diag}(F_{1}(\phi),F_2(\varphi)) \in \mathscr{T}_2(\mathcal{U},\mathbb{H}),
\end{align*}
where, given $\phi \in H^2(\mathcal{O})$, $\mu$ is defined by 
   \begin{equation}\label{eqn-mu from pgi}
      \mu = -\eps \Delta \phi + \frac1\eps F^\prime(\phi)\in L^2(\mathcal{O}),
     \end{equation}
while, given $\varphi \in D(\Delta_\Gamma)= H^2(\Gamma)$,  $\theta$ is defined by 
   \begin{equation}\label{eqn-theta from pgi}
      \theta= -\eps_\Gamma \Delta_\Gamma \varphi + \frac{1}{\eps_\Gamma} G^\prime(\varphi) + \eps \partial_{\bn} \phi. 
     \end{equation}
Therefore, we rewrite the equations \eqref{eq1.3a} and \eqref{eq1.5a} in the following "compact" form
\begin{equation}\label{Compact-Modified-stochastic-CHNSEs-3}
\begin{cases}
\d \bX(t)= \boldsymbol{b}(\bX(t))\,\d t + \boldsymbol{\sigma}(\bX(t)) \, \d \mathcal{W}(t),
\\
\mu(t) = -\eps \Delta \phi(t) + \frac1\eps F^\prime(\phi(t)), \;\; t \in[0,T], \\
\theta(t)= -\eps_\Gamma \Delta_\Gamma \varphi(t) + \frac{1}{\eps_\Gamma} G^\prime(\varphi(t)) + \eps \partial_{\bn} \phi(t),  \;\; t \in [0,T].
\end{cases}
\end{equation}
%%%%%%%%%%%%%%%%%%%%%%%%%%%%%%%%%%%%%%%%%%%%%%%%%%%%%%%%%%%%%%%%%%%%%%%%%%%%%%%%%%%%%%%%%%%%%%%%%%%%
Now we define what we mean by weak martingale solutions to the problem \eqref{eq1.1a}-\eqref{eq1.10a}.
\begin{definition}\label{def3.4}
Assume $K> 0$ and let $(\phi_0,\varphi_0): \mathcal{O} \to \mathbb{V}$ be a non-random initial data. By a weak martingale solution to \eqref{eq1.1a}-\eqref{eq1.10a}, we mean a system consisting of a complete probability space $(\tilde{\Omega}, \tilde{\mathcal{F}},\tilde{\mathbb{F}}, \tilde{\mathbb{P}})$, with the filtration $\tilde{\mathbb{F}}= (\tilde{\mathcal{F}}_t)_{t \in [0,T]}$ satisfying the usual conditions, and a process 
 \begin{equation}
     (\tilde{\bu}, \tilde{\phi}, \tilde{\varphi}, \tilde{\mu},\tilde{\theta}, \tilde{\mathcal{W}})=\left(\left(\tilde{\bu}(t), \tilde{\phi}(t), \tilde{\varphi}(t), \tilde{\mu}(t),\tilde{\theta}(t), \tilde{\mathcal{W}}(t)\right): t \in [0,T] \right)
     \end{equation}
     such that
\item[1.] $\tilde{\mathcal{W}}$ is a cylindrical Wiener process in a separable Hilbert space  $\mathcal{U}$ defined on $(\tilde{\Omega}, \tilde{\mathcal{F}},\tilde{\mathbb{F}})$.
 
\item[2.] $\tilde{\bu}: \tilde{\Omega} \times [0,T] \to V$ and $(\tilde{\mu},\tilde{\theta}): \tilde{\Omega} \times [0,T] \to \mathbb{V}$
are $\tilde{\mathbb{F}}$\dela{-adapted} progressively measurable processes with $\tilde{\mathbb{P}}$-a.e. paths  
  \begin{equation}\label{eq3.24}
  \begin{aligned}
   &\tilde{\bu} \in L^2(0,T; V), \quad \tilde{\bu}\lvert_\Gamma \,\in L^2(0,T; \mathbb{L}^2(\Gamma)), \\
   &(\tilde{\mu},\tilde{\theta}) \in L^2(0,T;\mathbb{V}),
   \end{aligned}
  \end{equation}
\item[3.] $\tilde{\bX} \coloneq (\tilde{\phi}, \tilde{\varphi}): \tilde{\Omega} \times [0,T] \to \mathbb{V}$ is a $\tilde{\mathbb{F}}$-progressively measurable processes with $\tilde{\mathbb{P}}$-a.e. paths 
     \begin{equation}\label{Eqn-3.11}
       \tilde{\bX} \in  C_w ([0,T]; \mathbb{V}) \cap L^\infty(0,T;\mathbb{V}), 
     \end{equation}
such that for any $\bv \in V$, $(\upsilon, \upsilon \lvert_\Gamma) \in \mathbb{V}$, and $(\psi, \psi \lvert_\Gamma) \in \mathbb{V}$, and every $t \in [0,T]$, the following equalities hold $\tilde{\mathbb{P}}$-a.s.,
\item[3.] 
\begin{equation}\label{eq3.22}
\begin{aligned}
  & 2 \int_{Q_t} \nu(\tilde{\phi}) D\tilde{\bu}: \nabla \bv\, \d x\, \d s + \int_{Q_t} \lambda(\tilde{\phi}) \tilde{\bu} \cdot \bv \, \d x\, \d s + \int_{\Sigma_t} \gamma(\tilde{\varphi}) \tilde{\bu} \cdot \bv \, \d S \, \d s 
 \\
  &\quad 
  =  - \int_{\Sigma_t} \tilde{\varphi} \nabla_\Gamma \tilde{\theta} \cdot \bv \, \d S \, \d s  - \int_{Q_t} \tilde{\phi} \nabla \tilde{\mu} \cdot \bv \, \d x\, \d s,
\end{aligned}
\end{equation}
             \begin{equation}\label{eq3.11}
              \begin{aligned}
              &\duality{\tilde{\bX}(t)}{(\upsilon, \upsilon \lvert_\Gamma)}{\mathbb{V}}{\mathbb{V}^\prime}
              - \int_0^t \duality{\boldsymbol{b}(\tilde{\bX}(s))}{(\upsilon,\upsilon \lvert_\Gamma)}{\mathbb{V}}{\mathbb{V}^\prime}\d s \\
              &\quad = \duality{(\phi_0, \varphi_0)}{(\upsilon, \upsilon \lvert_\Gamma)}{\mathbb{V}}{\mathbb{V}^\prime} + \left(\int_0^t \boldsymbol{\sigma}(\tilde{\bX}(s)) \, \d \tilde{\mathcal{W}}(s), (\upsilon, \upsilon \lvert_\Gamma) \right)_{\mathbb{H}},
              \end{aligned}
             \end{equation}

\begin{equation}\label{eq3.26}
 \begin{aligned}
  \int_{Q_t} \tilde{\mu} \psi \, \d x \, \d s + \int_{\Sigma_t} \tilde{\theta} \psi \lvert_\Gamma \, \d S\, \d s 
   &= \eps \int_{Q_t} \nabla \tilde{\phi} \cdot \nabla \psi \, \d x \, \d s + \int_{Q_t} \frac1\eps F'(\tilde{\phi}) \psi \, \d x \, \d s\\
   &\quad + \eps_\Gamma \int_{\Sigma_t} \nabla_\Gamma \tilde{\varphi} \cdot \nabla_\Gamma \psi \lvert_\Gamma \, \d S \, \d s +  \frac{1}{\eps_\Gamma} \int_{\Sigma_t} G'(\tilde{\varphi}) \psi \lvert_\Gamma \, \d S \, \d s \\
   &\quad  + (1/K) \int_{\Sigma_t} (\tilde{\varphi} - \tilde{\phi}) (\eps \psi \lvert_\Gamma -\eps \psi)\, \d S \, \d s.
  \end{aligned}
\end{equation}
\end{definition}
\begin{definition}\label{def-Lyapunov}
 We introduce the Lyapunov functional
\begin{align}\nonumber
\tilde{\mathcal{E}}_{tot}: \mathbb{V} \ni (\phi,\varphi) 
\mapsto & \int_{\mathcal{O}} \left(\frac{\eps}{2} \lvert \nabla \phi \rvert^2 + \frac1\eps F(\phi) + \frac12 \lvert \phi \rvert^2 \right) \d x
\\
\label{eqn-Lyapunov}
& + \int_{\Gamma} \left[\frac{\eps_\Gamma}{2} \lvert \nabla_\Gamma \varphi \rvert^2 + \frac{1}{\eps_\Gamma} G(\varphi) + \frac{\eps }{2 K} (\varphi - \phi)^2 \right] \, \d S \in [0,\infty),
\end{align}
where the integral $\int_{\Gamma} \cdots \, \d S  $ is the integral with respect to the "riemannian volume" measure.
\end{definition}
We are ready to state our main result.
\begin{theorem}\label{thm-first_main_theorem}
Assume the assumptions \ref{item:H1}-\ref{item:H8} hold. 
Moreover, suppose that there exist $c_{F^\prime}>0$ and $c_{G^\prime}>0$ such that for all \(r \in \mathbb{R}\),
        \begin{equation}\label{F'_and G'_additional_condition}
           \begin{aligned}
              \lvert F^\prime(r) \rvert \leq c_{F^\prime}(1 + \lvert r \rvert^{p-1}),
                \\
              \lvert G^\prime(r) \rvert \leq c_{G^\prime}(1 + \lvert r \rvert^{q-1}), 
           \end{aligned}
        \end{equation}
where 
     \begin{equation*}
        p \in 
             \begin{cases}
                   [2,\infty) &\mbox{ if }  d=2, \\
                   [2,6]  &\mbox{ if }  d=3,
              \end{cases}
                \mbox{ and }  q \in [2,\infty),
    \end{equation*}
and there exist $c_{F^\bis}>0$ and $c_{G^\bis}>0$ such that 
        \begin{equation}\label{F''_and G''_additional_condition}
           \begin{aligned}
              \lvert F^\bis(r) \rvert \leq c_{F^\bis}(1 + \lvert r \rvert^{p-2}), \; \; \forall r \in \mathbb{R},\\
              \lvert G^\bis(r) \rvert \leq c_{G^\bis}(1 + \lvert r \rvert^{q-2}), \; \; \forall r \in \mathbb{R},
           \end{aligned}
        \end{equation}
where 
     \begin{equation*}
        p \in 
             \begin{cases}
                   [2,\infty) &\mbox{ if }  d=2, \\
                   (0,4]  &\mbox{ if }  d=3,
              \end{cases}
                \mbox{ and }  q \in [2,\infty).
    \end{equation*}
Then, there exists a weak martingale solution
\[\left(\tilde{\Omega}, \tilde{\mathcal{F}},\tilde{\mathbb{F}}, \tilde{\mathbb{P}}, \tilde{\bu}, \tilde{\phi}, \tilde{\varphi}, \tilde{\mu}, \tilde{\theta}, \tilde{\mathcal{W}} \right)\]
to system \eqref{eq1.1a}-\eqref{eq1.10a} in the sense of Definition \ref{def3.4}. Moreover, the process  $(\tilde{\mu},\tilde{\theta})$ satisfies 
     \begin{equation}\label{Eqt-titlde-(mu,theta)-L4(0,T;H)-norm}
      (\tilde{\mu},\tilde{\theta}) \in L^4(0,T;\mathbb{H}), \quad \tilde{\mathbb{P}}\text{-a.s.,}
     \end{equation}
and we have
\begin{align}\label{eq3.29}
& \sup_{s \in [0,T]} \tilde{\mathbb{E}}[\tilde{\mathcal{E}}_{tot}(\tilde{\phi}(s), \tilde{\varphi}(s))] + \tilde{\mathbb{E}} \int_{Q_t} [2 \nu(\tilde{\phi}) \lvert D\tilde{\bu} \rvert^2 + \lambda(\tilde{\phi}) \lvert \tilde{\bu} \rvert^2] \,\d x \,\d s \notag
\\
& + \tilde{\mathbb{E}} \int_{\Sigma_t} \gamma(\tilde{\varphi}) \lvert \tilde{\bu} \rvert^2 \,\d S\,\d s + \frac12 \tilde{\mathbb{E}} \int_{Q_t} M_{\mathcal{O}}(\tilde{\phi}) \lvert \nabla \tilde{\mu} \rvert^2 \, \d x \,\d s + \tilde{\mathbb{E}} \int_{\Sigma_t} M_\Gamma(\tilde{\varphi}) \lvert \nabla_\Gamma \tilde{\theta} \rvert^2 \,\d S \,\d s  \notag
            \\
&\leq \tilde{\mathcal{E}}_{tot}(\phi_0,\varphi_0) + C \bigg[(1/K) + \tilde{\mathbb{E}} \int_0^{t} \Vert F_{1}(\tilde{\phi}) \Vert_{\mathscr{T}_2(U,L^2(\mathcal{O}))}^2\,\d s + \bar{\mathbb{E}} \int_0^{t} \lvert \nabla_\Gamma \tilde{\varphi} \rvert_{\Gamma}^2 \, \d s 
                 \\
&\quad + \left(1 + 1/K \right) \tilde{\mathbb{E}} \int_0^t  \lvert \nabla \tilde{\phi} \rvert^2 \, \d s + (1/K) \tilde{\mathbb{E}} \int_0^{t} \Vert F_2(\tilde{\varphi}) \Vert_{\mathscr{T}_2(U_\Gamma,L^2(\Gamma))}^2 \, \d s \notag
                      \\ 
&\quad + \tilde{\mathbb{E}} \int_0^{t} \sum_{k=1}^\infty \int_{\mathcal{O}} \lvert F^\bis(\tilde{\phi}) \rvert \lvert F_1(\tilde{\phi}) e_{1,k} \rvert^2 \, \d x \,\d s 
+ \tilde{\mathbb{E}} \int_0^{t}  \sum_{k=1}^\infty \int_{\Gamma} \lvert G^\bis(\tilde{\varphi}) \rvert \lvert F_2(\tilde{\varphi}) e_{2,k} \rvert^2\, \d S\,\d s \bigg], \; \; t \in [0,T]. \notag
\end{align}
where the constant $C$ depends on $\eps,\,\Gamma,\,\eps_\Gamma,\,C_1,\,C_2,\,C_{\Gamma,\mathcal{O}}$, and $\bar{M}_0$.
\newline
If $\mathcal{O}$ is of class $C^\iota$, $\iota \in \{2,3\}$, then 
\begin{align}
\label{Eqn-tilde-(phi,varphi)-L4(0,T;H2-norm}
 (\tilde{\phi},\tilde{\varphi}) \in L^4(0,T;\mathcal{H}^2), \;\; &\tilde{\mathbb{P}}\mbox{-a.s.},
\\
\label{Eqn-tilde-(F-prime(tilde{phi}),G-prime(tilde{varphi}))-L2(0,T;V-norm}
 (F^\prime(\tilde{\phi}),G^\prime(\tilde{\varphi})) \in L^2(0,T;\mathbb{V}), \;\; &\tilde{\mathbb{P}}\mbox{-a.s.},
\end{align}
and the process $(\tilde{\phi},\tilde{\varphi},\tilde{\mu},\tilde{\theta})$ satisfies the equations \eqref{eq1.4a} and \eqref{eq1.6a} in the strong sense, i.e.
\begin{align}
\label{Eqn-identification-tilde-mu}
\tilde{\mu}= -\eps \Delta \tilde{\phi } + \frac1\eps F^\prime(\tilde{\phi})  &\mbox{ a.e. in }  Q_T, \; \tilde{\mathbb{P}} \mbox{ -a.s., } 
\\
\label{Eqn-identification-tilde-theta}
\tilde{\theta}= -\eps_\Gamma \Delta_\Gamma \tilde{\varphi}+ \frac{1}{\eps_\Gamma} G'(\tilde{\varphi}) + \eps \partial_{\bn} \tilde{\phi} &\mbox{ a.e. on }   \Sigma_T, \; \tilde{\mathbb{P}} \mbox{ -a.s. } 
\end{align}
If $\mathcal{O}$ is of class $C^3$, then 
       \begin{equation}\label{Eqn-tilde-(phi,varphi)-L2(0,T;H3-norm}
          (\tilde{\phi},\tilde{\varphi}) \in L^2(0,T;\mathcal{H}^3), \;\; \tilde{\mathbb{P}}\mbox{-a.s.}
        \end{equation}
\end{theorem}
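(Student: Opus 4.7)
The plan is to construct a weak martingale solution through a double approximation scheme followed by stochastic compactness. First I would introduce finite-dimensional Galerkin subspaces $V_n\subset V$, $V_{1,n}\subset V_1$, and $V_{\Gamma,n}\subset V_\Gamma$ generated by the smooth eigenfunctions of, respectively, the Stokes operator $\bA$ of \eqref{eqn-Stokes operator}, the Neumann Laplacian on $\mathcal{O}$, and the Laplace--Beltrami operator $\Delta_\Gamma$ of \eqref{eqn-Laplace-Beltrami operator}, and in parallel introduce a Yosida regularization of parameter $\delta>0$ that replaces the stiff components (the elliptic relations \eqref{eqn-mu from pgi}--\eqref{eqn-theta from pgi} and the nonlinearities $F',G'$) by globally Lipschitz surrogates. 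On each time slice the projected Brinkman equation \eqref{eq1.1a}--\eqref{eq1.2a} with frozen phase fields is a coercive, linear stationary problem, by assumptions \ref{item:H5}--\ref{item:H7} and the Korn inequality \eqref{Korn-inequality}, so $\bu_{n,\delta}$ is obtained as a continuous, linearly growing function of $(\phi_{n,\delta},\varphi_{n,\delta},\mu_{n,\delta},\theta_{n,\delta})$; substituting back converts the projected phase-field system into a finite-dimensional SDE to which Lemma \ref{stochastic-ordinary-theorem} applies, providing a global strong solution $(\phi_{n,\delta},\varphi_{n,\delta})$.

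Second, I would derive uniform-in-$(n,\delta)$ estimates by applying It\^o's formula to the Lyapunov functional $\tilde{\mathcal{E}}_{tot}$ of \eqref{eqn-Lyapunov} along the approximate solution. Pairing the Brinkman equation against $\bu_{n,\delta}$, the bulk Cahn--Hilliard equation against $\mu_{n,\delta}$, and the surface equation against $\theta_{n,\delta}$, the Korteweg-type contributions $\mu\nabla\phi$ and $-\varphi\nabla_\Gamma\theta$ cancel exactly the convective terms $\diver(\phi\bu)$ and $\diver_\Gamma(\varphi\bu)$, leaving on the left-hand side the viscous, friction, and mobility dissipations together with the boundary integral $\int_\Gamma\gamma(\varphi)|\bu|^2\,\d S$, as allowed by assumption \ref{item:H4}. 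The It\^o correction terms are controlled by the growth hypotheses \eqref{F''_and G''_additional_condition} combined with the linear growth of $F_1,F_2$ in \ref{item:H2}; a Burkholder--Davis--Gundy bound and a Gronwall argument then close the estimate, yielding \eqref{eq3.29} together with uniform bounds of $(\phi_{n,\delta},\varphi_{n,\delta})$ in $L^p(\Omega;L^\infty(0,T;\mathbb{V}))$, of $(\mu_{n,\delta},\theta_{n,\delta})$ in $L^2(\Omega;L^2(0,T;\mathbb{V}))$, and of $\bu_{n,\delta}$ in $L^2(\Omega;L^2(0,T;V))$.

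Third, I would establish tightness of the joint laws of $(\bu_{n,\delta},\phi_{n,\delta},\varphi_{n,\delta},\mu_{n,\delta},\theta_{n,\delta},\mathcal{W})$ in a suitable product path space combining $L^2(0,T;\mathbb{H})\cap C_w([0,T];\mathbb{V})$ for the phase fields, the weak topologies of $L^2(0,T;V)$ and $L^2(0,T;\mathbb{V})$ for $\bu$ and $(\mu,\theta)$, and a continuous path space for the Wiener process; the Aubin--Lions compactness criterion together with a fractional Sobolev bound in time, extracted from the evolution equation and the BDG estimate on the stochastic integral, delivers the required compactness. The Jakubowski--Skorokhod representation theorem then produces new processes on a common probability space converging almost surely, and the strong $L^2(0,T;\mathbb{H})$ convergence of $(\tilde\phi,\tilde\varphi)$ suffices to identify the bilinear forms $B_1,\tilde B$, the coefficients $\nu,\lambda,\gamma$, and the regularized nonlinearities as $\delta\to 0$ via the growth assumption \eqref{F'_and G'_additional_condition} and a Vitali uniform-integrability argument, while a martingale identification passes the stochastic integrals to their limits; the resulting process satisfies Definition \ref{def3.4}.

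Finally, the statements \eqref{Eqt-titlde-(mu,theta)-L4(0,T;H)-norm}--\eqref{Eqn-tilde-(phi,varphi)-L2(0,T;H3-norm} together with the strong identifications \eqref{Eqn-identification-tilde-mu}--\eqref{Eqn-identification-tilde-theta} follow by viewing \eqref{eqn-mu from pgi}--\eqref{eqn-theta from pgi} as a bulk--surface elliptic system for $(\tilde\phi,\tilde\varphi)$ with data $(\tilde\mu-\eps^{-1}F'(\tilde\phi),\tilde\theta-\eps_\Gamma^{-1}G'(\tilde\varphi))$ and Robin coupling \eqref{eq1.7a}; interpolating the $L^\infty(0,T;\mathbb{V})$ and $L^2(0,T;\mathbb{V})$ bounds yields the $L^4(0,T;\mathbb{H})$ regularity of $(\tilde\mu,\tilde\theta)$, and bulk--surface elliptic regularity in the $C^2$, resp.\ $C^3$, case produces the $L^4(0,T;\mathcal{H}^2)$, resp.\ $L^2(0,T;\mathcal{H}^3)$, control. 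The hard part will be the control of the spatial means $\langle\mu\rangle_\mathcal{O}$ and $\langle\theta\rangle_\Gamma$: as already stressed in the introduction, the noise terms $F_1(\phi)\,\d W$ and $F_2(\varphi)\,\d W_\Gamma$ destroy the pointwise conservation of $\langle\phi\rangle_\mathcal{O},\langle\varphi\rangle_\Gamma$, so the standard deterministic device of testing against $F'(\phi)$ to bound $\langle\mu\rangle_\mathcal{O}$ does not apply. I would address this by computing an It\^o expansion of the means themselves and combining the resulting stochastic identity with the bulk--surface Poincar\'e inequality \eqref{bulk-surface-Poincare-inequality}, so as to upgrade the gradient dissipation supplied by \ref{item:H4} into full $\mathbb{V}$-control of $(\mu_{n,\delta},\theta_{n,\delta})$ before passing to the limit.
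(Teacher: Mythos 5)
Your overall strategy coincides with the paper's: Yosida regularization of $F,G$ combined with a Galerkin scheme built on the eigenfunctions of the Stokes, Neumann--Laplace and Laplace--Beltrami operators; resolution of the Brinkman equation at frozen phase fields by Lax--Milgram; a finite-dimensional SDE closed by Lemma \ref{stochastic-ordinary-theorem}; It\^o's formula on the energy functional with exact cancellation of the Korteweg and convective terms; tightness plus Jakubowski--Skorokhod; and bulk--surface elliptic regularity for the final statements. The one place where you genuinely diverge is the control of the spatial means $\langle\mu\rangle_{\mathcal{O}}$ and $\langle\theta\rangle_\Gamma$, which you propose to handle by an It\^o expansion of the means. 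This is a detour: although the noise destroys conservation of $\langle\phi\rangle_{\mathcal{O}}$ and $\langle\varphi\rangle_\Gamma$, the quantities that must be bounded are the means of the \emph{chemical potentials}, and these are instantaneous algebraic functionals of the current state, not evolving quantities. The paper bounds them pathwise at each fixed time: integrating \eqref{eqn-mu from pgi} over $\mathcal{O}$ and using the Robin condition \eqref{eq1.7a} with $K>0$ gives $\lvert(\Delta\phi,1)\rvert\le K^{-1}\lvert\varphi-\phi\rvert_{L^1(\Gamma)}$, while the structural hypothesis $\lvert F'\rvert\le C_F(1+F)$ from \ref{item:H8} controls $\int_{\mathcal{O}}\lvert F'_\delta(\phi)\rvert$ by the energy itself; the analogous computation works for $\theta$, and the bulk--surface Poincar\'e inequality \eqref{bulk-surface-Poincare-inequality} then upgrades the gradient dissipation to full $\mathbb{H}$-control. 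Your It\^o-based route would still leave you needing exactly this algebraic identity, so you should adopt the pathwise bound; with that substitution your proof goes through and matches the paper's. A second, minor difference: you obtain tightness via fractional Sobolev time regularity and Aubin--Lions, whereas the paper verifies the Aldous condition and invokes the compactness criterion of Brze\'zniak--Motyl; these are interchangeable here.
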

The complete proof of Theorem \ref{thm-first_main_theorem} is postponed in Section \ref{eqn-thm-first_main_theorem}.
%%%%%%%%%%%%%%%%%%%%%%%%%%%%%%%%%%%%%%%%%%%%%%%%%%%%%%%%%%%%%%%%%%%%%%%%%%%%%%%%%%%%%%%%%%%%%%%%%%%%%%%%%%%%%%%%%%%%%%%%%%%%%%%%%%%%%%%%%%%%%%%%%%
\section{Existence of global weak solution\dela{: Case \mbox{$K>0$}}}\label{sect4}
%%%%%%%%%%%%%%%%%%%%%%%%%%%%%%%%%%%%%%%%%%%%%%%%%%%%%%%%%%%%%%%%%%%%%%%%%%%%%%%%%%%%%%%%%%%%%%%%%%%%%%%%%%%%%%%%%%%%%%%%%%%%%%%%%%%%%%%%%%%%%%%%%
This section is devoted to the proof of Theorem \ref{thm-first_main_theorem}. To be precise,  we will prove the  existence of a global weak martingale solution to the stochastic Cahn-Hilliard-Brinkman system, i.e. the problem \eqref{eq1.1a}-\eqref{eq1.10a}. To achieve this, we introduce an approximation scheme by combining the Yosida and Galerkin approximation schemes. First, we discuss the existence of the approximation scheme. Next, we prove a priori estimates that are useful for establishing the tightness of the family of Galerkin approximations. In a subsequent step, we construct our new stochastic basis by exploiting the Skorokhod-Jakubowski Theorem in the framework of non-metric spaces, see \cite{Jakubowski_1997}, and then pass to the limit in the approximating scheme. Finally, we derive additional a priori estimates that are useful for establishing the tightness of the family of Yosida approximation problems.
%%%%%%%%%%%%%%%%%%%%%%%%%%%%%%%%%%%%%%%%%%%%%%%%%%%%%%%%%%%%%%%%%%%%%%%%%%%%%%%%%%%%%%%%%%%%%%%%%%%%%%%%%%%%%%%%%%%%%%%%%%%%%%%%%%%%%%%%%%%%%%%%%%%%%%%%%%%%%%%%%%%%%%%%%%%%%%%%%%%%%%%%%%%%%%%%%%%%%%%%%%%%%
\subsection{Yosida approximation}
%%%%%%%%%%%%%%%%%%%%%%%%%%%%%%%%%%%%%%%%%%%%%%%%%%%%%%%%%%%%%%%%%%%%%%%%%%%%%%%%%%%%%%%%%%%%%%%%%%%%%%%%%%%%%%%%%%%%%%%%%%%%%%%%%%%%%%%%%%%%%%%%%%%%%%%%%%%%%%%%%%%%%%%%%%%%%%%%%%%%%%%%%%%%%%%%%%%%%%%%%%%%%
Define the following non-decreasing and continuous map
\[
\tilde{F}: \mathbb{R} \ni s \mapsto  \left[F(s) + \frac{\tilde{c}_F}{2} s^2 \right] \in [0,\infty).
\]
By assumption \ref{item:H8}, function $\tilde{F}$ takes finite values at every point in its domain $D(\tilde{F})= \mathbb{R}$. In particular, $\tilde{F}$ is a proper, convex, and lower semicontinuous function. Therefore, invoking the theory of maximal monotone operators, see, e.g., 
\cite[Chapter II]{Barbu_1976} or \cite{Brezis_1973,Showalter_1997}, the subgradient of  $\tilde{F}$ exists and we put
\[
\mathbb{A}= \partial \tilde{F}: D(\mathbb{A})= \mathbb{R} \to \mathbb{R}.
\]
Let us recall the following result, which establishes the action of the subgradient operator at the regularity points, see \cite[Chapter II, Example 3]{Barbu_1976}.
\begin{lemma}
Assume that the map $\psi: \mathbb{R} \to (-\infty,+\infty]$ is convex and differentiable at some $s \in \mathbb{R}$. Then, $\partial \psi(s)= \psi^\prime(s)$.
\end{lemma}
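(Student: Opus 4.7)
The plan is to establish the equality $\partial \psi(s) = \{\psi'(s)\}$ by proving the two inclusions separately, relying only on the definition of the subdifferential together with convexity of $\psi$ and its differentiability at $s$. Recall that, by definition, $\xi \in \partial \psi(s)$ means
$$\psi(t) \geq \psi(s) + \xi(t-s), \quad t \in \mathbb{R}.$$
The whole argument reduces to the classical monotonicity of the difference quotient for a convex function: for any $h_1 < 0 < h_2$, applying convexity of $\psi$ to the three collinear points $s+h_1$, $s$, $s+h_2$ gives
$$\frac{\psi(s+h_1) - \psi(s)}{h_1} \leq \frac{\psi(s+h_2) - \psi(s)}{h_2}.$$

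For the inclusion $\psi'(s) \in \partial\psi(s)$, I would fix an arbitrary $t \in \mathbb{R}$. If $t > s$, choose $h_2 = t-s$ and pass to the left-limit $h_1 \to 0^-$; by differentiability, the left-hand side converges to $\psi'(s)$, yielding $\psi'(s) \leq (\psi(t)-\psi(s))/(t-s)$, i.e. $\psi(t) \geq \psi(s) + \psi'(s)(t-s)$. If $t < s$, set $h_1 = t-s$ and let $h_2 \to 0^+$; again by differentiability, the right-hand side converges to $\psi'(s)$, and rearranging (using $t-s < 0$) produces the same subgradient inequality. Hence $\psi'(s)$ belongs to $\partial\psi(s)$.

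For the reverse inclusion, pick $\xi \in \partial\psi(s)$. Then for every $h > 0$ one has $(\psi(s+h) - \psi(s))/h \geq \xi$, and letting $h \to 0^+$ gives $\psi'(s) \geq \xi$. Similarly, for every $h < 0$ one has $(\psi(s+h) - \psi(s))/h \leq \xi$ (the sign flips when dividing by a negative number), and letting $h \to 0^-$ gives $\psi'(s) \leq \xi$. Combining the two inequalities yields $\xi = \psi'(s)$, completing the proof.

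The argument is entirely elementary and no genuine obstacle is anticipated; the only minor point to verify is the monotonicity of the difference quotient, which is a standard one-line consequence of the convexity inequality. The result is exactly the statement of the textbook reference \cite[Chapter II, Example 3]{Barbu_1976} cited by the authors.
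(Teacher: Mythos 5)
Your proof is correct. The paper does not actually prove this lemma --- it simply cites \cite[Chapter II, Example 3]{Barbu_1976} --- so there is no in-text argument to compare against; your two-inclusion proof via the monotonicity of the difference quotient is exactly the standard textbook argument behind that reference. The only point worth a passing remark is that $\psi$ is allowed to take the value $+\infty$: differentiability at $s$ forces $\psi$ to be finite in a neighbourhood of $s$, so all the difference quotients you use are finite, and the subgradient inequality $\psi(t)\geq\psi(s)+\xi(t-s)$ is trivially satisfied at any $t$ with $\psi(t)=+\infty$; hence nothing in your argument breaks.
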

Thanks to the previous lemma and because $\tilde{F}$ is of $C^1$-class on $\mathbb{R}$, we infer that
    \begin{equation}
      \mathbb{A}(s)= \tilde{F}^\prime(s), \; \; s \in \mathbb{R},
   \end{equation}
where $\tilde{F}^\prime(s)$ refers to the standard derivative of $\tilde{F}$ at $s$. 
\newline
For every $\delta \in (0,1)$, we define a function $\tilde{F}_\delta$ by
\[
\tilde{F}_\delta(x)
= \inf_{y \in \mathbb{R}}\left\{ \frac{1}{2 \delta} \lvert x - y \rvert^2 + \tilde{F}(y) \right\}, \; \;  x \in \mathbb{R}
\]
and define
   \begin{align*}
      &J_\delta: \mathbb{R} \ni s \mapsto (I + \delta \mathbb{A})^{-1}(s) \in \mathbb{R},
      \\
      &\mathbb{A}_\delta: \mathbb{R} \ni s \mapsto \frac{1}{\delta} (s - J_\delta(s))  \in \mathbb{R},
   \end{align*}
the $1$-Lipschitz continuous resolvent operator and the Yosida approximation of $\mathbb{A}$, respectively.
\newline
By the elementary properties of the maximal monotone operators, see, for instance, \cite{Barbu_1976,Brezis_1973,Showalter_1997},
we approximate the potential $F$ by a sequence of nonnegative functions $(F_\delta)_{\delta>0}$, called the Yosida approximation of $F$:
    \begin{equation}
      F_\delta: \mathbb{R} \ni s \mapsto \left[\frac{\delta}{2} \lvert \mathbb{A}_\delta(s) \rvert^2 + \tilde{F}(J_\delta(s)) - \frac{\tilde{c}_F}{2} s^2 \right] \in [0,\infty).
    \end{equation}
Furthermore, $\tilde{F}_\delta$ and $F_\delta$ enjoy the following properties:
\begin{enumerate}[label=(P\arabic*),ref=(P\arabic*)]

\item \label{item:P1} $\tilde{F}_\delta(s)= \frac{\delta}{2} \lvert \mathbb{A}_\delta(s) \rvert^2 + \tilde{F}(J_\delta(s))$, for every $\delta>0$ and $s \in \mathbb{R}$,

\item \label{item:P2} $\lim_{\delta \to 0} F_\delta(s)= F(s)$, for all $s \in \mathbb{R}$,

\item \label{item:P3} $\tilde{F}(J_\delta(s)) \leq \tilde{F}_\delta(s) \leq \tilde{F}(s)$, for every $\delta> 0$ and $s \in \mathbb{R}$,

\item \label{item:P4} $F^\prime_\delta(r)= \mathbb{A}_\delta(r) - \tilde{c}_F r, \; r \in \mathbb{R}$, and $F^\prime_\delta$ is Lipschitz on $\mathbb{R}$ with constant $L_{F_\delta^\prime} \coloneq \frac{1}{\delta} + \tilde{c}_F$,

\item \label{item:P5} $\lvert F^\prime_\delta(s) \rvert \nearrow \lvert F^\prime(s) \rvert$ for all $s \in \mathbb{R}$ as $\delta \to 0$,

\item \label{item:P6} $F_\delta(0)= F(0)$ and $F^\prime_\delta(0)=0$, for every $\delta> 0$.
\end{enumerate}
%%%%%%%%%%%%%%%%%%%%%%%%%%%%%%%%%%%%%%%%%%%%%%%%%%%%%%%%%%%%%%%%%%%%%%%%%%%%%%%%%%%%%%%%%%%%%%%%%%%%%%%%%%%%%%%%%%%%%%%%%%%%%%%%%%%%%
\begin{remark}
Since $\mathbb{A}_\delta$ is differentiable on $\mathbb{R}$ and $\frac{1}{\delta}$-Lipschitz, by the properties \ref{item:P4} and \ref{item:P6}, we infer that $F_\delta$ satisfies the linear growth condition, i.e. there exists a positive constant $\tilde{c}_\delta$ depending on $\delta, \,\tilde{c}_F, \, F(0)$ such that 
     \begin{equation} \label{eq4.31}
       \lvert F_\delta(s) \rvert \leq \tilde{c}_\delta (1 + \lvert s \rvert^2), \; \; s \in \mathbb{R}.
     \end{equation}
\end{remark}
In a similar way, we define the Yosida approximation $G_\delta$ of $G$.
\newline
We fix $\delta \in (0,1)$ and we consider the following regularized stochastic Cahn-Hilliard-Brinkman problem
\begin{subequations}\label{Eqt1.1a}
\begin{align}
 -\diver(2 \nu(\phi_\delta) D\bu_\delta) + \lambda (\phi_\delta) \bu_\delta + \nabla p_\delta = \mu_\delta \nabla \phi_\delta \quad &\mbox{ in } Q_T,  \label{eq1.1aa} 
 \\
\diver \bu_\delta=0  &\mbox{ in } Q_T, \label{eq1.2aa}
    \\
\d \phi_\delta + [\diver(\phi_\delta \bu_\delta) - \diver(M_\mathcal{O}(\phi_\delta) \nabla \mu_\delta)]\d t= F_{1}(\phi_\delta) \d W &\mbox{ in } Q_T, \label{eq1.3aa}
	    \\
\mu_\delta= -\eps \Delta \phi_\delta + \frac1\eps F_\delta^\prime(\phi_\delta) &\mbox{ in } Q_T, \label{eq1.4aa} 
\end{align}
\end{subequations} 
%%%%%%%%%%%%%%%%%%%%%%%%%%%%%%%%%%%%%%%%%%%%%%%%%%%%%%%%%%%%%%%%%%%%%%%%%%%%%%%%%%%%%%%%%%%%%%%%%%%%%%%%%%%%%%%%%%%%%%%%%%%%%%%%%%%%%%%%%%%%%%%%%%%%%%%%%%%%%%%%%%%%%%%%%%%%%%%%%%%%%%%%%%%%%%%%%
\begin{subequations}\label{Eqt1.2a}
\begin{align}
\d \varphi_\delta + [\diver_\Gamma(\varphi_\delta \bu_\delta) - \diver_\Gamma (M_\Gamma(\varphi_\delta) \nabla_\Gamma \theta_\delta)]\d t= F_2(\varphi_\delta)\, \d W_\Gamma  &\mbox{ on }   \Sigma_T, \label{eq1.5aa}
\\
\theta_\delta= -\eps_\Gamma \Delta_\Gamma \varphi_\delta + \frac{1}{\eps_\Gamma} G_\delta^\prime(\varphi_\delta) + \eps \partial_{\bn} \phi_\delta  &\mbox{ on }   \Sigma_T, \label{eq1.6aa}
  \\
K \partial_{\bn} \phi_\delta= \varphi_\delta - \phi_\delta &\mbox{ on }   \Sigma_T, \label{eq1.7aa} 
\end{align}
\end{subequations}   
%%%%%%%%%%%%%%%%%%%%%%%%%%%%%%%%%%%%%%%%%%%%%%%%%%%%%%%%%%%%%%%%%%%%%%%%%%%%%%%%%%%%%%%%%%%%%%%%%%%%%%%%%%%%%%%%%%%%%%%%%%%%%%%%%%%%%%%%%%%%%%%%%%%%%%%%%%%%%%%%%%%%%%%%%%%%%%%%%%%%%%%%%%%%%%%%%%%%%%%%
together with the following boundary and initial conditions
\begin{subequations}
\begin{align}
M_\mathcal{O} (\phi_\delta) \partial_{\bn} \mu_\delta= \bu_\delta \cdot \bn= 0 &\mbox{ on }  \Sigma_T, \label{eq1.8aa}
 \\
[2 \nu(\phi_\delta) (D \bu_\delta) \bn + \gamma(\varphi_\delta) \bu_\delta]_\tau= -[\varphi_\delta \nabla_\Gamma \theta_\delta]_\tau &\mbox{ on }   \Sigma_T, \label{eq1.9aa}                                           
\end{align}
\end{subequations}   
%%%%%%%%%%%%%%%%%%%%%%%%%%%%%%%%%%%%%%%%%%%%%%%%%%%%%%%%%%%%%%%%%%%%%%%%%%%%%%%%%%%%%%%%%%%%%%%%%%%%%%%%%%%%%%%%%%%%%%%%%%%%%%%%%%%%%%%%%%%%%%%%%%%%%%%%%%%%%%%%%%%%%%%%%%%%%%%%%%%%%%%%%%%%%%%%%%%%%%%%
\begin{subequations}
\begin{align}
\phi_\delta(0)= \phi_0 \mbox{ in }  \mathcal{O} \mbox{ and }  \varphi_\delta(0)= \varphi_0 &\mbox{ on }  \Gamma. \label{eq1.10aa}
\end{align}
\end{subequations}   
%\todozb{How do we prove that the "candidate" satisfies equation \eqref{eq1.7aa}?}
Next, we proved the existence  of  a martingale solution to Problem \eqref{Eqt1.1a}-\eqref{eq1.10aa} in the sense of Definition \eqref{def3.4}, but with  the potential $F$ being replaced by  its Yosida approximation $F_\delta$.
%%%%%%%%%%%%%%%%%%%%%%%%%%%%%%%%%%%%%%%%%%%%%%%%%%%%%%%%%%%%%%%%%%
\subsection{Galerkin approximation}\label{Sub-sect-Galerkin-approximation}
%%%%%%%%%%%%%%%%%%%%%%%%%%%%%%%%%%%%%%%%%%%%%%%%%%%%%%%%%%%%%%%%%%%
Since both $\mathcal{O}$ and $\Gamma$ are bounded, the classical spectral Theorem and the properties of our three linear operators imply the following assertions.

\begin{trivlist}
\item[(i)] There exists an increasing sequence $(\xi_j)_{j=1}^\infty$ with $\xi_1>0$ and $\xi_j \nearrow \infty$ as $j \to \infty$, the eigenvalues of the Stokes operator $\bA$, and a family $\{\bw_j\}_{j=1}^\infty \subset D(\bA)$ of eigen-functions of $\bA$ that is orthonormal in $H$ and such that $\bA \bw_j= \xi_j \bw_j$, $j=1,2,\ldots$.
\item[(ii)] The Laplace-Beltrami operator $\Delta_\Gamma$ admits a spectral decomposition, i.e. there exist an orthonormal
basis $\{\bar{\Lambda}_i\}_{i \in \mathbb{N}} \subset H^2(\Gamma)$ of $L^2(\Gamma)$, consisting of eigenvectors of $ \Delta_\Gamma$ and a set of eigenvalues $\{\lambda^\Gamma_i\}_{i \in \mathbb{N}} \subset \mathbb{N}$, where
\begin{equation}\label{eq4.29}
- \Delta_\Gamma \bar{\Lambda}_i= \lambda^\Gamma_i \bar{\Lambda}_i \mbox{ on } \Gamma \dela{, \quad (\bar{\Lambda}_i,\bar{\Lambda}_i)_\Gamma=1, \quad \forall i \in \mathbb{N}}.
\end{equation}
\item[(iii)] There exists  an orthonormal basis $\{\bar{\upsilon}_j \in H^2(\mathcal{O}): j=1,2,\ldots\}$ of $L^2(\mathcal{O})$, 
whose elements are the eigen-functions of the Poisson-Neumann eigenvalue problem
   \begin{equation}\label{eq4.28}
     \begin{cases}
      -\Delta \bar{\upsilon}_j &= \lambda^{\mathcal{O}}_j  \bar{\upsilon}_j \mbox{ in }  \mathcal{O}, 
       \\
     \frac{\partial \bar{\upsilon}_j}{\partial \bn}&=0 \hspace{0.5cm} \mbox{ on }  \Gamma,
      \end{cases}
    \end{equation}
where  $\lambda^{\mathcal{O}}_j$ are the corresponding eigenvalues.  
\end{trivlist}
%%%%%%%%%%%%%%%%%%%%%%%%%%%%%%%%%%%%%%%%%%%%%%%%%%%%%%%%%%%%%%%%%%%
We assume that  $(\Omega, \mathcal{F}, \mathbb{F}, \mathbb{P})$  is a filtered probability space  satisfying  the usual assumptions, cf. Section \ref{sect3}.  We also fix $n \in \mathbb{N}$ and  $T>0$. \newline
Define the following finite-dimensional linear subspaces,
\begin{align*}
V^n &\coloneq \linspan\{\bw_1,\ldots,\bw_n\} \subset V, \quad 
H_{n}^1 \coloneq \linspan\{\bar{\upsilon}_1,\ldots,\bar{\upsilon}_n\} \subset H^1(\mathcal{O}),
  \\
H_{\Gamma,n}^1  &\coloneq \linspan\{\bar{\Lambda}_1,\ldots,\bar{\Lambda}_n\} \subset H^1(\Gamma), \quad
\mathcal{V}_n  \coloneq \linspan\{(\bar{\upsilon}_1,\bar{\Lambda}_1), \ldots,(\bar{\upsilon}_n,\bar{\Lambda}_n)\} \subset \mathbb{V}.
\end{align*}
Next, we denote by $P_n$ the $H$-orthogonal projection onto $V^n$ and by $\mathcal{S}_n$ the orthogonal projection from $L^2(\mathcal{O})$ onto $H_{n}^1$. 
We also denote by $\mathcal{S}_{n,\Gamma}$ the orthogonal projection from $L^2(\Gamma)$ onto $H_{\Gamma,n}^1$. 
\newline
Now, we seek an approximation to a solution of problem \eqref{eq1.1aa}-\eqref{eq1.10aa} and the initial condition $(\phi_{0,n}, \varphi_{0,n})$ of  the following forms:
    \begin{equation}\label{approximation-eqts}
    \begin{aligned}
     \phi_{\delta,n}(t,\cdot,\omega)&= \sum_{k=1}^n a_k^n(t,\omega) \bar{\upsilon}_k(\cdot) \in H_{n}^1, \quad 
     \varphi_{\delta,n}(t,\cdot,\omega)= \sum_{k=1}^n b_k^n(t,\omega) \bar{\Lambda}_k(\cdot) \in H_{\Gamma,n}^1, \\
     \mu_{\delta,n}(t,\cdot,\omega)&= \sum_{k=1}^n c_k^n(t,\omega) \bar{\upsilon}_k(\cdot) \in H_{n}^1, \quad 
     \theta_{\delta,n}(t,\cdot,\omega)= \sum_{k=1}^n d_k^n(t,\omega) \bar{\Lambda}_k(\cdot) \in H_{\Gamma,n}^1, \\
     \bu_{\delta,n}(t,\cdot,\omega)&= \sum_{k=1}^n e_k^n(t,\omega) \bw_k(\cdot) \in V^n, \; \; (t,\omega) \in (0,T) \times \Omega,
    \end{aligned}
  \end{equation}
$$\phi_{\delta,n}(0)= \mathcal{S}_n \phi_\delta(0)= \mathcal{S}_n\phi_0 \coloneq\phi_{0,n},     ~ 
	\varphi_{\delta,n}(0)= \mathcal{S}_{n,\Gamma} \varphi_\delta(0)= \mathcal{S}_{n,\Gamma} \varphi_0\coloneq\varphi_{0,n}.
$$
\dela{
Obviously, as $n \to \infty$, we have 
    \begin{equation}%\label{convergence-initial-condition}
      \phi_{0,n} \to \phi_0 \mbox{ in }  V_1   \mbox{ and }  \varphi_{0,n} \to \varphi_0  \mbox{ in }  V_\Gamma.
     \end{equation}
Thus, from \eqref{convergence-initial-condition} we see that 
      \begin{equation}%\label{Uniform-bound-INC}
        \|\phi_{0,n}\|_{V_1} \leq \|\phi_0\|_{V_1} \quad \text{and} \quad \|\varphi_{0,n}\|_{V_\Gamma} \leq \|\varphi_0\|_{V_\Gamma}.
      \end{equation}
}
Let us fix $t \in [0,T]$. We introduce the bilinear form $a_{n,t}$ defined by
\begin{align*}
 a_{n,t}:  V \times V \ni (\bu,\bv) \mapsto \int_{\mathcal{O}} \left[ 2 \nu(\phi_{\delta,n}(t)) D \bu: \nabla \bv + \lambda(\phi_{\delta,n}(t)) \bu \cdot \bv \, \d x\right] + \int_{\Gamma} \gamma(\varphi_{\delta,n}(t)) \bu \cdot \bv \, \d S \in \mathbb{R}, 
\end{align*}
which is part of the weak formulation of the Brinkman equation with the  Navier-slip boundary condition. Note that $a_{n,t}$ is $V$-continuous and $V$-coercive, i.e. there exist positive constants $C_0$ and $C_5$ such that for all $\bv \in V$,
    \begin{equation}\label{eq4.34}
      a_{n,t}(\bv,\bv) \geq C_0 \Vert \bv \Vert_{V}^2  \mbox{ and }  \lvert a_{n,t}(\bv,\bv) \rvert \leq C_5 \Vert \bv \Vert_{V}^2.
     \end{equation}
%%%%
%begin delete
\dela{
Indeed, if $\bv \in V$, we infer from the definition of the space $V$ that there exists a sequence $\bv_m \in \mathcal{C}_{0,\sigma}^\infty(\mathcal{O})$ such that $\bv_m \to \bv$ in $V$ as $m\to \infty$. Moreover, by the trace theorem, there exist a bounded linear operator $\mathcal{T}_{\bn}: \mathbb{H}^1(\mathcal{O}) \to \mathbb{L}^2(\Gamma)$ satisfying $\mathcal{T}_{\bn} \bv_m= \bv_m\lvert_\Gamma$ and a positive constant $C$ depending on $\mathcal{O}$ such that for every $ m \in \mathbb{N}$,
$\Vert \mathcal{T}_{\bn} \bv_m \Vert_{\mathbb{L}^2(\Gamma)} \leq C \Vert \bv_m \Vert_{\mathbb{H}^1(\mathcal{O})}$. This, jointly with the assumptions \ref{item:H5}-\ref{item:H7}, implies that for every $m \in \mathbb{N}$,
\begin{align*}
&\lvert a_{n,t}(\bv_m,\bv_m) \rvert
=\left\lvert 2 \int_{\mathcal{O}} \nu(\phi_{\delta,n}(t)) \lvert D \bv_m \rvert^2 \, \d x + \int_{\mathcal{O}} \lambda(\phi_{\delta,n}(t)) \lvert \bv_m \rvert^2 \, \d x 
+ \int_{\Gamma} \gamma(\varphi_{\delta,n}(t)) \lvert \bv_m \rvert^2 \, \d S \right \rvert 
 \\
&\leq 2 \bar{\nu}_0 \int_{\mathcal{O}} D \bv_m : D \bv_m \, \d x + \lambda_0 \lvert \bv_m \rvert^2 + \bar{\gamma}_0 \Vert \mathcal{T}_{\bn} \bv_m \Vert_{\mathbb{L}^2(\Gamma)}^2 
\leq \bar{\nu}_0 \lvert \nabla \bv_m \rvert^2 + \lambda_0 \lvert \bv_m \rvert^2 + \bar{\gamma}_0 C \Vert \bv_m \Vert_{\mathbb{H}^1(\mathcal{O})}^2.
\end{align*}
Hence, by the LDCT, we deduce that as $m \to \infty$, 
   \begin{align*}
      \lvert a_{n,t}(\bv,\bv) \rvert 
      \leq \bar{\nu}_0 \lvert \nabla \bv \rvert^2 + \lambda_0 \lvert \bv \rvert^2 + \bar{\gamma}_0 C \Vert \bv \Vert_{\mathbb{H}^1(\mathcal{O})}^2
      \leq C(\bar{\nu}_0,\lambda_0,\bar{\gamma}_0,\mathcal{O}) \Vert \bv \Vert_{V}^2, \; \;  \bv \in V.
    \end{align*}
}
%end delete
%%%%%%%%%%%%%%%%%%
Indeed, if $\bv \in V$, then by the trace theorem, there exist a bounded linear operator $\mathcal{T}_{\bn}: \mathbb{H}^1(\mathcal{O}) \to \mathbb{L}^2(\Gamma)$ satisfying $\mathcal{T}_{\bn} \bv= \bv\lvert_\Gamma$ and a positive constant $C$ depending on $\mathcal{O}$ such that 
$\Vert \mathcal{T}_{\bn} \bv \Vert_{\mathbb{L}^2(\Gamma)} \leq C \Vert \bv \Vert_{\mathbb{H}^1(\mathcal{O})}$. 
This, jointly with the assumptions \ref{item:H5}-\ref{item:H7}, implies that,
\begin{align*}
&\lvert a_{n,t}(\bv,\bv) \rvert
=\left\lvert 2 \int_{\mathcal{O}} \nu(\phi_{\delta,n}(t)) \lvert D \bv \rvert^2 \, \d x + \int_{\mathcal{O}} \lambda(\phi_{\delta,n}(t)) \lvert \bv \rvert^2 \, \d x 
+ \int_{\Gamma} \gamma(\varphi_{\delta,n}(t)) \lvert \bv \rvert^2 \, \d S \right \rvert 
 \\
&\leq 2 \bar{\nu}_0 \int_{\mathcal{O}} D \bv : D \bv \, \d x + \lambda_0 \lvert \bv \rvert^2 + \bar{\gamma}_0 \Vert \mathcal{T}_{\bn} \bv \Vert_{\mathbb{L}^2(\Gamma)}^2 
\leq \bar{\nu}_0 \lvert \nabla \bv \rvert^2 + \lambda_0 \lvert \bv \rvert^2 + \bar{\gamma}_0 C \Vert \bv \Vert_{\mathbb{H}^1(\mathcal{O})}^2.
\end{align*}
From this latter inequality, we deduce that for every $t>0$, $n \in \mathbb{N}$, and $\delta \in (0,1)$,
   \begin{align*}
      \lvert a_{n,t}(\bv,\bv) \rvert 
      \leq \bar{\nu}_0 \lvert \nabla \bv \rvert^2 + \lambda_0 \lvert \bv \rvert^2 + \bar{\gamma}_0 C \Vert \bv \Vert_{\mathbb{H}^1(\mathcal{O})}^2
      \leq C(\bar{\nu}_0,\lambda_0,\bar{\gamma}_0,\mathcal{O}) \Vert \bv \Vert_{V}^2, \; \;  \bv \in V.
    \end{align*}
This proves the second part of \eqref{eq4.34}. \newline
Next, using \ref{item:H5}-\ref{item:H7} together with the Korn inequality \eqref{Korn-inequality}, we infer that
\begin{align*}
a_{n,t}(\bv,\bv) 
&\geq 2 \nu_0 \int_{\mathcal{O}} D \bv : D \bv \, \d x  + \gamma_0 \int_\Gamma \lvert \bv \rvert^2 \d S
\geq \min (2 \nu_0,\gamma_0) \left[\int_{\mathcal{O}} D \bv : D \bv \, \d x + \int_\Gamma \lvert \bv \rvert^2 \d S \right]
\\
&\geq C(\nu_0,\gamma_0,C_{\text{KN}}) \lvert \nabla \bv \rvert^2, \; \; \bv \in V.
\dela{
\\
&\geq \nu_0 \lvert \nabla \bv \rvert^2 + \gamma_0 \int_\Gamma \lvert \bv \rvert^2 \d S
\geq \min (\nu_0,\gamma_0) (\vert \nabla \bv \rvert^2 + \lvert \bv \rvert_\Gamma^2).}
\end{align*}
\dela{   
On the other hand, by Lemma \ref{Poincare-inequality}, we obtain
$$\Vert \bv \Vert_{V}^2 
\leq (1 + 2C_P^2) \lvert \nabla \bv \rvert^2 + 2C_P^2 \lvert \bv \rvert_\Gamma^2 
\leq (1 + 2C_P^2) (\lvert \nabla \bv \rvert^2 + \lvert \bv \rvert_\Gamma^2),
$$
from which we deduce that
  \begin{equation*}
    a_{n,t}(\bv,\bv) \geq  \min (\nu_0,\gamma_0) (1 + 2C_P^2)^{-1} \Vert \bv \Vert_V^2, \;\; \bv \in V.
    \end{equation*}
}
This proves the first part of \eqref{eq4.34}. 
From now on, since $a_{n,t}$ is $V$-continuous and $V$-coercive, using the Riesz lemma and Lax-Milgram theorem, see, for instance, \cite[Theorem 1.6]{Girault+Raviart_1979}, we deduce that there exists a unique $ V^n$-valued   process  $\bu_{\delta,n}(t)$ solving $\mathbb{P}$-a.s. the following equation,
\begin{equation}\label{Eqn-obtain-process-u}
   a_{n,t}(\bu_{\delta,n}(t),\bv)
    = - \int_{\Gamma} \varphi_{\delta,n}(t) \nabla_\Gamma \theta_{\delta,n}(t) \cdot \bv \, \d S  - \int_{\mathcal{O}} \phi_{\delta,n}(t) \nabla \mu_{\delta,n}(t) \cdot \bv \, \d x, \; \; \bv \in V^n \subset V.
\end{equation}
Next, we require that the sequence of processes $(\bu_{\delta,n},\phi_{\delta,n},\varphi_{\delta,n},\mu_{\delta,n},\theta_{\delta,n})_{n \in \mathbb{N}}$ solves the following finite dimensional problem for all $\bv \in V^n$, $\upsilon \in H_{n}^1$, $\upsilon \lvert_\Gamma \in H_{\Gamma,n}^1$, and $(\psi,\psi \lvert_\Gamma) \in \mathcal{V}_n$,
\begin{align}
\label{eq4.34a}
& 2 \int_{Q_t} \nu(\phi_{\delta,n}) D\bu_{\delta,n}: D\bv\, \d x\, \d s + \int_{Q_t} \lambda(\phi_{\delta,n}) \bu_{\delta,n} \cdot \bv \, \d x\, \d s + \int_{\Sigma_t} \gamma(\varphi_{\delta,n}) \bu_{\delta,n} \cdot \bv \, \d S \, \d s \notag \\
& 
=  - \int_{\Sigma_t} \varphi_{\delta,n} \nabla_\Gamma \theta_{\delta,n} \cdot \bv \, \d S \, \d s  - \int_{Q_t} \phi_{\delta,n} \nabla \mu_{\delta,n} \cdot \bv \, \d x\, \d s, 
\\
\label{eq4.34b}
&\duality{\phi_{\delta,n}(t)}{\upsilon}{V_1}{V_1^\prime} - \int_{Q_t} \phi_{\delta,n} \bu_{\delta,n} \cdot \nabla \upsilon \, \d x\, \d s + \int_{Q_t} M_\mathcal{O}(\phi_{\delta,n}) \nabla \mu_{\delta,n} \cdot \nabla \upsilon \, \d x\, \d s  \\ 
 &= \duality{\phi_{0,n}}{\upsilon}{V_1}{V_1^\prime} + \left(\int_0^t F_{1}(\phi_{\delta,n}) \,\d W, \upsilon \right), \notag
\end{align}
%continuous
\begin{align}
 & \duality{\varphi_{\delta,n}(t)}{\upsilon \lvert_\Gamma }{V_\Gamma}{V_\Gamma^\prime} + \int_{\Sigma_t} (-\varphi_{\delta,n} \bu_{\delta,n} \cdot \nabla_\Gamma \upsilon \lvert_\Gamma + M_\Gamma(\varphi_{\delta,n}) \nabla_\Gamma \theta_{\delta,n} \cdot \nabla_\Gamma \upsilon \lvert_\Gamma) \, \d S \, \d s \label{eq4.34c} \\
 &= \duality{\varphi_{0,n}}{\upsilon \lvert_\Gamma}{V_\Gamma}{V_\Gamma^\prime} +  \left(\int_0^t F_2(\varphi_{\delta,n}) \,\d W_\Gamma, \upsilon \lvert_\Gamma \right)_\Gamma, \notag 
                   \\
&\int_{Q_t} \mu_{\delta,n} \psi \, \d x \, \d s + \int_{\Sigma_t} \theta_{\delta,n} \psi \lvert_\Gamma \, \d S\, \d s  
- \eps \int_{Q_t} \nabla \phi_{\delta,n} \cdot \nabla \psi \, \d x \, \d s - \int_{Q_t} \frac1\eps F_\delta^\prime(\phi_{\delta,n}) \psi \, \d x \, \d s \notag \\ 
&= \eps_\Gamma \int_{\Sigma_t} \nabla_\Gamma \varphi_{\delta,n} \cdot \nabla_\Gamma \psi \lvert_\Gamma \, \d S \, \d s +  \frac{1}{\eps_\Gamma} \int_{\Sigma_t} G_\delta^\prime(\varphi_{\delta,n}) \psi \lvert_\Gamma \, \d S \, \d s \label{eq4.34d} \\
&\qquad  + (1/K) \int_{\Sigma_t} (\varphi_{\delta,n} - \phi_{\delta,n}) (\eps \psi \lvert_\Gamma -\eps \psi)\, \d S \, \d s, \; \; t \in [0,T]. \notag
\end{align}
%\begin delta%%%%%%%%%%%%%%%%%%%%%%%%%%%%%%%%%%%%%%%%%%%%%%%%%%%%%%%%%%%%%%%%%%%%%%%%%%%%%%%%%%%%%%%%%%%%%%%%%%%%%%%%%%%%%%%%%%%%%%%%%%%%%%%%%%%%%%
\dela{
Inserting \eqref{approximation-eqts} into \eqref{eq4.34d}, and then choosing as test functions $(\psi,\psi \lvert_\Gamma)= (\bar{\upsilon}_i,0)$ and $(\psi,\psi \lvert_\Gamma)= (0,\bar{\Lambda}_i)$ for each $i\in\{1,\ldots,n\}$, we see that the processes $a_i^n$, $b_i^n$, $c_i^n$, and $d_i^n$ satisfy, respectively, the following system of $2n$ algebraic equations
\begin{align}
c_i^n(t) 
=& \eps \lambda_{\mathcal{O}} a_i^n(t) + \frac{1}{\eps} \int_{\mathcal{O}} F_\delta^\prime\left(\sum_{k=1}^n a_k^n(t) \bar{\upsilon}_k\right) \bar{\upsilon}_i \,\d x - \eps[K] \sum_{k=1}^n \int_{\Gamma} b_k^n(t) \bar{\Lambda}_k \bar{\upsilon}_i \,\d S \notag \\
& + \eps [K] \sum_{k=1}^n \int_{\Gamma} a_k^n(t) \bar{\upsilon}_k \bar{\upsilon}_i \,\d S, \label{eq4.35}
   \\ 
 d_i^n(t)
=& (\eps_\Gamma \lambda^\Gamma_i + [K]) b_i^n(t) + \frac{1}{\eps_\Gamma} \int_{\Gamma} G_\delta^\prime\left(\sum_{k=1}^n b_k^n(t) \bar{\Lambda}_k\right) \bar{\Lambda}_i \, \d S \notag \\
&- [K] \sum_{k=1}^n a_k^n(t) \int_{\mathcal{O}} \bar{\upsilon}_k \bar{\Lambda}_i \, \d S. \label{eq4.36}
\end{align}
        
On the other hand, inserting \eqref{approximation-eqts} into \eqref{eq4.34b} and \eqref{eq4.34c}, choosing as test functions $\upsilon= \bar{\upsilon}_k$, $\upsilon \lvert_\Gamma= \bar{\Lambda}_k$ for each $k\in\{1,\ldots,n\}$, we deduce that the processes $a_k^n$, $b_k^n$, $c_k^n$, and $d_k^n$ satisfy the system of $2n$ ordinary stochastic differential equations 
\begin{align}\label{eqn-Galerkin-01}
&\d a_i^n(t) - \sum_{k=1}^n \int_{\mathcal{O}}  a_k^n(t) \bar{\upsilon}_k \bu_n(t) \cdot \nabla \bar{\upsilon}_i \, \d x + \sum_{k=1}^n c_k^n(t) \int_{\mathcal{O}} M_\mathcal{O} \left(\sum_{k=1}^n a_k^n(t) \bar{\upsilon}_k\right) \nabla \bar{\upsilon}_k \cdot \nabla \bar{\upsilon}_i \, \d x 
\notag \\
&\quad = \left(F_{1} \left(\sum_{k=1}^n a_k^n(t) \bar{\upsilon}_k\right) \,\d W(t), \bar{\upsilon}_i \right),
            \\ 
& \d b_i^n(t) - \sum_{k=1}^n b_k^n(t) \int_{\Gamma} \bar{\Lambda}_k \bu_n(t) \cdot \nabla_\Gamma \bar{\Lambda}_i \, \d S + \sum_{k=1}^n d_k^n(t) \int_{\Gamma} M_\Gamma \left(\sum_{k=1}^n b_k^n(t) \bar{\Lambda}_k\right) \nabla_\Gamma \bar{\Lambda}_k \cdot \nabla_\Gamma \bar{\Lambda}_i \, \d S \notag \\
&\quad=\left(F_{\Gamma} \left(\sum_{k=1}^n b_k^n(t) \bar{\Lambda}_k\right) \,\d W_\Gamma(t), \bar{\Lambda}_i \right)_\Gamma. \label{eqn-Galerkin-02}
\end{align}
Let us point out that, in order to derive the two previous equations, we used the fact that
    \begin{equation*}
       \int_{\mathcal{O}} \nabla \bar{\upsilon}_k \cdot \nabla \bar{\upsilon}_i \, \d x= \int_{\partial \mathcal{O}} \bar{\upsilon}_k \frac{\partial \bar{\upsilon}_i}{\partial \bn} \, \d S - \int_{\mathcal{O}} \bar{\upsilon}_k \Delta \bar{\upsilon}_i \, \d x= \lambda_{\mathcal{O}},
    \end{equation*}
due to \eqref{eq4.28}; and because of \eqref{eq4.29},
\begin{equation*}
 \int_{\Gamma} \nabla_\Gamma \bar{\Lambda}_k \cdot \nabla_\Gamma \bar{\Lambda}_i \, \d S 
 = \lambda_i\lvert_\Gamma.
\end{equation*}

\noindent
Besides, taking into account the identities \eqref{eq4.35} and \eqref{eq4.36}, we further obtain the following system of $2n$ ordinary stochastic differential equations:
\begin{subequations}\label{eq4.37}
\begin{align}
&\d a_i^n(t)  - \sum_{k=1}^n \int_{\mathcal{O}}  a_k^n(t) \bar{\upsilon}_k \bu_n(t) \cdot \nabla \bar{\upsilon}_i \, \d x  + \sum_{k=1}^n \left[ \eps \lambda_{\mathcal{O}} a_k^n(t)  + \frac{1}{\eps} \int_{\mathcal{O}} F_\delta^\prime\left(\sum_{k=1}^n a_k^n(t) \bar{\upsilon}_k\right) \bar{\upsilon}_k \,\d x  \right. \notag \\
&\left.- \eps[K] \sum_{j=1}^n \int_{\Gamma} b_j^n(t) \bar{\Lambda}_j \bar{\upsilon}_k \,\d S + \eps [K] \sum_{j=1}^n \int_{\Gamma} a_j^n(t) \bar{\upsilon}_j \bar{\upsilon}_k \,\d S\right] \int_{\mathcal{O}} M_\mathcal{O} \left(\sum_{k=1}^n a_k^n(t) \bar{\upsilon}_k\right) \nabla \bar{\upsilon}_k \cdot \nabla \bar{\upsilon}_i \, \d x \notag  
     \\
&\quad =  \left(F_{1} \left(\sum_{k=1}^n a_k^n(t) \bar{\upsilon}_k\right) \,\d W(t), \bar{\upsilon}_i \right), \label{eq4.37a}
          \\ \notag \\
&\d b_i^n(t) - \sum_{k=1}^n b_k^n(t) \int_{\Gamma} \bar{\Lambda}_k \bu_n(t) \cdot \nabla_\Gamma \bar{\Lambda}_i \, \d S + \sum_{k=1}^n \bigg[(\eps_\Gamma \lambda_k \lvert_\Gamma + [K]) b_k^n(t) + \frac{1}{\eps_\Gamma} \int_{\Gamma} G_\delta^\prime\left(\sum_{k=1}^n b_k^n(t) \bar{\Lambda}_k\right) \bar{\Lambda}_k \, \d S \notag \\
&\quad - [K] \sum_{j=1}^n a_j^n(t) \int_{\mathcal{O}} \bar{\upsilon}_j \bar{\Lambda}_k \, \d S \bigg] \int_{\Gamma} M_\Gamma \left(\sum_{k=1}^n b_k^n(t) \bar{\Lambda}_k\right) \nabla_\Gamma \bar{\Lambda}_k \cdot \nabla_\Gamma \bar{\Lambda}_i \, \d S \label{eq4.37b} \\
&\quad= \left(F_{\Gamma} \left(\sum_{k=1}^n b_k^n(t) \bar{\Lambda}_k\right) \,\d W_\Gamma(t), \bar{\upsilon}_i \right), \notag
                   \\ \notag \\
& a_i^n(0)= (\phi_0,\bar{\upsilon}_i), \quad b_i^n(0)= (\varphi_0,\bar{\Lambda}_i)_\Gamma. \label{eq4.37c}
\end{align}
 \end{subequations}
%\end delta%%%%%%%%%%%%%%%%%%%%%%%%%%%%%%%%%%%%%%%%%%%%%%%%%%%%%%%%%%%%%%%%%%%%%%%%%%%%%%%%%%%%%%%%%%%%%%%%%%%%%%%%%%%%%%%%%%%%%%%%%%%%%%%%%%%%%% 
}
Throughout, we set 
\begin{equation}
\begin{aligned}
B_{1,n}(\bu,\phi) &\coloneq \mathcal{S}_n B_1(\bu,\phi), \quad \tilde{B}_n(\bu,\varphi) \coloneq \mathcal{S}_{n,\Gamma} \tilde{B}(\bu,\varphi), \\
F_{1,n}(\phi) &\coloneq  \left\{U \ni y \mapsto  \sum_{k=1}^\infty \mathcal{S}_n \sigma_k(\phi) y_k \in H_{n}^1
\right\}  \in \mathscr{T}_2(U,H_{n}^1), \\
F_{2,n}(\varphi) &\coloneq \left\{U_\Gamma \ni y \mapsto  \sum_{k=1}^\infty \mathcal{S}_{n,\Gamma} \tilde{\sigma}_k(\varphi) y_k \in H_{\Gamma,n}^1
\right\}  \in \mathscr{T}_2(U_\Gamma,H_{\Gamma,n}^1),
\end{aligned}
\end{equation}
and we note that
   \begin{align*}
     [F_{1,n}(\phi)](e_{1,k})= \mathcal{S}_n \sigma_k(\phi), \; \; \phi \in H_{n}^1, \mbox{ and } [F_{2,n}(\varphi)](e_{2,k})= \mathcal{S}_{n,\Gamma} \tilde{\sigma}_k(\varphi), \; \; \varphi \in H_{\Gamma,n}^1. 
   \end{align*}
From now on, \eqref{eq4.34a}-\eqref{eq4.34d}, can be seen as an ordinary stochastic differential equations in finite-dimensional space $ H_{n}^1 \times H_{\Gamma,n}^1$ and the solution is an $ H_{n}^1 \times H_{\Gamma,n}^1$-valued process $(\phi_{\delta,n}, \varphi_{\delta,n})$ such that
\begin{equation}\label{eqn-Galerkin-011}
\begin{cases}
\d \phi_{\delta,n}= [- B_{1,n}(\bu_{\delta,n},\phi_{\delta,n}) - \mathcal{S}_n A_{\phi_{\delta,n}}(\mu_{\delta,n})]\, \d t +  F_{1,n}(\phi_{\delta,n}) \,\d W(t),
\\
\d \varphi_{\delta,n}= [- \mathcal{S}_{n,\Gamma} \mathcal{A}_{\varphi_{\delta,n}}(\theta_{\delta,n}) - \tilde{B}_n(\bu_{\delta,n},\varphi_{\delta,n})]\,\d t + F_{2,n}(\varphi_{\delta,n})\, \d W_\Gamma,
\\
 \mu_{\delta,n}(t)= \mathcal{S}_n [-\eps \Delta \phi_{\delta,n}(t) + \eps^{-1} F_\delta^\prime(\phi_{\delta,n}(t))], \; \;  t \in [0,T], \\
\theta_{\delta,n}(t)= \mathcal{S}_{n,\Gamma} [-\eps_\Gamma \Delta_\Gamma \varphi_{\delta,n}(t) + \eps_\Gamma^{-1} G_\delta^\prime(\varphi_{\delta,n}(t)) + \eps \partial_{\bn} \phi_{\delta,n}(t)], \; \;  t \in [0,T],
\\
(\phi_{\delta,n}(0,\cdot),\varphi_{\delta,n}(0,\cdot))= (\phi_{0,n},\varphi_{0,n}).
\end{cases}
\end{equation}
The drift in that system is a function
\begin{equation}
\begin{aligned}
\mathbf{b}_n: &\mathcal{V}_n \ni \bX_{\delta,n}\coloneq (\phi_{\delta,n}, \varphi_{\delta,n}) \mapsto
\Bigl(- B_{1,n}(\bu_{\delta,n},\phi_{\delta,n}) - \mathcal{S}_n A_{\phi_{\delta,n}}(\mu_{\delta,n}), \\
& \hspace{5 truecm} - \mathcal{S}_{n,\Gamma} \mathcal{A}_{\varphi_{\delta,n}}(\theta_{\delta,n}) - \tilde{B}_n(\bu_{\delta,n},\varphi_{\delta,n}) \Bigr) \in \mathcal{V}_n,
\end{aligned}
\end{equation}
with $\bu_{\delta,n}: [0,T] \times \Omega \to V^n$ being the process obtained previously through an application of the Riesz lemma and Lax-Milgram theorem. Note that $\bu_{\delta,n}$ satisfies the equation \eqref{Eqn-obtain-process-u}.

\noindent
The diffusion coefficient is
  \begin{equation}
    \boldsymbol{\sigma}_n: \mathcal{V}_n \ni \bX_{\delta,n}= (\phi_{\delta,n}, \varphi_{\delta,n}) \mapsto \mathrm{diag} \left( F_{1,n}(\phi_{\delta,n}),F_{2,n}(\varphi_{\delta,n})\right) \in \mathscr{T}_2(\mathcal{U},\mathcal{V}_n).
  \end{equation}
Using the notation introduced above, the system \eqref{eqn-Galerkin-011} can be written in the following compact form:
   \begin{equation}\label{compact-stochastic-problem}
     \d \bX_{\delta,n}= \mathbf{b}_n(\bX_{\delta,n})\, \d t + \boldsymbol{\sigma}_n(\bX_{\delta,n})\,\d \mathcal{W}.
   \end{equation}
Before we continuous, let us think a bit about the above equation.
First of all, the space $\mathcal{V}_n$ is finite-dimensional and, therefore, all norms on it are equivalent. In the calculations below, we will use the norm which is the norm inherited from the bulk surface product space $\mathbb{V}$. Thus, we will use the It\^o Lemma to the free energy functional 
\begin{equation}\label{Eqn-Lyaponov-function}
\begin{aligned}
\mathcal{E}: \mathcal{V}_n \ni (\phi,\varphi) \mapsto \Bigl[\int_{\mathcal{O}} \left(\frac{\eps}{2} \lvert \nabla \phi \rvert^2 + \frac{1}{\eps} F_\delta(\phi) \right)\d x &+ \int_{\Gamma} \Bigl(\frac{\eps_\Gamma}{2} \lvert \nabla_\Gamma \varphi \rvert^2 + \frac{1}{\eps_\Gamma} G_\delta(\varphi) \\
& \hspace{1 truecm} + \frac{\eps}{2 K} (\varphi - \phi)^2 \Bigr)\d S\Bigr] \in \mathbb{R}.
\end{aligned}
\end{equation}
Let us point out that had if the maps $\boldsymbol{b}_n$ and $\boldsymbol{\sigma}_n$ were globally Lipschitz, one would be able to  apply the standard theory of an  stochastic differential equations in finite dimensional spaces. However, this is not the case. Fortunately,  by some clever calculations, we can prove that
the system \eqref{compact-stochastic-problem} satisfies the assumptions  of  Lemma \ref{stochastic-ordinary-theorem}. Hence, 
we deduce that there exists a global maximal solution, i.e. an adapted process $\bX_{\delta,n}=\{(\phi_{\delta,n}(t), \varphi_{\delta,n}(t)), t \in [0,T]\}$, in $\mathcal{V}_n$, unique solution of the stochastic differential equation \eqref{compact-stochastic-problem}. In particular, we infer that for every $ r \geq 2$, 
\[
\bX_{\delta,n} \in L^r(\Omega;\, C([0,T];\mathcal{V}_n)) \mbox{ and } (\mu_{\delta,n}, \theta_{\delta,n}) \in L^r(\Omega;\, C([0,T];\mathcal{V}_n)).
\]
%%%%%%%%%%%%%%%%%%%%%%%%%%%%%%%%%%%%%%%%%%%%%%%%%%%%%%%%%%%%%%%%%%%%%%%%%%%%%%%%%%%%%%%%%%%%%%%%%%%%%%%%%%%%%%%%%%%%%%%%%%%%%%%%%%%%%%%
%begin delate
\dela{
We now prove the following existence theorem.
\begin{theorem}\label{thm-Galerkin-01}
  Let the assumptions $(H1)\text{-}(H8)$ be satisfied. For every $n\in \mathbb{N}$, there exists a unique couple of $(\mathcal{F}_t)_t\text{-}$adapted processes $\boldsymbol{a}_n:\Omega \times [0,T] \to \mathbb{R}^n$ and $\boldsymbol{b}_n:\Omega \times [0,T] \to \mathbb{R}^n$ satisfying problem \eqref{eq4.37a}-\eqref{eq4.37c} or \eqref{compact-stochastic-problem}. Furthermore, for every $r\geq 2$, we have
\[
\boldsymbol{a}_n,\,\boldsymbol{b}_n \in L^r(\Omega;\, C([0,T];\mathbb{R}^n)), 
\]
implying
\[
\phi_{\delta,n},\; \mu_{\delta,n} \in L^r(\Omega;\, C([0,T];H_1^n)); \quad \varphi_{\delta,n},\; \theta_{\delta,n} \in L^r(\Omega;\, C([0,T];H_{\Gamma,n}^1)).
\]
\end{theorem}
\begin{proof}
Let us point out that if $\boldsymbol{b}(\cdot,\cdot)$ and $\boldsymbol{\sigma}(\cdot,\cdot)$ are globally Lipschitz-continuous, one can apply the standard theory of abstract stochastic evolution equations. However, our case does not fall in that case, so we need to verify the conditions of the vectorial version of Lemma \ref{stochastic-ordinary-theorem} as follows.

\noindent
We observe that
\begin{equation}\label{eq4.55}
\begin{aligned}
\langle \boldsymbol{b} (\cdot,\boldsymbol{\Upsilon}_n) - \boldsymbol{b}(\cdot,\Tilde{\Upsilon}_n),\boldsymbol{\Upsilon}_n - \Tilde{\Upsilon}_n\rangle_{\mathbb{V}^\prime,\mathbb{V}}
 = & \langle - B_1(\bu_{\delta,n},\phi_{\delta,n}) + B_1(\bu_{\delta,n},\tilde{\phi}_n^\delta),\phi_n - \tilde{\phi}_n^\delta \rangle_{V_1^\prime,V_1} \\
 &+ \langle - A_{\phi_{\delta,n}}(\mu_{\delta,n}) + A_{\tilde{\phi}_n^\delta}(\tilde{\mu}_n^\delta),\phi_{\delta,n} - \tilde{\phi}_n^\delta \rangle_{V_1^\prime,V_1}\\
 &+ \langle - \mathcal{A}_{\varphi_{\delta,n}}(\theta_{\delta,n}) +  \mathcal{A}_{\Tilde{\varphi}_n^\delta}(\Tilde{\theta}_n^\delta),\varphi_{\delta,n} - \Tilde{\varphi}_n^\delta \rangle_{V_\Gamma^\prime,V_\Gamma} \\
 &+ \langle - \tilde{B}(\bu_{\delta,n},\varphi_{\delta,n}) + \tilde{B}(\bu_{\delta,n},\Tilde{\varphi}_n^\delta),\varphi_{\delta,n} - \tilde{\varphi}_n^\delta \rangle_{V_\Gamma^\prime,V_\Gamma},
\end{aligned}
\end{equation}
             \begin{equation}\label{difference_chemical_potential_n}
             \begin{aligned}
                 \tilde{\mu}_n^\delta - \mu_{\delta,n}&= -\varepsilon \Delta (\tilde{\phi}_n^\delta - \phi_{\delta,n}) + \frac{1}{\varepsilon} (F_\delta^\prime(\tilde{\phi}_n^\delta) - F_\delta^\prime(\phi_{\delta,n})), 
                     \\
                 \nabla \tilde{\mu}_n^\delta&= -\varepsilon \nabla \Delta \tilde{\phi}_n^\delta + \frac{1}{\varepsilon} F_\delta''(\tilde{\phi}_n^\delta) \nabla \tilde{\phi}_n^\delta, 
                        \\
                 \nabla(\tilde{\mu}_n^\delta - \mu_{\delta,n})&= -\varepsilon \nabla \Delta (\tilde{\phi}_n^\delta - \phi_{\delta,n}) + \frac{1}{\varepsilon} F_\delta''(\tilde{\phi}_n^\delta) \nabla (\tilde{\phi}_n^\delta - \phi_{\delta,n}) + \frac{1}{\varepsilon} \left[F_\delta''(\tilde{\phi}_n^\delta) - F_\delta''(\phi_{\delta,n})\right] \nabla \phi_{\delta,n},
             \end{aligned}
             \end{equation}
and due to \eqref{eq1.7a} and the fact that $K>0$,
\begin{equation}
             \begin{aligned}\label{eq4.57} 
                 \tilde{\theta}_n^\delta - \theta_{\delta,n}&= -\varepsilon_\Gamma \Delta_\Gamma (\tilde{\varphi}_n^\delta - \varphi_{\delta,n}) + \frac{1}{\varepsilon_\Gamma} (G^\prime(\tilde{\varphi}_n^\delta) - G^\prime(\varphi_{\delta,n})) + \frac{\eps}{K} [\tilde{\varphi}_n^\delta - \varphi_{\delta,n} - (\tilde{\phi}_n^\delta - \phi_{\delta,n})], \\
                 &\nabla_\Gamma \tilde{\theta}_n^\delta= -\varepsilon_\Gamma \nabla_\Gamma \Delta_\Gamma \tilde{\varphi}_n^\delta + \frac{1}{\varepsilon_\Gamma} G''(\tilde{\varphi}_n^\delta) \nabla_\Gamma \tilde{\varphi}_n^\delta + \frac{\eps}{K} \nabla_\Gamma (\tilde{\varphi}_n^\delta - \tilde{\phi}_n^\delta).
             \end{aligned}
             \end{equation}
Let us proceed with estimating all the terms on the right-hand side of \eqref{eq4.55}. \newline
For the first term on the right-hand side of \eqref{eq4.55}, using \eqref{B1-Property}, we have
\begin{equation}\label{eq4.58}
\begin{aligned}
\langle - B_1(\bu_{\delta,n},\phi_{\delta,n}) + B_1(\bu_{\delta,n},\tilde{\phi}_n^\delta),\phi_{\delta,n} - 
 \tilde{\phi}_n^\delta \rangle_{V_1^\prime,V_1}
  &= \langle B_1(\bu_{\delta,n},\tilde{\phi}_n^\delta - \phi_{\delta,n}),\phi_{\delta,n} - \tilde{\phi}_n^\delta \rangle_{V_1^\prime,V_1} \\
  &\leq \|B_1(\bu_{\delta,n},\tilde{\phi}_n^\delta - \phi_{\delta,n})\|_{V_1^\prime} \|\phi_{\delta,n} - \tilde{\phi}_n^\delta\|_{V_1} \\
  &\leq C(\mathcal{O}) \|\bu_{\delta,n}\|_{V} \|\phi_{\delta,n} - \tilde{\phi}_n^\delta\|_{V_1}^2.
\end{aligned}
\end{equation}
For the fourth term, using \eqref{Property-tilde B}, we find
\begin{equation}\label{eq4.59}
 \begin{aligned}
 \langle - \tilde{B}(\bu_{\delta,n},\varphi_{\delta,n}) + \tilde{B}(\bu_{\delta,n},\tilde{\varphi}_n^\delta),\varphi_{\delta,n} - \tilde{\varphi}_n^\delta \rangle_{V_\Gamma^\prime,V_\Gamma}
  &= \langle \tilde{B}(\bu_{\delta,n}, \tilde{\varphi}_n^\delta - \varphi_{\delta,n}),\varphi_{\delta,n} - \tilde{\varphi}_n^\delta \rangle_{V_\Gamma^\prime,V_\Gamma} \\
  &\leq \|\tilde{B}(\bu_{\delta,n}, \tilde{\varphi}_n^\delta - \varphi_{\delta,n})\|_{V_\Gamma^\prime} \|\varphi_{\delta,n} - \tilde{\varphi}_n^\delta\|_{V_\Gamma} \\
  &\leq C(\Gamma,\mathcal{O}) \|\bu_{\delta,n}\|_{V} \|\varphi_{\delta,n} - \tilde{\varphi}_n^\delta\|_{V_\Gamma}^2.
\end{aligned}
\end{equation}
Before begin estimating the second term on the right-hand side of \eqref{eq4.55}, we observe that
\begin{align}\label{eq4.60a}
 &\langle - A_{\phi_{\delta,n}}(\mu_{\delta,n}) + A_{\tilde{\phi}_n^\delta}(\tilde{\mu}_n^\delta),\phi_{\delta,n} - \tilde{\phi}_n^\delta \rangle_{V_1^\prime,V_1} \notag \\
 &= - \int_{\mathcal{O}} M_\mathcal{O}(\phi_{\delta,n}) \nabla \mu_{\delta,n} \cdot \nabla (\phi_{\delta,n} - \tilde{\phi}_n^\delta)\, \d x + \int_{\mathcal{O}} M_\mathcal{O}(\tilde{\phi}_n^\delta) \nabla \tilde{\mu}_n^\delta \cdot \nabla (\phi_{\delta,n} - \tilde{\phi}_n^\delta)\, \d x \notag \\
 &= - \int_{\mathcal{O}} M_\mathcal{O}(\phi_{\delta,n}) \nabla \mu_{\delta,n} \cdot \nabla (\phi_{\delta,n} - \tilde{\phi}_n^\delta)\, \d x + \int_{\mathcal{O}} M_\mathcal{O}(\phi_{\delta,n}) \nabla \tilde{\mu}_n^\delta \cdot \nabla (\phi_{\delta,n} - \tilde{\phi}_n^\delta)\, \d x \\
 &\quad - \int_{\mathcal{O}} M_\mathcal{O}(\phi_{\delta,n}) \nabla \tilde{\mu}_n^\delta \cdot \nabla (\phi_{\delta,n} - \tilde{\phi}_n^\delta)\, \d x + \int_{\mathcal{O}} M_\mathcal{O}(\tilde{\phi}_n^\delta) \nabla \tilde{\mu}_n^\delta \cdot \nabla (\phi_{\delta,n} - \tilde{\phi}_n^\delta)\, \d x \notag \\
 &= \int_{\mathcal{O}} M_\mathcal{O}(\phi_{\delta,n}) \nabla (\tilde{\mu}_n^\delta - \mu_{\delta,n}) \cdot \nabla (\phi_{\delta,n} - \tilde{\phi}_n^\delta)\, \d x + \int_{\mathcal{O}} [M_\mathcal{O}(\tilde{\phi}_n^\delta) - M_\mathcal{O}(\phi_{\delta,n})] \nabla \tilde{\mu}_n^\delta \cdot \nabla (\phi_{\delta,n} - \tilde{\phi}_n^\delta)\, \d x. \notag
\end{align}
On the other hand, taking into account \eqref{difference_chemical_potential_n}, we find that
\begin{equation}\label{eq4.60}
\begin{aligned}
 & \langle - A_{\phi_{\delta,n}}(\mu_{\delta,n}) + A_{\tilde{\phi}_n^\delta}(\tilde{\mu}_n^\delta),\phi_{\delta,n} - \tilde{\phi}_n^\delta \rangle_{V_1^\prime,V_1} 
\\
 =&-\varepsilon \int_{\mathcal{O}} M_\mathcal{O}(\phi_{\delta,n}) \nabla \Delta (\tilde{\phi}_n^\delta - \phi_{\delta,n}) \cdot \nabla (\phi_{\delta,n} - \tilde{\phi}_n^\delta)\, \d x \\
 &+ \frac{1}{\varepsilon} \int_{\mathcal{O}} M_\mathcal{O}(\phi_n)F_\delta''(\tilde{\phi}_n^\delta) \nabla (\tilde{\phi}_n^\delta - \phi_{\delta,n}) \cdot \nabla (\phi_{\delta,n} - \tilde{\phi}_n^\delta)\, \d x 
       \\
 &+ \frac{1}{\varepsilon} \int_{\mathcal{O}} M_\mathcal{O}(\phi_{\delta,n}) \left[F_\delta''(\tilde{\phi}_n^\delta) - F_\delta''(\phi_{\delta,n})\right] \nabla \phi_{\delta,n} \cdot \nabla (\phi_{\delta,n} - \tilde{\phi}_n^\delta)\, \d x 
            \\
 & -\varepsilon \int_{\mathcal{O}} [M_\mathcal{O}(\tilde{\phi}_n^\delta) - M_\mathcal{O}(\phi_{\delta,n})]  \nabla \Delta \Tilde{\phi}_n^\delta\cdot \nabla (\phi_{\delta,n} - \tilde{\phi}_n^\delta)\, \d x 
                \\
 & + \frac{1}{\varepsilon} \int_{\mathcal{O}} [M_\mathcal{O}(\tilde{\phi}_n^\delta) - M_\mathcal{O}(\phi_{\delta,n})] F_\delta''(\tilde{\phi}_n^\delta) \nabla \tilde{\phi}_n^\delta \cdot \nabla (\phi_{\delta,n} - \tilde{\phi}_n^\delta)\, \d x.
\end{aligned}
\end{equation} 
Now thanks to H\"older's inequality, using $(H4)$ together with the fact that all norms are equivalent on finite dimensional space $H^1_n$, we obtain
\begin{equation}\label{eq4.61}
\begin{aligned}
-\varepsilon \int_{\mathcal{O}} M_\mathcal{O}(\phi_{\delta,n}) \nabla \Delta (\tilde{\phi}_n^\delta - \phi_{\delta,n}) \cdot \nabla (\phi_{\delta,n} - \tilde{\phi}_n^\delta)\, \d x 
&\leq \varepsilon M_\mathcal{O}^* \lvert \nabla \Delta (\tilde{\phi}_n^\delta - \phi_n) \rvert \lvert \nabla (\phi_n - \tilde{\phi}_n^\delta) \rvert \\
&\leq \varepsilon M_\mathcal{O}^* \|\tilde{\phi}_n^\delta - \phi_{\delta,n}\|_{H^3(\mathcal{O})} \|\tilde{\phi}_n^\delta - \phi_{\delta,n}\|_{V_1} \\
&\leq \varepsilon M_\mathcal{O}^* C(n) \|\tilde{\phi}_n^\delta - \phi_{\delta,n}\|_{V_1}^2.
\end{aligned}
\end{equation}
Since $\lvert F''_\delta \rvert \leq C_\delta$ for a certain constant $C_\delta>0$ independent of $n$, using $(H4)$, we find
\begin{equation}\label{eq4.62}
\begin{aligned}
 \frac{1}{\varepsilon} \int_{\mathcal{O}} M_\mathcal{O}(\phi_{\delta,n}) F''_\delta(\tilde{\phi}_n^\delta) \nabla (\tilde{\phi}_n^\delta - \phi_{\delta,n}) \cdot \nabla (\phi_{\delta,n} - \tilde{\phi}_n^\delta)\, \d x 
 &\leq M_\mathcal{O}^* \varepsilon^{-1} C_\delta \int_{\mathcal{O}} \vert \nabla (\phi_{\delta,n} - \tilde{\phi}_n^\delta) \rvert^2\, \d x \\
 &\leq M_\mathcal{O}^* \varepsilon^{-1} C_\delta \|\tilde{\phi}_n^\delta - \phi_{\delta,n}\|_{V_1}^2.
 \end{aligned}
\end{equation}
Arguing similarly as previously, we obtain
\begin{align*}
 \frac{1}{\varepsilon} \int_{\mathcal{O}} M_\mathcal{O}(\phi_{\delta,n}) \left[F_\delta''(\tilde{\phi}_n^\delta) - F_\delta''(\phi_{\delta,n})\right] \nabla \phi_{\delta,n} \cdot \nabla (\phi_{\delta,n} - \tilde{\phi}_n^\delta)\, \d x  
 &\leq 2 \varepsilon^{-1} M_\mathcal{O}^* C_\delta \int_{\mathcal{O}} \lvert \nabla \phi_{\delta,n} \rvert \lvert \nabla (\phi_{\delta,n} - \tilde{\phi}_n^\delta) \rvert\, \d x \\
 &\leq 2 \varepsilon^{-1}  M_\mathcal{O}^* C_\delta \|\phi_{\delta,n}\|_{V_1} \|\phi_{\delta,n} - \tilde{\phi}_n^\delta\|_{V_1}.
\end{align*}
If $\|(\phi_n,\varphi_n)\|_{\mathbb{V}}\leq R$, we further obtain
\begin{equation}\label{eq4.63}
\begin{aligned}
 &\frac{1}{\varepsilon} \int_{\mathcal{O}} M_\mathcal{O}(\phi_{\delta,n}) \left[F''_\delta(\tilde{\phi}_n^\delta) - F''_\delta(\phi_{\delta,n})\right] \nabla \phi_n \cdot \nabla (\phi_{\delta,n} - \tilde{\phi}_n^\delta)\, \d x \\
 &\leq 2 \varepsilon^{-1} M_\mathcal{O}^* C_\delta R \|\phi_{\delta,n} - \tilde{\phi}_n^\delta\|_{V_1} \\
 &\leq \varepsilon^{-1} M_\mathcal{O}^* C_\delta R (1 + \|\phi_{\delta,n} - \tilde{\phi}_n^\delta\|_{V_1}^2).
 \end{aligned}
 \end{equation}
Thanks to H\"older's inequality, using $(H4)$ together with the fact that all norms are equivalent on finite dimensional space $H^1_n$, we get
 \begin{equation}\label{eq4.64}
\begin{aligned}
-\varepsilon \int_{\mathcal{O}} [M_\mathcal{O}(\tilde{\phi}_n^\delta) - M_\mathcal{O}(\phi_{\delta,n})]  \nabla \Delta \tilde{\phi}_n^\delta \cdot \nabla (\phi_{\delta,n} - \tilde{\phi}_n^\delta)\, \d x 
&\leq 2 \varepsilon M_\mathcal{O}^* \lvert \nabla \Delta \tilde{\phi}_n^\delta \rvert \lvert \nabla (\phi_{\delta,n} - \tilde{\phi}_n^\delta) \rvert \\
&\leq 2 \varepsilon M_\mathcal{O}^* \|\tilde{\phi}_n^\delta\|_{H^3(\mathcal{O})} \|\tilde{\phi}_n^\delta - \phi_{\delta,n}\|_{V_1} \\
&\leq 2 \varepsilon M_\mathcal{O}^* C(n) \|\tilde{\phi}_n^\delta\|_{V_1} \|\tilde{\phi}_n^\delta - \phi_{\delta,n}\|_{V_1} \\
&\leq 2 \varepsilon M_\mathcal{O}^* C(n) R \|\tilde{\phi}_n^\delta - \phi_{\delta,n}\|_{V_1} \\
&\leq \varepsilon M_\mathcal{O}^* C(n) R (1 + \|\tilde{\phi}_n^\delta - \phi_{\delta,n}\|_{V_1}^2).
\end{aligned}
\end{equation}
Once more, owing to $(H4)$ and since $\lvert F''_\delta \rvert \leq C_\delta$, we obtain
\begin{align*}
\frac{1}{\varepsilon} \int_{\mathcal{O}} [M_\mathcal{O}(\tilde{\phi}_n^\delta) - M_\mathcal{O}(\phi_{\delta,n})] F''_\delta(\tilde{\phi}_n^\delta) \nabla \tilde{\phi}_n^\delta \cdot \nabla (\phi_{\delta,n} - \tilde{\phi}_n^\delta)\, \d x 
&\leq 2 \varepsilon^{-1} M_\mathcal{O}^* C_\delta \int_{\mathcal{O}} \lvert \nabla \tilde{\phi}_n^\delta \rvert \lvert \nabla (\phi_{\delta,n} - \tilde{\phi}_n^\delta) \rvert\, \d x \\
&\leq 2 \varepsilon^{-1} M_\mathcal{O}^* C_\delta  \|\tilde{\phi}_n^\delta\|_{V_1} \|\phi_{\delta,n} - \tilde{\phi}_n^\delta\|_{V_1}.
\end{align*}
Taking now into account the fact that $\|(\tilde{\phi}_n,\tilde{\varphi}_n)\|_{\mathbb{V}}\leq R$, we further obtain
\begin{equation}\label{eq4.65}
\begin{aligned}
&\frac{1}{\varepsilon} \int_{\mathcal{O}} [M_\mathcal{O}(\tilde{\phi}_n^\delta) - M_\mathcal{O}(\phi_{\delta,n})] F''_\delta(\tilde{\phi}_n^\delta) \nabla \tilde{\phi}_n^\delta \cdot \nabla (\phi_{\delta,n} - \tilde{\phi}_n^\delta)\, \d x \\
&\leq 2 \varepsilon^{-1}  M_\mathcal{O}^* C_\delta R \|\phi_{\delta,n} - \tilde{\phi}_n^\delta\|_{V_1} \\
&\leq \varepsilon^{-1} M_\mathcal{O}^* C_\delta R (1 + \|\phi_{\delta,n} - \tilde{\phi}_n^\delta\|_{V_1}^2).
\end{aligned}
\end{equation}
Plugging the estimates \eqref{eq4.61}-\eqref{eq4.65} on the right-hand side of \eqref{eq4.60}, we arrive at
  \begin{equation}\label{eq4.66}
   \begin{aligned}
     & \langle - A_{\phi_{\delta,n}}(\mu_{\delta,n}) + A_{\tilde{\phi}_n^\delta}(\tilde{\mu}_n^\delta),\phi_{\delta,n} - \tilde{\phi}_n^\delta \rangle_{V_1^\prime,V_1} \\
     &\leq M_\mathcal{O}^* [\varepsilon  C(n)  + \varepsilon^{-1}  C_\delta] \|\tilde{\phi}_n^\delta - \phi_{\delta,n}\|_{V_1}^2 \\
     & + M_\mathcal{O}^* [ 2 \varepsilon^{-1} C_\delta   + \varepsilon C(n)] R (1 + \|\phi_{\delta,n} - \tilde{\phi}_n^\delta\|_{V_1}^2).  
   \end{aligned}
  \end{equation}
For the third term on the right-hand side of \eqref{eq4.55}, as in \eqref{eq4.60a}, we have
\begin{align*}
 &\langle - \mathcal{A}_{\varphi_{\delta,n}}(\theta_{\delta,n}) +  \mathcal{A}_{\Tilde{\varphi}_n^\delta}(\tilde{\theta}_n^\delta),\varphi_{\delta,n} - \tilde{\varphi}_n^\delta \rangle_{V_\Gamma^\prime,V_\Gamma} \\
 &=\int_{\Gamma} M_\Gamma(\varphi_{\delta,n}) \nabla_\Gamma (\tilde{\theta}_n^\delta - \theta_{\delta,n}) \cdot \nabla_\Gamma (\varphi_{\delta,n} - \tilde{\varphi}_n^\delta)\, \d S + \int_{\Gamma} [M_\Gamma (\tilde{\varphi}_n^\delta) - M_\Gamma(\varphi_{\delta,n})] \nabla_\Gamma \tilde{\theta}_n^\delta \cdot \nabla_\Gamma (\varphi_{\delta,n} - \tilde{\varphi}_n^\delta)\, \d S.
\end{align*}
Besides, due to \eqref{eq4.57} and $(H4)$, we infer that
\begin{equation}\label{eq4.68}
\begin{aligned}
&\langle - \mathcal{A}_{\varphi_{\delta,n}}(\theta_{\delta,n}) +  \mathcal{A}_{\tilde{\varphi}_n^\delta}(\tilde{\theta}_n^\delta),\varphi_{\delta,n} - \tilde{\varphi}_n^\delta \rangle_{V_\Gamma^\prime,V_\Gamma} 
   \\
&= -\eps_\Gamma \int_{\Gamma} M_\Gamma(\varphi_{\delta,n}) \nabla_\Gamma \Delta_\Gamma (\tilde{\varphi}_n^\delta - \varphi_{\delta,n}) \cdot \nabla_\Gamma (\varphi_{\delta,n} - \tilde{\varphi}_n^\delta)\, \d S 
       \\
&\quad + \frac{1}{\eps_\Gamma} \int_{\Gamma} M_\Gamma(\varphi_{\delta,n}) \nabla_\Gamma (G_\delta^\prime(\tilde{\varphi}_n^\delta) - G_\delta^\prime(\varphi_{\delta,n})) \cdot \nabla_\Gamma (\varphi_{\delta,n} - \tilde{\varphi}_n^\delta)\, \d S  
             \\
&\quad - \underbrace{\frac{1}{K} \int_{\Gamma} M_\Gamma(\varphi_{\delta,n}) |\nabla_\Gamma (\varphi_{\delta,n} - \tilde{\varphi}_n^\delta)|^2\, \d S}_{\leq 0} 
                \\
&\quad + \frac{1}{K} \int_{\Gamma} M_\Gamma(\varphi_{\delta,n}) \nabla_\Gamma (\phi_{\delta,n} - \tilde{\phi}_n^\delta) \cdot \nabla_\Gamma (\varphi_{\delta,n} - \tilde{\varphi}_n^\delta)\, \d S
                      \\
&\quad - \eps_\Gamma \int_{\Gamma} [M_\Gamma (\tilde{\varphi}_n^\delta) - M_\Gamma(\varphi_{\delta,n})] \nabla_\Gamma \Delta_\Gamma \tilde{\varphi}_n^\delta \cdot \nabla_\Gamma (\varphi_{\delta,n} - \tilde{\varphi}_n^\delta)\, \d S  \\
&\quad + \frac{1}{\varepsilon_\Gamma} \int_{\Gamma} [M_\Gamma (\tilde{\varphi}_n^\delta) - M_\Gamma(\varphi_{\delta,n})] G''_\delta(\tilde{\varphi}_n^\delta) \nabla_\Gamma \tilde{\varphi}_n^\delta \cdot \nabla_\Gamma (\varphi_{\delta,n} - \tilde{\varphi}_n^\delta)\, \d S 
                               \\
&\quad + \frac{1}{K} \int_{\Gamma} [M_\Gamma (\tilde{\varphi}_n^\delta) - M_\Gamma(\varphi_{\delta,n})] \nabla_\Gamma (\tilde{\varphi}_n^\delta - \tilde{\phi}_n^\delta) \cdot \nabla_\Gamma (\varphi_{\delta,n} - \tilde{\varphi}_n^\delta)\, \d S.
\end{aligned}
\end{equation}
By H\"older's inequality, the assumption $(H4)$ and the fact that all norms are equivalent on finite dimensional space $H_{\Gamma,n}^1$, we get
\begin{align*}
 &-\eps_\Gamma \int_{\Gamma} M_\Gamma(\varphi_{\delta,n}) \nabla_\Gamma \Delta_\Gamma (\tilde{\varphi}_n^\delta - \varphi_{\delta,n}) \cdot \nabla_\Gamma (\varphi_{\delta,n} - \tilde{\varphi}_n^\delta)\, \d S \\
 &\leq \eps_\Gamma M_\Gamma^\ast |\nabla_\Gamma \Delta_\Gamma (\tilde{\varphi}_n^\delta - \varphi_{\delta,n})|_\Gamma |\nabla_\Gamma (\varphi_{\delta,n} - \tilde{\varphi}_n^\delta)|_\Gamma \\
 &\leq \eps_\Gamma M_\Gamma^\ast C(n) \|\varphi_{\delta,n} - \tilde{\varphi}_n^\delta\|_{V_\Gamma}^2,
\end{align*}
           \begin{align*}
            &-\eps_\Gamma \int_{\Gamma} [M_\Gamma (\tilde{\varphi}_n^\delta) - M_\Gamma(\varphi_{\delta,n})] \nabla_\Gamma \Delta_\Gamma \tilde{\varphi}_n^\delta \cdot \nabla_\Gamma (\varphi_{\delta,n} - \tilde{\varphi}_n^\delta)\, \d S \\
            &\leq 2 \eps_\Gamma M_\Gamma^\ast |\nabla_\Gamma \Delta_\Gamma \tilde{\varphi}_n^\delta|_\Gamma  |\nabla_\Gamma (\varphi_{\delta,n} - \tilde{\varphi}_n^\delta)|_\Gamma \\
            &\leq 2 \eps_\Gamma M_\Gamma^\ast C(n) \|\tilde{\varphi}_n^\delta\|_{V_\Gamma} \|\varphi_{\delta,n} - \tilde{\varphi}_n^\delta\|_{V_\Gamma} \\
            &\leq 2 \eps_\Gamma R M_\Gamma^\ast C(n)  \|\varphi_{\delta,n} - \tilde{\varphi}_n^\delta\|_{V_\Gamma} \\
            &\leq \eps_\Gamma R M_\Gamma^\ast C(n) [1 + \|\varphi_{\delta,n} - \tilde{\varphi}_n^\delta\|_{V_\Gamma}^2 ],
           \end{align*}
           
\begin{align*}
& \frac{1}{\eps_\Gamma} \int_{\Gamma} [M_\Gamma (\tilde{\varphi}_n^\delta) - M_\Gamma(\varphi_{\delta,n})] G''_\delta(\tilde{\varphi}_n^\delta) \nabla_\Gamma \tilde{\varphi}_n^\delta \cdot \nabla_\Gamma (\varphi_{\delta,n} - \tilde{\varphi}_n^\delta)\, \d S \\
&\leq 2 \eps_\Gamma^{-1} M_\Gamma^\ast C_\delta |\nabla_\Gamma \tilde{\varphi}_n^\delta| |\nabla_\Gamma (\varphi_{\delta,n} - \tilde{\varphi}_n^\delta)| \\
&\leq 2 \eps_\Gamma^{-1} M_\Gamma^\ast C_\delta \|\tilde{\varphi}_n^\delta\|_{V_\Gamma} \|\varphi_{\delta,n} - \tilde{\varphi}_n^\delta\|_{V_\Gamma} \\
&\leq  2 \eps_\Gamma^{-1} M_\Gamma^\ast C_\delta R \|\varphi_{\delta,n} - \tilde{\varphi}_n^\delta\|_{V_\Gamma} \\
&\leq \eps_\Gamma^{-1} M_\Gamma^\ast C_\delta R ( 1 + \|\varphi_{\delta,n} - \tilde{\varphi}_n^\delta\|_{V_\Gamma}^2),
\end{align*}
where we have also used the fact that $|G''_\delta|\leq C_\delta$ for a certain constant $C_\delta>0$ independent of $n$.

\noindent
By \eqref{condition_ G'_and_G''}, we have
            \begin{align*}
            &\frac{1}{\eps_\Gamma} \int_{\Gamma} M_\Gamma(\varphi_{\delta,n}) \nabla_\Gamma (G_\delta^\prime(\tilde{\varphi}_n^\delta) - G_\delta^\prime(\varphi_{\delta,n})) \cdot \nabla_\Gamma (\varphi_{\delta,n} - \tilde{\varphi}_n^\delta)\, \d S \\
            &= -\frac{1}{\eps_\Gamma} \int_{\Gamma} M_\Gamma(\varphi_{\delta,n}) [-G''_\delta(\tilde{\varphi}_n^\delta) \nabla_\Gamma \tilde{\varphi}_n^\delta + G''_\delta(\varphi_{\delta,n}) \nabla_\Gamma \varphi_{\delta,n}] \cdot \nabla_\Gamma (\varphi_{\delta,n} - \tilde{\varphi}_n^\delta)\, \d S \\
            &= -\frac{1}{\eps_\Gamma} \int_{\Gamma} M_\Gamma(\varphi_{\delta,n}) G''_\delta(\tilde{\varphi}_n^\delta) |\nabla_\Gamma (\varphi_{\delta,n} - \tilde{\varphi}_n^\delta)|^2\, \d S \\
            &\quad + \frac{1}{\eps_\Gamma} \int_{\Gamma} M_\Gamma(\varphi_{\delta,n}) [G''_\delta(\tilde{\varphi}_n^\delta) - G''_\delta(\varphi_{\delta,n})] \nabla_\Gamma \varphi_{\delta,n} \cdot \nabla_\Gamma (\varphi_{\delta,n} - \tilde{\varphi}_n^\delta)\, \d S \\
            &= -\underbrace{\frac{1}{\eps_\Gamma} \int_{\Gamma} M_\Gamma(\varphi_{\delta,n}) (G''_\delta(\tilde{\varphi}_n^\delta) + \tilde{c}_G) |\nabla_\Gamma (\varphi_{\delta,n} - \tilde{\varphi}_n^\delta)|^2\, \d S }_{\leq 0} + \frac{\tilde{c}_G}{\eps_\Gamma} \int_{\Gamma} M_\Gamma(\varphi_{\delta,n}) |\nabla_\Gamma (\varphi_{\delta,n} - \tilde{\varphi}_n^\delta)|^2\, \d S \\
            &\quad + \frac{1}{\eps_\Gamma} \int_{\Gamma} M_\Gamma(\varphi_{\delta,n}) [G''_\delta(\tilde{\varphi}_n^\delta) - G''_\delta(\varphi_{\delta,n})] \nabla_\Gamma \varphi_{\delta,n} \cdot \nabla_\Gamma (\varphi_{\delta,n} - \tilde{\varphi}_n^\delta)\, \d S
           \end{align*}
from which and the assumption $(H4)$ and the fact that $|G''_\delta|\leq C_\delta$ with $C_\delta$ independent of $n$, we deduce
\begin{align*}
&\frac{1}{\eps_\Gamma} \int_{\Gamma} M_\Gamma(\varphi_{\delta,n}) \nabla_\Gamma (G_\delta^\prime(\tilde{\varphi}_n^\delta) - G_\delta^\prime(\varphi_{\delta,n})) \cdot \nabla_\Gamma (\varphi_{\delta,n} - \tilde{\varphi}_n^\delta)\, \d S \\
&\leq \frac{\tilde{c}_G}{\eps_\Gamma} M_\Gamma^\ast |\nabla_\Gamma (\varphi_{\delta,n} - \tilde{\varphi}_n^\delta)|_\Gamma^2 + \frac{2 M_\Gamma^\ast}{\eps_\Gamma} |\nabla_\Gamma \varphi_{\delta,n}|_\Gamma  |\nabla_\Gamma (\varphi_{\delta,n} - \tilde{\varphi}_n^\delta)|_\Gamma \\
&\leq \frac{\tilde{c}_G}{\eps_\Gamma} M_\Gamma^\ast \|\varphi_{\delta,n} - \tilde{\varphi}_n^\delta\|_{V_\Gamma}^2 + \frac{2 M_\Gamma^\ast}{\eps_\Gamma} \|\varphi_{\delta,n}\|_{V_\Gamma} \|\varphi_{\delta,n} - \tilde{\varphi}_n^\delta\|_{V_\Gamma} \\
&\leq \frac{\tilde{c}_G}{\eps_\Gamma} M_\Gamma^\ast \|\varphi_{\delta,n} - \tilde{\varphi}_n^\delta\|_{V_\Gamma}^2 + \frac{2 M_\Gamma^\ast}{\eps_\Gamma} R \|\varphi_{\delta,n} - \tilde{\varphi}_n^\delta\|_{V_\Gamma} \\
&\leq \frac{\tilde{c}_G}{\eps_\Gamma} M_\Gamma^\ast \|\varphi_{\delta,n} - \tilde{\varphi}_n^\delta\|_{V_\Gamma}^2 + \frac{M_\Gamma^\ast}{\eps_\Gamma} R (1 + \|\varphi_{\delta,n} - \tilde{\varphi}_n^\delta\|_{V_\Gamma}^2).
\end{align*}
Consequently,
\begin{equation}\label{eq4.69}
\begin{aligned}
&-\eps_\Gamma \int_{\Gamma} M_\Gamma(\varphi_{\delta,n}) \nabla_\Gamma \Delta_\Gamma (\tilde{\varphi}_n^\delta - \varphi_{\delta,n}) \cdot \nabla_\Gamma (\varphi_{\delta,n} - \tilde{\varphi}_n^\delta)\, \d S 
       \\
& + \frac{1}{\eps_\Gamma} \int_{\Gamma} M_\Gamma(\varphi_{\delta,n}) \nabla_\Gamma (G_\delta^\prime(\tilde{\varphi}_n^\delta) - G_\delta^\prime(\varphi_{\delta,n})) \cdot \nabla_\Gamma (\varphi_{\delta,n} - \tilde{\varphi}_n^\delta)\, \d S  
             \\
&-\eps_\Gamma \int_{\Gamma} [M_\Gamma (\tilde{\varphi}_n^\delta) - M_\Gamma(\varphi_{\delta,n})] \nabla_\Gamma \Delta_\Gamma \tilde{\varphi}_n^\delta \cdot \nabla_\Gamma (\varphi_{\delta,n} - \tilde{\varphi}_n^\delta)\, \d S \\
&+ \frac{1}{\eps_\Gamma} \int_{\Gamma} [M_\Gamma (\tilde{\varphi}_n^\delta) - M_\Gamma(\varphi_{\delta,n})] G''_\delta(\tilde{\varphi}_n^\delta) \nabla_\Gamma \tilde{\varphi}_n^\delta \cdot \nabla_\Gamma (\varphi_{\delta,n} - \tilde{\varphi}_n^\delta)\, \d S
                      \\
&\leq \left[\eps_\Gamma  C(n) + \tilde{c}_G \eps_\Gamma^{-1} \right] M_\Gamma^\ast \|\varphi_{\delta,n} - \tilde{\varphi}_n^\delta\|_{V_\Gamma}^2 + \left[\eps_\Gamma^{-1} + \eps_\Gamma  C(n) + \eps_\Gamma^{-1}  C_\delta \right] M_\Gamma^\ast R (1 + \|\varphi_{\delta,n} - \tilde{\varphi}_n^\delta\|_{V_\Gamma}^2). 
\end{aligned}
\end{equation}
Let us observe that by definition of the tangential gradient (see, for e.g., \cite[Definition 2.3]{Dziuk+Elliott_2013}) and the Minkowski inequality, we have
    \begin{equation*}
      \|\nabla_\Gamma \phi_{\delta,n}\|_{\mathbb{L}^2(\Gamma)}
      = \|\nabla \phi_{\delta,n} - (\nabla \phi_{\delta,n} \cdot \bn)\bn\|_{\mathbb{L}^2(\Gamma)}
      \leq \|\nabla \phi_{\delta,n}\|_{\mathbb{L}^2(\Gamma)} + \|(\nabla \phi_{\delta,n} \cdot \bn)\bn\|_{\mathbb{L}^2(\Gamma)}.
    \end{equation*}
On the other hand, using the Cauchy-Schwarz inequality and the fact that $\bn$ is the unit outward normal vector to the boundary $\Gamma$, we get
\begin{align*}
\|(\nabla \phi_{\delta,n} \cdot \bn)\bn\|_{\mathbb{L}^2(\Gamma)}
&\leq \|\nabla \phi_{\delta,n} \cdot \bn\|_{L^2(\Gamma)} \\
&\leq  |\Gamma|^{1/2} \|\nabla \phi_{\delta,n}\|_{\mathbb{L}^2(\Gamma)},
\end{align*}
from which we infer that
  \begin{equation*}
      \|\nabla_\Gamma \phi_{\delta,n}\|_{\mathbb{L}^2(\Gamma)}
      \leq (1 + |\Gamma|^{1/2}) \|\nabla \phi_{\delta,n}\|_{\mathbb{L}^2(\Gamma)}.
    \end{equation*}
\adda{By the trace theorem (see, for instance, \cite[Theorem 5.36]{Adams_1975}), we have }
   \begin{equation*}
     \|\phi_{\delta,n}\|_{L^2(\Gamma)}\leq C(\mathcal{O},\Gamma) \|\phi_{\delta,n}\|_{H^1(\mathcal{O})}.
    \end{equation*}
This implies 
   \begin{equation}
     \|\nabla \phi_{\delta,n}\|_{\mathbb{L}^2(\Gamma)}
     \leq C(\mathcal{O},\Gamma) \|\phi_{\delta,n}\|_{H^2(\mathcal{O})},
    \end{equation}
and then 
\begin{equation}\label{eq4.71a}
  \|\nabla_\Gamma \phi_{\delta,n}\|_{\mathbb{L}^2(\Gamma)}
   \leq C(\mathcal{O},\Gamma) \|\phi_{\delta,n}\|_{H^2(\mathcal{O})}.
\end{equation}
From $(H4)$ and \eqref{eq4.71a}, we derive that
\begin{equation}\label{eq4.70}
\begin{aligned}
  &\frac{1}{K} \int_{\Gamma} M_\Gamma(\varphi_{\delta,n}) \nabla_\Gamma (\phi_{\delta,n} - \tilde{\phi}_n^\delta) \cdot \nabla_\Gamma (\varphi_n - \tilde{\varphi}_n^\delta)\, \d S \\
  &\leq M_\bar{\gamma}_0 K^{-1} \|\nabla_\Gamma (\phi_{\delta,n} - \tilde{\phi}_n^\delta)\|_{\mathbb{L}^2(\Gamma)} \|\nabla_\Gamma (\varphi_{\delta,n} - \tilde{\varphi}_n^\delta)\|_{\mathbb{L}^2(\Gamma)} \\
  &\leq M_\bar{\gamma}_0 K^{-1} C(\mathcal{O},\Gamma) \|\phi_{\delta,n} - \tilde{\phi}_n^\delta\|_{H^2(\mathcal{O})} \|\varphi_n - \tilde{\varphi}_n^\delta\|_{V_\Gamma} \\
  &\leq M_\bar{\gamma}_0 K^{-1} C(\mathcal{O},\Gamma,n) \|\phi_{\delta,n} - \tilde{\phi}_n^\delta\|_{V_1} \|\varphi_{\delta,n} - \tilde{\varphi}_n^\delta\|_{V_\Gamma} \\
  &\leq M_\bar{\gamma}_0 K^{-1} C(\mathcal{O},\Gamma,n) \|(\phi_{\delta,n} -\tilde{\phi}_n^\delta,\varphi_{\delta,n}-\tilde{\varphi}_n^\delta)\|_{\mathbb{V}}^2,
\end{aligned}
\end{equation}
where we have also used the fact that all norms are equivalent on finite dimensional space $H^1_n$.

\noindent
Once more, arguing as in \eqref{eq4.70}, we infer that
\begin{align*}
&\frac{1}{K} \int_{\Gamma} [M_\Gamma (\tilde{\varphi}_n^\delta) - M_\Gamma(\varphi_{\delta,n})] \nabla_\Gamma (\tilde{\varphi}_n^\delta - \tilde{\phi}_n^\delta) \cdot \nabla_\Gamma (\varphi_{\delta,n} - \tilde{\varphi}_n^\delta)\, \d S \\
&\leq 2 K^{-1} M_\bar{\gamma}_0 K^{-1} \|\nabla_\Gamma (\tilde{\varphi}_n^\delta - \tilde{\phi}_n^\delta)\|_{\mathbb{L}^2(\Gamma)} \|\nabla_\Gamma (\varphi_{\delta,n} - \tilde{\varphi}_n^\delta)\|_{\mathbb{L}^2(\Gamma)} \\
&\leq 2 K^{-1} M_\bar{\gamma}_0 K^{-1} (\|\nabla_\Gamma \tilde{\phi}_n^\delta\|_{\mathbb{L}^2(\Gamma)} + \|\nabla_\Gamma \tilde{\varphi}_n^\delta\|_{\mathbb{L}^2(\Gamma)}) \|\nabla_\Gamma (\varphi_{\delta,n} - \tilde{\varphi}_n^\delta)\|_{\mathbb{L}^2(\Gamma)} \\
&\leq 2 K^{-1} M_\bar{\gamma}_0 K^{-1} (C(\mathcal{O},\Gamma) \|\tilde{\phi}_n^\delta\|_{H^2(\mathcal{O})} + \|\tilde{\varphi}_n^\delta\|_{V_\Gamma}) \|\varphi_{\delta,n} - \tilde{\varphi}_n^\delta\|_{V_\Gamma} \\
&\leq 2 K^{-1} M_\bar{\gamma}_0 K^{-1} (C(\mathcal{O},\Gamma,n) \|\tilde{\phi}_n^\delta\|_{V_1} + \|\tilde{\varphi}_n^\delta\|_{V_\Gamma}) \|\varphi_{\delta,n} - \tilde{\varphi}_n^\delta\|_{V_\Gamma} \\
&\leq 2 R K^{-1} M_\bar{\gamma}_0 K^{-1} [C(\mathcal{O},\Gamma,n) + 1 ] \|\varphi_{\delta,n} - \tilde{\varphi}_n^\delta\|_{V_\Gamma},
\end{align*}
which, in turn, implies that
    \begin{equation}\label{eq4.71}
      \begin{split}
        &\frac{1}{K} \int_{\Gamma} [M_\Gamma (\tilde{\varphi}_n^\delta) - M_\Gamma(\varphi_{\delta,n})] \nabla_\Gamma (\tilde{\varphi}_n^\delta - \tilde{\phi}^\delta_n) \cdot \nabla_\Gamma (\varphi_{\delta,n} - \tilde{\varphi}_n^\delta)\, \d S \\
        &\leq R K^{-1} M_\bar{\gamma}_0 K^{-1} [C(\mathcal{O},\Gamma,n) + 1 ] (1 + \|\varphi_{\delta,n} - \tilde{\varphi}_n^\delta\|_{V_\Gamma}^2).
      \end{split}
    \end{equation}
Inserting \eqref{eq4.69}-\eqref{eq4.71} into the right-hand side of \eqref{eq4.68}, we obtain
\begin{equation}\label{eq4.72}
\begin{aligned}
&\langle - \mathcal{A}_{\varphi_{\delta,n}}(\theta_{\delta,n}) +  \mathcal{A}_{\tilde{\varphi}_n^\delta}(\tilde{\theta}_n^\delta),\varphi_{\delta,n} - \tilde{\varphi}_n^\delta \rangle_{V_\Gamma^\prime,V_\Gamma} \\
&\leq \left[\eps_\Gamma  C(n) + \tilde{c}_G \eps_\Gamma^{-1} \right] M_\Gamma^\ast \|\varphi_{\delta,n} - \tilde{\varphi}_n^\delta\|_{V_\Gamma}^2 + M_\bar{\gamma}_0 K^{-1} C(\mathcal{O},\Gamma,n) \|(\phi_{\delta,n} -\tilde{\phi}_n^\delta,\varphi_{\delta,n}-\tilde{\varphi}_n^\delta)\|_{\mathbb{V}}^2 \\
&\quad + \left[\eps_\Gamma^{-1} + \eps_\Gamma  C(n) + \eps_\Gamma^{-1}  C_\delta \right] M_\Gamma^\ast R (1 + \|\varphi_{\delta,n} - \tilde{\varphi}_n^\delta\|_{V_\Gamma}^2) \\
&\quad + R K^{-1} M_\bar{\gamma}_0 K^{-1} [C(\mathcal{O},\Gamma,n) + 1 ] (1 + \|\varphi_{\delta,n} - \tilde{\varphi}_n^\delta\|_{V_\Gamma}^2).
\end{aligned}
\end{equation}
Now, by plugging the estimates \eqref{eq4.58}, \eqref{eq4.59}, \eqref{eq4.66}, and \eqref{eq4.72} on the right-hand side of \eqref{eq4.55}, we deduce that there exists a positive constant $C>0$ depending on  $\delta,\,\mathcal{O},\,\Gamma,\,R,\,n,\,K,\,M_\bar{\gamma}_0,\,\bar{M}_0,\,\varepsilon,\,\varepsilon_\Gamma$ and $\tilde{c}_G$ such that
\begin{equation}\label{eq4.73}
\begin{aligned}
\langle \boldsymbol{b} (\cdot,\boldsymbol{\Upsilon}_n) - \boldsymbol{b}(\cdot,\Tilde{\Upsilon}_n),\boldsymbol{\Upsilon}_n - \tilde{\Upsilon}_n\rangle_{\mathbb{V}^\prime,\mathbb{V}}
\leq C (1 + \|\bu_{\delta,n}\|_{V_\sigma}) (1 + \|(\phi_{\delta,n} -\tilde{\phi}_n^\delta,\varphi_{\delta,n} -\tilde{\varphi}_n^\delta)\|_{\mathbb{V}}^2).
\end{aligned}
\end{equation}
From \eqref{eq4.73} together with the Lipschitz-continuity of the maps $F_{1}$ and $F_2$, we deduce that $\boldsymbol{b}(\cdot,\Upsilon_n)$ and $\boldsymbol{\sigma}(\cdot,\boldsymbol{\Upsilon}_n)$ satisfy the weak monotonicity property in the sense of Lemma \ref{stochastic-ordinary-theorem}$\text{-}(ii)$.

\noindent
Thanks to the assumptions $(H2)$ and $(H3)$, we see that
\begin{equation}\label{eq4.74}
\begin{aligned}
\|\boldsymbol{\sigma}(\cdot,\boldsymbol{\Upsilon}_n)\|_{\mathscr{T}_2(\mathcal{U},\mathbb{H})}^2
&= \|F_{1}(\phi_{\delta,n})\|_{\mathscr{T}_2(U,L^2(\mathcal{O}))}^2 + \|F_{\Gamma}(\varphi_{\delta,n})\|_{\mathscr{T}_2(U_\Gamma,L^2(\Gamma))}^2 \\
&= \sum_{k=1}^\infty \|F_1(\phi_{\delta,n})e_{1,k}\|_{L^2(\mathcal{O})}^2 + \sum_{k=1}^\infty \|F_2(\varphi_{\delta,n})e_{2,k}\|_{L^2(\Gamma)}^2\\
&= \sum_{k=1}^\infty \|\sigma_k(\phi_{\delta,n})\|_{L^2(\mathcal{O})}^2 + \sum_{k=1}^\infty \|\tilde{\sigma}_k(\varphi_{\delta,n})\|_{L^2(\Gamma)}^2\\
&\leq |\mathcal{O}| \sum_{k=1}^\infty \|\sigma_k\|_{L^{\infty}(\mathbb{R})} + |\Gamma| \sum_{k=1}^\infty \|\tilde{\sigma}_k\|_{L^{\infty}(\mathbb{R})} \\
&\leq |\mathcal{O}| C_1 + |\Gamma| C_2.
\end{aligned}
\end{equation}
Notice that
\begin{equation*}
\begin{aligned}
\langle \boldsymbol{b} (\cdot,\boldsymbol{\Upsilon}_n),\Upsilon_n\rangle_{\mathbb{V}^\prime,\mathbb{V}}
 =& \langle - A_{\phi_{\delta,n}}(\mu_{\delta,n}),\phi_{\delta,n} \rangle_{V_1^\prime,V_1} + \langle - \mathcal{A}_{\varphi_{\delta,n}}(\theta_{\delta,n}),\varphi_{\delta,n} \rangle_{V_\Gamma^\prime,V_\Gamma} \\
 &+ \langle - \tilde{B}(\bu_{\delta,n},\varphi_{\delta,n}),\varphi_{\delta,n} \rangle_{V_\Gamma^\prime,V_\Gamma}, 
\end{aligned}
\end{equation*}
where we used the fact that $\langle - B_1(\bu_{\delta,n},\phi_{\delta,n}),\phi_{\delta,n} \rangle_{V_1^\prime,V_1}=-b(\bu_{\delta,n},\phi_{\delta,n},\phi_{\delta,n})= 0$ due to \eqref{B1-first-Property}.

\noindent
From \eqref{Definition of A_phi and A_varphi} and \eqref{difference_chemical_potential_n}, we have
\begin{align*}
\langle - A_{\phi_n}(\mu_{\delta,n}),\phi_{\delta,n} \rangle_{V_1^\prime,V_1}
 &= - \int_{\mathcal{O}} M_{\mathcal{O}}(\phi_{\delta,n}) \nabla \mu_{\delta,n} \cdot \nabla \phi_{\delta,n} \, \d x \\
 &= \varepsilon \int_{\mathcal{O}} M_{\mathcal{O}}(\phi_{\delta,n}) \nabla \Delta \phi_{\delta,n} \cdot \nabla \phi_{\delta,n}\, \d x - \frac{1}{\varepsilon} \int_{\mathcal{O}} M_{\mathcal{O}}(\phi_{\delta,n}) F''_\delta(\phi_{\delta,n}) |\nabla \phi_{\delta,n}|^2\, \d x.
\end{align*}
Thanks to H\"older's inequality, using $(H4)$ together with the fact that all norms are equivalent on finite dimensional space $H^1_n$, we obtain
\begin{align*}
\eps \int_{\mathcal{O}} M_{\mathcal{O}}(\phi_{\delta,n}) \nabla \Delta \phi_{\delta,n} \cdot \nabla \phi_{\delta,n}\, \d x
\leq \eps \bar{M}_0 \lvert \nabla \Delta \phi_{\delta,n} \rvert \lvert  \nabla \phi_{\delta,n} \rvert 
 &\leq \eps \bar{M}_0 \|\phi_{\delta,n}\|_{H^3(\mathcal{O})} \|\phi_{\delta,n}\|_{V_1} \\
 &\leq \eps \bar{M}_0 C(n) \|\phi_{\delta,n}\|_{V_1}^2.
\end{align*}
Using the H\"older inequality and $(H4)$, we deduce that
  \begin{align*}
   &- \frac{1}{\varepsilon} \int_{\mathcal{O}} M_{\mathcal{O}}(\phi_{\delta,n}) F''_\delta(\phi_{\delta,n}) |\nabla \phi_{\delta,n}|^2\, \d x \\
   &= \underbrace{- \frac{1}{\varepsilon} \int_{\mathcal{O}} M_{\mathcal{O}}(\phi_{\delta,n}) (F''_\delta(\phi_{\delta,n}) + \tilde{c}_F) |\nabla \phi_{\delta,n}|^2\, \d x}_{\leq 0} +  \varepsilon^{-1} \tilde{c}_F \int_{\mathcal{O}} M_{\mathcal{O}}(\phi_{\delta,n}) |\nabla \phi_{\delta,n}|^2\, \d x \\
   &\leq \tilde{c}_F \varepsilon^{-1} \bar{M}_0 |\nabla \phi_{\delta,n}|^2 \\
   &\leq \tilde{c}_F \varepsilon^{-1} \bar{M}_0 \|\phi_{\delta,n}\|_{V_1}^2,
  \end{align*}
from which we get
    \begin{equation}\label{eq4.75}
      \begin{aligned}
       \langle - A_{\phi_{\delta,n}}(\mu_{\delta,n}),\phi_{\delta,n} \rangle_{V_1^\prime,V_1}
       &\leq (\varepsilon \bar{M}_0 C(n)  + \tilde{c}_F \varepsilon^{-1} \bar{M}_0) \|\phi_{\delta,n}\|_{V_1}^2 \\
      &\leq (\varepsilon \bar{M}_0 C(n)  + \tilde{c}_F \varepsilon^{-1} \bar{M}_0) \|(\phi_{\delta,n},\varphi_{\delta,n})\|_{\mathbb{V}}^2.
     \end{aligned}
   \end{equation}
In light of \eqref{Definition of A_phi and A_varphi} and \eqref{eq4.57}, we have
\begin{align*}
&\langle -\mathcal{A}_{\varphi_{\delta,n}}(\theta_{\delta,n}),\varphi_{\delta,n} \rangle_{V_\Gamma^\prime,V_\Gamma}
= -\int_{\Gamma} M_\Gamma(\varphi_{\delta,n}) \nabla_\Gamma \theta_{\delta,n} \cdot \nabla_\Gamma \varphi_{\delta,n}\, \d S \\
&=\eps_\Gamma \int_{\Gamma} M_\Gamma(\varphi_{\delta,n}) \nabla_\Gamma \Delta_\Gamma \varphi_{\delta,n} \cdot \nabla_\Gamma \varphi_{\delta,n}\, \d S - \eps_\Gamma^{-1} \int_{\Gamma} M_\Gamma(\varphi_{\delta,n}) G''_\delta(\varphi_{\delta,n}) |\nabla_\Gamma \varphi_{\delta,n}|^2 \, \d S \\
&\quad - K^{-1} \int_{\Gamma} M_\Gamma(\varphi_{\delta,n}) \nabla_\Gamma (\varphi_{\delta,n} - \phi_{\delta,n}) \cdot \nabla_\Gamma \varphi_{\delta,n}\, \d S \\
&=\eps_\Gamma \int_{\Gamma} M_\Gamma(\varphi_{\delta,n}) \nabla_\Gamma \Delta_\Gamma \varphi_{\delta,n} \cdot \nabla_\Gamma \varphi_{\delta,n}\, \d S
\underbrace{- \eps_\Gamma^{-1} \int_{\Gamma} M_\Gamma(\varphi_{\delta,n}) (G''_\delta(\varphi_{\delta,n}) + \tilde{c}_G) |\nabla_\Gamma \varphi_{\delta,n}|^2\, \d S}_{\leq 0} \\
&\quad + \eps_\Gamma^{-1} \tilde{c}_G \int_{\Gamma} M_\Gamma(\varphi_{\delta,n}) |\nabla_\Gamma \varphi_{\delta,n}|^2\, \d S 
\underbrace{- K^{-1} \int_{\Gamma} M_\Gamma(\varphi_{\delta,n}) |\nabla_\Gamma \varphi_{\delta,n}|^2\, \d S}_{\leq 0} 
+ K^{-1} \int_{\Gamma} M_\Gamma(\varphi_{\delta,n}) \nabla_\Gamma \phi_{\delta,n} \cdot \nabla_\Gamma \varphi_{\delta,n}\, \d S,
\end{align*}
from which we deduce that
\begin{equation}\label{eq4.76}
\begin{aligned}
&\langle -\mathcal{A}_{\varphi_n}(\theta_{\delta,n}),\varphi_{\delta,n} \rangle_{V_\Gamma^\prime,V_\Gamma} 
 \\
&\leq \varepsilon_\Gamma M_\bar{\gamma}_0 \|\nabla_\Gamma \Delta_\Gamma \varphi_{\delta,n}\|_{\mathbb{L}^2(\Gamma)} \|\nabla_\Gamma \varphi_{\delta,n}\|_{\mathbb{L}^2(\Gamma)} + \varepsilon_\Gamma^{-1} \tilde{c}_G  M_\bar{\gamma}_0 \|\nabla_\Gamma \varphi_{\delta,n}\|_{\mathbb{L}^2(\Gamma)}^2 
      \\
 &\quad + K^{-1} M_\bar{\gamma}_0 \|\nabla_\Gamma \phi_{\delta,n}\|_{\mathbb{L}^2(\Gamma)} \|\nabla_\Gamma \varphi_{\delta,n}\|_{\mathbb{L}^2(\Gamma)} 
          \\
 &\leq \varepsilon_\Gamma M_\bar{\gamma}_0 \|\varphi_{\delta,n}\|_{H^3(\Gamma)} \|\varphi_{\delta,n}\|_{V_\Gamma} + \varepsilon_\Gamma^{-1} \tilde{c}_G  M_\bar{\gamma}_0 \|\varphi_{\delta,n}\|_{V_\Gamma}^2 + K^{-1} M_\bar{\gamma}_0 C(\mathcal{O},\Gamma,n)\|\phi_{\delta,n}\|_{V_1} \|\varphi_{\delta,n}\|_{V_\Gamma} 
               \\
  &\leq \varepsilon_\Gamma M_\bar{\gamma}_0 C(n) \|\varphi_{\delta,n}\|_{V_\Gamma}^2 + \varepsilon_\Gamma^{-1} \tilde{c}_G  M_\bar{\gamma}_0 \|\varphi_{\delta,n}\|_{V_\Gamma}^2 + K^{-1} M_\bar{\gamma}_0 C(\mathcal{O},\Gamma,n) \|\phi_{\delta,n}\|_{V_1} \|\varphi_{\delta,n}\|_{V_\Gamma} 
                          \\
  &\leq [\varepsilon_\Gamma M_\bar{\gamma}_0 C(n) + \varepsilon_\Gamma^{-1} \tilde{c}_G  M_\bar{\gamma}_0 + K^{-1} M_\bar{\gamma}_0 C(\mathcal{O},\Gamma,n)] \|(\phi_{\delta,n},\varphi_{\delta,n})\|_{\mathbb{V}}^2,
\end{aligned}
\end{equation}
where we used \eqref{eq4.71a} together with the fact that all norms are equivalent on finite dimensional spaces $H^1_n$ and $H_{\Gamma,n}^1$, respectively.

\noindent
Note that
\begin{equation}\label{eq4.77}
\langle - \tilde{B}(\bu_{\delta,n},\varphi_{\delta,n}),\varphi_{\delta,n} \rangle_{V_\Gamma^\prime,V_\Gamma}
\leq \|\tilde{B}(\bu_{\delta,n},\varphi_{\delta,n})\|_{V_\Gamma^\prime} \|\varphi_{\delta,n}\|_{V_\Gamma}
\leq C(\Gamma,\mathcal{O}) \|\bu_{\delta,n}\|_{V} \|\varphi_{\delta,n}\|_{V_\Gamma}^2.
\end{equation}
By \eqref{eq4.74} and \eqref{eq4.75}-\eqref{eq4.77}, we deduce that there exists a positive constant $C=C(\tilde{c}_F,c_1,n,\varepsilon,\varepsilon_\Gamma,K,\tilde{c}_G,\tilde{c}_F,  M_\bar{\gamma}_0,M_\bar{\gamma}_0)$ such that
     \begin{equation*}
       \langle \boldsymbol{b} (\cdot,\boldsymbol{\Upsilon}_n),\Upsilon_n\rangle_{\mathbb{V}^\prime,\mathbb{V}} + \|\boldsymbol{\sigma}(\cdot,\boldsymbol{\Upsilon}_n)\|_{\mathscr{T}_2(\mathcal{U},\tilde{\mathbb{H}})}^2
      \leq C (1 + \|\bu_{\delta,n}\|_{V}) (1 + \|(\phi_{\delta,n},\varphi_{\delta,n})\|_{\mathbb{V}}^2).
     \end{equation*}
On the other hand, since the term $\int_0^T(1 + \|\bu_{\delta,n}(s)\|_{V})\, \d s$ is finite, $\mathbb{P}\text{-a.s.,}$ we derive the weak coercivity property in the sense of Lemma \ref{stochastic-ordinary-theorem}$\text{-}(iii)$. \newline
Hereafter, let $\by=(\phi,\varphi) \in \mathbb{V}$ be fixed and such that $\|\by\|_{\mathbb{V}}\leq 1$.

\noindent
From \eqref{B1-Property}, we have
     \begin{equation*}
       \lvert \langle - B_1(\bu_{\delta,n},\phi_{\delta,n}),\phi\rangle_{V_1^\prime,V_1}\rvert
       \leq \|B_1(\bu_{\delta,n},\phi_{\delta,n})\|_{V_1^\prime}
      \leq C(\mathcal{O}) \|\bu_{\delta,n}\|_{V} \|\phi_{\delta,n}\|_{V_1}.
     \end{equation*}
Using \eqref{Property-tilde B}, we get
     \begin{equation*}
      \lvert\langle - \tilde{B}(\bu_{\delta,n},\varphi_{\delta,n}),\varphi_{\delta,n} \rangle_{V_\Gamma^\prime,V_\Gamma}\rvert
      \leq \|\tilde{B}(\bu_{\delta,n},\varphi_{\delta,n})\|_{V_\Gamma^\prime}
      \leq C(\mathcal{O},\Gamma) \|\bu_{\delta,n}\|_{V} \|\varphi_{\delta,n}\|_{V_\Gamma}.
     \end{equation*}
Thanks to the assumption $(H4)$, we see that
\begin{align*}
|\langle - A_{\phi_{\delta,n}}(\mu_{\delta,n}),\phi\rangle_{V_1^\prime,V_1}| 
&\leq \varepsilon \int_{\mathcal{O}} M_{\mathcal{O}}(\phi_{\delta,n}) |\nabla \Delta \phi_{\delta,n}| |\nabla \phi|\,\d x + \varepsilon^{-1} \int_{\mathcal{O}} M_{\mathcal{O}}(\phi_{\delta,n}) |F''_\delta(\phi_{\delta,n})| |\nabla \phi_{\delta,n}| |\nabla \phi|\, \d x \\
&\leq \varepsilon \bar{M}_0 |\nabla \Delta \phi_{\delta,n}| |\nabla \phi| + \varepsilon^{-1} \bar{M}_0 C_\delta |\nabla \phi_{\delta,n}| |\nabla \phi| \\
&\leq \varepsilon \bar{M}_0 C(n) \|\phi_{\delta,n}\|_{V_1} + \varepsilon^{-1} \bar{M}_0 C_\delta \|\phi_{\delta,n}\|_{V_1}. 
\end{align*}
Next, observe that
\begin{align*}
\langle -\mathcal{A}_{\varphi_{\delta,n}}(\theta_{\delta,n}),\varphi\rangle_{V_\Gamma^\prime,V_\Gamma} 
&=\varepsilon_\Gamma \int_{\Gamma} M_\Gamma(\varphi_{\delta,n}) \nabla_\Gamma \Delta_\Gamma \varphi_{\delta,n} \cdot \nabla_\Gamma \varphi\, \d S - \varepsilon_\Gamma^{-1} \int_{\Gamma} M_\Gamma(\varphi_{\delta,n}) G''(\varphi_{\delta,n}) \nabla_\Gamma \varphi_{\delta,n} \cdot \nabla_\Gamma \varphi \, \d S \\
&\quad - K^{-1} \int_{\Gamma} M_\Gamma(\varphi_{\delta,n}) \nabla_\Gamma (\varphi_{\delta,n} - \phi_{\delta,n}) \cdot \nabla_\Gamma \varphi\, \d S.
\end{align*}
Now, from $(H4)$ and since all norms are equivalent on finite dimensional space $H_{\Gamma,n}^1$, we infer that
\begin{align*}
\left|\varepsilon_\Gamma \int_{\Gamma} M_\Gamma(\varphi_{\delta,n}) \nabla_\Gamma \Delta_\Gamma \varphi_{\delta,n} \cdot \nabla_\Gamma \varphi\, \d S\right| 
&\leq \varepsilon_\Gamma \int_{\Gamma} M_\Gamma(\varphi_{\delta,n}) |\nabla_\Gamma \Delta_\Gamma \varphi_{\delta,n}| |\nabla_\Gamma \varphi|\, \d S\\
&\leq \varepsilon_\Gamma M_\bar{\gamma}_0 \|\nabla_\Gamma \Delta_\Gamma \varphi_{\delta,n}\|_{\mathbb{L}^2(\Gamma)} \|\nabla_\Gamma \varphi\|_{\mathbb{L}^2(\Gamma)} \\
&\leq \varepsilon_\Gamma M_\bar{\gamma}_0 C(n) \|\varphi_{\delta,n}\|_{V_\Gamma}.
\end{align*}
Using $(H4)$ and the fact that $\lvert G''_\delta \rvert \leq C_\delta$ with $C_\delta>0$ independent of $n$, we obtain

\begin{align*}
\left|\eps_\Gamma^{-1} \int_{\Gamma} M_\Gamma(\varphi_{\delta,n}) G''_\delta(\varphi_{\delta,n}) \nabla_\Gamma \varphi_{\delta,n} \cdot \nabla_\Gamma \varphi \, \d S\right|
&\leq \eps_\Gamma^{-1} M_\Gamma^\ast C_\delta \|\nabla_\Gamma \varphi_{\delta,n}\|_{\mathbb{L}^2(\Gamma)} \|\nabla_\Gamma \varphi\|_{\mathbb{L}^2(\Gamma)} \\
&\leq \eps_\Gamma^{-1} M_\Gamma^\ast C_\delta \| \varphi_{\delta,n}\|_{V_\Gamma} \|\varphi\|_{V_\Gamma} \\
&\leq \eps_\Gamma^{-1} M_\Gamma^\ast C_\delta \| \varphi_{\delta,n}\|_{V_\Gamma}.
\end{align*}
We have
\begin{align*}
K^{-1} \int_{\Gamma} M_\Gamma(\varphi_{\delta,n}) |\nabla_\Gamma (\varphi_{\delta,n} - \phi_{\delta,n})| |\nabla_\Gamma \varphi|\, \d S 
&\leq K^{-1} M_\bar{\gamma}_0 (\|\nabla_\Gamma \varphi_{\delta,n}\|_{\mathbb{L}^2(\Gamma)} + \|\nabla_\Gamma \phi_{\delta,n}\|_{\mathbb{L}^2(\Gamma)}) \\
&\leq K^{-1} M_\bar{\gamma}_0 (\|\varphi_{\delta,n}\|_{V_\Gamma} + C(\mathcal{O},\Gamma) \|\phi_{\delta,n}\|_{H^2(\mathcal{O})}) 
\\
&\leq K^{-1} M_\bar{\gamma}_0 (\|\varphi_{\delta,n}\|_{V_\Gamma} + C(\mathcal{O},\Gamma,n) \|\phi_{\delta,n}\|_{V_1}),
\end{align*}
where we used the trace theorem in conjunction with the fact that all norms are equivalent on finite dimensional space $H_{n}^1$.

\noindent
Consequently,
\begin{align*}
\lvert\langle -\mathcal{A}_{\varphi_{\delta,n}}(\theta_{\delta,n}),\varphi\rangle_{V_\Gamma^\prime,V_\Gamma}\rvert
&\leq \eps_\Gamma M_\bar{\gamma}_0 C(n) \|\varphi_{\delta,n}\|_{V_\Gamma} + \eps_\Gamma^{-1} M_\Gamma^\ast C_\delta \| \varphi_{\delta,n}\|_{V_\Gamma} \\
&\quad + K^{-1} M_\bar{\gamma}_0 (\|\varphi_{\delta,n}\|_{V_\Gamma} + C(\mathcal{O},\Gamma,n) \|\phi_{\delta,n}\|_{V_1}).
\end{align*}
Collecting now the previous estimates, we deduce that
\begin{align}\label{eq4.78}
\begin{aligned}
\|\boldsymbol{b}(\cdot,\boldsymbol{\Upsilon}_n)\|_{\mathbb{V}^\prime}
&=\sup_{\by\in \mathbb{V},\,\|\by\|_{\mathbb{V}\leq 1}} \lvert \langle \boldsymbol{b} (\cdot,\boldsymbol{\Upsilon}_n),\by\rangle_{\mathbb{V}^\prime,\mathbb{V}} \rvert \\
&\leq C[\|\bu_{\delta,n}\|_{V} (\|\phi_{\delta,n}\|_{V_1} +  \|\varphi_{\delta,n}\|_{V_\Gamma}) +  \|\phi_{\delta,n}\|_{V_1}  +  \|\varphi_{\delta,n}\|_{V_\Gamma} +  \|\varphi_{\delta,n}\|_{V_\Gamma}^{q-1}],
\end{aligned}
\end{align}
where $C=C(\delta,\mathcal{O},\Gamma,n,\varepsilon, \bar{M}_0, \bar{M}_0,\varepsilon_\Gamma, M_\bar{\gamma}_0,K)$ is a positive large constant. It then follows from \eqref{eq4.78} and \eqref{eq4.74} that
\begin{align}
\begin{aligned}
&\|\boldsymbol{b}(\cdot,\boldsymbol{\Upsilon}_n)\|_{\mathbb{V}^\prime} + \|\boldsymbol{\sigma}(\cdot,\boldsymbol{\Upsilon}_n)\|_{\mathscr{T}_2(\mathcal{U},\mathbb{H})}^2 \\
&\leq  C[\|\bu_{\delta,n}\|_{V} (\|\phi_{\delta,n}\|_{V_1} +  \|\varphi_{\delta,n}\|_{V_\Gamma}) 1 +  \|\phi_{\delta,n}\|_{V_1}  +  \|\varphi_{\delta,n}\|_{V_\Gamma} +  \|\varphi_{\delta,n}\|_{V_\Gamma}^{2} + \|\phi_{\delta,n}\|_{V_1}^2]
\end{aligned}
\end{align}
for some $C=C(C_1,\mathcal{O},\Gamma,n,\varepsilon, \bar{M}_0, \delta, \bar{M}_0,\varepsilon_\Gamma, M_\bar{\gamma}_0,K,c_1)>0$. Hence,
\begin{align*}
&\int_0^T \sup_{\|\boldsymbol{\Upsilon}_n\|_{\mathbb{V}}\leq R} \left(\|\boldsymbol{b}(t,\boldsymbol{\Upsilon}_n)\|_{\mathbb{V}^\prime} + \|\boldsymbol{\sigma}(t,\boldsymbol{\Upsilon}_n)\|_{\mathscr{T}_2(\mathcal{U},\tilde{\mathbb{H}})}^2\right)\d t \\
&\leq C \int_0^T [ 1 + 2 R\|\bu_n(t)\|_{V} +  2 R  + 2R^2]\, \d t<\infty.
\end{align*}
This proves the item $(i)$ of Lemma \ref{stochastic-ordinary-theorem}.
\noindent
Finally, by Lemma \ref{stochastic-ordinary-theorem}, we obtain the existence of a unique strong solution $\boldsymbol{a}_n \in L^r(\Omega;\, C([0,T];\mathbb{R}^n))$ and $\boldsymbol{b}_n \in L^r(\Omega;\, C([0,T];\mathbb{R}^n))$ to the stochastic problem \eqref{compact-stochastic-problem}. The prove of Theorem \ref{thm-Galerkin-01} is now complete.
\end{proof}
}
%end delate
%%%%%%%%%%%%%%%%%%%%%%%%%%%%%%%%%%%%%%%%%%%%%%%%%%%%%%%%%%%%%%%%%%%%%%%%%%%%%%%%%%%%%%%%%%%%%%%%%%%%%%%%%%%%%%%%%%%%%%%%%%%%%%%%%%%%%%%%%%%%%%%%%%%%
\subsection{Uniform estimates} \label{subsection-4.3}
%%%%%%%%%%%%%%%%%%%%%%%%%%%%%%%%%%%%%%%%%%%%%%%%%%%%%%%%%%%%%%%%%%%%%%%%%%%%%%%%%%%%%%%%%%%%%%%%%%%%%%%%%%%%%%%%%%%%%%%%%%%%%%%%%%%%%%%%%%%%%%%%%%%%
Here, we will establish the uniform energy estimate for the solution of the finite-dimensional stochastic differential equation \eqref{compact-stochastic-problem}. For this, we define a sequence of stopping times $\{\tau_\kappa\}_{\kappa \in\mathbb{N}}$,
   \begin{equation}
      \tau_\kappa \coloneq \inf\{t>0: (\Vert \phi_{\delta,n}(t) \Vert_{V_1}^2 + \Vert \varphi_{\delta,n}(t) - \phi_{\delta,n}(t) \Vert_{L^2(\Gamma)}^2 + \Vert \nabla_\Gamma \varphi_{\delta,n}(t) \Vert_{\mathbb{L}^2(\Gamma)}^2)^\frac{1}{2} \geq \kappa\} \wedge T
  \end{equation}
and the following functional
     \begin{align*}
       \mathcal{E}_{tot}: \mathcal{V}_n \ni (\phi,\varphi) \mapsto \left[\mathcal{E}(\phi,\varphi) + \frac12  \lvert \phi \rvert^2 \right] \in \mathbb{R}. 
      \end{align*}
\begin{lemma}\label{Lem-2}
Let $\delta\in(0,1)$ be fixed. Assume the assumptions of Theorem \ref{thm-first_main_theorem} hold. Then, the sequence $(\bu_{\delta,n},\phi_{\delta,n},\varphi_{\delta,n},\mu_{\delta,n},\theta_{\delta,n})_{n \in \mathbb{N}}$ satisfies for all $t\in[0,T]$ and $\mathbb{P}$-a.s.,
\begin{equation}\label{Main_Galerkin_Equality}
\begin{aligned}
&\mathcal{E}_{tot}(\phi_{\delta,n}(t\wedge \tau_\kappa), \varphi_{\delta,n}(t\wedge \tau_\kappa)) + \int_0^{t\wedge \tau_\kappa} \int_{\mathcal{O}} [2 \nu(\phi_{\delta,n}) \lvert D\bu_{\delta,n} \rvert^2 + \lambda(\phi_{\delta,n}) \lvert \bu_{\delta,n} \rvert^2] \d x \,\d s \\
& + \int_0^{t\wedge \tau_\kappa} \int_{\Gamma} \gamma(\varphi_{\delta,n}) \lvert \bu_{\delta,n} \rvert^2 \d S\,\d s + \int_0^{t\wedge \tau_\kappa} \int_{\mathcal{O}} M_{\mathcal{O}}(\phi_{\delta,n}) \lvert \nabla \mu_{\delta,n} \rvert^2 \d x \,\d s \\ 
& + \int_0^{t\wedge \tau_\kappa} \int_{\Gamma} M_\Gamma(\varphi_{\delta,n}) \lvert \nabla_\Gamma \theta_{\delta,n} \rvert^2 \,\d S \,\d s  
 \\
&= \mathcal{E}_{tot}(\phi_n(0),\varphi_n(0))  + \frac{\eps}{2 K} \int_0^{t\wedge \tau_\kappa} (\Vert F_{2,n}(\varphi_{\delta,n}) \Vert_{\mathscr{T}_2(U_\Gamma,L^2(\Gamma))}^2 + \Vert F_{1,n}(\phi_{\delta,n}) \Vert_{\mathscr{T}_2(U,L^2(\Gamma))}^2) \, \d s \\
& + \frac{\eps}{2} \int_0^{t\wedge \tau_\kappa}  \Vert \nabla F_{1,n}(\phi_{\delta,n}) \Vert_{L^2(U,\mathbb{L}^2(\mathcal{O}))}^2  \, \d s + \frac{\eps_\Gamma}{2} \int_0^{t\wedge \tau_\kappa} \Vert \nabla_\Gamma F_{2,n}(\varphi_{\delta,n}) \Vert_{\mathscr{T}_2(U_\Gamma,\mathbb{L}^2(\Gamma))}^2 \, \d s \\
& + \int_0^{t\wedge \tau_\kappa} (\mu_{\delta,n},F_1(\phi_{\delta,n})\, \d W) + \int_0^{t\wedge \tau_\kappa} (\theta_{\delta,n},(F_2(\varphi_{\delta,n}))\, \d W_\Gamma)_\Gamma  \\
& + \sum_{k=1}^\infty \int_0^{t\wedge \tau_\kappa} \frac{1}{2 \eps} \int_{\mathcal{O}} F^{\bis}_\delta(\phi_{\delta,n}) \lvert F_{1,n}(\phi_{\delta,n}) e_{1,k} \rvert^2 \, \d x \,\d s +  \frac{1}{2} \int_0^{t\wedge \tau_\kappa} \Vert F_{1,n}(\phi_{\delta,n})\|_{\mathscr{T}_2(U,L^2(\mathcal{O}))}^2\,\d s \\
& + \sum_{k=1}^\infty \int_0^{t\wedge \tau_\kappa} \frac{1}{2 \eps_\Gamma} \int_{\Gamma} G^{\bis}_\delta(\varphi_{\delta,n}) \lvert F_{2,n}(\varphi_{\delta,n})e_{2,k} \rvert^2\, \d S\,\d s \\
& - \eps (1/K) \sum_{k=1}^\infty  \int_0^{t\wedge \tau_\kappa} \int_\Gamma (F_{1,n}(\phi_{\delta,n}) e_{1,k}) (F_{2,n}(\varphi_{\delta,n}) e_{2,k})\, \d S \, \d s  \\
& - \int_0^{t\wedge \tau_\kappa} \int_{\mathcal{O}} M_{\mathcal{O}}(\phi_{\delta,n}) \nabla \mu_{\delta,n} \cdot \nabla \phi_{\delta,n} \, \d x \,\d s  + \int_0^{t\wedge \tau_\kappa}(\phi_{\delta,n}, F_{1}(\phi_{\delta,n})\,\d W).
\end{aligned}
\end{equation}
\end{lemma}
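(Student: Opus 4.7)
The plan is to apply the It\^o formula to $\mathcal E_{tot}$ along the finite-dimensional trajectory $(\phi_{\delta,n},\varphi_{\delta,n})$ solving \eqref{compact-stochastic-problem}, and then to combine the resulting identity with the deterministic weak Brinkman identity \eqref{Eqn-obtain-process-u}. Since $\mathcal V_n$ is finite-dimensional and $F_\delta, G_\delta$ are of class $C^{1,1}$ by property \ref{item:P4}, It\^o's formula applies after a routine mollification of $F_\delta, G_\delta$ by $C^2$ approximants (with uniformly bounded first derivatives), followed by a limit transition using the a.e.\ convergence of $F_\delta^{\bis}, G_\delta^{\bis}$. The cut-off $\tau_\kappa$ is tailored so that $\|\phi_{\delta,n}\|_{V_1}$, $\|\nabla_\Gamma\varphi_{\delta,n}\|_{\mathbb L^2(\Gamma)}$ and $\|\varphi_{\delta,n}-\phi_{\delta,n}\|_{L^2(\Gamma)}$ remain bounded on $[0,t\wedge\tau_\kappa]$, so that every integral below is a.s.\ finite.

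The five building blocks of $\mathcal E_{tot}$ are treated separately. Integration by parts rewrites the first variation of $\tfrac{\eps}{2}|\nabla\phi_{\delta,n}|^2+\eps^{-1}\int_{\mathcal O}F_\delta(\phi_{\delta,n})$ as the pairing of $d\phi_{\delta,n}$ with $-\eps\Delta\phi_{\delta,n}+\eps^{-1}F_\delta'(\phi_{\delta,n})$ plus the boundary term $\eps(\partial_{\bn}\phi_{\delta,n},d\phi_{\delta,n})_\Gamma$. This boundary contribution combines, via the Robin coupling encoded in the definition \eqref{eq1.6aa} of $\theta_{\delta,n}$, with the first variation of $\tfrac{\eps}{2K}(\varphi_{\delta,n}-\phi_{\delta,n})^2$, so that the total coefficients of $d\phi_{\delta,n}$ and $d\varphi_{\delta,n}$ become (modulo the extra $\phi_{\delta,n}$ produced by the $\tfrac12|\phi|^2$ piece) precisely $\mu_{\delta,n}$ and $\theta_{\delta,n}$. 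Substituting the SDEs for $d\phi_{\delta,n}, d\varphi_{\delta,n}$ from \eqref{eqn-Galerkin-011}, the projections $\mathcal S_n, \mathcal S_{n,\Gamma}$ disappear when paired with $\mu_{\delta,n}\in H_n^1$ and $\theta_{\delta,n}\in H_{\Gamma,n}^1$, and the $A_{\phi_{\delta,n}}, \mathcal A_{\varphi_{\delta,n}}$ contributions produce the mobility dissipation terms $\int M_{\mathcal O}|\nabla\mu|^2$ and $\int M_\Gamma|\nabla_\Gamma\theta|^2$ appearing on the left-hand side. The extra pairing $(\phi_{\delta,n},-\mathcal S_nA_{\phi_{\delta,n}}(\mu_{\delta,n}))$ from the $\tfrac12|\phi|^2$ component yields the remaining $-\int M_{\mathcal O}\nabla\mu\cdot\nabla\phi$ on the right, while its convective counterpart $-b(\bu_{\delta,n},\phi_{\delta,n},\phi_{\delta,n})$ vanishes by \eqref{B1-first-Property}.

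To close the drift accounting, the convective pairings $-\langle B_{1,n}(\bu_{\delta,n},\phi_{\delta,n}),\mu_{\delta,n}\rangle$ and $-\langle \tilde B_n(\bu_{\delta,n},\varphi_{\delta,n}),\theta_{\delta,n}\rangle_\Gamma$ are rewritten, using the antisymmetry \eqref{B1-first-Property} and the definition of $\tilde b$, as $\int\phi_{\delta,n}\nabla\mu_{\delta,n}\cdot\bu_{\delta,n}+\int_\Gamma\varphi_{\delta,n}\nabla_\Gamma\theta_{\delta,n}\cdot\bu_{\delta,n}$. Choosing $\bv=\bu_{\delta,n}$ in \eqref{Eqn-obtain-process-u} shows this equals $-a_{n,t}(\bu_{\delta,n},\bu_{\delta,n})$, and moving it to the left reproduces the viscosity, permeability and friction dissipation terms. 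The martingale increments $(\mu_{\delta,n},F_{1,n}(\phi_{\delta,n})\,dW)$, $(\theta_{\delta,n},F_{2,n}(\varphi_{\delta,n})\,dW_\Gamma)_\Gamma$ and $(\phi_{\delta,n},F_{1}(\phi_{\delta,n})\,dW)$ are then read off directly (the $n$-subscript being dropped precisely because $\mu_{\delta,n},\theta_{\delta,n},\phi_{\delta,n}$ live in the finite-dimensional projection spaces).

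The remaining It\^o corrections split naturally: the gradient pieces yield $\tfrac{\eps}{2}\|\nabla F_{1,n}(\phi_{\delta,n})\|^2_{\mathscr T_2(U,\mathbb L^2(\mathcal O))}$ and $\tfrac{\eps_\Gamma}{2}\|\nabla_\Gamma F_{2,n}(\varphi_{\delta,n})\|^2_{\mathscr T_2(U_\Gamma,\mathbb L^2(\Gamma))}$; the potentials give the $F_\delta^{\bis}, G_\delta^{\bis}$ sums; the $\tfrac12|\phi|^2$ piece gives $\tfrac12\|F_{1,n}(\phi_{\delta,n})\|^2_{\mathscr T_2(U,L^2(\mathcal O))}$; and the Hessian of $\tfrac{\eps}{2K}(\varphi-\phi)^2$ produces both the trace-norm pair $\tfrac{\eps}{2K}\bigl(\|F_{1,n}\|^2_{\mathscr T_2(U,L^2(\Gamma))}+\|F_{2,n}\|^2_{\mathscr T_2(U_\Gamma,L^2(\Gamma))}\bigr)$ coming from the diagonal entries and the mixed term $-\tfrac{\eps}{K}\sum_k\int_\Gamma(F_{1,n}e_{1,k})(F_{2,n}e_{2,k})\,dS$ coming from the off-diagonal entry. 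Collecting everything and restricting to $t\wedge\tau_\kappa$ yields \eqref{Main_Galerkin_Equality}. The main technical obstacle is the justification of the It\^o formula for $\mathcal E_{tot}$ when $F_\delta,G_\delta$ are only $C^{1,1}$, handled by smoothing and a passage to the limit using the Lipschitz bound $L_{F_\delta'}=\tfrac{1}{\delta}+\tilde c_F$ (and analogously for $G_\delta$); the careful orchestration of the Robin coupling, the Neumann eigenbasis integration by parts, and the finite-dimensional projections $\mathcal S_n,\mathcal S_{n,\Gamma}$ is the auxiliary bookkeeping that must be performed with care in order that no residual boundary terms remain.
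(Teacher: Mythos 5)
Your proposal is correct and follows essentially the same route as the paper: It\^o's formula applied to the energy functional on the finite-dimensional space $\mathcal{V}_n$ (the paper merely splits $\mathcal{E}_{tot}$ into $\mathcal{E}$ and the $\tfrac12\lvert\phi\rvert^2$ piece and treats them by two separate applications of It\^o's formula, which is cosmetic), combined with the Brinkman identity tested against $\bv=\bu_{\delta,n}$ and the cancellation $b(\bu_{\delta,n},\phi_{\delta,n},\phi_{\delta,n})=0$. The one point to flag is that what you call the ``main technical obstacle'' --- mollifying $F_\delta,G_\delta$ because they would only be $C^{1,1}$ --- does not actually arise: since $F\in C^2(\mathbb{R})$ by \ref{item:H8}, the resolvent $J_\delta=(I+\delta\mathbb{A})^{-1}$ is $C^1$ by the inverse function theorem, hence $F_\delta\in C^2$ with $F_\delta^{\bis}$ continuous and bounded by $\delta^{-1}+\tilde c_F$, and the paper verifies the second Fr\'echet differentiability of $\mathcal{E}$ on $\mathcal{V}_n$ directly without any smoothing step.
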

\begin{proof}
By testing \eqref{eq4.34a} by $\bu_{\delta,n}$ and \eqref{eq4.34c} by $\theta_{\delta,n}$, respectively, we obtain
\begin{equation}\label{eq.4.78}
\begin{aligned}
& \int_0^{t\wedge \tau_\kappa} \int_{\mathcal{O}} [2 \nu(\phi_{\delta,n}) \lvert D\bu_{\delta,n} \rvert^2 + \lambda(\phi_{\delta,n}) \lvert \bu_{\delta,n} \rvert^2] \,\d x \,\d s + \int_0^{t\wedge \tau_\kappa} \int_{\Gamma} \gamma(\varphi_{\delta,n}) \lvert \bu_{\delta,n} \rvert^2 \,\d S\,\d s \\
&\quad= - \int_0^{t\wedge \tau_\kappa} \int_{\Gamma} \varphi_{\delta,n} \nabla_\Gamma \theta_{\delta,n} \cdot \bu_{\delta,n} \, \d S\,\d s  - \int_0^{t\wedge \tau_\kappa} \int_{\mathcal{O}} \phi_{\delta,n} \nabla \mu_{\delta,n} \cdot \bu_{\delta,n} \,\d x \,\d s, \; t \in [0,T]. 
\end{aligned}
\end{equation}
Next, we apply the It\^o formula to the free energy functional $\mathcal{E}$ introduced in \eqref{Eqn-Lyaponov-function}. \newline
The first Fr\'echet derivative $D\mathcal{E}: \mathcal{V}_n \to \mathcal{V}_n^\prime$ of $\mathcal{E}$ is given by
\dela{
Since $F^\prime_\delta$ and $G_\delta^\prime$ are Lipschitz-continuous and, since all norms are equivalent on finite dimensional space $\mathcal{V}_n$, the map $\mathcal{E}$ is Fr\'echet differential with the first Fr\'echet derivative $D\mathcal{E}: \mathbb{V}_n \to \mathbb{V}_n^\prime$ given by
}
\begin{align*}
D\mathcal{E}(\phi,\varphi)[(v,v_1)]
&= \eps \int_{\mathcal{O}} \nabla \phi \cdot \nabla v \,\d x + \frac{1}{\eps} \int_{\mathcal{O}} F_\delta^\prime(\phi) v\, \d x + \eps_\Gamma \int_\Gamma \nabla_\Gamma \varphi \cdot \nabla_\Gamma v_1 \, \d S \\
&\quad + \frac{1}{\eps_\Gamma} \int_\Gamma G_\delta^\prime(\varphi) v_1 \, \d S + \eps (1/K) \int_\Gamma (\varphi - \phi) (v_1 - v) \, \d S, \; \; (\phi,\varphi), \, (v,v_1) \in \mathcal{V}_n.
\end{align*}
As a direct consequence, we have $D \mathcal{E}(\phi_{\delta,n},\varphi_{\delta,n})= (\mu_{\delta,n},\theta_{\delta,n})$.

\noindent
Now, we claim that the second Fr\'echet derivative $D^2 \mathcal{E}: \mathcal{V}_n \to \mathcal{L}(\mathcal{V}_n, \mathcal{V}_n^\prime)$ of $\mathcal{E}$ is given by
\begin{align*}
 D^2 \mathcal{E}(\phi,\varphi)[(v,h),(v_1,h_1)]
 &=  \eps \int_{\mathcal{O}} \nabla v \cdot \nabla v_1 \,\d x  + \eps_\Gamma \int_\Gamma \nabla_\Gamma h \cdot \nabla_\Gamma h_1 \, \d S + \frac{1}{\eps} \int_{\mathcal{O}} F^{\bis}_\delta(\phi) v v_1 \, \d x
  \\
 &\quad  + \eps (1/K) \int_\Gamma (h - v) (h_1 - v_1)\, \d S  + \frac{1}{\eps_\Gamma} \int_{\Gamma} G^{\bis}_\delta(\varphi) h h_1 \, \d S, 
\end{align*}
for all $(v,h),\, (v_1,h_1),\, (\phi,\varphi) \in \mathcal{V}_n$. In fact, let us fix $(v,h),\, (v_1,h_1),\, (\phi,\varphi) \in \mathcal{V}_n$. We have,
\begin{align*}
 & D\mathcal{E}((\phi,\varphi) + (v,h))[(v_1,h_1)] - D\mathcal{E}(\phi,\varphi)[(v_1,h_1)] \\
 &=  \eps \int_{\mathcal{O}} \nabla v \cdot \nabla v_1 \,\d x + \frac{1}{\eps} \int_{\mathcal{O}} [F_\delta^\prime(\phi + v) - F_\delta^\prime(\phi)] v_1\, \d x + \eps_\Gamma \int_\Gamma \nabla_\Gamma h \cdot \nabla_\Gamma h_1 \, \d S \\
 &\quad + \frac{1}{\eps_\Gamma} \int_\Gamma [G_\delta^\prime(\varphi + h) -G_\delta^\prime(\varphi)] h_1 \, \d S + \eps (1/K) \int_\Gamma (h - v) (h_1 - v_1) \, \d S.
\end{align*}
Notice that
\begin{align*}
&\int_{\mathcal{O}} [F_\delta^\prime(\phi + v) - F_\delta^\prime(\phi)] v_1\, \d x
= \int_{\mathcal{O}} v v_1 \int_0^1 F^{\bis}_\delta(\phi + \tau v) \, \d \tau\, \d x \\
&= \int_{\mathcal{O}} F^{\bis}_\delta(\phi) v v_1 \, \d x + \int_{\mathcal{O}} v v_1 \int_0^1 [F^{\bis}_\delta(\phi + \tau v) - F^{\bis}_\delta(\phi)] \, \d \tau\, \d x
\end{align*}
and 
\begin{equation*}
\int_{\Gamma} [G_\delta^\prime(\varphi + h) - G_\delta^\prime(\varphi)] h_1\, \d S  - \int_{\Gamma} G^{\bis}_\delta(\varphi) h h_1 \, \d S 
= \int_{\Gamma} h h_1 \int_0^1 [G^{\bis}_\delta(\varphi + \tau h) - G^{\bis}_\delta(\varphi)] \, \d \tau\, \d S.
\end{equation*}
Therefore, by the H\"older inequality and the fact that all norms are equivalent on the finite dimensional space $\mathcal{V}_n$, we infer that
\begin{align*}
&\left|\int_{\mathcal{O}} v v_1 \int_0^1 [F^{\bis}_\delta(\phi + \tau v) - F^{\bis}_\delta(\phi)] \, \d \tau\, \d x\right|
\leq \Vert v \Vert_{L^\infty(\mathcal{O})} \Vert v_1 \Vert_{L^\infty(\mathcal{O})} \int_0^1 \Vert F^{\bis}_\delta(\phi + \tau v) - F^{\bis}_\delta(\phi) \Vert_{L^1(\mathcal{O})} \, \d \tau \\
&\leq C(n) \Vert (v,h) \Vert_{\mathcal{V}_n} \Vert (v_1,h_1) \Vert_{\mathcal{V}_n} \int_0^1 \Vert F^{\bis}_\delta(\phi + \tau v) - F^{\bis}_\delta(\phi) \Vert_{L^1(\mathcal{O})} \, \d \tau.
\end{align*}
Since $F^{\bis}_\delta$ is continuous and bounded, an application of the DCT entails that the third factor on the RHS of the above inequality converges to zero as $(v,h) \to (0,0)$. Thus,
\begin{align*}
\sup_{\Vert (v_1,h_1) \Vert_{\mathcal{V}_n} \leq 1} \left|\int_{\mathcal{O}} v v_1 \int_0^1 [F^{\bis}_\delta(\phi + \tau v) - F^{\bis}_\delta(\phi)] \, \d \tau\, \d x\right| = o(\Vert (v,h) \Vert_{\mathcal{V}_n}).
\end{align*}
Consequently, 
   \begin{equation*}
    \int_{\mathcal{O}} [F_\delta^\prime(\phi + v) - F_\delta^\prime(\phi)] v_1\, \d x
    = \int_{\mathcal{O}} F^{\bis}_\delta(\phi) v v_1 \, \d x + o(\Vert (v,h) \Vert_{\mathcal{V}_n}).
\end{equation*}
Similarly, we prove that
   \begin{equation*}
    \int_{\Gamma} [G_\delta^\prime(\varphi + h) - G_\delta^\prime(\varphi)] h_1\, \d S 
    = \int_{\Gamma} G^{\bis}_\delta(\varphi) h h_1 \, \d S + o(\Vert (v,h) \Vert_{\mathcal{V}_n}).
\end{equation*}
Therefore,
\begin{align*}
 & D\mathcal{E}((\phi,\varphi) + (v,h))[(v_1,h_1)] - D\mathcal{E}(\phi,\varphi)[(v_1,h_1)] \\
 &=  \eps \int_{\mathcal{O}} \nabla v \cdot \nabla v_1 \,\d x  + \eps_\Gamma \int_\Gamma \nabla_\Gamma h \cdot \nabla_\Gamma h_1 \, \d S + \eps (1/K) \int_\Gamma (h - v) (h_1 - v_1) \, \d S \\
 &\quad + \frac{1}{\eps} \int_{\mathcal{O}} F^{\bis}_\delta(\phi) v v_1 \, \d x + \frac{1}{\eps_\Gamma} \int_{\Gamma} G^{\bis}_\delta(\varphi) h h_1 \, \d S + o(\Vert (v,h) \Vert_{\mathcal{V}_n}).
\end{align*}
This means that $D\mathcal{E}$ is also Fr\'echet differentiable with the Fr\'echet derivative $D^2\mathcal{E}$ defined as above. Besides, the first and second Fr\'echet derivatives $D \mathcal{E}$ and $D^2 \mathcal{E}$ are both continuous and bounded on bounded subsets of $\mathcal{V}_n$ due to the fact that $F_\delta^\prime$ and $G_\delta^\prime$ are Lipschitz-continuous, $F^{\bis}_\delta$ and $G^{\bis}_\delta$ are continuous and bounded.
\newline
\noindent
We can now apply the It\^o formula introduced in \cite[Theorem 4.32]{Prato}. %or \cite{Krylov+Rozovskii_1979}. 
Therefore, for every $t\in[0,T]$ and $\mathbb{P}$-a.s., we obtain
\begin{equation}\label{Eqn-4.22}
\begin{aligned}
&\mathcal{E}(\phi_{\delta,n}(t\wedge \tau_\kappa), \varphi_{\delta,n}(t\wedge \tau_\kappa)) 
- \int_0^{t\wedge \tau_\kappa} \duality{\boldsymbol{b}_n((\phi_{\delta,n}, \varphi_{\delta,n}))}{(\mu_{\delta,n},\theta_{\delta,n})}{\mathbb{V}}{\mathbb{V}^\prime}\, \d s \\
&= \mathcal{E}(\phi_n(0),\varphi_n(0)) +  \int_0^{t\wedge \tau_\kappa} ((\mu_{\delta,n},\theta_{\delta,n}),\boldsymbol{\sigma}_n((\phi_{\delta,n},\varphi_{\delta,n}))\, \d \mathcal{W})_{\mathbb{H}}  \\
& + \frac12 \bigg[ \int_0^{t\wedge \tau_\kappa} \bigg( \eps \Vert \nabla F_{1,n}(\phi_{\delta,n}) \Vert_{L^2(U,\mathbb{L}^2(\mathcal{O}))}^2 + \eps_\Gamma \Vert \nabla_\Gamma F_{2,n}(\varphi_{\delta,n}) \Vert_{\mathscr{T}_2(U_\Gamma,\mathbb{L}^2(\Gamma))}^2 \\
& + \frac{1}{\eps} \sum_{k=1}^\infty \int_{\mathcal{O}} F^{\bis}_\delta(\phi_{\delta,n}) \lvert F_{1,n}(\phi_{\delta,n}) e_{1,k} \rvert^2 \, \d x + \frac{1}{\eps_\Gamma} \sum_{k=1}^\infty \int_{\mathcal{O}} G^{\bis}_\delta(\varphi_{\delta,n}) \lvert F_2(\varphi_{\delta,n})e_{2,k} \rvert^2\, \d S \\
& + \eps [K] \Vert F_{2,n}(\varphi_{\delta,n}) \Vert_{\mathscr{T}_2(U_\Gamma,L^2(\Gamma))}^2  + \eps [K] \Vert F_{1,n}(\phi_{\delta,n}) \Vert_{\mathscr{T}_2(U,L^2(\Gamma))}^2  \\
& - 2  \eps [K] \sum_{k=1}^\infty  \int_\Gamma (F_{1,n}(\phi_{\delta,n}) e_{1,k}) (F_{2,n}(\varphi_{\delta,n}) e_{2,k})\, \d S \bigg) \d s
\bigg].
\end{aligned}
\end{equation}
Next, by applying the It\^o formula to the functional $\Phi(x)= \lvert x \rvert^2$, cf. \cite[Theorem 4.2.5]{Liu+Rockner_2015}, 
since $(B_{1,n}(\bu_{\delta,n},\phi_{\delta,n}),\phi_{\delta,n})= 0$, we infer that
\begin{equation}\label{eq4.81}
\begin{aligned}
 &\frac12 \lvert \phi_{\delta,n}(t\wedge \tau_\kappa) \rvert^2 + \int_0^{t\wedge \tau_\kappa} \int_{\mathcal{O}} M_{\mathcal{O}}(\phi_{\delta,n}) \nabla \mu_{\delta,n} \cdot \nabla \phi_{\delta,n} \, \d x \,\d s\\
 &= \frac12 \lvert \phi_n(0) \rvert^2 +  \frac12 \int_0^{t\wedge \tau_\kappa} \Vert F_{1,n}(\phi_{\delta,n}) \Vert_{\mathscr{T}_2(U,L^2(\mathcal{O}))}^2 \,\d s + \int_0^{t\wedge \tau_\kappa}(\phi_{\delta,n}, F_{1}(\phi_{\delta,n})\,\d W).
\end{aligned}
\end{equation}
Note that
\begin{align*}
&\duality{\boldsymbol{b}_n((\phi_{\delta,n}, \varphi_{\delta,n}))}{(\mu_{\delta,n},\theta_{\delta,n})}{\mathbb{V}}{\mathbb{V}^\prime}  
= \duality{- B_1(\bu_{\delta,n},\phi_{\delta,n}) - A_{\phi_{\delta,n}}(\mu_{\delta,n})}{\mu_{\delta,n}}{V_1}{V_1^\prime}  \\
&\quad \hspace{7 truecm} +  \duality{- \mathcal{A}_{\varphi_{\delta,n}}(\theta_{\delta,n}) - \tilde{B}(\bu_{\delta,n},\varphi_{\delta,n})}{\theta_{\delta,n}}{V_\Gamma}{V_\Gamma^\prime} \\
&= \int_{\mathcal{O}}\left[\bu_{\delta,n} \cdot \nabla \mu_{\delta,n} \phi_{\delta,n} - M_{\mathcal{O}}(\phi_{\delta,n}) \lvert \nabla \mu_{\delta,n} \rvert^2 \right] \d x  
- \int_{\Gamma} \left[M_\Gamma(\varphi_{\delta,n}) \lvert \nabla_\Gamma \theta_{\delta,n} \rvert^2 -  \varphi_{\delta,n} \bu_{\delta,n} \cdot \nabla_\Gamma \theta_{\delta,n} \right] \d S,
\end{align*}
\begin{align*}
\int_0^{t\wedge \tau_\kappa} ((\mu_{\delta,n},\theta_{\delta,n}),\boldsymbol{\sigma}_n((\phi_{\delta,n},\varphi_{\delta,n}))\, \d \mathcal{W})_{\mathbb{H}}
= \int_0^{t\wedge \tau_\kappa} (\mu_{\delta,n},F_1(\phi_{\delta,n})\, \d W) + \int_0^{t\wedge \tau_\kappa} (\theta_{\delta,n},F_2(\varphi_{\delta,n})\, \d W_\Gamma)_\Gamma.
\end{align*}
%%%%%%%%%%%%%%%%%%%%%%%%%%%%%%%%%%%%%%%%%%%%%%%%%%%%%%%%%%%%%%%%%%%%%%%%%%%%%%%%%%%%%%%%%%%%%%%%%%%%%%%%%%%%%%%%%%%%%%%%%%%%%%%%%%%%%%%%%%%%%%%%%%
Plugging the two previous equalities into the RHS of \eqref{Eqn-4.22}, adding up side by side the resulting equality to \eqref{eq4.81}, we deduce \eqref{Main_Galerkin_Equality}. This completes the proof of Lemma \ref{Lem-2}.
\end{proof}
We now state and prove the following result.
\begin{proposition}\label{Proposition_first_priori_estimmate}
Suppose \ref{item:H1}-\ref{item:H8} are satisfied. Then, there exists a constant $C_{1,\delta}>0$ such that for every $n \in \mathbb{N}$, %Then, the sequence $(\bu_{\delta,n},\phi_{\delta,n},\varphi_{\delta,n},\mu_{\delta,n},\theta_{\delta,n})_{n \in \mathbb{N}}$ satisfies
\begin{equation}\label{eq4.80}
\begin{aligned}
 &\mathbb{E} \sup_{s \in[0,T]} [\mathcal{E}_{tot}(\phi_{\delta,n}(s), \varphi_{\delta,n}(s))] + \mathbb{E} \int_0^{T} \int_{\mathcal{O}} [\lvert D \bu_{\delta,n}(s) \rvert^2 + \lambda(\phi_{\delta,n}(s)) \lvert \bu_{\delta,n}(s) \rvert^2] \,\d x \,\d s \\
 & + \mathbb{E} \int_0^{T} \lvert \bu_{\delta,n}(s) \rvert_\Gamma^2 \,\d s + \mathbb{E} \int_0^{T} [\lvert \nabla \mu_{\delta,n}(s) \rvert^2 + \lvert \nabla_\Gamma \theta_{\delta,n}(s) \rvert_{\Gamma}^2] \,\d s  
 \leq C_{1,\delta} [1 + \|(\phi_0,\varphi_0)\|_{\mathbb{V}}^2]. 
\end{aligned}
\end{equation}
% where $C_{1,\delta}$ depends only on $\eps,\,\eps_\Gamma,\,C_1,\,C_2,\,\tilde{c}_F,\,\tilde{c}_G,\,\mathcal{O},\,\Gamma,\, K,\,M_0,\,M_{\mathcal{O}^\ast},\,\nu_\ast$, $\gamma_\ast,\,N_0,\, \delta$ and $T$. 
% Furthermore, there exists a positive constant $C_{2,\delta}>0$ depending only on the constant parameter $C_{1,\delta}$ such that
Furthermore, there exists a positive constant $C_{2,\delta}>0$ such that for all $n \in \mathbb{N}$,
   \begin{equation}\label{eq4.81a}
     \mathbb{E} \sup_{s \in[0,T]} \lvert \varphi_{\delta,n}(s) \rvert_{\Gamma}^2
     \leq C_{2,\delta} [1 + \Vert (\phi_0,\varphi_0) \Vert_{\mathbb{V}}^2].
   \end{equation}
\end{proposition}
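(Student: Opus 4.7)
The plan is to start from the energy identity \eqref{Main_Galerkin_Equality} of Lemma \ref{Lem-2} (which already incorporates the cancellation of the Korteweg-type cross terms between the Brinkman and Cahn--Hilliard equations), take the supremum over $[0,t\wedge\tau_\kappa]$, take expectation, bound every right-hand term so that a Gronwall argument closes the estimate, and let $\kappa\to\infty$ by monotone convergence (the stopping time ensures integrability throughout). The Korn inequality \eqref{Korn-inequality} together with $\nu\geq\nu_0$, $\gamma\geq\gamma_0$ and $\lambda\geq 0$ promotes the viscous dissipation on the LHS of \eqref{Main_Galerkin_Equality} to full control of $\Vert\bu_{\delta,n}\Vert_V^2$ and $\lvert\bu_{\delta,n}\rvert_\Gamma^2$, while $M_\mathcal{O}\geq M_0$ and $M_\Gamma\geq N_0$ turn the mobility terms into bounds on $\lvert\nabla\mu_{\delta,n}\rvert^2$ and $\lvert\nabla_\Gamma\theta_{\delta,n}\rvert_\Gamma^2$.

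The Hilbert--Schmidt norms of $F_{1,n}$ and $F_{2,n}$ appearing on the RHS of \eqref{Main_Galerkin_Equality} are controlled by the linear-growth hypotheses \eqref{eq4.122-1}--\eqref{eq4.122-2} and the trace embedding $V_1\hookrightarrow L^2(\Gamma)$, producing contributions dominated by the energy; in particular the boundary terms $\lvert\varphi_{\delta,n}-\phi_{\delta,n}\rvert_\Gamma^2$ that arise from bounding $\Vert F_{1,n}(\phi_{\delta,n})\Vert_{\mathscr{T}_2(U,L^2(\Gamma))}^2$ are controlled by the $\tfrac{\eps}{2K}(\varphi-\phi)^2$ component of $\mathcal{E}_{tot}$. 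The second-order It\^o terms involving $F_\delta''$ and $G_\delta''$ are dominated by the a.e.\ Lipschitz bounds $\lvert F_\delta''\rvert\leq 1/\delta+\tilde{c}_F$ and $\lvert G_\delta''\rvert\leq 1/\delta+\tilde{c}_G$ coming from property \ref{item:P4}, which is the source of the $\delta$-dependence in $C_{1,\delta}$. Finally, the deterministic cross term $-\int_0^{t\wedge\tau_\kappa}\!\int_{\mathcal{O}}M_\mathcal{O}(\phi_{\delta,n})\nabla\mu_{\delta,n}\cdot\nabla\phi_{\delta,n}\,dx\,ds$ is split by Young's inequality so that half of $\int M_\mathcal{O}\lvert\nabla\mu_{\delta,n}\rvert^2$ is absorbed into the LHS, leaving $\bar M_0^2\int\lvert\nabla\phi_{\delta,n}\rvert^2\,ds$ already controlled by the energy.

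The main obstacle is the stochastic integral $\int_0^{\cdot}(\mu_{\delta,n},F_1(\phi_{\delta,n})\,dW)$ (and its surface analogue): the BDG inequality \eqref{eqn-BDG Inequality} requires control of $\lvert\mu_{\delta,n}\rvert^2$, not just $\lvert\nabla\mu_{\delta,n}\rvert^2$. I will handle this via Poincar\'e--Wirtinger, $\lvert\mu_{\delta,n}\rvert^2\leq 2C_P\lvert\nabla\mu_{\delta,n}\rvert^2+2\lvert\mathcal{O}\rvert\lvert\langle\mu_{\delta,n}\rangle_\mathcal{O}\rvert^2$, and control the mean by testing the weak chemical-potential identity \eqref{eq4.34d} against the constant function $1\in H^1_n$ (available because the Neumann eigenvalue $0$ appears in the basis, cf.\ \eqref{eq4.28}): this yields $\lvert\mathcal{O}\rvert\langle\mu_{\delta,n}\rangle_\mathcal{O}+\lvert\Gamma\rvert\langle\theta_{\delta,n}\rangle_\Gamma=\eps^{-1}\int F_\delta'(\phi_{\delta,n})\,dx+\eps_\Gamma^{-1}\int G_\delta'(\varphi_{\delta,n})\,dS$, and the Lipschitz bounds $\lvert F_\delta'(s)\rvert\leq L_{F_\delta'}\lvert s\rvert$ and $\lvert G_\delta'(s)\rvert\leq L_{G_\delta'}\lvert s\rvert$ from \ref{item:P4}, \ref{item:P6} then give $\lvert\langle\mu_{\delta,n}\rangle_\mathcal{O}\rvert+\lvert\langle\theta_{\delta,n}\rangle_\Gamma\rvert\leq C_\delta(1+\lvert\phi_{\delta,n}\rvert+\lvert\varphi_{\delta,n}\rvert_\Gamma)$, each term dominated by the energy. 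Using the uniform bound $\Vert F_1(\phi)\Vert_{\mathscr{T}_2(U,L^2(\mathcal{O}))}\leq C$ from \eqref{eqn-boundedness-of-F_1}, BDG combined with the scalar inequality $\sqrt{a}\leq \eta a+\tfrac{1}{4\eta}$ absorbs a small fraction of $\mathbb{E}\int\lvert\nabla\mu_{\delta,n}\rvert^2\,ds$ into the LHS; Gronwall's lemma closes \eqref{eq4.80}, and monotone convergence as $\kappa\to\infty$ completes the first estimate.

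For \eqref{eq4.81a}, the plan is to apply It\^o's formula to $v\mapsto\tfrac12\lvert v\rvert_\Gamma^2$ on the $\varphi_{\delta,n}$-equation in \eqref{eqn-Galerkin-011}, obtaining
\begin{align*}
\tfrac12\lvert\varphi_{\delta,n}(t\wedge\tau_\kappa)\rvert_\Gamma^2
&=\tfrac12\lvert\varphi_{0,n}\rvert_\Gamma^2-\int_0^{t\wedge\tau_\kappa}\!\!\int_\Gamma M_\Gamma(\varphi_{\delta,n})\nabla_\Gamma\theta_{\delta,n}\cdot\nabla_\Gamma\varphi_{\delta,n}\,dS\,ds \\
&\quad -\int_0^{t\wedge\tau_\kappa}\tilde{b}(\bu_{\delta,n},\varphi_{\delta,n},\varphi_{\delta,n})\,ds+\tfrac12\int_0^{t\wedge\tau_\kappa}\Vert F_{2,n}(\varphi_{\delta,n})\Vert_{\mathscr{T}_2(U_\Gamma,L^2(\Gamma))}^2\,ds+(\text{stoch.}).
\end{align*}
The mobility pairing is bounded via Cauchy--Schwarz/Young by $\lvert\nabla_\Gamma\theta_{\delta,n}\rvert_\Gamma^2$ and $\lvert\nabla_\Gamma\varphi_{\delta,n}\rvert_\Gamma^2$, both already $L^1_{t,\omega}$ by \eqref{eq4.80}. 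The trilinear term is bounded via \eqref{First-Properties of trilinear-boundary-map} by $C\Vert\bu_{\delta,n}\Vert_V\Vert\varphi_{\delta,n}\Vert_{V_\Gamma}^2$; its $\lvert\nabla_\Gamma\varphi_{\delta,n}\rvert_\Gamma^2$-part is again controlled by \eqref{eq4.80}, whereas the $\lvert\varphi_{\delta,n}\rvert_\Gamma^2$-part enters a stochastic Gronwall loop with coefficient $C\Vert\bu_{\delta,n}\Vert_V$, which is in $L^1_{t,\omega}$ by \eqref{eq4.80}. The noise term is bounded via \eqref{eq4.122-2} and the stochastic integral by BDG exactly as above. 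A standard stochastic Gronwall argument plus the passage $\kappa\to\infty$ yields \eqref{eq4.81a}.
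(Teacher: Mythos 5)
Your overall skeleton for \eqref{eq4.80} (energy identity \eqref{Main_Galerkin_Equality}, Korn, the assumptions on $F_1,F_2$, BDG, Gronwall, Fatou as $\kappa\to\infty$) matches the paper, but there are two genuine gaps where your route diverges and would not close.

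First, the control of $\lvert\mu_{\delta,n}\rvert$ and $\lvert\theta_{\delta,n}\rvert$ needed for the BDG step. You propose Poincar\'e--Wirtinger plus control of the means by testing \eqref{eq4.34d} with the constant function. The admissible test functions in \eqref{eq4.34d} are elements of $\mathcal{V}_n=\mathrm{linspan}\{(\bar{\upsilon}_1,\bar{\Lambda}_1),\dots,(\bar{\upsilon}_n,\bar{\Lambda}_n)\}$, i.e.\ \emph{coupled} pairs with the same coefficients in both components; neither $(1,0)$ nor $(0,1)$ belongs to $\mathcal{V}_n$, and $(1,1)$ does not either unless $\lvert\mathcal{O}\rvert=\lvert\Gamma\rvert$ (since $\bar{\upsilon}_1=\lvert\mathcal{O}\rvert^{-1/2}$ while $\bar{\Lambda}_1=\lvert\Gamma\rvert^{-1/2}$). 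The only constant pair you may test with is $(\bar{\upsilon}_1,\bar{\Lambda}_1)$, which yields a single scalar relation for the combination $\lvert\mathcal{O}\rvert^{1/2}\langle\mu_{\delta,n}\rangle_{\mathcal{O}}+\lvert\Gamma\rvert^{1/2}\langle\theta_{\delta,n}\rangle_\Gamma$; this does not determine the two means individually, so $\lvert\mu_{\delta,n}\rvert^2$ and $\lvert\theta_{\delta,n}\rvert_\Gamma^2$ cannot be recovered from the gradients. The paper avoids the means entirely: it estimates the dual norm $\Vert(\mu_{\delta,n},\theta_{\delta,n})\Vert_{\mathbb{V}^\prime}$ directly from \eqref{eq4.34d} in terms of energy-controlled quantities (see \eqref{Eqn-4.34}), and then uses the interpolation $\Vert(\mu,\theta)\Vert_{\mathbb{H}}^2\leq\Vert(\mu,\theta)\Vert_{\mathbb{V}^\prime}^2+2\Vert(\mu,\theta)\Vert_{\mathbb{V}^\prime}(\lvert\nabla\mu\rvert+\lvert\nabla_\Gamma\theta\rvert_\Gamma)$ of \eqref{Eqn-4.35}--\eqref{Eqn-4.36} to absorb the gradient factors into the dissipation. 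You would need to adopt this (or an equivalent) device.

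Second, your proof of \eqref{eq4.81a} via It\^o's formula for $\tfrac12\lvert\varphi_{\delta,n}\rvert_\Gamma^2$ is both unnecessary and problematic. Unlike the bulk term, the surface transport term $\tilde{b}(\bu_{\delta,n},\varphi_{\delta,n},\varphi_{\delta,n})=-\int_\Gamma\varphi_{\delta,n}\bu_{\delta,n}\cdot\nabla_\Gamma\varphi_{\delta,n}\,\d S$ does not vanish, and your bound $C\Vert\bu_{\delta,n}\Vert_V\Vert\varphi_{\delta,n}\Vert_{V_\Gamma}^2$ puts a merely $L^1_t$, unbounded random coefficient in front of the quantity you are trying to estimate; the resulting Gronwall factor $\exp(C\int_0^T\Vert\bu_{\delta,n}\Vert_V\,\d t)$ need not have finite expectation, so you cannot conclude a bound on $\mathbb{E}\sup_s\lvert\varphi_{\delta,n}(s)\rvert_\Gamma^2$ this way (this is precisely the obstruction the authors cite for not applying It\^o to $\Vert\cdot\Vert_{L^2(\Gamma)}^2$). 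Since $K>0$, the term $\tfrac{\eps}{2K}\lvert\varphi_{\delta,n}-\phi_{\delta,n}\rvert_\Gamma^2$ is already part of $\mathcal{E}_{tot}$, so \eqref{eq4.81a} follows in one line from \eqref{eq4.80} via $\lvert\varphi_{\delta,n}\rvert_\Gamma^2\leq 2\lvert\varphi_{\delta,n}-\phi_{\delta,n}\rvert_\Gamma^2+C(\mathcal{O},\Gamma)\Vert\phi_{\delta,n}\Vert_{V_1}^2$, which is the paper's argument \eqref{eq4.99}.
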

\begin{proof}[Proof of Proposition \ref{Proposition_first_priori_estimmate}]
Let us proceed with estimating all the terms on the RHS of \eqref{Main_Galerkin_Equality}.
\newline
Using Assumption \ref{item:H2} on $F_1$, i.e. \eqref{eqn-boundedness-of-F_1}, we infer that for every $n \in \mathbb{N}$ and every $\delta>0$,
\begin{equation}\label{eq4.83}
\begin{aligned}
&\Vert F_{1,n}(\phi_{\delta,n}) \Vert_{\mathscr{T}_2(U,L^2(\mathcal{O}))}^2 
\leq \Vert \mathcal{S}_n \Vert_{\mathcal{L}(L^2)}^2 \Vert F_{1}(\phi_{\delta,n}) \Vert_{\mathscr{T}_2(U,L^2(\mathcal{O}))}^2 
 \\
&\leq \Vert F_{1}(\phi_{\delta,n}) \Vert_{\mathscr{T}_2(U,L^2(\mathcal{O}))}^2 
\leq \tilde{C}_1 \lvert \mathcal{O} \rvert.
\end{aligned}
\end{equation}
%begin delete
\dela{
\begin{equation}\label{eq4.83}
\begin{aligned}
&\Vert F_{1,n}(\phi_{\delta,n}) \Vert_{\mathscr{T}_2(U,L^2(\mathcal{O}))}^2 
\leq \Vert \mathcal{S}_n \Vert_{\mathcal{L}(L^2)}^2 \Vert F_{1}(\phi_{\delta,n}) \Vert_{\mathscr{T}_2(U,L^2(\mathcal{O}))}^2 
\leq \Vert F_{1}(\phi_{\delta,n}) \Vert_{\mathscr{T}_2(U,L^2(\mathcal{O}))}^2 
 \\
&= \sum_{k=1}^\infty \Vert F_1(\phi_{\delta,n}) e_{1,k} \Vert_{L^2(\mathcal{O})}^2 
= \sum_{k=1}^\infty \Vert \sigma_k(\phi_{\delta,n}) \Vert_{L^2(\mathcal{O})}^2 
\leq \lvert \mathcal{O} \rvert \sum_{k=1}^\infty \Vert \sigma_k \Vert_{L^{\infty}(\mathbb{R})}^2 
\leq \lvert \mathcal{O} \rvert C_1.
\end{aligned}
\end{equation}
}
%end delete
Once more, thanks to the assumption \ref{item:H2} on $F_1$\dela{and Remark \ref{Rk_1}}, we obtain for every $n \in \mathbb{N}$ and $\delta>0$,
\begin{equation}\label{eq4.82}
\begin{aligned}
&\Vert \nabla F_{1,n}(\phi_{\delta,n}) \Vert_{\mathscr{T}_2(U,\mathbb{L}^2(\mathcal{O}))}^2
\leq \Vert F_{1,n}(\phi_{\delta,n}) \Vert_{\mathscr{T}_2(U,V_1)}^2
\leq \Vert \mathcal{S}_n \Vert_{\mathcal{L}(V_1)}^2 \Vert F_{1}(\phi_{\delta,n}) \Vert_{\mathscr{T}_2(U,V_1)}^2
\\
&\leq C(\mathcal{O}) \Vert F_{1}(\phi_{\delta,n}) \Vert_{\mathscr{T}_2(U,V_1)}^2
\leq C(C_1,\tilde{C}_1,\mathcal{O}) (1 + \lvert \nabla \phi_{\delta,n} \rvert^2).
\end{aligned}
\end{equation}
%begin delete
\dela{
\begin{equation}\label{eq4.82}
\begin{aligned}
&\Vert \nabla F_{1,n}(\phi_{\delta,n}) \Vert_{\mathscr{T}_2(U,\mathbb{L}^2(\mathcal{O}))}^2
\leq \Vert F_{1,n}(\phi_{\delta,n}) \Vert_{\mathscr{T}_2(U,V_1)}^2
\leq \Vert \mathcal{S}_n \Vert_{\mathcal{L}(V_1)}^2 \Vert F_{1}(\phi_{\delta,n}) \Vert_{\mathscr{T}_2(U,V_1)}^2
\\
&\leq C \Vert F_{1}(\phi_{\delta,n}) \Vert_{\mathscr{T}_2(U,V_1)}^2
%= C \sum_{k=1}^\infty (\vert F_{1}(\phi_{\delta,n}) e_{1,k} \rvert^2 + \vert \nabla F_{1}(\phi_{\delta,n}) e_{1,k} \rvert^2)
= C \Vert F_{1}(\phi_{\delta,n}) \Vert_{\mathscr{T}_2(U,L^2(\mathcal{O}))}^2 + C \sum_{k=1}^\infty \lvert \sigma_k^\prime(\phi_{\delta,n}) \nabla \phi_{\delta,n} \rvert^2 
\\
&\leq C(C_1,\mathcal{O}) + C \sum_{k=1}^\infty \Vert \sigma_k \Vert_{W^{1,\infty}(\mathbb{R})}^2 \lvert \nabla \phi_{\delta,n}\rvert^2 
\leq C(C_1,\mathcal{O}) + C(C_1) \lvert \nabla \phi_{\delta,n} \rvert^2
\\
&\leq C(C_1,\mathcal{O}) (1 + \lvert \nabla \phi_{\delta,n} \rvert^2).
\end{aligned}
\end{equation}
}
%end delete
Next, by \eqref{eq4.82} and the trace theorem, see \cite[Theorem 5.36]{Adams_1975}, we infer     
\begin{equation}\label{eq4.82a}
  \begin{aligned}
   & \Vert F_{1,n}(\phi_{\delta,n}) \Vert_{\mathscr{T}_2(U,L^2(\Gamma))}^2
   = \sum_{k=1}^\infty \Vert F_{1,n}(\phi_{\delta,n}) e_{1,k} \Vert_{L^2(\Gamma)}^2 
   \leq  C(\mathcal{O},\Gamma) \sum_{k=1}^\infty \Vert F_{1,n}(\phi_{\delta,n}) e_{1,k} \Vert_{V_1}^2 \\
   &\leq C(\mathcal{O},\Gamma) \Vert F_{1,n}(\phi_{\delta,n}) \Vert_{\mathscr{T}_2(U,V_1)}^2
   \leq C(C_1,\mathcal{O},\Gamma) (1 + \lvert \nabla \phi_{\delta,n} \rvert^2).
  \end{aligned}
\end{equation}
\dela{
\begin{equation}%\label{eq4.82a}
  \begin{aligned}
   \Vert F_1(\phi_{\delta,n}) \Vert_{\mathscr{T}_2(U,L^2(\Gamma))}^2
   = \sum_{k=1}^\infty \Vert F_1(\phi_{\delta,n}) e_{1,k} \Vert_{L^2(\Gamma)}^2 
   &\leq  C(\mathcal{O},\Gamma) \sum_{k=1}^\infty \Vert F_1(\phi_{\delta,n}) e_{1,k} \Vert_{V_1}^2 \\
   &\leq C(\mathcal{O},\Gamma)( C_1 + C_1\lvert \mathcal{O} \rvert) \\
   &\leq C(\mathcal{O},\Gamma,C_1).
  \end{aligned}
\end{equation}
}
Note that $J_\delta^\prime(s) \in (0,1)$, $s \in \mathbb{R}$. Thus, $\mathbb{A}_\delta^\prime(s) \in (0,\frac{1}{\delta})$. Therefore, by $\ref{item:P4}$, we deduce that $\lvert F^{\bis}_\delta(s) \rvert\leq \frac{1}{\delta} + \tilde{c}_F$, $s \in \mathbb{R}$. This, jointly with \eqref{eq4.83}, implies that
\begin{equation}\label{eq4.85}
\sum_{k=1}^\infty \int_{\mathcal{O}} \lvert F^{\bis}_\delta(\phi_{\delta,n}) \rvert \lvert F_{1,n}(\phi_{\delta,n})e_{1,k} \rvert^2 \,\d x 
\leq (\delta^{-1} + \tilde{c}_F) \Vert F_{1,n}(\phi_{\delta,n}) \Vert_{\mathscr{T}_2(U,L^2(\mathcal{O}))}^2 
\leq C_1 (\delta^{-1} + \tilde{c}_F). 
\end{equation}
For the first stochastically forced term in \eqref{Main_Galerkin_Equality}, we use the BDG inequality and \eqref{eq4.83}. Thus, there exists $C=C(C_1,\mathcal{O})$ such that for all $n \in \mathbb{N}$ and all $\delta>0$,
\begin{equation}\label{eq4.86}
\begin{aligned}
\mathbb{E} \sup_{\tau \in[0,t\wedge \tau_\kappa]} \left \lvert \int_0^{\tau} \left(\mu_{\delta,n},F_{1}(\phi_{\delta,n}) \,\d W\right) \right \rvert 
&\leq C \mathbb{E} \left(\int_0^{t\wedge \tau_\kappa} \lvert \mu_{\delta,n} \lvert^2 \Vert F_1(\phi_{\delta,n}) \Vert_{\mathscr{T}_2(U,L^2(\mathcal{O}))}^2 \,\d s \right)^\frac{1}{2} \\
&\leq C \mathbb{E} \left(\int_0^{t \wedge \tau_\kappa} \lvert \mu_{\delta,n} \rvert^2 \,\d s \right)^{1/2}.
\end{aligned}
\end{equation}
By the assumption \ref{item:H2} on $F_2$ and arguing similarly as in \eqref{eq4.83} and \eqref{eq4.82}, we deduce that for every $n \in \mathbb{N}$ and $\delta>0$,
\begin{align}
\label{eqt4.87} 
\Vert F_{2,n}(\varphi_{\delta,n}) \Vert_{\mathscr{T}_2(U_\Gamma,L^2(\Gamma))}^2 
&\leq C_3,
\\
\label{eq4.88}
\Vert \nabla_{\Gamma,n} F_{\Gamma}(\varphi_{\delta,n}) \Vert_{\mathscr{T}_2(U_\Gamma,\mathbb{L}^2(\Gamma))}^2
&\leq C(\Gamma,C_3) (1 +  \lvert \nabla_\Gamma \varphi_{\delta,n} \rvert_{\Gamma}^2).
\end{align}
Therefore, by the BDG inequality, we infer that for all $n \in \mathbb{N}$ and $\delta>0$,
\begin{align*}
\mathbb{E} \sup_{\tau \in[0,t\wedge \tau_\kappa]} \left \lvert \int_0^{\tau} (\theta_{\delta,n},F_2(\varphi_{\delta,n})\, \d W_\Gamma)_\Gamma\right \rvert
&\leq C \mathbb{E} \left(\int_0^{t\wedge \tau_\kappa} \lvert \theta_{\delta,n} \rvert_\Gamma^2 \Vert F_2(\varphi_{\delta,n}) \Vert_{\mathscr{T}_2(U_\Gamma,L^2(\Gamma))}^2\, \d s \right)^\frac{1}{2} \\
&\leq C(\Gamma,C_2) \mathbb{E} \left(\int_0^{t\wedge \tau_\kappa} \lvert \theta_{\delta,n} \rvert_\Gamma^2 \,\d s\right)^\frac{1}{2}.
\end{align*}
Now, we claim that for every $n \in \mathbb{N}$,
\begin{equation}\label{Eqn-4.36}
\begin{aligned}
\left(\int_0^{t\wedge \tau_\kappa} \Vert (\mu_{\delta,n},\theta_{\delta,n}) \Vert_{\mathbb{H}}^2 \, \d s \right)^{\frac12}
& \leq \frac{1}{4 C} \int_0^{t\wedge \tau_\kappa} \int_{\mathcal{O}} M_{\mathcal{O}}(\phi_{\delta,n}) \lvert \nabla \mu_{\delta,n} \rvert^2 \,\d x \,\d s  + \frac1C \int_0^{t\wedge \tau_\kappa} \Vert (\mu_{\delta,n},\theta_{\delta,n}) \Vert^2_{\mathbb{V}^\prime} \, \d s \\
&\qquad + \frac{1}{2 C} \int_0^{t\wedge \tau_\kappa} \int_{\Gamma} M_\Gamma(\varphi_{\delta,n}) \lvert \nabla_\Gamma \theta_{\delta,n} \rvert^2 \,\d S \,\d s + C,
\end{aligned}
\end{equation}
where $C$ may depend on $C_1,C_3,\,\,\mathcal{O},\, \Gamma,\, M_0$, and $N_0$. \newline
Let us fix $(v,v_\Gamma) \in \bigcup_n \mathcal{V}_n \subset \mathbb{V}$. Then from \eqref{eq4.34d}, we infer that
\begin{align*}
\duality{(\mu_{\delta,n},\theta_{\delta,n})}{(v,v_\Gamma)}{\mathbb{V}}{\mathbb{V}^\prime}
&= \eps \int_{\mathcal{O}} \nabla \phi_{\delta,n} \cdot \nabla v\, \d x + \int_{\mathcal{O}} \frac{1}{\eps} F_\delta^\prime(\phi_{\delta,n}) v\, \d x + \eps_\Gamma \int_\Gamma \nabla_\Gamma \varphi_{\delta,n} \cdot \nabla_\Gamma v_\Gamma \, \d S \\
&\quad  + \int_\Gamma \frac{1}{\eps_\Gamma} G_\delta^\prime(\varphi_{\delta,n}) v_\Gamma \, \d S + (1/K) \int_\Gamma (\varphi_{\delta,n} - \phi_{\delta,n}) (\eps v_\Gamma - \eps v) \, \d S.
\end{align*}
Moreover, since $\mathbb{A}_\delta^\prime(s) \in (0,\frac{1}{\delta})$ for all $s\in \mathbb{R}$ and $\mathbb{A}_\delta(0)=0$, we infer from \ref{item:P4} that $\lvert F_\delta^\prime(r) \rvert \leq \tilde{c}_{1,\delta} \lvert r \rvert$ for all $r \in \mathbb{R}$, with $\tilde{c}_{1,\delta}= \tilde{c}_F + \delta^{-1}$. Therefore,
    \begin{equation*}
      \lvert F_\delta^\prime(\phi_{\delta,n}) \rvert \coloneq \Vert F_\delta^\prime(\phi_{\delta,n}) \Vert_{L^2(\mathcal{O})}
      \leq \tilde{c}_{1,\delta} \lvert \phi_{\delta,n} \rvert.
    \end{equation*}
Analogously,
   \begin{equation*}
     \lvert G_\delta^\prime(\varphi_{\delta,n}) \rvert_\Gamma
     \leq \tilde{c}_{2,\delta} \lvert \phi_{\delta,n} \rvert_\Gamma, \mbox{ with } \tilde{c}_{2,\delta}= \tilde{c}_G + \delta^{-1}.
    \end{equation*}
Furthermore, by trace theorem, we have 
\[
\lvert v_\Gamma - v \rvert_\Gamma 
\leq (\lvert v_\Gamma \rvert_\Gamma + \lvert v \rvert_\Gamma) 
\leq \lvert v_\Gamma \rvert_\Gamma + C(\Gamma,\mathcal{O}) \Vert v \Vert_{V_1}.
\]
Now from the previous observation and H\"older inequality, we deduce that for every $n \in \mathbb{N}$,
\begin{equation}
\begin{aligned}
&\lvert \duality{(\mu_{\delta,n},\theta_{\delta,n})}{(v,v_\Gamma)}{\mathbb{V}}{\mathbb{V}^\prime} \rvert
\leq \eps \lvert \nabla \phi_{\delta,n} \rvert \lvert \nabla v \rvert + \eps^{-1} \lvert F_\delta^\prime(\phi_{\delta,n}) \rvert \lvert v \rvert + \eps_\Gamma^{-1} \lvert \nabla_\Gamma \varphi_{\delta,n} \rvert_{\Gamma} \lvert \nabla_\Gamma v_\Gamma \rvert_{\Gamma} \\
&\hspace{5.3cm} + \eps_\Gamma^{-1} \lvert G_\delta^\prime(\varphi_{\delta,n}) \rvert_\Gamma \lvert v_\Gamma \rvert_\Gamma + \eps [K] \lvert \varphi_{\delta,n} - \phi_{\delta,n} \rvert_\Gamma \lvert v_\Gamma - v \rvert_{\Gamma}
\\
&\leq C (\lvert \nabla \phi_{\delta,n} \rvert + \lvert \phi_{\delta,n} \rvert + \lvert \nabla_\Gamma \varphi_{\delta,n} \rvert_{\Gamma} +  \lvert \varphi_{\delta,n} \rvert_{\Gamma} + [K] \lvert \varphi_{\delta,n} - \phi_{\delta,n} \rvert_\Gamma) \Vert (v,v_\Gamma) \Vert_{\mathbb{V}},
\end{aligned}
\end{equation}
where $C$ may depend on $\tilde{c}_G,\,\tilde{c}_F,\,\mathcal{O},\, \Gamma,\, \eps,\,\eps_\Gamma$, and $\delta$. \newline
On the other hand, by the trace theorem, see e.g., \cite[Theorem 5.36]{Adams_1975}, there exists a constant $C=C(\mathcal{O},\Gamma)>0$ such that for every $n \in \mathbb{N}$,
   \begin{align*}
      \lvert \phi_{\delta,n} \rvert_\Gamma \coloneq \Vert \phi_{\delta,n} \Vert_{L^2(\Gamma)}
      \leq C \Vert \phi_{\delta,n} \Vert_{V_1}.
   \end{align*}
Consequently,
  \begin{equation}\label{Eqn-4.34}
    \Vert (\mu_{\delta,n},\theta_{\delta,n}) \Vert_{\mathbb{V}^\prime}
     \leq C (\Vert \phi_{\delta,n} \Vert_{V_1} + \lvert \nabla_\Gamma \varphi_{\delta,n} \rvert_{\Gamma} + (1/K) \lvert \varphi_{\delta,n} - \phi_{\delta,n} \rvert_\Gamma).
  \end{equation}
Subsequently, notice that
\begin{align*}
&\Vert (\mu_{\delta,n},\theta_{\delta,n}) \Vert_{\mathbb{H}}^2
= \duality{(\mu_{\delta,n},\theta_{\delta,n})}{(\mu_{\delta,n},\theta_{\delta,n})}{\mathbb{V}}{\mathbb{V}^\prime}
\leq \Vert (\mu_{\delta,n},\theta_{\delta,n}) \Vert_{\mathbb{V}^\prime} \Vert (\mu_{\delta,n},\theta_{\delta,n}) \Vert_{\mathbb{V}}
\\
&\leq \|(\mu_{\delta,n},\theta_{\delta,n})\|_{\mathbb{V}^\prime} (\|(\mu_{\delta,n},\theta_{\delta,n})\|_{\mathbb{H}} + \|(\nabla \mu_{\delta,n},\nabla_\Gamma \theta_{\delta,n})\|_{\mathbb{L}^2(\mathcal{O}) \times \mathbb{L}^2(\Gamma)}) \\
&\leq \frac{1}{2} \|(\mu_{\delta,n},\theta_{\delta,n})\|_{\mathbb{H}}^2 +  \frac{1}{2} \|(\mu_{\delta,n},\theta_{\delta,n})\|_{\mathbb{V}^\prime}^2 + \|(\mu_{\delta,n},\theta_{\delta,n})\|_{\mathbb{V}^\prime} \|(\nabla \mu_{\delta,n},\nabla_\Gamma \theta_{\delta,n})\|_{\mathbb{L}^2(\mathcal{O}) \times \mathbb{L}^2(\Gamma)},
\end{align*}
which, in turn, entails that
    \begin{equation}\label{Eqn-4.35}
      \Vert (\mu_{\delta,n},\theta_{\delta,n}) \Vert_{\mathbb{H}}^2
      \leq \Vert (\mu_{\delta,n},\theta_{\delta,n}) \Vert_{\mathbb{V}^\prime}^2 + 2 \Vert(\mu_{\delta,n},\theta_{\delta,n}) \Vert_{\mathbb{V}^\prime} (\lvert \nabla \mu_{\delta,n} \rvert + \lvert \nabla_\Gamma \theta_{\delta,n} \rvert_\Gamma).
    \end{equation}
It then follows from \eqref{Eqn-4.35} that
\begin{align*}
&\left(\int_0^{t\wedge \tau_\kappa} \Vert (\mu_{\delta,n},\theta_{\delta,n}) \Vert_{\mathbb{H}}^2 \, \d s \right)^{\frac12}
\leq \left( \int_0^{t\wedge \tau_\kappa} \Vert (\mu_{\delta,n},\theta_{\delta,n}) \Vert_{\mathbb{V}^\prime}^2 \, \d s \right)^{\frac12} 
+ \sqrt{2} \left(\int_0^{t\wedge \tau_\kappa} \Vert (\mu_{\delta,n},\theta_{\delta,n}) \Vert^2_{\mathbb{V}^\prime} \, \d s\right)^{\frac14} (\times)
\\
& (\times) \left(\int_0^{t\wedge \tau_\kappa} \lvert \nabla \mu_{\delta,n} \rvert^2 \, \d s\right)^{\frac14} +  \sqrt{2} \left(\int_0^{t\wedge \tau_\kappa} \Vert(\mu_{\delta,n},\theta_{\delta,n}) \Vert_{\mathbb{V}^\prime}^2\, \d s\right)^{\frac14} \left(\int_0^{t\wedge \tau_\kappa} \lvert \nabla_\Gamma \theta_{\delta,n} \rvert_\Gamma^2 \, \d s\right)^{\frac14}.
\end{align*}
This, jointly with the assumption \ref{item:H4} and the Young inequality yields \eqref{Eqn-4.36}. \newline
%%%%%%%%%%%%%%%%%%%%%%%%%%%%%%%%%%%%%%%%%%%%%%%%%%%%%%%%%%%%%%%%%%%%%%%%%%%%%%%%%%%%%%%%%%%%%%%%%%%%%%%%%%%%%%%%%%%%%%%%%%%%%%%%%%%%%%%
\dela{
\begin{align*}
& \left(\int_0^{t\wedge \tau_\kappa} \Vert (\mu_{\delta,n},\theta_{\delta,n}) \Vert^2_{\mathbb{V}^\prime} \, \d s\right)^{\frac14} \left(\int_0^{t\wedge \tau_\kappa} \lvert \nabla \mu_{\delta,n} \rvert^2 \, \d s\right)^{\frac14}
\\
&\leq \frac{ \sqrt{2} }{M_0^{\frac14}} \left(\int_0^{t\wedge \tau_\kappa} \Vert (\mu_{\delta,n},\theta_{\delta,n}) \Vert^2_{\mathbb{V}^\prime} \, \d s\right)^{\frac14} \left(\int_0^{t\wedge \tau_\kappa} \int_{\mathcal{O}} M_{\mathcal{O}}(\phi_{\delta,n}) \lvert \nabla \mu_{\delta,n} \rvert^2 \,\d x \,\d s \right)^{\frac14}
\\
&\leq \frac14 \int_0^{t\wedge \tau_\kappa} \int_{\mathcal{O}} M_{\mathcal{O}}(\phi_{\delta,n}) \lvert \nabla \mu_{\delta,n} \rvert^2 \,\d x \,\d s + \int_0^{t\wedge \tau_\kappa} \Vert (\mu_{\delta,n},\theta_{\delta,n}) \Vert^2_{\mathbb{V}^\prime} \, \d s + C,
\end{align*}
where $C$ may depend on $C_1,\,\mathcal{O},\, M_0$
\begin{align*}
&\left(\int_0^{t\wedge \tau_\kappa} \Vert(\mu_{\delta,n},\theta_{\delta,n}) \Vert_{\mathbb{V}^\prime}\, \d s\right)^{\frac14} \left(\int_0^{t\wedge \tau_\kappa} \lvert \nabla_\Gamma \theta_{\delta,n} \rvert_\Gamma^2 \, \d s\right)^{\frac14}
\\
&\leq \frac14 \int_0^{t\wedge \tau_\kappa} \int_{\Gamma} M_\Gamma(\varphi_{\delta,n}) \lvert \nabla_\Gamma \theta_{\delta,n} \rvert^2 \,\d S \,\d s + \int_0^{t\wedge \tau_\kappa} \Vert (\mu_{\delta,n},\theta_{\delta,n}) \Vert^2_{\mathbb{V}^\prime} \, \d s + C  
\end{align*}
where $C$ may depend on $C_2,\,\Gamma,\, N_0$.
}
%%%%%%%%%%%%%%%%%%%%%%%%%%%%%%%%%%%%%%%%%%%%%%%%%%%%%%%%%%%%%%%%%%%%%%%%%%%%%%%%%%%%%%%%%%%%%%%%%%%%%%%%%%%%%%%%%%%%%%%%%%%%%%%%%%%%%%%
Thanks to \eqref{Eqn-4.36} and \eqref{Eqn-4.34}, we infer that there exists a constant $C>0$, which may depend on $\tilde{c}_G,\,\tilde{c}_F,\,\mathcal{O},\, \Gamma,\, \eps,\,\eps_\Gamma$, $C_1,\,C_3,\,M_0,\,N_0$,
and $\delta$ such that for every $n \in \mathbb{N}$ and every $\kappa \in \mathbb{N}$,
\begin{equation}
\begin{aligned}
&\mathbb{E} \sup_{\tau \in[0,t\wedge \tau_\kappa]} \left \lvert \int_0^{\tau} \left(\mu_{\delta,n},F_{1}(\phi_{\delta,n}) \,\d W\right) \right \rvert + \mathbb{E} \sup_{\tau \in[0,t\wedge \tau_\kappa]} \left \lvert \int_0^{\tau} (\theta_{\delta,n},F_2(\varphi_{\delta,n})\, \d W_\Gamma)_\Gamma\right \rvert
\\
&\leq C \mathbb{E} \left(\int_0^{t\wedge \tau_\kappa} \Vert (\mu_{\delta,n},\theta_{\delta,n}) \Vert_{\mathbb{H}}^2 \, \d s \right)^{\frac12}
\leq \frac{1}{4} \mathbb{E} \int_0^{t\wedge \tau_\kappa} \int_{\mathcal{O}} M_{\mathcal{O}}(\phi_{\delta,n}) \lvert \nabla \mu_{\delta,n} \rvert^2 \,\d x \,\d s   \\
&\qquad +  \mathbb{E} \int_0^{t\wedge \tau_\kappa} \Vert (\mu_{\delta,n},\theta_{\delta,n}) \Vert^2_{\mathbb{V}^\prime} \, \d s + \frac{1}{4} \mathbb{E} \int_0^{t\wedge \tau_\kappa} \int_{\Gamma} M_\Gamma(\varphi_{\delta,n}) \lvert \nabla_\Gamma \theta_{\delta,n} \rvert^2 \,\d S \,\d s + C
\\
&\leq \frac{1}{4} \mathbb{E} \int_0^{t\wedge \tau_\kappa} \int_{\mathcal{O}} M_{\mathcal{O}}(\phi_{\delta,n}) \lvert \nabla \mu_{\delta,n} \rvert^2 \,\d x \,\d s + \frac{1}{2} \mathbb{E} \int_0^{t\wedge \tau_\kappa} \int_{\Gamma} M_\Gamma(\varphi_{\delta,n}) \lvert \nabla_\Gamma \theta_{\delta,n} \rvert^2 \,\d S \,\d s
\\
&\quad + C \mathbb{E} \int_0^{t\wedge \tau_\kappa} \mathcal{E}_{tot}(\phi_{\delta,n}, \varphi_{\delta,n}) \, \d s + C.
\end{aligned}
\end{equation}
Owing to \ref{item:H4}, the Cauchy-Schwarz and Young inequalities, we find that
\begin{align*}
& \left|\int_{\mathcal{O}} M_{\mathcal{O}}(\phi_{\delta,n}) \nabla \mu_{\delta,n} \cdot \nabla \phi_{\delta,n}\, \d x \right|  
\leq  \lvert \sqrt{M_{\mathcal{O}}(\phi_{\delta,n})} \nabla \mu_{\delta,n} \rvert \lvert \sqrt{M_{\mathcal{O}}(\phi_{\delta,n})} \nabla \phi_{\delta,n} \rvert \\
&\leq \frac14 \lvert \sqrt{M_{\mathcal{O}}(\phi_{\delta,n})} \nabla \mu_{\delta,n} \rvert^2  + \lvert \sqrt{M_{\mathcal{O}}(\phi_{\delta,n})} \nabla \phi_{\delta,n} \rvert^2 
\leq \frac14 \int_{\mathcal{O}} M_{\mathcal{O}}(\phi_{\delta,n}) \lvert \nabla \mu_{\delta,n} \rvert^2\,\d x + \bar{M}_0 \lvert \nabla \phi_{\delta,n} \rvert^2.
\end{align*}
\dela{
\coma{
On the other hand, using the Poincar\'e inequality, we infer that for any $n \in \mathbb{N}, \, \delta \in (0,1)$,
\begin{equation}
\begin{aligned}
\lvert \mu_{\delta,n} \rvert^2
\leq 2 (\lvert \mu_{\delta,n} - \langle \mu_{\delta,n}\rangle_{\mathcal{O}} \rvert^2 + \lvert \langle \mu_{\delta,n} \rangle_{\mathcal{O}} \rvert^2) 
\leq C(\mathcal{O}) (\lvert \nabla \mu_{\delta,n} \rvert^2 + \lvert \langle \mu_{\delta,n} \rangle_{\mathcal{O}} \rvert^2).
\end{aligned}
\end{equation}
On the one hand, \coma{taking $(\psi,\psi \lvert_\Gamma)\equiv (1,0)$ in \eqref{eq4.34d} ???}, we obtain
 \begin{equation*}
   \langle \mu_{\delta,n} \rangle_{\mathcal{O}}
   = - \frac{\eps [K]}{\lvert \mathcal{O} \rvert} \int_\Gamma (\varphi_{\delta,n} - \phi_{\delta,n})\, \d S + \frac{1}{\eps \lvert \mathcal{O} \rvert} \int_{\mathcal{O}} F_\delta^\prime (\phi_{\delta,n})\, \d x,
  \end{equation*}  
which in turn, using the H\"older inequality and  the fact that $F_\delta^\prime$ is  $L_{F_\delta^\prime}$-Lipschitz yields
\begin{align*}
&\lvert \langle \mu_{\delta,n} \rangle_{\mathcal{O}} \rvert
\leq \eps [K] \lvert \mathcal{O} \rvert^{-1} \lvert \Gamma \rvert^\frac{1}{2} \lvert \varphi_{\delta,n} - \phi_{\delta,n} \rvert_\Gamma + \eps^{-1} \lvert \mathcal{O} \rvert^{-1} \int_{\mathcal{O}} \lvert F_\delta^\prime (\phi_{\delta,n}) \rvert\, \d x \\
&\leq \eps [K] \lvert \mathcal{O} \rvert^{-1} \lvert \Gamma \rvert^{\frac12} \lvert \varphi_{\delta,n} - \phi_{\delta,n} \rvert_{\Gamma} + \eps^{-1} \lvert \mathcal{O} \rvert^{-1} L_{F_\delta^\prime} \Vert \phi_{\delta,n} \Vert_{L^1(\mathcal{O})} \\
&\leq \eps [K] \lvert \mathcal{O} \rvert^{-1} \lvert \Gamma \rvert^{\frac12} \lvert \varphi_{\delta,n} - \phi_{\delta,n} \rvert_{\Gamma} + \eps^{-1} \lvert \mathcal{O} \rvert^{-1/2} L_{F_\delta^\prime} \lvert \phi_{\delta,n} \rvert.
\end{align*}
Squaring the two sides of the previous inequality, we obtain
\begin{equation*}
\begin{aligned}
\|\langle \mu_{\delta,n} \rangle_{\mathcal{O}}\|_{L^2(\mathcal{O})}^2
 = |\mathcal{O}| |\langle \mu_{\delta,n}\rangle_{\mathcal{O}}|^2 
 \leq 2 |\mathcal{O}|^{-1} \eps^2 [K]^2 |\Gamma| |\varphi_{\delta,n} - \phi_{\delta,n}|_{\Gamma}^2 + 2 \eps^{-2} L_{F_\delta^\prime}^2 |\phi_{\delta,n}|^2.
\end{aligned}
\end{equation*}
So, there exists a constant $C=C(\mathcal{O},\eps,\Gamma,\delta,\tilde{c}_F)$ such that for every $n \in \mathbb{N}$,
   \begin{equation}\label{eq4.87a}
    \Vert \mu_{\delta,n} \Vert_{L^2(\mathcal{O})}^2
     \leq C (\lvert \nabla \mu_{\delta,n} \rvert^2 + [K]^2 \lvert \varphi_{\delta,n} - \phi_{\delta,n} \rvert_{\Gamma}^2 + \lvert \phi_{\delta,n} \rvert^2),
   \end{equation}
and then by the H\"older inequality, we deduce that for every $n \in \mathbb{N}$ and every $\kappa \geq 1$,
\begin{equation}\label{eq4.87}
\begin{aligned}
 \left(\int_0^{t\wedge \tau_\kappa} |\mu_{\delta,n}(s)|^2 \,\d s\right)^{\frac{1}{2}} 
 &\leq C \left[ \left(\int_0^{t\wedge \tau_\kappa} |\nabla \mu_{\delta,n}(s)|^2 \, \d s\right)^{\frac{1}{2}} + \left(\int_0^{t\wedge \tau_\kappa} |\phi_{\delta,n}(s)|^2 \,\d s \right)^{\frac{1}{2}} \right. \\
 &\quad \left. +  [K] \left(\int_0^{t\wedge \tau_\kappa} |\varphi_{\delta,n}(s) - \phi_{\delta,n}(s)|_{\Gamma}^2\, \d s\right)^{\frac{1}{2}} \right].
\end{aligned} 
\end{equation}
Plugging \eqref{eq4.87} into the RHS of \eqref{eq4.86}, using the H\"older and Young inequalities together with the assumption \ref{item:H4}, we deduce that there exists $C= C(\tilde{c}_F,C_1,\mathcal{O},\Gamma,\eps,K,\delta,M_0)$ such that for every $n \in \mathbb{N}$, $t \in [0,T]$, and every $\kappa \geq 1$,
\begin{align}
 &\mathbb{E} \sup_{\tau \in[0,t\wedge \tau_\kappa]} \left|\int_0^{\tau} \left(\mu_{\delta,n}(s),F_{1}(\phi_{\delta,n}(s)) \,\d W(s)\right) \right| 
 \leq C \left[1  + \frac{1}{4} \mathbb{E} \int_0^{t\wedge \tau_\kappa} \int_{\mathcal{O}} M_{\mathcal{O}}(\phi_{\delta,n}(s)) \lvert \nabla \mu_{\delta,n}(s) \rvert^2\,\d x \,\d s \right. \notag \\
 &\quad \left.  + \mathbb{E} \int_0^{t\wedge \tau_\kappa} \left(\frac{1}{6} |\phi_{\delta,n}(s)|^2  + \frac{\eps [K]}{4} \|\varphi_{\delta,n}(s) - \phi_{\delta,n}(s)\|_{L^2(\Gamma)}^2 \right)\, \d s \right].
\end{align}
By the Burkholder-Davis-Gundy, H\"older and Young inequalities jointly with \eqref{eq4.83}, we infer that for every $n \in \mathbb{N}$,
\begin{align*}
&\mathbb{E} \sup_{\tau \in[0,t\wedge \tau_\kappa]} \left|\int_0^{\tau} \left(\phi_{\delta,n}(s),F_{1}(\phi_{\delta,n}(s)) \,\d W(s)\right) \right|
\leq C \mathbb{E} \left(\int_0^{t\wedge \tau_\kappa} |\phi_{\delta,n}(s)|^2 \|F_{1}(\phi_{\delta,n}(s))\|_{\mathscr{T}_2(U,L^2(\mathcal{O}))}^2\,\d s \right)^\frac{1}{2} 
 \\
&\leq C \mathbb{E} \left(\int_0^{t\wedge \tau_\kappa} |\phi_{\delta,n}(s)|^2 \,\d s \right)^\frac{1}{2} 
\leq C \left(\mathbb{E} \int_0^{t\wedge \tau_\kappa} |\phi_{\delta,n}(s)|^2 \,\d s \right)^\frac{1}{2} 
\leq C +  \mathbb{E} \int_0^{t\wedge \tau_\kappa} \frac{1}{6} |\phi_{\delta,n}(s)|^2 \,\d s.
\end{align*}
Here $C= C(\mathcal{O},C_1)$. \newline
In  similar way, using the assumption \ref{item:H3} and arguing as in \eqref{eq4.83} and \eqref{eq4.82}, we obtain for every $n \in \mathbb{N}$,
%%%%%%%%%%%%%%%%%%%%%%%%%%%%%%%%%%%%%%%%%%%%%%%%%%%%%%%%%%%%%%%%%%%%%%%%%%%%%%%%%%%%%%%%%%%%%%%%%%%%%%%%%%%%%%%%%%%%%%%%%%%%%%%%%%
\dela{
\begin{equation}%\label{eqt4.87}
  \begin{aligned}
   \Vert F_{2,n}(\varphi_{\delta,n}) \Vert_{\mathscr{T}_2(U_\Gamma,L^2(\Gamma))}^2 
   = \sum_{k=1}^\infty \lvert F_{2,n}(\varphi_{\delta,n}(s))e_{2,k} \rvert_{\Gamma}^2
   &= \sum_{k=1}^\infty |\tilde{\sigma_k}(\varphi_{\delta,n}(s))|_{\Gamma}^2 \\
   &\leq |\Gamma| \sum_{k=1}^\infty \|\tilde{\sigma}_k\|_{L^{\infty}(\mathbb{R})}^2 \\
   &\leq |\Gamma| C_2
  \end{aligned}
\end{equation}
and
\begin{equation}%\label{eq4.88}
\begin{aligned}
\|\nabla_\Gamma F_{\Gamma}(\varphi_{\delta,n}(s))\|_{\mathscr{T}_2(U_\Gamma,\mathbb{L}^2(\Gamma))}^2
= \sum_{k=1}^\infty |\nabla_\Gamma F_{\Gamma}(\varphi_{\delta,n}(s)) e_{2,k}|_{\Gamma}^2
&= \sum_{k=1}^\infty |\tilde{\sigma}_k^\prime(\varphi_{\delta,n}(s)) \nabla_\Gamma \varphi_{\delta,n}(s)|_{\Gamma}^2 \\
&\leq \sum_{k=1}^\infty \|\tilde{\sigma}_k\|_{W^{1,\infty}(\mathbb{R})}^2 |\nabla_\Gamma \varphi_{\delta,n}(s)|_{\Gamma}^2 \\
&\leq C_2 |\nabla_\Gamma \varphi_{\delta,n}(s)|_{\Gamma}^2.
\end{aligned}
\end{equation}
}
%%%%%%%%%%%%%%%%%%%%%%%%%%%%%%%%%%%%%%%%%%%%%%%%%%%%%%%%%%%%%%%%%%%%%%%%%%%%%%%%%%%%%%%%%%%%%%%%%%%%%%%%%%%%%%%%%%%%%%%%%%%%%%%%%%
\begin{align}
\label{eqt4.87} 
\Vert F_{2,n}(\varphi_{\delta,n}) \Vert_{\mathscr{T}_2(U_\Gamma,L^2(\Gamma))}^2 
&\leq \lvert \Gamma \rvert C_2,
\\
\label{eq4.88}
\Vert \nabla_{\Gamma,n} F_{\Gamma}(\varphi_{\delta,n}) \Vert_{\mathscr{T}_2(U_\Gamma,\mathbb{L}^2(\Gamma))}^2
&\leq C(\Gamma,C_2) (1 +  \lvert \nabla_\Gamma \varphi_{\delta,n} \rvert_{\Gamma}^2).
\end{align}
Using the Burkholder-Davis-Gundy inequality and \eqref{eqt4.87}, we infer for any $n \in \mathbb{N}$,
\begin{align*}
&\mathbb{E} \sup_{\tau \in[0,t\wedge \tau_\kappa]} \left|\int_0^{\tau} (\theta_{\delta,n}(s),(F_2(\varphi_{\delta,n}(s)))\, \d W_\Gamma(s))_\Gamma\right|
\\
& \leq C \mathbb{E} \left(\int_0^{t\wedge \tau_\kappa} \lvert \theta_{\delta,n}(s) \rvert_\Gamma^2 \Vert F_2(\varphi_{\delta,n}(s)) \Vert_{\mathscr{T}_2(U_\Gamma,L^2(\Gamma))}^2\, \d s \right)^\frac{1}{2} \\
&\leq C(\Gamma,C_2) \mathbb{E} \left(\int_0^{t\wedge \tau_\kappa} |\theta_{\delta,n}(s)|_\Gamma^2 \, \d s \right)^\frac{1}{2}
\leq C(\Gamma,C_2) \left(\mathbb{E} \int_0^{t\wedge \tau_\kappa} |\theta_{\delta,n}(s)|_\Gamma^2 \, \d s \right)^\frac{1}{2}.
\end{align*}
Accordingly, from \eqref{eq4.34d}, we learn that
\[
\langle \theta_{\delta,n} \rangle_\Gamma
= \frac{1}{\eps_\Gamma \lvert \Gamma \rvert} \int_{\Gamma} G_\delta^\prime(\varphi_{\delta,n}) \, \d S  + \frac{\eps [K]}{\lvert \Gamma \rvert} \int_\Gamma (\varphi_{\delta,n} - \phi_{\delta,n})\, \d S.
\]
So, using the H\"older inequality and since $G_\delta^\prime$ is $L_{G_\delta^\prime}$-Lipschitz, we deduce that 
\begin{align*}
&\lvert \langle \theta_{\delta,n} \rangle_{\Gamma} \rvert
%&\leq \eps [K] |\Gamma|^{-1/2} |\varphi_{\delta,n}(s) - \phi_{\delta,n}(s)|_\Gamma + \eps_\Gamma^{-1} |\Gamma|^{-1} \int_{\Gamma} |G_\delta^\prime (\varphi_{\delta,n}(s))|\, \d S 
\leq \eps [K] \lvert \Gamma \rvert^{-1/2} \lvert \varphi_{\delta,n} - \phi_{\delta,n} \rvert_{\Gamma} + \eps_\Gamma^{-1} \lvert \Gamma \rvert^{-1} L_{G_\delta^\prime} \Vert \varphi_{\delta,n} \Vert_{L^1(\Gamma)} 
 \\
&\leq \eps [K] \lvert \Gamma \rvert^{-1/2} \lvert \varphi_{\delta,n} - \phi_{\delta,n} \rvert_{\Gamma} + \eps_\Gamma^{-1} \lvert \Gamma \rvert^{-1/2} L_{G_\delta^\prime} \lvert \varphi_{\delta,n} \rvert_\Gamma 
   \\
&\leq \lvert \Gamma \rvert^{-1/2} (\eps [K] + \eps_\Gamma^{-1} L_{G_\delta^\prime})  \lvert \varphi_{\delta,n} - \phi_{\delta,n} \rvert_{\Gamma}  + \eps_\Gamma^{-1} \lvert \Gamma \rvert^{-1/2} L_{G_\delta^\prime} \lvert \phi_{\delta,n} \rvert_\Gamma.
\end{align*}
On the other hand, by the trace theorem, see, e.g., \cite[Theorem 5.36]{Adams_1975}, there exists a positif constant $C=C(\mathcal{O},\Gamma)$ such that for every $n \in \mathbb{N}$,
   \begin{align*}
      \lvert \phi_{\delta,n} \rvert_\Gamma \coloneq \Vert \phi_{\delta,n} \Vert_{L^2(\Gamma)}
      \leq C \Vert \phi_{\delta,n} \Vert_{V_1}.
   \end{align*}
Hence, there exists a constant $C=C(\mathcal{O},\Gamma,\delta,\eps,\eps_\Gamma,\tilde{c}_G,K)$ such that for every $n \in \mathbb{N}$,
  \begin{equation*}
   \lvert \langle \theta_{\delta,n} \rangle_{\Gamma} \rvert
   \leq  C \eps [K] \lvert \varphi_{\delta,n} - \phi_{\delta,n} \rvert_{\Gamma}  + C \Vert \phi_{\delta,n} \Vert_{V_1}.
  \end{equation*}
This jointly with the Poincar\'e inequality on $\Gamma$, cf. \cite[Theorem 2.12]{Dziuk+Elliott_2013}, implies that there exists $C=C(\mathcal{O},\Gamma,\delta,\eps,\eps_\Gamma,\tilde{c}_G,K)$ such that for every $n \in \mathbb{N}$, 
\begin{equation}\label{eqt4.92}
\begin{aligned}
&\lvert \theta_{\delta,n} \rvert_\Gamma^2
\leq 2 \lvert \theta_{\delta,n} - \langle \theta_{\delta,n}\rangle_{\Gamma} \rvert^2 + 2 \lvert \langle \theta_{\delta,n} \rangle_{\Gamma} \rvert^2 
\leq  C(\Gamma) \lvert \nabla_\Gamma \theta_{\delta,n} \rvert_{\Gamma}^2 + 2  \lvert \Gamma \rvert \lvert \langle \theta_{\delta,n} \rangle_{\Gamma} \rvert^2 
  \\
&\leq C \lvert \nabla_\Gamma \theta_{\delta,n} \rvert_{\Gamma}^2 + C \eps [K] \lvert \varphi_{\delta,n} - \phi_{\delta,n} \rvert_{\Gamma}^2 + C \Vert \phi_{\delta,n} \Vert_{V_1}^2.
\end{aligned}
\end{equation}
Hence, there exists a constant $C=C(\eps,\eps_\Gamma,\delta,\tilde{c}_G,\Gamma,\mathcal{O},C_2,K)$ such that
for any $n \in \mathbb{N}$,
\begin{equation}
\begin{aligned}
&\mathbb{E} \sup_{\tau \in[0,t\wedge \tau_\kappa]} \left|\int_0^{r} (\theta_{\delta,n}(s),(F_2(\varphi_{\delta,n}(s)))\, \d W_\Gamma(s))_\Gamma\right| 
\leq C + \frac{\eps_\Gamma}{2} \mathbb{E} \int_0^{t\wedge \tau_\kappa}  \lvert \nabla_\Gamma \theta_{\delta,n}(s) \rvert_{\Gamma}^2 \, \d s   \\
&\quad + \frac16  \mathbb{E} \int_0^{t\wedge \tau_\kappa} \Vert \phi_{\delta,n}(s) \Vert_{V_1}^2 \, \d s  + \frac{\eps [K]}{4}  \mathbb{E} \int_0^{t\wedge \tau_\kappa} \Vert \varphi_{\delta,n}(s) - \phi_{\delta,n}(s) \Vert_{L^2(\Gamma)}^2\, \d s.
\end{aligned}
\end{equation}
}
}
From \eqref{eqt4.87}, we infer that
\begin{equation}\label{eq4.92a}
\begin{aligned}
\sum_{k=1}^\infty \int_{\Gamma} \lvert G^{\bis}_\delta(\varphi_{\delta,n}) \rvert \lvert F_{2,n}(\varphi_{\delta,n})e_{2,k} \rvert^2 \, \d S 
\leq (\delta^{-1} + \tilde{c}_G)  \Vert F_{2,n}(\varphi_{\delta,n}) \Vert_{\mathscr{T}_2(U_\Gamma,L^2(\Gamma))}^2
\leq C_3 (\delta^{-1} + \tilde{c}_G). 
\end{aligned}
\end{equation}
Thanks to \eqref{eq4.82a} and \eqref{eqt4.87}, we infer that there exists $C= C(\mathcal{O},\Gamma,C_1,C_3)$ such that for every $n \in \mathbb{N}$,
\begin{equation}\label{eq4.93b}
\begin{aligned}
\sum_{k=1}^\infty \left \lvert \int_\Gamma (F_{1,n}(\phi_{\delta,n}) e_{1,k}) (F_{2,n}(\varphi_{\delta,n}) e_{2,k})\, \d S \right \rvert 
&\leq \Vert F_{1,n}(\phi_{\delta,n}) \Vert_{\mathscr{T}_2(U,L^2(\Gamma))} \Vert F_2(\varphi_{\delta,n}) \Vert_{\mathscr{T}_2(U_\Gamma,L^2(\Gamma))}  \\
&\leq C (1 + \lvert \nabla \phi_{\delta,n} \rvert^2).
\end{aligned}
\end{equation} 
Considering \eqref{eq4.31}, we infer that for every $n \in \mathbb{N}$,
\begin{equation*}
\Vert F_\delta (\phi_{0,n}) \Vert_{L^1(\mathcal{O})}
\leq \tilde{c}_\delta (\lvert \mathcal{O} \rvert + \lvert \phi_{0,n} \rvert^2)
\leq \tilde{c}_\delta (\lvert \mathcal{O} \rvert + \lvert \phi_0 \rvert^2)
\leq C(\delta,\mathcal{O}) (1 + \Vert \phi_{0} \Vert_{V_1}^2).
\end{equation*}
Analogously,
    \begin{equation*}
     \Vert G_\delta (\varphi_{0,n}) \Vert_{L^1(\Gamma)}
       \leq  \tilde{c}_\delta (\lvert \Gamma \rvert + \lvert \varphi_{0,n} \rvert_\Gamma^2)
       \leq C(\delta,\Gamma) (1 + \Vert \varphi_0 \Vert_{V_\Gamma}^2).
    \end{equation*}
Therefore, there exists a constant $C= C(\eps,\delta,\Gamma,\eps_\Gamma,\mathcal{O})$ such that for every $n \in \mathbb{N}$,
    \begin{equation*}
      \mathcal{E}_{tot}(\phi_n(0),\varphi_n(0))
      \leq C (1 + \Vert (\phi_0,\varphi_0) \Vert_{\mathbb{V}}^2).
    \end{equation*}
Plugging the previous estimates in \eqref{Main_Galerkin_Equality}, taking the supremum over $[0,t\wedge \tau_\kappa]$ and finally taking expectations with respect to $\mathbb{P}$, we deduce that for every $n \in \mathbb{N}$ and every $\kappa \geq 1$,
%%%%%%%%%%%%%%%%%%%%%%%%%%%%%%%%%%%%%%%%%%%%%%%%%%%%%%%%%%%%%%%%%%%%%%%%%%%%%%%%%%%%%%%%%%%%%%%%%%%%%%%%%%%%%%%%%%%%%%%%%%%%%%%%%%%%%%%
\dela{
\begin{equation}%\label{eq4.91}
\begin{aligned}
&\mathbb{E} \sup_{s \in[0,t\wedge \tau_\kappa]} [\mathcal{E}_{tot}(\phi_{\delta,n}(s), \varphi_{\delta,n}(s))] + \mathbb{E} \int_0^{t\wedge \tau_\kappa} \int_{\mathcal{O}} [2 \nu(\phi_{\delta,n}) \lvert D\bu_{\delta,n}(s) \rvert^2 + \lambda(\phi_{\delta,n}(s)) \lvert \bu_{\delta,n}(s) \rvert^2] \,\d x \,\d s \\
&\quad + \mathbb{E} \int_0^{t\wedge \tau_\kappa} \int_{\Gamma} \gamma(\varphi_{\delta,n}(s)) \lvert \bu_{\delta,n}(s) \rvert^2 \,\d S\,\d s + \frac{1}{2}\mathbb{E} \int_0^{t\wedge \tau_\kappa} \int_{\mathcal{O}} M_{\mathcal{O}}(\phi_{\delta,n}(s)) \lvert \nabla \mu_{\delta,n}(s) \rvert^2 \, \d x \,\d s \\ 
& \hspace{6.6cm}  + \mathbb{E} \int_0^{t\wedge \tau_\kappa} \int_{\Gamma} M_\Gamma(\varphi_{\delta,n}(s)) \lvert \nabla_\Gamma \theta_{\delta,n}(s) \rvert^2 \,\d S \,\d s  
 \\
&\leq C_\delta (1 + \|(\phi_0,\varphi_0)\|_{\mathbb{V}}^2) +  c_1 \mathbb{E} (t\wedge \tau_\kappa) + c_2 \mathbb{E} \int_0^{t\wedge \tau_\kappa} \sup_{0\leq s \leq \tau} [\mathcal{E}_{tot}(\phi_{\delta,n}(s), \varphi_{\delta,n}(s))] \, \d \tau,
\end{aligned}
\end{equation}
}
%%%%%%%%%%%%%%%%%%%%%%%%%%%%%%%%%%%%%%%%%%%%%%%%%%%%%%%%%%%%%%%%%%%%%%%%%%%%%%%%%%%%%%%%%%%%%%%%%%%%%%%%%%%%%%%%%%%%%%%%%%%%%%%%%%%%%%%
\begin{equation}\label{eq4.91}
\begin{aligned}
&\mathbb{E} \sup_{s \in[0,t\wedge \tau_\kappa]} [\mathcal{E}_{tot}(\phi_{\delta,n}(s), \varphi_{\delta,n}(s))] + \mathbb{E} \int_0^{t\wedge \tau_\kappa} \int_{\mathcal{O}} [2 \nu(\phi_{\delta,n}) \lvert D\bu_{\delta,n} \rvert^2 + \lambda(\phi_{\delta,n}) \lvert \bu_{\delta,n} \rvert^2] \,\d x \,\d s \\
& + \mathbb{E} \int_0^{t\wedge \tau_\kappa} \int_{\Gamma} \gamma(\varphi_{\delta,n}) \lvert \bu_{\delta,n} \rvert^2 \,\d S\,\d s + \frac{1}{2}\mathbb{E} \int_0^{t\wedge \tau_\kappa} \int_{\mathcal{O}} M_{\mathcal{O}}(\phi_{\delta,n}) \lvert \nabla \mu_{\delta,n} \rvert^2 \, \d x \,\d s \\ 
& + \frac12 \mathbb{E} \int_0^{t\wedge \tau_\kappa} \int_{\Gamma} M_\Gamma(\varphi_{\delta,n}) \lvert \nabla_\Gamma \theta_{\delta,n} \rvert^2 \,\d S \,\d s  
 \\
&\leq C \left[1 + \Vert (\phi_0,\varphi_0) \Vert_{\mathbb{V}}^2 +  \mathbb{E} (t \wedge \tau_\kappa) + \mathbb{E} \int_0^{t\wedge \tau_\kappa} \sup_{0 \leq s \leq \tau} [\mathcal{E}_{tot}(\phi_{\delta,n}(s), \varphi_{\delta,n}(s))] \, \d \tau \right],
\end{aligned}
\end{equation}
where the constant $C$ depends on $\eps,\,\eps_\Gamma,\,C_1,\,C_3,\,\tilde{c}_F,\,\tilde{c}_G,\,\mathcal{O},\,\Gamma,\, K,\,M_0$ and $\delta$.
%where $C_\delta$ is a positive constant depending only on $\eps,\,\eps_\Gamma,\,C_1,\,C_2,\,\tilde{c}_F,\,\tilde{c}_G,\,\mathcal{O},\,\Gamma,\, K,\,M_0$ and $\delta$. Here the constant $c_1$ depends only on $\mathcal{O},\,\Gamma,\,C_1,\,C_2,\,K$ and $\eps$, whereas $c_2$ depends only on $C_1,\,C_2,\,\eps$ and $M_{\mathcal{O}^\ast}$. 
Moreover, from the assumptions \ref{item:H4}-\ref{item:H7} and \eqref{eq4.91}, we deduce that for every $n \in \mathbb{N}$ and every $\kappa \geq 1$,
%%%%%%%%%%%%%%%%%%%%%%%%%%%%%%%%%%%%%%%%%%%%%%%%%%%%%%%%%%%%%%%%%%%%%%%%%%%%%%%%%%%%%%%%%%%%%%%%%%%%%%%%%%%%%%%%%%%%%%%%%%%%%%%%%%%%%
\dela{
\begin{equation}%\label{eq4.92}
\begin{aligned}
&\mathbb{E} \sup_{\tau \in[0,t\wedge \tau_\kappa]} [\mathcal{E}_{tot}(\phi_{\delta,n}(\tau), \varphi_{\delta,n}(\tau))] + \mathbb{E} \int_0^{t\wedge \tau_\kappa} \int_{\mathcal{O}} [2 \nu_\ast \lvert D\bu_{\delta,n}(s) \rvert^2 + \lambda(\phi_{\delta,n}(s)) \lvert \bu_{\delta,n}(s) \rvert^2] \,\d x \,\d s \\
&\quad +  \mathbb{E} \int_0^{t\wedge \tau_\kappa} \gamma_\ast |\bu_{\delta,n}(s)|_\Gamma^2 \,\d s + \mathbb{E} \int_0^{t\wedge \tau_\kappa} [0.5 M_0 |\nabla \mu_{\delta,n}(s)|^2 + N_0 |\nabla_\Gamma \theta_{\delta,n}(s)|_{\Gamma}^2] \,\d s  
 \\
&\leq C_\delta (1 + \|(\phi_0,\varphi_0)\|_{\mathbb{V}}^2) +  c_1 \mathbb{E} (t\wedge \tau_\kappa) + c_2 \mathbb{E} \int_0^{t\wedge \tau_\kappa} \sup_{0\leq s \leq \tau} [\mathcal{E}_{tot}(\phi_{\delta,n}(s), \varphi_{\delta,n}(s))] \, \d \tau.
\end{aligned}
\end{equation}
}
%%%%%%%%%%%%%%%%%%%%%%%%%%%%%%%%%%%%%%%%%%%%%%%%%%%%%%%%%%%%%%%%%%%%%%%%%%%%%%%%%%%%%%%%%%%%%%%%%%%%%%%%%%%%%%%%%%%%%%%%%%%%%%%%%%%%%
\begin{equation}\label{eq4.92}
\begin{aligned}
&\mathbb{E} \sup_{s \in[0,t\wedge \tau_\kappa]} [\mathcal{E}_{tot}(\phi_{\delta,n}(s), \varphi_{\delta,n}(s))] + \mathbb{E} \int_0^{t\wedge \tau_\kappa} \int_{\mathcal{O}} [2 \nu_\ast \lvert D\bu_{\delta,n} \rvert^2 + \lambda(\phi_{\delta,n}) \lvert \bu_{\delta,n} \rvert^2] \,\d x \,\d s \\
&\quad + \mathbb{E} \int_0^{t\wedge \tau_\kappa} \gamma_\ast \lvert \bu_{\delta,n} \rvert_\Gamma^2 \,\d s + \frac12 \mathbb{E} \int_0^{t\wedge \tau_\kappa} [M_0 \lvert \nabla \mu_{\delta,n} \rvert^2 + N_0 \lvert \nabla_\Gamma \theta_{\delta,n} \rvert_{\Gamma}^2] \,\d s  
 \\
&\leq C \left[1 + \Vert (\phi_0,\varphi_0) \Vert_{\mathbb{V}}^2 +  \mathbb{E} (t \wedge \tau_\kappa) + \mathbb{E} \int_0^{t\wedge \tau_\kappa} \sup_{0 \leq s \leq \tau} [\mathcal{E}_{tot}(\phi_{\delta,n}(s), \varphi_{\delta,n}(s))] \, \d \tau \right], \; t \in [0,T].
\end{aligned}
\end{equation}
%%%%%%%%%%%%%%%%%%%%%%%%%%%%%%%%%%%%%%%%%%%%%%%%%%%%%%%%%%%%%%%%%%%%%%%%%%%%%%%%%%%%%%%%%%%%%%%%%%%%%%%%%%%%%%%%%%%%%%%%%%%%%%%%%%%%%%%
\dela{
Now, we set 
\begin{equation*}
\mathcal{Z}(t)\coloneq \mathbb{E} \int_0^{t\wedge \tau_\kappa} \sup_{0\leq s \leq \tau} [\mathcal{E}_{tot}(\phi_{\delta,n}(s), \varphi_{\delta,n}(s))] \, \d \tau
\end{equation*}
and then from \eqref{eq4.92}, we deduce that
    \begin{equation*}
    \mathcal{Z}^\prime(t) 
    \leq C_\delta (1 + \|(\phi_0,\varphi_0)\|_{\mathbb{V}}^2) +  c_1 \mathbb{E} (t\wedge \tau_\kappa) + c_2 \mathcal{Z}(t).
   \end{equation*}
Now, the Gronwall lemma yields
\begin{equation*}
\mathcal{Z}(t)
\leq \frac{1}{c_2} \left[C_\delta (1 + \|(\phi_0,\varphi_0)\|_{\mathbb{V}}^2) + \frac{c_1 \mathbb{E} 1 \wedge \tau_\kappa}{c_2} + c_1 \right] e^{c_2t} - \frac{1}{c_2} [C_\delta (1 + \|(\phi_0,\varphi_0)\|_{\mathbb{V}}^2) +  c_1 \mathbb{E} (t\wedge \tau_\kappa)],
\end{equation*}
from which and \eqref{eq4.92}, we deduce that
\begin{align*}
&\mathbb{E} \sup_{\tau \in[0,t\wedge \tau_\kappa]} [\mathcal{E}_{tot}(\phi_{\delta,n}(\tau), \varphi_{\delta,n}(\tau))] + \mathbb{E} \int_0^{t\wedge \tau_\kappa} \int_{\mathcal{O}} [2 \nu_\ast \lvert D\bu_{\delta,n}(s) \rvert^2 + \lambda(\phi_{\delta,n}(s)) \lvert \bu_{\delta,n}(s) \rvert^2] \,\d x \,\d s \\
&\quad +  \mathbb{E} \int_0^{t\wedge \tau_\kappa} \gamma_\ast |\bu_{\delta,n}(s)|_\Gamma^2 \,\d s + \mathbb{E} \int_0^{t\wedge \tau_\kappa} [0.5 M_0 |\nabla \mu_{\delta,n}(s)|^2 + N_0 |\nabla_\Gamma \theta_{\delta,n}(s)|_{\Gamma}^2] \,\d s  
 \\
&\leq \left[C_\delta (1 + \|(\phi_0,\varphi_0)\|_{\mathbb{V}}^2) + \frac{c_1 \mathbb{E} 1 \wedge \tau_\kappa}{c_2} + c_1 \right] e^{c_2t}.
\end{align*}
}
%%%%%%%%%%%%%%%%%%%%%%%%%%%%%%%%%%%%%%%%%%%%%%%%%%%%%%%%%%%%%%%%%%%%%%%%%%%%%%%%%%%%%%%%%%%%%%%%%%%%%%%%%%%%%%%%%%%%%%%%%%%%%%%%%%%%
Thus, by the Gronwall lemma, we infer that for every $n \in \mathbb{N}$ and every $\kappa \geq 1$,
\begin{align*}
&\mathbb{E} \sup_{s\in[0,t\wedge \tau_\kappa]} [\mathcal{E}_{tot}(\phi_{\delta,n}(s), \varphi_{\delta,n}(s))] + \mathbb{E} \int_0^{t\wedge \tau_\kappa} \int_{\mathcal{O}} [\lvert D\bu_{\delta,n}(s) \rvert^2 + \lambda(\phi_{\delta,n}(s)) \lvert \bu_{\delta,n}(s) \rvert^2] \,\d x \,\d s \\
&\quad +  \mathbb{E} \int_0^{t\wedge \tau_\kappa}  \lvert \bu_{\delta,n}(s) \rvert_\Gamma^2 \,\d s + \mathbb{E} \int_0^{t\wedge \tau_\kappa} [ \lvert \nabla \mu_{\delta,n}(s) \rvert^2 +|\nabla_\Gamma \theta_{\delta,n}(s)|_{\Gamma}^2] \,\d s  
\leq C [1 + \|(\phi_0,\varphi_0)\|_{\mathbb{V}}^2],
\end{align*}
where $C$ depends on $\nu_\ast,\,\eps,\,\eps_\Gamma,\,C_1,\,C_3,\,\tilde{c}_F,\,\tilde{c}_G,\,\mathcal{O},\,\Gamma,\, K,\,M_0,\, \delta,\,\gamma_\ast$, $M_0,\,N_0$, and $T$.
%%%%%%%%%%%%%%%%%%%%%%%%%%%%%%%%%%%%%%%%%%%%%%%%%%%%%%%%%%%%%%%%%%%%%%%%%%%%%%%%%%%%%%%%%%%%%%%%%%%%%%%%%%%%%%%%%%%%%%%%%%%%%%%%%%%%%%%
\dela{
On the other hand, since $\tau_\kappa \nearrow T$ $\mathbb{P}$-a.s. as $R \to \infty$, we conclude by passing to the limit in the previous inequality and by using the Fatou lemma that 
\begin{align*}
 &\mathbb{E} \sup_{\tau \in[0,T]} [\mathcal{E}_{tot}(\phi_{\delta,n}(\tau), \varphi_{\delta,n}(\tau))] + \mathbb{E} \int_0^{T} \int_{\mathcal{O}} [2 \nu_\ast \lvert D\bu_{\delta,n}(s) \rvert^2 + \lambda(\phi_{\delta,n}(s)) \lvert \bu_{\delta,n}(s) \rvert^2] \,\d x \,\d s \\
 &\quad +  \mathbb{E} \int_0^{T} \gamma_\ast |\bu_{\delta,n}(s)|_\Gamma^2 \,\d s + \mathbb{E} \int_0^{T} [0.5 M_0 |\nabla \mu_{\delta,n}(s)|^2 + N_0 |\nabla_\Gamma \theta_{\delta,n}(s)|_{\Gamma}^2] \,\d s  
 \\
 &\leq \left[C_\delta (1 + \|(\phi_0,\varphi_0)\|_{\mathbb{V}}^2) + \frac{c_1 T}{c_2} + c_1 \right] e^{c_2T},
\end{align*}
from which we easily derive \eqref{eq4.80}. 
}
%%%%%%%%%%%%%%%%%%%%%%%%%%%%%%%%%%%%%%%%%%%%%%%%%%%%%%%%%%%%%%%%%%%%%%%%%%%%%%%%%%%%%%%%%%%%%%%%%%%%%%%%%%%%%%%%%%%%%%%%%%%%%%%%%%%%%%
Moreover, since $\tau_\kappa \nearrow T$ $\mathbb{P}$-a.s. as $k \to \infty$, we infer \eqref{eq4.80}, by passing to the limit in that previous inequality and by using the Fatou lemma. \newline
Subsequently, by the H\"older inequality and the trace theorem, we obtain for every $n \in \mathbb{N}$,
\begin{equation}\label{eq4.99}
\lvert \varphi_{\delta,n} \rvert_{\Gamma}^2 
\leq 2\lvert \varphi_{\delta,n} - \phi_{\delta,n} \rvert_{\Gamma}^2 + 2 \lvert \phi_{\delta,n} \rvert_{\Gamma}^2 \\
\leq 2 \lvert \varphi_{\delta,n} - \phi_{\delta,n} \rvert_{\Gamma}^2 + C(\mathcal{O},\Gamma) \Vert \phi_{\delta,n} \Vert_{V_1}^2.
\end{equation}
Finally, combining \eqref{eq4.99} and \eqref{eq4.80}, we deduce \eqref{eq4.81a}.
%%%%%%%%%%%%%%%%%%%%%%%%%%%%%%%%%%%%%%%%%%%%%%%%%%%%%%%%%%%%%%%%%%%%%%%%%%%%%%%%%%%%%%%%%%%%%%%%%%%%%%%%%%%%%%%%%%%%%%%%%%%%%%%%%%%%%%%
\dela{
  \begin{align*}
   \mathbb{E} \sup_{s \in[0,T]} |\varphi_{\delta,n}(s)|_{\Gamma}^2
    &\leq 2 \mathbb{E} \sup_{s \in[0,T]} |\varphi_{\delta,n}(s) - \phi_{\delta,n}(s)|_{\Gamma}^2 + C(\mathcal{O},\Gamma) \mathbb{E} \sup_{s \in[0,T]} \|\phi_{\delta,n}(s)\|_{V_1}^2 \\
    &\leq C(\mathcal{O},\Gamma,K,\eps) \mathbb{E} \sup_{s \in[0,T]} [\mathcal{E}_{tot}(\phi_{\delta,n}(s), \varphi_{\delta,n}(s))] \\
    &\leq C_{2,\delta} [1 + \|(\phi_0,\varphi_0)\|_{\mathbb{V}}^2],
  \end{align*}
where $C_{2,\delta}>0$ depends only on the constant parameter $C_{1,\delta}$. The proof of Proposition \ref{Proposition_first_priori_estimmate} is now complete.
}
%%%%%%%%%%%%%%%%%%%%%%%%%%%%%%%%%%%%%%%%%%%%%%%%%%%%%%%%%%%%%%%%%%%%%%%%%%%%%%%%%%%%%%%%%%%%%%%%%%%%%%%%%%%%%%%%%%%%%%%%%%%%%%%%%%%%%%%
\end{proof}
%%%%%%%%%%%%%%%%%%%%%%%%%%%%%%%%%%%%%%%%%%%%%%%%%%%%%%%%%%%%%%%%%%%%%%%%%%%%%%%%%%%%%%%%%%%%%%%%%%%%%%%%%%%%%%%%%%%%%%%%%%%%%%%%%%%%%%
\begin{proposition}\label{Proposition_second_priori_estimmate}
Let Assumptions \ref{item:H1}-\ref{item:H8} hold. Assume $\delta \in (0,1)$ and $r \geq 2$. Then, there exists a constant $C_{r/2,\delta}>0$ such that for every $n \in \mathbb{N}$,
\begin{equation}\label{eq4.95}
\begin{aligned}
& \mathbb{E} \sup_{s \in[0,t]} [\mathcal{E}_{tot}(\phi_{\delta,n}(s), \varphi_{\delta,n}(s))]^{r/2} + \mathbb{E} \left(\int_0^t \lvert \nabla \bu_{\delta,n}(s) \rvert^2 \, \d s \right)^{r/2}
+ \mathbb{E} \left(\int_0^t \lvert \nabla\mu_{\delta,n}(s) \rvert^2 \d s \right)^{r/2}
\\
& + \mathbb{E} \left(\int_0^{t} \lvert \nabla_\Gamma \theta_{\delta,n}(s) \rvert^2 \d s\right)^{r/2} + \mathbb{E} \left(\int_{Q_t} \lambda(\phi_{\delta,n}(s,x)) \lvert \bu_{\delta,n}(s,x) \rvert^2 \,\d x \,\d s \right)^{r/2}
\\
&\leq C_{r/2,\delta} [1 + \Vert (\phi_0,\varphi_0) \Vert_{\mathbb{V}}^r], \; \; t \in [0,T].
\end{aligned}
\end{equation}
Furthermore, there exists a constant $C_{\delta,r}>0$ such that for every $n \in \mathbb{N}$,
    \begin{align}\label{eq4.101a}
       \mathbb{E} \sup_{\tau \in[0,t]} \lvert \varphi_{\delta,n}(\tau) \rvert_{\Gamma}^r
       \leq C_{\delta,r} [1 + \Vert (\phi_0,\varphi_0) \Vert_{\mathbb{V}}^r], \; \; t \in [0,T]. 
       \end{align}
    \end{proposition}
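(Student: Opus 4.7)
The plan is to revisit the energy equality \eqref{Main_Galerkin_Equality} from Lemma \ref{Lem-2} and push through all the bounds with the higher moment $r/2$. First I would freeze the stopping time $\tau_\kappa$ and apply the elementary inequality $(\sum_{i=1}^N a_i)^{r/2} \leq C_{N,r}\sum_{i=1}^N a_i^{r/2}$ to both sides, then take $\sup_{s \in [0,t \wedge \tau_\kappa]}$ and finally expectations. The coercivity of $a_{n,t}$ (see \eqref{eq4.34}) together with the Korn inequality handles the $|\nabla \bu_{\delta,n}|^2$ piece from the $|D\bu_{\delta,n}|^2$ and boundary $|\bu_{\delta,n}|^2_\Gamma$ contributions, while \ref{item:H4} gives the lower bound on $\int M_\mathcal{O}|\nabla \mu_{\delta,n}|^2$ and $\int M_\Gamma |\nabla_\Gamma \theta_{\delta,n}|^2$ by $M_0|\nabla\mu_{\delta,n}|^2$ and $N_0|\nabla_\Gamma\theta_{\delta,n}|_\Gamma^2$ respectively.

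For the deterministic terms on the right-hand side, the bounds \eqref{eq4.83}, \eqref{eq4.82}, \eqref{eq4.82a}, \eqref{eq4.85}, \eqref{eqt4.87}, \eqref{eq4.88}, \eqref{eq4.92a}, \eqref{eq4.93b} from the proof of Proposition \ref{Proposition_first_priori_estimmate} all yield expressions of the form $C(1+|\nabla \phi_{\delta,n}|^2+|\nabla_\Gamma \varphi_{\delta,n}|_\Gamma^2)$, whose $(r/2)$-th power is controlled by $1 + [\mathcal{E}_{tot}(\phi_{\delta,n},\varphi_{\delta,n})]^{r/2}$; after time integration this produces the Gronwall-type driver $\int_0^{t\wedge\tau_\kappa} \sup_{\sigma \in [0,s]} [\mathcal{E}_{tot}(\phi_{\delta,n}(\sigma),\varphi_{\delta,n}(\sigma))]^{r/2} \, ds$. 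The term $\int_0^{t\wedge\tau_\kappa} M_\mathcal{O}(\phi_{\delta,n}) \nabla \mu_{\delta,n} \cdot \nabla \phi_{\delta,n}\,dx\,ds$ is split with Young as in the $r=2$ case so that $\frac{1}{4}\int M_\mathcal{O}|\nabla \mu_{\delta,n}|^2$ is absorbed to the left and the remainder raised to $r/2$ joins the Gronwall driver.

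The main work will be the three stochastic integrals. For $\int_0^\cdot (\mu_{\delta,n}, F_1(\phi_{\delta,n})\,dW)$, the BDG inequality \eqref{eqn-BDG Inequality} with exponent $r/2$ gives
\begin{equation*}
\mathbb{E}\sup_{s\in[0,t\wedge\tau_\kappa]}\Bigl|\int_0^s (\mu_{\delta,n},F_1(\phi_{\delta,n})\,dW)\Bigr|^{r/2}
\leq C\,\mathbb{E}\Bigl(\int_0^{t\wedge\tau_\kappa} |\mu_{\delta,n}|^2 \|F_1(\phi_{\delta,n})\|_{\mathscr{T}_2(U,L^2(\mathcal{O}))}^2\,ds\Bigr)^{r/4}.
\end{equation*}
Using \eqref{eq4.83} and the control \eqref{Eqn-4.36}, this is bounded by $C\,\mathbb{E}\bigl(\int_0^{t\wedge\tau_\kappa}\|(\mu_{\delta,n},\theta_{\delta,n})\|_{\mathbb{H}}^2\,ds\bigr)^{r/4}$, which by the inequality chain $\|\cdot\|_{\mathbb{H}}^2 \leq \|\cdot\|_{\mathbb{V}'}^2 + 2\|\cdot\|_{\mathbb{V}'}(|\nabla\mu_{\delta,n}|+|\nabla_\Gamma\theta_{\delta,n}|_\Gamma)$ and \eqref{Eqn-4.34} reduces to a quantity controlled (after Young) by $\tfrac{1}{4}\bigl[\mathbb{E}(\int M_\mathcal{O}|\nabla \mu_{\delta,n}|^2)^{r/2} + \mathbb{E}(\int M_\Gamma |\nabla_\Gamma \theta_{\delta,n}|_\Gamma^2)^{r/2}\bigr] + C\bigl(1+\mathbb{E}\int_0^{t\wedge\tau_\kappa}[\mathcal{E}_{tot}]^{r/2}\,ds\bigr)$. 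The same treatment handles $\int(\theta_{\delta,n},F_2(\varphi_{\delta,n})\,dW_\Gamma)_\Gamma$ and the simpler $\int(\phi_{\delta,n},F_1(\phi_{\delta,n})\,dW)$.

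After absorbing the $\tfrac{1}{4}$-contributions to the left-hand side, the resulting inequality has the form
\begin{equation*}
\Psi_\kappa(t) \leq C[1+\|(\phi_0,\varphi_0)\|_{\mathbb{V}}^r] + C\int_0^{t}\Psi_\kappa(s)\,ds,
\end{equation*}
where $\Psi_\kappa(t)$ denotes the full LHS up to stopping time $\tau_\kappa$. Gronwall's lemma gives a bound uniform in $\kappa$, and letting $\kappa \to \infty$ via Fatou yields \eqref{eq4.95}. Finally, \eqref{eq4.101a} follows by raising \eqref{eq4.99} to $r/2$, taking $\sup$ and expectation, and invoking \eqref{eq4.95}. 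The principal obstacle is the careful bookkeeping needed so that all terms involving $\mathbb{E}\sup[\mathcal{E}_{tot}]^{r/2}$, $\mathbb{E}(\int|\nabla\mu_{\delta,n}|^2)^{r/2}$ and $\mathbb{E}(\int|\nabla_\Gamma\theta_{\delta,n}|_\Gamma^2)^{r/2}$ generated by Young's inequality on the BDG bounds are absorbed with sufficient room before Gronwall is applied, as these three quantities appear in a coupled way.
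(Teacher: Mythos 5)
Your argument is correct and follows the same overall skeleton as the paper's proof (raise the It\^o energy identity to the power $r/2$, take suprema and expectations, apply BDG to the three stochastic integrals, absorb the gradient terms by Young, and close with Gronwall); the only genuine difference is how you control $\Vert(\mu_{\delta,n},\theta_{\delta,n})\Vert_{\mathbb{H}}$ inside the BDG bound. You route this through the dual-norm estimate \eqref{Eqn-4.34}, which rests on the $\delta$-dependent Lipschitz bounds $\lvert F_\delta'(s)\rvert\le(\tilde c_F+\delta^{-1})\lvert s\rvert$, $\lvert G_\delta'(s)\rvert\le(\tilde c_G+\delta^{-1})\lvert s\rvert$; this is perfectly admissible here because the constant $C_{r/2,\delta}$ is allowed to depend on $\delta$, and indeed it closes: $\Vert(\mu_{\delta,n},\theta_{\delta,n})\Vert_{\mathbb{V}'}^2\le C_\delta\,\mathcal{E}_{tot}(\phi_{\delta,n},\varphi_{\delta,n})$ feeds directly into the Gronwall driver, and the cross term $\Vert\cdot\Vert_{\mathbb{V}'}(\lvert\nabla\mu_{\delta,n}\rvert+\lvert\nabla_\Gamma\theta_{\delta,n}\rvert_\Gamma)$ is absorbed after Young exactly as you describe. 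The paper instead estimates the spatial means $\fint_{\mathcal{O}}\mu_{\delta,n}$ and $\fint_\Gamma\theta_{\delta,n}$ from the chemical-potential identity together with the growth condition $\lvert F'(r)\rvert\le C_F(1+F(r))$ and the Yosida monotonicity \ref{item:P3}, then applies the Poincar\'e and bulk-surface Poincar\'e inequalities to reach \eqref{Eqn-L2-norm-mu-and-theta-delta-n}; the payoff is that the resulting bound \eqref{Eqn-both-zeta-stochastic-integrals} has a constant \emph{independent of} $\delta$, which is reused verbatim when deriving the $\delta$-uniform estimates \eqref{Eqn-E_tot-zeta-estimate-1}--\eqref{Eqn-E_tot-zeta-estimate-3} in Section~\ref{sect_Uniform_estimates_delta}. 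So your proof establishes the proposition as stated, but it would not carry over to that later step, whereas the paper's does. Two minor remarks: the paper drops the stopping times entirely (Proposition \ref{Proposition_first_priori_estimmate} already guarantees that every term in \eqref{Main_Galerkin_Equality} is integrable), so your Fatou passage $\kappa\to\infty$ is harmless but unnecessary; and for the term $\int_{Q_t}M_{\mathcal{O}}(\phi_{\delta,n})\nabla\mu_{\delta,n}\cdot\nabla\phi_{\delta,n}$ it is cleaner to apply H\"older and Young \emph{after} raising to the power $r/2$, as the paper does, so that the absorbed coefficient $\tfrac{[M_0]^{r/2}}{4}$ is explicit rather than relying on $2^{r/2-1}4^{-r/2}<1$.
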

%%%%%%%%%%%%%%%%%%%%%%%%%%%%%%%%%%%%%%%%%%%%%%%%%%%%%%%%%%%%%%%%%%%%%%%%%%%%%%%%%%%%%%%%%%%%%%%%%%%%%%%%%%%%%%%%%%%%%%%%%%%%%%%%%%%%%%%
\begin{proof}
Let $\delta \in (0,1)$. Without loss of generality, let us fix a number $r \geq 4$. Once we have proved Proposition \ref{Proposition_first_priori_estimmate}, each term of \eqref{Main_Galerkin_Equality} is well defined, and therefore we do not need to use the stopping times anymore. Hence, by exploiting \eqref{Main_Galerkin_Equality}, raising the corresponding equality to the power $r/2$, taking the supremum in time over $[0,t]$, $t\in [0,T]$, and finally taking expectation with respect to $\mathbb{P}$, we infer that there exists a constant $c_r$ depending on $\eps,\,K,\,\eps_\Gamma$, and $r$ such that for every $n \in \mathbb{N}$,
\begin{equation}\label{eq4.97}
\begin{aligned}
& \mathbb{E} \sup_{\tau \in[0,t]} [\mathcal{E}_{tot}(\phi_{\delta,n}(\tau), \varphi_{\delta,n}(\tau))]^\frac{r}{2} + \mathbb{E} \left(\int_0^t \left[2 \int_{\mathcal{O}} \nu(\phi_{\delta,n}) \lvert D\bu_{\delta,n} \rvert^2 \, \d x + \int_{\Gamma} \gamma(\varphi_{\delta,n}) \lvert \bu_{\delta,n} \rvert^2 \d S \right] \d s \right)^\frac{r}{2}  
\\
& + \mathbb{E} \left(\int_{Q_t} \lambda(\phi_{\delta,n}) \lvert \bu_{\delta,n} \rvert^2 \,\d x \,\d s \right)^\frac{r}{2} + [M_0]^\frac{r}{2} \mathbb{E} \left(\int_0^{t} \lvert \nabla \mu_{\delta,n} \rvert^2 \,\d s \right)^\frac{r}{2} + [N_0]^\frac{r}{2} \mathbb{E} \left(\int_0^{t} \lvert \nabla_\Gamma \theta_{\delta,n} \rvert^2 \,\d s\right)^\frac{r}{2}  
 \\
&\leq c_r \bigg([\mathcal{E}_{tot}(\phi_n(0),\varphi_n(0))]^\frac{r}{2} + \mathbb{E} \left\lvert \int_0^{t} \Vert (F_{1,n}(\phi_{\delta,n}) \Vert_{\mathscr{T}_2(U,L^2(\mathcal{O}))}^2 + F_{2,n}(\varphi_{\delta,n}) \Vert_{\mathscr{T}_2(U_\Gamma,L^2(\Gamma))}^2)\,\d s  \right\rvert^\frac{r}{2} \\
&  + \mathbb{E} \left\lvert \int_0^t (\Vert \nabla F_{1,n}(\phi_{\delta,n}) \Vert_{L^2(U,\mathbb{L}^2(\mathcal{O}))}^2 + \Vert \nabla_\Gamma F_{2,n}(\varphi_{\delta,n})\Vert_{\mathscr{T}_2(U_\Gamma,\mathbb{L}^2(\Gamma))}^2)  \, \d s \right \rvert^\frac{r}{2} \\
& + \mathbb{E} \left\lvert \int_0^{t} \Vert F_{1,n}(\phi_{\delta,n}) \Vert_{\mathscr{T}_2(U,L^2(\Gamma))}^2 \, \d s \right \rvert^\frac{r}{2} + \mathbb{E} \sup_{\tau \in [0,t]} \left\lvert \int_0^{\tau} (\mu_{\delta,n},F_1(\phi_{\delta,n})\, \d W) \right \rvert^\frac{r}{2} \\
& + \mathbb{E} \sup_{\tau \in [0,t]} \left\lvert \int_0^{\tau} (\theta_{\delta,n},(F_2(\varphi_{\delta,n}))\, \d W_\Gamma)_\Gamma\right \rvert^\frac{r}{2} + \mathbb{E} \left(\sum_{k=1}^\infty \int_{Q_t} \lvert F^{\bis}_\delta(\phi_{\delta,n}) \rvert \lvert F_{1,n}(\phi_{\delta,n})e_{1,k} \rvert^2 \, \d x \,\d s \right)^\frac{r}{2} \\
& + \mathbb{E} \left(\sum_{k=1}^\infty \int_{\Sigma_t} \lvert G^{\bis}_\delta(\varphi_{\delta,n}) \rvert \lvert F_{2,n}(\varphi_{\delta,n}) e_{2,k} \rvert^2\, \d S\,\d s\right)^\frac{r}{2} + \mathbb{E} \left \lvert \sum_{k=1}^\infty  \int_{\Sigma_t} (F_{1,n}(\phi_{\delta,n}) e_{1,k}) (F_{2,n}(\varphi_{\delta,n}) e_{2,k})\, \d S \, \d s \right \rvert^\frac{r}{2} \\
& + \mathbb{E} \left\lvert \int_{Q_t} M_{\mathcal{O}}(\phi_{\delta,n}) \nabla \mu_{\delta,n} \cdot \nabla \phi_{\delta,n} \, \d x \,\d s  \right \rvert^\frac{r}{2} + \mathbb{E} \sup_{\tau \in [0,t]} \left\lvert \int_0^{\tau}(\phi_{\delta,n}, F_{1}(\phi_{\delta,n})\,\d W) \right \rvert^\frac{r}{2} \bigg).
\end{aligned}
\end{equation}
Let us control all the terms on the RHS of \eqref{eq4.97}. First, by \eqref{eq4.83} and \eqref{eqt4.87}, we infer that
   \begin{align*}
     \left \lvert \int_0^t (\Vert F_{1,n}(\phi_{\delta,n}(s)) \Vert_{\mathscr{T}_2(U,L^2(\mathcal{O}))}^2 + \Vert F_{2,n}(\varphi_{\delta,n}(s)) \Vert_{\mathscr{T}_2(U_\Gamma,L^2(\Gamma))}^2) \,\d s \right \rvert^{r/2} 
    \leq  (C_1 + C_3)^\frac{r}{2} t^\frac{r}{2}.
   \end{align*}
Next, by the H\"older inequality, \eqref{eq4.82a}, and \eqref{eq4.93b}, we deduce that there exists $C= C(\mathcal{O},\Gamma,C_1,C_3,r)$ such that for every $n \in \mathbb{N}$ and every $\delta>0$,
\begin{align*}
&\left \lvert \int_0^{t} \Vert F_{1,n}(\phi_{\delta,n}) \Vert_{\mathscr{T}_2(U,L^2(\Gamma))}^2 \, \d s \right \rvert^{r/2} + \left\lvert \sum_{k=1}^\infty \int_{\Sigma_t} (F_{1,n}(\phi_{\delta,n}) e_{1,k}) (F_{2,n}(\varphi_{\delta,n}) e_{2,k})\, \d S \, \d s \right \rvert^{r/2}
\\
&\leq C \left(\int_0^t(1 + \lvert \nabla \phi_{\delta,n} \rvert^2) \, \d s \right)^{r/2}
\leq  C \left[ \int_0^t (1 + \lvert \nabla \phi_{\delta,n} \rvert^{r})\,\d s \right] t^\frac{r - 2}{2}.
\end{align*}
Thanks to \eqref{eq4.82}, \eqref{eq4.88}, and the H\"older inequality, we deduce that there exists a constant $C$ depending on $C_1,\,C_3,\,\mathcal{O},\,\Gamma$, and $r$ such that for every $n \in \mathbb{N}$ and $\delta>0$,
\begin{align*}
& \left \lvert \int_0^t (\Vert \nabla F_{1,n}(\phi_{\delta,n}) \Vert_{L^2(U,\mathbb{L}^2(\mathcal{O}))}^2 + \Vert \nabla_\Gamma F_{2,n}(\varphi_{\delta,n}) \Vert_{\mathscr{T}_2(U_\Gamma,\mathbb{L}^2(\Gamma))}^2)\, \d s \right \rvert^{r/2}  \\
&\leq  C\left(\int_0^t(1 + \lvert \nabla \phi_{\delta,n} \rvert^2 + \lvert \nabla_\Gamma \varphi_{\delta,n} \rvert_{\Gamma}^2) \, \d s \right)^{r/2}  
\leq  C \left[\int_0^t (1 + \lvert \nabla \phi_{\delta,n} \rvert^{r} + \lvert \nabla_\Gamma \varphi_{\delta,n} \rvert_{\Gamma}^{r}) \,\d s \right] t^\frac{r - 2}{2}.
\end{align*}
By the H\"older and Young inequalities, together with the assumption \ref{item:H4}, we deduce there exists $C=C(\bar{M}_0,M_0,r)$ such that for all $n \in \mathbb{N}$ and every $\delta>0$,
\begin{align*}
\mathbb{E} \left\lvert \int_0^t (M_{\mathcal{O}}(\phi_{\delta,n}) \nabla \mu_{\delta,n}, \nabla \phi_{\delta,n}) \,\d s  \right \rvert^{r/2} 
&\leq C \left[ \mathbb{E} \left(\int_0^t \lvert \nabla\mu_{\delta,n} \rvert^2 \, \d s \right)^{r/2} \right]^\frac{1}{2} \left[ \mathbb{E} \int_0^t \lvert \nabla \phi_{\delta,n} \rvert^{r}\,\d s \right]^\frac{1}{2} t^\frac{r - 2}{4}
\\
&\leq \frac{[M_0]^{r/2}}{4} \mathbb{E} \left(\int_0^t \lvert \nabla\mu_{\delta,n} \rvert^2 \, \d s \right)^\frac{r}{2} + C t^\frac{r - 2}{2} \, \mathbb{E} \int_0^t \lvert \nabla \phi_{\delta,n} \rvert^{r}\,\d s.
\end{align*}
Using the BDG inequality, \eqref{eq4.83}, and \eqref{eqt4.87}, we infer that for every $n \in \mathbb{N}$ and every $\delta>0$,
\begin{align*}
&c_r \mathbb{E} \sup_{\tau \in [0,t]} \left \lvert \int_0^{\tau} (\mu_{\delta,n},F_1(\phi_{\delta,n})\, \d W) \right \rvert^{r/2} + c_r \mathbb{E} \sup_{\tau \in [0,t]} \left \lvert \int_0^{\tau} (\theta_{\delta,n},F_2(\varphi_{\delta,n})\, \d W_\Gamma)_\Gamma \right\rvert^{r/2}
\\
& \leq C \mathbb{E} \left(\int_0^t \lvert \mu_{\delta,n} \rvert^2 \Vert F_1(\phi_{\delta,n}) \Vert_{\mathscr{T}_2(U,L^2(\mathcal{O}))}^2 \, \d s\right)^{r/4} + C \mathbb{E} \left(\int_0^t \lvert \theta_{\delta,n} \rvert_\Gamma^2 \Vert F_2(\varphi_{\delta,n}) \Vert_{\mathscr{T}_2(U_\Gamma,L^2(\Gamma))}^2\, \d s\right)^{r/4}
\\
&\leq C \mathbb{E} \left(\int_0^t \Vert (\mu_{\delta,n},\theta_{\delta,n}) \Vert_{\mathbb{H}}^2 \, \d s \right)^{r/4}.
\end{align*}
Next, Note that as $(\phi_{\delta,n},\varphi_{\delta,n}) \in H_{n}^1 \times H_{\Gamma,n}^1$, then $\mu_{\delta,n} \in H_{n}^1$. From \eqref{eqn-Galerkin-011} and \eqref{condition_ F'_and_F''}, we deduce that for every $n \in \mathbb{N}$ and every $\delta>0$,
\begin{align*}
&\left \lvert \int_{\mathcal{O}} \mu_{\delta,n} \, \d x \right \rvert
= \lvert (\mathcal{S}_n [-\eps \Delta \phi_{\delta,n} + \eps^{-1} F_\delta^\prime(\phi_{\delta,n})],1) \rvert
= \lvert (-\eps \Delta \phi_{\delta,n} + \eps^{-1} F_\delta^\prime(\phi_{\delta,n}), \mathcal{S}_n 1) \rvert
\\
&= \lvert (-\eps \Delta \phi_{\delta,n} + \eps^{-1} F_\delta^\prime(\phi_{\delta,n}),1) \rvert
\leq \eps \lvert (\Delta \phi_{\delta,n},1) \rvert + \eps^{-1} \lvert (F_\delta^\prime(\phi_{\delta,n}),1) \rvert
\\
&\leq \frac{\eps}{K} \int_\Gamma \lvert \varphi_{\delta,n} - \phi_{\delta,n} \rvert \, \d S + \frac{1}{\eps} \int_{\mathcal{O}} \lvert F_\delta^\prime(\phi_{\delta,n}) \rvert \, \d x
\leq  C(1 + \lvert \varphi_{\delta,n} - \phi_{\delta,n} \rvert_\Gamma + \Vert F_\delta(\phi_{\delta,n}) \Vert_{L^1(\mathcal{O})}),
\end{align*}
where $C= C(K,\eps,\mathcal{O},\Gamma,C_F)$. This implies that 
     \begin{equation}\label{Eqn-mean-mu_n}
       \left \lvert \fint_{\mathcal{O}} \mu_{\delta,n} \, \d x \right \rvert
       \leq  C(1 + \lvert \varphi_{\delta,n} - \phi_{\delta,n} \rvert_\Gamma + \Vert F_\delta(\phi_{\delta,n}) \Vert_{L^1(\mathcal{O})}).
    \end{equation}
Similarly, from \eqref{condition_ G'_and_G''} and \eqref{eqn-Galerkin-011}, we deduce there exists a constant $C= C(K,\eps,\eps_\Gamma,\mathcal{O},\Gamma,C_G)$ such that for every $n \in \mathbb{N}$ and every $\delta>0$,
      \begin{equation}\label{Eqn-mean-theta_n}
       \left \lvert \fint_{\Gamma} \theta_{\delta,n} \, \d S \right \rvert
       \leq  C(1 + \lvert \varphi_{\delta,n} - \phi_{\delta,n} \rvert_\Gamma + \Vert G_\delta(\phi_{\delta,n}) \Vert_{L^1(\Gamma)}).
    \end{equation}
Subsequently, owing to \eqref{Eqn-mean-mu_n}, \eqref{Eqn-mean-theta_n},  the Poincar\'e inequality, and the bulk-surface Poincar\'e inequality \eqref{bulk-surface-Poincare-inequality}, we infer that for every $n \in \mathbb{N}$ and all $\delta>0$,
\begin{equation}\label{Eqn-L2-norm-mu-and-theta-delta-n}
\begin{aligned}
&\Vert (\mu_{\delta,n},\theta_{\delta,n}) \Vert_{\mathbb{H}}^2
\leq C \left(1 + \Vert (\nabla \mu_{\delta,n},\nabla_\Gamma \theta_{\delta,n}) \Vert_{\mathbb{H}}^2 + \left \lvert \fint_{\mathcal{O}} \mu_{\delta,n} \, \d x \right \rvert^2 + \left \lvert \fint_{\Gamma} \theta_{\delta,n} \, \d S \right \rvert^2 \right)
\\
&\leq C \left(1 + \Vert (\nabla \mu_{\delta,n},\nabla_\Gamma \theta_{\delta,n}) \Vert_{\mathbb{H}}^2 + \lvert \varphi_{\delta,n} - \phi_{\delta,n} \rvert_\Gamma^2 + \Vert F_\delta(\phi_{\delta,n}) \Vert_{L^1(\mathcal{O})}^2 + \Vert G_\delta(\phi_{\delta,n}) \Vert_{L^1(\Gamma)}^2 \right),
\end{aligned}
\end{equation}
from which we deduce that for every $n \in \mathbb{N}$ and every $\delta>0$,
\begin{align*}
&c_r \mathbb{E} \sup_{\tau \in [0,t]} \left \lvert \int_0^{\tau} (\mu_{\delta,n},F_1(\phi_{\delta,n})\, \d W) \right \rvert^{r/2} + c_r \mathbb{E} \sup_{\tau \in [0,t]} \left \lvert \int_0^{\tau} (\theta_{\delta,n},F_2(\varphi_{\delta,n})\, \d W_\Gamma)_\Gamma \right\rvert^{r/2}
\\
&\leq C \left[t^\frac{r}{4} + \mathbb{E} \left(\int_0^t \lvert \nabla \mu_{\delta,n} \rvert^2 \, \d s \right)^\frac{r}{4} + \mathbb{E} \left(\int_0^t \lvert \nabla_\Gamma \theta_{\delta,n}) \rvert_\Gamma^2 \, \d s \right)^\frac{r}{4} + \mathbb{E} \left(\int_0^t \lvert \varphi_{\delta,n} - \phi_{\delta,n} \rvert_\Gamma^r \, \d s \right)^{\frac12} t^\frac{r - 2}{4} \right. \\
&\left. \hspace{1 truecm} + t^\frac{r-4}{4} \mathbb{E}  \int_0^t \Vert F_\delta(\phi_{\delta,n}) \Vert_{L^1(\mathcal{O})}^{r/2} \, \d s  + t^\frac{r-4}{4} \mathbb{E} \int_0^t \Vert G_\delta(\phi_{\delta,n}) \Vert_{L^1(\Gamma)}^{r/2} \, \d s  \right].
\end{align*}
Furthermore, Young's inequality leads to
\begin{align*}
&\left(\int_0^t \lvert \nabla \mu_{\delta,n} \rvert^2 \, \d s \right)^{r/4} + \left(\int_0^t \lvert \nabla_\Gamma \theta_{\delta,n}) \rvert_\Gamma^2 \, \d s \right)^{r/4}
\leq \frac{[M_0]^{r/2}}{4 C}  \left(\int_0^t \lvert \nabla\mu_{\delta,n} \rvert^2 \d s \right)^{r/2}
\\
&\quad + \frac{[N_0]^{r/2}}{2C} \left(\int_0^{t} \lvert \nabla_\Gamma \theta_{\delta,n} \rvert^2 \d s\right)^{r/2} + C(K,\eps,\eps_\Gamma,\mathcal{O},\Gamma,C_G,M_0,N_0,r),
\end{align*}
with $C$ being independent of $n$ and $\delta$. Hence, there exists a positive constant $C$ independent of $n$ and $\delta$, but which may depend on $K,\,\eps,\,\eps_\Gamma,\,\mathcal{O},\,\Gamma,\,C_G,\, C_F,\,M_0,\,N_0$, and $r$ such that
\begin{equation}\label{Eqn-both-zeta-stochastic-integrals}
\begin{aligned}
&c_r \mathbb{E} \sup_{\tau \in [0,t]} \left \lvert \int_0^{\tau} (\mu_{\delta,n},F_1(\phi_{\delta,n})\, \d W) \right \rvert^{r/2} + c_r \mathbb{E} \sup_{\tau \in [0,t]} \left \lvert \int_0^{\tau} (\theta_{\delta,n},F_2(\varphi_{\delta,n})\, \d W_\Gamma)_\Gamma \right\rvert^{r/2}
\\
&\leq  \frac{[M_0]^{r/2}}{4} \mathbb{E} \left(\int_0^t \lvert \nabla\mu_{\delta,n} \rvert^2 \d s \right)^{r/2} + \frac{[N_0]^{r/2}}{2} \mathbb{E} \left(\int_0^{t} \lvert \nabla_\Gamma \theta_{\delta,n} \rvert^2 \d s\right)^{r/2} + C(t^\frac{r}{4} + t^\frac{r-2}{2})
\\
&\quad + \mathbb{E} \int_0^t \lvert \varphi_{\delta,n} - \phi_{\delta,n} \rvert_\Gamma^r \, \d s + t^\frac{r-4}{4} C \mathbb{E}  \int_0^t (\Vert F_\delta(\phi_{\delta,n}) \Vert_{L^1(\mathcal{O})}^{r/2} + \Vert G_\delta(\phi_{\delta,n}) \Vert_{L^1(\Gamma)}^{r/2}) \, \d s.
\end{aligned}
\end{equation}
For the last stochastic integral in \eqref{eq4.97}, we infer that for every $n \in \mathbb{N}$ and every $\delta>0$,
\begin{align*}
&\mathbb{E} \sup_{\tau \in [0,t]} \left \lvert \int_0^{\tau}(\phi_{\delta,n}, F_{1}(\phi_{\delta,n})\,\d W)\right\rvert^{r/2}
\leq C \mathbb{E} \left(\int_0^t \lvert \phi_{\delta,n} \rvert^2 \Vert F_1(\phi_{\delta,n})\Vert_{\mathscr{T}_2(U,L^2(\mathcal{O}))}^2 \, \d s \right)^{r/4} \\
&\leq C \mathbb{E} \left(\int_0^t \lvert \phi_{\delta,n} \rvert^2 \, \d s \right)^{r/4} 
\leq C \left[ \mathbb{E} \int_0^t \lvert \phi_{\delta,n} \rvert^{r} \, \d s \right]^\frac{1}{2} t^\frac{r - 2}{4} 
\leq C(r,\mathcal{O},C_1) + t^\frac{r - 2}{2} \, \mathbb{E} \int_0^t \lvert \phi_{\delta,n} \rvert^{r} \, \d s.
\end{align*}
Now, Plugging the previous estimates into the RHS of \eqref{eq4.97}, using \eqref{Korn-inequality} and the assumptions \ref{item:H5}-\ref{item:H7}, we deduce that for every $n \in \mathbb{N}$ and every $\delta>0$,
\begin{equation}\label{Eqn-E_tot-zeta-estimate}
\begin{aligned}
& \mathbb{E} \sup_{s \in[0,t]} [\mathcal{E}_{tot}(\phi_{\delta,n}(s), \varphi_{\delta,n}(s))]^{r/2} + \mathbb{E} \left(\int_0^t \lvert \nabla \bu_{\delta,n} \rvert^2 \, \d s \right)^\frac{r}{2}
+ \mathbb{E} \left(\int_0^t \lvert \nabla\mu_{\delta,n} \rvert^2 \d s \right)^{r/2}
\\
&  + \mathbb{E} \left(\int_0^{t} \lvert \nabla_\Gamma \theta_{\delta,n} \rvert^2 \d s\right)^{r/2} + \mathbb{E} \left(\int_{Q_t} \lambda(\phi_{\delta,n}) \lvert \bu_{\delta,n} \rvert^2 \,\d x \,\d s \right)^\frac{r}{2}
 \\
&\leq C\bigg[ 1 + [\mathcal{E}_{tot}(\phi_n(0),\varphi_n(0))]^{r/2} + \mathbb{E} \left(\sum_{k=1}^\infty \int_{Q_t} \lvert F^{\bis}_\delta(\phi_{\delta,n}) \rvert \lvert F_{1,n}(\phi_{\delta,n})e_{1,k} \rvert^2 \, \d x \,\d s \right)^\frac{r}{2} 
   \\
&\qquad + \mathbb{E} \left(\sum_{k=1}^\infty \int_{\Sigma_t} \lvert G^{\bis}_\delta(\varphi_{\delta,n}) \rvert \lvert F_{2,n}(\varphi_{\delta,n}) e_{2,k} \rvert^2\, \d S\,\d s\right)^\frac{r}{2} 
+  \mathbb{E} \int_0^t [\mathcal{E}_{tot}(\phi_{\delta,n}, \varphi_{\delta,n})]^{r/2} \, \d s \bigg],
\end{aligned}
\end{equation}
where $C$ may depend on $r,\,\nu_0,\,\lambda_0,\,K,\,\eps,\,\eps_\Gamma,\,\mathcal{O},\,\Gamma,\,C_G,\, C_F,\,M_0,\,N_0,\,  C_{\text{KN}}$, and $T$. \newline
On the other hand, from \eqref{eq4.85} and \eqref{eq4.92a}, we infer that there exists $C=C (\mathcal{O},\Gamma,C_1,C_3,\tilde{c}_F,\tilde{c}_G,\delta,r)$ such that for every $n \in \mathbb{N}$,
   \begin{align*}
     \left(\sum_{k=1}^\infty \int_{Q_t} F^{\bis}_\delta(\phi_{\delta,n}) \lvert F_{1,n}(\phi_{\delta,n})e_{1,k} \rvert^2 \d x \,\d s \right)^\frac{r}{2} + \left(\sum_{k=1}^\infty  \int_{\Sigma_t} G^{\bis}_\delta(\varphi_{\delta,n}) \lvert F_{2,n}(\varphi_{\delta,n})e_{2,k} \rvert^2 \d S \, \d s \right)^\frac{r}{2} 
    \leq C \, t^\frac{r}{2}.
  \end{align*}
On the one hand, note that for every $n \in \mathbb{N}$,
     \begin{equation*}
       [\mathcal{E}_{tot}(\phi_n(0),\varphi_n(0))]^{r/2}
      \leq C (1 + \Vert (\phi_0,\varphi_0) \Vert_{\mathbb{V}}^r).
    \end{equation*}
Consequently, there exists a positive constant $C$ independent of $n$ such that
\begin{align*}
& \mathbb{E} \sup_{s \in[0,t]} [\mathcal{E}_{tot}(\phi_{\delta,n}(s), \varphi_{\delta,n}(s))]^{r/2} + \mathbb{E} \left(\int_0^t \lvert \nabla \bu_{\delta,n} \rvert^2 \, \d s \right)^\frac{r}{2}
+ \mathbb{E} \left(\int_0^t \lvert \nabla\mu_{\delta,n} \rvert^2 \d s \right)^{r/2}
\\
& + \mathbb{E} \left(\int_0^{t} \lvert \nabla_\Gamma \theta_{\delta,n} \rvert^2 \d s\right)^{r/2} + \mathbb{E} \left(\int_{Q_t} \lambda(\phi_{\delta,n}) \lvert \bu_{\delta,n} \rvert^2 \,\d x \,\d s \right)^\frac{r}{2}
  \\
&\leq C\bigg[ 1 + \Vert (\phi_0,\varphi_0) \Vert_{\mathbb{V}}^r + \int_0^t \mathbb{E} \sup_{0 \leq s \leq \tau} [\mathcal{E}_{tot}(\phi_{\delta,n}(s), \varphi_{\delta,n}(s))]^{r/2} \d \tau \bigg], \; \; t \in [0,T].
\end{align*}
Hence, by applying the Gronwall inequality, we deduce \eqref{eq4.95}. Subsequently, considering \eqref{eq4.99} and \eqref{eq4.95}, we infer \eqref{eq4.101a}. This completes the proof of the proposition.
\end{proof}
%%%%%%%%%%%%%%%%%%%%%%%%%%%%%%%%%%%%%%%%%%%%%%%%%%%%%%%%%%%%%%%%%%%%%%%%%%%%%%%%%%%%%%%%%%%%%%%%%%%%%%%%%%%%%%%%%%%%%%%%%%%%%%%%%%%%%%%
%begin delete
\dela{
\begin{proposition}\label{Proposition_second_priori_estimmate} 
Let $r>2$ be arbitrary. Let the assumptions \ref{item:H1}-\ref{item:H8} be satisfied. Then, there exists a constant $C_{r/2,\delta}>0$ such that for every $n \in \mathbb{N}$, %Then, the sequence $(\bu_{\delta,n},\phi_{\delta,n},\varphi_{\delta,n},\mu_{\delta,n},\theta_{\delta,n})_{n \in \mathbb{N}}$ satisfies
\begin{align}\label{eq4.95}
& \mathbb{E} \sup_{s \in[0,t]} [\mathcal{E}_{tot}(\phi_{\delta,n}(s), \varphi_{\delta,n}(s))]^\frac{r}{2} + \mathbb{E} \left(\int_{Q_t} \lvert D\bu_{\delta,n} \rvert^2 \,\d x \,\d s \right)^\frac{r}{2} + \mathbb{E} \left(\int_{Q_t} \lambda(\phi_{\delta,n}) \lvert \bu_{\delta,n} \rvert^2 \,\d x \,\d s \right)^\frac{r}{2} \notag
\\
&  + \mathbb{E} \left(\int_0^{t} \lvert \bu_{\delta,n}(s) \rvert_{\Gamma}^2 \,\d s\right)^\frac{r}{2} + \mathbb{E} \left(\int_0^{t} \lvert \nabla \mu_{\delta,n}(s) \rvert^2 \,\d s \right)^\frac{r}{2} +  \mathbb{E} \left(\int_0^{t} \lvert \nabla_\Gamma \theta_{\delta,n}(s) \rvert^2 \,\d s\right)^\frac{r}{2} \notag 
 \\
&\leq C_{r/2,\delta} [1 + \Vert (\phi_0,\varphi_0) \Vert_{\mathbb{V}}^r], \; \; t \in [0,T].
\end{align}
% and for some constant $C_{r/2,\delta}>0$ depending on  $T,\,\eps,\,\eps_\Gamma,\,\mathcal{O},\,\Gamma,\,C_1,\,C_2,\,\tilde{c}_F,\,\tilde{c}_G,\,K,\,M_0,\,\bar{M}_0,\,\delta$, $\nu_0,\,\lambda_0,\,M_0,\,N_0$, and $r$. 
In addition, there exists a constant $C_{\delta,r}>0$ such that for every $n \in \mathbb{N}$,
  \begin{equation}\label{eq4.101a}
    \mathbb{E} \sup_{\tau \in[0,t]} \lvert \varphi_{\delta,n}(\tau) \rvert_{\Gamma}^r
    \leq C_{\delta,r} [1 + \Vert (\phi_0,\varphi_0) \Vert_{\mathbb{V}}^r], \; \; t \in [0,T]. 
  \end{equation}
%Here $C_{\delta,r}>0$ is a generic constant depending on the constant parameter $C_{r/2,\delta}$.
\end{proposition}
\begin{proof}
Let $r>2$ be arbitrary but fixed. \dela{Once we have proved Proposition \ref{Proposition_first_priori_estimmate}, each term of Equation \eqref{Main_Galerkin_Equality} is well-defined, and therefore we do not need to use the stopping times anymore. Hence, by
} 
Exploiting \eqref{Main_Galerkin_Equality}, raising the corresponding equality to the power $r/2$, taking the supremum in time over $[0,t]$, $t\in [0,T]$, and finally taking expectation with respect to $\mathbb{P}$, we obtain for every $n \in \mathbb{N}$,
\begin{equation}\label{eq4.97}
\begin{aligned}
& \mathbb{E} \sup_{\tau \in[0,t]} [\mathcal{E}_{tot}(\phi_{\delta,n}(\tau), \varphi_{\delta,n}(\tau))]^\frac{r}{2} + \nu_0^\frac{r}{2} \mathbb{E} \left\lvert\int_{Q_t} 2 \lvert D\bu_{\delta,n} \rvert^2 \,\d x \,\d s \right \rvert^\frac{r}{2} + \mathbb{E} \left(\int_{Q_t} \lambda(\phi_{\delta,n}) \lvert \bu_{\delta,n} \rvert^2 \,\d x \,\d s \right)^\frac{r}{2} 
\\
& + \lambda_0^\frac{r}{2} \mathbb{E} \left(\int_0^{t} \lvert \bu_{\delta,n}(s) \rvert_{\Gamma}^2 \,\d s\right)^\frac{r}{2} + M_0^\frac{r}{2} \mathbb{E} \left(\int_0^{t} \lvert \nabla \mu_{\delta,n}(s) \rvert^2 \,\d s \right)^\frac{r}{2} + N_0^\frac{r}{2} \mathbb{E} \left(\int_0^{t} \lvert \nabla_\Gamma \theta_{\delta,n}(s) \rvert^2 \,\d s\right)^\frac{r}{2}  
 \\
&\leq c_r \bigg([\mathcal{E}_{tot}(\phi_n(0),\varphi_n(0))]^\frac{r}{2} + \mathbb{E} \left\lvert \coma{\int_0^{t} \Vert F_{1,n}(\phi_{\delta,n}(s)) \Vert_{\mathscr{T}_2(U,L^2(\mathcal{O}))}^2\,\d s } \right\rvert^\frac{r}{2} \\
& + \mathbb{E} \left\lvert \int_0^{t} \Vert F_{2,n}(\varphi_{\delta,n}) \Vert_{\mathscr{T}_2(U_\Gamma,L^2(\Gamma))}^2 \, \d s \right\rvert^\frac{r}{2} + \mathbb{E} \left\lvert \int_0^{t} \Vert F_{1,n}(\phi_{\delta,n}) \Vert_{\mathscr{T}_2(U,L^2(\Gamma))}^2 \, \d s \right \rvert^\frac{r}{2} \\
& + \mathbb{E} \left\lvert \int_0^{t} \Vert \nabla F_{1,n}(\phi_{\delta,n}(s)) \Vert_{L^2(U,\mathbb{L}^2(\mathcal{O}))}^2  \, \d s \right \rvert^\frac{r}{2} + \mathbb{E} \left \lvert\int_0^{t} \Vert \nabla_\Gamma F_{2,n}(\varphi_{\delta,n}(s))\Vert_{\mathscr{T}_2(U_\Gamma,\mathbb{L}^2(\Gamma))}^2 \, \d s \right \rvert^\frac{r}{2} \\
& + \mathbb{E} \sup_{\tau \in [0,t]} \left\lvert \int_0^{\tau} (\mu_{\delta,n}(s),F_1(\phi_{\delta,n}(s))\, \d W(s)) \right \rvert^\frac{r}{2} + \mathbb{E} \sup_{\tau \in [0,t]} \left\lvert \int_0^{\tau} (\theta_{\delta,n}(s),(F_2(\varphi_{\delta,n}(s)))\, \d W_\Gamma(s))_\Gamma\right \rvert^\frac{r}{2} \\
& + \mathbb{E} \left\lvert \sum_{k=1}^\infty \int_{Q_t} F^{\bis}_\delta(\phi_{\delta,n}) \lvert F_{1,n}(\phi_{\delta,n})e_{1,k} \rvert^2 \, \d x \,\d s \right \rvert^\frac{r}{2} + \mathbb{E} \left\lvert \sum_{k=1}^\infty \int_{\Sigma_t} G^{\bis}_\delta(\varphi_{\delta,n}) \lvert F_{2,n}(\varphi_{\delta,n}) e_{2,k} \rvert^2\, \d S\,\d s\right \rvert^\frac{r}{2} \\
& + \mathbb{E} \left\lvert \sum_{k=1}^\infty  \int_{\Sigma_t} (F_{1,n}(\phi_{\delta,n}) e_{1,k}) (F_{2,n}(\varphi_{\delta,n}) e_{2,k})\, \d S \, \d s \right \rvert^\frac{r}{2} + \mathbb{E} \left\lvert \coma{\int_{Q_t} M_{\mathcal{O}}(\phi_{\delta,n}) \nabla \mu_{\delta,n} \cdot \nabla \phi_{\delta,n} \, \d x \,\d s } \right \rvert^\frac{r}{2} \\
& + \mathbb{E} \sup_{\tau \in [0,t]} \left\lvert \coma{\int_0^{\tau}(\phi_{\delta,n}(s), F_{1}(\phi_{\delta,n}(s))\,\d W(s))} \right \rvert^\frac{r}{2} \bigg),
\end{aligned}
\end{equation}
where the constant $c_r$ may depend on $\eps,\,K,\,\eps_\Gamma$, and $r$. \newline
Using \eqref{eq4.83}, \eqref{eq4.82a}, and \eqref{eqt4.87}, we infer that for every $n \in \mathbb{N}$,
\begin{align*}
 & c_r \left(\mathbb{E} \left\lvert \coma{\int_0^{t} \Vert F_{1,n}(\phi_{\delta,n}(s)) \Vert_{\mathscr{T}_2(U,L^2(\mathcal{O}))}^2\,\d s} \right \rvert^\frac{r}{2} + \mathbb{E} \left \lvert\int_0^{t} \Vert F_{2,n}(\varphi_{\delta,n}) \Vert_{\mathscr{T}_2(U_\Gamma,L^2(\Gamma))}^2 \, \d s \right \rvert^\frac{r}{2} \right. \\
 &\left. + \mathbb{E} \left \lvert \int_0^{t} \Vert F_{1,n}(\phi_{\delta,n}) \Vert_{\mathscr{T}_2(U,L^2(\Gamma))}^2 \, \d s \right \rvert^\frac{r}{2} \right) 
\leq C(\mathcal{O},\Gamma,C_1,C_2,r) t^\frac{r}{2}, \; \; t \in [0,T].
\end{align*}
\coma{
Proceeding as in \eqref{eq4.93b}, we obtain for every $n \in \mathbb{N}$,
   \begin{align*}
     c_r \mathbb{E} \left\lvert \sum_{k=1}^\infty \int_{\Sigma_t} (F_{1,n}(\phi_{\delta,n}) e_{1,k}) (F_{2,n}(\varphi_{\delta,n}) e_{2,k})\, \d S \, \d s \right \rvert^\frac{r}{2}
     \leq C(\mathcal{O},\Gamma,C_1,C_2,r)\,  t^\frac{r}{2}, \; \; t \in [0,T].
  \end{align*}
  }
Next, arguing as in \eqref{eq4.85} and \eqref{eq4.92a}, we infer that for every $n \in \mathbb{N}$,
\begin{align*}
&\mathbb{E} \left|\sum_{k=1}^\infty \int_{Q_t} F^{\bis}_\delta(\phi_{\delta,n}) \lvert F_{1,n}(\phi_{\delta,n})e_{1,k} \rvert^2 \d x \d s \right \rvert^\frac{r}{2} + \mathbb{E} \left \lvert \sum_{k=1}^\infty  \int_{\Sigma_t} G^{\bis}_\delta(\varphi_{\delta,n}) \lvert F_{2,n}(\varphi_{\delta,n})e_{2,k} \rvert^2 \d S \d s \right \rvert^\frac{r}{2} 
 \\
%&\quad \leq [ |\mathcal{O}|^\frac{r}{2} C_1^\frac{r}{2} (\delta^{-1} + \tilde{c}_F)^\frac{r}{2} + |\Gamma|^\frac{r}{2} C_2^\frac{r}{2} (\delta^{-1} + \tilde{c}_G)^\frac{r}{2}]\, t^\frac{r}{2}\\
&\leq C(\mathcal{O},\Gamma,C_1,C_2,\tilde{c}_F,\tilde{c}_G,\delta,r) \, t^\frac{r}{2}, \; \; t \in [0,T].
\end{align*}
Using \eqref{eq4.82} and \eqref{eq4.88}, and the H\"older inequality, we infer for every $n \in \mathbb{N}$,
\begin{align*}
& \mathbb{E} \left \lvert \int_0^{t} \Vert \nabla F_{1,n}(\phi_{\delta,n}) \Vert_{L^2(U,\mathbb{L}^2(\mathcal{O}))}^2\, \d s \right \rvert^\frac{r}{2} + \mathbb{E} \left \lvert\int_0^{t} \Vert \nabla_\Gamma F_{2,n}(\varphi_{\delta,n}) \Vert_{\mathscr{T}_2(U_\Gamma,\mathbb{L}^2(\Gamma))}^2 \, \d s \right\rvert^\frac{r}{2}  
  \\
&\leq \left[ C\mathbb{E} \left(\int_0^t(1 + \lvert \nabla \phi_{\delta,n} \rvert^2) \, \d s \right)^\frac{r}{2} + C \mathbb{E} \left(\int_0^t (1 +  \lvert \nabla_\Gamma \varphi_{\delta,n} \rvert_{\Gamma}^2) \, \d s \right)^\frac{r}{2} \right] \\
%&\leq C \mathbb{E} \left[\left(\int_0^t (1 + \lvert \nabla \phi_{\delta,n} \rvert^{r}) \,\d s\right)^\frac{2}{r} t^\frac{r - 2}{r} \right]^\frac{r}{2} + C \mathbb{E} \left[\left(\int_0^t ( 1+ \lvert \nabla_\Gamma \varphi_{\delta,n}\rvert_{\Gamma}^{r}) \,\d s\right)^\frac{2}{r} t^\frac{r - 2}{r} \right]^\frac{r}{2}  \\
&\leq  \left[C \mathbb{E} \int_0^t (1 + \lvert \nabla \phi_{\delta,n} \rvert^{r})\,\d s + C \mathbb{E} \int_0^t (1 + \lvert \nabla_\Gamma \varphi_{\delta,n} \rvert_{\Gamma}^{r}) \,\d s \right] t^\frac{r - 2}{2},
\end{align*}
where $C$ may depend on $C_1,\,C_2,\,\mathcal{O},\,\Gamma$, and $r$. \newline
From the assumption \ref{item:H4}, H\"older and Young inequalities, we obtain for all $n \in \mathbb{N}$,
\begin{align*}
& \mathbb{E} \left\lvert \red{\int_0^t (M_{\mathcal{O}}(\phi_{\delta,n}) \nabla \mu_{\delta,n}, \nabla \phi_{\delta,n}) \,\d s } \right \rvert^\frac{r}{2} 
%\leq C \mathbb{E} \left(\int_0^t \lvert \nabla\mu_{\delta,n} \rvert \lvert \nabla \phi_{\delta,n} \rvert \, \d s \right)^\frac{r}{2} \\
%&\leq (\bar{M}_0)^\frac{r}{2} \mathbb{E} \left[ \left(\int_0^t \lvert \nabla\mu_{\delta,n} \rvert^2 \, \d s \right)^\frac{r}{4} \left(\int_0^t \lvert \nabla \phi_{\delta,n} \rvert^2 \, \d s\right)^\frac{r}{4} \right] \\
%&\leq (\bar{M}_0)^\frac{r}{2} \left[ \mathbb{E} \left(\int_0^t \lvert \nabla\mu_{\delta,n}\rvert^2 \, \d s \right)^\frac{r}{2} \right]^\frac{1}{2} \left[ \mathbb{E} \left(\int_0^t \lvert \nabla \phi_{\delta,n} \rvert^2 \, \d s\right)^\frac{r}{2} \right]^\frac{1}{2} \\
\leq C \left[ \mathbb{E} \left(\int_0^t \lvert \nabla\mu_{\delta,n} \rvert^2 \, \d s \right)^\frac{r}{2} \right]^\frac{1}{2} \left[ \mathbb{E} \int_0^t \lvert \nabla \phi_{\delta,n} \rvert^{r}\,\d s \right]^\frac{1}{2} t^\frac{r - 2}{4}
\\
&\leq \frac{[M_0]^\frac{r}{2}}{4} \mathbb{E} \left(\int_0^t \lvert \nabla\mu_{\delta,n} \rvert^2 \, \d s \right)^\frac{r}{2} + C(\bar{M}_0,M_0,r) t^\frac{r - 2}{2} \, \mathbb{E} \int_0^t \lvert \nabla \phi_{\delta,n}(s) \rvert^{r}\,\d s.
\end{align*}
Using the Burkholder-Davis-Gundy inequality together with \eqref{eq4.83}, \eqref{eqt4.87}, and \eqref{Eqn-4.35}, we infer that for every $n \in \mathbb{N}$,
\begin{align*}
&c_r \mathbb{E} \sup_{\tau \in [0,t]} \left \lvert \int_0^{\tau} (\mu_{\delta,n},F_1(\phi_{\delta,n})\, \d W) \right \rvert^\frac{r}{2} + c_r \mathbb{E} \sup_{\tau \in [0,t]} \left \lvert \int_0^{\tau} (\theta_{\delta,n},F_2(\varphi_{\delta,n})\, \d W_\Gamma)_\Gamma \right\rvert^\frac{r}{2}
\\
& \leq C \mathbb{E} \left(\int_0^t \lvert \mu_{\delta,n} \rvert^2 \Vert F_1(\phi_{\delta,n}) \Vert_{\mathscr{T}_2(U,L^2(\mathcal{O}))}^2 \, \d s\right)^\frac{r}{4} + C \mathbb{E} \left(\int_0^t \lvert \theta_{\delta,n} \rvert_\Gamma^2 \Vert F_2(\varphi_{\delta,n}) \Vert_{\mathscr{T}_2(U_\Gamma,L^2(\Gamma))}^2\, \d s\right)^\frac{r}{4}
\\
&\leq C \mathbb{E} \left(\int_0^t \Vert (\mu_{\delta,n},\theta_{\delta,n}) \Vert_{\mathbb{H}}^2 \, \d s \right)^\frac{r}{4}
\leq C \mathbb{E} \left(\int_0^t \Vert (\mu_{\delta,n},\theta_{\delta,n}) \Vert_{\mathbb{V}^\prime}^2 \, \d s \right)^\frac{r}{4}
\\
&\quad + C \mathbb{E} \left(\int_0^t \Vert(\mu_{\delta,n},\theta_{\delta,n}) \Vert_{\mathbb{V}^\prime} (\lvert \nabla \mu_{\delta,n} \rvert + \lvert \nabla_\Gamma \theta_{\delta,n} \rvert_\Gamma) \, \d s \right)^\frac{r}{4}.
\end{align*}
Next, using the H\"older inequality, we see that for all $t \in [0,T]$,
\begin{equation}\label{Eqn-4.48}
\begin{aligned}
&\mathbb{E} \left(\int_0^t \Vert (\mu_{\delta,n},\theta_{\delta,n}) \Vert_{\mathbb{V}^\prime}^2 \, \d s \right)^\frac{r}{4}
\leq  \left[\mathbb{E} \left(\int_0^t \Vert (\mu_{\delta,n},\theta_{\delta,n}) \Vert_{\mathbb{V}^\prime}^2 \, \d s \right)^\frac{r}{2} \right]^{\frac12}
\\
&\leq \left[\mathbb{E} \left( \left(\int_0^t \Vert (\mu_{\delta,n},\theta_{\delta,n}) \Vert_{\mathbb{V}^\prime}^{r} \, \d s \right)^\frac{2}{r} t^\frac{r - 2}{r} \right)^\frac{r}{2} \right]^\frac{1}{2}  
= \left[ \mathbb{E} \int_0^t \Vert (\mu_{\delta,n},\theta_{\delta,n}) \Vert_{\mathbb{V}^\prime}^{r} \, \d s \right]^\frac{1}{2} t^\frac{r - 2}{4}.
\end{aligned}
\end{equation}
Now, by applying the H\"older inequalities and \eqref{Eqn-4.48}, we infer that for all $n \in \mathbb{N}$,
\begin{equation}
\begin{aligned}
&\mathbb{E} \left(\int_0^t \Vert(\mu_{\delta,n},\theta_{\delta,n}) \Vert_{\mathbb{V}^\prime} (\lvert \nabla \mu_{\delta,n} \rvert + \lvert \nabla_\Gamma \theta_{\delta,n} \rvert_\Gamma) \, \d s \right)^\frac{r}{4}
\leq C \mathbb{E} \bigg[\left(\int_0^t \Vert (\mu_{\delta,n},\theta_{\delta,n}) \Vert^2_{\mathbb{V}^\prime} \, \d s\right)^\frac{r}{8} \cdot
\\
& \left(\int_0^t \lvert \nabla \mu_{\delta,n} \rvert^2 \, \d s\right)^\frac{r}{8} \bigg] + C \mathbb{E} \bigg[\left(\int_0^t \Vert(\mu_{\delta,n},\theta_{\delta,n}) \Vert_{\mathbb{V}^\prime}^2 \, \d s\right)^\frac{r}{8} \left(\int_0^t \lvert \nabla_\Gamma \theta_{\delta,n} \rvert_\Gamma^2 \, \d s\right)^\frac{r}{8} \bigg]
\\
%&\leq C \bigg[ \mathbb{E} \left(\int_0^t \Vert (\mu_{\delta,n},\theta_{\delta,n}) \Vert^2_{\mathbb{V}^\prime} \, \d s\right)^\frac{r}{4} \bigg]^{\frac12} \bigg[  \mathbb{E} \left(\int_0^t \lvert \nabla \mu_{\delta,n} \rvert^2 \, \d s\right)^\frac{r}{4} \bigg]^{\frac12} 
%\\
%&\quad + C \bigg[ \mathbb{E} \left(\int_0^t \Vert(\mu_{\delta,n},\theta_{\delta,n}) \Vert_{\mathbb{V}^\prime}^2 \, \d s\right)^\frac{r}{4} \bigg]^{\frac12} \bigg[ \mathbb{E} \left(\int_0^t \lvert \nabla_\Gamma \theta_{\delta,n} \rvert_\Gamma^2 \, \d s\right)^\frac{r}{4} \bigg]^{\frac12}
%\\
%&\leq C \bigg[ \mathbb{E} \left(\int_0^t \Vert (\mu_{\delta,n},\theta_{\delta,n}) \Vert^2_{\mathbb{V}^\prime} \, \d s\right)^\frac{r}{4} \bigg]^{\frac12} \bigg[  \mathbb{E} \left(\int_0^t \lvert \nabla \mu_{\delta,n} \rvert^2 \, \d s\right)^\frac{r}{2} \bigg]^{\frac14} 
%\\
%&\quad + C \bigg[ \mathbb{E} \left(\int_0^t \Vert(\mu_{\delta,n},\theta_{\delta,n}) \Vert_{\mathbb{V}^\prime}^2 \, \d s\right)^\frac{r}{4} \bigg]^{\frac12} \bigg[ \mathbb{E} \left(\int_0^t \lvert \nabla_\Gamma \theta_{\delta,n} \rvert_\Gamma^2 \, \d s\right)^\frac{r}{2} \bigg]^{\frac14}
%\\
&\leq C \left[ \mathbb{E} \int_0^t \Vert (\mu_{\delta,n},\theta_{\delta,n}) \Vert_{\mathbb{V}^\prime}^{r} \, \d s \right]^\frac{1}{4} \bigg[ \mathbb{E} \left(\int_0^t \lvert \nabla \mu_{\delta,n} \rvert^2 \, \d s\right)^\frac{r}{2} \bigg]^{\frac14} t^\frac{r - 2}{8}
\\
&\quad + C \left[ \mathbb{E} \int_0^t \Vert (\mu_{\delta,n},\theta_{\delta,n}) \Vert_{\mathbb{V}^\prime}^{r} \, \d s \right]^\frac{1}{4}  \bigg[ \mathbb{E} \left(\int_0^t \lvert \nabla_\Gamma \theta_{\delta,n} \rvert_\Gamma^2 \, \d s\right)^\frac{r}{2} \bigg]^{\frac14} t^\frac{r - 2}{8},
\end{aligned}
\end{equation}
which along with the Young inequalities and \ref{item:H4} implies that for all $n \in \mathbb{N}$ and $t \in [0,T]$,
\begin{align*}
&c_r \mathbb{E} \sup_{\tau \in [0,t]} \left \lvert \int_0^{\tau} (\mu_{\delta,n},F_1(\phi_{\delta,n})\, \d W) \right \rvert^\frac{r}{2} + c_r \mathbb{E} \sup_{\tau \in [0,t]} \left \lvert \int_0^{\tau} (\theta_{\delta,n},F_2(\varphi_{\delta,n})\, \d W_\Gamma)_\Gamma \right\rvert^\frac{r}{2}
\\
&\leq C  \left[ \mathbb{E} \int_0^t \Vert (\mu_{\delta,n},\theta_{\delta,n}) \Vert_{\mathbb{V}^\prime}^{r} \, \d s \right]^\frac{1}{2} t^\frac{r - 2}{4} + C \left[ \mathbb{E} \int_0^t \Vert (\mu_{\delta,n},\theta_{\delta,n}) \Vert_{\mathbb{V}^\prime}^{r} \, \d s \right]^\frac{1}{4} \bigg[ \mathbb{E} \left(\int_0^t \lvert \nabla \mu_{\delta,n} \rvert^2 \, \d s\right)^\frac{r}{2} \bigg]^{\frac14} t^\frac{r - 2}{8}
\\
&\quad + C \left[ \mathbb{E} \int_0^t \Vert (\mu_{\delta,n},\theta_{\delta,n}) \Vert_{\mathbb{V}^\prime}^{r} \, \d s \right]^\frac{1}{4}  \bigg[ \mathbb{E} \left(\int_0^t \lvert \nabla_\Gamma \theta_{\delta,n} \rvert_\Gamma^2 \, \d s\right)^\frac{r}{2} \bigg]^{\frac14} t^\frac{r - 2}{8}
\\
&\leq \frac{[M_0]^\frac{r}{2}}{4} \mathbb{E} \left(\int_0^t \lvert \nabla \mu_{\delta,n} \rvert^2 \, \d s\right)^\frac{r}{2} + \frac{[N_0]^\frac{r}{2}}{2} \mathbb{E} \left(\int_0^t \lvert \nabla_\Gamma \theta_{\delta,n} \rvert^2 \, \d s \right)^\frac{r}{2} \\
&\quad + t^\frac{r - 2}{2} \mathbb{E} \int_0^t \Vert (\mu_{\delta,n},\theta_{\delta,n}) \Vert_{\mathbb{V}^\prime}^{r} \, \d s + C,
\end{align*}
where $C$ may depend on $r,\,C_1,\, C_2,\,\tilde{c}_G,\,\tilde{c}_F,\,\mathcal{O},\, \Gamma,\, \eps,\,\eps_\Gamma,\, M_0,\, N_0$, and $\delta$. \newline
On the other hand, thanks to \eqref{Eqn-4.34}, we deduce that for all $n \in \mathbb{N}$ and $t \in [0,T]$,
\begin{equation}
\begin{aligned}
&c_r \mathbb{E} \sup_{\tau \in [0,t]} \left \lvert \int_0^{\tau} (\mu_{\delta,n},F_1(\phi_{\delta,n})\, \d W) \right \rvert^\frac{r}{2} + c_r \mathbb{E} \sup_{\tau \in [0,t]} \left \lvert \int_0^{\tau} (\theta_{\delta,n},F_2(\varphi_{\delta,n})\, \d W_\Gamma)_\Gamma \right\rvert^\frac{r}{2}
\\
&\leq \frac{[M_0]^\frac{r}{2}}{4} \mathbb{E} \left(\int_0^t \lvert \nabla \mu_{\delta,n} \rvert^2 \, \d s\right)^\frac{r}{2} + \frac{[N_0]^\frac{r}{2}}{2} \mathbb{E} \left(\int_0^t \lvert \nabla_\Gamma \theta_{\delta,n} \rvert^2 \, \d s \right)^\frac{r}{2} \\
&\quad + C \, t^\frac{r - 2}{2} \mathbb{E} \int_0^t [\mathcal{E}_{tot}(\phi_{\delta,n}(s), \varphi_{\delta,n}(s))]^\frac{r}{2} \, \d s + C.
\end{aligned}
\end{equation}
For the last stochastic integral in \eqref{eq4.97}, we infer that for every $n \in \mathbb{N}$,
\begin{align*}
&\mathbb{E} \sup_{\tau \in [0,t]} \left \lvert \red{\int_0^{\tau}(\phi_{\delta,n}, F_{1}(\phi_{\delta,n})\,\d W)}\right\rvert^\frac{r}{2}
\leq C \mathbb{E} \left(\int_0^t \lvert \phi_{\delta,n} \rvert^2 \Vert F_1(\phi_{\delta,n})\Vert_{\mathscr{T}_2(U,L^2(\mathcal{O}))}^2 \, \d s \right)^\frac{r}{4} \\
&\leq C \mathbb{E} \left(\int_0^t \lvert \phi_{\delta,n} \rvert^2 \, \d s \right)^\frac{r}{4} 
\leq C \left[ \mathbb{E} \int_0^t \lvert \phi_{\delta,n} \rvert^{r} \, \d s \right]^\frac{1}{2} t^\frac{r - 2}{4} 
\leq C(r,\mathcal{O},C_1) + t^\frac{r - 2}{2} \, \mathbb{E} \int_0^t \lvert \phi_{\delta,n} \rvert^{r} \, \d s.
\end{align*}
\dela{
\coma{
\begin{equation}
\begin{aligned}
c_r \mathbb{E} \sup_{\tau \in [0,t]} \left \lvert \int_0^{\tau} (\theta_{\delta,n},F_2(\varphi_{\delta,n})\, \d W_\Gamma) \right\rvert^\frac{r}{2}  
&\leq \frac{[N_0]^\frac{r}{2}}{2} \mathbb{E} \left(\int_0^t \lvert \nabla_\Gamma \theta_{\delta,n} \rvert^2 \, \d s \right)^\frac{r}{2} + C \\
& + t^\frac{r - 2}{2} \mathbb{E} \left[ \int_0^t \left(\frac13 \Vert \phi_{\delta,n} \Vert_{V_1}^{r} +  \frac12 |\varphi_{\delta,n} - \phi_{\delta,n} \rvert_{\Gamma}^{r} \right)\d s \right]. 
\end{aligned}
\end{equation}
Next, using the Burkholder-Davis-Gundy inequality and \eqref{eq4.83}, we infer that for every $n \in \mathbb{N}$,
\begin{equation*}
\begin{aligned}
c_r \mathbb{E} \sup_{\tau \in [0,t]} \left \lvert \int_0^{\tau} (\mu_{\delta,n},F_1(\phi_{\delta,n})\, \d W) \right \rvert^\frac{r}{2} 
& \leq C \mathbb{E} \left(\int_0^t \lvert \mu_{\delta,n} \rvert^2 \Vert F_1(\phi_{\delta,n}) \Vert_{\mathscr{T}_2(U,L^2(\mathcal{O}))}^2 \, \d s \right)^\frac{r}{4} \\
&\leq C \mathbb{E} \left(\int_0^t |\mu_{\delta,n}(s)|^2 \, \d s \right)^\frac{r}{4}.
\end{aligned}
\end{equation*}
 Besides, we recall that, cf. \eqref{eq4.87a},
\begin{equation*}
    \Vert \mu_{\delta,n} \Vert_{L^2(\mathcal{O})}^2
     \leq C(\mathcal{O},\eps,K,\Gamma,\tilde{c}_F,\delta) [\lvert \nabla \mu_{\delta,n} \rvert^2 + \lvert \varphi_{\delta,n} - \phi_{\delta,n} \rvert_{\Gamma}^2 + \lvert \phi_{\delta,n} \rvert^2].
   \end{equation*}
%Here $c_{1,r}>0$ depends on $\mathcal{O},\,\eps,\,K,\,\Gamma,\,\tilde{c}_F$, and $\delta$. 
Thus, thanks to the H\"older and Young inequalities, we infer that there exists $C= C(C_1,\mathcal{O},\Gamma,\eps,\tilde{c}_F,\delta,M_0,r)$ such that
for every $n \in \mathbb{N}$,
%%%%%%%%%%%%%%%%%%%%%%%%%%%%%%%%%%%%%%%%%%%%%%%%%%%%%%%%%%%%%%%%%%%%%%%%%%%%%%%%%%%%%%%%%%%%%%%%%%%%%%%%%%%%%%%%%%%%%%%%%%%%%%%%%%%%%%%%%%%%%%%%%%%%%%%%%%%%%%%%%%%%%%%%%%%%%%%%%%%%%%%%%%%%%%%%%%%%%%%%
\dela{
\noindent
Notice that
   \begin{equation*}
     \mathbb{E} \left( \int_0^t |\nabla \mu_{\delta,n}(s)|^2 \, \d s \right)^\frac{r}{4} 
     \leq \frac{1}{M_0^\frac{r}{4}} \left[M_0^\frac{r}{2} \mathbb{E} \left( \int_0^t |\nabla \mu_{\delta,n}(s)|^2 \, \d s \right)^\frac{r}{2} \right]^\frac{1}{2},
   \end{equation*}
\begin{align*}
 \mathbb{E} \left( \int_0^t |\phi_{\delta,n}(s)|^2 \, \d s \right)^\frac{r}{4} 
 &\leq \left[\mathbb{E} \left( \int_0^t |\phi_{\delta,n}(s)|^2 \, \d s \right)^\frac{r}{2} \right]^\frac{1}{2} \\
 &\leq \left[ \mathbb{E} \left( \left(\int_0^t |\phi_{\delta,n}(s)|^{r} \, \d s \right)^\frac{2}{r} t^\frac{r - 2}{r} \right)^\frac{r}{2} \right]^\frac{1}{2} \\
 &= \left[ \mathbb{E} \int_0^t |\phi_{\delta,n}(s)|^{r} \, \d s \right]^\frac{1}{2} t^\frac{r - 2}{4},
\end{align*}
and
\begin{equation*}
\mathbb{E} \left(\int_0^t |\varphi_{\delta,n}(s) - \phi_{\delta,n}(s)|_\Gamma^2 \, \d s \right)^\frac{r}{4} 
\leq \left[ \mathbb{E} \int_0^t |\varphi_{\delta,n}(s) - \phi_{\delta,n}(s)|_{\Gamma}^{r} \, \d s \right]^\frac{1}{2} t^\frac{r - 2}{4},
\end{equation*}
where we used H\"older's inequality. Hence, we have (after updating the constant $c_{1,r}$)
\begin{align*}
&\mathbb{E} \left( \int_0^t \|\mu_{\delta,n}(s)\|_{L^2(\mathcal{O})}^2\, \d s \right)^\frac{r}{4} \\
&\leq c_{1,r} \left[\mathbb{E} \left( \int_0^t |\nabla \mu_{\delta,n}(s)|^2 \, \d s \right)^\frac{r}{4} + \mathbb{E} \left( \int_0^t |\phi_{\delta,n}(s)|^2 \, \d s \right)^\frac{r}{4} + \mathbb{E} \left( \int_0^t |\varphi_{\delta,n}(s) - \phi_{\delta,n}(s)|_{\Gamma}^2 \, \d s \right)^\frac{r}{4} \right] \\
&\leq \frac{ c_{1,r}}{M_0^\frac{r}{4}} \left[M_0^\frac{r}{2} \mathbb{E} \left( \int_0^t |\nabla \mu_{\delta,n}(s)|^2 \, \d s \right)^\frac{r}{2} \right]^\frac{1}{2} + c_{1,r} \left[ \mathbb{E} \int_0^t |\phi_{\delta,n}(s)|^{r} \, \d s \right]^\frac{1}{2} t^\frac{r - 2}{4} \\
&\quad + c_{1,r} \left[ \mathbb{E} \int_0^t |\varphi_{\delta,n}(s) - \phi_{\delta,n}(s)|_{\Gamma}^{r} \, \d s \right]^\frac{1}{2} t^\frac{r - 2}{4},
\end{align*}
which, in turn, entails that
\begin{align*}
 & c_r \mathbb{E} \sup_{\tau \in [0,t]} \left\lvert \int_0^{\tau} (\mu_{\delta,n},F_1(\phi_{\delta,n})\, \d W) \right\rvert^\frac{r}{2} 
 \leq C \left[M_0^\frac{r}{2} \mathbb{E} \left( \int_0^t \lvert \nabla \mu_{\delta,n} \rvert^2 \, \d s \right)^\frac{r}{2} \right]^\frac{1}{2}  \\
&\quad + C \left[ \mathbb{E} \int_0^t \lvert \phi_{\delta,n} \rvert^{r} \, \d s \right]^\frac{1}{2} t^\frac{r - 2}{4} + C  \left[ \mathbb{E} \int_0^t \lvert \varphi_{\delta,n} - \phi_{\delta,n} \rvert_{\Gamma}^{r} \, \d s \right]^\frac{1}{2} t^\frac{r - 2}{4},
\end{align*}
where the constant $C= C(C_1,\mathcal{O},\Gamma,\eps,\tilde{c}_F,r,\delta,M_0)$ is independent of $n$. Finally, using Young's inequalities, we find
\begin{equation}%\label{eq4.101}
\begin{aligned}
& c_r \mathbb{E} \sup_{\tau \in [0,t]} \left|\int_0^{\tau} (\mu_{\delta,n}(s),F_1(\phi_{\delta,n}(s))\, \d W(s)) \right|^\frac{r}{2} 
\\
&\leq \frac{M_0^\frac{r}{2}}{4} \mathbb{E} \left(\int_0^t |\nabla\mu_{\delta,n}(s)|^2 \, \d s \right)^\frac{r}{2} + C_r + t^\frac{r - 2}{2} \mathbb{E}  \left[ \int_0^t (\frac{1}{3} |\phi_{\delta,n}(s)|^{r} + \frac{1}{2} |\varphi_{\delta,n}(s) - \phi_{\delta,n}(s)|_{\Gamma}^{r}) \, \d s \right],  
\end{aligned}
\end{equation}
}
%%%%%%%%%%%%%%%%%%%%%%%%%%%%%%%%%%%%%%%%%%%%%%%%%%%%%%%%%%%%%%%%%%%%%%%%%%%%%%%%%%%%%%%%%%%%%%%%%%%%%%%%%%%%%%%%%%%%%%%%%%%%%%%%%%%%%%%%%%%%%%%%%%%%%%%%%%%%%%%%%%%%%%%%%%%%%%%%%%%%%%%%%%%%%%%%%%%%%%%%
\begin{equation}\label{eq4.101}
\begin{aligned}
& c_r \mathbb{E} \sup_{\tau \in [0,t]} \left\lvert \int_0^{\tau} (\mu_{\delta,n},F_1(\phi_{\delta,n})\, \d W) \right\rvert^\frac{r}{2} 
\leq C \left[M_0^\frac{r}{2} \mathbb{E} \left( \int_0^t \lvert \nabla \mu_{\delta,n} \rvert^2 \, \d s \right)^\frac{r}{2} \right]^\frac{1}{2}  \\
&\quad + C \left[ \mathbb{E} \int_0^t \lvert \phi_{\delta,n} \rvert^{r} \, \d s \right]^\frac{1}{2} t^\frac{r - 2}{4} + C  \left[ \mathbb{E} \int_0^t \lvert \varphi_{\delta,n} - \phi_{\delta,n} \rvert_{\Gamma}^{r} \, \d s \right]^\frac{1}{2} t^\frac{r - 2}{4}
\\
&\leq \frac{M_0^\frac{r}{2}}{4} \mathbb{E} \left(\int_0^t |\nabla\mu_{\delta,n}|^2 \, \d s \right)^\frac{r}{2} + C + t^\frac{r - 2}{2} \mathbb{E}  \left[\int_0^t (\frac{1}{3} \lvert \phi_{\delta,n} \rvert^{r} + \frac{1}{2} \lvert \varphi_{\delta,n} - \phi_{\delta,n} \rvert_{\Gamma}^{r}) \, \d s \right]. 
\end{aligned}
\end{equation}
Analogously, we prove that there exists $C>0$ depending on $\Gamma,\,K,\,\eps,\,\eps_\Gamma,\,\delta,\,\tilde{c}_G,\,\mathcal{O},\,N_0$, and $r$ such that for every $n \in \mathbb{N}$,
%%%%%%%%%%%%%%%%%%%%%%%%%%%%%%%%%%%%%%%%%%%%%%%%%%%%%%%%%%%%%%%%%%%%%%%%%%%%%%%%%%%%%%%%%%%%%%%%%%%%%%%%%%%%%%%%%%%%%%%%%%%%%%%%%%%%%%%%%%%%%%%%%%%%%%%%%%%%%%%%%%%%%%%%%%%%%%%%%%%%%%%%%%%%%%%%%%%%%%%%
\dela{
\noindent
Let us move to the second stochastically forced term in \eqref{eq4.97}. On the one hand, using the the Burkholder-Davis-Gundy inequality in conjunction with \eqref{eqt4.87}, we have
\begin{align*}
 & c_r \mathbb{E} \sup_{\tau \in [0,t]} \left|\int_0^{\tau} (\theta_{\delta,n}(s),F_2(\varphi_{\delta,n}(s))\, \d W_\Gamma(s))_\Gamma \right|^\frac{r}{2} \\
 &\leq c_r \mathbb{E} \left(\int_0^t |\theta_{\delta,n}(s)|_{\Gamma}^2 \|F_2(\varphi_{\delta,n}(s))\|_{\mathscr{T}_2(U_\Gamma,L^2(\Gamma))}^2 \, \d s \right)^\frac{r}{4} \\
 &\leq c_r (|\Gamma| C_2)^\frac{r}{4} \mathbb{E} \left(\int_0^t |\theta_{\delta,n}(s)|_{\Gamma}^2 \, \d s \right)^\frac{r}{4}.
\end{align*}
On the other hand, we note that (see \eqref{eqt4.92})
   \begin{equation*}
     |\theta_{\delta,n}(s)|_\Gamma^2
     \leq c_{2,r} [|\nabla_\Gamma \theta_{\delta,n}(s)|_{\Gamma}^2 + |\varphi_{\delta,n}(s) - \phi_{\delta,n}(s)|_{\Gamma}^2  + \|\phi_{\delta,n}(s)\|_{V_1}^2],
  \end{equation*}
where the constant $c_{2,r}$ may depend on $\Gamma,\,K,\,\eps,\,\eps_\Gamma,\,\delta,\,\tilde{c}_G,\,\mathcal{O}$ and $r$. Then, arguing as in \eqref{eq4.101}, we get 
}
%%%%%%%%%%%%%%%%%%%%%%%%%%%%%%%%%%%%%%%%%%%%%%%%%%%%%%%%%%%%%%%%%%%%%%%%%%%%%%%%%%%%%%%%%%%%%%%%%%%%%%%%%%%%%%%%%%%%%%%%%%%%%%%%%%%%%%%%%%%%%%%%%%%%%%%%%%%%%%%%%%%%%%%%%%%%%%%%%%%%%%%%%%%%%%%%%%%%%%%%
\begin{equation}
\begin{aligned}
c_r \mathbb{E} \sup_{\tau \in [0,t]} \left \lvert \int_0^{\tau} (\theta_{\delta,n},F_2(\varphi_{\delta,n})\, \d W_\Gamma) \right\rvert^\frac{r}{2}  
&\leq \frac{N_0^\frac{r}{2}}{2} \mathbb{E} \left(\int_0^t \lvert \nabla_\Gamma \theta_{\delta,n} \rvert^2 \, \d s \right)^\frac{r}{2} + C \\
& + t^\frac{r - 2}{2} \mathbb{E} \left[ \int_0^t \left(\frac13 \Vert \phi_{\delta,n} \Vert_{V_1}^{r} +  \frac12 |\varphi_{\delta,n} - \phi_{\delta,n} \rvert_{\Gamma}^{r} \right)\d s \right]. 
\end{aligned}
\end{equation}
For the last stochastic integral in \eqref{eq4.97}, we see that
\begin{align*}
  &\mathbb{E} \sup_{\tau \in [0,t]} \left \lvert \red{\int_0^{\tau}(\phi_{\delta,n}, F_{1}(\phi_{\delta,n})\,\d W)}\right\rvert^\frac{r}{2}
  \leq C \mathbb{E} \left(\int_0^t \lvert \phi_{\delta,n} \rvert^2 \Vert F_1(\phi_{\delta,n})\Vert_{\mathscr{T}_2(U,L^2(\mathcal{O}))}^2 \, \d s \right)^\frac{r}{4} \\
  &\leq C \mathbb{E} \left(\int_0^t \lvert \phi_{\delta,n} \rvert^2 \, \d s \right)^\frac{r}{4} 
  \leq C \left[ \mathbb{E} \int_0^t \lvert \phi_{\delta,n} \rvert^{r} \, \d s \right]^\frac{1}{2} t^\frac{r - 2}{4} 
  \leq C(r,\mathcal{O},C_1) + t^\frac{r - 2}{2} \, \mathbb{E} \int_0^t \frac{1}{3} |\phi_{\delta,n}|^{r} \, \d s.
\end{align*}
}
}
Collecting all the previous estimates, inserting all of them into the RHS of \eqref{eq4.97}, we infer after straightforward transformation, that there exists $C$ depending on $N_0 ,M_0,\,\bar{M}_0,\eps,\,\eps_\Gamma,\,\mathcal{O},\,\Gamma,\,C_1$, $\nu_0,\lambda_0,C_2,\,\tilde{c}_F,\,\tilde{c}_G,\,K,\,\delta$, and $r$ such that for any $n \in \mathbb{N}$ and any $t \in [0,T]$,
%%%%%%%%%%%%%%%%%%%%%%%%%%%%%%%%%%%%%%%%%%%%%%%%%%%%%%%%%%%%%%%%%%%%%%%%%%%%%%%%%%%%%%%%%%%%%%%%%%%%%%%%%%%%%%%%%%%%%%%%%%%%%%%%%%%%%%%
\dela{
\begin{equation}%\label{eq4.104}
\begin{aligned}
& \mathbb{E} \sup_{s \in[0,t]} [\mathcal{E}_{tot}(\phi_{\delta,n}(s), \varphi_{\delta,n}(s))]^\frac{r}{2} + \nu_0^\frac{r}{2} \mathbb{E} \left \lvert \int_{Q_t} 2 \lvert D\bu_{\delta,n} \rvert^2 \,\d x \,\d s \right \rvert^\frac{r}{2} + \mathbb{E} \left(\int_{Q_t} \lambda(\phi_{\delta,n}) \lvert \bu_{\delta,n} \rvert^2 \,\d x \,\d s \right)^\frac{r}{2} \\
&\quad  +  \mathbb{E} \left(\int_0^{t} \lvert \bu_{\delta,n} \rvert_{\Gamma}^2 \,\d s\right)^\frac{r}{2} + \frac{M_0^\frac{r}{2}}{2} \mathbb{E} \left(\int_0^{t} \lvert \nabla \mu_{\delta,n} \rvert^2 \,\d s \right)^\frac{r}{2} + \frac{N_0^\frac{r}{2}}{2} \mathbb{E} \left(\int_0^{t} \lvert \nabla_\Gamma \theta_{\delta,n} \rvert^2 \,\d s\right)^\frac{r}{2}  
 \\
&\leq c_{3,r} (1 + \|(\phi_0,\varphi_0)\|_{\mathbb{V}}^r + t^{r/2}) +  c_{4,r} \, t^\frac{r - 2}{2} \, \mathbb{E} \int_0^t \sup_{0\leq s \leq \tau} [\mathcal{E}_{tot}(\phi_{\delta,n}(s), \varphi_{\delta,n}(s))]^\frac{r}{2} \, \d \tau,
\end{aligned}
\end{equation}
}
%%%%%%%%%%%%%%%%%%%%%%%%%%%%%%%%%%%%%%%%%%%%%%%%%%%%%%%%%%%%%%%%%%%%%%%%%%%%%%%%%%%%%%%%%%%%%%%%%%%%%%%%%%%%%%%%%%%%%%%%%%%%%%%%%%%%%%%
\begin{equation}\label{eq4.104}
\begin{aligned}
& \mathbb{E} \sup_{s \in[0,t]} [\mathcal{E}_{tot}(\phi_{\delta,n}(s), \varphi_{\delta,n}(s))]^\frac{r}{2} +  \mathbb{E} \left \lvert \int_{Q_t} \lvert D\bu_{\delta,n} \rvert^2 \,\d x \,\d s \right \rvert^\frac{r}{2} + \mathbb{E} \left(\int_{Q_t} \lambda(\phi_{\delta,n}) \lvert \bu_{\delta,n} \rvert^2 \,\d x \,\d s \right)^\frac{r}{2} \\
&\quad  + \mathbb{E} \left(\int_0^{t} \lvert \bu_{\delta,n} \rvert_{\Gamma}^2 \,\d s\right)^\frac{r}{2} + \mathbb{E} \left(\int_0^{t} \lvert \nabla \mu_{\delta,n} \rvert^2 \,\d s \right)^\frac{r}{2} + \mathbb{E} \left(\int_0^{t} \lvert \nabla_\Gamma \theta_{\delta,n} \rvert^2 \,\d s\right)^\frac{r}{2}  
 \\
&\leq C (1 + \|(\phi_0,\varphi_0)\|_{\mathbb{V}}^r + t^{r/2}) +  C \, t^\frac{r - 2}{2} \, \mathbb{E} \int_0^t \sup_{0\leq s \leq \tau} [\mathcal{E}_{tot}(\phi_{\delta,n}(s), \varphi_{\delta,n}(s))]^\frac{r}{2} \, \d \tau.
\end{aligned}
\end{equation}
%%%%%%%%%%%%%%%%%%%%%%%%%%%%%%%%%%%%%%%%%%%%%%%%%%%%%%%%%%%%%%%%%%%%%%%%%%%%%%%%%%%%%%%%%%%%%%%%%%%%%%%%%%%%%%%%%%%%%%%%%%%%%%%%%%%%%%%
\dela{
so that the Gronwall lemma yields
\begin{equation}\label{eq4.105}
\mathbb{E} \int_0^t \sup_{0\leq s \leq \tau} [\mathcal{E}_{tot}(\phi_{\delta,n}(s), \varphi_{\delta,n}(s))]^\frac{r}{2} \, \d \tau
\leq c_{3,r} \int_0^t (1 + \|(\phi_0,\varphi_0)\|_{\mathbb{V}}^r + \tau^{r/2}) e^{c_{4,r} \int_\tau^t \, s^\frac{r - 2}{2}\, \d s} \d \tau
\end{equation}
for all $t\in[0,T]$, where $c_{3,r}>0$ may depend on various parameters, including $\eps,\,\eps_\Gamma,\,\mathcal{O},\,\Gamma,\,C_1,\,C_2,\,\tilde{c}_F,\,\tilde{c}_G,\,K,\,\delta,r$, and $c_{4,r}>0$ may depend on $\eps,\,\eps_\Gamma,\,K,\,C_1,\,C_2,\,M_0,\,\bar{M}_0$ and $r$. 
The estimate \eqref{eq4.95} then follows from \eqref{eq4.104} and \eqref{eq4.105}.
}
%%%%%%%%%%%%%%%%%%%%%%%%%%%%%%%%%%%%%%%%%%%%%%%%%%%%%%%%%%%%%%%%%%%%%%%%%%%%%%%%%%%%%%%%%%%%%%%%%%%%%%%%%%%%%%%%%%%%%%%%%%%%%%%%%%%%%%%
Hence, by applying the Gronwall, we deduce \eqref{eq4.95}. Subsequently, considering \eqref{eq4.99} and \eqref{eq4.95}, we infer \eqref{eq4.101a}.
\end{proof}
}
% end delete
%%%%%%%%%%%%%%%%%%%%%%%%%%%%%%%%%%%%%%%%%%%%%%%%%%%%%%%%%%%%%%%%%%%%%%%%%%%%%%%%%%%%%%%%%%%%%%%%%%%%%%%%%%%%%%%%%%%%%%%%%%%%%%%%%%%%%%%%%%%%%%%%%%%%%%%%%%%%%%%%%
\begin{lemma}\label{Lema-Aldous-condition}
Let $\delta \in (0,1)$. For every $\epsilon>0$ and every $\eta>0$, there exists $\kappa>0$ such that for every sequence $(\tau_n)_{n \in \mathbb{N}}$ of $\mathbb{F}$-stopping times with $\tau_n \leq T$,
   \begin{equation}\label{Eqn-Aldous-condition-with-delta}
     \sup_{n \in \mathbb{N}} \sup_{0\leq \theta \leq \kappa} \mathbb{P} \left(\{\Vert \bX_{\delta,n}(\tau_n + \theta) - \bX_{\delta,n}(\tau_n) \Vert_{\mathbb{V}^\prime} \geq \eta \} \right)\leq \epsilon.
   \end{equation}
\end{lemma}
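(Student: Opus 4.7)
The plan is the standard martingale route to the Aldous condition: write the integral form of the SDE \eqref{compact-stochastic-problem}, bound the $L^1(\Omega;\mathbb{V}^\prime)$-norm of the deterministic and stochastic parts of the increment by a positive power of $\theta$, and then apply Chebyshev's inequality. Every uniform-in-$n$ bound that I will need has already been established in Propositions \ref{Proposition_first_priori_estimmate} and \ref{Proposition_second_priori_estimmate}.

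First, for any $\mathbb{F}$-stopping time $\tau_n \leq T$ and $\theta \geq 0$, I would write
\[
\bX_{\delta,n}(\tau_n+\theta) - \bX_{\delta,n}(\tau_n) = I_n(\theta) + J_n(\theta),
\]
where
\[
I_n(\theta) \coloneq \int_{\tau_n}^{\tau_n+\theta} \mathbf{b}_n(\bX_{\delta,n}(s))\,\mathrm{d}s,
\qquad
J_n(\theta) \coloneq \int_{\tau_n}^{\tau_n+\theta} \boldsymbol{\sigma}_n(\bX_{\delta,n}(s))\,\mathrm{d}\mathcal{W}(s),
\]
and treat the two summands separately. For the drift, Minkowski's and the Cauchy-Schwarz inequalities give
\[
\Vert I_n(\theta) \Vert_{\mathbb{V}^\prime} \leq \theta^{1/2}\left(\int_0^T \Vert \mathbf{b}_n(\bX_{\delta,n}(s)) \Vert_{\mathbb{V}^\prime}^2\,\mathrm{d}s\right)^{1/2}.
\]
Using \eqref{B1-Property}, \eqref{Property-tilde B}, the global Lipschitz property of $A_{\phi}$ and $\mathcal{A}_{\varphi}$ (both vanishing at zero), and the fact that $\mathcal{S}_n$, $\mathcal{S}_{n,\Gamma}$ have operator norm $\leq 1$ on the relevant duals, the integrand is pointwise dominated by
\[
C\left(\Vert \bu_{\delta,n} \Vert_V^2\bigl(\Vert \phi_{\delta,n} \Vert_{V_1}^2 + \Vert \varphi_{\delta,n} \Vert_{V_\Gamma}^2\bigr) + \Vert \mu_{\delta,n} \Vert_{V_1}^2 + \Vert \theta_{\delta,n} \Vert_{V_\Gamma}^2\right).
\]
To take expectations, I would combine the $L_t^\infty\mathbb{V}$-bound on $(\phi_{\delta,n},\varphi_{\delta,n})$ (via $\mathcal{E}_{tot}$) with the $L_t^2 V$-bound on $\bu_{\delta,n}$, both in $L^2(\Omega)$ by Proposition \ref{Proposition_second_priori_estimmate} with $r=4$, and then pair them via Cauchy-Schwarz in $\Omega$; the remaining term $\Vert (\mu_{\delta,n},\theta_{\delta,n})\Vert_{\mathbb{V}}^2$ is controlled via \eqref{Eqn-L2-norm-mu-and-theta-delta-n} by $\Vert (\nabla\mu_{\delta,n},\nabla_\Gamma\theta_{\delta,n})\Vert_{\mathbb{H}}^2$, $\lvert \varphi_{\delta,n}-\phi_{\delta,n}\rvert_\Gamma^2$, and the $L^1$-norms of $F_\delta(\phi_{\delta,n})$, $G_\delta(\varphi_{\delta,n})$, each of which is uniformly bounded in $n$ by Propositions \ref{Proposition_first_priori_estimmate}-\ref{Proposition_second_priori_estimmate}. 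Altogether this will give $\mathbb{E}\Vert I_n(\theta)\Vert_{\mathbb{V}^\prime} \leq C_\delta\,\theta^{1/2}$ uniformly in $n$ and $\tau_n$.

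For the stochastic piece, the continuous embedding $\mathbb{H}\hookrightarrow \mathbb{V}^\prime$ together with the optional stopping theorem applied to the $\mathbb{H}$-valued martingale $s\mapsto \int_0^s \boldsymbol{\sigma}_n(\bX_{\delta,n})\,\mathrm{d}\mathcal{W}$ and the It\^o isometry \eqref{eq3.5} yield
\[
\mathbb{E}\Vert J_n(\theta) \Vert_{\mathbb{V}^\prime}^2
\leq C\,\mathbb{E}\int_{\tau_n}^{\tau_n+\theta} \Vert \boldsymbol{\sigma}_n(\bX_{\delta,n}(s))\Vert_{\mathscr{T}_2(\mathcal{U},\mathbb{H})}^2\,\mathrm{d}s
\leq C(C_1\lvert\mathcal{O}\rvert + C_3\lvert\Gamma\rvert)\,\theta,
\]
thanks to \eqref{eq4.83} and \eqref{eqt4.87}, so $\mathbb{E}\Vert J_n(\theta) \Vert_{\mathbb{V}^\prime} \leq C\,\theta^{1/2}$ uniformly in $n$ and $\tau_n$. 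Finally, Chebyshev's inequality gives
\[
\mathbb{P}\bigl( \Vert \bX_{\delta,n}(\tau_n+\theta) - \bX_{\delta,n}(\tau_n) \Vert_{\mathbb{V}^\prime} \geq \eta \bigr)
\leq \frac{\mathbb{E}\Vert I_n(\theta)\Vert_{\mathbb{V}^\prime} + \mathbb{E}\Vert J_n(\theta)\Vert_{\mathbb{V}^\prime}}{\eta} \leq \frac{C_\delta\,\theta^{1/2}}{\eta},
\]
and choosing $\kappa = (\eta\epsilon/C_\delta)^2$ establishes \eqref{Eqn-Aldous-condition-with-delta}. The only genuinely delicate step will be the drift bound: the convective couplings $B_1(\bu_{\delta,n},\phi_{\delta,n})$ and $\tilde{B}(\bu_{\delta,n},\varphi_{\delta,n})$ force the simultaneous use of the higher-moment estimates from Proposition \ref{Proposition_second_priori_estimmate}, since neither $\bu_{\delta,n}$ nor the phase variables are individually bounded in $L^2_{\omega,t}$ with enough regularity to absorb their product without pairing via Cauchy-Schwarz in $\Omega$.
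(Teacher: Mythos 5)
Your proposal is correct and follows essentially the same route as the paper: decompose the increment into drift and stochastic parts, get $\theta^{1/2}$ from Cauchy--Schwarz in time for the drift (pairing the $L^\infty_t\mathbb{V}$ and $L^2_tV$ bounds via Cauchy--Schwarz in $\Omega$ with the $r=4$ moments for the convective terms) and from the It\^o isometry for the noise, then conclude by Chebyshev with $\kappa=(\eta\epsilon/C)^2$. The only cosmetic difference is that the paper bounds $\Vert \mathcal{S}_n A_{\phi_{\delta,n}}(\mu_{\delta,n})\Vert_{V_1^\prime}$ directly by $\bar M_0\lvert\nabla\mu_{\delta,n}\rvert$ from \eqref{Definition of A_phi and A_varphi} and \ref{item:H4}, which spares the detour through the full $\mathbb{H}$-norm of $(\mu_{\delta,n},\theta_{\delta,n})$ that your Lipschitz-based bound requires.
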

\begin{proof}
Let $\delta \in (0,1)$ and $\theta>0$. Let $(\tau_n)_{n \in \mathbb{N}}$ be a sequence of $\mathbb{F}$-stopping times such that $0\leq \tau_n \leq T$. Let $\eta,\, \epsilon>0$ be fixed and $\kappa>0$ to be chosen later. Then, from \eqref{compact-stochastic-problem}, we infer
\begin{align*}
& \mathbb{E} \Vert \bX_{\delta,n}(\tau_n + \theta) - \bX_{\delta,n}(\tau_n) \Vert_{\mathbb{V}^\prime}
\leq \mathbb{E} \int_{\tau_n}^{\tau_n + \theta} \Vert \mathbf{b}_n(\bX_{\delta,n}) \Vert_{\mathbb{V}^\prime}\, \d s + \mathbb{E}\left \Vert \int_{\tau_n}^{\tau_n + \theta} \boldsymbol{\sigma}_n(\bX_{\delta,n})\,\d \mathcal{W} \right \Vert_{\mathbb{V}^\prime}
\\
& \leq \mathbb{E} \int_{\tau_n}^{\tau_n + \theta} \Vert B_{1,n}(\bu_{\delta,n},\phi_{\delta,n}) \Vert_{V_1^\prime}\d s  + \mathbb{E} \int_{\tau_n}^{\tau_n + \theta} \Vert \mathcal{S}_n A_{\phi_{\delta,n}}(\mu_{\delta,n}) \Vert_{V_1^\prime} \d s + \mathbb{E} \int_{\tau_n}^{\tau_n + \theta} \Vert \mathcal{S}_{n,\Gamma} \mathcal{A}_{\varphi_{\delta,n}}(\theta_{\delta,n}) \Vert_{V_\Gamma^\prime} \d s
    \\
& + \mathbb{E} \int_{\tau_n}^{\tau_n + \theta} \Vert \tilde{B}_n(\bu_{\delta,n},\varphi_{\delta,n}) \Vert_{V_\Gamma^\prime}\,\d s
+ \mathbb{E} \left \Vert \int_{\tau_n}^{\tau_n + \theta} F_{1,n}(\phi_{\delta,n}) \,\d W \right \Vert_{V_1^\prime} + \left \Vert \int_{\tau_n}^{\tau_n + \theta} F_{2,n}(\varphi_{\delta,n})\, \d W_\Gamma \right \Vert_{V_\Gamma^\prime}.
\end{align*}
Next, by the H\"older inequality, \eqref{B1-Property}, \eqref{eq4.80}, and \eqref{eq4.95}, we deduce that for every $n \in \mathbb{N}$,
\begin{align*}
&\mathbb{E} \int_{\tau_n}^{\tau_n + \theta} \Vert B_{1,n}(\bu_{\delta,n},\phi_{\delta,n}) \Vert_{V_1^\prime}\,\d s
\leq \mathbb{E} \left[\theta^{\frac12} \left(\int_0^T \Vert B_{1,n}(\bu_{\delta,n},\phi_{\delta,n}) \Vert_{V_1^\prime}^2\d s \right)^{\frac12} \right] 
\\
&\leq \left(\mathbb{E} \int_0^T \Vert B_{1,n}(\bu_{\delta,n},\phi_{\delta,n}) \Vert_{V_1^\prime}^2\d s \right)^{\frac12} \cdot \theta^{\frac12}
\leq C \left[\mathbb{E} \left(\sup_{s \in[0,T]} \Vert \phi_{\delta,n}(s) \Vert_{V_1}^{2} \int_0^T \Vert \bu_{\delta,n} \Vert_{V_\sigma}^{2}\, \d s\right) \right]^{\frac12} \cdot \theta^{\frac12}
\\
&\leq C \left[\mathbb{E} \sup_{s \in[0,T]} \Vert \phi_{\delta,n}(s) \Vert_{V_1}^4 \right]^{\frac14}
\left[ \mathbb{E} \left(\int_0^T \Vert \bu_{\delta,n} \Vert_{V_\sigma}^{2}\, \d s\right)^2 \right]^{\frac14} \cdot \theta^{\frac12}
\leq C \theta^{\frac12}.
\end{align*}
By \eqref{Definition of A_phi and A_varphi}, the assumption \ref{item:H4}, \eqref{eq4.80}, and \eqref{eq4.95}, we infer that for every $n \in \mathbb{N}$,
\begin{align*}
&\mathbb{E} \int_{\tau_n}^{\tau_n + \theta} \Vert \mathcal{S}_n A_{\phi_{\delta,n}}(\mu_{\delta,n}) \Vert_{V_1^\prime}\,\d s
\leq \mathbb{E} \left[\left(\int_0^T \Vert \mathcal{S}_n A_{\phi_{\delta,n}}(\mu_{\delta,n}) \Vert_{V_1^\prime}^2\,\d s \right)^{\frac12} \cdot \theta^{\frac12} \right] 
\\
&\leq \left(\mathbb{E} \int_0^T \Vert \mathcal{S}_n A_{\phi_{\delta,n}}(\mu_{\delta,n}) \Vert_{V_1^\prime}^2\,\d s \right)^{\frac12} \cdot \theta^{\frac12} 
\leq C \left(\mathbb{E} \int_0^T \lvert \nabla \mu_{\delta,n}(s) \rvert^2\, \d s \right)^{\frac12} \cdot \theta^{\frac12}
\leq C \theta^{\frac12}.
\end{align*}
Analogously, by \eqref{Definition of A_phi and A_varphi} and \ref{item:H4} in conjunction with \eqref{eq4.80} and \eqref{eq4.95}, we deduce that for every $n \in \mathbb{N}$,
\begin{align*}
&\mathbb{E} \int_{\tau_n}^{\tau_n + \theta} \Vert \mathcal{S}_{n,\Gamma} \mathcal{A}_{\varphi_{\delta,n}}(\theta_{\delta,n}(s)) \Vert_{V_\Gamma^\prime}\,\d s
%\leq \mathbb{E} \left[\left(\int_0^T \Vert \mathcal{S}_{n,\Gamma} \mathcal{A}_{\varphi_{\delta,n}}(\theta_{\delta,n}) \Vert_{V_\Gamma^\prime}^2\,\d s \right)^{\frac12} \cdot \theta^{\frac12} \right] 
%\\
%&\leq \left(\mathbb{E} \int_0^T \Vert \mathcal{S}_{n,\Gamma} \mathcal{A}_{\varphi_{\delta,n}}(\theta_{\delta,n}) \Vert_{V_\Gamma^\prime}^2\,\d s \right)^{\frac12} \cdot \theta^{\frac12} 
%\leq C \left(\mathbb{E} \int_0^T \lvert \nabla_\Gamma \theta_{\delta,n} \rvert_\Gamma^2\, \d s \right)^{\frac12} \cdot \theta^{\frac12}
\leq C \theta^{\frac12}.
\end{align*}
Next, owing to \eqref{Property-tilde B}, \eqref{eq4.95}, and \eqref{eq4.101a}, we infer that for every $n \in \mathbb{N}$,
\begin{align*}
&\mathbb{E} \int_{\tau_n}^{\tau_n + \theta} \Vert \tilde{B}_n(\bu_{\delta,n},\varphi_{\delta,n}) \Vert_{V_\Gamma^\prime}\,\d s
\leq \mathbb{E} \left[\theta^{\frac12} \left(\int_0^T \Vert \tilde{B}_n(\bu_{\delta,n},\varphi_{\delta,n}) \Vert_{V_\Gamma^\prime}^2\d s \right)^{\frac12} \right] 
\\
&\leq \left(\mathbb{E} \int_0^T \Vert \tilde{B}_n(\bu_{\delta,n},\varphi_{\delta,n}) \Vert_{V_\Gamma^\prime}^2\d s \right)^{\frac12} \cdot \theta^{\frac12}
\leq C \left[\mathbb{E} \left(\sup_{s \in[0,T]} \Vert \varphi_{\delta,n}(s)\Vert_{V_\Gamma}^{2} \int_0^T \Vert \bu_{\delta,n} \Vert_{V_\sigma}^{2}\, \d s\right) \right]^{\frac12} \cdot \theta^{\frac12}
\\
&\leq C \left[\mathbb{E} \sup_{s \in[0,T]} \Vert \varphi_{\delta,n}(s)\Vert_{V_\Gamma}^4 \right]^{\frac14}
\left[ \mathbb{E} \left(\int_0^T \Vert \bu_{\delta,n} \Vert_{V_\sigma}^{2}\, \d s\right)^2 \right]^{\frac14} \cdot \theta^{\frac12}
\leq C \theta^{\frac12}.
\end{align*}
Using the It\^o isometry, \eqref{eq4.83} and \eqref{eqt4.87}, we deduce that for every $n \in \mathbb{N}$,

\begin{align*}
&\mathbb{E} \left \Vert \int_{\tau_n}^{\tau_n + \theta} F_{1,n}(\phi_{\delta,n}(s)) \,\d W(s) \right \Vert_{V_1^\prime} + \mathbb{E} \left \Vert \int_{\tau_n}^{\tau_n + \theta} F_{2,n}(\varphi_{\delta,n}(s))\, \d W_\Gamma(s) \right \Vert_{V_\Gamma^\prime}
\\
& \leq \left[\mathbb{E} \left \Vert \int_{\tau_n}^{\tau_n + \theta} F_{1,n}(\phi_{\delta,n}(s)) \,\d W(s) \right \Vert_{V_1^\prime}^2 \right]^{\frac12} + \left[\mathbb{E} \left \Vert \int_{\tau_n}^{\tau_n + \theta} F_{2,n}(\varphi_{\delta,n}(s))\, \d W_\Gamma(s) \right \Vert_{V_\Gamma^\prime}^2 \right]^{\frac12}
\\
&\leq \left[\mathbb{E} \int_{\tau_n}^{\tau_n + \theta} \Vert F_{1,n}(\phi_{\delta,n}(s)) \Vert_{\mathscr{T}_2(U,V_1^\prime)}^2\, \d s \right]^{\frac12} + \left[\mathbb{E} \int_{\tau_n}^{\tau_n + \theta} \Vert F_{2,n}(\varphi_{\delta,n}(s)) \Vert_{\mathscr{T}_2(U,V_\Gamma^\prime)}^2\, \d s \right]^{\frac12}
\\
& \leq \left[\mathbb{E} \int_{\tau_n}^{\tau_n + \theta} \Vert F_{1,n}(\phi_{\delta,n}(s)) \Vert_{\mathscr{T}_2(U,L^2(\mathcal{O}))}^2\, \d s \right]^{\frac12} + \left[\mathbb{E} \int_{\tau_n}^{\tau_n + \theta} \Vert F_{2,n}(\varphi_{\delta,n}(s)) \Vert_{\mathscr{T}_2(U,L^2(\Gamma))}^2\, \d s \right]^{\frac12}
\\
&\leq ([\lvert \mathcal{O} \rvert C_1]^{1/2} + [\lvert \Gamma \rvert C_2]^{1/2}) \theta^{1/2}.
\end{align*}
It then follows from the previous inequalities that there exists a constant $c_1>0$ independent of $n$ such that
   \begin{align*}
     \mathbb{E} \Vert \bX_{\delta,n}(\tau_n + \theta) - \bX_{\delta,n}(\tau_n) \Vert_{\mathbb{V}^\prime} \leq c_1 \theta^{1/2}.  
   \end{align*}
Taking $\kappa= \left(\frac{\eta \epsilon}{c_1} \right)^2$, then, by the Chebychev inequality we deduce \eqref{Eqn-Aldous-condition-with-delta}. 
\end{proof}
Here, given a Banach space $X$, the notation $X_w$ indicates that the space $X$ is endowed with the weak topology.
We now introduce the following topological space 
   \begin{equation}\label{Eqn-mathfrack{X}}
      \mathfrak{X}_T \coloneq \mathfrak{X}_{T,2} \times \mathfrak{X}_{T,1} \times \mathfrak{X}_{T,\Gamma},
    \end{equation}
where
   \begin{align*}
     \mathfrak{X}_{T,2}= C([0,T];\mathbb{H}) \cap C([0,T];\mathbb{V}_w) \cap L_w^4(0,T;\mathcal{H}^2) \cap L^2(0,T;\mathbb{V}),
     \\
     \mathfrak{X}_{T,1}= C([0,T];U_0) \mbox{ and } \mathfrak{X}_{T,\Gamma}= C([0,T];U_0^\Gamma).
   \end{align*}
We recall that we can find a set $\Omega_0 \in \mathcal{F}$ of measure zero such that $W(t,\cdot,\omega) \in C([0,T];U_0)$ and $W_\Gamma(t,\cdot,\omega) \in C([0,T];U_0^\Gamma)$ for any $\omega \in \Omega \backslash \Omega_0$. In the sequel, we use the notations $\mathcal{L}(W)$ and $\mathcal{L}(W_\Gamma)$ to denote the laws of $W=(W(t))_{t \in [0,T]}$ and $W_\Gamma=(W_\Gamma(t))_{t \in [0,T]}$ on $C([0,T];U_0)$ and on $C([0,T];U_0^\Gamma)$, respectively.
For every $n \in \mathbb{N}$, we consider the constant sequences of cylindrical Wiener processes 
$$
W_{1,n} \equiv W \quad \text{and} \quad W_{2,n} \equiv W_\Gamma,
$$
and we construct a family of probability laws on $\mathfrak{X}_{T,1}= C([0,T];U_0)$  and  on $\mathfrak{X}_{T,\Gamma}= C([0,T];U_0^\Gamma)$ by setting
\begin{align*}
\mathbb{P}_n(\cdot)&\coloneq \mathbb{P}(W_{1,n} \in \cdot) \in P_r(\mathfrak{X}_{T,1}), \\
\mathbb{P}_{\Gamma,n}(\cdot)&\coloneq \mathbb{P}(W_{2,n} \in \cdot) \in P_r(\mathfrak{X}_{T,\Gamma}),
\end{align*}
where $P_r(\mathfrak{X}_{T,1})$ and $P_r(\mathfrak{X}_{T,\Gamma})$ denote the set of all probability measures on the measurable spaces $(\mathfrak{X}_{T,1}, \mathcal{B}(\mathfrak{X}_{T,1}))$ and $(\mathfrak{X}_{T,\Gamma}, \mathcal{B}(\mathfrak{X}_{T,\Gamma}))$, respectively. 
\newline
We have the following important result.
\begin{proposition}\label{Eqn-tightness-delta}
Assume $\delta \in (0,1)$. Then, the sequence of laws of $((\phi_{\delta,n},\varphi_{\delta,n}),W_{1,n},W_{2,n})_{n \in \mathbb{N}}$ is tight on $\mathfrak{X}_T$.
\end{proposition}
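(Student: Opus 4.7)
Since the Wiener components $W_{1,n}\equiv W$ and $W_{2,n}\equiv W_\Gamma$ form constant sequences, their laws are automatically tight on $\mathfrak{X}_{T,1}$ and $\mathfrak{X}_{T,\Gamma}$, respectively. The problem therefore reduces to proving tightness of the laws of the pair $\bX_{\delta,n}=(\phi_{\delta,n},\varphi_{\delta,n})$ on the intersection space $\mathfrak{X}_{T,2}$. My plan is to establish tightness separately in each of the four topologies whose intersection defines $\mathfrak{X}_{T,2}$, and then invoke the standard fact that tightness on a countable intersection of topological spaces follows from tightness on each factor (which is the appropriate framework for Jakubowski's version of the Skorokhod theorem).

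For tightness in $C([0,T];\mathbb{V}_w)$, I would apply a criterion of Brzeźniak--Motyl type: it suffices to have (i) a uniform (in probability) bound in $L^\infty(0,T;\mathbb{V})$, and (ii) an Aldous-type condition in a weaker topology on $\mathbb{V}$, such as the norm of $\mathbb{V}^\prime$. Point (i) follows from Proposition \ref{Proposition_first_priori_estimmate}, since the energy $\mathcal{E}_{tot}(\phi_{\delta,n},\varphi_{\delta,n})$ controls $\Vert(\phi_{\delta,n},\varphi_{\delta,n})\Vert_{\mathbb{V}}$, while (ii) is precisely Lemma \ref{Lema-Aldous-condition}. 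For tightness in $C([0,T];\mathbb{H})$, I would combine the $L^\infty(0,T;\mathbb{V})$ bound with the compact embedding $\mathbb{V}\embed\mathbb{H}$ and the interpolation $\Vert v\Vert_{\mathbb{H}}^2\leq\Vert v\Vert_{\mathbb{V}}\Vert v\Vert_{\mathbb{V}^\prime}$, which upgrades the Aldous condition in $\mathbb{V}^\prime$ to equicontinuity in $\mathbb{H}$; the Arzelà--Ascoli theorem then yields the claim.

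For tightness in $L^2(0,T;\mathbb{V})$, my plan is to use an Aubin--Lions--Simon compactness argument: a uniform bound in $L^2(0,T;\mathcal{H}^2)$ combined with the equicontinuity in $\mathbb{V}^\prime$ furnished by Lemma \ref{Lema-Aldous-condition} yields compactness since $\mathcal{H}^2\embed\mathbb{V}\embed\mathbb{V}^\prime$ with the first embedding compact. The required $L^2(0,T;\mathcal{H}^2)$ bound is derived by inverting the identities
\begin{equation*}
\mu_{\delta,n}= \mathcal{S}_n\bigl[-\eps\Delta\phi_{\delta,n}+\eps^{-1}F_\delta^\prime(\phi_{\delta,n})\bigr],\qquad
\theta_{\delta,n}= \mathcal{S}_{n,\Gamma}\bigl[-\eps_\Gamma\Delta_\Gamma\varphi_{\delta,n}+\eps_\Gamma^{-1}G_\delta^\prime(\varphi_{\delta,n})+\eps\partial_{\bn}\phi_{\delta,n}\bigr]
\end{equation*}
as an elliptic system with Robin coupling $K\partial_{\bn}\phi_{\delta,n}=\varphi_{\delta,n}-\phi_{\delta,n}$ and applying elliptic regularity, using the $L^2(0,T;\mathbb{V})$ bound on $(\mu_{\delta,n},\theta_{\delta,n})$ produced by controlling $\lvert\nabla\mu_{\delta,n}\rvert$ and $\lvert\nabla_\Gamma\theta_{\delta,n}\rvert_\Gamma$ via Proposition \ref{Proposition_first_priori_estimmate} together with the mean-value estimates \eqref{Eqn-mean-mu_n}--\eqref{Eqn-L2-norm-mu-and-theta-delta-n}. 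Finally, tightness in $L_w^4(0,T;\mathcal{H}^2)$ follows directly from a uniform bound in $L^4(0,T;\mathcal{H}^2)$, which is obtained by the same elliptic bootstrap but using the $r=4$ version of Proposition \ref{Proposition_second_priori_estimmate}; reflexivity and separability of $L^4(0,T;\mathcal{H}^2)$ then give tightness of its weak topology on balls, which are weakly metrizable and weakly sequentially compact.

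The main technical obstacle is the elliptic regularity step converting $L^q$-in-time control of $(\mu_{\delta,n},\theta_{\delta,n})$ into $L^q(0,T;\mathcal{H}^2)$ control of $(\phi_{\delta,n},\varphi_{\delta,n})$, since one must handle simultaneously the bulk--surface Robin coupling on $\Gamma$ and the $\delta$-dependent Yosida nonlinearities $F_\delta^\prime,G_\delta^\prime$, all at the Galerkin level. The uniformity of the constants in $n$ is crucial and follows from the fact that $\mathcal{S}_n$ and $\mathcal{S}_{n,\Gamma}$ are orthogonal projections associated with the same eigenbases as $-\Delta$ (with Neumann condition) and $-\Delta_\Gamma$, so that the elliptic estimates commute with the projections.
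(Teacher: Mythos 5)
Your proposal is correct and follows essentially the same route as the paper: the Wiener components are constant sequences on Polish spaces and hence trivially tight, while tightness of $(\phi_{\delta,n},\varphi_{\delta,n})$ in $\mathfrak{X}_{T,2}$ rests on the uniform energy and $\mathcal{H}^2$ estimates (Propositions \ref{Proposition_first_priori_estimmate}, \ref{Proposition_second_priori_estimmate} and Lemma \ref{H2_H3_priori_estimmates}) combined with the Aldous condition of Lemma \ref{Lema-Aldous-condition}. The only difference is that the paper invokes the packaged tightness criterion of \cite[Corollary 3.9]{Brzezniak+Moty_2013} for the intersection space, whereas you unpack that corollary factor by factor; the ingredients are identical.
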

\begin{proof}
First, the family of probability laws $\lbrace \mathbb{P}_n \rbrace_n$, $\lbrace \mathbb{P}_{\Gamma,n} \rbrace_n$ is tight in $P_r(\mathfrak{X}_{T,1})$, and in $P_r(\mathfrak{X}_{T,\Gamma})$, respectively.  Indeed, if we endow the space $\mathfrak{X}_{T,1}$ with the uniform convergence norm, it becomes a Polish space. By \cite[Theorem 6.8]{Billingsley_1999}, $P_r(\mathfrak{X}_{T,1})$ endowed with the Prohorov metric is a separable and complete metric space. Moreover, by construction, the family of probability laws $\lbrace \mathbb{P}_n \rbrace_n$ is reduced to one element which is the law $\mathcal{L}(W)$ of $W$ and belongs to $P_r(\mathfrak{X}_{T,1})$. Therefore, by \cite[Theorem 3.2, Chapter II]{Parthasarathy_1967} we deduce that the family $(\mathbb{P}_n)_n$ is tight on $P_r(\mathfrak{X}_{T,1})$. Similarly, we can prove that the family $(\mathbb{P}_{\Gamma,n})_n$ is tight on $P_r(\mathfrak{X}_{T,\Gamma})$.
\newline
Next, thanks to the propositions \ref{Proposition_first_priori_estimmate} and \ref{Proposition_second_priori_estimmate}, the lemmas \ref{H2_H3_priori_estimmates} and \ref{Lema-Aldous-condition}, we can apply \cite[Corollary 3.9]{Brzezniak+Moty_2013} and deduce that the sequence of laws of $\bX_{\delta,n}\coloneq (\phi_{\delta,n}, \varphi_{\delta,n})$ is tight. This completes the proof of Proposition \ref{Eqn-tightness-delta}.
\end{proof}

%begin delete
\dela{
Next, arguing as in \eqref{Eqn-4.57}, since $\mathcal{S}_n$ are uniformly bounded in the space $\mathscr{L}(V_1)$, see \cite{Temam_2001}, $H^2(\mathcal{O}) \embed H^{3/2}(\Gamma)$, and $H^2(\Gamma) \embed H^{3/2}(\Gamma)$, we infer that for every $n \in \mathbb{N}$,
\begin{equation}\label{Eqn-H3-phi-estimate}
\begin{aligned}
&\Vert \phi_{\delta,n} \Vert_{H^3(\mathcal{O})}^2
\leq  C \Vert \mu_{\delta,n} - \eps^{-1} \mathcal{S}_n  F_\delta^\prime(\phi_{\delta,n})\Vert_{V_1}^2 + C \Vert \phi_{\delta,n} \Vert_{H^2(\mathcal{O})}^2 + C \Vert \varphi_{\delta,n} - \phi_{\delta,n} \Vert_{H^{3/2}(\Gamma)}^2
\\
&\leq  C(\Vert \mu_{\delta,n} \Vert_{V_1}^2 + \Vert \mathcal{S}_n \Vert_{\mathcal{L}(V_1)}^2 \Vert F_\delta^\prime(\phi_{\delta,n}) \Vert_{V_1}^2 +  \Vert \phi_{\delta,n} \Vert_{H^2(\mathcal{O})}^2 + \Vert \varphi_{\delta,n} - \phi_{\delta,n} \Vert_{H^{3/2}(\Gamma)}^2)
\\
&\leq  C(K,\eps,\mathcal{O},\Gamma,\tilde{c}_F) (\Vert \mu_{\delta,n} \Vert_{V_1}^2 + \Vert(\phi_{\delta,n},\varphi_{\delta,n})\Vert_{\mathcal{H}^2}^2).
\end{aligned}
\end{equation}
Here we have also used the fact that $\lvert F_\delta^\prime(\phi_{\delta,n}) \rvert \leq \tilde{c}_{1,\delta} \lvert \phi_{\delta,n} \rvert$ and 
 $\lvert F^{\bis}_\delta \rvert \leq (\delta^{-1} + \tilde{c}_F)$. \newline
 Arguing as in \eqref{Eqn-4.59}, using the fact that $H^2(\mathcal{O}) \embed H^{\frac32}(\mathcal{O}) \embed H^1(\Gamma)$, and that the projections $\mathcal{S}_{n,\Gamma}$  are uniformly bounded in the space $\mathscr{L}(V_\Gamma)$, we infer that for every $n \in \mathbb{N}$,
\begin{equation}
\begin{aligned}
&\Vert \varphi_{\delta,n} \Vert_{H^3(\Gamma)}
\leq C (\Vert \theta_{\delta,n} - \eps_\Gamma^{-1} \mathcal{S}_{n,\Gamma} G_\delta^\prime(\varphi_{\delta,n}) - \eps(\varphi_{\delta,n} - \phi_{\delta,n}) \Vert_{V_\Gamma} + \Vert \varphi_{\delta,n} \Vert_{H^2(\Gamma)})
\\
&\leq C (\Vert \theta_{\delta,n} \Vert_{V_\Gamma} + \Vert \mathcal{S}_{n,\Gamma} G_\delta^\prime(\varphi_{\delta,n}) \Vert_{V_\Gamma} + \Vert \varphi_{\delta,n} \Vert_{V_\Gamma} + \Vert \phi_{\delta,n} \Vert_{V_\Gamma} + \Vert \varphi_{\delta,n} \Vert_{H^2(\Gamma)}) \\
&\leq C(\Vert \theta_{\delta,n} \Vert_{V_\Gamma} + \Vert \mathcal{S}_{n,\Gamma} \Vert_{\mathcal{L}(V_\Gamma)} \Vert G_\delta^\prime(\varphi_{\delta,n}) \Vert_{V_\Gamma} + \Vert \varphi_{\delta,n} \Vert_{H^2(\Gamma)} +  \Vert \phi_{\delta,n} \Vert_{H^2(\mathcal{O})}) \\
&\leq C(\Vert \theta_{\delta,n} \Vert_{V_\Gamma} + \Vert G_\delta^\prime(\varphi_{\delta,n}) \Vert_{V_\Gamma} +  \Vert(\phi_{\delta,n},\varphi_{\delta,n})\Vert_{\mathcal{H}^2}).
\end{aligned}
\end{equation}
Moreover, since the potential $G_\delta$ enjoy the following properties
    \begin{align*}
      \lvert G_\delta^\prime(\cdot) \rvert \leq \tilde{c}_{2,\delta} \lvert \cdot \rvert \mbox{ and } \lvert G^{\bis}_\delta(\cdot) \rvert \leq (\delta^{-1} + \tilde{c}_G),
   \end{align*}
we deduce that there exists $C=(\eps,\eps_\Gamma,\mathcal{O},\Gamma,K,\tilde{c}_G, \delta)$, $C$ being independent of $n$ such that
    \begin{equation}\label{Eqn-H3-varphi_Gamma-estimate}
      \Vert \varphi_{\delta,n} \Vert_{H^3(\Gamma)}^2
      \leq C(\Vert \theta_{\delta,n} \Vert_{V_\Gamma}^2  +  \Vert(\phi_{\delta,n},\varphi_{\delta,n})\Vert_{\mathcal{H}^2}^2).
    \end{equation}
Obviously, by \eqref{eq4.95}, \eqref{eq4.108}, \eqref{Eqn-H3-phi-estimate} and \eqref{Eqn-H3-varphi_Gamma-estimate}, we easily complete the proof of the second part of Lemma \ref{H2_H3_priori_estimmates}.
}
% end delete
%%%%%%%%%%%%%%%%%%%%%%%%%%%%%%%%%%%%%%%%%%%%%%%%%%%%%%%%%%%%%%%%%%%%%%%%%%%%%%%%%%%%%%%%%%%%%%%%%%%%%%%%%%%%%%%%%%%%%%%%%%%%%%%%%%%%
\subsection{Convergence to a weak solution}\label{subs-4.4}
%%%%%%%%%%%%%%%%%%%%%%%%%%%%%%%%%%%%%%%%%%%%%%%%%%%%%%%%%%%%%%%%%%%%%%%%%%%%%%%%%%%%%%%%%%%%%%%%%%%%%%%%%%%%%%%%%%%%%%%%%%%%%%%%%%%
In this section, we prove that there exists a martingale solution to the approximate system \eqref{eq1.1aa}-\eqref{eq1.10aa}, with $\delta$ fixed. 
For a $\mathfrak{X}_T$-valued r.v. $\bX$, we denote by $\mathcal{L}_{\mathfrak{X}_T}(\bX)$ its law in $\mathfrak{X}_T$. We state the following auxiliary result.
\begin{proposition}\label{Jakubowski-Skorokhod-representation theorem}
Assume $\delta \in (0,1)$. Then, there exists a subsequence $(n_k)_{k \in \mathbb{N}}$, a complete probability space $(\bar{\Omega},\bar{\mathcal{F}},\bar{\mathbb{P}})$, and on this space, $\mathfrak{X}_T$-valued random variables $((\bar{\phi}_{\delta,k},\bar{\varphi}_{\delta,k}),\bar{W}_{1,k},\bar{W}_{2,k})$ and $((\bar{\phi}_{\delta},\bar{\varphi}_{\delta}),\bar{W},\bar{W}_{\Gamma})$, $k \in \mathbb{N}$ such that
\begin{align}
\label{Eqn-equal-of-law-of-sequence}
&\mathcal{L}_{\mathfrak{X}_T}(((\bar{\phi}_{\delta,k},\bar{\varphi}_{\delta,k}),\bar{W}_{1,k},\bar{W}_{2,k}))= \mathcal{L}_{\mathfrak{X}_T}(((\phi_{\delta,n_k},\varphi_{\delta,n_k}),W_{1,n_k},W_{2,n_k})),
\\
\label{Eqn-almots-sure-convergenc-of-sequence}
&((\bar{\phi}_{\delta,k},\bar{\varphi}_{\delta,k}),\bar{W}_{1,k},\bar{W}_{2,k}) \to ((\bar{\phi}_{\delta},\bar{\varphi}_{\delta}),\bar{W},\bar{W}_{\Gamma}) \mbox{ in } \mathfrak{X}_T \mbox{ as } k \to \infty, \; \; \bar{\mathbb{P}}\mbox{-a.s.}
\end{align}
\end{proposition}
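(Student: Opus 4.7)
The plan is to apply the Jakubowski--Skorokhod representation theorem in the framework of non-metric topological spaces (see \cite{Jakubowski_1997}). This theorem replaces the classical Skorokhod representation theorem and applies to any topological space on which there exists a countable family of continuous real-valued functions separating points, as soon as the sequence of laws is tight. Since $\mathfrak{X}_T$ involves the non-metrizable weak topologies on $L^4(0,T;\mathcal{H}^2)$ and $\mathbb{V}$, the classical theorem is not applicable and Jakubowski's version is essential.

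My first step would be to verify that $\mathfrak{X}_T$ belongs to the class of topological spaces to which \cite{Jakubowski_1997} applies, i.e.\ to exhibit a countable family of continuous real-valued functions separating points on each factor:
\begin{itemize}
\item On the Polish spaces $C([0,T];\mathbb{H})$, $L^2(0,T;\mathbb{V})$, $C([0,T];U_0)$, $C([0,T];U_0^\Gamma)$ this is standard.
\item On $C([0,T];\mathbb{V}_w)$, using a countable dense set $\{\xi_j\}_{j\in\mathbb{N}} \subset \mathbb{V}^\prime$ and rationals $r \in [0,T]\cap \mathbb{Q}$, the maps $f \mapsto \duality{f(r)}{\xi_j}{\mathbb{V}}{\mathbb{V}^\prime}$ are continuous and separate points.
\item On $L_w^4(0,T;\mathcal{H}^2)$, one uses a countable dense subset of $L^{4/3}(0,T;(\mathcal{H}^2)^\prime)$ to obtain a separating family of continuous linear functionals.
\end{itemize}
The product space $\mathfrak{X}_T$ then inherits such a separating family. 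This verification, although technical, follows verbatim the arguments in \cite{Brzezniak+Moty_2013,Brzezniak+Motyl+Ondrejat_2017}.

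Next, I would invoke Proposition \ref{Eqn-tightness-delta}, which provides tightness of the laws of the sequence $\bigl((\phi_{\delta,n},\varphi_{\delta,n}),W_{1,n},W_{2,n}\bigr)_{n\in\mathbb{N}}$ on $\mathfrak{X}_T$. Combined with the verification of Jakubowski's separation condition, this immediately yields the existence of a subsequence $(n_k)_{k \in \mathbb{N}}$, a new complete probability space $(\bar{\Omega},\bar{\mathcal{F}},\bar{\mathbb{P}})$, and $\mathfrak{X}_T$-valued random variables $((\bar{\phi}_{\delta,k},\bar{\varphi}_{\delta,k}),\bar{W}_{1,k},\bar{W}_{2,k})$ and $((\bar{\phi}_{\delta},\bar{\varphi}_{\delta}),\bar{W},\bar{W}_{\Gamma})$ satisfying both the equality of laws \eqref{Eqn-equal-of-law-of-sequence} and the $\bar{\mathbb{P}}$-a.s.\ convergence \eqref{Eqn-almots-sure-convergenc-of-sequence}.

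The main obstacle is not Jakubowski's theorem itself but the preliminary verification that the space $\mathfrak{X}_T$, in particular the factor $\mathfrak{X}_{T,2}= C([0,T];\mathbb{H}) \cap C([0,T];\mathbb{V}_w) \cap L_w^4(0,T;\mathcal{H}^2) \cap L^2(0,T;\mathbb{V})$, equipped with the supremum topology of its four constituents, admits a countable separating family of continuous functions. This verification is the crux and relies on the separability of $\mathbb{V}^\prime$ and of $L^{4/3}(0,T;(\mathcal{H}^2)^\prime)$, together with the observation that the weak topologies on bounded subsets of the reflexive separable Hilbert spaces $\mathbb{V}$ and $L^4(0,T;\mathcal{H}^2)$ are metrizable. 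Once this structural fact is in place, the remainder of the proof is an immediate application of \cite[Theorem 2]{Jakubowski_1997}.
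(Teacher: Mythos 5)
Your proposal is correct and follows essentially the same route as the paper: the paper likewise verifies that $\mathfrak{X}_{T,2}$ satisfies Assumption (10) of \cite{Jakubowski_1997} (citing the analogous verification in \cite{Brzezniak+Moty_2013}, Proof of Corollary 3.12), notes that $\mathfrak{X}_{T,1}\times\mathfrak{X}_{T,\Gamma}$ is Polish, and then combines the tightness from Proposition \ref{Eqn-tightness-delta} with \cite[Theorem 2]{Jakubowski_1997}. Your explicit construction of the countable separating families on $C([0,T];\mathbb{V}_w)$ and $L^4_w(0,T;\mathcal{H}^2)$ simply spells out what the paper delegates to the cited reference.
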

Before we embark on proving the above result, let us emphasize that \eqref{Eqn-almots-sure-convergenc-of-sequence} yields that 
   \begin{equation}\label{Eqn-convergence-of-Wiener-processes}
    (\bar{W}_{1,k},\bar{W}_{2,k}) \to (\bar{W},\bar{W}_{\Gamma})  \mbox{ in }  C([0,T];U_0 \times U_0^\Gamma) \mbox{ as } k \to \infty, \; \; \bar{\mathbb{P}}\mbox{-a.s.}.             
   \end{equation}
We will continue to denote the above sequences by $((\bar{\phi}_{\delta,n},\bar{\varphi}_{\delta,n}),\bar{W}_{1,n},\bar{W}_{2,n})_{n \in \mathbb{N}}$ and $((\phi_{\delta,n},\varphi_{\delta,n_k}),W_{1,n},W_{2,n})_{n \in \mathbb{N}}$, for the sake of brevity.
\begin{proof}
First, arguing as in \cite{Brzezniak+Moty_2013}, Proof of Corollary 3.12, we can check that the space $\mathfrak{X}_{T,2}$ satisfies Assumption $(10)$ in Jakubowski's paper \cite{Jakubowski_1997}. We point out that $\mathfrak{X}_{T,2}$ is in fact the product of countable spaces satisfying the assumption $(10)$ in \cite{Jakubowski_1997}. Secondly, since $\mathfrak{X}_{T,1} \times \mathfrak{X}_{T,\Gamma}$ is a Polish space, its satisfies the assumption $(10)$ in \cite{Jakubowski_1997}. Therefore, we can apply \cite[Theorem 2]{Jakubowski_1997} and thus deduce all the conclusions in the proposition. The proof of Proposition \ref{Jakubowski-Skorokhod-representation theorem} is now  complete.
\end{proof}
\begin{remark}
$(a)$ Proposition \ref{Jakubowski-Skorokhod-representation theorem} yields that there exists a new probability space $(\bar{\Omega},\bar{\mathcal{F}},\bar{\mathbb{P}})$ and a sequence of random variables $X_n: (\bar{\Omega},\bar{\mathcal{F}}) \to (\Omega,\mathcal{F})$ such that the law of $X_n$ is $\mathbb{P}$ for every $n \in \mathbb{N}$, i.e. $\mathbb{P}= \bar{\mathbb{P}} \circ X_n^{-1}$, so that composition with $X_n$ preserves laws, and the convergences \eqref{Eqn-almots-sure-convergenc-of-sequence} and \eqref{Eqn-convergence-of-Wiener-processes} hold.
\newline
$(b)$ Notice that from the proof of Skorokhod-Jakubowski Theorem, see \cite{Jakubowski_1997}, possibly enlarging the new probability space, we may assume that the new probability space $(\bar{\Omega},\bar{\mathcal{F}},\bar{\mathbb{P}})$ is independent of $\delta$.
\end{remark}
Let us consider a sequence of processes $(\bar{\bu}_{\delta,n})_n\,,(\bar{\mu}_{\delta,n})_n$, and $(\bar{\theta}_{\delta,n})_n$ defined by
   \begin{equation}
     \bar{\bu}_{\delta,n} \coloneq \bu_{\delta,n} \circ X_n, \; \;  \bar{\mu}_{\delta,n} \coloneq \mu_{\delta,n} \circ X_n, \; \; \bar{\theta}_{\delta,n} \coloneq \theta_{\delta,n} \circ X_n,
   \end{equation}
which enjoy the same properties as for the sequences $(\bu_{\delta,n})_n\,,(\mu_{\delta,n})_n$,  and $(\theta_{\delta,n})_n$, since the map $X_n$ preserves laws. \newline
In the following, let us choose and fix $r \geq 2$. By \eqref{Eqn-equal-of-law-of-sequence} and \eqref{Eqn-almots-sure-convergenc-of-sequence} in Proposition \ref{Jakubowski-Skorokhod-representation theorem}, recalling the estimates in Propositions \ref{Proposition_second_priori_estimmate} and \ref{Proposition_third_priori_estimmate}, and in Lemma \ref{H2_H3_priori_estimmates}, using also the Banach-Alaoglu Theorem and the Vitali convergence theorem, we have up to a subsequence still labeled by the same subscript,
\begin{equation}\label{eq4.122a}
\begin{aligned}
(\bar{\phi}_{\delta,n},\bar{\varphi}_{\delta,n}) \to (\bar{\phi}_{\delta},\bar{\varphi}_{\delta}) &\mbox{ in } L^\ell(\bar{\Omega};C([0,T];\mathbb{H})) \cap L^r(\bar{\Omega};L^2([0,T];\mathbb{V})), \; \;  \forall \ell< r,
\\
(\bar{\phi}_{\delta,n},\bar{\varphi}_{\delta,n}) \stackrel{*}{\rightharpoonup} (\bar{\phi}_{\delta},\bar{\varphi}_{\delta}) &\mbox{ in }  L^r(\bar{\Omega};L^\infty(0,T;\mathbb{V})), %\cap L^{r/2}(\bar{\Omega}; W^{\alpha,r}(0,T;\mathbb{V}^\prime)),
  \\
(\bar{\phi}_{\delta,n},\bar{\varphi}_{\delta,n}) \rightharpoonup (\bar{\phi}_{\delta},\bar{\varphi}_{\delta}) &\mbox{ in }  L^r(\bar{\Omega};L^4(0,T;\mathcal{H}^2)) \dela{\cap  L^r(\bar{\Omega};L^2(0,T;\mathcal{H}^3))},
    \\
(\bar{\mu}_{\delta,n},\bar{\theta}_{\delta,n}) \rightharpoonup (\bar{\mu}_{\delta},\bar{\theta}_{\delta})  &\mbox{ in }  L^r(\bar{\Omega};L^2(0,T;\mathbb{V})) \cap L^r(\bar{\Omega};L^4(0,T;\mathbb{H})), 
      \\
(\bar{W}_{1,n},\bar{W}_{2,n}) \to (\bar{W},\bar{W}_{\Gamma}) &\mbox{ in } L^\ell(\bar{\Omega};C([0,T];U_0 \times U_0^\Gamma)), \; \;  \forall \ell< r,
\\
\bar{\bu}_{\delta,n} \rightharpoonup \bar{\bu}_{\delta}  &\mbox{ in }  L^r(\bar{\Omega};L^2(0,T;V)), 
  \\
\bar{\bu}_{\delta,n} \lvert_\Gamma \rightharpoonup \bar{\bu}_{\delta} \lvert_\Gamma  &\mbox{ in }  L^r(\bar{\Omega};L^2(0,T;L^2(\Gamma))).
\end{aligned}
\end{equation}
Moreover, from the strong convergences of $\bar{\phi}_{\delta,n}$ and $\bar{\varphi}_{\delta,n}$ above, we have, modulo the extraction of a subsequence (still) denoted $(\bar{\phi}_{\delta,n})_{n \in \mathbb{N}}$ and $(\bar{\varphi}_{\delta,n})_{n \in \mathbb{N}}$:
      \begin{equation}\label{eq4.123a}
        (\bar{\phi}_{\delta,n},\bar{\varphi}_{\delta,n}) \to (\bar{\phi}_{\delta},\bar{\varphi}_{\delta})\mbox{-a.e. in } Q_T \times \tilde{\Omega}.
      \end{equation}
We have the following important auxiliary result.
\begin{proposition}\label{Prop:Proposition-4}
Let $\mathfrak{X}_\delta=(\bar{\phi}_{\delta}, \bar{\varphi}_{\delta})$ be the stochastic process as given by \eqref{eq4.122a}. Then, there exists a subsequence of $\mathfrak{X}_{\delta,n}\coloneq (\bar{\phi}_{\delta,n},\bar{\varphi}_{\delta,n})$ and $\bar{\mathcal{W}}_n\coloneq(\bar{W}_{1,n},\bar{W}_{2,n})^{tr}$ still labeled by the
same subscript, such that   as $n\to \infty$,
\begin{equation}
 \int_0^t \bG^n(\mathfrak{X}_{\delta,n}(s)) \, \d \bar{\mathcal{W}}^{n}(s) \to \int_0^t \bG(\mathfrak{X}_\delta(s)) \, \d \bar{\mathcal{W}}(s)  \mbox{ in probability in } L^2(0,T;\mathbb{H}),
\end{equation}
for all $t \in[0,T]$, with $\bar{\mathcal{W}} \coloneq (\bar{W},\bar{W}_{\Gamma})^{tr}$.
\end{proposition}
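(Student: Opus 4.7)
\textbf{Proof plan for Proposition \ref{Prop:Proposition-4}.}

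The strategy is the classical one of Bensoussan / Debussche--Glatt-Holtz--Temam / Brze\'zniak--Ondrejat for identifying limits of Galerkin stochastic integrals on the Skorokhod space. First I would equip $(\bar{\Omega},\bar{\mathcal{F}},\bar{\mathbb{P}})$ with the filtrations
\[
\bar{\mathcal{F}}^n_t \coloneq \sigma\bigl(\mathfrak{X}_{\delta,n}(s),\bar{\mathcal{W}}^n(s):s\le t\bigr)\vee \mathcal{N},\qquad \bar{\mathcal{F}}_t \coloneq \sigma\bigl(\mathfrak{X}_{\delta}(s),\bar{\mathcal{W}}(s):s\le t\bigr)\vee \mathcal{N},
\]
augmented by $\bar{\mathbb{P}}$-null sets and made right-continuous. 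Using the equality of laws \eqref{Eqn-equal-of-law-of-sequence} together with the L\'evy characterisation, one shows that $\bar{\mathcal{W}}^n$ is a cylindrical Wiener process on $\mathcal{U}=U\times U_\Gamma$ with respect to $\bar{\mathbb{F}}^n$, that $\mathfrak{X}_{\delta,n}$ solves the Galerkin system \eqref{compact-stochastic-problem} on the new basis, and hence that $\bG^n(\mathfrak{X}_{\delta,n})$ is $\bar{\mathbb{F}}^n$-progressively measurable. The same argument, combined with the a.s.\ convergence \eqref{Eqn-almots-sure-convergenc-of-sequence}--\eqref{Eqn-convergence-of-Wiener-processes}, promotes $\bar{\mathcal{W}}$ to a $\bar{\mathbb{F}}$-cylindrical Wiener process on $\mathcal{U}$, with $\bG(\mathfrak{X}_\delta)$ being $\bar{\mathbb{F}}$-progressively measurable.

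Next I would establish the key auxiliary convergence
\begin{equation}\label{eq-plan-sigma}
\bG^n(\mathfrak{X}_{\delta,n})\longrightarrow \bG(\mathfrak{X}_\delta)\quad\text{in }L^2\bigl(\bar{\Omega}\times[0,T];\mathscr{T}_2(\mathcal{U},\mathbb{H})\bigr).
\end{equation}
Split the difference as
\[
\bG^n(\mathfrak{X}_{\delta,n})-\bG(\mathfrak{X}_\delta)=\bigl[\bG^n(\mathfrak{X}_{\delta,n})-\bG(\mathfrak{X}_{\delta,n})\bigr]+\bigl[\bG(\mathfrak{X}_{\delta,n})-\bG(\mathfrak{X}_\delta)\bigr].
\]
For the second bracket I would use the Lipschitz hypotheses \eqref{eq4.122-1}--\eqref{eq4.122-2} of \ref{item:H2}, applied to the strong convergence $(\bar{\phi}_{\delta,n},\bar{\varphi}_{\delta,n})\to(\bar{\phi}_\delta,\bar{\varphi}_\delta)$ in $L^\ell(\bar{\Omega};C([0,T];\mathbb{H}))$ from \eqref{eq4.122a}, to get convergence in $L^2(\bar{\Omega}\times[0,T];\mathscr{T}_2(\mathcal{U},\mathbb{H}))$. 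For the first bracket, recall that
\[
\bG^n(\phi,\varphi)[\cdot]=\mathrm{diag}\!\bigl(\mathcal{S}_n\circ F_1(\phi),\,\mathcal{S}_{n,\Gamma}\circ F_2(\varphi)\bigr),
\]
so
\[
\Vert\bG^n(\mathfrak{X}_{\delta,n})-\bG(\mathfrak{X}_{\delta,n})\Vert_{\mathscr{T}_2}^2=\sum_{k}\bigl(\Vert(\mathcal{S}_n-I)\sigma_k(\bar{\phi}_{\delta,n})\Vert_{L^2(\mathcal{O})}^2+\Vert(\mathcal{S}_{n,\Gamma}-I)\tilde\sigma_k(\bar{\varphi}_{\delta,n})\Vert_{L^2(\Gamma)}^2\bigr).
\]
Since $\mathcal{S}_n\to I$ strongly in $L^2(\mathcal{O})$ and $\mathcal{S}_{n,\Gamma}\to I$ strongly in $L^2(\Gamma)$, the pointwise-in-$(\omega,t)$ limit is zero. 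The uniform bounds \eqref{eqn-boundedness-of-F_1}--\eqref{eqn-boundedness-of-F_2} together with \eqref{eq4.122-1}--\eqref{eq4.122-2} provide the dominating majorant, and Lebesgue's dominated convergence theorem yields the desired convergence.

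Having \eqref{eq-plan-sigma} and the adaptedness/filtration setup, I would invoke a Skorokhod-type stochastic integral convergence lemma (e.g.\ \cite[Lemma 2.1]{Debussche+Glatt+Temam}-style, or Brze\'zniak--Motyl--Ondrejat). The hypotheses are: (i) $\bar{\mathcal{W}}^n\to\bar{\mathcal{W}}$ in $C([0,T];U_0\times U_0^\Gamma)$ $\bar{\mathbb{P}}$-a.s.; (ii) $\bG^n(\mathfrak{X}_{\delta,n})$ and $\bG(\mathfrak{X}_\delta)$ are progressively measurable w.r.t.\ the respective filtrations; (iii) \eqref{eq-plan-sigma} holds, which in particular implies convergence in probability in $L^2([0,T];\mathscr{T}_2(\mathcal{U},\mathbb{H}))$. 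The conclusion is
\[
\int_0^{\cdot}\bG^n(\mathfrak{X}_{\delta,n}(s))\,\d\bar{\mathcal{W}}^n(s)\longrightarrow \int_0^{\cdot}\bG(\mathfrak{X}_\delta(s))\,\d\bar{\mathcal{W}}(s)\qquad\text{in probability in }L^2(0,T;\mathbb{H}),
\]
which is the claim. (Pointwise-in-$t$ convergence in $\mathbb{H}$ in probability can then be read off along a subsequence if needed.)

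\textbf{Main obstacle.} The delicate point is not the Lipschitz/DCT step for \eqref{eq-plan-sigma}, which is essentially routine given \ref{item:H2} and the Galerkin projection convergence, but rather the rigorous verification that $\bar{\mathcal{W}}^n$ and $\bar{\mathcal{W}}$ remain cylindrical Wiener processes with respect to the correct filtrations on the new probability space, and that the integrands are adapted. This requires carefully transporting the martingale identities $\mathbb{E}[W(t)-W(s)\,|\,\mathcal{F}_s]=0$ and Gaussian increments from the original space to $(\bar{\Omega},\bar{\mathcal{F}},\bar{\mathbb{P}})$ using only the equality of laws in \eqref{Eqn-equal-of-law-of-sequence} (for each $n$) and a passage to the limit argument that uses the a.s.\ convergence of the tuple $(\mathfrak{X}_{\delta,n},\bar{\mathcal{W}}^n)$ in $\mathfrak{X}_T$. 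Once this filtration bookkeeping is done, the remaining analytic steps are standard.
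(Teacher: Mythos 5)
Your proposal is correct and follows essentially the same route as the paper: establish convergence of the diffusion coefficients $\bG^n(\mathfrak{X}_{\delta,n})\to\bG(\mathfrak{X}_\delta)$ in $L^2(\bar{\Omega};L^2(0,T;\mathscr{T}_2(\mathcal{U},\mathbb{H})))$ (the paper gets this from Lipschitz continuity, the a.s.\ convergence in \eqref{eq4.122a}, the uniform moment bounds of Propositions \ref{Proposition_first_priori_estimmate}--\ref{Proposition_second_priori_estimmate} and the Vitali convergence theorem), and then conclude via \cite[Lemma 2.1]{Debussche+Glatt+Temam_2011}. Your version is in fact slightly more careful on two points the paper glosses over — the explicit splitting that isolates the Galerkin projection error $(\mathcal{S}_n-I)$, and the filtration/adaptedness bookkeeping needed to legitimately invoke that lemma — but these are refinements of, not departures from, the paper's argument.
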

We report the proof of Proposition \ref{Prop:Proposition-4} in appendix \ref{eqn-Prop:Proposition-4}.

\noindent
To reach the limit in the weak formulation of the finite-dimensional problem \eqref{eq4.34a}-\eqref{eq4.34d}, in addition to the previous convergences, we will need the following convergences. These follow from the strong convergences of $(\bar{\phi}_{\delta,n})_n$ and $(\bar{\varphi}_{\delta,n})_n$, see \eqref{eq4.122a}, and the fact that $F_\delta^\prime$ and $G_\delta^\prime$ are Lipschitz-continuous. Thus, as $n \to \infty$ along a non-relabeled subsequence, we get
   \begin{equation}
     \begin{aligned}
        F_\delta^\prime(\bar{\phi}_{\delta,n}) \to F_\delta^\prime(\bar{\phi}_{\delta}) &\mbox{ in }  L^r(\bar{\Omega};L^2(0,T;L^2(\mathcal{O}))), 
          \\
        G_\delta^\prime(\bar{\varphi}_{\delta,n}) \to G_\delta^\prime(\bar{\varphi}_{\delta}) &\mbox{ in }  L^r(\bar{\Omega};L^2(0,T;L^2(\Gamma))).
     \end{aligned}
  \end{equation}
Moreover, by the pointwise convergences \eqref{eq4.123a} and the continuity of the maps $M_{\mathcal{O}},\,M_\Gamma,\,\nu,\,\lambda$, and $\gamma$, we infer that, as $n \to \infty$, 
\begin{equation}
  \begin{aligned}
    \nu(\bar{\phi}_{\delta,n}) &\to \nu(\bar{\phi}_{\delta}), \quad \lambda(\bar{\phi}_{\delta,n}) \to \lambda(\bar{\phi}_{\delta}), \quad M_{\mathcal{O}}(\bar{\phi}_{\delta,n}) \to M_{\mathcal{O}}(\bar{\phi}_{\delta}), \quad \d \mathbb{\bar{P}}\mbox{-a.e. in} \quad Q_T\\
    M_\Gamma(\bar{\varphi}_{\delta,n}) &\to M_\Gamma(\bar{\varphi}_{\delta}), \quad \gamma(\bar{\varphi}_{\delta,n}) \to \gamma(\bar{\varphi}_{\delta}), \quad \d \mathbb{\bar{P}}\mbox{-a.e. on} \quad  \Sigma_T.
  \end{aligned}
\end{equation}
Now, from the convergence results established in this section and the DCT, we can pass to the limit in the weak formulation of \eqref{eq1.1aa}-\eqref{eq1.10aa},\dela{cf. \eqref{eq4.34a}-\eqref{eq4.34d},} to deduce that the process $(\bar{\bu}_{\delta},\bar{\phi}_{\delta},\bar{\varphi}_{\delta},\bar{\mu}_{\delta},\bar{\theta}_{\delta})$ satisfies $\bar{\mathbb{P}}$-a.s. and for all test functions
\begin{align*}
 \bv \in \bigcup_{n\in \mathbb{N}} V^n \subset V, \;\; \upsilon \in \bigcup_{n\in \mathbb{N}} H_{n}^1 \subset V_1, \;\; \upsilon \lvert_\Gamma \in \bigcup_{n\in \mathbb{N}} H_{\Gamma,n}^1 \subset V_\Gamma, \;\; (\psi,\psi \lvert_\Gamma) \bigcup_{n\in \mathbb{N}} \mathcal{V}_n \subset \mathbb{V},
\end{align*}
\begin{align}
\label{eq4.130a}
& 2 \int_{Q_t} \nu(\bar{\phi}_{\delta}) D\bar{\bu}_{\delta}: D\bv\, \d x\, \d s + \int_{Q_t} \lambda(\bar{\phi}_{\delta}) \bar{\bu}_{\delta} \cdot \bv \, \d x\, \d s + \int_{\Sigma_t}\gamma(\bar{\varphi}_{\delta}) \bar{\bu}_{\delta} \cdot \bv \, \d S \, \d s, \notag \\
&\quad 
=  - \int_{\Sigma_t} \bar{\varphi}_{\delta} \nabla_\Gamma \bar{\theta}_{\delta} \cdot \bv \, \d S \, \d s  - \int_{Q_t} \bar{\phi}_{\delta} \nabla \bar{\mu}_{\delta} \cdot \bv \, \d x\, \d s, \; \; t \in [0,T],  
\\
\label{eq4.130b}
&\duality{\bar{\phi}_{\delta}(t)}{\upsilon}{V_1}{V_1^\prime} + \int_{Q_t} [- \bar{\phi}_{\delta} \bar{\bu}_{\delta} \cdot \nabla \upsilon  +  M_\mathcal{O}(\bar{\phi}_{\delta}) \nabla \bar{\mu}_{\delta} \cdot \nabla \upsilon] \d x\, \d s
 = (\phi_{0}, \upsilon) + \left(\int_0^t F_{1}(\bar{\phi}_{\delta}) \,\d \bar{W}, \upsilon \right), 
   \\
\label{eq4.130c}
&\duality{\bar{\varphi}_{\delta}(t)}{\upsilon \lvert_\Gamma }{V_\Gamma}{V_\Gamma^\prime} + \int_{\Sigma_t} [-\bar{\varphi}_{\delta} \bar{\bu}_{\delta} \cdot \nabla_\Gamma \upsilon \lvert_\Gamma + M_\Gamma(\bar{\varphi}_{\delta}) \nabla_\Gamma \bar{\theta}_{\delta} \cdot \nabla_\Gamma \upsilon \lvert_\Gamma] \d S \, \d s  \notag \\
 &\quad= (\varphi_{0}, \upsilon \lvert_\Gamma)_\Gamma +  \left(\int_0^t F_2(\bar{\varphi}_{\delta}) \,\d \bar{W}_{\Gamma}, \upsilon \lvert_\Gamma \right)_\Gamma, 
 \\
\label{eq4.130d}
&\int_{\mathcal{O}} \bar{\mu}_{\delta} \psi \, \d x  + \int_{\Gamma} \bar{\theta}_{\delta} \psi \lvert_\Gamma \, \d S  
- \eps \int_{\mathcal{O}} \nabla \bar{\phi}_{\delta} \cdot \nabla \psi \, \d x  - \int_{\mathcal{O}} \frac1\eps F_\delta^\prime(\bar{\phi}_{\delta}) \psi \, \d x \notag \\ 
&\quad= \eps_\Gamma \int_{\Gamma} \nabla_\Gamma \bar{\varphi}_{\delta} \cdot \nabla_\Gamma \psi \lvert_\Gamma \, \d S +  \frac{1}{\eps_\Gamma} \int_{\Gamma} G_\delta^\prime(\bar{\varphi}_{\delta}) \psi \lvert_\Gamma \, \d S  
+ (1/K) \int_{\Gamma} (\bar{\varphi}_{\delta} - \bar{\phi}_{\delta}) (\eps \psi \lvert_\Gamma -\eps \psi)\, \d S. 
\end{align}
Note that \eqref{eq4.130a}-\eqref{eq4.130d} also hold for all $\bv \in V,\, \upsilon \in V_1,\, \upsilon \lvert_\Gamma \in V_\Gamma$, and $(\psi,\psi \lvert_\Gamma) \in \mathbb{V}$ by a density argument. 
%%%%%%%%%%%%%%%%%%%%%%%%%%%%%%%%%%%%%%%%%%%%%%%%%%%%%%%%%%%%%%%%%%%%%%%%%%%%%%%%%%%%%%%%%%%%%%%%%%%%%%%%%%%%%%%%%%%%%%%%%%%%%%%%%%%%%
\section{Uniform estimates with respect to delta} \label{sect_Uniform_estimates_delta}
%%%%%%%%%%%%%%%%%%%%%%%%%%%%%%%%%%%%%%%%%%%%%%%%%%%%%%%%%%%%%%%%%%%
In this section, we derive additional uniform estimates that are independent of $\delta$. 
%begin delete
\dela{
Consequently, the constant $C>0$, whose specific dependencies are explicitly noted when necessary, remains independent of $\delta$ and can vary from one line to another. 
}
%end delete
\newline
We start with the following inequalities. First, we have
   \begin{align*}
     \left \lvert (M_{\mathcal{O}}(\bar{\phi}_{\delta,n}) \nabla \bar{\mu}_{\delta,n}, \nabla \bar{\phi}_{\delta,n}) \right \rvert  
      \leq \frac{1}{2} \int_{\mathcal{O}} M_{\mathcal{O}}(\bar{\phi}_{\delta,n}) \lvert \nabla \bar{\mu}_{\delta,n} \rvert^2\,\d x + \frac{\bar{M}_0}{2} \lvert \nabla \bar{\phi}_{\delta,n} \rvert^2.
   \end{align*}
This, in conjunction with \eqref{Main_Galerkin_Equality}, \eqref{Eqn-E_tot-zeta-estimate}, and the fact that $\mathbb{P}= \bar{\mathbb{P}} \circ X_n^{-1}$ for every $n$, that is, $X_n$, preserves laws, yields that for every $t \in[0,T]$,
\begin{equation}\label{eq4.131}
\begin{aligned}
&\bar{\mathbb{E}}[\mathcal{E}_{tot}(\bar{\phi}_{\delta,n}(t), \bar{\varphi}_{\delta,n}(t))] + \bar{\mathbb{E}} \int_{Q_t} [2 \nu(\bar{\phi}_{\delta,n}) \lvert D\bar{\bu}_{\delta,n} \rvert^2 + \lambda(\bar{\phi}_{\delta,n}) \lvert \bar{\bu}_{\delta,n} \rvert^2] \d x \,\d s \\
& + \mathbb{\bar{E}} \int_{\Sigma_t} \gamma(\bar{\varphi}_{\delta,n}) \lvert \bar{\bu}_{\delta,n} \rvert^2 \,\d S\,\d s + \frac{1}{2} \mathbb{\bar{E}} \int_{Q_t} M_{\mathcal{O}}(\bar{\phi}_{\delta,n}) \lvert \nabla \bar{\mu}_{\delta,n} \rvert^2 \d x \,\d s + \mathbb{\bar{E}} \int_{\Sigma_t} M_\Gamma(\bar{\varphi}_{\delta,n}) \lvert \nabla_\Gamma \bar{\theta}_{\delta,n} \rvert^2 \d S \,\d s  
 \\
&\leq \mathcal{E}_{tot}(\phi_n(0),\varphi_n(0)) + \frac{1}{2} \mathbb{\bar{E}} \int_0^{t} \Vert F_{1,n}(\bar{\phi}_{\delta,n}) \Vert_{\mathscr{T}_2(U,L^2(\mathcal{O}))}^2\,\d s  + \frac{\eps_\Gamma C_2}{2} \mathbb{\bar{E}} \int_0^{t} \lvert \nabla_\Gamma \bar{\varphi}_{\delta,n} \rvert_{\Gamma}^2 \d s \\
& + \left(\frac{\bar{M}_0}{2} + \frac{\eps C_1}{2} \right) \mathbb{\bar{E}} \int_0^t  \lvert \nabla \bar{\phi}_{\delta,n} \rvert^2 \d s + \eps (1/K) \mathbb{\bar{E}} \int_0^{t} (\Vert F_{2,n}(\bar{\varphi}_{\delta,n})\Vert_{\mathscr{T}_2(U_\Gamma,L^2(\Gamma))}^2 + \Vert F_{1,n}(\bar{\phi}_{\delta,n}) \Vert_{\mathscr{T}_2(U,L^2(\Gamma))}^2) \d s  \\
& + \frac{1}{2 \eps} \mathbb{\bar{E}} \int_0^{t}  \sum_{k=1}^\infty \int_{\mathcal{O}} F^{\bis}_\delta(\bar{\phi}_{\delta,n}) \lvert F_{1,n}(\bar{\phi}_{\delta,n})e_{1,k} \rvert^2 \d x \,\d s 
+ \frac{1}{2 \eps_\Gamma} \mathbb{\bar{E}} \int_0^{t}  \sum_{k=1}^\infty \int_{\Gamma} G^{\bis}_\delta(\bar{\varphi}_{\delta,n}) \lvert F_{2,n}(\bar{\varphi}_{\delta,n}) e_{2,k}\rvert^2\d S\,\d s
\end{aligned}
\end{equation}
and for every $t \in[0,T]$, $n \in \mathbb{N}$, and every $\delta>0$,
\begin{equation}\label{Eqn-E_tot-zeta-estimate-1}
\begin{aligned}
& \bar{\mathbb{E}} \sup_{s \in[0,t]} [\mathcal{E}_{tot}(\bar{\phi}_{\delta,n}(s), \bar{\varphi}_{\delta,n}(s))]^\frac{r}{2} + \bar{\mathbb{E}} \left(\int_0^t \lvert \nabla \bar{\bu}_{\delta,n} \rvert^2 \d s \right)^\frac{r}{2}
+ \bar{\mathbb{E}} \left(\int_0^t \lvert \nabla\bar{\mu}_{\delta,n} \rvert^2 \d s \right)^\frac{r}{2}
\\
&  + \bar{\bar{\mathbb{E}}} \left(\int_0^{t} \lvert \nabla_\Gamma \bar{\theta}_{\delta,n} \rvert^2 \d s\right)^\frac{r}{2}
\leq C\bigg[ 1 + [\mathcal{E}_{tot}(\bar{\phi}_n(0),\bar{\varphi}_n(0))]^\frac{r}{2} +  \bar{\mathbb{E}} \int_0^t [\mathcal{E}_{tot}(\bar{\phi}_{\delta,n}(s), \bar{\varphi}_{\delta,n}(s))]^\frac{r}{2} \, \d s
\\
& + \bar{\mathbb{E}} \left(\sum_{k=1}^\infty \int_{Q_t} \lvert F^{\bis}_\delta(\bar{\phi}_{\delta,n}) \rvert \lvert F_{1,n}(\bar{\phi}_{\delta,n})e_{1,k} \rvert^2 \d x \,\d s \right)^\frac{r}{2} 
+ \bar{\mathbb{E}} \left(\sum_{k=1}^\infty \int_{\Sigma_t} \lvert G^{\bis}_\delta(\bar{\varphi}_{\delta,n}) \rvert \lvert F_{2,n}(\bar{\varphi}_{\delta,n}) e_{2,k} \rvert^2\d S\,\d s\right)^\frac{r}{2} 
 \bigg].
\end{aligned}
\end{equation}
The next step now is to pass to the limit by letting $n \to \infty$ in \eqref{eq4.131} and \eqref{Eqn-E_tot-zeta-estimate-1}. \newline
Let us consider \eqref{eq4.131}. By the Lipschitz continuity of the maps $F_{1,n}$, $F_{2,n}$, and similar reasoning as in  \eqref{eq4.123}, we infer that as $n \to \infty$,
    \begin{equation}\label{Eqn4.82}
      (F_{1,n}(\bar{\phi}_{\delta,n}), F_{2,n}(\bar{\varphi}_{\delta,n})) \to (F_1(\bar{\phi}_{\delta}),F_2(\bar{\varphi}_{\delta}))  \mbox{ in } L^\ell(\bar{\Omega};L^2(0,T;\mathscr{T}_2(\mathcal{U};\mathbb{H}))) \quad \forall \ell< r.
    \end{equation}
Next, since $F_\delta$ and $G_\delta$ are continuous and satisfy the linear growth condition, see \eqref{eq4.31}, and since 
     \begin{equation}%\label{convergence-initial-condition}
      \phi_{0,n} \to \phi_0 \mbox{ in }  V_1   \mbox{ and }  \varphi_{0,n} \to \varphi_0  \mbox{ in }  V_\Gamma,
     \end{equation}
we deduce that $F_\delta(\phi_{0,n}) \to F_\delta(\phi_{0})$ and $G_\delta(\varphi_{0,n}) \to G_\delta(\varphi_{0})$ as $n \to \infty$. 
\newline
Moreover, thanks to the embedding $H^1(\mathcal{O}) \embed L^2(\Gamma)$, we infer that for every $n \in \mathbb{N}$,
\begin{align*}
\frac12 \int_{\Gamma} (\varphi_{0,n} - \phi_{0,n})^2 \, \d S
\leq \lvert \varphi_{0,n} \rvert_\Gamma^2 + \lvert \phi_{0,n} \rvert_\Gamma^2
\leq \lvert \varphi_0 \rvert_\Gamma^2 + C(\mathcal{O})  \Vert \phi_0 \Vert_{V_1}^2.
\end{align*}
Consequently,
\begin{align*}
&\frac{\eps}{2} \lvert \nabla \phi_{0,n} \rvert^2 + \frac{\eps_\Gamma}{2} \lvert \nabla_\Gamma \varphi_{0,n} \rvert^2 + \frac{\eps}{2 K} \lvert \varphi_{0,n} - \phi_{0,n} \rvert_\Gamma^2
\leq C(\eps,\mathcal{O},K) \Vert \phi_0 \Vert_{V_1}^2 + (\eps_\Gamma + \eps (1/K)) \Vert \varphi_\Gamma \Vert_{V_\Gamma}^2,
\\
&\mathcal{E}_{tot}(\bar{\phi}_n(0),\bar{\varphi}_n(0))
\leq C(\eps,\mathcal{O},K) \Vert \phi_0 \Vert_{V_1}^2 + (\eps_\Gamma + \eps (1/K)) \Vert \varphi_\Gamma \Vert_{V_\Gamma}^2 + \frac{1}{\eps} \Vert F_\delta(\bar{\phi}_n(0)) \Vert_{L^1(\mathcal{O})} 
\\
&\quad \hspace{3.5 truecm} + \frac{1}{\eps_\Gamma} \Vert G_\delta(\bar{\varphi}_n(0)) \Vert_{L^1(\Gamma)}.
\end{align*}
In addition, to pass to the limit in \eqref{eq4.131}, we rely on the following weak convergences,
\begin{equation}\label{convergence-4.132}
\begin{aligned}
\sqrt{\nu(\bar{\phi}_{\delta,n})} D\bar{\bu}_{\delta,n} &\rightharpoonup \sqrt{\nu(\bar{\phi}_{\delta})} D\bar{\bu}_{\delta} &\mbox{ in } L^2(\bar{\Omega} \times Q_T;\mathbb{R}^{d \times d}), 
\\
\sqrt{\lambda(\bar{\phi}_{\delta,n})} \bar{\bu}_{\delta,n} &\rightharpoonup \sqrt{\lambda(\bar{\phi}_{\delta})} \bar{\bu}_{\delta} &\mbox{ in }  \mathbb{L}^2(\bar{\Omega} \times Q_T),
    \\
\sqrt{M_{\mathcal{O}}(\bar{\phi}_{\delta,n})} \nabla \bar{\mu}_{\delta,n} &\rightharpoonup \sqrt{M_{\mathcal{O}}(\bar{\phi}_{\delta})} \nabla \bar{\mu}_{\delta} &\mbox{ in }  \mathbb{L}^2(\bar{\Omega} \times Q_T),
          \\
\sqrt{\gamma(\bar{\varphi}_{\delta,n})} \bar{\bu}_{\delta,n} &\rightharpoonup \sqrt{\gamma(\bar{\varphi}_{\delta})} \bar{\bu}_{\delta}  &\mbox{ in }  \mathbb{L}^2(\bar{\Omega} \times  \Sigma_T),
               \\
\sqrt{M_{\Gamma}(\bar{\varphi}_{\delta,n})} \nabla_\Gamma \bar{\theta}_{\delta,n} &\rightharpoonup \sqrt{M_{\Gamma}(\bar{\varphi}_{\delta})} \nabla_\Gamma \bar{\theta}_{\delta} &\mbox{ in }  \mathbb{L}^2(\bar{\Omega} \times  \Sigma_T).
\end{aligned}
\end{equation}
Since $(\bar{\phi}_{\delta,n},\bar{\varphi}_{\delta,n}) \to (\bar{\phi}_{\delta},\bar{\varphi}_{\delta})$ in $C([0,T];\mathbb{H})$, $\bar{\mathbb{P}}$-a.s., cf. Proposition \ref{Jakubowski-Skorokhod-representation theorem}, by continuity of the map $\nu: \mathbb{R} \to \mathbb{R}$, we deduce that up to a subsequence, $\sqrt{\nu(\bar{\phi}_{\delta,n})} \to \sqrt{\nu(\bar{\phi}_{\delta})}$ $\d \mathbb{\bar{P}}$-a.e. in $Q_T$, as $n \to \infty$. Moreover, since the $\mu$-measure of $\bar{\Omega} \times Q_T$ is finite, the Severini–Egorov Theorem ensures that, for every $\epsilon>0$, there exists a measurable subset $\mathcal{B}_\epsilon$ of $\bar{\Omega} \times Q_T$ s.t. $\mu(\mathcal{B}_\epsilon^c)<\epsilon$, and $\sqrt{\nu(\bar{\phi}_{\delta,n})} \lvert_{1_{\mathcal{B}_\epsilon}} \underset{n \to \infty}{\to} \sqrt{\nu(\bar{\phi}_{\delta})} \lvert_{1_{\mathcal{B}_\epsilon}}$ in $L^\infty([0,T] \times \bar{\Omega})$.
\dela{and $(\sqrt{\nu(\bar{\phi}_{\delta,n})})_n$ converges to $\sqrt{\nu(\bar{\phi}_{\delta})}$ uniformly on $\mathcal{B}_\eps$}Here $\mathcal{B}_\epsilon^c= (\bar{\Omega} \times Q_T) \backslash \mathcal{B}_\epsilon$. \newline
Let $\bv \in \left \{A= (A_{ij})_{1\leq i, j \leq d}): \, A_{ij} \in  \bigcup_{n\in \mathbb{N}} V_\sigma^n \right \}$ and $\chi \in L^\infty([0,T] \times \bar{\Omega})$. 
Note that
\begin{align*}
&\int_{\bar{\Omega} \times Q_T} \sqrt{\nu(\bar{\phi}_{\delta,n}(t,x,\bar{\omega}))} D \bar{\bu}_{\delta,n}(t,x,\bar{\omega}): \chi(t,\bar{\omega}) \bv(x)\, \d \mu(t,x,\bar{\omega})
\\
&= \int_{\bar{\Omega} \times Q_T} \sqrt{\nu(\bar{\phi}_{\delta,n}(t,x,\bar{\omega}))} D \bar{\bu}_{\delta,n}(t,x,\bar{\omega}): \chi(t,\bar{\omega}) \bv(x) \lvert_{1_{\mathcal{B}_\epsilon}}\, \d \mu(t,x,\bar{\omega})
\\
&\qquad + \int_{\bar{\Omega} \times Q_T} \sqrt{\nu(\bar{\phi}_{\delta,n}(t,x,\bar{\omega}))} D \bar{\bu}_{\delta,n}(t,x,\bar{\omega}): \chi(t,\bar{\omega}) \bv(x) \lvert_{1_{\mathcal{B}^c_\epsilon}}\, \d \mu(t,x,\bar{\omega}).
\end{align*}
Now, since $\sqrt{\nu(\bar{\phi}_{\delta,n})} \lvert_{1_{\mathcal{B}_\eps}} \underset{n \to \infty}{\to} \sqrt{\nu(\bar{\phi}_{\delta})} \lvert_{1_{\mathcal{B}_\eps}}$ in $L^\infty(\bar{\Omega} \times Q_T)$, 
$\nabla \bar{\bu}_{\delta,n}  \underset{n \to \infty}{\rightharpoonup} \nabla \bar{\bu}_{\delta}$ in $L^2(\bar{\Omega} \times Q_T;\mathbb{R}^{d \times d})$, we infer that
\begin{align*}
&\int_{\bar{\Omega} \times Q_T} \sqrt{\nu(\bar{\phi}_{\delta,n}(t,x,\bar{\omega}))} D \bar{\bu}_{\delta,n}(t,x,\bar{\omega}): \chi(t,\bar{\omega}) \bv(x) \lvert_{1_{\mathcal{B}_\epsilon}}\, \d \mu(t,x,\bar{\omega})
\\
&\underset{n \to \infty}{\to} \int_{\bar{\Omega} \times Q_T} \sqrt{\nu(\bar{\phi}_{\delta}(t,x,\bar{\omega}))} D \bar{\bu}_{\delta}(t,x,\bar{\omega}):\chi(t,\bar{\omega}) \bv(x) \lvert_{1_{\mathcal{B}_\epsilon}}\, \d \mu(t,x,\bar{\omega}).
\end{align*}
Fix $\epsilon> 0$. By the H\"older inequality, the fact that $\nabla \bar{\bu}_{\delta,n}$ is uniformly bounded w.r.t. $n$ in $L^2(\bar{\Omega} \times Q_T;\mathbb{R}^{d \times d})$, we deduce that for every $n \in \mathbb{N}$,
\begin{align*}
&\left \lvert \int_{\bar{\Omega} \times Q_T} \sqrt{\nu(\bar{\phi}_{\delta,n}(t,x,\bar{\omega}))} D \bar{\bu}_{\delta,n}(t,x,\bar{\omega}): \chi(t,\bar{\omega}) \bv(x) \lvert_{1_{\mathcal{B}^c_\epsilon}}\, \d \mu(t,x,\bar{\omega}) \right \rvert
\\
&\leq [\bar{\nu}_0]^{\frac12} \Vert D\bar{\bu}_{\delta,n} \Vert_{L^2(\bar{\Omega} \times Q_T;\mathbb{R}^{d \times d})} \Vert \chi \Vert_{L^\infty([0,T] \times \bar{\Omega})} \Vert \bv \Vert_{L^\infty(\mathcal{O};\mathbb{R}^{d \times d})} [\mu(\mathcal{B}_\epsilon^c)]^{\frac12}
\leq C \eps^{1/2}.
\end{align*}
Subsequently, since $\epsilon$ is arbitrary, we infer that
   \begin{equation*}
     \lim_{n \to \infty} \left \lvert \int_{\bar{\Omega} \times Q_T} \sqrt{\nu(\bar{\phi}_{\delta,n}(t,x,\bar{\omega}))} D \bar{\bu}_{\delta,n}(t,x,\bar{\omega}):  \chi(t,\bar{\omega}) \bv(x) \lvert_{1_{\mathcal{B}^c_\epsilon}}\, \d \mu(t,x,\bar{\omega}) \right \rvert= 0.
  \end{equation*}
Consequently,
\begin{align*}
&\lim_{n \to \infty} \int_{\bar{\Omega} \times Q_T} \sqrt{\nu(\bar{\phi}_{\delta,n}(t,x,\bar{\omega}))} D \bar{\bu}_{\delta,n}(t,x,\bar{\omega}): \chi(t,\bar{\omega}) \bv(x)\, \d \mu(t,x,\bar{\omega})
\\
&= \int_{\bar{\Omega} \times Q_T} \sqrt{\nu(\bar{\phi}_{\delta}(t,x,\bar{\omega}))} D \bar{\bu}_{\delta}(t,x,\bar{\omega}): \chi(t,\bar{\omega}) \bv(x)\, \d \mu(t,x,\bar{\omega}).
\end{align*}
Due to density, the last equality holds for all test functions in the space $\mathbb{L}^2(\bar{\Omega} \times Q_T;\mathbb{R}^{d \times d})$.
\newline
This completes the proof of the first part of condition \eqref{convergence-4.132}. 
\newline
The proof of the other part can be proven similarly with straightforward modifications. \newline
Next, exploiting the previous convergences results and weak lower-semicontinuity of the norm, we deduce that for all $t \in [0,T]$ and for every $\delta>0$,
\begin{equation}\label{eq4.133}
\begin{aligned}
&\bar{\mathbb{E}}[\mathcal{E}_{tot}(\bar{\phi}_{\delta}(t), \bar{\varphi}_{\delta}(t))] + \bar{\mathbb{E}} \int_{Q_t} [2 \nu(\bar{\phi}_{\delta}) \lvert D\bar{\bu}_{\delta} \rvert^2 + \lambda(\bar{\phi}_{\delta}) \lvert \bar{\bu}_{\delta} \rvert^2] \,\d x \,\d s + \bar{\mathbb{E}} \int_{\Sigma_t} \gamma(\bar{\varphi}_{\delta}) \lvert \bar{\bu}_{\delta} \rvert^2 \,\d S\,\d s 
\\
& + \frac{1}{2} \bar{\mathbb{E}} \int_{Q_t} M_{\mathcal{O}}(\bar{\phi}_{\delta}) \lvert \nabla \bar{\mu}_{\delta} \rvert^2 \, \d x \,\d s + \bar{\mathbb{E}} \int_{\Sigma_t} M_\Gamma(\bar{\varphi}_{\delta}) \lvert \nabla_\Gamma \bar{\theta}_{\delta} \rvert^2 \,\d S \,\d s  
    \\
&\leq  C(\eps,\mathcal{O},K) \Vert \phi_0 \Vert_{V_1}^2 + (\eps_\Gamma + \eps (1/K)) \Vert \varphi_\Gamma \Vert_{V_\Gamma}^2 + \frac{1}{\eps} \Vert F_\delta(\phi_0) \Vert_{L^1(\mathcal{O})} + \frac{1}{\eps_\Gamma} \Vert G_\delta(\varphi_0) \Vert_{L^1(\Gamma)}
\\
& + \frac{1}{2} \bar{\mathbb{E}} \int_0^{t} \Vert F_{1}(\bar{\phi}_{\delta}) \Vert_{\mathscr{T}_2(U,L^2(\mathcal{O}))}^2\,\d s + 2 \eps (1/K) \bar{\mathbb{E}} \int_0^{t}  \sum_{k=1}^\infty \Vert F_1(\bar{\phi}_{\delta}) e_{1,k} \Vert_{L^2(\Gamma)}^2 \, \d s 
  \\
& + \eps (1/K) \bar{\mathbb{E}} \int_0^{t} \Vert F_2(\bar{\varphi}_{\delta})\Vert_{\mathscr{T}_2(U_\Gamma,L^2(\Gamma))}^2 \, \d s  + \frac{\eps_\Gamma C_2}{2} \bar{\mathbb{E}} \int_0^{t} \lvert \nabla_\Gamma \bar{\varphi}_{\delta} \lvert_{\Gamma}^2 \, \d s + \left(\frac{\bar{M}_0}{2} + \frac{\eps C_1}{2} \right) \bar{\mathbb{E}} \int_0^t  \lvert \nabla \bar{\phi}_{\delta} \rvert^2 \, \d s 
     \\
& + \frac{1}{2 \eps} \bar{\mathbb{E}} \int_0^{t}  \sum_{k=1}^\infty \int_{\mathcal{O}} F^{\bis}_\delta(\bar{\phi}_{\delta}) \lvert F_1(\bar{\phi}_{\delta})e_{1,k} \rvert^2 \, \d x \,\d s +  \frac{1}{2 \eps_\Gamma} \bar{\mathbb{E}} \int_0^{t}  \sum_{k=1}^\infty \int_{\Gamma} G^{\bis}_\delta(\bar{\varphi}_{\delta}) \lvert F_2(\bar{\varphi}_{\delta})e_{2,k} \rvert^2\, \d S\,\d s.
\end{aligned}
\end{equation}
Let us now consider \eqref{Eqn-E_tot-zeta-estimate-1}. We choose and fix $k \geq 1$. By \eqref{eq4.122a}-1) and \eqref{Eqn4.82}, we infer that
\begin{equation}
\begin{aligned}
\lvert F^{\bis}_\delta(\bar{\phi}_{\delta,n}) \rvert \lvert F_{1,n}(\bar{\phi}_{\delta,n}) e_{1,k} \rvert^2 
& \underset{n \to \infty}{\to} \lvert F^{\bis}_\delta(\bar{\phi}_{\delta}) \rvert \lvert F_1(\bar{\phi}_{\delta}) e_{1,k} \rvert^2 \mbox{ a.e. in } \bar{\Omega} \times Q_T,
  \\
\lvert G^{\bis}_\delta(\bar{\varphi}_{\delta,n}) \rvert \lvert F_{2,n}(\bar{\varphi}_{\delta,n}) e_{2,k} \rvert^2 & \underset{n \to \infty}{\to} \lvert G^{\bis}_\delta(\bar{\varphi}_{\delta}) \rvert \lvert F_2(\bar{\varphi}_{\delta}) e_{2,k} \rvert^2 \mbox{ a.e. in } \bar{\Omega} \times  \Sigma_T.
\end{aligned}
\end{equation}
Moreover, as in \eqref{eq4.85} and \eqref{eq4.92a}, we obtain
\begin{align*}
&\sum_{k=1}^\infty \int_{\mathcal{O}} \lvert F^{\bis}_\delta(\bar{\phi}_{\delta,n}) \rvert \lvert F_{1,n}(\bar{\phi}_{\delta,n})e_{1,k} \rvert^2 \, \d x +  \sum_{k=1}^\infty \int_{\Gamma} \lvert G^{\bis}_\delta(\bar{\varphi}_{\delta,n}) \rvert \lvert F_{2,n}(\bar{\varphi}_{\delta,n})e_{2,k} \rvert^2\, \d S
\\
&\leq (\delta^{-1} + \tilde{c}_F) C_1 + (\delta^{-1} + \tilde{c}_G) C_3.
\end{align*}
Hence, by the DCT, we deduce that as $n \to \infty$,
\begin{align*}
\bar{\mathbb{E}} \left(\sum_{k=1}^\infty \int_{Q_t} \lvert F^{\bis}_\delta(\bar{\phi}_{\delta,n}) \rvert \lvert F_{1,n}(\bar{\phi}_{\delta,n}) e_{1,k} \rvert^2 \d x \d s \right)^{r/2} 
&\to \bar{\mathbb{E}} \left(\sum_{k=1}^\infty \int_{Q_t} \lvert F^{\bis}_\delta(\bar{\phi}_{\delta}) \rvert \lvert F_1(\bar{\phi}_{\delta}) e_{1,k} \rvert^2 \d x \d s \right)^{r/2}
 \\
\bar{\mathbb{E}} \left(\sum_{k=1}^\infty \int_{\Sigma_t} \lvert G^{\bis}_\delta(\bar{\varphi}_{\delta,n}) \rvert \lvert F_{2,n}(\bar{\varphi}_{\delta,n}) e_{2,k} \rvert^2 \d S \d s\right)^{r/2} 
&\to \bar{\mathbb{E}} \left(\sum_{k=1}^\infty \int_{\Sigma_t} \lvert G^{\bis}_\delta(\bar{\varphi}_{\delta}) \rvert \lvert F_2(\bar{\varphi}_{\delta}) e_{2,k} \rvert^2 \d S \d s\right)^{r/2}. 
\end{align*}
We recall that $\bar{\phi}_{\delta,n} \to \bar{\phi}_{\delta}$ in $C([0,T];H_w^1(\mathcal{O}))$, $\bar{\mathbb{P}}$-a.s., see Proposition \ref{Jakubowski-Skorokhod-representation theorem}. So, using Corollary E.3 and Proposition E.1 from \cite{Brzezniak+Ferrario+Zanella_2024}, and the Fatou lemma, we deduce that
\begin{equation}\label{Eqn-4.83}
\begin{aligned}
\bar{\mathbb{E}} \sup_{s \in [0,t]} \lvert \nabla \bar{\phi}_{\delta}(s) \rvert^{r}
=\int_{\bar{\Omega}} \sup_{s \in [0,t]} \lvert \nabla \bar{\phi}_{\delta}(s,\bar{\omega}) \rvert^{r} \, \d \bar{\mathbb{P}}(\bar{\omega})
&\leq \int_{\bar{\Omega}} \liminf_{n \to \infty} \sup_{s \in [0,t]} \lvert \nabla \bar{\phi}_{\delta,n}(s,\bar{\omega}) \rvert^{r} \, \d \bar{\mathbb{P}}(\bar{\omega})
\\
&\leq \liminf_{n \to \infty} \int_{\bar{\Omega}} \sup_{s \in [0,t]} \lvert \nabla \bar{\phi}_{\delta,n}(s,\bar{\omega}) \rvert^{r} \, \d \bar{\mathbb{P}}(\bar{\omega}).
\end{aligned}
\end{equation}
In a similar way, we show that
\begin{align*}
\bar{\mathbb{E}} \sup_{s \in[0,t]} [\mathcal{E}_{tot}(\bar{\phi}_{\delta}(s), \bar{\varphi}_{\delta}(s))]^{r/2}
\leq \liminf_{n \to \infty} \int_{\bar{\Omega}} \sup_{s \in[0,t]} [\mathcal{E}_{tot}(\bar{\phi}_{\delta,n}(s,\bar{\omega}), \bar{\varphi}_{\delta,n}(s,\bar{\omega}))]^{r/2} \, \d \bar{\mathbb{P}}(\bar{\omega}).
\end{align*}
From now on, recalling \eqref{eq4.122a} and making use of the previous observations, we can then pass to the limit by letting $n \to \infty$ in \eqref{Eqn-E_tot-zeta-estimate-1} to deduce that for all $t \in [0,T]$,
\begin{equation}\label{Eqn-E_tot-zeta-estimate-2}
\begin{aligned}
& \bar{\mathbb{E}} \sup_{s \in[0,t]} [\mathcal{E}_{tot}(\bar{\phi}_{\delta}(s), \bar{\varphi}_{\delta}(s))]^{r/2} + \bar{\mathbb{E}} \left[ \left(\int_0^t \lvert \nabla \bar{\bu}_{\delta} \rvert^2 \d s \right)^{r/2}
+ \left(\int_0^t \lvert \nabla\bar{\mu}_{\delta} \rvert^2 \d s \right)^{r/2} + \left(\int_0^{t} \lvert \nabla_\Gamma \bar{\theta}_{\delta} \rvert^2 \d s\right)^{r/2} \right]
\\
&\leq C\bigg[ 1 + \Vert \phi_0 \Vert_{V_1}^r + \Vert \varphi_\Gamma \Vert_{V_\Gamma}^r + \Vert F_\delta(\phi_0) \Vert_{L^1(\mathcal{O})}^{r/2} + \Vert G_\delta(\varphi_0) \Vert_{L^1(\Gamma)}^{r/2}
+  \bar{\mathbb{E}} \int_0^t [\mathcal{E}_{tot}(\bar{\phi}_{\delta}(s), \bar{\varphi}_{\delta}(s))]^{r/2} \, \d s
\\
& + \bar{\mathbb{E}} \left(\sum_{k=1}^\infty \int_{Q_t} \lvert F^{\bis}_\delta(\bar{\phi}_{\delta}) \rvert \lvert F_1(\bar{\phi}_{\delta})e_{1,k} \rvert^2 \d x \,\d s \right)^{r/2} 
+ \bar{\mathbb{E}} \left(\sum_{k=1}^\infty \int_{\Sigma_t} \lvert G^{\bis}_\delta(\bar{\varphi}_{\delta}) \rvert \lvert F_{\Gamma}(\bar{\varphi}_{\delta}) e_{2,k} \rvert^2\d S\,\d s\right)^{p/2} 
 \bigg],
\end{aligned}
\end{equation}
where the constant $C$ is independent of $\delta$. 
\newline
Let us consider the terms $\Vert F_\delta(\phi_0) \Vert_{L^1(\mathcal{O})}$ and $\Vert G_\delta(\varphi_0) \Vert_{L^1(\Gamma)}$. From \ref{item:P1} and \ref{item:P3}, we infer that
\[
\Vert F_\delta(\phi_0) \Vert_{L^1(\mathcal{O})} \leq \Vert F(\phi_0) \Vert_{L^1(\mathcal{O})}.
\]
Analogously,
   \begin{equation*}
     \Vert G_\delta(\varphi_0) \Vert_{L^1(\Gamma)} \leq \Vert G(\varphi_0) \Vert_{L^1(\Gamma)}.
   \end{equation*}
Now, we consider the last two terms on the RHS of \eqref{Eqn-E_tot-zeta-estimate-2}. We observe that
%begin delete
\dela{
\begin{align*}
& \sum_{k=1}^\infty \int_{Q_t} \lvert F^{\bis}_\delta(\bar{\phi}_{\delta}(s,x)) \rvert \lvert F_1(\bar{\phi}_{\delta}(s,x))e_{1,k}(x) \rvert^2 \, \d x \, \d s
\\& \leq 
\left( \sum_{k=1}^\infty \sup_{s,x} \lvert F_1(\bar{\phi}_{\delta}(s,x))e_{1,k}(x) \rvert^2  \right) \int_{Q_t} \lvert F^{\bis}_\delta(\bar{\phi}_{\delta}(s,x)) \rvert \, \d x \, \d s
\end{align*}
}
%end delete
\begin{align*}
& \sum_{k=1}^\infty \int_{Q_t} \lvert F^{\bis}_\delta(\bar{\phi}_{\delta}(s,x)) \rvert \lvert F_1(\bar{\phi}_{\delta}(s,x))e_{1,k}(x) \rvert^2 \, \d x \, \d s
= \sum_{k=1}^\infty \int_{Q_t} \lvert F^{\bis}_\delta(\bar{\phi}_{\delta}(s,x)) \rvert \lvert F_1(\bar{\phi}_{\delta}(s,x))e_{1,k}(x) \rvert^2 \, \d x\, \d s 
 \\
&\leq \int_0^t \Vert F^{\bis}_\delta(\bar{\phi}_{\delta}(s)) \Vert_{L^1(\mathcal{O})} \, \d s \cdot \sum_{k=1}^\infty \Vert F_1(\bar{\phi}_{\delta})e_{1,k} \Vert_{L^\infty(Q_t)}^2 
\leq \text{meas}(Q_t) \tilde{C}_1 \int_0^t \Vert F^{\bis}_\delta(\bar{\phi}_{\delta}(s)) \Vert_{L^1(\mathcal{O})} \, \d s.
\end{align*}
This, together with the assumption \eqref{condition_ F'_and_F''} on $F^{\bis}$ implies that
  \begin{equation*}
    \sum_{k=1}^\infty \int_{Q_t} \lvert F^{\bis}_\delta(\bar{\phi}_{\delta}(s,x)) \rvert \lvert F_1(\bar{\phi}_{\delta}(s,x)) e_{1,k}(x) \rvert^2 \, \d x \, \d s
    \leq C \left(1 + \int_0^t \Vert F_\delta(\bar{\phi}_{\delta}(s)) \Vert_{L^1(\mathcal{O})} \, \d s \right),
  \end{equation*}
for a certain constant $C$ which is independent of $\delta$. Next, H\"older's inequality leads to
\begin{align*}
\bar{\mathbb{E}} \left(\sum_{k=1}^\infty \int_{Q_t} \lvert F^{\bis}_\delta(\bar{\phi}_{\delta}) \rvert \lvert F_1(\bar{\phi}_{\delta})e_{1,k} \rvert^2 \, \d x \,\d s \right)^{r/2} 
\leq C \left[1 + t^\frac{r-2}{2} \bar{\mathbb{E}} \int_0^t \Vert F_\delta(\bar{\phi}_{\delta}) \Vert_{L^1(\mathcal{O})}^{r/2} \, \d s \right].
\end{align*}
Similarly, we deduce that there exists a constant $C$ independent of $\delta$ such that
   \begin{equation*}
     \bar{\mathbb{E}} \left(\sum_{k=1}^\infty \int_{\Sigma_t} \lvert G^{\bis}_\delta(\bar{\varphi}_{\delta}) \rvert \lvert 
       F_2(\bar{\varphi}_{\delta}) e_{2,k} \rvert^2 \, \d S \,\d s \right)^{r/2} 
     \leq C \left[1 + t^\frac{r-2}{2} \bar{\mathbb{E}} \int_0^t \Vert G_\delta(\bar{\phi}_{\delta}) \Vert_{L^1(\Gamma)}^{r/2} \, \d s \right], \; t \in [0,T].
   \end{equation*}
Plugging the above estimates into the RHS of \eqref{Eqn-E_tot-zeta-estimate-2}, we are led to 
\begin{equation}\label{Eqn-E_tot-zeta-estimate-3}
\begin{aligned}
& \bar{\mathbb{E}} \sup_{s \in[0,t]} [\mathcal{E}_{tot}(\bar{\phi}_{\delta}(s), \bar{\varphi}_{\delta}(s))]^{r/2} + \bar{\mathbb{E}} \bigg[ \left(\int_0^t \lvert \nabla \bar{\bu}_{\delta}(s) \rvert^2 \d s \right)^{r/2}
+ \left(\int_0^t \lvert \nabla\bar{\mu}_{\delta}(s) \rvert^2 \d s \right)^{r/2}
\\
& + \left(\int_0^{t} \lvert \nabla_\Gamma \bar{\theta}_{\delta}(s) \rvert^2 \d s\right)^{r/2} \bigg]
\leq C\bigg[ 1 + \Vert \phi_0 \Vert_{V_1}^r + \Vert \varphi_\Gamma \Vert_{V_\Gamma}^r + \Vert F(\phi_0) \Vert_{L^1(\mathcal{O})}^{r/2}
\\
& + \Vert G(\varphi_0) \Vert_{L^1(\Gamma)}^{r/2}
+  \bar{\mathbb{E}} \int_0^t \sup_{0\leq r \leq s} [\mathcal{E}_{tot}(\bar{\phi}_{\delta}(r), \bar{\varphi}_{\delta}(r))]^{r/2} \, \d s 
 \bigg], \; \; t \in [0,T].
\end{aligned}
\end{equation}
Let us point out that the constant $C$ in \eqref{Eqn-E_tot-zeta-estimate-3} does not depend on $\delta$. Therefore, by applying the Gronwall lemma, we deduce that there exists a positive constant $c_r$, with $c_r$ being independent of $\delta$ such that
\begin{equation}\label{eq4.143}
\begin{aligned}
\Vert \bar{\bu}_{\delta} \Vert_{L^r(\bar{\Omega};L^2(0,T;\mathbb{L}^2(\Gamma)))}  &\leq c_r, 
\\
\Vert \bar{\bu}_{\delta} \Vert_{L^r(\bar{\Omega};L^2(0,T;V))} &\leq c_r,
   \\
\Vert \bar{\phi}_{\delta} \Vert_{L^r(\bar{\Omega};L^\infty(0,T;V_1))} &\leq c_r, 
      \\
\Vert (F_\delta(\bar{\phi}_{\delta}),G_\delta(\bar{\varphi}_{\delta})) \Vert_{L^{r/2}(\bar{\Omega};L^\infty(0,T;L^1(\mathcal{O}) \times L^1(\Gamma)))}  &\leq c_r,
\\
\Vert \nabla_\Gamma \bar{\varphi}_{\delta} \Vert_{L^r(\bar{\Omega};L^\infty(0,T;\mathbb{L}^2(\mathcal{O})))} &\leq c_r,  \\
\Vert(\nabla \bar{\mu}_{\delta},\nabla_\Gamma \bar{\theta}_{\delta}) \Vert_{L^r(\bar{\Omega};L^2(0,T;\mathbb{H}))} &\leq c_r,
   \\
\Vert \bar{\varphi}_{\delta} - \bar{\phi}_{\delta} \Vert_{L^r(\bar{\Omega};L^\infty(0,T;L^2(\Gamma)))} &\leq c_r.
\end{aligned}
\end{equation}
Moreover, owing to \eqref{eq4.130d} and \eqref{eq4.143}, and arguing similarly as in \eqref{Eqn-L2-norm-mu-and-theta-delta-n}, we deduce there exists a positive constant $c_r$ independent of $\delta$ such that
    \begin{equation}\label{eq4.144}
      \Vert (\bar{\mu}_{\delta},\bar{\theta}_{\delta}) \Vert_{L^r(\bar{\Omega};L^2(0,T;\mathbb{H}))} \leq c_r.
     \end{equation}
Furthermore, from \eqref{eq4.143} and similar reasoning as in \eqref{eq4.99}, we deduce that 
    \begin{equation}\label{uniform_delta_estimate_varphi}
       \Vert \bar{\varphi}_{\delta} \Vert_{L^r(\bar{\Omega};L^\infty(0,T;L^2(\Gamma)))} \leq c_r.
    \end{equation}
On the other hand, from the assumption \ref{item:H2}, the estimates \eqref{eq4.143}, and \eqref{uniform_delta_estimate_varphi}, we get
    \begin{equation}
       \Vert (F_1(\bar{\phi}_{\delta}),F_2(\bar{\varphi}_{\delta})) \Vert_{L^\infty([0,T] \times \bar{\Omega};\mathscr{T}_2(\mathcal{U};\mathbb{H})) \cap L^r(\bar{\Omega};L^\infty(0,T;\mathscr{T}_2(\mathcal{U};\mathbb{V})))} \leq c_r. 
    \end{equation}
Let us now prove that $(\bar{\phi}_{\delta},\bar{\varphi}_{\delta})_\delta$ satisfies the following property: For every $\delta>0$,
    \begin{align}\label{Eqn-(phi_{n,delta},varphi_{n,delta})-H2-estimate-1}
       \bar{\mathbb{E}} \left(\int_0^T \Vert(\bar{\phi}_{\delta}(s),\bar{\varphi}_{\delta}(s)) \Vert_{\mathcal{H}^2}^2 \,\d s \right)^{r/2} \leq c_r.
    \end{align}
%begin delete
\dela{
\coma{
Firstly, taking $\psi= \mathbb{A}_\delta(\bar{\phi}_{\delta}) \coloneq F^\prime_\delta(\bar{\phi}_{\delta}) +  \tilde{c}_F \bar{\phi}_{\delta}$ and $\psi \lvert_\Gamma= \mathbb{A}_{\Gamma,\delta}(\bar{\varphi}_{\delta}) \coloneq G_\delta^\prime(\bar{\varphi}_{\delta}) + \tilde{c}_G \bar{\varphi}_{\delta}$ in the variational formulation \eqref{eq4.130d}, we obtain
\begin{equation}
\begin{aligned}
&\eps \int_{\mathcal{O}} \Psi^\prime_\delta(\bar{\phi}_{\delta}) \lvert \nabla \bar{\phi}_{\delta} \rvert^2 \, \d x + \eps_\Gamma \int_{\Gamma} \Psi^\prime_{\Gamma,\delta}(\bar{\varphi}_{\delta}) \lvert \nabla_\Gamma \bar{\varphi}_{\delta} \rvert^2 \, \d S + \frac1\eps \lvert F_\delta^\prime(\bar{\phi}_{\delta}) \rvert^2 + \frac{1}{\eps_\Gamma} \lvert G_\delta^\prime(\bar{\varphi}_{\delta}) \rvert_\Gamma^2
  \\
& = \int_{\mathcal{O}} \bar{\mu}_{\delta} F^\prime_\delta(\bar{\phi}_{\delta}) \, \d x + \tilde{c}_F \int_{\mathcal{O}} \bar{\mu}_{\delta}  \bar{\phi}_{\delta} \, \d x + \int_{\Gamma} \bar{\theta}_{\delta} G_\delta^\prime(\bar{\varphi}_{\delta}) \, \d S + \tilde{c}_G \int_{\Gamma} \bar{\theta}_{\delta}  \bar{\varphi}_{\delta} \, \d S - \frac{\tilde{c}_G}{\eps_\Gamma} \int_{\Gamma} G_\delta^\prime(\bar{\varphi}_{\delta}) \bar{\varphi}_{\delta} \, \d S
     \\
&\quad - \frac{\tilde{c}_F}{\eps} \int_{\mathcal{O}} F_\delta^\prime(\bar{\phi}_{\delta}) \bar{\phi}_{\delta} \, \d x + \eps [K] \int_{\Gamma} (\bar{\varphi}_{\delta} - \bar{\phi}_{\delta}) G_\delta^\prime(\bar{\varphi}_{\delta})  \, \d S  +  \eps \tilde{c}_G  [K]  \int_{\Gamma} (\bar{\varphi}_{\delta} - \bar{\phi}_{\delta}) \bar{\varphi}_{\delta} \, \d S 
\\
&\quad - \coma{ \eps [K] \int_{\Gamma} (\bar{\varphi}_{\delta} - \bar{\phi}_{\delta}) F^\prime_\delta(\bar{\phi}_{\delta}) \, \d S }   - \eps \tilde{c}_F [K]  \int_{\Gamma} (\bar{\varphi}_{\delta} - \bar{\phi}_{\delta}) \bar{\phi}_{\delta} \,\d S.  
\end{aligned}
\end{equation}
}
\todoan{
At this level, I want to deduce an estimate for $\lvert F_\delta^\prime(\bar{\phi}_{\delta}) \rvert^2 \coloneq \Vert F_\delta^\prime(\bar{\phi}_{\delta}) \Vert_{L^2}^2$ and $\lvert G_\delta^\prime(\bar{\varphi}_{\delta}) \rvert^2 \coloneq \Vert G_\delta^\prime(\bar{\varphi}_{\delta}) \Vert_{L^2}^2$. However, it seems difficult to control the term $\coma{ \eps [K] \int_{\Gamma} (\bar{\varphi}_{\delta} - \bar{\phi}_{\delta}) F^\prime_\delta(\bar{\phi}_{\delta}) \, \d S }$. So, to deal with this issue, we consider the additional condition \eqref{F''_and G''_additional_condition}.
}
}
%end delete
The proof is given in four steps.

\noindent
\textbf{Step1:} Since the sequence $(\bar{\phi}_{\delta,n},\bar{\varphi}_{\delta,n})_n$ enjoys the same property as $(\phi_{\delta,n},\varphi_{\delta,n})_n$ for each $\delta>0$, then from \eqref{Eqn-4.57} and \eqref{Eqn-4.59}, we infer that there exists $C=C(K,\eps,\eps_\Gamma,\mathcal{O},\Gamma)$ such that for every $n \in \mathbb{N}$ and every $\delta>0$,
      \begin{equation}
        \Vert (\bar{\phi}_{\delta,n},\bar{\varphi}_{\delta,n}) \Vert_{\mathcal{H}^2}^2
        \leq  C (\Vert (\bar{\mu}_{\delta,n},\bar{\theta}_{\delta,n}) \Vert_{\mathbb{H}}^2 + \lvert F_\delta^\prime(\bar{\phi}_{\delta,n})\rvert^2 + \lvert G_\delta^\prime(\bar{\varphi}_{\delta,n}) \rvert_\Gamma^2 +  \Vert (\bar{\phi}_{\delta,n},\bar{\varphi}_{\delta,n}) \Vert_{\mathbb{V}}^2).
      \end{equation}
From the previous inequality, the proposition \ref{Jakubowski-Skorokhod-representation theorem} and the weak lower semicontinuity of the norm together with the assumption \ref{item:H8}, we deduce that for every $\delta>0$,
    \begin{equation}\label{Eqn-bar-delta-phi-varphi-H_2-norm}
        \Vert (\bar{\phi}_{\delta},\bar{\varphi}_{\delta}) \Vert_{\mathcal{H}^2}^2
        \leq  c_\ast (\Vert (\bar{\mu}_{\delta},\bar{\theta}_{\delta}) \Vert_{\mathbb{H}}^2 + \lvert F_\delta^\prime(\bar{\phi}_{\delta})\rvert^2 + \lvert G_\delta^\prime(\bar{\varphi}_{\delta}) \rvert_\Gamma^2 +  \Vert (\bar{\phi}_{\delta},\bar{\varphi}_{\delta}) \Vert_{\mathbb{V}}^2).
    \end{equation} 
\textbf{Step2:} Let $G_\delta^\prime(r) \coloneq \mathbb{A}_{\Gamma,\delta}(r) - \tilde{c}_G r, \, r \in \mathbb{R}$, where $\mathbb{A}_{\Gamma,\delta}$ is the regularized Yosida of $\mathbb{A}_{\Gamma}= G^\prime + \tilde{c}_G I$. 
Since $\lvert \mathbb{A}_{\Gamma,\delta}(\cdot) \rvert \leq \lvert \mathbb{A}_{\Gamma}(\cdot) \rvert$ for every $\delta>0$, cf. \cite[Chapter II, Proposition 1.1]{Barbu_1976}, we infer that for every $\delta \in(0,1)$,
\begin{equation}\label{Eqn-4.99}
\begin{aligned}
& \lvert G_\delta^\prime(\bar{\varphi}_{\delta}) \vert
\leq \lvert \mathbb{A}_{\Gamma,\delta}(\bar{\varphi}_{\delta}) \rvert + \tilde{c}_G \lvert \bar{\varphi}_{\delta} \rvert 
\leq \lvert \mathbb{A}_{\Gamma}(\bar{\varphi}_{\delta}) \rvert  + \tilde{c}_G \lvert \bar{\varphi}_{\delta} \rvert
  \\
&\leq \lvert G^\prime(\bar{\varphi}_{\delta}) \rvert + 2 \tilde{c}_G \lvert \bar{\varphi}_{\delta} \rvert 
\leq c_{G^\prime} + c_{G^\prime} \lvert \bar{\varphi}_{\delta} \rvert^{q-1} + 2 \tilde{c}_G \lvert \bar{\varphi}_{\delta} \rvert 
\leq C(q,c_{G^\prime},\tilde{c}_G) (1 + \lvert \bar{\varphi}_{\delta} \rvert^{q-1}).
\end{aligned}
\end{equation}
Notice that in \eqref{Eqn-4.99}, we have also used the assumption \eqref{F'_and G'_additional_condition}.
\newline
Similarly, we show that for every $\delta>0 $,
    \begin{equation}\label{Eqn-4.100}
      \lvert F_\delta^\prime(\bar{\phi}_{\delta}) \rvert 
       \leq C(p,c_{F^\prime},\tilde{c}_F) (1 + \lvert \bar{\phi}_{\delta} \rvert^{p-1}).
    \end{equation}
Now, from \eqref{Eqn-4.99} and the Sobolev embedding theorem, we infer that for every $\delta>0$,
     \begin{equation*}
       \lvert G_\delta^\prime(\bar{\varphi}_{\delta})\rvert_\Gamma
       \leq C (1 + \Vert \bar{\varphi}_{\delta} \Vert_{V_\Gamma}^{q-1}),
     \end{equation*}
This, together with \eqref{uniform_delta_estimate_varphi} yields that
   \begin{equation}\label{eq4.147}
      \Vert G_\delta^\prime(\bar{\varphi}_{\delta}) \Vert_{L^r(\bar{\Omega};L^\infty(0,T;L^2(\Gamma)))} \leq c_{r}.
   \end{equation}
Next, using \eqref{Eqn-4.100} and the embedding of $V_1 \embed L^\frac{6(p-1)}{5}(\mathcal{O})$, we deduce that for every $\delta>0$,
    \begin{equation*}
    \Vert F^\prime(\bar{\phi}_{\delta}) \Vert_{L^{6/5}(\mathcal{O})}^{6/5}
    \leq C + C \Vert \bar{\phi}_{\delta} \Vert_{L^\frac{6(p-1)}{5}(\mathcal{O})} 
     \leq C (1 + \Vert \bar{\phi}_{\delta} \Vert_{V_1}).
   \end{equation*}
Then, by the estimate \eqref{eq4.143}, we infer that there exists $c_r>0$ independent of $\delta$ such that
     \begin{equation}\label{eq4.159}
       \Vert F_\delta^\prime(\bar{\phi}_{\delta}) \Vert_{L^\frac{6 r}{5}(\bar{\Omega}; L^\infty(0,T;L^{\frac65}(\mathcal{O})))}
       \leq c_r.
     \end{equation}
\textbf{Step3:} \dela{We assume that $\mathcal{O}$ if of class $C^2$ and if $d=3$, we further assume $p<6$.} 
Since $H^1(\mathcal{O}) \embed L^{2(p-1)}(\mathcal{O})$
\begin{equation*}
\begin{aligned}
&\mbox{ if } p \geq 2 \; \mbox{ when } \;  d=2,
\\
&\mbox{ if } 2 \leq p \leq 4 \; \mbox{ when } \;  d=3,
\end{aligned}
\end{equation*}
we infer from \eqref{Eqn-4.100} in \textbf{Step2} that for every $\delta>0$,
\begin{equation}\label{eq4.148-1}
\begin{aligned}
 \lvert F_\delta^\prime(\bar{\phi}_{\delta}) \rvert
\leq C \left(1 + \Vert \bar{\phi}_{\delta} \Vert_{L^{2(p-1)}(\mathcal{O})}^{p-1} \right) 
\leq C \left(1 + \Vert \bar{\phi}_{\delta} \Vert_{V_1}^{p-1} \right) \quad &\mbox{ if } p \geq 2 \; \mbox{ when } \;  d=2,
\\
 \lvert F_\delta^\prime(\bar{\phi}_{\delta}) \rvert \leq C \left(1 + \Vert \bar{\phi}_{\delta} \Vert_{V_1}^{p-1} \right) \quad &\mbox{ if } 2 \leq p \leq 4 \; \mbox{ when } \;  d=3.
\end{aligned}
\end{equation}
Moreover, by the Sobolev inequalities, see for instance \cite[Theorem 10.1]{Friedman_1969}, there exists $C= C(\mathcal{O},p)$ such that for every $\delta>0$,
\begin{equation*}
\Vert \bar{\phi}_{\delta} \Vert_{L^{2(p-1)}(\mathcal{O})}
\leq C \Vert \bar{\phi}_{\delta} \Vert_{L^6(\mathcal{O})}^\frac{p+2}{2(p-1)} \Vert \bar{\phi}_{\delta} \Vert_{H^2(\mathcal{O})}^\frac{p-4}{2(p-1)} \quad \mbox{ if } 4< p \leq 6 \; \mbox{ when } \;  d=3.
\end{equation*}
This, jointly with  \eqref{Eqn-4.100} yields that for every $\delta>0$,
\begin{equation}\label{eq4.148-2}
\begin{aligned}
 \lvert F_\delta^\prime(\bar{\phi}_{\delta}) \rvert \leq C \left(1 +  \Vert \bar{\phi}_{\delta} \Vert_{L^6(\mathcal{O})}^\frac{p+2}{2} \Vert \bar{\phi}_{\delta} \Vert_{H^2(\mathcal{O})}^\frac{p-4}{2} \right) \quad \mbox{ if } 4< p \leq 6 \; \mbox{ when } \;  d=3.
\end{aligned}
\end{equation}
Let us choose and fix $\epsilon>0$. Without loss of generality, we assume $p\in(4,6)$. Thanks to \eqref{eq4.148-2}, the fact that $H^1(\mathcal{O}) \embed L^6(\mathcal{O})$ jointly with the Young inequality, we infer that there exists a constant $C=C(\mathcal{O},p,c_{F^\prime},\tilde{c}_F)$ independent of $\delta$ such that, 
\begin{equation}\label{eq4.150}
\begin{aligned}
\lvert F_\delta^\prime(\bar{\phi}_{\delta}) \rvert
\leq C \left(1 + \epsilon^{\frac{p-4}{2p-4}} + \epsilon^{-\frac{p-4}{6-p}} \Vert \bar{\phi}_{\delta} \Vert_{V_1}^\frac{p+2}{6-p} \right) + \epsilon \Vert \bar{\phi}_{\delta} \Vert_{H^2(\mathcal{O})} \mbox{ if } d=3.
\end{aligned}
\end{equation}
Next, plugging \eqref{eq4.150} into the RHS of \eqref{Eqn-bar-delta-phi-varphi-H_2-norm}, choosing $\epsilon=\frac{1}{\sqrt{2c_\ast}}$, we deduce in the case $d=3$ and when $p \in(4,6)$ that there exists a constant $C>0$ depending on $K,\,\eps,\,\eps_\Gamma,\,\mathcal{O},\,\Gamma,\,c_{F^\prime},\,\tilde{c}_F,\,c_{G^\prime},\,\tilde{c}_G,\,q$, and $p$ such 
that for any $\delta>0$,
\begin{equation}\label{Eqn-bar-phi-varphi-H_2-norm}
\frac12 \Vert (\bar{\phi}_{\delta},\bar{\varphi}_{\delta}) \Vert_{\mathcal{H}^2}^2
\leq C \left(1 + \Vert (\bar{\mu}_{\delta},\bar{\theta}_{\delta}) \Vert_{\mathbb{H}}^2 + \Vert (\bar{\phi}_{\delta},\bar{\varphi}_{\delta}) \Vert_{\mathbb{V}}^2 + \Vert \bar{\varphi}_{\delta} \Vert_{V_\Gamma}^{2(q-1)} + \Vert \bar{\phi}_{\delta} \Vert_{V_1}^\frac{2(p+2)}{6-p} \right).
\end{equation}
By inserting \eqref{eq4.148-1} into the RHS of \eqref{Eqn-bar-delta-phi-varphi-H_2-norm}, we infer that for every $\delta>0$,
\begin{equation}\label{Eqn-bar-phi-varphi-H_2-norm-1}
\Vert (\bar{\phi}_{\delta},\bar{\varphi}_{\delta}) \Vert_{\mathcal{H}^2}^2
\leq  C (1 + \Vert (\bar{\mu}_{\delta},\bar{\theta}_{\delta}) \Vert_{\mathbb{H}}^2 + \Vert (\bar{\phi}_{\delta},\bar{\varphi}_{\delta}) \Vert_{\mathbb{V}}^2 + \Vert \bar{\varphi}_{\delta} \Vert_{V_\Gamma}^{2(q-1)} + \Vert \bar{\phi}_{\delta} \Vert_{V_1}^{2(p-1)}),
\end{equation}
where $2 \leq p \leq 4$ if $d=3$ and $p\geq 2$ if $d=2$. 

\noindent
\textbf{Step4:} When $2 \leq p \leq 4$ if $d=3$ and $p\geq 2$ if $d=2$, the estimate \eqref{Eqn-(phi_{n,delta},varphi_{n,delta})-H2-estimate-1} is a direct consequence of \eqref{eq4.143}, \eqref{eq4.144}, \eqref{uniform_delta_estimate_varphi}, and \eqref{Eqn-bar-phi-varphi-H_2-norm-1}. \newline
When $4 < p \leq 6$ if $d=3$, \eqref{Eqn-(phi_{n,delta},varphi_{n,delta})-H2-estimate-1} is a direct consequence of \eqref{eq4.143}, \eqref{eq4.144}, \eqref{uniform_delta_estimate_varphi}, and \eqref{Eqn-bar-phi-varphi-H_2-norm}. This completes the proof of our claim, i.e. \eqref{Eqn-(phi_{n,delta},varphi_{n,delta})-H2-estimate-1}. \newline
From now on, by \eqref{eq4.148-1}, \eqref{eq4.148-2} together with \eqref{eq4.143}, \eqref{eq4.144}, \eqref{uniform_delta_estimate_varphi}, and \eqref{Eqn-(phi_{n,delta},varphi_{n,delta})-H2-estimate-1}, we deduce that for every $\delta>0$,
    \begin{equation}\label{eq4.148-3}
      \Vert F_\delta^\prime(\bar{\phi}_{\delta}) \Vert_{L^r (\bar{\Omega};L^2(0,T;L^2(\mathcal{O})))} \leq c_r.
    \end{equation}
%%%%%%%%%%%%%%%%%%%%%%%%%%%%%%%%%%%%%%%%%%%%%%%%%%%%%%%%%%%%%%%%%%%%%%%%%%%%%%%%%%%%%%%%%%%%%%%%%%%%%%%%%%%%%%%%%%%%%%%%%%%%%%%%%%%%%%%%%%%%%%%%%%%%%%%%%%%%%%%%%%%%%%%%%%%%%%%%%%%%%%%%%%%%%%%%%%%%%%%%%
\textbf{Further estimates} Fix $(v,v_\Gamma) \in \mathbb{V}$ such that $\Vert (v,v_\Gamma) \Vert_{\mathbb{V}} \leq 1$. Then from \eqref{eq4.130d}, we have
\begin{align*}
\duality{(\bar{\mu}_{\delta},\bar{\theta}_{\delta})}{(v,v_\Gamma)}{\mathbb{V}}{\mathbb{V}^\prime}
 &= \eps \int_{\mathcal{O}} \nabla \bar{\phi}_{\delta} \cdot \nabla v\, \d x + \int_{\mathcal{O}} \frac{1}{\eps} F_\delta^\prime(\bar{\phi}_{\delta}) v\, \d x + \eps_\Gamma \int_\Gamma \nabla_\Gamma \bar{\varphi}_{\delta} \cdot \nabla_\Gamma v_\Gamma \, \d S \\
 &\quad  + \int_\Gamma \frac{1}{\eps_\Gamma} G_\delta^\prime(\bar{\varphi}_{\delta}) v_\Gamma \, \d S + (1/K) \int_\Gamma (\bar{\varphi}_{\delta} - \bar{\phi}_{\delta}) (\eps v_\Gamma - \eps v) \, \d S.
\end{align*}
Therefore, for every $\delta>0$,
\begin{align*}
\lvert \duality{(\bar{\mu}_{\delta},\bar{\theta}_{\delta})}{(v,v_\Gamma)}{\mathbb{V}}{\mathbb{V}^\prime} \rvert
 &\leq \eps \lvert \nabla \bar{\phi}_{\delta} \rvert \lvert \nabla v \rvert + \frac{1}{\eps} \Vert F_\delta^\prime(\bar{\phi}_{\delta}) \Vert_{L^\frac{6}{5}(\mathcal{O})} \Vert v \Vert_{L^6(\mathcal{O})} + \frac{1}{\eps_\Gamma} \lvert \nabla_\Gamma \bar{\varphi}_{\delta} \rvert_{\Gamma} \lvert \nabla_\Gamma v_\Gamma \rvert_{\Gamma}
 \\
 &\quad  + \frac{1}{\eps_\Gamma} \lvert G_\delta^\prime(\bar{\varphi}_{\delta}) \rvert_\Gamma \lvert v_\Gamma \rvert_\Gamma + \eps (1/K) \lvert \bar{\varphi}_{\delta} - \bar{\phi}_{\delta} \rvert_\Gamma \lvert v_\Gamma - v \rvert_{\Gamma}.
\end{align*}
Next, since $V_1 \embed L^2(\Gamma)$, we infer there exists $C=C(\eps,\eps_\Gamma,K,\mathcal{O})$ such that for every $\delta>0$,
   \begin{equation}\label{eq4.150a}
       \Vert (\bar{\mu}_{\delta},\bar{\theta}_{\delta}) \Vert_{\mathbb{V}^\prime}
        \leq C (\lvert \bar{\varphi}_{\delta} - \bar{\phi}_{\delta} \rvert_\Gamma + \Vert (\bar{\phi}_{\delta},\bar{\varphi}_{\delta}) \Vert_{\mathbb{V}} + \Vert F_\delta^\prime(\bar{\phi}_{\delta}) \Vert_{L^\frac{6}{5}(\mathcal{O})} + \lvert G_\delta^\prime(\bar{\varphi}_{\delta}) \rvert_\Gamma).
   \end{equation}
Furthermore, arguing as in the proof of Proposition \ref{Proposition_third_priori_estimmate}, we have
    \begin{equation*}
      \|(\bar{\mu}_{\delta},\bar{\theta}_{\delta})\|_{\mathbb{H}}^2
      \leq \|(\bar{\mu}_{\delta},\bar{\theta}_{\delta})\|_{\mathbb{V}^\prime}^2 + 2 \|(\bar{\mu}_{\delta},\bar{\theta}_{\delta})\|_{\mathbb{V}^\prime} \Vert(\nabla \bar{\mu}_{\delta},\nabla_\Gamma \bar{\theta}_{\delta}) \Vert_{\mathbb{H}}.
    \end{equation*}
Therefore, by the H\"older inequality, we deduce that for every $\delta>0$,
\begin{align*}
& \bar{\mathbb{E}} \left(\int_0^T \Vert (\bar{\mu}_{\delta}(s),\bar{\theta}_{\delta}(s)) \Vert_{\mathbb{H}}^4\, \d s \right)^\frac{r}{4}
\leq C \bar{\mathbb{E}} \bigg[ \left(\int_0^T \Vert (\bar{\mu}_{\delta}(s),\bar{\theta}_{\delta}(s)) \Vert_{\mathbb{V}^\prime}^4\, \d s \right)^\frac{r}{4} \\
& + \left( \int_0^T \Vert (\bar{\mu}_{\delta}(s),\bar{\theta}_{\delta}(s)) \Vert_{\mathbb{V}^\prime}^2 \Vert(\nabla \bar{\mu}_{\delta}(s),\nabla_\Gamma \bar{\theta}_{\delta}(s)) \Vert_{\mathbb{H}}^2\, \d s\right)^\frac{r}{4}\bigg]
\leq C \bigg(\bar{\mathbb{E}} \sup_{s \in [0,T]} \Vert (\bar{\mu}_{\delta}(s),\bar{\theta}_{\delta}(s)) \Vert_{\mathbb{V}^\prime}^r
\\
& + C [\bar{\mathbb{E}} \sup_{s \in [0,T]} \Vert (\bar{\mu}_{\delta}(s),\bar{\theta}_{\delta}(s)) \Vert_{\mathbb{V}^\prime}^r]^{\frac12} \left[\bar{\mathbb{E}} \left(\int_0^T  \Vert(\nabla \bar{\mu}_{\delta}(s),\nabla_\Gamma \bar{\theta}_{\delta}(s)) \Vert_{\mathbb{H}}^2\, \d s\right)^\frac{r}{2} \right]^{\frac12} \bigg).
\end{align*}
From \eqref{eq4.143}, \eqref{uniform_delta_estimate_varphi}, \eqref{eq4.159}, \eqref{eq4.150a}, we infer there exists $c_r>0$ independent of $\delta$ such that
   \begin{equation}\label{Eqn-bar-mu-theta-delta-L_2--estimate}
    \Vert (\bar{\mu}_{\delta},\bar{\theta}_{\delta}) \Vert_{L^r(\bar{\Omega};L^4(0,T;\mathbb{H}))}\leq c_r.
   \end{equation}
Finally, from \eqref{Eqn-bar-mu-theta-delta-L_2--estimate} and similar reasoning as in \eqref{Eqn-(phi_{n,delta},varphi_{n,delta})-H2-estimate-1}, we find that for every $\delta>0$,
     \begin{equation}
       \bar{\mathbb{E}} \left(\int_0^T \Vert(\bar{\phi}_{\delta}(s),\bar{\varphi}_{\delta}(s)) \Vert_{\mathcal{H}^2}^4 \,\d s \right)^{r/4} \leq c_r.
    \end{equation}
%%%%%%%%%%%%%%%%%%%%%%%%%%%%%%%%%%%%%%%%%%%%%%%%%%%%%%%%%%%%%%%%%%%%%%%%%%%%%%%%%%%%%%%%%%%%%%%%%%%%%%%%%%%%%%%%%%%%%%%%%%%%%%%%%%%%%%%%%%%%%%%%%%%%%%%%%%%%%%%%%%%%%%%%%%%%%%%%%%%%%%%%%%%%%%
\subsection{Tightness of the laws of the approximating sequences}
%%%%%%%%%%%%%%%%%%%%%%%%%%%%%%%%%%%%%%%%%%%%%%%%%%%%%%%%%%%%%%%%%%%%%%%%%%%%%%%%%%%%%%%%%%%%%%%%%%%%%%%%%%%%%%%%%%%%%%%%%%%%%%%%%%%%%%%%%%%%%%%%%%%%%%%%%%%%%%%%%%%%%%%%%%%%%%%%%%%%%%%%%%%%%%%
It follows from the above result that for every $T>0$, almost surely, the trajectories of the process $\bar{\bX}_\delta \coloneq (\bar{\phi}_{\delta},\bar{\varphi}_{\delta})$ belong to the space $\mathfrak{X}_{T,2}$, i.e. in 
   \begin{equation*}
     \mathfrak{X}_{T,2}= C([0,T];\mathbb{H}) \cap C([0,T];\mathbb{V}_w) \cap L_w^4(0,T;\mathcal{H}^2) \cap L^2(0,T;\mathbb{V}).
   \end{equation*}
Moreover, we are in a position to perform a stochastic compactness argument.
\begin{lemma}
Assume $\delta \in (0,1)$. Then, 
\item[(1)] the family of laws of $\lbrace (\bar{\phi}_{\delta},\bar{\varphi}_{\delta})\rbrace_{\delta>0}$ is tight in the space $\mathfrak{X}_{T,2}$.
\item[(2)] If we identify the Wiener process $\bar{W}$ with a constant sequence $(\bar{W}_{1,\delta})_{\delta>0}$ and the Wiener process $\bar{W}_{\Gamma}$ with a constant sequence $( \bar{W}_{2,\delta})_{\delta>0}$, then the corresponding family of laws is tight in the space $C([0,T];U_0)$ and in the space $C([0,T];U_0^\Gamma)$, respectively. 
\end{lemma}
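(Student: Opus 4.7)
The plan is to follow exactly the same compactness strategy that led to Proposition \ref{Eqn-tightness-delta} for the Galerkin level, but now applied to the Yosida parameter $\delta$, and to verify that every ingredient of \cite[Corollary 3.9]{Brzezniak+Moty_2013} is available with constants that are \emph{independent of} $\delta$. Part (2) is immediate: a constant sequence of Borel probability measures on the Polish spaces $C([0,T];U_0)$ and $C([0,T];U_0^\Gamma)$ is automatically tight by \cite[Theorem 3.2, Chapter II]{Parthasarathy_1967}, so we will simply record that observation. The substance is in part (1).

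First, I would collect the $\delta$-uniform bounds that have already been established in this section: $(\bar\phi_\delta,\bar\varphi_\delta)$ is bounded in $L^r(\bar\Omega;L^\infty(0,T;\mathbb{V}))$ by \eqref{eq4.143} and \eqref{uniform_delta_estimate_varphi}, and in $L^r(\bar\Omega;L^4(0,T;\mathcal{H}^2))$ by \eqref{Eqn-(phi_{n,delta},varphi_{n,delta})-H2-estimate-1} (in fact even the stronger $L^4(0,T;\mathcal{H}^2)$ estimate is available). Together with the compact embeddings $\mathbb{V}\hookrightarrow\mathbb{H}\hookrightarrow\mathbb{V}^\prime$ and $\mathcal{H}^2\hookrightarrow\mathbb{V}$, these give, via the standard Lions/Dubinski-type compactness, precompactness in $L^2(0,T;\mathbb{V})$ and $L^4_w(0,T;\mathcal{H}^2)$, together with precompactness in $C([0,T];\mathbb{V}_w)$ once an Aldous-type condition in $\mathbb{V}^\prime$ is known. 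The remaining factor $C([0,T];\mathbb{H})$ will then follow from the compact embedding $\mathbb{V}\hookrightarrow\mathbb{H}$ and the same Aldous condition.

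The main step, and the only real work, is therefore to prove the $\delta$-uniform Aldous condition: for every $\varepsilon,\eta>0$ there exists $\kappa>0$ such that for every family of $\bar{\mathbb{F}}$-stopping times $(\tau_\delta)_{\delta\in(0,1)}$ with $\tau_\delta\leq T$,
\begin{equation}\label{Eqn-Aldous-uniform-delta}
\sup_{\delta\in(0,1)}\;\sup_{0\leq\theta\leq\kappa}\bar{\mathbb{P}}\bigl(\{\|\bar{\bX}_\delta(\tau_\delta+\theta)-\bar{\bX}_\delta(\tau_\delta)\|_{\mathbb{V}^\prime}\geq\eta\}\bigr)\leq\varepsilon.
\end{equation}
Passing to the limit $n\to\infty$ in \eqref{eq4.130b}–\eqref{eq4.130c} identifies $\bar{\bX}_\delta$ as the mild/weak solution of
\begin{equation*}
d\bar{\bX}_\delta=\bigl(-B_1(\bar{\bu}_\delta,\bar\phi_\delta)-A_{\bar\phi_\delta}(\bar\mu_\delta),-\mathcal{A}_{\bar\varphi_\delta}(\bar\theta_\delta)-\tilde B(\bar{\bu}_\delta,\bar\varphi_\delta)\bigr)\,dt+\mathrm{diag}(F_1(\bar\phi_\delta),F_2(\bar\varphi_\delta))\,d\bar{\mathcal{W}},
\end{equation*}
so, as in the proof of Lemma \ref{Lema-Aldous-condition}, I would estimate each contribution of $\bar{\mathbb{E}}\|\bar{\bX}_\delta(\tau_\delta+\theta)-\bar{\bX}_\delta(\tau_\delta)\|_{\mathbb{V}^\prime}$ by $C\theta^{1/2}$ using: \eqref{B1-Property} together with the $\delta$-uniform bounds on $\bar\phi_\delta$ in $L^\infty(0,T;V_1)$ and $\bar{\bu}_\delta$ in $L^2(0,T;V)$; \eqref{Property-tilde B} together with the bounds on $\bar\varphi_\delta$ and $\bar{\bu}_\delta$; \ref{item:H4} together with the $\delta$-uniform $L^r(\bar\Omega;L^2(0,T;\mathbb{V}))$ bound on $(\bar\mu_\delta,\bar\theta_\delta)$ recorded in \eqref{eq4.143}; and, for the martingale part, It\^o's isometry together with \eqref{eqn-boundedness-of-F_1}–\eqref{eqn-boundedness-of-F_2}. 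Adding these estimates yields $\bar{\mathbb{E}}\|\bar{\bX}_\delta(\tau_\delta+\theta)-\bar{\bX}_\delta(\tau_\delta)\|_{\mathbb{V}^\prime}\leq C\theta^{1/2}$ with $C$ independent of $\delta$, whence Chebyshev's inequality and the choice $\kappa=(\eta\varepsilon/C)^2$ deliver \eqref{Eqn-Aldous-uniform-delta}.

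With the uniform moment bounds and the Aldous condition \eqref{Eqn-Aldous-uniform-delta} at hand, an application of \cite[Corollary 3.9]{Brzezniak+Moty_2013} (exactly as in the proof of Proposition \ref{Eqn-tightness-delta}) yields tightness of the laws of $(\bar\phi_\delta,\bar\varphi_\delta)$ in $\mathfrak{X}_{T,2}$, completing part (1). The main obstacle I anticipate is bookkeeping in the drift estimate for $A_{\bar\phi_\delta}(\bar\mu_\delta)$ and $\mathcal{A}_{\bar\varphi_\delta}(\bar\theta_\delta)$: one has to be careful that only the gradient norm of the chemical potentials—not their full $\mathbb{V}$-norm, which blows up with $\delta^{-1}$ through $F_\delta^\prime$ and $G_\delta^\prime$—enters the estimate; this is precisely what \eqref{Definition of A_phi and A_varphi} and \ref{item:H4} provide, so the uniform-in-$\delta$ bound survives.
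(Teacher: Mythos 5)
Your proposal is correct and follows essentially the same route as the paper: the paper's proof is literally the one-line remark that one argues exactly as in Proposition \ref{Eqn-tightness-delta}, i.e. combine the $\delta$-uniform estimates with an Aldous condition in $\mathbb{V}^\prime$ and invoke \cite[Corollary 3.9]{Brzezniak+Moty_2013}, together with \cite[Theorem 3.2, Chapter II]{Parthasarathy_1967} for the constant Wiener sequences. Your elaboration of the $\delta$-uniform Aldous condition, and in particular the observation that only $\lvert\nabla\bar{\mu}_\delta\rvert$ and $\lvert\nabla_\Gamma\bar{\theta}_\delta\rvert_\Gamma$ (not the $\delta$-dependent norms of $F_\delta^\prime$, $G_\delta^\prime$) enter the drift estimate via \ref{item:H4}, is exactly the point that makes the paper's "argue exactly as before" legitimate.
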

\begin{proof}
The proof can be done by arguing exactly as in the proof of Proposition \ref{Eqn-tightness-delta}.
\end{proof}
We have the following auxiliary result.
\begin{proposition}\label{Jakubowski-Skorokhod-representation theorem-1}
Suppose $\delta \in (0,1)$. Then, there exists a subsequence $(\delta_k)_{k \in \mathbb{N}}$, a complete probability space $(\tilde{\Omega},\tilde{\mathcal{F}},\tilde{\mathbb{P}})$, and on this space, $\mathfrak{X}_T$-valued random variables $((\tilde{\phi}_{\delta_k},\tilde{\varphi}_{\delta_k}),\tilde{W}_{1,\delta_k},\tilde{W}_{2,\delta_k})$ and $((\tilde{\phi},\tilde{\varphi}),\tilde{W},\tilde{W}_{\Gamma})$, $k \in \mathbb{N}$ such that
\begin{align}
\label{Eqn-equal-of-law-of-sequence-1}
&\mathcal{L}_{\mathfrak{X}_T}(((\tilde{\phi}_{\delta_k},\tilde{\varphi}_{\delta_k}),\tilde{W}_{1,\delta_k},\tilde{W}_{2,\delta_k}))= \mathcal{L}_{\mathfrak{X}_T}(((\bar{\phi}_{\delta_k},\bar{\varphi}_{\delta_k}),\bar{W}_{1,\delta_k},\bar{W}_{2,\delta_k})),
\\
\label{Eqn-almots-sure-convergenc-of-sequence-2}
&((\tilde{\phi}_{\delta_k},\tilde{\varphi}_{\delta_k}),\tilde{W}_{1,\delta_k},\tilde{W}_{2,\delta_k}) \to ((\tilde{\phi},\tilde{\varphi}),\tilde{W},\tilde{W}_{\Gamma}) \mbox{ in } \mathfrak{X}_T \mbox{ as } k \to \infty, \; \; \tilde{\mathbb{P}}\mbox{-a.s.}
\end{align}
\end{proposition}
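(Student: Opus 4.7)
The plan is to repeat the argument already used in Proposition~\ref{Jakubowski-Skorokhod-representation theorem}, now applied to the $\delta$-indexed family rather than the $n$-indexed Galerkin sequence. Since all the hard analytic work has been done in the preceding subsection (uniform-in-$\delta$ estimates plus the tightness lemma that immediately precedes the statement), what remains is essentially a clean application of the Skorokhod--Jakubowski representation theorem on the non-metric product space $\mathfrak{X}_T$.

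First, I would collect the tightness ingredients. From the preceding lemma, the family $\{\mathcal{L}(\bar{\bX}_\delta)\}_{\delta\in(0,1)}$ is tight on $\mathfrak{X}_{T,2}$. The constant families $\{\mathcal{L}(\bar{W}_{1,\delta})\}_\delta$ and $\{\mathcal{L}(\bar{W}_{2,\delta})\}_\delta$ are reduced to the single Polish laws $\mathcal{L}(\bar W)$ and $\mathcal{L}(\bar W_\Gamma)$, hence tight on $\mathfrak{X}_{T,1}=C([0,T];U_0)$ and $\mathfrak{X}_{T,\Gamma}=C([0,T];U_0^\Gamma)$, respectively, by the same Parthasarathy argument used earlier. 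Tightness of each marginal yields tightness of the joint law of $((\bar\phi_\delta,\bar\varphi_\delta),\bar W_{1,\delta},\bar W_{2,\delta})$ on the product $\mathfrak{X}_T$.

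Next, I would verify the hypothesis needed to invoke Jakubowski's theorem \cite{Jakubowski_1997} on $\mathfrak{X}_T$. Exactly as argued in the proof of Proposition~\ref{Jakubowski-Skorokhod-representation theorem}, the space $\mathfrak{X}_{T,2}= C([0,T];\mathbb{H}) \cap C([0,T];\mathbb{V}_w) \cap L_w^4(0,T;\mathcal{H}^2) \cap L^2(0,T;\mathbb{V})$ is a countable intersection of spaces each of which carries a countable family of continuous real-valued functions separating points, so Assumption~(10) of \cite{Jakubowski_1997} holds for $\mathfrak{X}_{T,2}$; the Polish factors $\mathfrak{X}_{T,1}$ and $\mathfrak{X}_{T,\Gamma}$ trivially satisfy the same assumption, and the property is stable under finite products. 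Applying \cite[Theorem~2]{Jakubowski_1997} to the tight sequence of joint laws on $\mathfrak{X}_T$ then produces the subsequence $(\delta_k)$, the new probability space $(\tilde\Omega,\tilde{\mathcal F},\tilde{\mathbb P})$, and $\mathfrak{X}_T$-valued random variables $((\tilde{\phi}_{\delta_k},\tilde{\varphi}_{\delta_k}),\tilde{W}_{1,\delta_k},\tilde{W}_{2,\delta_k})$ and $((\tilde\phi,\tilde\varphi),\tilde W,\tilde W_\Gamma)$ satisfying \eqref{Eqn-equal-of-law-of-sequence-1} and the $\tilde{\mathbb{P}}$-a.s.\ convergence \eqref{Eqn-almots-sure-convergenc-of-sequence-2}.

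There is no genuine analytic obstacle here; the entire content of the proposition is the transfer of convergence from "in law" to "almost sure on a new probability space", and both of its two technical prerequisites (tightness on $\mathfrak{X}_T$ and Jakubowski's separating-family condition for $\mathfrak{X}_T$) have been established or recorded earlier in the paper. The only point requiring a touch of care is bookkeeping: one should note that after the relabelling the process $\tilde{\bX}_{\delta_k}\coloneq(\tilde\phi_{\delta_k},\tilde\varphi_{\delta_k})$ inherits all the uniform-in-$\delta$ bounds \eqref{eq4.143}--\eqref{Eqn-bar-mu-theta-delta-L_2--estimate} because $((\tilde{\phi}_{\delta_k},\tilde{\varphi}_{\delta_k}),\tilde{W}_{1,\delta_k},\tilde{W}_{2,\delta_k})$ and $((\bar{\phi}_{\delta_k},\bar{\varphi}_{\delta_k}),\bar{W}_{1,\delta_k},\bar{W}_{2,\delta_k})$ share the same law; this is what will allow, in the next subsection, the identification of $(\tilde\phi,\tilde\varphi)$ with a weak martingale solution by passing to the limit $\delta_k\to 0$ in the $\delta$-level variational formulation \eqref{eq4.130a}--\eqref{eq4.130d}.
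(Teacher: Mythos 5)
Your proposal is correct and follows essentially the same route as the paper, which proves this proposition by citing verbatim the argument of Proposition \ref{Jakubowski-Skorokhod-representation theorem}: tightness of the joint laws on $\mathfrak{X}_T$ (from the preceding lemma) plus verification of Jakubowski's separating-family condition for $\mathfrak{X}_{T,2}$ and the Polish factors, followed by an application of \cite[Theorem~2]{Jakubowski_1997}. Your closing remark about the relabelled processes inheriting the uniform-in-$\delta$ bounds via equality of laws is accurate and consistent with how the paper uses the result in the subsequent limit passage.
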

\begin{proof}
The proof can be done by arguing exactly as in Proposition \ref{Jakubowski-Skorokhod-representation theorem}.
\end{proof}
For the sake of brevity, we will denote the above sequences by $((\tilde{\phi}_{\delta},\tilde{\varphi}_{\delta}),\tilde{W}_{1,\delta},\tilde{W}_{2,\delta})_{\delta \in (0,1)}$ and $((\bar{\phi}_{\delta},\bar{\varphi}_{\delta}),\bar{W}_{1,\delta},\bar{W}_{2,\delta})_{\delta \in (0,1)}$.
%%%%%%%%%%%%%%%%%%%%%%%%%%%%%%%%%%%%%%%%%%%%%%%%%%%%%%%%%%%%%%%%%%%%%%%%%%%%%%%%%%%%%%%%%%%%%%%%%%%%%%%%%%%%%%%%%%%%%%%%%%%%%%%%%%%%%%%%%%%%%%%%%%%%%%%%%%%%%%%%%%%%%%%%%%%%%%%%%%%%%%%%%%%%%%%
\subsection{Passage to the limit as delta tends to zero} \label{Sect_passage to the limit_delta}
%%%%%%%%%%%%%%%%%%%%%%%%%%%%%%%%%%%%%%%%%%%%%%%%%%%%%%%%%%%%%%%%%%%%%%%%%%%%%%%%%%%%%%%%%%%%%%%%%%%%%%%%%%%%%%%%%%%%%%%%%%%%%%%%%%%%%%%%%%%%%%%%%%%%%%%%%%%%%%%%%%%%%%%%%%%%%%%%%%%%%%%%%%%%%%
The goal of this section is to let $\delta \to 0$ and then derive the martingale solution of the original problem \eqref{eq1.1a}-\eqref{eq1.10a}. Note that the argument of the proof is similar to the one carry in Subsection \ref{subs-4.4}, thus we will omit some details for the sake of brevity.

\noindent
Fix $r \geq 2$. By the previous estimates, arguing as in Subsection \ref{subs-4.4},  we infer that up to a subsequence still labeled by the same subscript,
\begin{equation}\label{eq4.122b}
\begin{aligned}
(\tilde{\phi}_{\delta},\tilde{\varphi}_{\delta}) \to (\tilde{\phi},\tilde{\varphi}) &\mbox{ in } L^\ell(\bar{\Omega};C([0,T];\mathbb{H})) \cap L^r(\tilde{\Omega};L^2([0,T];\mathbb{V})), \; \;  \forall \ell< r,
\\
(\tilde{\phi}_{\delta},\tilde{\varphi}_{\delta}) \stackrel{*}{\rightharpoonup} (\tilde{\phi},\tilde{\varphi}) &\mbox{ in }  L^r(\tilde{\Omega};L^\infty(0,T;\mathbb{V})), %\cap L^{r/2}(\tilde{\Omega}; W^{\alpha,r}(0,T;\mathbb{V}^\prime)),
  \\
(\tilde{\phi}_{\delta},\tilde{\varphi}_{\delta}) \rightharpoonup (\tilde{\phi},\tilde{\varphi}) &\mbox{ in }  L^r(\tilde{\Omega};L^4(0,T;\mathcal{H}^2)),
    \\
(\tilde{\mu}_{\delta},\tilde{\theta}_{\delta}) \rightharpoonup (\tilde{\mu},\tilde{\theta})  &\mbox{ in }  L^r(\tilde{\Omega};L^2(0,T;\mathbb{V})) \cap L^r(\tilde{\Omega};L^4(0,T;\mathbb{H})), 
      \\
(\tilde{W}_{1,\delta},\tilde{W}_{2,\delta}) \to (\tilde{W},\tilde{W}_{\Gamma}) &\mbox{ in } L^\ell(\tilde{\Omega};C([0,T];U_0 \times U_0^\Gamma)), \; \;  \forall \ell< r,
\\
\tilde{\bu}_{\delta} \rightharpoonup \tilde{\bu}  &\mbox{ in }  L^r(\tilde{\Omega};L^2(0,T;V)), 
  \\
\tilde{\bu}_{\delta} \lvert_\Gamma \rightharpoonup \tilde{\bu} \lvert_\Gamma  &\mbox{ in }  L^r(\tilde{\Omega};L^2(0,T;L^2(\Gamma))),
    \\
(F_1(\tilde{\phi}_\delta),F_2(\tilde{\varphi}_\delta)) \rightharpoonup (F_1(\tilde{\phi}),F_2(\tilde{\varphi}))  &\mbox{ in }  L^r(\tilde{\Omega};L^2(0,T;\mathbb{H})).
\end{aligned}
\end{equation}
Moreover, by the strong convergences of $\tilde{\phi}_{\delta}$ and $\tilde{\varphi}_{\delta}$ above, we have, modulo the extraction of a subsequence $(\tilde{\phi}_{\delta_n})_{\delta_n \in (0,1)}$ and $(\tilde{\varphi}_{\delta_n})_{\delta_n \in (0,1)}$,
      \begin{equation}\label{eq4.123b}
        (\tilde{\phi}_{\delta_n},\tilde{\varphi}_{\delta_n}) \to (\tilde{\phi},\tilde{\varphi}) \mbox{-a.e. in } Q_T \times \tilde{\Omega}.
      \end{equation}
Furthermore, we claim that
\begin{equation}\label{eq4.161}
\begin{aligned}
F_{\delta_n}^\prime(\tilde{\phi}_{\delta_n}) \rightharpoonup F^\prime(\tilde{\phi}) &\mbox{ in }  L^r(\tilde{\Omega};L^2(0,T;L^2(\mathcal{O}))),
       \\
G_{\delta_n}^\prime(\tilde{\varphi}_{\delta_n}) \stackrel{*}\rightharpoonup G^\prime(\tilde{\varphi}) &\mbox{ in }  L^r(\tilde{\Omega};L^\infty(0,T;L^2(\Gamma))).
\end{aligned}
\end{equation}
In fact, from the uniform bound \eqref{eq4.148-3} and the Banach-Alaoglu theorem, we have, up to a subsequence, 
   \begin{equation*}
      F_{\delta_n}^\prime(\tilde{\phi}_{\delta_n}) \rightharpoonup \mathcal{K}  \mbox{ in } L^r (\tilde{\Omega};L^2(0,T;L^2(\mathcal{O}))).
    \end{equation*}
On the other hand, we have $F_{\delta_n}^\prime= \mathbb{A}_{\delta_n} - \tilde{c}_F I= \mathbb{A}(J_{\delta_n}(\tilde{\phi}_{\delta_n})) - \tilde{c}_F I$ and 
\begin{align*}
\lvert J_{\delta_n}(\tilde{\phi}_{\delta_n}) - \tilde{\phi} \rvert
\leq \lvert J_\delta(\tilde{\phi}_{\delta_n}) - \tilde{\phi}_{\delta_n} \rvert + \lvert \tilde{\phi}_{\delta_n} - \tilde{\phi} \rvert
&= \delta \lvert \mathbb{A}_{\delta_n} (\tilde{\phi}_{\delta_n}) \rvert  + \lvert \tilde{\phi}_{\delta_n} - \tilde{\phi} \rvert
\\
&\leq {\delta_n} \lvert \mathbb{A} (\tilde{\phi}_{\delta_n}) \rvert  + \lvert \tilde{\phi}_{\delta_n} - \tilde{\phi} \rvert.
\end{align*}
Therefore, since $\tilde{\phi}_{\delta_n}\to \tilde{\phi}$-a.e. in $Q_T \times \tilde{\Omega}$, cf. \eqref{eq4.123b}, we infer $J_{\delta_n}(\tilde{\phi}_{\delta_n})\to \tilde{\phi}$-a.e. in $Q_T \times \tilde{\Omega}$.  Hence, $\mathbb{A}(J_{\delta_n}(\tilde{\phi}_{\delta_n})) \to \mathbb{A}(\tilde{\phi})$ a.e. in $\tilde{\Omega} \times Q_T$. As a direct consequence,
   \begin{equation*}
     \mathcal{K}= F^\prime(\tilde{\phi}) \mbox{ a.e. in }  Q_T \times \tilde{\Omega}.
   \end{equation*}
This proof the first part of \eqref{eq4.161}. Similarly, we deduce the second part of \eqref{eq4.161}. 
\newline
Note that $J_{\delta_n}(\tilde{\phi}_{\delta_n}) \to \tilde{\phi}$ a.e. in $Q_T \times \tilde{\Omega}$. Hence, by the weak lower semicontinuity, i.e. the lower semicontinuity of $L^1(\mathcal{O})$,
and the Fatou Lemma, we infer that
      \begin{equation*}
        \tilde{\mathbb{E}} \Vert F(\tilde{\phi}(t)) \Vert_{L^1(\mathcal{O})}
        \leq \liminf_{n \to \infty} \tilde{\mathbb{E}} \Vert F_{\delta_n}(\tilde{\phi}_{\delta_n}(t)) \Vert_{L^1(\mathcal{O})}, \; \; t \in (0,T).
      \end{equation*}
Similarly,
   \begin{equation*}
     \tilde{\mathbb{E}} \Vert G(\tilde{\varphi}(t)) \Vert_{L^1(\Gamma)}
     \leq \liminf_{n \to \infty} \tilde{\mathbb{E}} \Vert G_{\delta_n}(\tilde{\varphi}_{\delta_n}(t)) \Vert_{L^1(\Gamma)}, \; \; t \in (0,T).
   \end{equation*}
Observe that, under the assumption \eqref{F''_and G''_additional_condition}, the limit object in \eqref{eq4.161}${\text{-1)}}$ is, in fact, an element of the space $L^r(\tilde{\Omega};L^2(0,T;V_1))$, while the limit object in \eqref{eq4.161}${\text{-2)}}$ also belongs to $L^r(\tilde{\Omega};L^2(0,T;V_\Gamma))$. 
\newline
Let us prove the previous assertion in the case $d=3$, since the proof in case $d=2$ is similar. 
First, by the H\"older inequality and Assumption \eqref{F''_and G''_additional_condition}, we infer that
\begin{align*}
\lvert F^\bis(\tilde{\phi}) \nabla \tilde{\phi} \rvert^2
& = \int_{\mathcal{O}} \lvert  F^\bis(\tilde{\phi}) \rvert^2 \lvert \nabla \tilde{\phi} \rvert^2 \, \d x
\leq 2 c_{F^\bis}^2 \left[ \lvert \nabla \tilde{\phi} \rvert^2 + \Vert \tilde{\phi} \Vert_{L^{3(p-2)}(\mathcal{O})}^{2(p-2)} \Vert \nabla \tilde{\phi} \Vert_{\mathbb{L}^6(\mathcal{O})}^2 \right],
\\
\lvert G^\bis(\tilde{\varphi}) \nabla_\Gamma \tilde{\varphi} \rvert_\Gamma^2
&= \int_{\Gamma} \lvert G^\bis(\tilde{\varphi}) \rvert^2 \lvert \nabla_\Gamma \tilde{\varphi} \rvert^2 \, \d S
\leq 2 c_{G^\bis}^2 \left[ \lvert \nabla_\Gamma \tilde{\varphi} \rvert_\Gamma^2 + \Vert \tilde{\varphi} \Vert_{L^{3(q-2)}(\mathcal{O})}^{2(q-2)} \Vert \nabla_\Gamma \tilde{\phi} \Vert_{\mathbb{L}^6(\Gamma)}^2 \right].
\end{align*}
Next, using the Sobolev embeddings $H^1(\mathcal{O}) \embed L^{3(p-2)}(\mathcal{O})$ and $H^1(\mathcal{O}) \embed L^6(\mathcal{O})$, we deduce that there exists $C=C(c_{F^\bis},\mathcal{O},p)$ such that
   \begin{align}
    \lvert F^\bis(\tilde{\phi}) \nabla \tilde{\phi} \rvert^2
    \leq C \left[ \lvert \nabla \tilde{\phi} \rvert^2 + \Vert \tilde{\phi} \Vert_{V_1}^{2(p-2)} \Vert \tilde{\phi} \Vert_{H^2(\mathcal{O})}^2 \right],
  \end{align}
which in turn, yields that
\begin{equation}
\begin{aligned}
&\tilde{\mathbb{E}} \left(\int_0^T \lvert F^\bis(\tilde{\phi}(s)) \nabla \tilde{\phi}(s) \rvert^2 \, \d s \right)^{\frac{r}{2}}
\\
& \leq C \tilde{\mathbb{E}} \sup_{s \in [0,T]} \lvert \nabla \tilde{\phi}(s) \rvert^r + \left[ \tilde{\mathbb{E}} \sup_{s \in [0,T]} \Vert \tilde{\phi}(s) \Vert_{V_1}^{2r(p-2)} \right]^{\frac12} \left[ \tilde{\mathbb{E}} \left(\int_0^T \Vert \tilde{\phi}(s) \Vert_{H^2(\mathcal{O})}^2 \, \d s \right)^r \right]^{\frac12}.
\end{aligned}
\end{equation}
This, jointly with \eqref{eq4.161} and the properties of the limit object $\tilde{\phi}$ gives that 
    \begin{equation}\label{Eqn-tilde-F-prime(tilde{phi})-L2(0,T;V1-norm}
        F^\prime(\tilde{\phi}) \in L^r(\tilde{\Omega};L^2(0,T;V_1)).   
     \end{equation}
Similarly, we can show that there exists $C=C(c_{G^\bis},\Gamma,q)$ such that
   \begin{align}
    \lvert G^\bis(\tilde{\varphi}) \nabla_\Gamma \tilde{\varphi} \rvert_\Gamma^2
    \leq C \left[ \lvert \nabla_\Gamma \tilde{\varphi} \rvert_\Gamma^2 + \Vert \tilde{\varphi} \Vert_{V_\Gamma}^{2(q-2)} \Vert \tilde{\varphi} \Vert_{H^2(\Gamma)}^2 \right],
  \end{align}
and then from the properties of the process $\tilde{\varphi}$, we deduce that
     \begin{equation}\label{Eqn-tilde-G-prime(tilde{varphi})-L2(0,T;V-Gamma-norm}
       G^\prime(\tilde{\varphi}) \in L^r(\tilde{\Omega};L^2(0,T;V_\Gamma)). 
     \end{equation}
Arguing as in the proof of Proposition \ref{Prop:Proposition-4}, we obtain
  \begin{equation}\label{eqn-4.119}
   \int_0^t \bG^\delta(\tilde{\Upsilon}_\delta(s)) \, \d \tilde{\mathcal{W}}^{\delta}(s) \to \int_0^t \bG(\tilde{\Upsilon}(s)) \, \d \tilde{\mathcal{W}}(s) ~ \mbox{ in probability in } L^2(0,T;\mathbb{H}),
 \end{equation}
where 
$\tilde{\mathcal{W}} \coloneq (\tilde{W},\tilde{W}^\Gamma)^{tr},\, 
\tilde{\mathcal{W}}^\delta \coloneq (\tilde{W}_{1,\delta},\tilde{W}_{2,\delta})^{tr},\; 
\tilde{\Upsilon} \coloneq (\tilde{\phi}, \tilde{\varphi})$, 
$\tilde{\Upsilon}_\delta= (\tilde{\phi}_\delta,\tilde{\varphi}_\delta)$, 
\\
$\bG^\delta(\tilde{\Upsilon}_\delta) \coloneq \mathrm{diag}(F_1(\tilde{\phi}_\delta),F_2(\tilde{\varphi}_\delta))$, 
and
$\bG(\tilde{\Upsilon})\coloneq \mathrm{diag}(F_1(\tilde{\phi}),F_2(\tilde{\varphi}))$.

\noindent
Let us now complete the proof of the main result of this paper, i.e. Theorem \ref{thm-first_main_theorem}.
%%%%%%%%%%%%%%%%%%%%%%%%%%%%%%%%%%%%%%%%%%%%%%%%%%%%%%%%%%%%%%%%%%%%%%%%%%%%%%%%%%%%%%%%%%%%%%%
\section{Proof of Theorem \ref{thm-first_main_theorem}}\label{eqn-thm-first_main_theorem}
%%%%%%%%%%%%%%%%%%%%%%%%%%%%%%%%%%%%%%%%%%%%%%%%%%%%%%%%%%%%%%%%%%%%%%%%%%%%%%%%%%%%%%%%%%%%%%
The proof of this theorem is given is three parts as follows.
\begin{proof}[Proof of Theorem \ref{thm-first_main_theorem}]

\begin{trivlist}
\item[Part 1.] 
Considering the previous convergences, using the continuity and boundedness of the maps $M_\mathcal{O},\,M_\Gamma,\,\nu,\,\lambda$ and $\Gamma$ together with the DCT, we see that, after letting $\delta \to 0^+$, the process $(\tilde{\bu}(t), \tilde{\phi}(t), \tilde{\varphi}(t), \tilde{\mu}(t),\tilde{\theta}(t), \tilde{\mathcal{W}}(t))_{t \in [0,T]}$ satisfies the weak variational formulations \eqref{eq3.24}-\eqref{eq3.26} in the probabilistic space $(\tilde{\Omega}, \tilde{\mathcal{F}},\tilde{\mathbb{F}}, \tilde{\mathbb{P}})$. $\tilde{\mathbb{F}}= \{\tilde{\mathcal{F}}_t\}_{t\in[0,T]}$ being the natural filtration.
\newline
Moreover, by arguing as in \eqref{Eqn-4.83}, we infer that 
    \begin{equation*}
      \tilde{\mathbb{E}} \sup_{s \in [0,T]} \Vert(\tilde{\phi}(s),\tilde{\varphi}(s)) \Vert_{\mathbb{V}}^2
       \leq \liminf_{\delta \to \infty} \int_{\tilde{\Omega}} \sup_{s \in [0,T]} \Vert(\tilde{\phi}_\delta(s,\tilde{\omega}),\tilde{\varphi}_\delta(s,\tilde{\omega})) \Vert_{\mathbb{V}}^2 \, \d \tilde{\mathbb{P}}(\tilde{\omega})< \infty.
    \end{equation*}
 Furthermore, by the lemma due to Strauss, see \cite{Strauss_1966,Vishik+Fursikov_1988}, that says:
\[
L^\infty(0,T;\mathbb{V}) \cap C([0,T];\mathbb{H}_w) \hookrightarrow  C([0,T];\mathbb{V}_w),
\]
and Proposition E.1 from \cite{Brzezniak+Ferrario+Zanella_2024}, we infer that $(\tilde{\phi},\tilde{\varphi}) \in C_w([0,T];\mathbb{V})$, $\tilde{\mathbb{P}}$-a.s. Therefore, we deduce that the process $(\tilde{\phi},\tilde{\varphi})$ satisfies \eqref{Eqn-3.11}. This completes the proof of the first part of Theorem \ref{thm-first_main_theorem}, i.e.  there exists a weak martingale solution
$(\tilde{\bu}(t), \tilde{\phi}(t), \tilde{\varphi}(t), \tilde{\mu}(t), \tilde{\theta}(t), \tilde{\mathcal{W}}(t))_{t \in [0,T]}$
to system \eqref{eq1.1a}-\eqref{eq1.10a} in the sense of Definition \ref{def3.4}. Moreover, the process $(\tilde{\mu},\tilde{\theta})$ satisfies: $(\tilde{\mu},\tilde{\theta}) \in L^4(0,T;\mathbb{H})$, $\tilde{\mathbb{P}}$-a.s., cf. \eqref{eq4.122b}. 
\item[Part 2.] 
The proof of \eqref{Eqn-tilde-(phi,varphi)-L4(0,T;H2-norm}${\text{-1)}}$ is a direct consequence of \eqref{eq4.122b} and the Banach-Alaoglu Theorem, while the proof of \eqref{Eqn-tilde-(phi,varphi)-L4(0,T;H2-norm}${\text{-2)}}$ is a direct consequence of \eqref{Eqn-tilde-F-prime(tilde{phi})-L2(0,T;V1-norm} and \eqref{Eqn-tilde-G-prime(tilde{varphi})-L2(0,T;V-Gamma-norm}.
\newline
The fact that the process $(\tilde{\phi},\tilde{\varphi},\tilde{\mu},\tilde{\theta})$ satisfies the equations \eqref{Eqn-identification-tilde-mu} and \eqref{Eqn-identification-tilde-theta} in the strong sense, follows from the properties of $(\tilde{\phi},\tilde{\varphi},\tilde{\mu},\tilde{\theta})$, the fact that $(F^\prime(\tilde{\phi}),G^\prime(\tilde{\varphi})) \in L^2(0,T;\mathbb{H})$, $\tilde{\mathbb{P}}\mbox{-a.s.}$, and the weak variational formulation \eqref{eq3.26}.
\newline
It remains to show that the process $(\tilde{\phi},\tilde{\varphi})$ enjoys the regularity property stated in \eqref{Eqn-tilde-(phi,varphi)-L2(0,T;H3-norm}, that is, $(\tilde{\phi},\tilde{\varphi}) \in L^2(0,T;\mathcal{H}^3), \;\; \tilde{\mathbb{P}}\mbox{-a.s.}$
Indeed, since $\mathcal{O}$ is assumed to be of class $C^3$ and since the process $(\tilde{\phi},\tilde{\varphi})$ satisfies \eqref{Eqn-identification-tilde-mu}-\eqref{Eqn-identification-tilde-theta}, appealing to theory of elliptic problems with bulk-surface coupling, see, for instance, \cite[Theorem 3.3]{Knopf+Liu_2021}), we infer there exists a constant $C>0$ such that
    \begin{equation*}
        \Vert (\tilde{\phi}(t),\tilde{\varphi}(t)) \Vert_{\mathcal{H}^3}^2 
        \leq C (\Vert \tilde{\mu}(t) \Vert_{V_1}^2 + \Vert F^\prime(\tilde{\phi}(t)) \Vert_{V_1}^2 + \Vert \tilde{\theta}(t) \Vert_{V_\Gamma}^2 + \Vert G^\prime(\tilde{\varphi}(t)) \Vert_{V_\Gamma}^2), \; \; t \in [0,T],
    \end{equation*}
from which we easily complete the proof of \eqref{Eqn-tilde-(phi,varphi)-L2(0,T;H3-norm}.
\item[Part 3.] 
Proof of \eqref{eq3.29}. First, note that $\mathcal{L}_{\mathfrak{X}_T}(((\tilde{\phi}_{\delta},\tilde{\varphi}_{\delta})))= \mathcal{L}_{\mathfrak{X}_T}(((\bar{\phi}_{\delta},\bar{\varphi}_{\delta})))$, see \eqref{Eqn-equal-of-law-of-sequence-1}, hence so $(\tilde{\phi}_{\delta},\tilde{\varphi}_{\delta})$ satisfies \eqref{eq4.133}. The next step now is to pass to the limit in the resulting inequality satisfied by $(\tilde{\phi}_{\delta},\tilde{\varphi}_{\delta})$.
From the embedding  $V_1 \embed L^2(\Gamma)$, we infer that for every $\delta>0$,
\begin{align*}
\sum_{k=1}^\infty \Vert F_1(\tilde{\phi}_\delta) e_{1,k} \Vert_{L^2(\Gamma)}^2 
\leq C(\Gamma,\mathcal{O}) \Vert F_1(\tilde{\phi}_\delta) \Vert_{\mathscr{T}_2(U,V_1)}^2  
\leq C(\Gamma,\mathcal{O},C_1)( 1 + \lvert \nabla \tilde{\phi}_\delta \rvert^2).
\end{align*}
Next, since $F_1$ and $F_2$ are Lipschitz, we infer that
\begin{align*}
\lim_{\delta \to 0} \tilde{\mathbb{E}} \int_0^t \Vert F_{1}(\tilde{\phi}_\delta(s)) \Vert_{\mathscr{T}_2(U,L^2(\mathcal{O}))}^2 \,\d s&= \tilde{\mathbb{E}} \int_0^t \Vert F_{1}(\tilde{\phi}(s)) \Vert_{\mathscr{T}_2(U,L^2(\mathcal{O}))}^2 \,\d s, 
\\
\lim_{\delta \to 0} \tilde{\mathbb{E}} \int_0^t \Vert F_2(\tilde{\varphi}_\delta(s)) \Vert_{\mathscr{T}_2(U_\Gamma,L^2(\Gamma))}^2 \, \d s&= \tilde{\mathbb{E}} \int_0^t \Vert F_2(\tilde{\varphi}(s)) \Vert_{\mathscr{T}_2(U_\Gamma,L^2(\Gamma))}^2 \, \d s, \; \; t \in [0,T].
\end{align*}
\noindent
Now, arguing as in the proof of \eqref{convergence-4.132}, we deduce that
\begin{align*}
\tilde{\mathbb{E}} \int_{Q_t} \nu(\tilde{\phi}) \lvert D\tilde{\bu} \rvert^2  \,\d x \,\d s  &\leq \liminf_{\delta \to 0} \tilde{\mathbb{E}} \int_{Q_t} \nu(\tilde{\phi}_\delta) \lvert D\tilde{\bu}_\delta \rvert^2  \,\d x \,\d s, 
\\
\tilde{\mathbb{E}} \int_{Q_t} \lambda(\tilde{\phi}) \lvert \tilde{\bu} \rvert^2 \,\d x \,\d s &\leq \liminf_{\delta \to 0} \tilde{\mathbb{E}} \int_{Q_t} \lambda(\tilde{\phi}_\delta) \lvert \tilde{\bu}_\delta \rvert^2 \,\d x \,\d s,
    \\
\tilde{\mathbb{E}} \int_{\Sigma_t} \gamma(\tilde{\varphi}) \lvert \tilde{\bu} \rvert^2 \,\d S\,\d s &\leq \liminf_{\delta \to 0} \tilde{\mathbb{E}} \int_{\Sigma_t} \gamma(\tilde{\varphi}_\delta) \lvert \tilde{\bu}_\delta \rvert^2 \,\d S\,\d s,
          \\
\tilde{\mathbb{E}} \int_{Q_t} M_{\mathcal{O}}(\tilde{\phi}) \lvert \nabla \tilde{\mu} \rvert^2 \, \d x \,\d s &\leq \liminf_{\delta \to 0} \tilde{\mathbb{E}} \int_{Q_t} M_{\mathcal{O}}(\tilde{\phi}_\delta) \lvert \nabla \tilde{\mu}_\delta \rvert^2 \, \d x \,\d s,
                \\
\tilde{\mathbb{E}} \int_{\Sigma_t} M_\Gamma(\tilde{\varphi}) \lvert \nabla_\Gamma \tilde{\theta} \rvert^2 \,\d S \,\d s &\leq \liminf_{\delta \to 0} \tilde{\mathbb{E}} \int_{\Sigma_t} M_\Gamma(\tilde{\varphi}_\delta) \lvert \nabla_\Gamma \tilde{\theta}_\delta \rvert^2 \,\d S \,\d s.
\end{align*}
Hereafter, we claim that
\begin{equation}\label{Eqn-4.117}
\begin{aligned}
& \lim_{\delta \to 0} \tilde{\mathbb{E}} \int_0^t \sum_{k=1}^\infty \int_{\mathcal{O}} F^\bis_\delta(\tilde{\phi}_\delta) \lvert F_1(\tilde{\phi}_{\delta})e_{1,k} \rvert^2 \, \d x \,\d s
= \tilde{\mathbb{E}} \int_0^t \sum_{k=1}^\infty \int_{\mathcal{O}} F^\bis(\tilde{\phi}) \lvert F_1(\tilde{\phi}) e_{1,k} \rvert^2 \, \d x \,\d s, 
\\
& \lim_{\delta \to 0} \tilde{\mathbb{E}} \int_0^t  \sum_{k=1}^\infty \int_{\Gamma} G^\bis_\delta(\tilde{\varphi}_\delta) \lvert F_2(\tilde{\varphi}_{\delta})e_{2,k} \rvert^2 \, \d S\,\d s
= \tilde{\mathbb{E}} \int_0^t  \sum_{k=1}^\infty \int_{\Gamma} G^\bis(\tilde{\varphi}) \lvert F_2(\tilde{\varphi}) e_{2,k} \rvert^2\, \d S\,\d s.
\end{aligned}
\end{equation}
We preliminary recall that $J_\delta$ is the inverse function of $f_\delta: \mathbb{R} \ni s \mapsto (I + \delta \mathbb{A})(s) \in \mathbb{R}$, which is differentiable with $f^\prime_\delta(s)= 1 + \delta (F^\bis(s) + \tilde{c}_F)>0$. This entails that $\mathbb{A}_\delta$ is differentiable in $\mathbb{R}$. Notice also that $\mathbb{A}_\delta(s)= \mathbb{A}(J_\delta(s))$. Then, by differentiation and \ref{item:P4},
\begin{align*}
F_\delta^\bis(s)
= \mathbb{A}^\prime_\delta(s) - \tilde{c}_F 
= J^\prime_\delta(r) F^\bis(J_\delta(s)) + \tilde{c}_F (J^\prime_\delta(s)-1), \; \; s \in \mathbb{R}.
\end{align*}
On the other hand, by \eqref{condition_ F'_and_F''} and \ref{item:P4} in conjunction with the fact that $J_\delta^\prime(s) \in (0,1)$, $s \in \mathbb{R}$, we infer that for all $s \in \mathbb{R}$,
\begin{align*}
&\lvert F^\bis_\delta(s) \rvert 
\leq \lvert J^\prime_\delta(r) F^\bis(J_\delta(s)) \rvert + \tilde{c}_F \lvert J^\prime_\delta(s) - 1 \rvert 
\leq \lvert F^\bis(J_\delta(s)) \rvert + 2 \tilde{c}_F
\leq  C_F F(J_\delta(s)) + C_F + 2 \tilde{c}_F
\\
&\leq  C_F \tilde{F}(J_\delta(s)) + C_F + 2 \tilde{c}_F
\leq C_F \tilde{F}(s) + C_F + 2 \tilde{c}_F
= C_F F(s) +  \frac{ C_F \tilde{c}_F}{2} s^2 + C_F + 2 \tilde{c}_F.
\end{align*}
This implies that for every $\delta>0$,
\begin{align*}
&\lvert F^\bis_\delta(\tilde{\phi}_\delta) \rvert \lvert F_1(\tilde{\phi}_{\delta})e_{1,k} \rvert^2
\leq \left[C_F F(\tilde{\phi}_\delta) +  \frac{ C_F \tilde{c}_F}{2} \lvert \tilde{\phi}_\delta \rvert^2 + C_F + 2 \tilde{c}_F \right] \lvert F_1(\tilde{\phi}_{\delta})e_{1,k} \rvert^2
%\\
%&\leq \Vert \sigma_k \Vert_{L^{\infty}(\mathbb{R})}^2 \left[C_F F(\tilde{\phi}_\delta) +  \frac{ C_F \tilde{c}_F}{2} \lvert \tilde{\phi}_\delta \rvert^2 + C_F + 2 \tilde{c}_F \right]
\\
&\leq \tilde{C}_1 \left[C_F F(\tilde{\phi}_\delta) +  \frac{ C_F \tilde{c}_F}{2} \lvert \tilde{\phi}_\delta \rvert^2 + C_F + 2 \tilde{c}_F \right]
\leq C(\tilde{C}_1,C_F,\tilde{c}_F) \left[1 +  F(\tilde{\phi}_\delta) + \lvert \tilde{\phi}_\delta \rvert^2 \right].
\end{align*}
Besides, let $\mu$ be the measure on $\bar{\Omega} \times Q_T$. By \eqref{F'_and G'_additional_condition} and \eqref{eq4.122b}, we infer that
\[
\lim_{M \to \infty} \left(\sup_{\delta \in (0,1)} \left\{ \int_{\{(t,x,\tilde{\omega}) \in Q_T \times \tilde{\Omega}: \lvert 1 +  F(\tilde{\phi}_\delta(t,x,\tilde{\omega})) + \lvert \tilde{\phi}_\delta(t,x,\tilde{\omega}) \rvert^2 \rvert> M\}} \lvert 1 +  F(\tilde{\phi}_\delta) + \lvert \tilde{\phi}_\delta \rvert^2 \rvert \, \d \mu \right\} \right)= 0.
\]
Hence,
\[
\lim_{M \to \infty} \left(\sup_{\delta \in (0,1)} \left\{ \int_{\{(t,x,\tilde{\omega}) \in Q_T \times \tilde{\Omega}: \lvert F^\bis_\delta(\tilde{\phi}_\delta) \rvert \lvert F_1(\tilde{\phi}_{\delta})e_{1,k} \rvert^2> M\}} \lvert F^\bis_\delta(\tilde{\phi}_\delta) \rvert \lvert F_1(\tilde{\phi}_{\delta})e_{1,k} \rvert^2  \, \d \mu \right\} \right)= 0,
\]
i.e. the sequence $(\lvert F^\bis_\delta(\tilde{\phi}_\delta) \rvert \lvert F_1(\tilde{\phi}_{\delta})e_{1,k} \rvert^2)_\delta$ is uniformly integrable. Moreover, since 
\newline
$\lvert F^\bis_\delta(\tilde{\phi}_\delta) \rvert \lvert F_1(\tilde{\phi}_{\delta})e_{1,k} \rvert^2 \to \lvert F^\bis(\tilde{\phi}) \rvert \lvert F_1(\tilde{\phi})e_{1,k} \rvert^2$ a.e. in $Q_T \times \tilde{\Omega}$ 
\dela{and for each $\eta>0$,
\[
\lim_{\delta \to 0} \mu \{(t,x,\tilde{\omega}) \in Q_T \times \tilde{\Omega}: \lvert \tilde{\phi}_\delta(t,x,\tilde{\omega}) - \tilde{\phi}(t,x,\tilde{\omega}) \rvert> \eta\}=0,
\]
and the map $\mathbb{R} \ni s \mapsto \lvert F^\bis_\delta(s) \rvert \lvert \sigma_k(s) \rvert^2 \in [0,\infty)$ is continuous, we deduce that for each $\eps>0$,
\begin{align*}
\lim_{\delta \to 0} \mu \{(t,x,\tilde{\omega}) \in Q_T \times \tilde{\Omega}: &\lvert \lvert F^\bis_\delta(\tilde{\phi}_\delta(t,x,\tilde{\omega})) \rvert \lvert \sigma_k(\tilde{\phi}_\delta(t,x,\tilde{\omega})) \rvert^2 
\\
&- \lvert F^\bis(\tilde{\phi}(t,x,\tilde{\omega})) \rvert \lvert \sigma_k(\tilde{\phi}(t,x,\tilde{\omega})) \rvert^2 \rvert> \eps\}=0.
\end{align*}
Invoking now} we can apply the Vitali Convergence Theorem and deduce the first part of \eqref{Eqn-4.117}. Similarly we can prove the second part of \eqref{Eqn-4.117}.
\newline
Therefore, \eqref{eq3.29} follows from the previous convergences results and the weak lower semicontinuity of the norm. This completes the proof of Theorem \ref{thm-first_main_theorem}.
\end{trivlist}
\end{proof}

\appendix

\section{Auxilliary results}

%%%%%%%%%%%%%%%%%%%%%%%%%%%%%%%%%%%%%%%%%
We now state and prove the following result.
\begin{proposition}\label{Proposition_third_priori_estimmate}
Fix $r \geq 2$ and $\delta \in (0,1)$. Under the assumptions of Proposition \ref{Proposition_first_priori_estimmate}, we have the following result: There exists a constant $C_{3,\delta}>0$ such that for every $n \in \mathbb{N}$,
\begin{align}
\label{eq4.108}
\mathbb{E} \left( \int_0^t \Vert (\mu_{\delta,n}(s),\theta_{\delta,n}(s)) \Vert_{\mathbb{H}}^2 \, \d s \right)^{r/2} 
\leq C_{3,\delta} [1 + \Vert(\phi_0,\varphi_0)\Vert_{\mathbb{V}}^r],
  \\
\label{eq4.109a}
\mathbb{E} \left(\int_0^t \Vert(\mu_{\delta,n}(s),\theta_{\delta,n}(s))\Vert_{\mathbb{H}}^4 \, \d s \right)^{r/4}
\leq C_{3,\delta} [1 + \Vert(\phi_0,\varphi_0)\Vert_{\mathbb{V}}^r].
\end{align}
\end{proposition}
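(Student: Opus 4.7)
\medskip

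\noindent
\textbf{Proof proposal for Proposition \ref{Proposition_third_priori_estimmate}.} The plan is to combine the dual-norm bound \eqref{Eqn-4.34} with the interpolation identity \eqref{Eqn-4.35}, and then feed the result into the uniform estimates \eqref{eq4.95} from Proposition \ref{Proposition_second_priori_estimmate}. Since every quantity appearing below is already uniformly estimated in $L^r(\Omega)$ with $r \ge 2$ by Propositions \ref{Proposition_first_priori_estimmate} and \ref{Proposition_second_priori_estimmate}, the work reduces to bookkeeping with H\"older and Young.

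\medskip

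\noindent
\textbf{Step 1 (estimate of the dual norm).} From \eqref{Eqn-4.34} we have, for every $n$ and every $\delta \in (0,1)$,
\[
\Vert (\mu_{\delta,n},\theta_{\delta,n}) \Vert_{\mathbb{V}^\prime}^2
\le C\bigl(\Vert \phi_{\delta,n}\Vert_{V_1}^2 + \lvert \nabla_\Gamma \varphi_{\delta,n}\rvert_\Gamma^2 + (1/K)^2 \lvert \varphi_{\delta,n}-\phi_{\delta,n}\rvert_\Gamma^2\bigr)
\le C_\delta\,\mathcal{E}_{tot}(\phi_{\delta,n},\varphi_{\delta,n}).
\]
Hence, by the definition of $\mathcal{E}_{tot}$ and Proposition \ref{Proposition_second_priori_estimmate},
\[
\mathbb{E}\sup_{s\in[0,t]}\Vert (\mu_{\delta,n}(s),\theta_{\delta,n}(s))\Vert_{\mathbb{V}^\prime}^r
\le C_\delta\,\mathbb{E}\sup_{s\in[0,t]}[\mathcal{E}_{tot}(\phi_{\delta,n}(s),\varphi_{\delta,n}(s))]^{r/2}
\le C_{3,\delta}\bigl[1+\Vert(\phi_0,\varphi_0)\Vert_{\mathbb{V}}^r\bigr].
\]

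\medskip

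\noindent
\textbf{Step 2 (proof of \eqref{eq4.108}).} Integrating \eqref{Eqn-4.35} on $[0,t]$ and applying Cauchy--Schwarz and Young, we obtain
\[
\int_0^t \Vert(\mu_{\delta,n},\theta_{\delta,n})\Vert_\mathbb{H}^2\,\d s
\le C\!\int_0^t \Vert(\mu_{\delta,n},\theta_{\delta,n})\Vert_{\mathbb{V}^\prime}^2\,\d s
+ C\!\int_0^t (\lvert\nabla\mu_{\delta,n}\rvert^2+\lvert\nabla_\Gamma\theta_{\delta,n}\rvert_\Gamma^2)\,\d s.
\]
Raising to the power $r/2$, taking expectation, and invoking H\"older in time to estimate
\[
\mathbb{E}\Bigl(\int_0^t\Vert(\mu_{\delta,n},\theta_{\delta,n})\Vert_{\mathbb{V}^\prime}^2\,\d s\Bigr)^{r/2}
\le t^{r/2}\,\mathbb{E}\sup_{s\in[0,t]}\Vert(\mu_{\delta,n}(s),\theta_{\delta,n}(s))\Vert_{\mathbb{V}^\prime}^r,
\]
we combine Step 1 with the control on $\int_0^t(\lvert\nabla\mu_{\delta,n}\rvert^2+\lvert\nabla_\Gamma\theta_{\delta,n}\rvert_\Gamma^2)\,\d s$ provided by \eqref{eq4.95} to conclude \eqref{eq4.108}.

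\medskip

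\noindent
\textbf{Step 3 (proof of \eqref{eq4.109a}).} Squaring \eqref{Eqn-4.35} and using $(a+b)^2\le 2a^2+2b^2$,
\[
\Vert(\mu_{\delta,n},\theta_{\delta,n})\Vert_\mathbb{H}^4
\le C\,\Vert(\mu_{\delta,n},\theta_{\delta,n})\Vert_{\mathbb{V}^\prime}^4
+ C\,\Vert(\mu_{\delta,n},\theta_{\delta,n})\Vert_{\mathbb{V}^\prime}^2\bigl(\lvert\nabla\mu_{\delta,n}\rvert^2+\lvert\nabla_\Gamma\theta_{\delta,n}\rvert_\Gamma^2\bigr).
\]
Integrating on $[0,t]$, pulling $\sup_s\Vert(\mu_{\delta,n}(s),\theta_{\delta,n}(s))\Vert_{\mathbb{V}^\prime}^2$ out of the cross term, raising to the $r/4$ power and using Cauchy--Schwarz on $\Omega$,
\begin{align*}
\mathbb{E}\Bigl(\int_0^t\Vert(\mu_{\delta,n},\theta_{\delta,n})\Vert_\mathbb{H}^4\,\d s\Bigr)^{r/4}
\le{}& C\,t^{r/4}\,\mathbb{E}\sup_{s\in[0,t]}\Vert(\mu_{\delta,n}(s),\theta_{\delta,n}(s))\Vert_{\mathbb{V}^\prime}^r \\
&+ C\Bigl(\mathbb{E}\sup_{s\in[0,t]}\Vert(\mu_{\delta,n}(s),\theta_{\delta,n}(s))\Vert_{\mathbb{V}^\prime}^r\Bigr)^{1/2}\cdot\\
&\phantom{+ C}\Bigl(\mathbb{E}\Bigl(\int_0^t(\lvert\nabla\mu_{\delta,n}\rvert^2+\lvert\nabla_\Gamma\theta_{\delta,n}\rvert_\Gamma^2)\,\d s\Bigr)^{r/2}\Bigr)^{1/2}.
\end{align*}
Both factors on the right-hand side are bounded by $C_{3,\delta}[1+\Vert(\phi_0,\varphi_0)\Vert_{\mathbb{V}}^r]$ thanks to Step 1 and \eqref{eq4.95}, which yields \eqref{eq4.109a}.

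\medskip

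\noindent
There is no genuine obstacle here; the only mildly delicate point is ensuring the power $r$ of the Sobolev-type norm of $(\phi_0,\varphi_0)$ on the right-hand side matches on both sides, which is why we use $\|\cdot\|_{\mathbb{V}'}^r \lesssim \mathcal{E}_{tot}^{r/2}$ rather than an $L^2$-in-time estimate at this stage.
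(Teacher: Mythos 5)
Your argument is correct and follows essentially the same route as the paper: Step 3 reproduces the paper's treatment of \eqref{eq4.109a} via \eqref{Eqn-4.35}, the sup-out-of-the-time-integral trick, and Cauchy--Schwarz in $\Omega$, while for \eqref{eq4.108} you use \eqref{Eqn-4.35} together with the dual-norm bound \eqref{Eqn-4.34} where the paper instead cites the Poincar\'e-type estimate \eqref{Eqn-L2-norm-mu-and-theta-delta-n} — an immaterial difference, and your observation that $\Vert\cdot\Vert_{\mathbb{V}^\prime}^r\lesssim \mathcal{E}_{tot}^{r/2}$ keeps the powers matched with \eqref{eq4.95} at the same exponent $r$ is exactly the right bookkeeping.
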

%%%%%%%%%%%%%%%%%%%%%%%%%%%%%%%%%%%%%%%%%%%%%%%%%%%%%%%%%%%%%%%%%%%%%%%%%%%%%%%%%%%%%%%%%%%%%%%%%%%%%%%%%%%%%%%%%%%%%%%%%%%%%%%%%%%%%%%%%%%%%%%%%%%%%%%%%%%%%%%%%%%%%%%%%%%%%%%%%%
\begin{proof}
Let us choose and fix $r\geq 2$. By \eqref{Eqn-L2-norm-mu-and-theta-delta-n} and the estimate \eqref{eq4.95} in Proposition \ref{Proposition_second_priori_estimmate}, we easily derive \eqref{eq4.108}. \newline
To prove the other part, we recall that, cf. \eqref{Eqn-4.35},
\begin{equation*}
\Vert (\mu_{\delta,n},\theta_{\delta,n}) \Vert_{\mathbb{H}}^2
\leq \Vert (\mu_{\delta,n},\theta_{\delta,n}) \Vert_{\mathbb{V}^\prime}^2 + 2 \Vert(\mu_{\delta,n},\theta_{\delta,n}) \Vert_{\mathbb{V}^\prime} (\lvert \nabla \mu_{\delta,n} \rvert + \lvert \nabla_\Gamma \theta_{\delta,n} \rvert_\Gamma).
\end{equation*}
This, jointly with the H\"older inequality implies that for every $n \in \mathbb{N}$,
\begin{equation}
\begin{aligned}
&\mathbb{E} \left(\int_0^t \Vert (\mu_{\delta,n},\theta_{\delta,n}) \Vert_{\mathbb{H}}^4 \,\d s \right)^{r/4}
\leq C t^{r/4} \mathbb{E} [\sup_{s \in [0,t]} \Vert (\mu_{\delta,n}(s),\theta_{\delta,n}(s)) \Vert_{\mathbb{V}^\prime}^{r}]
\\
& + C \mathbb{E} \bigg[\sup_{s \in [0,t]}\Vert(\mu_{\delta,n}(s),\theta_{\delta,n}(s)) \Vert_{\mathbb{V}^\prime}^{r/2} \left(\int_0^t  (\lvert \nabla \mu_{\delta,n} \rvert^2 + \lvert \nabla_\Gamma \theta_{\delta,n} \rvert_\Gamma^2) \,\d s\right)^{r/4} \bigg]
\\
&\leq C [\mathbb{E} \sup_{s \in [0,t]} \Vert (\mu_{\delta,n}(s),\theta_{\delta,n}(s)) \Vert_{\mathbb{V}^\prime}^{r}]^{\frac12} \bigg[ t^{r/4} [\mathbb{E} \sup_{s \in [0,t]} \Vert (\mu_{\delta,n}(s),\theta_{\delta,n}(s)) \Vert_{\mathbb{V}^\prime}^{r}]^{\frac12} \\
&\hspace{6cm} + \bigg[ \mathbb{E} 
 \left(\int_0^t  (\lvert \nabla \mu_{\delta,n} \rvert^2 + \lvert \nabla_\Gamma \theta_{\delta,n} \rvert_\Gamma^2) \,\d s\right)^{r/2} \bigg]^{\frac12} \bigg].
\end{aligned}
\end{equation}
From the above inequality, \eqref{Eqn-4.34}, and \eqref{eq4.95}, we easily conclude the proof of \eqref{eq4.109a}.
\end{proof}
%%%%%%%%%%%%%%%%%%%%%%%%%%%%%%%%%%%%%%%%%%%%%%%%%%%%%%%%%%%%%%%%%%%%%%%%%%%%%%%%%%%%%%%%%%%%%%%%%%%%%%%%%%%%%%%%%%%%%%%%%%%%%%%%%%%%%%%%%%%%%%%%%%%%%%%%%%%%%%%%%%%%%%%%%%%%%%%%%%
The estimates in Propositions \ref{Proposition_second_priori_estimmate} and \ref{Proposition_third_priori_estimmate} imply the following result.
\begin{lemma}\label{H2_H3_priori_estimmates}
Assume that all the hypotheses of Lemma \ref{Lem-2} hold. Then, for any $r \in [2,\infty)$, there exists a constant $C_{4,\delta}>0$ such that for every $n \in \mathbb{N}$,
    \begin{align}
    \label{Eqn-(phi_{n,delta},varphi_{n,delta})-H2-estimate}
       \mathbb{E} \left(\int_0^T \Vert \bX_{\delta,n}(s) \Vert_{\mathcal{H}^2}^4 \,\d s \right)^{r/4} &\leq C_{4,\delta} [1 + \Vert (\phi_0,\varphi_0) \Vert_{\mathbb{V}}^r].
       \dela{
          \\
         \label{Eqn-(phi_{n,delta},varphi_{n,delta})-H3-estimate}
       \mathbb{E} \left(\int_0^T \Vert(\phi_{\delta,n}(s),\varphi_{\delta,n}(s))\Vert_{\mathcal{H}^3}^2 \,\d s\right)^{\fracr 2} &\leq C_{4,\delta} [1 + \Vert (\phi_0,\varphi_0) \Vert_{\mathbb{V}}^r].}
    \end{align}
\end{lemma}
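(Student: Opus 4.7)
The plan is to reduce the $\mathcal{H}^2$-norm of $\bX_{\delta,n}=(\phi_{\delta,n},\varphi_{\delta,n})$ to quantities that have already been estimated, by exploiting the elliptic character of the chemical-potential equations. Concretely, the identities
\begin{align*}
-\eps\Delta\phi_{\delta,n}+\eps^{-1}F'_\delta(\phi_{\delta,n})&=\mu_{\delta,n}\quad\text{in $\mathcal{O}$ (tested against $H^1_n$)},\\
-\eps_\Gamma\Delta_\Gamma\varphi_{\delta,n}+\eps_\Gamma^{-1}G'_\delta(\varphi_{\delta,n})+\eps\partial_\bn\phi_{\delta,n}&=\theta_{\delta,n}\quad\text{on $\Gamma$ (tested against $H^1_{\Gamma,n}$)},\\
K\partial_\bn\phi_{\delta,n}&=\varphi_{\delta,n}-\phi_{\delta,n}\quad\text{on $\Gamma$},
\end{align*}
together with the uniform boundedness of the Galerkin projections $\mathcal{S}_n,\mathcal{S}_{n,\Gamma}$ on $H^1$, place $(\phi_{\delta,n},\varphi_{\delta,n})$ inside a linear bulk--surface elliptic problem with Robin coupling whose data are $\mu_{\delta,n}-\eps^{-1}\mathcal{S}_n F'_\delta(\phi_{\delta,n})$ and $\theta_{\delta,n}-\eps_\Gamma^{-1}\mathcal{S}_{n,\Gamma}G'_\delta(\varphi_{\delta,n})$. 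By a standard $H^2$-regularity argument for such problems (same calculation the authors invoke via \textit{Eqn-4.57}, \textit{Eqn-4.59}), we obtain a pointwise inequality
\begin{equation*}
\|\bX_{\delta,n}(t)\|_{\mathcal{H}^2}^2\le C\Bigl(\|(\mu_{\delta,n}(t),\theta_{\delta,n}(t))\|_{\mathbb{H}}^2+|F'_\delta(\phi_{\delta,n}(t))|^2+|G'_\delta(\varphi_{\delta,n}(t))|_\Gamma^2+\|\bX_{\delta,n}(t)\|_{\mathbb{V}}^2\Bigr),
\end{equation*}
with $C=C(K,\eps,\eps_\Gamma,\mathcal{O},\Gamma)$ independent of $n$ and $\delta$.

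Next I control the Yosida-derivative terms. Property \ref{item:P4} gives that $F'_\delta$ is Lipschitz with $F'_\delta(0)=0$, so $|F'_\delta(\phi_{\delta,n}(t))|\le(\tilde c_F+\delta^{-1})|\phi_{\delta,n}(t)|$, and analogously for $G'_\delta$ with constant $\tilde c_G+\delta^{-1}$. Inserting these into the previous display yields
\begin{equation*}
\|\bX_{\delta,n}(t)\|_{\mathcal{H}^2}^2\le C_\delta\bigl(1+\|(\mu_{\delta,n}(t),\theta_{\delta,n}(t))\|_{\mathbb{H}}^2+\|\bX_{\delta,n}(t)\|_{\mathbb{V}}^2\bigr),
\end{equation*}
where the trace embedding $V_1\embed L^2(\Gamma)$ is used to absorb $|\varphi_{\delta,n}|_\Gamma$ into the $\mathbb{V}$-norm.

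Squaring, integrating on $[0,T]$, raising to the power $r/4$ and taking expectation, the triangle inequality in $L^{r/4}(\Omega)$ gives
\begin{align*}
\mathbb{E}\Bigl(\int_0^T\|\bX_{\delta,n}(s)\|_{\mathcal{H}^2}^4\,\d s\Bigr)^{r/4}
&\le C_\delta\,T^{r/4}+C_\delta\,\mathbb{E}\Bigl(\int_0^T\|(\mu_{\delta,n},\theta_{\delta,n})\|_{\mathbb{H}}^4\,\d s\Bigr)^{r/4}\\
&\quad+C_\delta\,T^{r/4}\,\mathbb{E}\Bigl[\sup_{s\in[0,T]}\|\bX_{\delta,n}(s)\|_{\mathbb{V}}^r\Bigr].
\end{align*}
The last two expectations are bounded by $C_{r,\delta}[1+\|(\phi_0,\varphi_0)\|_{\mathbb{V}}^r]$ via \eqref{eq4.109a} of Proposition \ref{Proposition_third_priori_estimmate} and the $L^\infty$-in-time control on $\|\bX_{\delta,n}\|_{\mathbb{V}}$ contained in \eqref{eq4.95} of Proposition \ref{Proposition_second_priori_estimmate}. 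Collecting everything yields \eqref{Eqn-(phi_{n,delta},varphi_{n,delta})-H2-estimate}.

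The main technical obstacle is the elliptic estimate in the first paragraph: the equations for $\mu_{\delta,n}$ and $\theta_{\delta,n}$ hold only in the finite-dimensional sense (they involve $\mathcal{S}_n$ and $\mathcal{S}_{n,\Gamma}$), the coupling is of Robin type with the nonstandard interface condition $K\partial_\bn\phi_{\delta,n}=\varphi_{\delta,n}-\phi_{\delta,n}$, and the $\Gamma$-part of $\theta_{\delta,n}$ contains the trace $\partial_\bn\phi_{\delta,n}$ that has to be absorbed by $H^2$-regularity on $\phi_{\delta,n}$. The key point is that $\mathcal{S}_n,\mathcal{S}_{n,\Gamma}$ are orthogonal projections in $L^2$ and uniformly bounded in $H^1$, so the usual elliptic estimates transfer to the Galerkin level with constants independent of $n$; only the Yosida-dependence $\delta^{-1}$ enters through the Lipschitz constants of $F'_\delta,G'_\delta$, which is acceptable since the final constant $C_{4,\delta}$ is allowed to depend on $\delta$.
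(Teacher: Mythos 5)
Your argument is correct and follows essentially the same route as the paper: the pointwise bound $\Vert \bX_{\delta,n}\Vert_{\mathcal{H}^2}^2\le C_\delta(\Vert(\mu_{\delta,n},\theta_{\delta,n})\Vert_{\mathbb{H}}^2+\Vert\bX_{\delta,n}\Vert_{\mathbb{V}}^2)$ is exactly the paper's \eqref{Eqn-4.61}, obtained via the same elliptic regularity for the bulk Poisson--Neumann/Robin problem and the surface Laplace--Beltrami equation, the same Lipschitz bounds $\lvert F_\delta'(\phi)\rvert\le\tilde c_{1,\delta}\lvert\phi\rvert$, $\lvert G_\delta'(\varphi)\rvert_\Gamma\le\tilde c_{2,\delta}\lvert\varphi\rvert_\Gamma$, and the same conclusion from \eqref{eq4.95} and \eqref{eq4.109a}. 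No gaps.
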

\begin{proof}
By the smoothness of $\mathcal{O}$ \dela{,\coma{i.e. $\mathcal{O}$ is assumed to be of class $C^3$}}and the regularity theory for Poissons's equation with Neumann boundary condition, see \cite[Section 5, Proposition 7.7]{Taylor_2011}), we infer there exists $C=C(K,\eps,\mathcal{O})$ such that for every $n \in \mathbb{N}$,
\begin{equation}\label{Eqn-4.57}
\begin{aligned}
&\Vert \phi_{\delta,n} \Vert_{H^2(\mathcal{O})}^2
\leq  C \lvert \mu_{\delta,n} - \eps^{-1} \mathcal{S}_n  F_\delta^\prime(\phi_{\delta,n})\rvert^2 + C \Vert \phi_{\delta,n} \Vert_{V_1}^2 + C \Vert \bX_{\delta,n} \Vert_{H^{1/2}(\Gamma)}^2
\\
&\leq  C \lvert \mu_{\delta,n} \rvert^2 + C \lvert F_\delta^\prime(\phi_{\delta,n})\rvert^2 + C \Vert \phi_{\delta,n} \Vert_{V_1}^2 + C \Vert \bX_{\delta,n} \Vert_{H^{1/2}(\Gamma)}^2.
\end{aligned}
\end{equation}
Moreover, since $H^1(\mathcal{O}) \embed H^{1/2}(\Gamma)$ and $H^1(\Gamma) \embed H^{1/2}(\Gamma)$, we obtain for all $n \in \mathbb{N}$,
  \begin{align*}
   \Vert \bX_{\delta,n} \Vert_{H^{1/2}(\Gamma)} 
     \leq C \Vert \varphi_{\delta,n}\Vert_{H^{1/2}(\Gamma)}  + C \Vert \phi_{\delta,n}\Vert_{H^{1/2}(\Gamma)} 
     \leq C (\Vert \varphi_{\delta,n} \Vert_{V_\Gamma}  + \Vert \phi_{\delta,n} \Vert_{V_1}).
 \end{align*}
Furthermore, as $\lvert F_\delta^\prime(\phi_{\delta,n}) \rvert \leq \tilde{c}_{1,\delta} \lvert \phi_{\delta,n} \rvert$, we deduce there exists $C=C(K,\eps,\mathcal{O},\Gamma,\delta)$ such that for every $n \in \mathbb{N}$,
   \begin{equation}\label{Eqn-4.58}
     \Vert \phi_{\delta,n} \Vert_{H^2(\mathcal{O})}^2
      \leq  C (\lvert \mu_{\delta,n} \rvert^2 + \Vert \bX_{\delta,n} \Vert_{\mathbb{V}}^2).
    \end{equation}
%begin delete
\dela{Next, since the domain $\mathcal{O}$ is assumed to be of class $C^3$,}
\dela{Next, since the domain $\mathcal{O}$ satisfies the uniform $C^2$-regularity condition, its boundary $\Gamma$ is indeed a compact submanifold of $\mathbb{R}^d$ without boundary.
}
Thus, by applying the regularity theory for elliptic problems on submanifolds, see, e.g., \cite[Section 5, Theorem 1.3]{Kardestuncer+Norrie+Brezzi_1987}, using the fact that $H^1(\mathcal{O}) \embed L^2(\Gamma)$, we infer that for any $n \in \mathbb{N}$,
\begin{equation}\label{Eqn-4.59}
\begin{aligned}
&\Vert \varphi_{\delta,n} \Vert_{H^2(\Gamma)}
\leq C (\lvert \theta_{\delta,n} - \eps_\Gamma^{-1} \mathcal{S}_{n,\Gamma} G_\delta^\prime(\varphi_{\delta,n}) - \eps(\varphi_{\delta,n} - \phi_{\delta,n}) \rvert_\Gamma + \Vert \varphi_{\delta,n} \Vert_{V_\Gamma})
\\
&\leq C (\lvert \theta_{\delta,n} \rvert_\Gamma + \lvert \mathcal{S}_{n,\Gamma} G_\delta^\prime(\varphi_{\delta,n}) \rvert_\Gamma + \lvert \varphi_{\delta,n} \rvert_\Gamma + \lvert \phi_{\delta,n} \rvert_\Gamma + \Vert \varphi_{\delta,n} \Vert_{V_\Gamma}) \\
&\leq C(\lvert \theta_{\delta,n} \rvert_\Gamma + \Vert \mathcal{S}_{n,\Gamma} \Vert_{\mathcal{L}(L^2(\Gamma))} \lvert G_\delta^\prime(\varphi_{\delta,n}) \rvert_\Gamma + \Vert \varphi_{\delta,n} \Vert_{V_\Gamma} + \Vert \phi_{\delta,n} \Vert_{V_1}) \\
&\leq C(\lvert \theta_{\delta,n} \rvert_\Gamma + \lvert G_\delta^\prime(\varphi_{\delta,n}) \rvert_\Gamma + \Vert (\phi_{\delta,n},\varphi_{\delta,n}) \Vert_{\mathbb{V}}),
\end{aligned}
\end{equation}
where $C$ may depend on $K,\,\eps,\,\Gamma$, and $\eps_\Gamma$. Moreover, since 
$\lvert G_\delta^\prime(\varphi_{\delta,n}) \rvert_\Gamma \leq \tilde{c}_{2,\delta} \lvert \phi_{\delta,n} \rvert_\Gamma$, we deduce that there exists $C=C(K,\eps,\eps_\Gamma,\mathcal{O},\Gamma,\delta)$ such that for every $n \in \mathbb{N}$,
    \begin{equation}\label{Eqn-4.60}
     \Vert \varphi_{\delta,n} \Vert_{H^2(\Gamma)}^2
      \leq  C (\lvert \theta_{\delta,n} \rvert_\Gamma^2 + \Vert \bX_{\delta,n} \Vert_{\mathbb{V}}^2).
    \end{equation}
Hence, from \eqref{Eqn-4.58} and \eqref{Eqn-4.60}, we find that there exists $C=C(K,\eps,\eps_\Gamma,\mathcal{O},\Gamma,\delta)$ such that for any $n \in \mathbb{N}$,    
  \begin{equation}\label{Eqn-4.61}
    \Vert \bX_{\delta,n} \Vert_{\mathcal{H}^2}^2
    \leq C (\Vert(\mu_{\delta,n},\theta_{\delta,n})\Vert_{\mathbb{H}}^2 + \Vert \bX_{\delta,n} \Vert_{\mathbb{V}}^2)
  \end{equation}
From now on, using \eqref{eq4.95}, \eqref{eq4.109a}, and \eqref{Eqn-4.61}, we can easily complete the proof \dela{of the first part of Lemma \ref{H2_H3_priori_estimmates}}of the lemma.
\end{proof}
%%%%%%%%%%%%%%%%%%%%%%%%%%%%%%%%%%%%%%%%%%%%%%%%%%%%%%%%%%%%%%%%%%%%%%%%%%%%%%%%%%%%%%%%%%%%%%%%
\section{Proof of Proposition \ref{Prop:Proposition-4}}\label{eqn-Prop:Proposition-4}
%%%%%%%%%%%%%%%%%%%%%%%%%%%%%%%%%%%%%%%%%%%%%%%%%%%%%%%%%%%%%%%%%%%%%%%%%%%%%%%%%%%%%%%%%%%%%%%%
\begin{proof}[Proof of Proposition \ref{Prop:Proposition-4}]
Let us choose and fix $\delta \in (0,1)$. Let
    \begin{equation*}
     \bG^n(\mathfrak{X}_{\delta,n})\coloneq \mathrm{diag}(F_{1,n}(\bar{\phi}_{\delta,n}),F_{2,n}(\bar{\varphi}_{\delta,n})).
   \end{equation*}
%begin delete
\dela{
Observe that from the assumptions \ref{item:H2} and \ref{item:H3}, and Remark \ref{Rk_1}, we have\todozb{These conditions below can me moved to be parts of assumptions \ref{item:H2} and \ref{item:H3}.}
\begin{equation}\label{eq4.122}
\begin{aligned}
\Vert F_1(\phi) \Vert_{\mathscr{T}_2(U,L^2(\mathcal{O})))}^2 &\leq C_1 \lvert \mathcal{O} \rvert, \; \; &\phi \in L^2(\mathcal{O}), 
\\
\Vert F_1(\phi_1) - F_1(\phi_2) \Vert_{\mathscr{T}_2(U,L^2(\mathcal{O})))}^2 &\leq C_1 \lvert \phi_1 - \phi_2 \rvert^2, \; \; &\phi_1,\, \phi_2 \in L^2(\mathcal{O}),  
   \\
\Vert F_2(\varphi)\Vert_{\mathscr{T}_2(U_\Gamma,L^2(\Gamma)))}^2 &\leq C_2 \lvert \Gamma \rvert, \;\; &\varphi \in L^2(\Gamma), 
\\
\Vert F_2(\varphi_1) - F_2(\varphi_2) \Vert_{\mathscr{T}_2(U_\Gamma,L^2(\Gamma)))}^2 &\leq C_2 \lvert \varphi_1 - \varphi_2 \rvert_\Gamma^2, \;\; &\varphi_1,\, \varphi_2 \in L^2(\Gamma).
\end{aligned}
\end{equation}
}
%end delete
In view of \eqref{eq4.122-1} and \eqref{eq4.122-2}, we see that the components of $\bG^n$ are Lipschitz. Thus, by \eqref{eq4.122a}, we infer $\bG^n(\mathfrak{X}_{\delta,n}) \to \bG(\mathfrak{X}_\delta) \coloneq \mathrm{diag}(F_1(\bar{\phi}_{\delta}),F_2(\bar{\varphi}_{\delta}))$ in $L^2(0,T;\mathscr{T}_2(\mathcal{U},\mathbb{H}))$ $\bar{\mathbb{P}}$-a.s., as $n \to \infty$. Moreover, there exists a positive constant $C=C(C_1,C_3)$ independent of $n$ and $\delta$ such that
    \begin{equation*}
      \Vert \bG^n(\mathfrak{X}_{\delta,n}) \Vert_{\mathscr{T}_2(\mathcal{U},\mathbb{H})}^2 \leq C(\lvert \bar{\phi}_{\delta,n} \rvert^2 + \lvert \bar{\varphi}_{\delta,n} \rvert_\Gamma^2).
   \end{equation*}
This, altogether with Propositions \ref{Proposition_first_priori_estimmate} and \ref{Proposition_second_priori_estimmate} yields that $\lbrace \bG^n(\mathfrak{X}_{\delta,n}) \rbrace_n$ is uniformly bounded in $L^2(\bar{\Omega}; L^2(0,T;\mathscr{T}_2(\mathcal{U},\mathbb{H})))$. Therefore, by the Vitali-Convergence theorem, we infer that
    \begin{equation}\label{eq4.123}
       \bG^n(\mathfrak{X}_{\delta,n}) \to \bG(\mathfrak{X}_\delta)  \mbox{ in } L^2(\bar{\Omega};L^2(0,T;\mathscr{T}_2(\mathcal{U},\mathbb{H}))),  \mbox{ as } n \to \infty.
    \end{equation}
Hence, thanks to \eqref{eq4.122a} and \eqref{eq4.123}, we can now apply \cite[Lemma 2.1]{Debussche+Glatt+Temam_2011} to deduce that
   \begin{equation*}
     \int_0^t \bG^n(\mathfrak{X}_{\delta,n}(s)) \, \d \bar{\mathcal{W}}^{n}(s) \to \int_0^t \bG(\mathfrak{X}_\delta(s)) \, \d \bar{\mathcal{W}}(s) \mbox{  in probability in }  L^2(0,T;\mathbb{H}).
   \end{equation*}
This completes the proof of the proposition.
\end{proof}

%begin delete
\dela{

\newpage
%%%%%%%%%%%%%%%%%%%%%%%%%%%%%%%%%%%%%%%%%%%%%%%%%%%%%%%%%%%%%%%%%%%%%%%%%%%%%%%%%%%%%%%%%%%%%%%%%%%%%%%%%%%%%%%%%%%%%%%%%%%%%%%%%%%%%
\subsection{Uniform estimates with respect to delta} \label{sect_Uniform_estimates_delta}
%%%%%%%%%%%%%%%%%%%%%%%%%%%%%%%%%%%%%%%%%%%%%%%%%%%%%%%%%%%%%%%%%%%
In this section, we derive additional uniform estimates that are independent of the parameter $\delta$. Consequently, the positive constant $C>0$, whose specific dependencies are explicitly noted when necessary, remains independent of $\delta$ and can vary from one line to another. \newline
We start with the following energy identity.
\begin{proposition}
Let $(\bar{\bu}_{\delta}, \bar{\phi}_{\delta},\bar{\varphi}_{\delta},\bar{\mu}_{\delta},\bar{\theta}_{\delta})_{\delta \in (0,1)}$, $\bar{W}$, and $\bar{W}_{\Gamma}$ be the processes obtained in \eqref{eq4.122a} and such that the equalities \eqref{eq4.130a}-\eqref{eq4.130d} hold. Then for every $T>0$, we have
\end{proposition}
\begin{proof}
Fix $\delta \in (0,1)$. Define the following $\mathcal{C}^2$-mappings:
\begin{align*}
&\vartheta_1: V_1 \ni \phi \mapsto \frac{\eps}{2} \Vert \nabla \phi \Vert_{\mathbb{L}^2(\mathcal{O})}^2 \in [0,\infty), \qquad \qquad \;  \vartheta_2: V_\Gamma \ni \varphi \mapsto \frac{\eps_\Gamma}{2} \Vert \nabla_\Gamma \varphi \Vert_{\mathbb{L}^2(\Gamma)}^2 \in [0,\infty),
\\
&\vartheta_3: V_1 \ni \phi \mapsto \frac{1}{\eps} \int_{\mathcal{O}} F_\delta(\phi(x)) \, \d x \in [0,\infty), \quad \quad \; \vartheta_4: V_\Gamma \ni \varphi \mapsto \frac{1}{\eps_\Gamma} \int_{\Gamma} G_\delta(\varphi) \, \d S \in [0,\infty),
\\
&\vartheta_5: \mathbb{V} \ni (\phi,\varphi) \mapsto \frac{\eps [K]}{2} \Vert \varphi - \phi \Vert_{L^2(\Gamma)}^2 \in [0,\infty).
\end{align*}
Now, from the regularity of the processes $\bar{\phi}_{\delta}$ and $\bar{\varphi}_{\delta}$, cf. \eqref{eq4.122a}, we can apply the It\^o formula due to \cite{Gyongy+Krylov_1982} to the processes $\vartheta_1(\bar{\phi}_{\delta}(t))$, $\vartheta_2(\bar{\varphi}_{\delta}(t))$, and $\vartheta_5((\bar{\phi}_{\delta}(t),\bar{\varphi}_{\delta}(t)))$, respectively.\newline
Subsequently, observe that $F_\delta,\, G_\delta \in \mathcal{C}^2(\mathbb{R})$ and $F^{\bis}_\delta,\, G^{\bis}_\delta \in L^\infty(\mathbb{R})$, so that $F^{\bis}_\delta$ and $G^{\bis}_\delta$ are Lipschitz-continuous. In particular, the maps
   \begin{align*}
     \Psi_1: V_1 \ni \phi \mapsto F^\prime_{\delta}(\phi) \in V_1 \mbox{ and }  \Psi_2: V_\Gamma \ni \varphi \mapsto G^\prime_{\delta}(\varphi) \in V_\Gamma
    \end{align*}
are well defined and continuous. Thus, from the properties of the processes $\bar{\phi}_{\delta}$ and $\bar{\varphi}_{\delta}$, we can apply the It\^o formula due to \cite{Pardoux_1975} in the variational setting of Theorem 4.2 in that thesis to the processes $\vartheta_3(\bar{\phi}_{\delta}(t))$ and $\vartheta_4(\bar{\varphi}_{\delta}(t))$. \newline
In conclusion, in order to derive the energy identity \coma{???} we shall apply the It\^o formula to the processes $\vartheta_1(\bar{\phi}_{\delta}(t)),\,\vartheta_2(\bar{\varphi}_{\delta}(t)),\,\vartheta_3(\bar{\phi}_{\delta}(t)),\, \vartheta_4(\bar{\varphi}_{\delta}(t))$, and $\vartheta_5((\bar{\phi}_{\delta}(t),\bar{\varphi}_{\delta}(t)))$, then we add the resulting formulae. This procedure, which is possible because of the previous observations, the properties of the processes $\bar{\phi}_{\delta}(t)$ and $\bar{\varphi}_{\delta}(t)$, and the fact that
\begin{align*}
 K \partial_{\bn} \bar{\phi}_\delta= \bar{\varphi}_\delta - \bar{\phi}_\delta &\mbox{ on }   \Sigma_T, 
                                 \\
 M_\mathcal{O} (\bar{\phi}_\delta) \partial_{\bn} \bar{\mu}_\delta= 0 &\mbox{ on }  \Sigma_T,                                                           \\
\end{align*}

\todoan{
I wanted to apply the It\^o formula to $\vartheta_i$, $i=1,\ldots 5$. But the issue i have is how to show that the following integral is well defined using the properties of the processes $\bar{\phi}_\delta$ and $\bar{\varphi}_\delta$ (cf. \eqref{eq4.122a})
\begin{align*}
&\int_0^t (A_{\bar{\phi}_\delta}(\bar{\mu}_\delta), \bar{\varphi}_\delta - \bar{\phi}_\delta)_\Gamma \, \d S \; \; ???
\\
&\int_0^t (\mathcal{A}_{\bar{\varphi}_\delta}(\bar{\theta}_\delta), \Delta_\Gamma \bar{\varphi}_\delta)_\Gamma \, \d S \; \; ???
\end{align*}
I note that when we apply It\^o's formula and then add the processes $\vartheta_1(\bar{\phi}_{\delta}(t)),\,\vartheta_2(\bar{\varphi}_{\delta}(t)),\,\vartheta_3(\bar{\phi}_{\delta}(t)),\, \vartheta_4(\bar{\varphi}_{\delta}(t))$, and $\vartheta_5((\bar{\phi}_{\delta}(t),\bar{\varphi}_{\delta}(t)))$, these integrals vanish at the end. However, how to show that the aforementioned integrals are well-defined.
}

\todoan{
In the previous version, we took $(\psi,\psi \lvert_\Gamma)=(1,0)$ or $(\psi,\psi \lvert_\Gamma)=(0,1)$ as test function in  \eqref{eq4.34d} so as to deduce an estimate for the mean of the processes $\mu_{\delta,n}$ and $\theta_{\delta,n}$. However, it seems that $(1,0)$ is not an element of the subspace $\mathcal{V}_n$. Maybe I wrong??
}
\end{proof}

\newpage

%%%%%%%%%%%%%%%%%%%%%%%%%%%%%%%%%%%%%%%%%%%%%%%%%%%%%%%%%%%%%%%%%%
\subsection{Uniform estimates with respect to delta} \label{sect_Uniform_estimates_delta}
%%%%%%%%%%%%%%%%%%%%%%%%%%%%%%%%%%%%%%%%%%%%%%%%%%%%%%%%%%%%%%%%%%%

\noindent
\textbf{Additional regularity for the phase-fields.}
We now assume that $\mathcal{O}$ is even of class $C^3$ and, if $d=3$, we further assume $p<6$.

\noindent
Since $\Psi(r)= F^\prime(r) + \tilde{c}_F r$ and $F_\delta^\prime(r)= \mathbb{A}_\delta(r) - \tilde{c}_F r= (\Psi \circ J_\delta) (r)- \tilde{c}_F r$ for all $r \in \mathbb{R}$, then $F''_\delta(r)= J^\prime_\delta(r) \Psi^\prime(J_\delta(r)) - \tilde{c}_F= J^\prime_\delta(r) F''(J_\delta(r)) + \tilde{c}_F (J^\prime_\delta(r)-1)$, from which, we obtain
\begin{align*}
|F''_\delta(r)| &\leq |J^\prime_\delta(r) F''(J_\delta(r))| + \tilde{c}_F | J^\prime_\delta(r) - 1| \\
&\leq |F''(J_\delta(r))| + 2 \tilde{c}_F \\
&\leq c_{F''}(1 + |J_\delta(r)|^{p-2})  + 2 \tilde{c}_F \\
&\leq c_{F''}(1 + |r|^{p-2})  + 2 \tilde{c}_F,
\end{align*}
where we have also used the fact that $J_\delta^\prime(r) \in (0,1)$ for every $r \in \mathbb{R}$ together with the assumption \eqref{F''_and G''_additional_condition}. Consequently,
\begin{align*}
\int_{\mathcal{O}} |F''_\delta(\bar{\phi}_{\delta,n}(t))|^2 |\nabla \bar{\phi}_{\delta,n}(t)|^2 \, \d x 
&\leq 2( c_{F''} +  2 \tilde{c}_F)^2 \int_{\mathcal{O}} |\nabla \bar{\phi}_{\delta,n}(t)|^2 \, \d x + 2 \int_{\mathcal{O}} |\bar{\phi}_{\delta,n}(t)|^{2(p-2)} |\nabla \bar{\phi}_{\delta,n}(t)|^2 \, \d x \\
&\leq 2( c_{F''} +  2 \tilde{c}_F)^2 \|\nabla \bar{\phi}_{\delta,n}(t)\|_{\mathbb{L}^2(\mathcal{O})}^2 + 2 \|\bar{\phi}_{\delta,n}(t)\|_{L^{2(p-2)}(\mathcal{O})}^{2(p-2)} \|\nabla \bar{\phi}_{\delta,n}(t)\|_{\mathbb{L}^\infty(\mathcal{O})}^2.
\end{align*}
\adda{Next, without loss of generality, we further assume $p \in[5,6)$. } From this previous inequality and the Gagliardo-Nirenberg inequality together with the Agmon inequality, we obtain
\begin{align*}
&\|F''_\delta(\bar{\phi}_{\delta,n}(t)) \nabla \bar{\phi}_{\delta,n}(t)\|_{\mathbb{L}^2(\mathcal{O})} \\
&\leq C \|\nabla \bar{\phi}_{\delta,n}(t)\|_{\mathbb{L}^2(\mathcal{O})} + \|\bar{\phi}_{\delta,n}(t)\|_{L^{2(p-2)}(\mathcal{O})}^{p-2} \|\nabla \bar{\phi}_{\delta,n}(t)\|_{\mathbb{L}^\infty(\mathcal{O})} \\
&\leq C \|\nabla \bar{\phi}_{\delta,n}(t)\|_{\mathbb{L}^2(\mathcal{O})} + C \|\bar{\phi}_{\delta,n}(t)\|_{L^6(\mathcal{O})}^\frac{p+1}{2} \|\bar{\phi}_{\delta,n}(t)\|_{H^2(\mathcal{O})}^{\frac{p-5}{2} + \frac{1}{2}} \|\bar{\phi}_{\delta,n}(t)\|_{H^3(\mathcal{O})}^\frac{1}{2} \\
&\leq C \|\bar{\phi}_{\delta,n}(t)\|_{V_1} + C \|\bar{\phi}_{\delta,n}(t)\|_{V_1}^\frac{p+1}{2} \|\bar{\phi}_{\delta,n}(t)\|_{H^2(\mathcal{O})}^\frac{p-4}{2} \|\bar{\phi}_{\delta,n}(t)\|_{H^3(\mathcal{O})}^\frac{1}{2} \\
&\leq C \|\bar{\phi}_{\delta,n}(t)\|_{V_1} + C \|\bar{\phi}_{\delta,n}(t)\|_{V_1}^\frac{p+1}{2} \|\bar{\phi}_{\delta,n}(t)\|_{V_1}^\frac{p-4}{4} \|\bar{\phi}_{\delta,n}(t)\|_{H^3(\mathcal{O})}^{\frac{1}{2} + \frac{p-4}{4}} \\
&\leq C \|\bar{\phi}_{\delta,n}(t)\|_{V_1} + C  \|\bar{\phi}_{\delta,n}(t)\|_{V_1}^\frac{3p-2}{4} \|\bar{\phi}_{\delta,n}(t)\|_{H^3(\mathcal{O})}^\frac{p-2}{4},
\end{align*} 
which, in turn, entails by application of the Young inequality that
\begin{align*}
\|F''_\delta(\bar{\phi}_{\delta,n}(t)) \nabla \bar{\phi}_{\delta,n}(t)\|_{\mathbb{L}^2(\mathcal{O})}
\leq C_\epsilon  + C_\epsilon \|\bar{\phi}_{\delta,n}(t)\|_{V_1}^\frac{3p-2}{6-p} + \frac{\epsilon}{2} \|\bar{\phi}_{\delta,n}(t)\|_{H^3(\mathcal{O})},
\end{align*}
for every $\epsilon>0$ and where the constant $C_\epsilon$ is independent of $n$ and $\delta$. Besides, from \eqref{F'_and G'_additional_condition} and the fact that $|F_\delta^\prime (\bar{\phi}_{\delta,n})| \leq   |F^\prime(\bar{\phi}_{\delta,n})| + 2 \tilde{c}_F |\bar{\phi}_{\delta,n}|$, we have
\begin{align*}
\|F_\delta^\prime (\bar{\phi}_{\delta,n}(t))\|_{L^2(\mathcal{O})}
&\leq C + C \|\bar{\phi}_{\delta,n}(t)\|_{L^{2(p-1)}(\mathcal{O})}^{p-1} \\
&\leq C + C  \|\bar{\phi}_{\delta,n}(t)\|_{L^6(\mathcal{O})}^\frac{p+2}{2} \|\bar{\phi}_{\delta,n}(t)\|_{H^2(\mathcal{O})}^\frac{p-4}{2} \\
&\leq C + C  \|\bar{\phi}_{\delta,n}(t)\|_{V_1}^\frac{p+2}{2} \|\bar{\phi}_{\delta,n}(t)\|_{H^2(\mathcal{O})}^\frac{p-4}{2} \\
&\leq C + C  \|\bar{\phi}_{\delta,n}(t)\|_{V_1}^\frac{p+2}{2} \|\bar{\phi}_{\delta,n}(t)\|_{V_1}^\frac{p-4}{4} \|\bar{\phi}_{\delta,n}(t)\|_{H^3(\mathcal{O})}^\frac{p-4}{4} \\
&\leq C + \|\bar{\phi}_{\delta,n}(t)\|_{V_1}^\frac{3p}{4} \|\bar{\phi}_{\delta,n}(t)\|_{H^3(\mathcal{O})}^\frac{p-4}{4} \\
&\leq C + C \|\bar{\phi}_{\delta,n}(t)\|_{V_1}^\frac{3p}{8-p} +  \frac{\epsilon}{2} \|\bar{\phi}_{\delta,n}(t)\|_{H^3(\mathcal{O})},
\end{align*}
where we have also use the Gagliardo-Nirenberg inequality and Young’s inequality in conjunction with the embedding of $H^1(\mathcal{O})$ in $L^6(\mathcal{O})$. Here, the constant $C$ may depends on $c_{F^\prime},\,p,\,\mathcal{O}$ and $\epsilon$.  By Young's inequality this leads to 
\begin{equation}\label{eq4.170}
\begin{aligned}
\|F_\delta^\prime (\bar{\phi}_{\delta,n}(t))\|_{V_1}
&\leq \|F_\delta^\prime(\bar{\phi}_{\delta,n}(t))\|_{L^2(\mathcal{O})} + \|F''_\delta(\bar{\phi}_{\delta,n}(t)) \nabla \bar{\phi}_{\delta,n}(t)\|_{\mathbb{L}^2(\mathcal{O})} \\
&\leq C  + C \|\bar{\phi}_{\delta,n}(t)\|_{V_1}^\frac{3p}{8-p} + C \|\bar{\phi}_{\delta,n}(t)\|_{V_1}^\frac{3p-2}{6-p} + \epsilon \|\bar{\phi}_{\delta,n}(t)\|_{H^3(\mathcal{O})} \\
&\leq C  + C \|\bar{\phi}_{\delta,n}(t)\|_{V_1}^\frac{3p-2}{6-p} + \epsilon \|\bar{\phi}_{\delta,n}(t)\|_{H^3(\mathcal{O})},
\end{aligned}
\end{equation}
for every $\epsilon>0$, with $C$ being independent of $n$ and $\delta$.

\noindent
We want now to estimate $\|G_\delta^\prime(\bar{\varphi}_{\delta,n})\|_{V_\Gamma}$. Since $|G''_\delta(r)| \leq c_{G''}(1 + |r|^{q-2})  + 2 \tilde{c}_G$ for all $r \in \mathbb{R}$, we have
\begin{align*}
\|G''_\delta(\bar{\varphi}_{\delta,n}(t)) \nabla_\Gamma \bar{\varphi}_{\delta,n}(t)\|_{\mathbb{L}^2(\mathcal{O})}
&\leq C \|\nabla_\Gamma \bar{\varphi}_{\delta,n}(t)\|_{\mathbb{L}^2(\mathcal{O})} + C \|\bar{\varphi}_{\delta,n}(t)\|_{L^{4(q-2)}(\Gamma)}^{2(q-2)} \|\nabla_\Gamma \bar{\varphi}_{\delta,n}(t)\|_{\mathbb{L}^4(\Gamma)}^2 \\
&\leq C \|\bar{\varphi}_{\delta,n}(t)\|_{V_\Gamma} + C \|\bar{\varphi}_{\delta,n}(t)\|_{V_\Gamma}^{2(q-2)} \|\nabla_\Gamma \bar{\varphi}_{\delta,n}(t)\|_{\mathbb{L}^4(\Gamma)}^2,
\end{align*}
where we used the following continuous embeddings: $H^1(\Gamma) \hookrightarrow H^{1/2}(\Gamma) \hookrightarrow L^{4(q-2)}(\Gamma)$ if $d=2$, and $H^1(\Gamma) \hookrightarrow L^{4(q-2)}(\Gamma)$ if $d=3$. Here $C$ is a positive constant independent of $n$ and $\delta$. Besides, by the Sobolev inequalities $\|\nabla_\Gamma \bar{\varphi}_{\delta,n}(t)\|_{\mathbb{L}^4(\Gamma)} \leq C(\Gamma) \|\bar{\varphi}_{\delta,n}(t)\|_{H^2(\Gamma)}^\frac{1}{2} \|\nabla_\Gamma \bar{\varphi}_{\delta,n}(t)\|_{\mathbb{L}^2(\Gamma)}^\frac{1}{2}$ and $\|\bar{\varphi}_{\delta,n}(t)\|_{H^2(\Gamma)}\leq C(\Gamma) \|\bar{\varphi}_{\delta,n}(t)\|_{H^1(\Gamma)}^\frac{1}{2} \|\bar{\varphi}_{\delta,n}(t)\|_{H^3(\Gamma)}^\frac{1}{2}$, we find

\begin{align*}
\|G''_\delta(\bar{\varphi}_{\delta,n}(t)) \nabla_\Gamma \bar{\varphi}_{\delta,n}(t)\|_{\mathbb{L}^2(\mathcal{O})}
&\leq C \|\bar{\varphi}_{\delta,n}(t)\|_{V_\Gamma} + C \|\bar{\varphi}_{\delta,n}(t)\|_{V_\Gamma}^{2(q-2)} \|\bar{\varphi}_{\delta,n}(t)\|_{H^2(\Gamma)} \|\nabla_\Gamma\bar{\varphi}_{\delta,n}(t)\|_{\mathbb{L}^2(\Gamma)} \\
&\leq C \|\bar{\varphi}_{\delta,n}(t)\|_{V_\Gamma} + C \|\bar{\varphi}_{\delta,n}(t)\|_{V_\Gamma}^{2(q-2) + 1} \|\bar{\varphi}_{\delta,n}(t)\|_{H^2(\Gamma)} \\
&\leq C \|\bar{\varphi}_{\delta,n}(t)\|_{V_\Gamma} + C \|\bar{\varphi}_{\delta,n}(t)\|_{V_\Gamma}^{2(q-2) + 1} \|\bar{\varphi}_{\delta,n}(t)\|_{H^1(\Gamma)}^\frac{1}{2} \|\bar{\varphi}_{\delta,n}(t)\|_{H^3(\Gamma)}^\frac{1}{2} \\
&\leq C \|\bar{\varphi}_{\delta,n}(t)\|_{V_\Gamma} + C \|\bar{\varphi}_{\delta,n}(t)\|_{V_\Gamma}^{2(q-2) + 3/2} \|\bar{\varphi}_{\delta,n}(t)\|_{H^3(\Gamma)}^\frac{1}{2} \\
&\leq C \|\bar{\varphi}_{\delta,n}(t)\|_{V_\Gamma} + C \|\bar{\varphi}_{\delta,n}(t)\|_{V_\Gamma}^{4q-5} + \epsilon \|\bar{\varphi}_{\delta,n}(t)\|_{H^3(\Gamma)} \\
&\leq C + C \|\bar{\varphi}_{\delta,n}(t)\|_{V_\Gamma}^{4q-5} + \epsilon \|\bar{\varphi}_{\delta,n}(t)\|_{H^3(\Gamma)}
\end{align*}
which holds for $d=3$ and for every $\epsilon>0$. Once more $C$ is independent of $n$ and $\delta$. In the case $d=2$, we have 
\begin{align*}
\|G''_\delta(\bar{\varphi}_{\delta,n}(t)) \nabla_\Gamma \bar{\varphi}_{\delta,n}(t)\|_{\mathbb{L}^2(\mathcal{O})}
&\leq C \|\bar{\varphi}_{\delta,n}(t)\|_{V_\Gamma} + C \|\bar{\varphi}_{\delta,n}(t)\|_{V_\Gamma}^{2(q-2)} \|\bar{\varphi}_{\delta,n}(t)\|_{H^2(\Gamma)}^\frac{1}{2} \|\nabla_\Gamma \bar{\varphi}_{\delta,n}(t)\|_{\mathbb{L}^2(\Gamma)}^\frac{3}{2} \\
&\leq C \|\bar{\varphi}_{\delta,n}(t)\|_{V_\Gamma} + C \|\bar{\varphi}_{\delta,n}(t)\|_{V_\Gamma}^{2(q-2)+3/2} \|\bar{\varphi}_{\delta,n}(t)\|_{H^2(\Gamma)}^\frac{1}{2} \\
&\leq C \|\bar{\varphi}_{\delta,n}(t)\|_{V_\Gamma} + C \|\bar{\varphi}_{\delta,n}(t)\|_{V_\Gamma}^{2(q-2)+3/2} \|\bar{\varphi}_{\delta,n}(t)\|_{H^1(\Gamma)}^\frac{1}{4} \|\bar{\varphi}_{\delta,n}(t)\|_{H^3(\Gamma)}^\frac{1}{4} \\
&\leq C \|\bar{\varphi}_{\delta,n}(t)\|_{V_\Gamma} + C \|\bar{\varphi}_{\delta,n}(t)\|_{V_\Gamma}^{2(q-2)+7/4} \|\bar{\varphi}_{\delta,n}(t)\|_{H^3(\Gamma)}^\frac{1}{4} \\
&\leq C \|\bar{\varphi}_{\delta,n}(t)\|_{V_\Gamma} + C \|\bar{\varphi}_{\delta,n}(t)\|_{V_\Gamma}^{\frac{8q-9}{3}} + \epsilon \|\bar{\varphi}_{\delta,n}(t)\|_{H^3(\Gamma)} \\
&\leq C + C \|\bar{\varphi}_{\delta,n}(t)\|_{V_\Gamma}^{4q-5} + \epsilon \|\bar{\varphi}_{\delta,n}(t)\|_{H^3(\Gamma)},
\end{align*}
where we used the Sobolev inequalities $\|\nabla_\Gamma \bar{\varphi}_{\delta,n}(t)\|_{\mathbb{L}^4(\Gamma)} \leq C(\Gamma) \|\bar{\varphi}_{\delta,n}(t)\|_{H^2(\Gamma)}^\frac{r-2}{2r} \|\nabla_\Gamma \bar{\varphi}_{\delta,n}(t)\|_{\mathbb{L}^2(\Gamma)}^{\frac{1}{2} + \frac{1}{r}}$ for all $r \in (2,\infty)$ together with the fact that
$\|\bar{\varphi}_{\delta,n}(t)\|_{H^2(\Gamma)}\leq C(\Gamma) \|\bar{\varphi}_{\delta,n}(t)\|_{H^1(\Gamma)}^\frac{1}{2} \|\bar{\varphi}_{\delta,n}(t)\|_{H^3(\Gamma)}^\frac{1}{2}$. In the case $d=2$, we note that $H^{1/2}(\Gamma) \not\subseteq L^\infty(\Gamma)$, so that this Sobolev inequality is not true for $r=2$. Hence, for both $d=2,3$, we have 
\begin{align*}
\|G''_\delta(\bar{\varphi}_{\delta,n}(t)) \nabla_\Gamma \bar{\varphi}_{\delta,n}(t)\|_{\mathbb{L}^2(\mathcal{O})}
\leq C + C \|\bar{\varphi}_{\delta,n}(t)\|_{V_\Gamma}^{4q-5} + \epsilon \|\bar{\varphi}_{\delta,n}(t)\|_{H^3(\Gamma)}
\end{align*}
for any $\epsilon>0$ with $C$ being independent of $n$ and $\delta$, and in turn, this implies that
\begin{equation}\label{eq4.171a}
\begin{aligned}
\|G_\delta^\prime (\bar{\varphi}_{\delta,n}(t))\|_{V_\Gamma}
&\leq \|G_\delta^\prime(\bar{\varphi}_{\delta,n}(t))\|_{L^2(\Gamma)} + \|G''_\delta(\bar{\varphi}_{\delta,n}(t)) \nabla_\Gamma \bar{\varphi}_{\delta,n}(t)\|_{\mathbb{L}^2(\Gamma)} \\
&\leq C + C \|\bar{\varphi}_{\delta,n}\|_{V_\Gamma}^{q-1} + C \|\bar{\varphi}_{\delta,n}(t)\|_{V_\Gamma}^{4q-5} + \epsilon \|\bar{\varphi}_{\delta,n}(t)\|_{H^3(\Gamma)} \\
&\leq C + C \|\bar{\varphi}_{\delta,n}(t)\|_{V_\Gamma}^{4q-5} + \epsilon \|\bar{\varphi}_{\delta,n}(t)\|_{H^3(\Gamma)},
\end{aligned}
\end{equation}
where we used Young's inequality. Moreover, by regularity theory for elliptic problems with bulk-surface coupling (see \cite[Theorem 3.3]{Knopf+Liu_2021}), we have
\begin{equation*}
\begin{aligned}
\|(\bar{\phi}_{\delta,n}(t),\bar{\varphi}_{\delta,n}(t))\|_{\mathcal{H}^3}^2 
&\leq C \|(f_{n,\delta}(t),g_{n,\delta}(t))\|_{\mathcal{H}^1}^2 \\
&\leq C (\|\bar{\mu}_{\delta,n}(t)\|_{V_1}^2 + \|F_\delta^\prime(\bar{\phi}_{\delta,n}(t))\|_{V_1}^2 + \|\bar{\theta}_{\delta,n}(t)\|_{V_\Gamma}^2 + \|G_\delta^\prime(\bar{\varphi}_{\delta,n}(t))\|_{V_\Gamma}^2),
\end{aligned}
\end{equation*}
with $C$ independent of $n$ and $\delta$. From this inequality together with \eqref{eq4.170} and \eqref{eq4.171a}, we obtain
\begin{equation}\label{eq4.140a}
\begin{aligned}
\|(\bar{\phi}_{\delta,n}(t),\bar{\varphi}_{\delta,n}(t))\|_{\mathcal{H}^3}^2
&\leq  C + C \|\bar{\mu}_{\delta,n}(t)\|_{V_1}^2 + C  \|\bar{\theta}_{\delta,n}(t)\|_{V_\Gamma}^2 + C \|\bar{\phi}_{\delta,n}(t)\|_{V_1}^\frac{6p-4}{6-p} + C \|\bar{\varphi}_{\delta,n}(t)\|_{V_\Gamma}^{8q-10} \\
&\quad + C \epsilon^2 \|(\bar{\phi}_{\delta,n}(t),\bar{\varphi}_{\delta,n}(t))\|_{\mathcal{H}^3}^2.
\end{aligned}
\end{equation}
Next, choosing $\epsilon$ sufficiently small and using the previous convergences, we find
\begin{align}\label{eq4.140b}
\|(\bar{\phi}_{\delta}(t),\bar{\varphi}_{\delta}(t))\|_{\mathcal{H}^3}^2
&\leq  C + C \|\bar{\mu}_{\delta}(t)\|_{V_1}^2 + C  \|\bar{\theta}_{\delta}(t)\|_{V_\Gamma}^2 + C \|\bar{\phi}_{\delta}(t)\|_{V_1}^\frac{6p-4}{6-p} + C \|\bar{\varphi}_{\delta}(t)\|_{V_\Gamma}^{8q-10},
\end{align}
which, in turn, entails
\begin{equation}\label{eq4.172}
\begin{aligned}
&\bar{\mathbb{E}} \left(\int_0^T \|(\bar{\phi}_{\delta}(t),\bar{\varphi}_{\delta}(t))\|_{\mathcal{H}^3}^2\, \d t \right)^\frac{\ell}{2} \\
&\leq  C + C \bar{\mathbb{E}} \left(\int_0^T \|\bar{\mu}_{\delta}(t)\|_{V_1}^2 \, \d t\right)^\frac{\ell}{2} + C \bar{\mathbb{E}} \left(\int_0^T  \|\bar{\theta}_{\delta}(t)\|_{V_\Gamma}^2 \, \d t \right)^\frac{\ell}{2} \\
&\quad + C \bar{\mathbb{E}} \sup_{t\in[0,T]} \|\bar{\phi}_{\delta}(t)\|_{V_1}^\frac{(3p-2)\ell}{6-p} + C \bar{\mathbb{E}} \sup_{t\in[0,T]} \|\bar{\varphi}_{\delta}(t)\|_{V_\Gamma}^{(4q-5)\ell}
\end{aligned}
\end{equation}
with $\ell\geq 2$ and $C$ independent of $\delta$. It then follows from \eqref{eq4.143}$\text{-}3)$, \eqref{uniform_delta_estimate_varphi}, \eqref{eq4.144}, and \eqref{eq4.172} that there exists a positive constant $ c_\ell$ independent of $\delta$ such that
\begin{equation}
\|(\bar{\phi}_{\delta},\bar{\varphi}_{\delta})\|_{L^\ell(\bar{\Omega};L^2(0,T;\mathcal{H}^3))} \leq c_{\ell}, \quad \forall \ell <\min \left \lbrace \frac{6-p}{3p-2},\frac{1}{4q-5} \right \rbrace r.
\end{equation}

%%%%%%%%%%%%%%%%%%%%%%%%%%%%%%%%%%%%%%%%%%%%%%%%%%%%%%%%%%%%%%%%%%
\subsection{Passage to the limit as delta tends to zero} \label{Sect_passage to the limit_delta}
%%%%%%%%%%%%%%%%%%%%%%%%%%%%%%%%%%%%%%%%%%%%%%%%%%%%%%%%%%%%%%%%%%%

whereas, if now we assume that $\mathcal{O}$ is of class $C^3$ and, if $d=3$, we further assume $p<6$, we get
   \begin{equation}
      (\tilde{\phi}^\delta,\tilde{\varphi}^\delta)  \rightharpoonup (\tilde{\phi},\tilde{\varphi}) \quad \text{in} \quad  L^\ell(\tilde{\Omega};L^2(0,T;\mathcal{H}^3)) \quad \forall \ell <\min \left \lbrace \frac{6-p}{3p-2},\frac{1}{4q-5} \right \rbrace r.
   \end{equation}

\noindent
We will now prove that the inequality \eqref{eq3.29} holds true. First, we note that we can express the result \eqref{eq4.133} in the new variables since $X_\delta$ preserves laws, i.e., for every $\delta \in (0,1)$, we have 
\begin{equation}\label{eq4.171}
\begin{aligned}
&\tilde{\mathbb{E}}[ \mathcal{E}_{tot}(\tilde{\phi}^\delta(t), \tilde{\varphi}^\delta(t))] + \tilde{\mathbb{E}} \int_{Q_t} [2 \nu(\tilde{\phi}^\delta) \lvert D\tilde{\bu}^\delta \rvert^2 + \lambda(\tilde{\phi}^\delta) \lvert \tilde{\bu}^\delta \rvert^2] \,\d x \,\d s 
\\
& + \tilde{\mathbb{E}} \int_{\Sigma_t} \gamma(\tilde{\varphi}^\delta) \lvert \tilde{\bu}^\delta \rvert^2 \,\d S\,\d s + 0.5 \tilde{\mathbb{E}} \int_{Q_t} M_{\mathcal{O}}(\tilde{\phi}^\delta) \lvert \nabla \tilde{\mu}^\delta \rvert^2 \, \d x \,\d s + \tilde{\mathbb{E}} \int_{\Sigma_t} M_\Gamma(\tilde{\varphi}^\delta) \lvert \nabla_\Gamma \tilde{\theta}^\delta \rvert^2 \,\d S \,\d s  
            \\
&\leq \mathcal{E}_{tot}(\phi_0,\varphi_0) + \red{(\eps [K] C_{\Gamma,\mathcal{O}} + 0.5 ) \tilde{\mathbb{E}}\int_0^{t} \|F_{1}(\tilde{\phi}^\delta)\|_{\mathscr{T}_2(U,L^2(\mathcal{O}))}^2\,\d s }  
                 \\
&\quad  + 0.5 \eps_\Gamma C_2 \bar{\mathbb{E}} \int_0^{t} |\nabla_\Gamma \tilde{\varphi}^\delta|_{\Gamma}^2 \, \d s + \left(0.5 \bar{M}_0 + 0.5 \eps C_1 + \eps C_1 [K] C_{\Gamma,\mathcal{O}} \right) \tilde{\mathbb{E}} \int_0^t  \lvert \nabla \tilde{\phi}^\delta \rvert^2 \, \d s                    \\
&\quad + \eps [K] \tilde{\mathbb{E}} \int_0^{t} \|F_2(\tilde{\varphi}^\delta)\|_{\mathscr{T}_2(U_\Gamma,L^2(\Gamma))}^2 \, \d s +  0.5 \eps^{-1} \tilde{\mathbb{E}} \int_0^{t} \sum_{k=1}^\infty \int_{\mathcal{O}} F''_\delta(\tilde{\phi}^\delta) |\sigma_k(\tilde{\phi}^\delta)|^2 \, \d x \,\d s 
                                    \\
&\quad +  0.5 \eps_\Gamma^{-1} \tilde{\mathbb{E}} \int_0^{t}  \sum_{k=1}^\infty \int_{\Gamma} G''_\delta(\tilde{\varphi}^\delta) |\tilde{\sigma}_k(\tilde{\varphi}^\delta)|^2\, \d S\,\d s, 
\end{aligned}
\end{equation}
for every $t\in[0,T]$, 
The inequality \eqref{eq3.29} is then a direct consequence of the previous convergences results and the weak lower semicontinuity of the norm. This completes the proof of our first main result, i.e., Theorem \ref{thm-first_main_theorem}.
}
% end delete
%begin delete
\dela{
%%%%%%%%%%%%%%%%%%%%%%%%%%%%%%%%%%%%%%%%%%%%%%%%%%%%%%%%%%%%%%%%%%
\section{Existence of martingale solutions in the case K=0} \label{sect5}
%%%%%%%%%%%%%%%%%%%%%%%%%%%%%%%%%%%%%%%%%%%%%%%%%%%%%%%%%%%%%%%%%%%
In this section, we prove the existence of martingale solutions to the problem \eqref{Eqt1.1a}-\eqref{Eqt1.3a}-\eqref{eq1.10a} in the case $K=0$. The main result of this section is given as follows.

\begin{theorem}\label{second_main_theorem}
Let $d=2,3$, $T>0$ be a fixed positive time, and let the domain $\mathcal{O} \subset \mathbb{R}^d$ be of class $C^2$. We suppose that the assumptions $(H2)$-$(H8)$ hold. We further assume that the potentials $F$ and $G$ satisfy \eqref{F''_and G''_additional_condition} as in Theorem \ref{thm-first_main_theorem}. Then, there exists a weak martingale solution
$(\bu^0(t), \phi^0(t), \varphi^0(t), \mu^0(t), \mathcal{W}^0(t))_{t \in [0,T]}$ to system \eqref{Eqt1.1a}-\eqref{Eqt1.3a}-\eqref{eq1.10a} in the sense of Definition \ref{def3.4} with $K=0$. In addition, we have 
    \begin{equation}
      (\phi^0,\varphi^0) \in L^r(\Omega^0;L^2(0,T;\mathcal{H}^2)) \quad \text{and} \quad (F^\prime(\phi^0),G^\prime(\varphi^0))  \in L^r(\Omega^0; L^2(0,T;\mathbb{H})),
    \end{equation}
for every $r\geq 2$. 

\noindent
If $\mathcal{O}$ is of class $C^3$ and if in addition we  assume that there exist $c_{F''}>0$ and $c_{G''}>0$ such that
        \begin{align*}
          |F''(r)| \leq c_{F''}(1 + |r|^{p-2}), \\
          |G''(r)| \leq c_{G''}(1 + |r|^{q-2})
        \end{align*}
for all $r \in \mathbb{R}$, where 
     \begin{equation*}
        p \in 
             \begin{cases}
                   [2,\infty) \quad &\text{if} \quad d=2, \\
                   [2,6]  \quad &\text{if} \quad d=3,
              \end{cases}
                \quad \text{and} \quad q \in [2,\infty),
    \end{equation*}
then the following estimates hold:
    \begin{equation}\label{eq5.2}
     \begin{aligned}
       (\phi^0,\varphi^0) \in L^2(0,T;\mathcal{H}^3), \quad &\mathbb{P}^0\text{-a.s.},
       \\
      (F^\prime (\phi^0), G^\prime (\varphi^0)) \in L^2(0,T;\mathbb{V}), \quad & \mathbb{P}^0\text{-a.s.}
    \end{aligned}
\end{equation}
             
\end{theorem}

\begin{proof}
Fix $\delta \in (0,1)$. We recall that in Subsection \ref{Sub-sect-Galerkin-approximation}, we proved that there exists
$(\bu_{\delta,n},\phi_{\delta,n},\varphi_{\delta,n})_{n \in \mathbb{N}}$ solution of the problem \eqref{compact-stochastic-problem} both in the case $K=0$ and $K>0$. Furthermore, we notice that the sequence $(\bu_{\delta,n},\phi_{\delta,n},\varphi_{\delta,n})_{n \in \mathbb{N}}$ satisfies the equations \eqref{eq4.34a}-\eqref{eq4.34d}. In particular, in the case $K=0$, we have
\begin{equation}\label{eq4.34d-a}
\begin{aligned}
&\int_{\mathcal{O}} \mu_{\delta,n} \psi\, \d x  + \int_{\Gamma} \theta_{\delta,n} \psi \lvert_\Gamma\, \d S  
- \eps \int_{\mathcal{O}} \nabla \phi_{\delta,n} \cdot \nabla \psi\, \d x - \int_{\mathcal{O}} \frac1\eps F_\delta^\prime(\phi_{\delta,n}) \psi \, \d x \\ 
&= \eps_\Gamma \int_{\Gamma} \nabla_\Gamma \varphi_{\delta,n} \cdot \nabla_\Gamma \psi \lvert_\Gamma\, \d S  +  \frac{1}{\eps_\Gamma} \int_{\Gamma} G_\delta^\prime(\varphi_{\delta,n}) \psi \lvert_\Gamma \, \d S, \; \; (\psi,\psi \lvert_\Gamma) \in \mathbb{V}_0.
\end{aligned}
\end{equation}
Now, we begin by obtaining the counterpart of the results obtained in Subsection \ref{subsection-4.3}, when $K=0$.
%begin delete
\dela{
So, we consider once more the Faedo-Galerkin scheme \eqref{eq4.34a}-\eqref{eq4.34d}. Next, since the domain is assumed to be of class $C^2$, then by the regularity theory for Poisson's equation with inhomogeneous Neumann boundary condition (see \cite[Section 5, Proposition 7.7]{Taylor_2011}), we deduce that the eigenfunctions $\bar{\upsilon}_j$ enjoy the following regularity: $\bar{\upsilon}_j \in H^2(\mathcal{O})$ for all $j \in \mathbb{N}$. Moreover, by the regularity theory for elliptic problems on submanifolds (see \cite[Theorem 1.3]{Kardestuncer+Norrie+Brezzi_1987}), the eigenfunctions $\bar{\Lambda}_j$ enjoy the regularity: $\bar{\Lambda}_i \in H^2(\Gamma)$ for any $j \in \mathbb{N}$. Consequently, the process $(\phi_{\delta,n},\varphi_{\delta,n})$ whose existence has been proved in Theorem \ref{thm-Galerkin-01} satisfy $\mathbb{P}$-a.s. $\phi_{\delta,n} \in H_{n}^1 \cap H^2(\mathcal{O})$ and $\varphi_{\delta,n} \in H_{\Gamma,n}^1 \cap H^2(\Gamma)$, and in turn, in the case $K=0$, we infer from the weak variational formulation \eqref{eq4.34d} that $(\phi_{\delta,n},\varphi_{\delta,n})$ satisfies: 
\begin{subequations}
\begin{align}
- \eps \Delta \phi_{\delta,n}(t)&= f_{n,\delta}(t) \quad &\text{in} \quad \mathcal{O}, \quad \mathbb{P}\text{-a.s,}  \label{eq5.1a}
\\
- \eps_\Gamma \Delta_\Gamma \varphi_{\delta,n}(t) + \eps \partial_{\bn} \bar{\phi}_{\delta,n}(t)
&= g_{n,\delta}(t) \quad &\text{on} \quad \Gamma, \quad \mathbb{P}\text{-a.s,}  \label{eq5.1b}
    \\
\varphi_{\delta,n}(t) &= \phi_{\delta,n}(t) \quad &\text{on} \quad \Gamma, \quad \mathbb{P}\text{-a.s.,} \label{eq5.1c}
\end{align}
\end{subequations}
with
\begin{equation*}
f_{n,\delta}(t)= \mu_{\delta,n}(t) - \eps^{-1} \mathcal{S}_n [F_\delta^\prime(\phi_{\delta,n}(t))] \quad \text{and} \quad g_{n,\delta}(t)= \theta_{\delta,n}(t) - \eps_\Gamma^{-1} \mathcal{S}_{n,\Gamma}[G_\delta^\prime(\varphi_{\delta,n}(t))]
\end{equation*}
for almost $t \in[0,T]$. 
%end delete
}
Indeed, arguing as in the proof of Lemma \ref{Lem-2}, we infer that for all $t\in[0,T]$ and $\mathbb{P}$-a.s.,
\begin{equation}\label{Main_Galerkin_Equality2}
\begin{aligned}
& E_{\delta}(\phi_{\delta,n}(t),\varphi_{\delta,n}(t\wedge \tau_\kappa)) + \int_0^{t\wedge \tau_\kappa} \int_{\Gamma} (\gamma(\varphi_{\delta,n}) \lvert \bu_{\delta,n} \rvert^2 + M_\Gamma(\varphi_{\delta,n}) \lvert \nabla_\Gamma \theta_{\delta,n} \rvert^2) \,\d S \,\d s  
\\
&\quad + \int_0^{t\wedge \tau_\kappa} \int_{\mathcal{O}} [2 \nu(\phi_{\delta,n}) \lvert D\bu_{\delta,n} \rvert^2 + \lambda(\phi_{\delta,n}) \lvert \bu_{\delta,n} \rvert^2 + M_{\mathcal{O}}(\phi_{\delta,n}) \lvert \nabla \mu_{\delta,n} \rvert^2]\,\d x \,\d s
 \\
&= E_\delta(\phi_n(0),\varphi_n(0)) +  \frac12 \int_0^{t\wedge \tau_\kappa} \Vert F_{1}(\phi_{\delta,n}) \Vert_{\mathscr{T}_2(U,L^2(\mathcal{O}))}^2\,\d s 
   \\
&\quad + \frac\eps2 \int_0^{t\wedge \tau_\kappa} \Vert \nabla F_1(\phi_{\delta,n}) \Vert_{L^2(U,\mathbb{L}^2(\mathcal{O}))}^2\, \d s + \frac{\eps_\Gamma}{2} \int_0^{t\wedge \tau_\kappa}  \Vert \nabla_\Gamma F_2(\varphi_{\delta,n}) \Vert_{\mathscr{T}_2(U_\Gamma,\mathbb{L}^2(\Gamma))}^2\, \d s 
\\
&\quad + \int_0^{t\wedge \tau_\kappa} (\mu_{\delta,n},F_1(\phi_{\delta,n})\,\d W(s)) + \int_0^{t\wedge \tau_\kappa} (\theta_{\delta,n},F_2(\varphi_{\delta,n})\,\d W_\Gamma(s))_\Gamma  
  \\
&\quad + \sum_{k=1}^\infty \int_0^{t\wedge \tau_\kappa} \left(\frac{1}{2 \eps} \int_{\mathcal{O}} F^{\bis}_\delta(\phi_{\delta,n}) \lvert F_1(\phi_{\delta,n})e_{1,k} \rvert^2\, \d x + \frac{1}{2 \eps_\Gamma} \int_{\Gamma} G^\bis_\delta(\varphi_{\delta,n}) \lvert F_2(\varphi_{\delta,n}) e_{2,k} \rvert^2\, \d S \right)\,\d s  
     \\
&\quad - \int_0^{t \wedge \tau_\kappa} \int_{\mathcal{O}} M_{\mathcal{O}}(\phi_{\delta,n}) \nabla \mu_{\delta,n} \cdot \nabla \phi_{\delta,n}\, \d x \,\d s  + \int_0^{t\wedge \tau_\kappa}(\phi_{\delta,n}, F_{1}(\phi_{\delta,n})\,\d W(s)),
\end{aligned}
\end{equation}
having setting
\[
E_\delta(\cdot,\tilde{\cdot})
= \int_{\mathcal{O}} \frac{\eps}{2} \lvert \nabla \cdot \rvert^2 + \frac{1}{\eps} F_\delta(\cdot)\, \d x + \int_{\Gamma} \frac{\eps_\Gamma}{2} \lvert \nabla_\Gamma \tilde{\cdot} \rvert^2 + \frac{1}{\eps_\Gamma} G_\delta(\tilde{\cdot})\, \d S  + \frac{1}{2} \int_{\mathcal{O}} \lvert \cdot \rvert^2\, \d x.
\]
We will now control the first and second stochastically forced terms on the right-hand side of \eqref{Main_Galerkin_Equality2} as we did in the proof of Proposition \ref{Proposition_first_priori_estimmate}, but with some small modifications. We note that the other terms can be control exactly as we did in that proof.

\noindent
We have (cf. Proposition \ref{Proposition_first_priori_estimmate})
\begin{equation}\label{eqt5.3a}
 \mathbb{E} \sup_{\tau \in[0,t\wedge \tau_\kappa]} \left|\int_0^{\tau} \left(\mu_{\delta,n}(s),F_{1}(\phi_{\delta,n}(s)) \,\d W(s)\right) \right| 
 \leq \mathbb{E} \left(\int_0^{t\wedge \tau_\kappa} |\mu_{\delta,n}(s)|^2 \,\d s \right)^\frac{1}{2}
\end{equation}
and 
               \begin{equation}\label{eqt5.3}
                |\mu_{\delta,n}|^2
                \leq C(|\nabla \mu_{\delta,n}|^2 + |\langle \mu_{\delta,n}\rangle_{\mathcal{O}}|^2),
                \end{equation}
where $C$ may depends only on $C_1$ and $\mathcal{O}$. Besides, in the case $K=0$, we have $[K]=0$ by definition. Thus, since $F_\delta^\prime$ is  $L_{F_\delta^\prime}$-Lipschitz continuous, we obtain
\begin{align*}
|\langle \mu_{\delta,n} \rangle_{\mathcal{O}}|
= \left| \frac{1}{|\mathcal{O}|} \int_{\mathcal{O}} \frac{1}{\eps} F_\delta^\prime (\phi_{\delta,n}) \, \d x \right| 
\leq  \eps^{-1} |\mathcal{O}|^{-1} \int_{\mathcal{O}} |F_\delta^\prime (\phi_{\delta,n})|\, \d x 
&\leq \eps^{-1} |\mathcal{O}|^{-1} L_{F_\delta^\prime} \|\phi_{\delta,n}\|_{L^1(\mathcal{O})} \\
&\leq \eps^{-1} |\mathcal{O}|^{-1/2} L_{F_\delta^\prime} |\phi_{\delta,n}|,
\end{align*}
from which we deduce that $|\mu_{\delta,n}|^2 
\leq C(|\nabla \mu_{\delta,n}|^2  + |\phi_{\delta,n}|^2)$, which $C$ depending on $C_1,\,\mathcal{O},\,\eps$ and $ L_{F_\delta^\prime}$. Consequently,
\begin{align*}
 \mathbb{E} \sup_{\tau \in[0,t\wedge \tau_\kappa]} \left|\int_0^{\tau} \left(\mu_{\delta,n},F_{1}(\phi_{\delta,n}) \,\d W(s)\right) \right| 
  \leq C +  \mathbb{E} \int_0^{t\wedge \tau_\kappa} \left(\frac{1}{4} \lvert \sqrt{M_{\mathcal{O}}(\phi_{\delta,n})} \nabla \mu_{\delta,n} \rvert^2\,\d x + \frac{1}{6} |\phi_{\delta,n}|^2 \right) \, \d s, 
\end{align*}
where $C$ depends on $C_1,\,\mathcal{O},\,\eps,\, L_{F_\delta^\prime}$ and $M_0$ and not on $n$.

\noindent
Once more, arguing as in the proof of Proposition \ref{Proposition_first_priori_estimmate}, we have
\begin{equation}\label{eqt5.5a}
  \mathbb{E} \sup_{\tau \in[0,t\wedge \tau_\kappa]} \left|\int_0^{\tau} (\theta_{\delta,n}(s),F_2(\varphi_{\delta,n}(s))\, \d W_\Gamma(s))_\Gamma\right| 
  \leq C(\Gamma,C_2) \left(\mathbb{E} \int_0^{t\wedge \tau_\kappa} |\theta_{\delta,n}(s)|_\Gamma^2 \, \d s \right)^\frac{1}{2},
\end{equation}
$C$ depends on $\Gamma$ and $C_2$, and because $\langle \theta_{\delta,n} \rangle_\Gamma= \eps_\Gamma^{-1} \langle G_\delta^\prime(\varphi_{\delta,n}) \rangle_\Gamma$ and $G_\delta^\prime$ is  $L_{G_\delta^\prime}$-Lipschitz continuous, one has \adda{
\begin{align*}
|\langle \theta_{\delta,n} \rangle_{\Gamma}|
\leq \eps_\Gamma^{-1} |\Gamma|^{-1} \int_{\Gamma} |G_\delta^\prime (\varphi_{\delta,n})|\, \d S 
\leq  \eps_\Gamma^{-1} |\Gamma|^{-1} L_{G_\delta^\prime} \|\varphi_{\delta,n}\|_{L^1(\Gamma)} 
&\leq  \eps_\Gamma^{-1} |\Gamma|^{-1/2} L_{G_\delta^\prime} |\varphi_{\delta,n}|_\Gamma \\
&\leq  \eps_\Gamma^{-1} |\Gamma|^{-1/2} L_{G_\delta^\prime} |\phi_{\delta,n}|_\Gamma \\
&\leq C \|\phi_{\delta,n}\|_{V_1},
\end{align*}
}
where we have also used  \eqref{eq5.1c} and the embedding of $H^1(\mathcal{O})$ in $L^2(\Gamma)$. Here $C$ depends on $\eps_\Gamma,\,\Gamma,\,\mathcal{O}$ and $L_{G_\delta^\prime}$. Hence, we have
\begin{align}\label{eqt5.4}
|\theta_{\delta,n}|_\Gamma^2
\leq 2 |\theta_{\delta,n} - \langle \theta_{\delta,n}\rangle_{\Gamma}|_\Gamma^2 + 2 |\langle \theta_{\delta,n} \rangle_{\Gamma}|_\Gamma^2 
\leq C(|\nabla_\Gamma \theta_{\delta,n}|_{\Gamma}^2 + \|\phi_{\delta,n}\|_{V_1}^2),
\end{align}
with $C=C(\Gamma,\eps_\Gamma,\mathcal{O},L_{G_\delta^\prime})$. Accordingly, we learn that
\begin{align*}
&\mathbb{E} \sup_{\tau \in[0,t\wedge \tau_\kappa]} \left|\int_0^{\tau} (\theta_{\delta,n},F_2(\varphi_{\delta,n})\, \d W_\Gamma(s))_\Gamma\right| \\
&\leq C +  \mathbb{E} \int_0^{t\wedge \tau_\kappa} \left(\frac{1}{2}  |\sqrt{M_{\Gamma}(\varphi_{\delta,n})} \nabla_\Gamma \theta_{\delta,n}|_\Gamma^2\,\d x + \frac{1}{6} \|\phi_{\delta,n}\|_{V_1}^2 \right) \, \d s, 
\end{align*}
where $C$ depends on $C_2,\,\Gamma,\,\mathcal{O},\,\eps_\Gamma,\, L_{G_\delta^\prime}$ and $N_0$ and not on $n$. With these previous estimates in hand, we can now proceed as we did in the proof of Proposition \ref{Proposition_first_priori_estimmate} and deduce that the process $(\bu_{\delta,n},\phi_{\delta,n},\varphi_{\delta,n},\mu_{\delta,n},\theta_{\delta,n})$ satisfies the following uniform bound with respect to $n$:
\begin{equation}\label{eqt5.5}
\begin{aligned}
 &\mathbb{E} \sup_{\tau \in[0,T]} [E_\delta(\phi_{\delta,n}(\tau), \varphi_{\delta,n}(\tau))] + \mathbb{E} \int_{Q_T} [\lvert D\bu_{\delta,n}(s) \rvert^2 + \lambda(\phi_{\delta,n}(s)) \lvert \bu_{\delta,n}(s) \rvert^2] \,\d x \,\d s \\
& + \mathbb{E} \int_0^{T} |\bu_{\delta,n}(s)|_\Gamma^2 \,\d s + \mathbb{E} \int_0^{T} [|\nabla \mu_{\delta,n}(s)|^2 + |\nabla_\Gamma \theta_{\delta,n}(s)|_{\Gamma}^2] \,\d s \leq C,
\end{aligned}
\end{equation}
where $C$ denote a generic positive constant depending only on $\tilde{c}_F,\,\tilde{c}_G,\,\mathcal{O},\,\Gamma,\,\delta,\,C_1,\,C_2$, $M_0,\,N_0$, the parameters of the system  \eqref{eq4.34a}-\eqref{eq4.34d} and the norms of the initial data. 

\noindent
Next, let $r>2$ be arbitrary but fixed. Coming back to \eqref{Main_Galerkin_Equality2}, using \eqref{eqt5.3}, \eqref{eqt5.4}, and arguing similarly as in the proof of Proposition \ref{Proposition_second_priori_estimmate}, we find
\begin{equation}\label{eqt5.6}
\begin{aligned}
& \mathbb{E} \sup_{\tau \in[0,t]} [E_\delta(\phi_{\delta,n}(\tau), \varphi_{\delta,n}(\tau))]^\frac{r}{2} + \mathbb{E} \left(\int_{Q_t} \lvert D\bu_{\delta,n} \rvert^2 \,\d x \,\d s \right)^\frac{r}{2}  + \mathbb{E} \left(\int_{Q_t} \lambda(\phi_{\delta,n}) \lvert \bu_{\delta,n} \rvert^2 \,\d x \,\d s \right)^\frac{r}{2} \\
&\quad + \mathbb{E} \left(\int_0^{t} |\bu_{\delta,n}|_{\Gamma}^2 \,\d s\right)^\frac{r}{2} + \mathbb{E} \left(\int_0^{t} \lvert \nabla \mu_{\delta,n} \rvert^2 \,\d s \right)^\frac{r}{2} + \mathbb{E} \left(\int_0^{t} \lvert \nabla_\Gamma \theta_{\delta,n} \rvert^2 \,\d s\right)^\frac{r}{2}  
\leq C \quad \forall t \in [0,T],
\end{aligned}
\end{equation}
where $C$ is a generic positive constant depending only on $T,\,\mathcal{O},\,\Gamma,\,\delta,\,C_1,\,C_2,,M_0,\,\bar{M}_0,\,\tilde{c}_F$, $\tilde{c}_G,\,\nu_0,\,N_0,\,\lambda_0,\,r$, the parameters of the system \eqref{eq4.34a}-\eqref{eq4.34d} and the norms of the initial data. Moreover, given $(v,v_\Gamma) \in \mathbb{V}$ such that $\|(v,v_\Gamma)\|_{\mathbb{V}} \leq 1$, we infer from \eqref{eq4.34d} that 
\begin{align*}
\langle (\mu_{\delta,n},\theta_{\delta,n}), (v,v_\Gamma) \rangle_{\mathbb{V}^\prime,\mathbb{V}}
 &= \eps \int_{\mathcal{O}} \nabla \phi_{\delta,n} \cdot \nabla v\, \d x + \int_{\mathcal{O}} \frac{1}{\eps} F_\delta^\prime(\phi_{\delta,n}) v\, \d x + \eps_\Gamma \int_\Gamma \nabla_\Gamma \varphi_{\delta,n} \cdot \nabla_\Gamma v_\Gamma \, \d S \\
 &\quad  + \int_\Gamma \frac{1}{\eps_\Gamma} G_\delta^\prime(\varphi_{\delta,n}) v_\Gamma \, \d S, 
\end{align*}
since $K=0$. Hence,
\begin{align*}
\|(\mu_{\delta,n},\theta_{\delta,n})\|_{\mathbb{V}^\prime}
\leq C(|\nabla \phi_{\delta,n}| + |\phi_{\delta,n}| + |\nabla_\Gamma \varphi_{\delta,n}|_{\Gamma} +  |\varphi_{\delta,n}|_\Gamma)
\leq C(\|\phi_{\delta,n}\|_{V_1} + |\nabla_\Gamma \varphi_{\delta,n}|_{\Gamma}),
\end{align*}
where we used the fact that $|F_\delta^\prime(\phi_{\delta,n})| \leq \tilde{c}_{1,\delta} |\phi_{\delta,n}|$, $ |G_\delta^\prime(\varphi_{\delta,n})|_\Gamma \leq \tilde{c}_{2,\delta} |\phi_{\delta,n}|_\Gamma$, and  \newline
$|\varphi_{\delta,n}|_\Gamma = |\phi_{\delta,n}|_\Gamma \leq C(\mathcal{O},\Gamma) \|\phi_{\delta,n}\|_{V_1}$ due to \eqref{eq5.1c} and the embedding of $V_1$ in $L^2(\Gamma)$. Here, the constant $C$ may depends on $\tilde{c}_F,\,\tilde{c}_G,\,\delta,\,\eps$ and $\eps_\Gamma$. It then follows 
from the estimates \eqref{eqt5.6} and similar reasoning as in the proof of the proposition \ref{Proposition_third_priori_estimmate} that
\begin{equation}\label{eqt5.7}
\begin{aligned}
\mathbb{E} \left(\int_0^t \|(\mu_{\delta,n}(s),\theta_{\delta,n}(s))\|_{\mathbb{H}}^4 \, \d s \right)^\frac{r}{4} &\leq C,
\\
\mathbb{E} \sup_{s\in [0,t]} \|F_{1}(\phi_{\delta,n}(s))\|_{\mathscr{T}_2(U,V_1)}^r &\leq C,
   \\
\mathbb{E} \sup_{s\in [0,t]} \|F_{\Gamma}(\varphi_{\delta,n}(s))\|_{\mathscr{T}_2(U_\Gamma,V_\Gamma)}^r &\leq C,   
\end{aligned}
\end{equation}
for all $t\in[0,T]$, with $C$ being independent of $n$. Furthermore, the fact that the laws of the approximating solutions are tights is a verbatim reproduction of similar results as in Subsection \ref{subs-4.4}. Hence, by Prokhorov's theorem (see \cite[Theorem 2.7]{Ikeda+Watanabe_1989}) and Skorokhod's representation theorem (see \cite[Theorem 1.10.4, Addendum 1.10.5]{van der Vaart+Wellner_1996}), recalling the estimates \eqref{eqt5.5}-\eqref{eqt5.7}, we can find a new probability space $(\bar{\Omega},\bar{\mathcal{F}},\bar{\mathbb{P}})$ and a sequence of random variables $X_n: (\bar{\Omega},\bar{\mathcal{F}}) \to (\Omega,\mathcal{F})$ such that the law of $X_n$ is $\mathbb{P}$ for every $n \in \mathbb{N}$, namely, $\mathbb{P}= \bar{\mathbb{P}} \circ X_n^{-1}$, so that composition with $X_n$ preserves laws, and the following convergences hold
\begin{equation}\label{eq5.8}
\begin{aligned}
\bar{\phi}_{\delta,n}&\coloneq \phi_{\delta,n} \circ X_n \to \bar{\phi}_{\delta} \quad &\text{in} \quad C([0,T];L^2(\mathcal{O})),
 \\
\bar{\varphi}_{\delta,n}&\coloneq \varphi_{\delta,n} \circ X_n \to \bar{\varphi}_{\delta} \quad &\text{in} \quad C([0,T];L^2(\Gamma)),  
     \\
\bar{\mu}_{\delta,n}&\coloneq \mu_{\delta,n} \circ X_n \rightharpoonup \bar{\mu}_{\delta} \quad &\text{in} \quad L^2(0,T;V_1) \cap L^4(0,T;L^2(\mathcal{O})), 
         \\
\bar{\theta}_{\delta,n}&\coloneq \theta_{\delta,n} \circ X_n \rightharpoonup \bar{\theta}_{\delta} \quad &\text{in} \quad L^2(0,T;V_\Gamma) \cap L^4(0,T;L^2(\Gamma)), 
              \\
\bar{W}_{1,n}&\coloneq W_{1,n} \circ X_n \to \bar{W} \quad &\text{in} \quad C([0,T];U_0),  
                  \\
\bar{W}_{2,n}&\coloneq W_{2,n} \circ X_n \to \bar{W}_{\Gamma} \quad &\text{in} \quad C([0,T];U_0^\Gamma),  
\end{aligned}
\end{equation}
$\bar{\mathbb{P}}$-a.s., for some limiting process $(\bar{\phi}_{\delta},\bar{\varphi}_{\delta},\bar{\mu}_{\delta},\bar{\theta}_{\delta},\bar{W},\bar{W}_{\Gamma})$ belonging to the specified spaces. Moreover, from \eqref{eq5.8} and the preservation of laws under $X_n$, recalling once more the estimates \eqref{eqt5.5}-\eqref{eqt5.7}, using also the Vitali convergence theorem and the Banach-Alaoglu theorem, we have for some subsequences still labeled by the same subscript
\begin{equation}\label{eq5.9}
\begin{aligned}
\bar{\phi}_{\delta,n} \to \bar{\phi}_{\delta} \quad &\text{in} \quad L^\ell(\bar{\Omega};C([0,T];L^2(\mathcal{O}))) \quad \forall \ell< r,
\\
(\bar{\phi}_{\delta,n},\bar{\varphi}_{\delta,n}) \stackrel{*}{\rightharpoonup} (\bar{\phi}_{\delta},\bar{\varphi}_{\delta}) \quad &\text{in} \quad L^r(\bar{\Omega};L^\infty(0,T;\mathbb{V}_0)) \cap L^\frac{r}{2}(\bar{\Omega}; W^{\alpha,r}(0,T;\mathbb{V}_0^\prime)),
    \\
\bar{\varphi}_{\delta,n} \to \bar{\varphi}_{\delta} \quad &\text{in} \quad L^\ell(\bar{\Omega};C([0,T];L^2(\Gamma))) \quad \forall \ell<r,
              \\
\bar{\mu}_{\delta,n} \rightharpoonup \bar{\mu}_{\delta} \quad &\text{in} \quad L^r(\bar{\Omega};L^2(0,T;V_1)) \cap L^r(\bar{\Omega};L^4(0,T;L^2(\mathcal{O}))), 
                  \\
\nabla \bar{\mu}_{\delta,n} \rightharpoonup \nabla \bar{\mu}_{\delta} \quad &\text{in} \quad L^r(\bar{\Omega};L^2(0,T;\mathbb{L}^2(\mathcal{O}))),
                       \\
\bar{\theta}_{\delta,n} \rightharpoonup \bar{\theta}_{\delta} \quad &\text{in} \quad L^r(\bar{\Omega};L^2(0,T;V_\Gamma)) \cap L^r(\bar{\Omega};L^4(0,T;L^2(\Gamma))), 
                            \\
\nabla_\Gamma \bar{\theta}_{\delta,n} \rightharpoonup \nabla_\Gamma \bar{\theta}_{\delta} \quad &\text{in} \quad L^r(\bar{\Omega};L^2(0,T;\mathbb{L}^2(\Gamma))), 
                                 \\
\bar{W}_{1,n} \to \bar{W} \quad &\text{in} \quad L^\ell(\bar{\Omega};C([0,T];U_0)) \quad \forall \ell< r,
                                     \\
\bar{W}_{2,n} \to \bar{W}_{\Gamma} \quad &\text{in} \quad L^\ell(\bar{\Omega};C([0,T];U_0^\Gamma)) \quad \forall \ell< r
\end{aligned}
\end{equation}
and
\begin{equation}
\begin{aligned}
\bar{\bu}_{\delta,n}\coloneq \bu_{\delta,n} \circ X_n \rightharpoonup \bar{\bu}_{\delta} \quad &\text{in} \quad L^r(\bar{\Omega};L^2(0,T;V)), 
   \\
\bar{\bu}_{\delta,n} \lvert_\Gamma \rightharpoonup \bar{\bu}_{\delta} \lvert_\Gamma \quad &\text{in} \quad L^r(\bar{\Omega};L^2(0,T;L^2(\Gamma))).
\end{aligned}
\end{equation}
On the other hand, arguing as in Subsection \ref{subs-4.4}, we learn that the limiting process $(\bar{\phi}_{\delta},\bar{\varphi}_{\delta},\bar{\mu}_{\delta},\bar{\theta}_{\delta},\bar{W},\bar{W}_{\Gamma})$ satisfies for any test functions $\bv \in V,\, \upsilon \in V_1,\, \upsilon \lvert_\Gamma \in V_\Gamma$ and $(\psi,\psi \lvert_\Gamma) \in \mathbb{V}_0$,
\begin{subequations}\label{eq5.11}
\begin{align}
  & 2 \int_{Q_t} \nu(\bar{\phi}_{\delta}(s)) D\bar{\bu}_{\delta}(s): D\bv\, \d x\, \d s + \int_{Q_t} \lambda(\bar{\phi}_{\delta}(s)) \bar{\bu}_{\delta}(s) \cdot \bv \, \d x\, \d s + \int_{\Sigma_t} \gamma(\bar{\varphi}_{\delta}(s)) \bar{\bu}_{\delta}(s) \cdot \bv \, \d S \, \d s \notag \\
  &\quad 
  =  - \int_{\Sigma_t} \bar{\varphi}_{\delta}(s) \nabla_\Gamma \bar{\theta}_{\delta}(s) \cdot \bv \, \d S \, \d s  - \int_{Q_t} \bar{\phi}_{\delta}(s) \nabla \bar{\mu}_{\delta}(s) \cdot \bv \, \d x\, \d s, \label{eq5.11a}
   \\ \notag \\
 &\langle \bar{\phi}_{\delta}(t),\upsilon\rangle_{V_1^\prime,V_1} - \int_{Q_t} \bar{\phi}_{\delta}(s) \bar{\bu}_{\delta}(s) \cdot \nabla \upsilon \, \d x\, \d s + \int_{Q_t} M_\mathcal{O}(\bar{\phi}_{\delta}(s)) \nabla \bar{\mu}_{\delta}(s) \cdot \nabla \upsilon \, \d x\, \d s \notag \\ 
 &\quad = (\phi_{0}, \upsilon) + \left(\int_0^t F_{1}(\bar{\phi}_{\delta}(s)) \,\d \bar{W}(s), \upsilon \right), \label{eq5.11b}
        \\
 &\langle \bar{\varphi}_{\delta}(t), \upsilon \lvert_\Gamma \rangle_{V_\Gamma^\prime,V_\Gamma} + \int_{\Sigma_t} (-\bar{\varphi}_{\delta} \bar{\bu}_{\delta} \cdot \nabla_\Gamma \upsilon \lvert_\Gamma + M_\Gamma(\bar{\varphi}_{\delta}) \nabla_\Gamma \bar{\theta}_{\delta} \cdot \nabla_\Gamma \upsilon \lvert_\Gamma) \, \d S \, \d s \label{eq5.11c} \\
 &\quad= (\varphi_{0}, \upsilon \lvert_\Gamma)_\Gamma +  \left(\int_0^t F_2(\bar{\varphi}_{\delta}(s)) \,\d \bar{W}_{\Gamma}(s), \upsilon \lvert_\Gamma \right)_\Gamma, \notag 
                   \\ \notag \\
&\int_{\mathcal{O}} \bar{\mu}_{\delta} \psi \, \d x  + \int_{\Gamma} \bar{\theta}_{\delta} \psi \lvert_\Gamma \, \d S  
- \eps \int_{\mathcal{O}} \nabla \bar{\phi}_{\delta} \cdot \nabla \psi \, \d x  - \int_{\mathcal{O}} \frac1\eps F_\delta^\prime(\bar{\phi}_{\delta}) \psi \, \d x \notag \\ 
&\quad= \eps_\Gamma \int_{\Gamma} \nabla_\Gamma \bar{\varphi}_{\delta} \cdot \nabla_\Gamma \psi \lvert_\Gamma \, \d S  +  \frac{1}{\eps_\Gamma} \int_{\Gamma} G_\delta^\prime(\bar{\varphi}_{\delta}) \psi \lvert_\Gamma \, \d S, \label{eq5.11d}
\end{align}
\end{subequations} 
for all $t \in[0,T]$ and $\bar{\mathbb{P}}$-a.s.
%%%%%%%%%%%%%%%%%%%%%%%%%%%%%%%%%%%%%%%%%%%%%%%%%%%%%%%%%%%%%%%%%%%%%%%%%%%%%%%%%%%%%%%%%%%%%%%%%%%%%%%%%%%%%%%%%%%%%%%%%%%%%%%%%%%%%%%%%%%%%%%%%%%%%%%%%%%%%%%%%%%%%%%%%%%%%%%%%%%%%%%%%%%%%%
\subsection{Uniform estimates with respect to delta}
%%%%%%%%%%%%%%%%%%%%%%%%%%%%%%%%%%%%%%%%%%%%%%%%%%%%%%%%%%%%%%%%%%%%%%%%%%%%%%%%%%%%%%%%%%%%%%%%%%%%%%%%%%%%%%%%%%%%%%%%%%%%%%%%%%%%%%%%%%%%%%%%%%%%%%%%%%%%%%%%%%%%%%%%%%%%%%%%%%%%%%%%%%%%%%%
Here, we derive some uniform estimates with respect to $\delta$ that enable us to construct a solution to the original problem \eqref{eq1.1a}-\eqref{eq1.10a} (in the case $K=0$) through a stochastic compactness argument. Thus, the constant $C$ in this section is independent of $\delta$, but may depend on other parameters that are explicitly pointed out if necessary. Let us point that the arguments necessary to construct the solution to the problem \eqref{eq1.1a}-\eqref{eq1.10a} in the case $K=$ are very similar to the ones in Subsection \ref{sect_Uniform_estimates_delta}, and then, we will omit certain details.

\noindent
Since $X_n$ preserves laws, we can express \eqref{Main_Galerkin_Equality2} in the new variables, i.e., for any $r \geq 2$, we have
\begin{equation}\label{eq5.14}
\begin{aligned}
& \bar{\mathbb{E}} \sup_{\tau \in[0,t]} [E_\delta(\bar{\phi}_{\delta,n}(\tau), \bar{\varphi}_{\delta,n}(\tau))]^\frac{r}{2} + \bar{\mathbb{E}} \left(\int_{Q_t} \lambda(\bar{\phi}_{\delta,n}(s)) \lvert \bar{\bu}_{\delta,n}(s) \rvert^2 \,\d x \,\d s \right)^\frac{r}{2}  \\
&\quad +  \bar{\mathbb{E}} \left|\int_{0}^t |\nabla \bar{\bu}_{\delta,n}|^2  \,\d s \right|^\frac{r}{2}  +  \bar{\mathbb{E}} \left(\int_0^{t} |\bar{\bu}_{\delta,n}|_{\Gamma}^2 \,\d s\right)^\frac{r}{2} +  \bar{\mathbb{E}} \left(\int_0^{t} |\nabla \bar{\mu}_{\delta,n}|^2 \,\d s \right)^\frac{r}{2} +  \bar{\mathbb{E}} \left(\int_0^{t} |\nabla_\Gamma \bar{\theta}_{\delta,n}|^2 \,\d s\right)^\frac{r}{2}  
 \\
&\leq C \bigg[ 1 + [E_\delta(\phi_n(0),\varphi_n(0))]^\frac{r}{2} + \bar{\mathbb{E}} \left(\int_0^t |\nabla \bar{\phi}_{\delta,n}(s)|^2\, \d s \right)^\frac{r}{2} + \bar{\mathbb{E}} \left(\int_0^t |\nabla_\Gamma \bar{\theta}_{\delta,n}(s)|_\Gamma^2\, \d s \right)^\frac{r}{2} \\
&\quad + \bar{\mathbb{E}} \left(\int_0^t |\bar{\phi}_{\delta,n}(s)|^2 \, \d s \right)^\frac{r}{4} + \bar{\mathbb{E}} \left(\int_0^t |\bar{\mu}_{\delta,n}(s)|^2 \, \d s \right)^\frac{r}{4} + \bar{\mathbb{E}} \left(\int_0^t |\bar{\theta}_{\delta,n}(s)|_{\Gamma}^2 \, \d s \right)^\frac{r}{4} \\
&\quad + \bar{\mathbb{E}} \left|\sum_{k=1}^\infty \int_{Q_t} F''_\delta(\bar{\phi}_{\delta,n}(s)) |F_1(\bar{\phi}_{\delta,n}(s))e_{1,k}|^2 \, \d x \,\d s \right|^\frac{r}{2} \\
&\quad + \bar{\mathbb{E}} \left|\sum_{k=1}^\infty \int_{\Sigma_t} G''_\delta(\bar{\varphi}_{\delta,n}(s)) |F_2(\bar{\varphi}_{\delta,n}(s))e_{2,k}|^2\, \d S\,\d s\right|^\frac{r}{2}  \bigg],
\end{aligned}
\end{equation}
where we used the bounds on the diffusion coefficients \eqref{eq4.82}, \eqref{eq4.83}, \eqref{eq4.82a}, \eqref{eqt4.87} and \eqref{eq4.88}, and the bounds \eqref{eqt5.3a} and \eqref{eqt5.5a} for the stochastic forced terms. Here, the constant $C$ depends on $\eps,\,\eps_\Gamma,\,\nu_0,\,\lambda_0,\,M_0,\,N_0,\,\bar{M}_0,\,\mathcal{O},\,\Gamma,\,C_1$, $C_2,\,T$ and $r$. Next, recalling that $\varphi_{\delta,n}(t)= \phi_{\delta,n}(t)$ on  $\Gamma$ $\bar{\mathbb{P}}$-a.s. and $K=0$, following the steps of the proof of \eqref{eq4.141}
\begin{equation}
\begin{aligned}
& \bar{\mathbb{E}} \sup_{\tau \in[0,t]} [E_\delta(\bar{\phi}_{\delta}(\tau), \bar{\varphi}_{\delta}(\tau))]^\frac{r}{2} + \bar{\mathbb{E}} \left(\int_{Q_t} \lambda(\bar{\phi}_{\delta}(s)) \lvert \bar{\bu}_{\delta}(s) \rvert^2 \,\d x \,\d s \right)^\frac{r}{2}  \\
&\quad +  \bar{\mathbb{E}} \left|\int_{0}^t |\nabla \bar{\bu}_{\delta}(s)|^2  \,\d s \right|^\frac{r}{2}  +  \bar{\mathbb{E}} \left(\int_0^{t} |\bar{\bu}_{\delta}(s)|_{\Gamma}^2 \,\d s\right)^\frac{r}{2} \\
&\quad +  \bar{\mathbb{E}} \left(\int_0^{t} |\nabla \bar{\mu}_{\delta}(s)|^2 \,\d s \right)^\frac{r}{2} +  \bar{\mathbb{E}} \left(\int_0^{t} |\nabla_\Gamma \bar{\theta}_{\delta}(s)|^2 \,\d s\right)^\frac{r}{2}  
 \\
&\leq C \left[ 1 + [\tilde{E}(\phi_0,\varphi_0)]^\frac{r}{2} + \bar{\mathbb{E}} \int_0^t \sup_{0\leq s \leq \tau} [E_\delta(\bar{\phi}_{\delta}(s), \bar{\varphi}_{\delta}(s))]^\frac{r}{2} \, \d \tau \right]
\end{aligned}
\end{equation}
for all $r \geq 4$ and for a certain certain constant $C$ independent of $\delta$. Here 
$$\tilde{E}(\phi_0,\varphi_0)
= \frac{\eps}{2} |\nabla \phi_0|^2 + \frac{1}{\eps} \|F(\phi_0)\|_{L^1(\mathcal{O})} + \frac{\eps_\Gamma}{2} |\nabla_\Gamma \varphi_0|_{\Gamma}^2 + \frac{1}{\eps_\Gamma} \|G(\varphi_0)\|_{L^1(\Gamma)} + \frac{1}{2} |\phi_0|^2.$$ 
Hence, the Gronwall entails that there exists $c_r>0$ independent of $\delta$ such that
\begin{equation}\label{eq5.16}
\begin{aligned}
\|\bar{\bu}_{\delta}\|_{L^r(\bar{\Omega};L^2(0,T;\mathbb{L}^2(\Gamma)))}  &\leq c_r, 
\\
\|\bar{\bu}_{\delta}\|_{L^r(\bar{\Omega};L^2(0,T;V))} &\leq c_r,
   \\
\|(\bar{\phi}_{\delta},\bar{\varphi}_{\delta})\|_{L^r(\bar{\Omega};L^\infty(0,T;\mathbb{V}_0))} &\leq c_r, 
      \\
\|F_\delta(\bar{\phi}_{\delta})\|_{L^\frac{r}{2}(\bar{\Omega};L^\infty(0,T;L^1(\mathcal{O})))}  &\leq c_r,
         \\
\|G_\delta(\bar{\varphi}_{\delta})\|_{L^\frac{r}{2}(\bar{\Omega};L^\infty(0,T;L^1(\Gamma)))} &\leq c_r, \\
\|\nabla_\Gamma \bar{\theta}_{\delta}\|_{L^r(\bar{\Omega};L^\infty(0,T;\mathbb{L}^2(\Gamma)))} &\leq c_r,  \\
\|\nabla_\Gamma \bar{\theta}_{\delta}\|_{L^r(\bar{\Omega};L^2(0,T;\mathbb{L}^2(\Gamma)))} &\leq c_r,       \\
\|\nabla \bar{\mu}_{\delta} \|_{L^r(\bar{\Omega};L^2(0,T;\mathbb{L}^2(\mathcal{O})))} &\leq c_r,
                         \\
\|\nabla_\Gamma \bar{\varphi}_{\delta}\|_{L^r(\bar{\Omega};L^\infty(0,T;\mathbb{L}^2(\Gamma)))} &\leq c_r. 
\end{aligned}
\end{equation}
Moreover, we observe that 
$|\bar{\mu}_{\delta,n}|^2 
 \leq C( 1 + |\nabla \bar{\mu}_{\delta,n}|^2 + \|F_\delta(\bar{\phi}_{\delta,n})\|_{L^1(\mathcal{O})}^2 + |\bar{\phi}_{\delta,n}|^2)
 $,
with $C$ depending only on $\eps,\,\mathcal{O},\,\Gamma,\,C_F,\,\tilde{c}_F$, and
$|\bar{\theta}_{\delta,n}|_\Gamma^2
\leq C (1 + |\nabla_\Gamma \bar{\theta}_{\delta,n}|_\Gamma^2 +   \|G_\delta(\bar{\varphi}_{\delta,n})\|_{L^1(\Gamma)}^2 + \|\bar{\phi}_{\delta,n}\|_{V_1}^2)
$ for some $C=C(C_G,\mathcal{O},\Gamma,\tilde{c}_G,\eps,\eps_\Gamma)$. This, together with \eqref{eq5.9}, the pointwise convergence for $(\bar{\phi}_{\delta,n},\bar{\varphi}_{\delta,n})_n$, the continuity of the maps $F_\delta$ and $G_\delta$, and the weak lower semicontinuity of the norm imply that 
\begin{equation}\label{eq5.17}
\begin{aligned}
\|\bar{\mu}_{\delta}\|_{L^r(\bar{\Omega};L^2(0,T;L^2(\mathcal{O})))} \leq c_r, \\
\|\bar{\theta}_{\delta}\|_{L^r(\bar{\Omega};L^2(0,T;L^2(\Gamma)))} \leq c_r,
\end{aligned}
\end{equation}
with $c_r$ independent of $\delta$. Furthermore, from \eqref{eq5.16} and \eqref{eq5.17}, we obtain
\begin{equation}\label{eq5.18}
\begin{aligned}
\|\bar{\theta}_{\delta}\|_{L^r(\bar{\Omega};L^2(0,T;V_\Gamma))} &\leq c_r, \\
\|\bar{\mu}_{\delta}\|_{L^r(\bar{\Omega};L^2(0,T;V_1))} &\leq c_r,
\end{aligned}
\end{equation}
with $c_r$ being independent of $\delta$.

\noindent
We will now derive an uniform estimate w.r.t $\delta$ for $F_\delta^\prime(\bar{\phi}_{\delta})$ and $G_\delta^\prime(\bar{\varphi}_{\delta})$. In comparison to the the proof of \eqref{eq4.147}, or \eqref{eq4.159}, or \eqref{eq4.158}, we will not use the assumption \eqref{F''_and G''_additional_condition}, which implies \eqref{F'_and G'_additional_condition}. Indeed, recalling that $F_\delta^\prime(\bar{\phi}_{\delta})= \mathbb{A}_\delta(\bar{\phi}_{\delta}) - \tilde{c}_F \bar{\phi}_{\delta}$, then taking $(\psi,\psi \lvert_\Gamma)=(\mathbb{A}_\delta(\bar{\phi}_{\delta}),0)$ as test function in \eqref{eq5.11d}, we have
\begin{align*}
 &\eps \int_{\mathcal{O}} \Psi^\prime_\delta(\bar{\phi}_{\delta}) |\nabla \bar{\phi}_{\delta}|^2 \, \d x + \int_{\mathcal{O}} \frac1\eps |F_\delta^\prime(\bar{\phi}_{\delta})|^2 \, \d x \\ 
 &=  \int_{\mathcal{O}}  \bar{\mu}_{\delta} F_\delta^\prime(\bar{\phi}_{\delta}) \, \d x + \tilde{c}_F \int_{\mathcal{O}} \bar{\mu}_{\delta} \bar{\phi}_{\delta} \, \d x - \frac{\tilde{c}_F}{\eps} \int_{\mathcal{O}} F_\delta^\prime(\bar{\phi}_{\delta}) \bar{\phi}_{\delta} \, \d x \\
 &\leq \frac{1}{2\eps} \int_{\mathcal{O}} |F_\delta^\prime(\bar{\phi}_{\delta})|^2 \, \d x + \frac{3 \eps}{2} \int_{\mathcal{O}} |\bar{\mu}_{\delta}|^2\, \d x  + \frac{3 \tilde{c}_F^2}{2\eps} \int_{\mathcal{O}} |\bar{\phi}_{\delta}|^2 \, \d x
\end{align*}
from which and the fact that $\Psi^\prime_\delta(r)\geq 0$ for all $r\in \mathbb{R}$, we obtain
    \begin{equation}\label{eq5.19}
      |F_\delta^\prime(\bar{\phi}_{\delta})|^2
      \leq 3 \eps^2 |\bar{\mu}_{\delta}|^2  + 3 \tilde{c}_F^2 |\bar{\phi}_{\delta}|^2.
    \end{equation}
Moreover, recalling that $G_\delta^\prime(\bar{\varphi}_{\delta})= \Psi^\Gamma_\delta(\bar{\varphi}_{\delta}) - \tilde{c}_G \bar{\varphi}_{\delta}$, then taking $(\psi,\psi \lvert_\Gamma)=(0,\Psi^\Gamma_\delta(\bar{\varphi}_{\delta}))$ as test function in \eqref{eq5.11d}, we have
\begin{align*}
&\eps_\Gamma \int_{\Gamma} (\mathbb{A}_\delta^\Gamma)^\prime(\bar{\varphi}_{\delta}) |\nabla_\Gamma \bar{\varphi}_{\delta}|^2 \, \d S +  \frac{1}{\eps_\Gamma} \int_{\Gamma} |G_\delta^\prime(\bar{\varphi}_{\delta})|^2 \, \d S \\
&= \int_{\Gamma} \bar{\theta}_{\delta} G_\delta^\prime(\bar{\varphi}_{\delta}) \, \d S + \tilde{c}_G \int_{\Gamma} \bar{\theta}_{\delta} \bar{\varphi}_{\delta} \, \d S - \frac{\tilde{c}_G}{\eps_\Gamma} \int_{\Gamma} G_\delta^\prime(\bar{\varphi}_{\delta}) \bar{\varphi}_{\delta} \, \d S
\end{align*}
from which and the monotonicity of $\Psi^\Gamma_\delta$, we get
    \begin{equation}\label{eq5.20}
        |G_\delta^\prime(\bar{\varphi}_{\delta})|_\Gamma^2
          \leq 3 \eps_\Gamma^2 |\bar{\theta}_{\delta}|_\Gamma^2  + 3 \tilde{c}_G^2 |\bar{\varphi}_{\delta}|_\Gamma^2.
    \end{equation}
Hence, from \eqref{eq5.16}$\text{-}3)$, \eqref{eq5.17}, \eqref{eq5.19}, and \eqref{eq5.20}, we infer 
\begin{equation}\label{eq5.21}
\begin{aligned}
\|F_\delta^\prime(\bar{\phi}_{\delta})\|_{L^r(\bar{\Omega};L^2(0,T;L^2(\mathcal{O})))} \leq c_r, \\
\|G_\delta^\prime(\bar{\varphi}_{\delta})\|_{L^r(\bar{\Omega};L^2(0,T;L^2(\Gamma)))} \leq c_r,
\end{aligned}
\end{equation}
where the constant $c_r$ is independent of $\delta$. On the one hand, since the sequence $(\bar{\phi}_{\delta,n},\bar{\varphi}_{\delta,n},\bar{\mu}_{\delta,n},\bar{\theta}_{\delta,n})$ satisfies the system \eqref{eq5.1a}-\eqref{eq5.1c}, $\bar{\mathbb{P}}$-a.s., by using the regularity theory for elliptic problems with bulk-surface coupling (see \cite[Theorem 3.3]{Knopf+Liu_2021}), we have
\begin{align*}
\|(\bar{\phi}_{\delta,n}(t),\bar{\varphi}_{\delta,n}(t))\|_{\mathcal{H}^2}^2
&\leq C \|(f_{n,\delta}(t),g_{n,\delta}(t))\|_{\mathbb{H}}^2 \\
&\leq C (|\bar{\mu}_{\delta,n}(t)|^2 + | F_\delta^\prime(\bar{\phi}_{\delta,n}(t))|^2 + |\bar{\theta}_{\delta,n}(t)|_{\Gamma}^2 + |G_\delta^\prime(\bar{\varphi}_{\delta,n}(t))|_\Gamma^2)
\end{align*}
$C$ may depend on $\eps,\,\eps_\Gamma$ and not of $n$ and $\delta$. On the other hand, from the convergence \eqref{eq5.9}, the pointwise convergence for $(\bar{\phi}_{\delta,n},\bar{\varphi}_{\delta,n})$, the Lipschitz-continuity of $F_\delta^\prime$ and $G_\delta^\prime$ and the lower-semicontinuity, we find
   \begin{equation}\label{eq5.22}
     \|(\bar{\phi}_{\delta}(t),\bar{\varphi}_{\delta}(t))\|_{\mathcal{H}^2}^2
     \leq C (|\bar{\mu}_{\delta}(t)|^2 + | F_\delta^\prime(\bar{\phi}_{\delta}(t))|^2 + |\bar{\theta}_{\delta}(t)|_{\Gamma}^2 + |G_\delta^\prime(\bar{\varphi}_{\delta}(t))|_\Gamma^2), 
   \end{equation}
for all $t\in[0,T]$ and $\bar{\mathbb{P}}$-a.s. It then follows from \eqref{eq5.17}, \eqref{eq5.21} and \eqref{eq5.22} that 
\begin{equation}
\|(\bar{\phi}_{\delta},\bar{\varphi}_{\delta})\|_{L^r(\bar{\Omega};L^2(0,T;\mathcal{H}^2))} \leq c_r,
\end{equation}
with $c_r$ being independent of $\delta$.

\noindent
%%%%%%%%%%%%%%%%%%%%%%%%%%%%%%%%%%%%%%%%%%%%%%%%%%%%%%%%%%%%%%%%%%
\textbf{Passage to the limit as $\delta \to 0^+$.}
%%%%%%%%%%%%%%%%%%%%%%%%%%%%%%%%%%%%%%%%%%%%%%%%%%%%%%%%%%%%%%%%%%%
We are now in a position to pass to the limit as $\delta\to 0^+$ along a suitable subsequence. However, the argument is very similar  to the one of Subsection \ref{Sect_passage to the limit_delta}. Thus, we will omit some details for the sake of brevity. As in Subsection \ref{subs-4.4}, we can check that the family of laws of $(\bar{\phi}_{\delta},\bar{\varphi}_{\delta},\bar{W}_{1,\delta}\coloneq\bar{W},\bar{W}_{2,\delta}\coloneq\bar{W}_{\Gamma})_\delta$ is tight in the product space 
$
\Lambda\coloneq \mathcal{Z}_\phi \times \mathcal{Z}_\varphi \times C([0,T];U_0) \times C([0,T];U_0^\Gamma).
$ 
Once more, Owing to the Prokhorov and Skorokhod's theorems (see \cite[Theorem 2.7]{Ikeda+Watanabe_1989} and \cite[Theorem 1.10.4, Addendum 1.10.5]{van der Vaart+Wellner_1996}), there exists a probability space $(\Omega^0,\mathcal{F}^0,\mathbb{P}^0)$ and a sequence of random variables $X_\delta: (\Omega^0,\mathcal{F}^0) \to (\bar{\Omega},\bar{\mathcal{F}})$ such that the law of $X_\delta$ is $\bar{\mathbb{P}}$ for every $n \in \mathbb{N}$, namely, $\bar{\mathbb{P}}= \mathbb{P}^0 \circ X_\delta^{-1}$, so that composition with $X_\delta$ preserves laws, and the following convergences hold as $\delta \to 0^+$
\begin{equation}\label{eq5.24}
\begin{aligned}
\tilde{\bu}^\delta\coloneq\bar{\bu}_{\delta} \circ X_\delta \rightharpoonup \bu^0 \quad &\text{in} \quad L^r(\Omega^0;L^2(0,T;V)), 
\\
\tilde{\bu}^\delta \lvert_\Gamma \rightharpoonup \bu^0 \lvert_\Gamma \quad &\text{in} \quad L^r(\Omega^0;L^2(0,T;L^2(\Gamma))),
    \\
\tilde{\phi}^\delta\coloneq\bar{\phi}_{\delta} \circ X_\delta \to \phi^0 \quad &\text{in} \quad L^\ell(\Omega^0;C([0,T];L^2(\mathcal{O}))) \quad \forall \ell< r,
    \\
\tilde{\varphi}^\delta\coloneq\bar{\varphi}_{\delta} \circ X_\delta \to \varphi^0 \quad &\text{in} \quad L^\ell(\Omega^0;C([0,T];L^2(\Gamma))) \quad \forall \ell<r,
         \\
(\tilde{\phi}^\delta,\tilde{\varphi}^\delta) \stackrel{*}\rightharpoonup (\phi^0,\varphi^0) \quad &\text{in} \quad L^r(\Omega^0;L^\infty(0,T;\mathbb{V}_0)) \cap L^\frac{r}{2}(\Omega^0; W^{\alpha,r}(0,T;\mathbb{V}_0^\prime)),
\\
(\tilde{\phi}^\delta,\tilde{\varphi}^\delta) \rightharpoonup (\phi^0,\varphi^0) \quad & \text{in} \quad L^r(\Omega^0;L^2(0,T;\mathcal{H}^2)),
   \\
\tilde{\mu}^\delta\coloneq\bar{\mu}_{\delta} \circ X_\delta \rightharpoonup \mu^0 \quad &\text{in} \quad L^r(\Omega^0;L^2(0,T;V_1)), 
   \\
\tilde{\theta}^\delta\coloneq\bar{\theta}_{\delta} \circ X_\delta \rightharpoonup \theta^0 \quad &\text{in} \quad L^r(\Omega^0;L^2(0,T;V_\Gamma)),  
      \\
\tilde{W}_{1,\delta} \to W^0 \quad &\text{in} \quad L^\ell(\Omega^0;C([0,T];U_0)) \quad \forall \ell< r,
         \\
\tilde{W}_{2,\delta} \to W^{\Gamma,0} \quad &\text{in} \quad L^\ell(\Omega^0;C([0,T];U_0^\Gamma)) \quad \forall \ell< r,
\\               
F_\delta^\prime(\tilde{\phi}^\delta) \rightharpoonup F^\prime(\phi^0) \quad &\text{in} \quad L^r(\Omega^0; L^2(0,T;L^2(\mathcal{O}))),
   \\
G_\delta^\prime(\tilde{\varphi}^\delta) \rightharpoonup G^\prime(\varphi^0) \quad &\text{in} \quad L^r(\Omega^0;L^2(0,T;L^2(\Gamma))), 
\end{aligned}
\end{equation}
for some limiting processes $(\bu^0,\phi^0,\varphi^0,\mu^0,\theta^0,W^0,W^{\Gamma,0})$ belonging to the specified spaces.

\noindent
Next, arguing as in the proof of Proposition \ref{Prop:Proposition-4}, we have
\begin{align*}
\int_0^t F_{1}(\bar{\phi}_{\delta}(s)) \,\d \bar{W}_{1,\delta}(s) &\to \int_0^t F_{1}(\phi^0(s)) \,\d W^0(s) ~ &\text{in probability in} ~ L^2(0,T;L^2(\mathcal{O})), \\
\int_0^t F_2(\bar{\varphi}_{\delta}(s)) \,\d \bar{W}_{2,\delta}(s) &\to \int_0^t F_2(\varphi^0(s)) \,\d W^{\Gamma,0}(s) ~ &\text{in probability in} ~  L^2(0,T;L^2(\Gamma)).
\end{align*}
Hence, from the previous convergences and similar reasoning as in Subsection \ref{Sect_passage to the limit_delta}, we can let $\delta \to 0^+$ in the variational formulations and obtain that the limit processes form a martingale solution of the original problem \eqref{eq1.1a}-\eqref{eq1.10a} in the sense of Definition \ref{def3.4} with $K=0$. Moreover, we have
\begin{align*}
\mu^0= -\eps \Delta \phi^0 + \frac1\eps F'(\phi^0) \quad  &\text{a.e. in} \quad Q_T, \quad \mathbb{P}^0\text{-a.s.,} 
\\
\theta^0= -\eps_\Gamma \Delta_\Gamma \varphi^0 + \frac{1}{\eps_\Gamma} G'(\varphi^0) + \eps \partial_{\bn} \phi^0 \quad &\text{a.e. on} \quad  \Sigma_T, \quad \mathbb{P}^0\text{-a.s.} 
\end{align*}
Finally, to complete the proof of Theorem \ref{second_main_theorem}, we further assume that $\mathcal{O}$ if of class $C^3$ and that \eqref{F''_and G''_additional_condition} holds true with $p<6$. Then, arguing as in \eqref{eq4.170}, 
\eqref{eq4.171a}, and \eqref{eq4.140a}, we have
\begin{equation}\label{eq5.26}
\|F_\delta^\prime (\bar{\phi}_{\delta,n}(t))\|_{V_1}
\leq C  + C \|\bar{\phi}_{\delta,n}(t)\|_{V_1}^\frac{3p-2}{6-p} + \epsilon \|\bar{\phi}_{\delta,n}(t)\|_{H^3(\mathcal{O})},
\end{equation}
             \begin{equation}\label{eq5.27}
               \|G_\delta^\prime (\bar{\varphi}_{\delta,n}(t))\|_{V_\Gamma}
                \leq C + C \|\bar{\varphi}_{\delta,n}(t)\|_{V_\Gamma}^{4q-5} + \epsilon \|\bar{\varphi}_{\delta,n}(t)\|_{H^3(\Gamma)},
            \end{equation}
and
\begin{equation}\label{eq5.28}
\begin{aligned}
\|(\bar{\phi}_{\delta,n}(t),\bar{\varphi}_{\delta,n}(t))\|_{\mathcal{H}^3}^2
&\leq  C + C \|\bar{\mu}_{\delta,n}(t)\|_{V_1}^2 + C  \|\bar{\theta}_{\delta,n}(t)\|_{V_\Gamma}^2 + C \|\bar{\phi}_{\delta,n}(t)\|_{V_1}^\frac{6p-4}{6-p} + C \|\bar{\varphi}_{\delta,n}(t)\|_{V_\Gamma}^{8q-10} \\
&\quad + C \epsilon^2 \|(\bar{\phi}_{\delta,n}(t),\bar{\varphi}_{\delta,n}(t))\|_{\mathcal{H}^3}^2,
\end{aligned}
\end{equation}
with $C$ independent of $n$ and $\delta$ and for every $\epsilon(0,1)$. By choosing $\epsilon$ sufficiently small in \eqref{eq5.28}, we deduce that 
\begin{equation}\label{eq5.29}
\begin{aligned}
\|(\bar{\phi}_{\delta,n}(t),\bar{\varphi}_{\delta,n}(t))\|_{\mathcal{H}^3}^2
&\leq  C + C \|\bar{\mu}_{\delta,n}(t)\|_{V_1}^2 + C  \|\bar{\theta}_{\delta,n}(t)\|_{V_\Gamma}^2 + C \|\bar{\phi}_{\delta,n}(t)\|_{V_1}^\frac{6p-4}{6-p} + C \|\bar{\varphi}_{\delta,n}(t)\|_{V_\Gamma}^{8q-10}.
\end{aligned}
\end{equation}
As a direct consequence of \eqref{eq5.29}, \eqref{eq5.26} and \eqref{eq5.27}, we have
\begin{equation*}
\begin{aligned}
\|F_\delta^\prime (\bar{\phi}_{\delta,n}(t))\|_{V_1}^2
&\leq C  +  C \|\bar{\mu}_{\delta,n}(t)\|_{V_1}^2 + C  \|\bar{\theta}_{\delta,n}(t)\|_{V_\Gamma}^2 + C \|\bar{\phi}_{\delta,n}(t)\|_{V_1}^\frac{6p-4}{6-p} + C \|\bar{\varphi}_{\delta,n}(t)\|_{V_\Gamma}^{8q-10} 
   \\
\|G_\delta^\prime (\bar{\varphi}_{\delta,n}(t))\|_{V_\Gamma}^2
 &\leq C  +  C \|\bar{\mu}_{\delta,n}(t)\|_{V_1}^2 + C  \|\bar{\theta}_{\delta,n}(t)\|_{V_\Gamma}^2 + C \|\bar{\phi}_{\delta,n}(t)\|_{V_1}^\frac{6p-4}{6-p} + C \|\bar{\varphi}_{\delta,n}(t)\|_{V_\Gamma}^{8q-10},                
\end{aligned}
\end{equation*}
where $C$ is independent of $n$ and $\delta$. Besides, using the convergence result \eqref{eq5.9} and the weak lower semicontinuity, we get
\begin{equation}\label{eq5.30}
\|(\bar{\phi}_{\delta}(t),\bar{\varphi}_{\delta}(t))\|_{\mathcal{H}^3}^2
\leq  C + C \|\bar{\mu}_{\delta}(t)\|_{V_1}^2 + C  \|\bar{\theta}_{\delta}(t)\|_{V_\Gamma}^2 + C \|\bar{\phi}_{\delta}(t)\|_{V_1}^\frac{6p-4}{6-p} + C \|\bar{\varphi}_{\delta}(t)\|_{V_\Gamma}^{8q-10}
\end{equation}
and 
\begin{equation}\label{eq5.32a}
\begin{aligned}
\|F_\delta^\prime (\bar{\phi}_{\delta}(t))\|_{V_1}^2
&\leq C  +  C \|\bar{\mu}_{\delta}(t)\|_{V_1}^2 + C  \|\bar{\theta}_{\delta}(t)\|_{V_\Gamma}^2 + C \|\bar{\phi}_{\delta}(t)\|_{V_1}^\frac{6p-4}{6-p} + C \|\bar{\varphi}_{\delta,n}(t)\|_{V_\Gamma}^{8q-10} 
   \\
\|G_\delta^\prime (\bar{\varphi}_{\delta}(t))\|_{V_\Gamma}^2
 &\leq C  +  C \|\bar{\mu}_{\delta}(t)\|_{V_1}^2 + C  \|\bar{\theta}_{\delta}(t)\|_{V_\Gamma}^2 + C \|\bar{\phi}_{\delta}(t)\|_{V_1}^\frac{6p-4}{6-p} + C \|\bar{\varphi}_{\delta}(t)\|_{V_\Gamma}^{8q-10},                
\end{aligned}
\end{equation}
$\bar{\mathbb{P}}$-a.s. with $C$ being independent of $\delta$. Moreover, from \eqref{eq5.30} and H\"older's inequality, we see that
\begin{equation}\label{eq5.32}
\begin{aligned}
\bar{\mathbb{E}} \left(\int_0^T \|(\bar{\phi}_{\delta}(t),\bar{\varphi}_{\delta}(t))\|_{\mathcal{H}^3}^2\, \d t \right)^\frac{\ell}{2} 
&\leq  C + C \bar{\mathbb{E}} \left(\int_0^T \|\bar{\mu}_{\delta}\|_{V_1}^2 \, \d t\right)^\frac{\ell}{2} + C \bar{\mathbb{E}} \left(\int_0^T  \|\bar{\theta}_{\delta}\|_{V_\Gamma}^2 \, \d t \right)^\frac{\ell}{2} \\
&\quad + C \bar{\mathbb{E}} \sup_{t\in[0,T]} \|\bar{\phi}_{\delta}(t)\|_{V_1}^\frac{(3p-2)\ell}{6-p} + C \bar{\mathbb{E}} \sup_{t\in[0,T]} \|\bar{\varphi}_{\delta}(t)\|_{V_\Gamma}^{(4q-5)\ell}
\end{aligned}
\end{equation}
with $\ell\geq 2$ and $C$ being independent of $\delta$. Using \eqref{eq5.16}$\text{-}3)$ and  \eqref{eq5.18}, the right-hand side of \eqref{eq5.32} is uniformly bounded in $L^\ell(\bar{\Omega};L^2(0,T;\mathcal{H}^3))$ for all $\ell <\min \left \lbrace \frac{6-p}{3p-2},\frac{1}{4q-5} \right \rbrace r$, so
that by weak lower semicontinuity  and \eqref{eq5.24}, we have $(\phi^0,\varphi^0) \in L^\ell(\bar{\Omega};L^2(0,T;\mathcal{H}^3))$ for all $\ell <\min \left \lbrace \frac{6-p}{3p-2},\frac{1}{4q-5} \right \rbrace r$. This proves \eqref{eq5.2}$\text{-}1)$. Once more, using \eqref{eq5.16}$\text{-}3)$ and  \eqref{eq5.18}, the right-hand side of \eqref{eq5.32a}$\text{-}1)$ and \eqref{eq5.32a}$\text{-}2)$ are uniformly bounded in $L^\ell(\bar{\Omega};L^2(0,T;V_1))$  and $L^\ell(\bar{\Omega};L^2(0,T;V_\Gamma))$ for all $\ell <\min \left \lbrace \frac{6-p}{3p-2},\frac{1}{4q-5} \right \rbrace r$, respectively; so that due to
the weak lower semicontinuity of the norm and convergence \eqref{eq5.24}, we infer that $(F^\prime (\phi^0), G^\prime (\varphi^0)) \in L^\ell(\bar{\Omega};L^2(0,T;\mathbb{V}))$ for all $\ell <\min \left \lbrace \frac{6-p}{3p-2},\frac{1}{4q-5} \right \rbrace r$. This proves \eqref{eq5.2}$\text{-}2)$ and then completes the proof of Theorem \ref{second_main_theorem}.
\end{proof}
}
%end delete

%%%%%%%%%%%%%%%%%%%%%%%%%%%%%%%%%%%%%%%%%%%%%%%%%%%%%%%%%%%%%%%%%%%%%%%%%%%%%%%%%%%%%%%%%%%%%%%%%%%%%%%%%%%%%%%%%%%%%%%
%draft
\dela{
\newpage 
\begin{proof}
Let $\lbrace K_n \rbrace_{n\in \mathbb{N}}$ be a sequence of positive real numbers such that $K_n \to 0$ as $n \to \infty$.
Thanks to Theorem \ref{thm-first_main_theorem}, there exists a complete probability space $(\tilde{\Omega},\tilde{\mathcal{F}},\tilde{\mathbb{P}})$ and a sequence of random variables $(\tilde{\bu}^{K_n}, \tilde{\phi}^{K_n}, \tilde{\varphi}^{K_n}, \tilde{\mu}^{K_n}, \tilde{\theta}^{K_n},\tilde{\mathcal{W}} \equiv \tilde{\mathcal{W}}^{K_n})_{n\in \mathbb{N}}$ that is a probabilistic weak solution to the problem \eqref{Eqt1.1a}-\eqref{Eqt1.3a}-\eqref{eq1.10a} in the sense of Definition \ref{def3.4}. Hereafter, for the sake of brevity, we will denote this sequence by $(\tilde{\bu}^n, \tilde{\phi}^n, \tilde{\varphi}^n, \tilde{\mu}^n, \tilde{\theta}^n, \tilde{\mathcal{W}} \equiv \tilde{\mathcal{W}}^n)_{n\in \mathbb{N}}$. From \eqref{eq3.11} and similar reasoning as in the proof of Lemma \ref{Lem-2}, we see that the process $(\tilde{\bu}^n, \tilde{\phi}^n, \tilde{\varphi}^n, \tilde{\mu}^n, \tilde{\theta}^n,\tilde{\mathcal{W}} \equiv \tilde{\mathcal{W}}^n)_{n\in \mathbb{N}}$ satisfies
\begin{align*}
&\mathcal{E}_{n}(\tilde{\phi}^n(t), \tilde{\varphi}^n(t)) + \int_{Q_t} [2 \nu(\tilde{\phi}^n) |D\tilde{\bu}^n|^2 + \lambda(\tilde{\phi}^n) |\tilde{\bu}^n|^2] \,\d x \,\d s \\
&\quad + \int_{\Sigma_t} \gamma(\tilde{\varphi}^n) |\tilde{\bu}^n|^2 \,\d S\,\d s + \int_{Q_t} M_{\mathcal{O}}(\tilde{\phi}^n) |\nabla \tilde{\mu}^n|^2 \, \d x \,\d s + \int_{\Sigma_t} M_\Gamma(\tilde{\varphi}^n) |\nabla_\Gamma \tilde{\theta}^n|^2 \,\d S \,\d s  
 \\
&\leq \mathcal{E}_{n}(\phi_0,\varphi_0) +  \frac{1}{2} \int_0^{t} \|F_{1}(\tilde{\phi}^n(s))\|_{\mathscr{T}_2(U,L^2(\mathcal{O}))}^2\,\d s  \\
&\quad + \frac{\eps [K_n]}{2} \int_0^{t} (\|F_2(\tilde{\varphi}^n)\|_{\mathscr{T}_2(U_\Gamma,L^2(\Gamma))}^2 + \|F_1(\tilde{\phi}^n)\|_{\mathscr{T}_2(U,L^2(\Gamma))}^2) \, \d s \\
&\quad + \frac{\eps}{2} \int_0^{t}  \|\nabla F_1(\tilde{\phi}^n)\|_{L^2(U,\mathbb{L}^2(\mathcal{O}))}^2  \, \d s + \frac{\eps_\Gamma}{2} \int_0^{t}   \|\nabla_\Gamma F_2(\tilde{\varphi}^n)\|_{\mathscr{T}_2(U_\Gamma,\mathbb{L}^2(\Gamma))}^2 \, \d s \\
&\quad + \int_0^{t} (\tilde{\mu}^n,F_1(\tilde{\phi}^n)\, \d \tilde{W}(s)) + \int_0^{t} (\tilde{\theta}^n,(F_2(\tilde{\varphi}^n))\, \d \tilde{W}^\Gamma(s))_\Gamma  \\
&\quad + \sum_{k=1}^\infty \int_0^{t} \frac{1}{2 \eps} \int_{\mathcal{O}} F''(\tilde{\phi}^n) |F_1(\tilde{\phi}^n)e_{1,k}|^2 \, \d x \,\d s + \sum_{k=1}^\infty \int_0^{t} \frac{1}{2 \eps_\Gamma} \int_{\Gamma} G''(\tilde{\varphi}^n) |F_2(\tilde{\varphi}^n)e_{2,k}|^2\, \d S\,\d s \\
&\quad - \eps [K_n] \sum_{k=1}^\infty  \int_0^{t} \int_\Gamma (F_1(\tilde{\phi}^n) e_{1,k}) (F_2(\tilde{\varphi}^n) e_{2,k})\, \d S \, \d s \\
&\quad - \int_0^{t} \int_{\mathcal{O}} M_{\mathcal{O}}(\tilde{\phi}^n) \nabla \tilde{\mu}^n \cdot \nabla \tilde{\phi}^n \, \d x \,\d s  + \int_0^{t}(\tilde{\phi}^n, F_{1}(\tilde{\phi}^n)\,\d \tilde{W}(s)),
\end{align*}
for all $t\in[0,T]$, $n \in \mathbb{N}$, and $\tilde{\mathbb{P}}$-a.s. Here $\mathcal{E}_n(\cdot,\tilde{\cdot})
= \int_{\mathcal{O}} \frac{\eps}{2} \lvert \nabla \cdot \rvert^2 + \frac{1}{\eps} F(\cdot) \, \d x + \int_{\Gamma} \frac{\eps_\Gamma}{2} \lvert \nabla_\Gamma \tilde{\cdot} \rvert^2 + \frac{1}{\eps_\Gamma} G(\tilde{\cdot}) \, \d S + \frac{\eps [K_n]}{2} \int_\Gamma (\tilde{\cdot} - \cdot)^2 \, \d S + \frac{1}{2} \int_{\mathcal{O}} |\cdot|^2\, \d x.
$
From this inequality, we obtain for every $r \geq 2$:
\begin{equation}\label{eq5.1}
\begin{aligned}
&\tilde{\mathbb{E}} \sup_{s \in [0,t]} [\mathcal{E}_{n}(\tilde{\phi}^n(s), \tilde{\varphi}^n(s))]^\frac{r}{2} + \tilde{\mathbb{E}} \left(\int_{Q_t} \nu(\tilde{\phi}^n) |D\tilde{\bu}^n|^2  \,\d x \,\d s \right)^\frac{r}{2} \\
&\quad + \tilde{\mathbb{E}} \left(\int_{Q_t} \lambda(\tilde{\phi}^n) |\tilde{\bu}^n|^2 \,\d x \,\d s \right)^\frac{r}{2} + \tilde{\mathbb{E}} \left(\int_{\Sigma_t} \gamma(\tilde{\varphi}^n) |\tilde{\bu}^n|^2 \,\d S\,\d s\right)^\frac{r}{2} \\
&\quad  + \tilde{\mathbb{E}} \left(\int_{Q_t} M_{\mathcal{O}}(\tilde{\phi}^n) |\nabla \tilde{\mu}^n|^2 \, \d x \,\d s \right)^\frac{r}{2} + \tilde{\mathbb{E}} \left(\int_{\Sigma_t} M_\Gamma(\tilde{\varphi}^n) |\nabla_\Gamma \tilde{\theta}^n|^2 \,\d S \,\d s \right)^\frac{r}{2}  
 \\
&\leq C_r \bigg([\mathcal{E}_{n}(\phi_0,\varphi_0)]^\frac{r}{2} + \tilde{\mathbb{E}} \left(\int_0^{t} \|F_{1}(\tilde{\phi}^n(s))\|_{\mathscr{T}_2(U,L^2(\mathcal{O}))}^2\,\d s\right)^\frac{r}{2}  \\
&\quad + \tilde{\mathbb{E}} \left([K_n] \int_0^{t} \|F_2(\tilde{\varphi}^n)\|_{\mathscr{T}_2(U_\Gamma,L^2(\Gamma))}^2 \, \d s \right)^\frac{r}{2} + \tilde{\mathbb{E}} \left([K_n] \int_0^{t} \|F_1(\tilde{\phi}^n)\|_{\mathscr{T}_2(U,L^2(\Gamma))}^2 \, \d s \right)^\frac{r}{2} \\
&\quad + \tilde{\mathbb{E}} \left(\int_0^{t} \|\nabla F_1(\tilde{\phi}^n)\|_{L^2(U,\mathbb{L}^2(\mathcal{O}))}^2  \, \d s \right)^\frac{r}{2} + \tilde{\mathbb{E}} \left(\int_0^{t} \|\nabla_\Gamma F_2(\tilde{\varphi}^n)\|_{\mathscr{T}_2(U_\Gamma,\mathbb{L}^2(\Gamma))}^2 \, \d s \right)^\frac{r}{2} \\
&\quad + \tilde{\mathbb{E}} \sup_{\tau \in [0,t]} \left|\int_0^{\tau} (\tilde{\mu}^n,F_1(\tilde{\phi}^n)\, \d \tilde{W}(s))\right|^\frac{r}{2} + \tilde{\mathbb{E}} \sup_{\tau \in [0,t]} \left|\int_0^{\tau} (\tilde{\theta}^n,(F_2(\tilde{\varphi}^n))\, \d \tilde{W}^\Gamma(s))_\Gamma \right|^\frac{r}{2}  \\
&\quad + \tilde{\mathbb{E}} \left(\sum_{k=1}^\infty \int_0^{t} \int_{\mathcal{O}} |F''(\tilde{\phi}^n)| |F_1(\tilde{\phi}^n)e_{1,k}|^2 \, \d x \,\d s \right)^\frac{r}{2} \\
&\quad + \tilde{\mathbb{E}} \left(\sum_{k=1}^\infty \int_0^{t} \int_{\Gamma} |G''(\tilde{\varphi}^n)| |F_2(\tilde{\varphi}^n)e_{2,k}|^2\, \d S\,\d s\right)^\frac{r}{2} \\
&\quad + \tilde{\mathbb{E}} \left| [K_n] \sum_{k=1}^\infty  \int_0^{t} \int_\Gamma (F_1(\tilde{\phi}^n) e_{1,k}) (F_2(\tilde{\varphi}^n) e_{2,k})\, \d S \, \d s \right|^\frac{r}{2} \\
&\quad + \tilde{\mathbb{E}} \left(\int_0^{t} \left|\int_{\mathcal{O}} M_{\mathcal{O}}(\tilde{\phi}^n) \nabla \tilde{\mu}^n \cdot \nabla \tilde{\phi}^n \, \d x \right| \,\d s \right)^\frac{r}{2}  + \tilde{\mathbb{E}} \sup_{\tau \in [0,t]} \left|\int_0^{\tau}(\tilde{\phi}^n, F_{1}(\tilde{\phi}^n)\,\d \tilde{W}(s))\right|^\frac{\tau}{2} \bigg),
\end{aligned}
\end{equation}
where $C_r$ depending on $\eps,\,\eps_\Gamma$ and $r$.
Let us proceed with estimating the terms on the right-hand of \eqref{eq5.1} independently of $n$. First, arguing as in the proof of Proposition \ref{Proposition_first_priori_estimmate}, we have 
\begin{align*}
\|F_{1}(\tilde{\phi}^n)\|_{\mathscr{T}_2(U,L^2(\mathcal{O}))}^2 &\leq |\mathcal{O}| C_1,
  \\
\|F_{\Gamma}(\tilde{\varphi}^n)\|_{\mathscr{T}_2(U_\Gamma,L^2(\Gamma))}^2 &\leq |\Gamma| C_2,
     \\
\|F_1(\tilde{\phi}^n)\|_{\mathscr{T}_2(U,L^2(\Gamma))}^2 &\leq C(\mathcal{O},\Gamma)( C_1 + C_1|\mathcal{O}|), 
        \\
\|\nabla F_{1}(\tilde{\phi}^n)\|_{\mathscr{T}_2(U,\mathbb{L}^2(\mathcal{O}))}^2
&\leq C_1 |\nabla \tilde{\phi}^n|^2, 
           \\
\|\nabla_\Gamma F_{\Gamma}(\tilde{\varphi}^n)\|_{\mathscr{T}_2(U_\Gamma,\mathbb{L}^2(\Gamma))}^2 &\leq C_2 |\nabla_\Gamma \tilde{\varphi}^n|_{\Gamma}^2, 
               \\
\left|\sum_{k=1}^\infty  \int_{\Sigma_t} (F_1(\tilde{\phi}^n) e_{1,k}) (F_2(\tilde{\varphi}^n) e_{2,k})\, \d S \, \d s \right| 
 &\leq \int_0^{t} \|F_1(\tilde{\phi}^n)\|_{\mathscr{T}_2(U,L^2(\Gamma))} \|F_2(\tilde{\varphi}^n)\|_{\mathscr{T}_2(U_\Gamma,L^2(\Gamma))} \, \d s \\
 &\leq C(\mathcal{O},\Gamma,C_1,C_2) t.
\end{align*}
and in turn, using H\"older's inequality, we have
\begin{align*}
\tilde{\mathbb{E}} \left(\int_0^{t} \|\nabla F_1(\tilde{\phi}^n)\|_{L^2(U,\mathbb{L}^2(\mathcal{O}))}^2  \, \d s \right)^\frac{r}{2}
&\leq  C_1^{r/2} t^{\frac{r-2}{2}} \tilde{\mathbb{E}} \int_0^t |\nabla \tilde{\phi}^n|^r\, \d s, 
\\
\tilde{\mathbb{E}} \left(\int_0^{t} \|\nabla_\Gamma F_2(\tilde{\varphi}^n)\|_{L^2(U_\Gamma,\mathbb{L}^2(\Gamma))}^2  \, \d s \right)^\frac{r}{2}
&\leq  C_2^{r/2} t^{\frac{r-2}{2}} \tilde{\mathbb{E}} \int_0^t |\nabla_\Gamma \tilde{\varphi}^n|_\Gamma^r\, \d s, \quad \forall t \in[0,T]. 
\end{align*}
Next, using H\"older's inequality together with the assumption $(H4)$, we obtain
\begin{align*}
& \tilde{\mathbb{E}} \left(\int_0^{t} \left| \int_{\mathcal{O}} M_{\mathcal{O}}(\tilde{\phi}^n) \nabla \tilde{\mu}^n \cdot \nabla \tilde{\phi}^n\, \d x \right| \d s \right)^\frac{r}{2} \\ 
&\leq  (\bar{M}_0)^\frac{r}{4} \tilde{\mathbb{E}} \left(\int_0^{t} \left|\sqrt{M_{\mathcal{O}}(\tilde{\phi}^n)} \nabla \tilde{\mu}^n \right| |\nabla \tilde{\phi}^n| \, \d s \right)^\frac{r}{2} \\
&\leq (\bar{M}_0)^\frac{r}{4} \tilde{\mathbb{E}} \left[ \left(\int_0^{t} \left|\sqrt{M_{\mathcal{O}}(\tilde{\phi}^n)} \nabla \tilde{\mu}^n \right|^2 \d s \right)^\frac{r}{4} \left(\int_0^{t} |\nabla \tilde{\phi}^n|^2 \, \d s \right)^\frac{r}{4} \right] \\
&\leq (\bar{M}_0)^\frac{r}{4} \left[\tilde{\mathbb{E}}  \left(\int_0^{t} \left|\sqrt{M_{\mathcal{O}}(\tilde{\phi}^n)} \nabla \tilde{\mu}^n \right|^2 \d s \right)^\frac{r}{2} \right]^\frac{1}{2} \tilde{\mathbb{E}} \left[\left(\int_0^{t} |\nabla \tilde{\phi}^n|^2 \, \d s \right)^\frac{r}{2}\right]^\frac{1}{2} \\
&\leq (\bar{M}_0)^\frac{r}{4} \left[\tilde{\mathbb{E}}  \left(\int_0^{t} \left|\sqrt{M_{\mathcal{O}}(\tilde{\phi}^n)} \nabla \tilde{\mu}^n \right|^2 \d s \right)^\frac{r}{2} \right]^\frac{1}{2}  \left[\tilde{\mathbb{E}} \int_0^{t} |\nabla \tilde{\phi}^n|^r \, \d s \right]^\frac{1}{2} t^\frac{r-2}{4},
\end{align*}
and in turn, applying Young's inequality, we have
\begin{align*}
&C_r \tilde{\mathbb{E}} \left(\int_0^{t} \left| \int_{\mathcal{O}} M_{\mathcal{O}}(\tilde{\phi}^n) \nabla \tilde{\mu}^n \cdot \nabla \tilde{\phi}^n\, \d x \right| \d s \right)^\frac{r}{2} \\
&\leq \frac{1}{4} \tilde{\mathbb{E}}  \left(\int_0^{t} \left|\sqrt{M_{\mathcal{O}}(\tilde{\phi}^n)} \nabla \tilde{\mu}^n \right|^2 \d s \right)^\frac{r}{2} + C(r,\bar{M}_0) t^\frac{r-2}{2} \tilde{\mathbb{E}} \int_0^{t} |\nabla \tilde{\phi}^n|^r \, \d s.
\end{align*}
We observe that
\begin{align*}
\sum_{k=1}^\infty \int_{Q_t} |F''(\tilde{\phi}^n)| |F_1(\tilde{\phi}^n)e_{1,k}|^2 \, \d x \, \d s
&= \sum_{k=1}^\infty \int_{Q_t} |F''(\tilde{\phi}^n)| |\sigma_k(\tilde{\phi})|^2 \, \d x\, \d s \\
&\leq \int_0^t \|F''(\tilde{\phi}^n)\|_{L^1(\mathcal{O})} \, \d s \sum_{k=1}^\infty \|\sigma_k(\tilde{\phi}^n)\|_{L^\infty(Q_t)}^2 \\
&\leq |Q_t| C_1 \int_0^t \|F''(\tilde{\phi}^n)\|_{L^1(\mathcal{O})} \, \d s.
\end{align*}
This, together with \eqref{condition_ F'_and_F''} gives
\begin{equation*}
\sum_{k=1}^\infty \int_{Q_t} |F''(\tilde{\phi}^n)| |F_1(\tilde{\phi}^n)e_{1,k}|^2 \, \d x \, \d s
\leq C t + C \int_0^t \|F(\tilde{\phi}^n)\|_{L^1(\mathcal{O})} \, \d s,
\end{equation*}
with $C$ being independent of $n$. Furthermore, by H\"older's inequality, we have
\begin{align*}
 \tilde{\mathbb{E}} \left(\sum_{k=1}^\infty \int_{Q_t} |F''(\tilde{\phi}^n)| |F_1(\tilde{\phi}^n)e_{1,k}|^2 \, \d x \,\d s \right)^\frac{r}{2} 
 &\leq C(r)\left[1 + \tilde{\mathbb{E}} \left(\int_0^t \|F(\tilde{\phi}^n)\|_{L^1(\mathcal{O})} \, \d s \right)^{\frac{r}{2}}\right]  \\
 &\leq C(r)\left[t^\frac{r}{2} + t^\frac{r-2}{2} \tilde{\mathbb{E}} \int_0^t \|F(\tilde{\phi}^n)\|_{L^1(\mathcal{O})}^\frac{r}{2} \, \d s \right].
\end{align*}
Similarly, 
\begin{equation*}
\tilde{\mathbb{E}} \left(\sum_{k=1}^\infty \int_{\Sigma_t} |G''(\tilde{\varphi}^n)| |F_2(\tilde{\varphi}^n)e_{2,k}|^2 \, \d S \,\d s \right)^\frac{r}{2} 
 \leq C(r)\left[t^\frac{r}{2} + t^\frac{r-2}{2} \tilde{\mathbb{E}} \int_0^t \|G(\tilde{\varphi}^n)\|_{L^1(\Gamma)}^\frac{r}{2} \, \d s \right],
\end{equation*}
for any $r\geq 2$ and, where $C=C(r)$ is a positive constant independent of $n$.

\noindent
Next, using the Burkholder-Davis-Gundy inequality, we have
\begin{align*}
 &\tilde{\mathbb{E}} \sup_{\tau \in[0,t]} \left|\int_0^{\tau} \left(\tilde{\mu}^n(s),F_{1}(\tilde{\phi}^n(s)) \,\d \tilde{W}^n(s)\right) \right|^{r/2} \\
 &\leq C(r) \tilde{\mathbb{E}} \left(\int_0^{t} |\tilde{\mu}^n(s)|^2 \|F_1(\tilde{\phi}^n(s))\|_{\mathscr{T}_2(U,L^2(\mathcal{O}))}^2\,\d s \right)^{r/4} \\
 &\leq C(C_1,\mathcal{O},r) \tilde{\mathbb{E}} \left(\int_0^{t} |\tilde{\mu}^n(s)|^2 \,\d s \right)^{r/4},
\end{align*}
where $C\coloneq C(C_1,\mathcal{O},r)$. Moreover, we have 
   \begin{equation}
     |\tilde{\mu}^n|^2
      \leq 2 |\tilde{\mu}^n - \langle \tilde{\mu}^n\rangle_{\mathcal{O}}|^2 + 2 |\langle \tilde{\mu}^n\rangle_{\mathcal{O}}|^2 
      \leq 2 \tilde{C}_{\mathcal{O}}^2 |\nabla \tilde{\mu}^n|^2 + 2 |\langle \tilde{\mu}^n\rangle_{\mathcal{O}}|^2,
  \end{equation}
where $\tilde{C}_{\mathcal{O}}$ is the Poincar\'e constant depending only on $\mathcal{O}$. Now, taking $(\psi,\psi \lvert_\Gamma)=(1,0)$ as test function in \eqref{eq3.26}, using \eqref{condition_ F'_and_F''} and H\"older's inequality, we find
\begin{align*}
|\langle \tilde{\mu}^n \rangle_{\mathcal{O}}|
&= \left|-\eps [K_n] |\mathcal{O}|^{-1} \int_\Gamma (\tilde{\varphi}^n - \tilde{\phi}^n)\, \d S + \eps^{-1} |\mathcal{O}|^{-1} \int_{\mathcal{O}} F^\prime (\tilde{\phi}^n) \, \d x \right| \\
&\leq \eps [K_n] |\mathcal{O}|^{-1} |\Gamma|^\frac{1}{2} |\tilde{\varphi}^n - \tilde{\phi}^n|_\Gamma + \eps^{-1} |\mathcal{O}|^{-1} \int_{\mathcal{O}} |F^\prime (\tilde{\phi}^n)|\, \d x \\
&\leq \eps^{-1} C_F + \eps [K_n] |\mathcal{O}|^{-1} |\Gamma|^\frac{1}{2} |\tilde{\varphi}^n - \tilde{\phi}^n|_\Gamma  + \eps^{-1} C_F |\mathcal{O}|^{-1} \|F(\tilde{\phi}^n)\|_{L^1(\mathcal{O})}
\end{align*}
and in turn, we infer that
  \begin{equation*}
   |\tilde{\mu}^n|^2
    \leq C(1 + |\nabla \tilde{\mu}^n|^2 + [K_n]^2 |\tilde{\varphi}^n - \tilde{\phi}^n|_\Gamma^2 + \|F(\tilde{\phi}^n)\|_{L^1(\mathcal{O})}^2), 
  \end{equation*}
with $C$ depending only on $\mathcal{O},\,C_F,\,\eps,\,\Gamma$. Besides, we have
\begin{align*}
\tilde{\mathbb{E}} \left(\int_0^t |\tilde{\mu}^n|^2 \, \d s \right)^\frac{r}{4}
&\leq C t^\frac{r}{4} + C \tilde{\mathbb{E}} \left(\int_0^t |\nabla \tilde{\mu}^n|^2 \, \d s \right)^\frac{r}{4} + C \tilde{\mathbb{E}} \left(\int_0^t [K_n]^2 |\tilde{\varphi}^n - \tilde{\phi}^n|_\Gamma^2 \, \d s \right)^\frac{r}{4} \\
&\quad + C \tilde{\mathbb{E}} \left(\int_0^t \|F(\tilde{\phi}^n)\|_{L^1(\mathcal{O})}^2\, \d s \right)^\frac{r}{4}.
\end{align*}
Notice that
\begin{align*}
\tilde{\mathbb{E}} \left(\int_0^t 
\|F(\tilde{\phi}^n(s))\|_{L^1(\mathcal{O})}^2 \, \d s \right)^\frac{r}{4} \leq
  \begin{cases}
   & t^\frac{r - 4}{4} \tilde{\mathbb{E}} \int_0^t \|F(\tilde{\phi}^n(s))\|_{L^1(\mathcal{O})}^{\frac{r}{2}} \, \d s \quad \forall r>4, \\
   & \tilde{\mathbb{E}} \int_0^t 
   \|F(\tilde{\phi}^\delta(s))\|_{L^1(\mathcal{O})}^2 \, \d s \quad \text{if} \quad r=4,
 \end{cases}
\end{align*}

\begin{align*}
\tilde{\mathbb{E}} \left(\int_0^t |\nabla \tilde{\mu}^n(s)|^2 \, \d s \right)^{r/4} 
\leq \frac{1}{M_0^{r/4}} \left[\tilde{\mathbb{E}} \left(\int_0^t \left|\sqrt{M_{\mathcal{O}}(\tilde{\phi}^n(s))} \nabla \tilde{\mu}^n(s) \right|^2 \, \d s \right)^\frac{r}{2} \right]^{1/2}
\end{align*}

        \begin{equation*}
          \tilde{\mathbb{E}} \left(\int_0^t [K_n]^2 |\tilde{\varphi}^n - \tilde{\phi}^n|_\Gamma^2 \, \d s \right)^\frac{r}{4} 
           \leq \left[\tilde{\mathbb{E}} \int_0^t [K_n]^r |\tilde{\varphi}^n - \tilde{\phi}^n|_{\Gamma}^{r} \, \d s \right]^\frac{1}{2} t^\frac{r - 2}{4} \quad \forall r \geq 2.
        \end{equation*}
Hence,
\begin{align*}
& C_r \tilde{\mathbb{E}} \sup_{\tau \in[0,t]} \left|\int_0^{\tau} \left(\tilde{\mu}^n(s),F_{1}(\tilde{\phi}^n(s)) \,\d \tilde{W}^n(s)\right) \right|^{r/2} \\
&\leq C t^\frac{r}{4} + C \left[\tilde{\mathbb{E}} \left(\int_0^t \left|\sqrt{M_{\mathcal{O}}(\tilde{\phi}^n(s))} \nabla \tilde{\mu}^n(s) \right|^2 \, \d s \right)^\frac{r}{2} \right]^{1/2} + C t^\frac{r - 4}{4} \tilde{\mathbb{E}} \int_0^t \|F(\tilde{\phi}^n(s))\|_{L^1(\mathcal{O})}^{\frac{r}{2}} \, \d s \\
&\quad  + C \left[\tilde{\mathbb{E}} \int_0^t [K_n]^r |\tilde{\varphi}^n(s) - \tilde{\phi}^n(s)|_{\Gamma}^{r} \, \d s \right]^\frac{1}{2} t^\frac{r - 2}{4},
\end{align*}
for all $r \geq 4$, where the positive constant $C$ may depend on $M_0,\,C_1,\,\mathcal{O},\,C_F,\,\eps,\,\Gamma$ and $r$. Furthermore, by applying Young's inequality, we arrive at
\begin{equation}\label{eq5.3}
\begin{aligned}
& C_r \tilde{\mathbb{E}} \sup_{\tau \in[0,t]} \left|\int_0^{\tau} \left(\tilde{\mu}^n(s),F_{1}(\tilde{\phi}^n(s)) \,\d \tilde{W}^n(s)\right) \right|^{r/2} \\
&\leq C ( 1 + t^{r/4} + t^{(r - 2)/2}) + \frac{1}{4} \tilde{\mathbb{E}} \left(\int_0^{t} \left|\sqrt{M_{\mathcal{O}}(\tilde{\phi}^n(s))} \nabla \tilde{\mu}^n(s) \right|^2 \d s \right)^{r/2} \\
&\quad + C t^\frac{r - 4}{4} \tilde{\mathbb{E}} \int_0^t \|F(\tilde{\phi}^n(s))\|_{L^1(\mathcal{O})}^{\frac{r}{2}} \, \d s + C \tilde{\mathbb{E}} \int_0^t [K_n]^r |\tilde{\varphi}^n(s) - \tilde{\phi}^n(s)|_{\Gamma}^{r} \, \d s, \quad \forall r \geq 4, 
\end{aligned}
\end{equation}
and for any $t\in[0,T]$, where $C$ may depend on $M_0,\,C_1,\,\mathcal{O},\,C_F,\,\eps,\,\Gamma$ and $r$.

\noindent
For the second stochastic integral in \eqref{eq5.1}, we have
\begin{align*}
\tilde{\mathbb{E}} \sup_{\tau \in[0,t]} \left|\int_0^{\tau} (\tilde{\theta}^n(s),(F_2(\tilde{\varphi}^n(s)))\, \d \tilde{W}^\Gamma(s))_\Gamma\right|^{r/2} 
\leq C \tilde{\mathbb{E}} \left(\int_0^{t} |\tilde{\theta}^n(s)|_\Gamma^2 \, \d s \right)^\frac{r}{4},
\end{align*}
where $C$ depends only on $C_2,\,\Gamma$ and $r$.

\noindent
Now, taking $(\psi,\psi \lvert_\Gamma)=(0,1)$ as test function in \eqref{eq3.26}, using \eqref{condition_ G'_and_G''} and H\"older's inequality, we get
\begin{align*}
|\langle \tilde{\theta}^n \rangle_{\Gamma}| 
&= \left| \eps [K_n] |\Gamma|^{-1} \int_\Gamma (\tilde{\varphi}^n - \tilde{\phi}^n)\, \d S + \eps_\Gamma^{-1} \int_\Gamma  G^\prime(\tilde{\varphi}^n) \, \d x \right| \\
&\leq \eps [K_n] |\Gamma|^{-1/2} |\tilde{\varphi}^n - \tilde{\phi}^n|_\Gamma + \eps_\Gamma^{-1} |\Gamma|^{-1} \int_{\Gamma} |G^\prime(\tilde{\varphi}^n)|\, \d S \\
&\leq \eps_\Gamma^{-1} C_G + \eps [K_n] |\Gamma|^{-1/2} |\tilde{\varphi}^n - \tilde{\phi}^n|_\Gamma + \eps_\Gamma^{-1} |\Gamma|^{-1} C_G \|G^\prime (\tilde{\varphi}^n)\|_{L^1(\Gamma)}.
\end{align*}
Moreover, using the Poincar\'e inequality on $\Gamma$, we obtain
\begin{align*}
|\tilde{\theta}^n|_\Gamma^2
&\leq 2 |\tilde{\theta}^n - \langle \tilde{\theta}^n\rangle_{\Gamma}|_{\Gamma}^2 + 2 |\langle \tilde{\theta}^n \rangle_{\Gamma}|_{\Gamma}^2 \\
&\leq C( 1 +  |\nabla_\Gamma \tilde{\theta}^n|_{\Gamma}^2 + [K_n]^2  |\tilde{\varphi}^n - \tilde{\phi}^n|_\Gamma^2 + \|G^\prime (\tilde{\varphi}^n)\|_{L^1(\Gamma)}^2).
\end{align*}
Here $C$ may depends on $\eps,\,\Gamma,\,C_G$ and $\eps_\Gamma$. Besides, by following the last steps of the proof of \eqref{eq5.3}, we further obtain
\begin{equation}\label{eq5.4}
\begin{aligned}
& C_r \tilde{\mathbb{E}} \sup_{\tau \in[0,t]} \left|\int_0^{\tau} \left(\tilde{\theta}^n(s),F_2(\tilde{\varphi}^n(s)) \,\d \tilde{W}^\Gamma(s)\right) \right|^{r/2} \\
&\leq C ( 1 + t^{r/4} + t^{(r - 2)/2}) + \frac{1}{2} \tilde{\mathbb{E}} \left(\int_0^{t} \left|\sqrt{M_{\Gamma}(\tilde{\varphi}^n(s))} \nabla_\Gamma \tilde{\theta}^n(s) \right|_\Gamma^2 \d s \right)^{r/2} \\
&\quad + C t^\frac{r - 4}{4} \tilde{\mathbb{E}} \int_0^t \|G(\tilde{\varphi}^n(s))\|_{L^1(\Gamma)}^{\frac{r}{2}} \, \d s + C \tilde{\mathbb{E}} \int_0^t [K_n]^r |\tilde{\varphi}^n(s) - \tilde{\phi}^n(s)|_{\Gamma}^{r} \, \d s, \quad \forall r \geq 4, 
\end{aligned}
\end{equation}
and for any $t\in[0,T]$, where $C$ may depend on $N_0,\,C_2,\,\mathcal{O},\,C_G,\,\eps,\,\eps_\Gamma,\,\Gamma$ and $r$.

\noindent
Using the Burkholder-Davis-Gundy,  H\"older's and Young's inequalities together with the assumption $(H2)$, we obtain
\begin{align*}
  & C_r \tilde{\mathbb{E}} \sup_{\tau \in [0,t]} \left|\int_0^{\tau}(\tilde{\phi}^n(s), F_1(\tilde{\phi}^n(s))\,\d \tilde{W}^n(s))\right|^\frac{r}{2} \\
  &\leq C_r \tilde{\mathbb{E}} \left(\int_0^t |\tilde{\phi}^n(s)|^2 \|F_1(\tilde{\phi}^n(s))\|_{\mathscr{T}_2(U,L^2(\mathcal{O}))}^2 \, \d s \right)^\frac{r}{4} \\
  &\leq C(\mathcal{O}, C_1,r) \tilde{\mathbb{E}} \left(\int_0^t |\tilde{\phi}^n(s)|^2 \, \d s \right)^\frac{r}{4} \\
  &\leq C(\mathcal{O}, C_1,r) \left[\tilde{\mathbb{E}} \int_0^t |\tilde{\phi}^n(s)|^{r} \, \d s \right]^\frac{1}{2} t^\frac{r - 2}{4} \\
  &\leq C(r,\mathcal{O},C_1) + t^\frac{r - 2}{2} \tilde{\mathbb{E}} \int_0^t |\tilde{\phi}^n(s)|^{r} \, \d s.
\end{align*}
Collecting now all the previous estimates, inserting all of them on the right-hand side of \eqref{eq5.1}, we obtain after straightforward transformation, that
\begin{equation}\label{eq5.5}
\begin{aligned}
&\tilde{\mathbb{E}} \sup_{s \in [0,t]} [\mathcal{E}_{n}(\tilde{\phi}^n(s), \tilde{\varphi}^n(s))]^\frac{r}{2} + \tilde{\mathbb{E}} \left(\int_{Q_t} \nu(\tilde{\phi}^n) |D\tilde{\bu}^n|^2  \,\d x \,\d s \right)^\frac{r}{2} \\
&\quad + \tilde{\mathbb{E}} \left(\int_{Q_t} \lambda(\tilde{\phi}^n) |\tilde{\bu}^n|^2 \,\d x \,\d s \right)^\frac{r}{2} + \tilde{\mathbb{E}} \left(\int_{\Sigma_t} \gamma(\tilde{\varphi}^n) |\tilde{\bu}^n|^2 \,\d S\,\d s\right)^\frac{r}{2} \\
&\quad  + \frac{1}{2} \tilde{\mathbb{E}} \left(\int_{Q_t} M_{\mathcal{O}}(\tilde{\phi}^n) |\nabla \tilde{\mu}^n|^2 \, \d x \,\d s \right)^\frac{r}{2} + \frac{1}{2} \tilde{\mathbb{E}} \left(\int_{\Sigma_t} M_\Gamma(\tilde{\varphi}^n) |\nabla_\Gamma \tilde{\theta}^n|^2 \,\d S \,\d s \right)^\frac{r}{2}  
 \\
&\leq C\left(1 + (\|\phi_0\|_{V_1}^2 + \|F(\phi_0)\|_{L^1(\mathcal{O})}  + |\nabla_\Gamma \varphi_0|_\Gamma^2 + \|G(\varphi_0)\|_{L^1(\Gamma)})^\frac{r}{2} + (1 + |\varphi_0 - \phi_0|_\Gamma^r) [K_n]^{r/2} \right. \\
&\quad \left. + (1 + [K_n]^{r/2}) \tilde{\mathbb{E}} \int_0^{t} \sup_{0\leq s \leq r} [\mathcal{E}_{n}(\tilde{\phi}^n(s), \tilde{\varphi}^n(s))]^{r/2} \, \d r \right),
\end{aligned}
\end{equation}
for all $t\in[0,T]$ and all $r \geq 4$, where $C$ may depends only on $T,\,\mathcal{O},\,C_1,\,\Gamma,\,C_2,\,M_0$, $C_F,\,\eps,\,N_0,\, C_G,\,\eps_\Gamma,\,\bar{M}_0$ and $r$. It then follows by application of the Gronwall lemma that
\begin{equation}\label{eq5.6}
\begin{aligned}
&\tilde{\mathbb{E}} \sup_{s \in [0,t]} [\mathcal{E}_{n}(\tilde{\phi}^n(s), \tilde{\varphi}^n(s))]^\frac{r}{2} + \tilde{\mathbb{E}} \left(\int_{Q_t} \nu(\tilde{\phi}^n) |D\tilde{\bu}^n|^2  \,\d x \,\d s \right)^\frac{r}{2} \\
&\quad + \tilde{\mathbb{E}} \left(\int_{Q_t} \lambda(\tilde{\phi}^n) |\tilde{\bu}^n|^2 \,\d x \,\d s \right)^\frac{r}{2} + \tilde{\mathbb{E}} \left(\int_{\Sigma_t} \gamma(\tilde{\varphi}^n) |\tilde{\bu}^n|^2 \,\d S\,\d s\right)^\frac{r}{2} \\
&\quad  + \frac{1}{2} \tilde{\mathbb{E}} \left(\int_{Q_t} M_{\mathcal{O}}(\tilde{\phi}^n) |\nabla \tilde{\mu}^n|^2 \, \d x \,\d s \right)^\frac{r}{2} + \frac{1}{2} \tilde{\mathbb{E}} \left(\int_{\Sigma_t} M_\Gamma(\tilde{\varphi}^n) |\nabla_\Gamma \tilde{\theta}^n|^2 \,\d S \,\d s \right)^\frac{r}{2}  
 \\
&\leq C\left(1 + (\|\phi_0\|_{V_1}^2 + \|F(\phi_0)\|_{L^1(\mathcal{O})}  + |\nabla_\Gamma \varphi_0|_\Gamma^2 + \|G(\varphi_0)\|_{L^1(\Gamma)})^\frac{r}{2}  \right. \\
&\left. \hspace{1cm} + (1 + |\varphi_0 - \phi_0|_\Gamma^r) [K_n]^{r/2}\right) e^{(1 + [K_n]^{r/2}) t},
\end{aligned}
\end{equation}
for all $t \in[0,T]$, $n \in \mathbb{N}$, and all $r \geq 4$, with $C$ independent of $K_n$. 

\noindent
As a direct consequence of \eqref{eq5.6}, the definition of the functional $\mathcal{E}_{n}$ and the positivity of the maps $F$ and $G$, we infer that there exists a positive  constant $C$ independent of $K_n$ such that
\begin{equation}
\begin{aligned}
\tilde{\mathbb{E}} \sup_{s \in [0,t]} |\tilde{\varphi}^n(s) - \tilde{\phi}^n(s)|^r 
&\leq C  K_n^{r/2} \left(1 + (\|\phi_0\|_{V_1}^2 + \|F(\phi_0)\|_{L^1(\mathcal{O})}  + |\nabla_\Gamma \varphi_0|_\Gamma^2 + \|G(\varphi_0)\|_{L^1(\Gamma)})^\frac{r}{2}  \right. \\
&\quad \left. \hspace{1cm} + (1 + |\varphi_0 - \phi_0|_\Gamma^r) [K_n]^{r/2}\right) e^{(1 + [K_n]^{r/2}) t}
\end{aligned}
\end{equation}
for all $t\in[0,T]$, $n\in \mathbb{N}$, and all $r \geq 4$.
\end{proof}

\textbf{Brouillon.}

\noindent
Taking $(\psi,\psi \lvert_\Gamma)=(\mathbb{A}_\delta(\bar{\phi}_{\delta}),0)$ as test function in \eqref{eq4.130d}, we have
\begin{align*}
&  \eps \int_{\mathcal{O}} \Psi^\prime_\delta(\bar{\phi}_{\delta}) |\nabla \bar{\phi}_{\delta}|^2 \, \d x + \int_{\mathcal{O}} \frac1\eps |F_\delta^\prime(\bar{\phi}_{\delta})|^2 \, \d x  \\ 
&=   \int_{\mathcal{O}}  \bar{\mu}_{\delta} F_\delta^\prime(\bar{\phi}_{\delta}) \, \d x + \tilde{c}_F \int_{\mathcal{O}} \bar{\mu}_{\delta} \bar{\phi}_{\delta} \, \d x - \frac{\tilde{c}_F}{\eps} \int_{\mathcal{O}} F_\delta^\prime(\bar{\phi}_{\delta}) \bar{\phi}_{\delta} \, \d x \\
&\quad + \eps [K] \int_{\Gamma} (\bar{\varphi}_{\delta} - \bar{\phi}_{\delta}) F_\delta^\prime(\bar{\phi}_{\delta}) \, \d S  + \eps [K] \tilde{c}_F \int_{\Gamma} (\bar{\varphi}_{\delta} - \bar{\phi}_{\delta})  \bar{\phi}_{\delta} \, \d S.
\end{align*}

\begin{align*}
&\int_{\mathcal{O}} \bar{\mu}_{\delta} \mathbb{A}_\delta(\bar{\phi}_{\delta}) \, \d x    
- \eps \int_{\mathcal{O}} \nabla \bar{\phi}_{\delta}(s) \cdot \nabla \mathbb{A}_\delta(\bar{\phi}_{\delta}) \, \d x  - \int_{\mathcal{O}} \frac1\eps F_\delta^\prime(\bar{\phi}_{\delta}) \mathbb{A}_\delta(\bar{\phi}_{\delta}) \, \d x  \notag \\ 
&\quad=  - \eps [K] \int_{\Gamma} (\bar{\varphi}_{\delta} - \bar{\phi}_{\delta}) \mathbb{A}_\delta(\bar{\phi}_{\delta}) \, \d S 
\end{align*}

\begin{equation*}
\int_{\mathcal{O}} \bar{\mu}_{\delta} \mathbb{A}_\delta(\bar{\phi}_{\delta}) \, \d x
= \int_{\mathcal{O}}  \bar{\mu}_{\delta} F_\delta^\prime(\bar{\phi}_{\delta}) \, \d x + \tilde{c}_F \int_{\mathcal{O}}  \bar{\mu}_{\delta} \bar{\phi}_{\delta} \, \d x
\end{equation*}

\begin{equation*}
- \eps \int_{\mathcal{O}} \nabla \bar{\phi}_{\delta} \cdot \nabla \mathbb{A}_\delta(\bar{\phi}_{\delta}) \, \d x
= - \eps \int_{\mathcal{O}} \Psi^\prime_\delta(\bar{\phi}_{\delta}) |\nabla \bar{\phi}_{\delta}|^2 \, \d x
\end{equation*}

\begin{equation*}
- \int_{\mathcal{O}} \frac1\eps F_\delta^\prime(\bar{\phi}_{\delta}) \mathbb{A}_\delta(\bar{\phi}_{\delta}) \, \d x
= - \int_{\mathcal{O}} \frac1\eps |F_\delta^\prime(\bar{\phi}_{\delta})|^2 \, \d x - \frac{\tilde{c}_F}{\eps} \int_{\mathcal{O}} F_\delta^\prime(\bar{\phi}_{\delta}) \bar{\phi}_{\delta} \, \d x
\end{equation*}

\begin{align*}
- \eps [K] \int_{\Gamma} (\bar{\varphi}_{\delta} - \bar{\phi}_{\delta}) \mathbb{A}_\delta(\bar{\phi}_{\delta}) \, \d S
= - \eps [K] \int_{\Gamma} (\bar{\varphi}_{\delta} - \bar{\phi}_{\delta}) F_\delta^\prime(\bar{\phi}_{\delta}) \, \d S  - \eps [K] \tilde{c}_F \int_{\Gamma} (\bar{\varphi}_{\delta} - \bar{\phi}_{\delta})  \bar{\phi}_{\delta} \, \d S.
\end{align*}

\newpage
$\mathbb{A}_\delta(\bar{\phi}_{\delta})= F_\delta^\prime(\bar{\phi}_{\delta}) + \tilde{c}_F \bar{\phi}_{\delta}$

\begin{align*}
&\int_{Q_t} \bar{\mu}_{\delta}(s) \psi \, \d x \, \d s + \int_{\Sigma_t} \bar{\theta}_{\delta}(s) \psi \lvert_\Gamma \, \d S\, \d s  
- \eps \int_{Q_t} \nabla \bar{\phi}_{\delta}(s) \cdot \nabla \psi \, \d x \, \d s - \int_{Q_t} \frac1\eps F_\delta^\prime(\bar{\phi}_{\delta}(s)) \psi \, \d x \, \d s \notag \\ 
&\quad= \eps_\Gamma \int_{\Sigma_t} \nabla_\Gamma \bar{\varphi}_{\delta}(s) \cdot \nabla_\Gamma \psi \lvert_\Gamma \, \d S \, \d s +  \frac{1}{\eps_\Gamma} \int_{\Sigma_t} G_\delta^\prime(\bar{\varphi}_{\delta}(s)) \psi \lvert_\Gamma \, \d S \, \d s \\
&\qquad  + [K] \int_{\Sigma_t} (\bar{\varphi}_{\delta}(s) - \bar{\phi}_{\delta}(s)) (\eps \psi \lvert_\Gamma -\eps \psi)\, \d S \, \d s,
\end{align*}

 Now, taking $(\psi,\psi \lvert_\Gamma)=(1,0)$ and $(\psi,\psi \lvert_\Gamma)=(0,1)$ in \eqref{eq4.130d}, we obtain
\begin{equation}
\begin{aligned}
- |\mathcal{O}| \langle f_\delta \rangle_{\mathcal{O}}
= \eps [K] \int_\Gamma (\bar{\varphi}_{\delta} - \bar{\phi}_{\delta}) \, \d S, 
  \\
|\Gamma| \langle g_\delta \rangle_\Gamma
= \eps [K] \int_\Gamma (\bar{\varphi}_{\delta} - \bar{\phi}_{\delta}) \, \d S.
\end{aligned}
\end{equation}

}

\section*{Acknowledgments}
 The second named author acknowledges financial support from the European Union’s Horizon 2023 Marie Sk{\l}odowska-Curie Action Postdoctoral Fellowship  No. 101151937-SNSCHEs (via URRIP guarantee). 

%\footnotetext{A. Ndongmo Ngana is fully supported by the EU Research Agency through the Marie Sk\l odowska-Curie fellowship}

\section*{Declarations} 
	
\noindent 	\textbf{Ethical Approval:}   Not applicable 
	
%\noindent  \textbf{Competing interests: } The authors declare no competing interests. 
	
\noindent  \textbf{Conflict of Interest: } On behalf of all authors, the corresponding author states that there is no conflict of interest.
	
\noindent 	\textbf{Authors' contributions:} All authors have contributed equally.

\noindent \textbf{Data Availability Statement and materials:} Not applicable.

\end{document}